\newtheorem{theorem}{Theorem}[section]
\newtheorem{proposition}{Proposition}[section]
\newtheorem{lemma}[proposition]{Lemma}
\theoremstyle{definition}
\newtheorem{definition}{Definition}[section]
\newtheorem{remark}{Remark}[section]
\theoremstyle{plain}
\DeclareMathAlphabet{\mathcalligra}{T1}{calligra}{m}{n}
\DeclareFontShape{T1}{calligra}{m}{n}{<->s*[2.2]callig15}{}
\newcommand{\Ent}{s}
\newcommand{\GradEnt}{S}
\newcommand{\Enth}{H}
\newcommand{\Lnenth}{h}
\newcommand{\Temp}{\theta}
\newcommand{\TempoverEnth}{q}
\newcommand{\vort}{\varpi}
\newcommand{\speed}{c}
\newcommand{\uperpvort}{\mbox{\upshape vort}}
\begin{document}
\title{The relativistic Euler equations: Remarkable null structures and regularity
properties}
\author[MD,JS]{Marcelo M. Disconzi$^{* \#}$ and Jared Speck$^{** \dagger}$}
	
\thanks{$^{\#}$MMD gratefully acknowledges support from NSF grant \# 1812826,
from a Sloan Research Fellowship provided by the Alfred P. Sloan foundation,
and from a Discovery grant administered by Vanderbilt University.
}

\thanks{$^{\dagger}$JS gratefully acknowledges support from NSF grant \# 1162211,
from NSF CAREER grant \# 1454419,
from a Sloan Research Fellowship provided by the Alfred P. Sloan foundation,
and from a Solomon Buchsbaum grant administered by the Massachusetts Institute of Technology.
}

\thanks{$^{*}$Vanderbilt University, Nashville, TN, USA.
\texttt{marcelo.disconzi@vanderbilt.edu}}

\thanks{$^{**}$Vanderbilt University, Nashville, TN, USA 
	\\
\& Massachusetts Institute of Technology, Cambridge, MA, USA.
\texttt{jared.speck@vanderbilt.edu}}

\newcommand{\norm}[1]{\Vert#1\Vert}
\newcommand{\abs}[1]{\vert#1\right}
\newcommand{\nnorm}[1]{\left\Vert#1\right\Vert}
\newcommand{\aabs}[1]{\left\vert#1\right\vert}
\newcommand{\ErrorTerm}{\mathrm{f}}
\newcommand{\divthreed}{{}^{(3)}\mbox{\upshape div}}
\newcommand{\curlthreed}{{}^{(3)}\mbox{\upshape curl}}
\newcommand{\td}{t}

\newtheorem{notation}{Notation}[section]

\newcommand{\Charhypersurface}{\mathscr{H}}

\begin{abstract}
We derive a new formulation of the relativistic Euler equations that 
exhibits remarkable properties. This new formulation consists of a coupled system of geometric wave,
transport, and $\mbox{\upshape transport}$-$\mbox{\upshape div}$-$\mbox{\upshape curl}$ equations, sourced by nonlinearities 
that are null forms relative to the acoustical metric. Our new formulation is
well-suited for various applications, in particular for the study of 
stable shock formation, as it is surveyed in the paper. Moreover, 
using the new formulation presented here, 
we establish a local well-posedness result showing 
that the vorticity and the entropy of the fluid
are one degree more differentiable compared to the regularity guaranteed by standard estimates
(assuming that the initial data enjoy the extra differentiability).
This gain in regularity is essential for the study of shock formation without symmetry assumptions.
Our results hold for an arbitrary equation of state, not necessarily of barotropic type.

\bigskip

\noindent \textbf{Keywords}: Relativistic Euler; Null structure; Acoustical metric;
Shock formation.

\bigskip

\noindent \textbf{Mathematics Subject Classification (2010)} 
Primary: 35Q75; 
Secondary: 	35L10, 
	35Q35,  	
35L67. 

\end{abstract}

\maketitle

\centerline{\today}

\tableofcontents
\setcounter{tocdepth}{1}

\newpage

\section{Introduction}
\label{S:INTRO}
The relativistic Euler equations are the most well-studied PDE system in relativistic fluid mechanics.
In particular, they play a prominent role in cosmology, 
where they are often used to model the evolution of the average matter-energy content of the universe;
see, for example, Weinberg's well-known monograph \cite{sW2008} for an account
of the role that the relativistic Euler equations play in the standard model of cosmology.
The equations are also widely used in astrophysics and high-energy nuclear physics, as is 
described, for example, in \cite{lRoZ2013}.
Our main result in this article is our derivation of a new formulation of the 
relativistic Euler equations that reveals remarkable new regularity and null structures that are not visible relative
to standard first-order formulations. 
The new formulation is available for an arbitrary equation of state, 
not necessarily of barotropic\footnote{Barotropic equations of state are such that the pressure is a function of the proper energy density $\uprho$ alone.} 
type. Below we will describe potential applications that we anticipate will be the subject of future works.
We mention already that our new formulation of the equations provides a viable framework for the rigorous mathematical
study of stable shock formation without symmetry assumptions in solutions to the
relativistic Euler equations; for reasons to be explained, standard first-order formulations
are not adequate for tracking the behavior of solutions (without symmetry assumptions) 
all the way to the formation of a shock or for extending the solution
(uniquely, in a weak sense tied to suitable selection criteria) 
past the first singularity. 

We derive the new formulation by differentiating 
a standard first-order formulation with various geometric
differential operators and observing remarkable cancellations.\footnote{In observing many of the cancellations,
the precise numerical coefficients in the equations are important; roughly, these cancellations lead to the presence of the 
null form structures described below. However, for most applications, the overall coefficient of the null forms is not important;
what matters is that the cancellations lead to null forms.}
The calculations are rather involved and make up the bulk of the article.
We have carefully divided them into manageable pieces; see Sects.\,\ref{S:PRELIMINID}-\ref{S:TRANSPORTDIVCURLFORVORTICITY}.
Readers can jump ahead to Theorem~\ref{T:MAINTHEROEMSCHEMATIC} for a rough statement
of the equations and Theorem~\ref{T:NEWFORMULATIONRELEULER} for the precise version.

As we alluded to above, 
the relativistic Euler equations 
are typically formulated as a first-order quasilinear hyperbolic PDE system. 
In our new formulation, the equations take the form
of a system of covariant wave equations coupled to transport equations and to two $\mbox{\upshape transport}$-$\mbox{\upshape div}$-$\mbox{\upshape curl}$ systems.
The new formulation is well-suited for various applications
in ways that first-order formulations are not. 
In particular, the equations of Theorem~\ref{T:NEWFORMULATIONRELEULER} 
can be used to prove that \emph{the vorticity and entropy are one degree more differentiable than one might naively expect}
(assuming that the gain in differentiability is present in the initial data).
This gain in differentiability is crucial for the rigorous mathematical study of some fundamental phenomena that occur 
in fluid dynamics. In particular, this gain, as well as other structural aspects of the new formulation,
is essential for the study of shock waves (without symmetry assumptions) in relativistic fluid mechanics;
see Subsect.\,\ref{SS:SHOCKWAVES} for further discussion.
Although the gain in differentiability for the vorticity had previously been observed relative to Lagrangian coordinates
\cites{mSsSjS2015,jJpLnM2016},
Lagrangian coordinates are inadequate, for example, for the study of the formation of shock singularities 
because they are not adapted to the acoustic
characteristics, whose intersection corresponds to a shock. Hence,
it is of fundamental importance that our new formulation allows one to prove the gain in differentiability relative
to \emph{arbitrary vectorfield differential operators} (with suitably regular coefficients).
In this vein, we also mention the works \cites{dChLsS2010,dCsS2011,dCsS2012} on the non-relativistic compressible Euler equations, 
in which a gain in differentiability for the vorticity was shown relative to Lagrangian coordinates,
and the first author's joint work \cite{mDdE2017}, in which elliptic estimates were used to show that for the non-relativistic 
barotropic compressible Euler equations, it is possible to gain one derivative on the density relative to the velocity
(again, assuming that the gain is present in the initial data).

We also highlight the following key advantage of our new formulation: 
\begin{quote}
It dramatically enlarges the set of energy estimate techniques 
that can be applied to the study of the relativistic Euler equations. More precisely, 
the new formulation partially decouples the ``wave parts'' and ``transport parts'' of the system
and unlocks our ability to apply the full power of the commutator and multiplier vectorfield methods to the study of the
wave part; see Subsect.\,\ref{SS:ENERGIESFORWAVE} for further discussion.
\end{quote}
For applications to shock waves, 
it is fundamentally important that one is able to use the full scope of the vectorfield
method on the wave part of the system; see the introduction of \cite{jLjS2018}
for a discussion of this issue in the related context of the non-relativistic barotropic compressible Euler equations with vorticity. 
In particular, our new formulation of the equations allows one to derive a coercive energy estimate
for the wave part of the system for any multiplier vectorfield that is causal relative to the
acoustical metric $g$ of Def.\,\ref{D:ACOUSTICALMETRICANDINVERSE} and on any hypersurface that is 
null or spacelike relative to $g$; see Subsubsect.\,\ref{SSS:ESTIMATESFORCOVARIANTWAVEEQUATIONS}  for further discussion.
In contrast, for first-order hyperbolic systems 
(a special case of which is the relativistic Euler equations)
without additional structure, there is, up to scalar function multiple, 
only one\footnote{Here we further explain how standard first-order formulations of 
the relativistic Euler equations limit the available energy estimates. 
In deriving energy estimates for
the relativistic Euler equations in their standard first-order form, 
one is effectively controlling the wave and transport parts of the system at the same time,
and, up to a scalar function multiple, there is only one energy estimate available for transport equations.
To see this limitation in a more concrete fashion, one can rewrite the relativistic Euler equations 
in first-order symmetric hyperbolic form
as $A^{\alpha}(\mathbf{V}) \partial_{\alpha} \mathbf{V} = 0$,
where $\mathbf{V}$ is the array of solution variables and the $A^{\alpha}$ are symmetric matrices
with $A^0$ positive definite; see, for example,
\cite{aR1992a} for a symmetric hyperbolic formulation of the
general relativistic Euler equations in the barotropic case. 
The standard energy estimate for symmetric hyperbolic systems is obtained by
taking the Euclidean dot product of both sides of the equation with $\mathbf{V}$ and then integrating by parts
over an appropriate spacetime domain foliated by spacelike hypersurfaces. 
The key point is that for systems without additional structure, no other energy estimate is known, 
aside from rescaling the standard one by a scalar function.} 
available energy estimate on each causal or spacelike hypersurface.
 
Our second result in this article is that we provide a proof of local well-posedness 
for the relativistic Euler equations that relies on the new formulation; 
see Theorem~\ref{T:UPGRADEDLOCALWELLPOSEDNESS}. The new feature of Theorem~\ref{T:UPGRADEDLOCALWELLPOSEDNESS} 
compared to standard proofs of local well-posedness for the relativistic Euler equations
is that it provides the aforementioned gain in differentiability for the vorticity and entropy.
Although many aspects of the proof of the theorem are standard, 
we also rely on some geometric and analytic insights that are tied 
to the special structure of our new formulation of the equations
and thus are likely not known to the broader PDE research community; 
see the end of Subsubsect.\,\ref{SSS:REGULARITYANDSINGULARENERGYESTIMATES} for further discussion of this point.

For convenience, throughout the article, we restrict our attention to the 
special relativistic Euler equations, that is, 
the relativistic Euler equations on the Minkowski spacetime background
$(\mathbb{R}^{1+3},\upeta)$,
where $\upeta$ is the Minkowski metric. However, using arguments similar
to the ones given in the present article, our results could be extended 
to apply to the relativistic Euler equations on a general Lorentzian manifold;
such an extension could be useful, for example, in applications to
fluid mechanics in the setting of general relativity. 
For use throughout the article, 
we fix a standard rectangular coordinate system
$\lbrace x^{\alpha} \rbrace_{\alpha = 0,1,2,3}$, 
relative to which $\upeta_{\alpha \beta} := \mbox{\upshape diag}(-1,1,1,1)$.
See Subsect.\,\ref{SS:NOTATIONANDCONVENTIONS} for our index conventions.
We clarify that in Sect.\,\ref{S:WELLPOSEDNESS}, we prove local well-posedness
for the relativistic Euler equations (including the aforementioned gain in regularity for the vorticity and entropy) 
on the flat spacetime background $(\mathbb{R} \times \mathbb{T}^3,\upeta)$,
where the ``spatial manifold'' $\mathbb{T}^3$ is the three-dimensional torus and
we recycle the notation in the sense that 
$\lbrace x^{\alpha} \rbrace_{\alpha = 0,1,2,3}$ denotes standard coordinates on $\mathbb{R} \times \mathbb{T}^3$
(see Subsubsect.\,\ref{SSS:NOTATION} for further discussion)
and $\upeta$ again denotes the Minkowski metric;
the compactness of $\mathbb{T}^3$ allows for a simplified approach to some technical aspects
of the argument while allowing us to illustrate the ideas needed to exhibit the gain in regularity
for the vorticity and entropy.

Our work here can be viewed as extensions of the second author's previous joint work
\cite{jLjS2016a}, in which the authors derived a similar formulation of the
non-relativistic compressible Euler equations under an arbitrary barotropic equation of state,
as well as the second author's work \cite{jS2017a}, 
which extended the results of \cite{jLjS2016a}
to a general equation of state. However, since the geo-analytic structures
revealed by \cites{jLjS2016a,jS2017a} are rather delicate 
(that is, quite unstable under perturbations of the equations), it is 
far from obvious that similar results hold in the relativistic case.
We also stress that compared to the non-relativistic case, 
our work here is substantially more intricate in that it extensively 
relies on decompositions of various spacetime tensors into tensors that are
parallel to the four-velocity $u$ and tensors that are 
$\upeta$-orthogonal to $u$. In particular, we heavily exploit that
many of the tensorfields appearing in our analysis
exhibit improved regularity under $u$-directional differentiation
\emph{or} contraction against $u$.

\subsection{Rough statement of the new formulation}
\label{SS:ROUGHSTATEMENT}
In this subsection, we provide a schematic version of our
new formulation of the equations; 
in Subsect.\,\ref{SS:SHOCKWAVES}, we will refer
to the schematic version when describing potential applications.
In any formulation of the relativistic Euler equations, 
there is great freedom in choosing state-space variables 
(i.e., the fundamental unknowns in the system).
In this article, as state space variables, we use
the logarithmic enthalpy $\Lnenth$,
the entropy $s$, and the four-velocity $u$, 
which is a future-directed timelike vectorfield normalized by 
$\upeta_{\alpha \beta} u^{\alpha} u^{\beta} = -1$. 
Other fluid quantities such as the proper energy density $\uprho$, the pressure $p$, etc.\
will also play a role in our discussion, but these quantities
can be viewed as functions of the state space variables;
see see Sect.\,\ref{S:SOMEBACKGROUND} for detailed descriptions 
of all of these variables as well as the first-order formulation of the equations that forms
the starting point for our ensuing analysis.

As we mentioned earlier, our new formulation comprises a system 
of covariant wave equations coupled to transport equations and to two $\mbox{\upshape transport}$-$\mbox{\upshape div}$-$\mbox{\upshape curl}$ systems.
Roughly, the wave equations correspond to the propagation of sound waves, while the transport
equations correspond to the transporting of vorticity and entropy along the integral curves of $u$.
The $\mbox{\upshape transport}$-$\mbox{\upshape div}$-$\mbox{\upshape curl}$ systems are needed to control the top-order derivatives of the vorticity and the entropy
and to exhibit the aforementioned gain in differentiability.
In addition to the state space variables $\Lnenth$, $\Ent$, and $u$,
our formulation also involves a collection of auxiliary\footnote{By ``auxiliary,'' we mean that they are determined by 
$\Lnenth$, $\Ent$, and $u$.} fluid variables, 
including the entropy gradient one-form $\GradEnt_{\alpha} := \partial_{\alpha} \Ent$
and the vorticity $\vort^{\alpha}$, which is a vectorfield that is $\upeta$-orthogonal to $u$
(see Def.\,\ref{D:VORTICITYDEF}).
Among these auxiliary variables, of crucial importance
for our work is that we have identified new combinations 
of fluid variables that solve transport equations
with unexpectedly good structure. These structures can be used to show that the 
combinations exhibit a gain in regularity compared
to what can be inferred from a standard first-order formulation of the equations.
We refer to these special combinations as ``modified variables,'' and throughout,
we denote them by $\mathcal{C}^{\alpha}$ and $\mathcal{D}$; 
see Def.\,\ref{D:MODIFIEDVARIABLES}.

The remaining discussion in this subsection relies on some schematic notation and refers to some geometric objects
that are not precisely defined until later in the article:
\begin{itemize}
	\item The notation ``$\sim$'' below means that we are only highlighting the
		maximum number of derivatives of the state-space variables that the auxiliary variables
		depend on. We note, however, that in practice, the precise structure of many of 
		the terms that we encounter is important for observing the cancellations that lie behind our main results.
	\item ``$\partial$'' schematically denotes the spacetime gradient with respect to the rectangular coordinates,
		and ``$\partial^2$'' schematically denotes two differentiations with respect to the rectangular coordinates.
	\item $g = g(\Lnenth,\Ent,u)$ denotes the acoustical metric, which is Lorentzian 
		(see Def.\,\ref{D:ACOUSTICALMETRICANDINVERSE}).
	\item $\vort \sim \partial u + \partial \Lnenth$ is the vorticity vectorfield
		(see Def.\,\ref{D:VORTICITYDEF}).
	\item $\GradEnt_{\alpha} := \partial_{\alpha} \Ent$ is the entropy gradient one-form.
		\item $\mathcal{C}^{\alpha} \sim \partial^2 u + \partial^2 \Lnenth$ is a modified version of the vorticity of $\vort$,
		that is, the vorticity of the vorticity (see Def.\,\ref{D:MODIFIEDVARIABLES}).
	\item $\mathcal{D} \sim \partial^2 \Ent$ is a modified version of $\partial_{\alpha} \GradEnt^{\alpha}$
		(see Def.\,\ref{D:MODIFIEDVARIABLES}).
	\item $\mathfrak{Q}(\partial T_1,\cdots,\partial T_m)$ 
		denotes \emph{special} terms that are quadratic in the
		tensorfields $\partial T_1,\cdots,\partial T_m$.
		More precisely, the $\mathfrak{Q}(\partial T_1,\cdots,\partial T_m)$ are
		linear combinations of the standard \textbf{null forms relative to $g$};
 		see Def.\,\ref{D:STANDARDNULLFORMS}
		for the definitions of the standard null forms relative to $g$
		and Subsubsect.\,\ref{SSS:NONLINEARNULL} for a discussion of the significance 
		that the special structure of these null forms plays in the context of the study of shock waves.
	\item $\mathfrak{L}(\partial T_1,\cdots,\partial T_m)$ denotes
		linear combinations of terms that are at most \textbf{linear} in $\partial T_1,\cdots,\partial T_m$;
		see Subsubsect.\,\ref{SSS:NONLINEARNULL} for a discussion of 
		the significance of the linear dependence in the context of the study of shock waves.
\end{itemize}

Before schematically stating our main theorem, we first provide the definitions of the
standard null forms relative to $g$.
\begin{definition}[Standard null forms relative to $g$]
\label{D:STANDARDNULLFORMS}
We define the standard null forms relative to $g$ (which we refer to as ``standard $g$-null forms'' for short)
as follows, where $\phi$ and $\psi$ are scalar functions and $0 \leq \mu < \nu \leq 3$:
\begin{align} \label{E:STANDARDNULLFORMS}
	\mathfrak{Q}^{(g)}(\partial \phi, \partial \psi)
	& := (g^{-1})^{\alpha \beta} (\partial_{\alpha} \phi) (\partial_{\beta} \psi),
	\\
		\mathfrak{Q}_{\mu \nu}(\partial \phi, \partial \psi)
	& := (\partial_{\mu} \phi) (\partial_{\nu} \psi) 
			- 
			(\partial_{\nu} \phi) (\partial_{\mu} \psi).
	\notag
\end{align}
\end{definition}

We now present the schematic version of our main theorem; see Theorem~\ref{T:NEWFORMULATIONRELEULER}
for the precise statements.

\begin{theorem}[New formulation of the relativistic Euler equation (schematic version)]
	\label{T:MAINTHEROEMSCHEMATIC}
	Assume that $(\Lnenth,\Ent,u^{\alpha})$ is
	a $C^3$ solution to the (first-order) relativistic Euler equations 
	\eqref{E:ENTHALPYEVOLUTION}-\eqref{E:ENTROPYEVOLUTION} + \eqref{E:UISUNITLENGTH}.
	Then $\Lnenth$, $u^{\alpha}$, and $\Ent$ also verify the following covariant\footnote{Relative to arbitrary coordinates,
	for scalar functions $f$, we have
	\begin{align}
	\square_g f= 
\frac{1}{\mbox{$\sqrt{|\mbox{\upshape det} g|}$}}
\partial_{\alpha}\left(\sqrt{|\mbox{\upshape det} g|} (g^{-1})^{\alpha \beta} \partial_{\beta} f \right).
\notag
	\end{align}
\label{FN:COVWAVEOPARBITRARYCOORDS}}  
	wave equations,
	where the schematic notation ``$\simeq$'' below means
	that we have ignored the coefficients of the inhomogeneous terms
	and also harmless (from the point of view of applications to shock waves) 
	lower-order terms, which are allowed to depend on 
	$\Lnenth$, $\Ent$, $u$, $\GradEnt$, and $\vort$
	(but not their derivatives):
	\begin{subequations}
	\begin{align}
	\square_g \Lnenth
	& \simeq 
		\mathcal{D} 
		+
		\mathfrak{Q}(\partial \Lnenth,\partial u)
		+
		\mathfrak{L}(\partial \Lnenth),
			\label{E:INTROCOVWAVELNENTHALPY} \\
	\square_g u^{\alpha}
	& 
		\simeq
		\mathcal{C}^{\alpha}
		+
		\mathfrak{Q}(\partial \Lnenth,\partial u)
		+
		\mathfrak{L}(\partial \Lnenth,\partial u),
		\label{E:INTROCOVWAVEVELOCITY}
			\\
	\square_g \Ent
	& 
		\simeq
		\mathcal{D}
		+
		\mathfrak{L}(\partial \Lnenth).
		\label{E:INTROCOVWAVEENT}
	\end{align}
	\end{subequations}
	
In addition, $\Ent$, $\GradEnt^{\alpha}$, and $\vort^{\alpha}$ verify the
following transport equations:
	\begin{subequations}
	\begin{align}
		u^{\kappa} \partial_{\kappa} \Ent 
		& = 0,
			\label{E:INTROENTTRANSPORT} \\
		u^{\kappa} \partial_{\kappa} \GradEnt^{\alpha}
		& \simeq \mathfrak{L}(\partial u),
			\label{E:INTROGRADENTTRANSPORT} \\
		u^{\kappa} \partial_{\kappa} \vort^{\alpha} 
		& \simeq \mathfrak{L}(\partial \Lnenth,\partial u).
		\label{E:INTROVORTICITYTRANSPORT}
	\end{align}
	\end{subequations}
	
	Moreover, $\GradEnt^{\alpha}$ verifies the following $\mbox{\upshape transport}$-$\mbox{\upshape div}$-$\mbox{\upshape curl}$ system:
	\begin{subequations}
	\begin{align}
		u^{\kappa} \partial_{\kappa} \mathcal{D}
		& \simeq 
				\mathcal{C}
				+
				\mathfrak{Q}(\partial \GradEnt,\partial \Lnenth,\partial u) 
				+
				\mathfrak{L}(\partial \Lnenth,\partial u),
			\label{E:INTROTRANSPORTOFMODIFIEDENTROPYDIVERGENCE} \\
		\uperpvort^{\alpha}(\GradEnt)
		& = 0,
		\label{E:INTROVORTICITYOFENTROPYGRADIENT}
	\end{align}
	\end{subequations}
	where the vorticity operator $\uperpvort$ is defined in Def.\,\ref{D:VORTICITYOFAONEFORM}.
	
	Finally, $\vort^{\alpha}$ verifies the following $\mbox{\upshape transport}$-$\mbox{\upshape div}$-$\mbox{\upshape curl}$ system:
	\begin{subequations}
	\begin{align}
		\partial_{\kappa} \vort^{\kappa}
		& \simeq \mathfrak{L}(\partial \Lnenth),
			\label{E:INTRODIVERGENCEOFVORTICITY} \\
		u^{\kappa} \partial_{\kappa} \mathcal{C}^{\alpha}
		& \simeq \mathcal{C}
				+
				\mathcal{D}
				+
				\mathfrak{Q}(\partial \GradEnt,\partial \vort,\partial \Lnenth,\partial u) 
				+
				\mathfrak{L}(\partial \GradEnt,\partial \vort,\partial \Lnenth,\partial u).
			\label{E:INTROTRANSPORTVORTICITYOFVORTICITY} 
	\end{align}
	\end{subequations}
\end{theorem}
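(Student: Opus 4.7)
The overall strategy is to start from the first-order formulation $\eqref{E:ENTHALPYEVOLUTION}$--$\eqref{E:ENTROPYEVOLUTION}$ together with the normalization $\upeta_{\alpha\beta} u^{\alpha} u^{\beta} = -1$ and differentiate it by carefully chosen first-order geometric operators. The wave equations will come from differentiating a ``$u$-directed'' evolution equation by a rectangular partial derivative and then symmetrizing so that the principal part becomes $(g^{-1})^{\alpha\beta}\partial_\alpha \partial_\beta$; the transport equations come directly (for $\Ent$ and $\vort^\alpha$) from the momentum equation and its $\upeta$-curl; and the transport--div--curl systems come from differentiating these transport equations one more time and introducing the modified variables $\mathcal{C}^\alpha$ and $\mathcal{D}$ chosen precisely to absorb the top-order terms that would otherwise destroy the regularity gain. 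Since the identity \eqref{E:INTROVORTICITYOFENTROPYGRADIENT} is simply the statement that $\uperpvort(dS) = 0$ because $S_\alpha = \partial_\alpha S$ is closed, and since \eqref{E:INTROENTTRANSPORT} is the postulated entropy transport law, those two identities are immediate and we concentrate on the remaining five equations.

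To derive \eqref{E:INTROCOVWAVELNENTHALPY}, I would take $(\eta^{-1})^{\alpha\beta}\partial_\alpha$ of the momentum-type equation that controls $u$ and add to it a multiple of $u^\kappa\partial_\kappa$ of \eqref{E:ENTHALPYEVOLUTION}; the cross-terms between $\partial u$ and $\partial \Lnenth$ combine, thanks to the definition of the sound speed and of $g$, into $(g^{-1})^{\alpha\beta}\partial_\alpha\partial_\beta \Lnenth$ up to a term involving $\partial_\alpha S^\alpha$, which by construction equals $\mathcal{D}$ modulo $\mathfrak{L}$-terms, plus remainders of the form $(\partial u)(\partial \Lnenth)$ that one must organize into the null forms of Def.~\ref{D:STANDARDNULLFORMS}. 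The equation \eqref{E:INTROCOVWAVEVELOCITY} for $u^\alpha$ is analogous: apply $\partial_\beta$ to the $u$-evolution equation, commute with the rectangular gradient on the right-hand side of the enthalpy equation, and use the four-velocity normalization $u_\alpha u^\alpha = -1$ (together with $u^\alpha \partial_\beta u_\alpha = 0$) to recognize the top-order piece as $\square_g u^\alpha + \mathcal{C}^\alpha$ modulo null-form and linear remainders. Equation \eqref{E:INTROCOVWAVEENT} is the simplest: apply $(g^{-1})^{\alpha\beta}\partial_\alpha\partial_\beta$ to $\Ent$, commute $u^\kappa\partial_\kappa \Ent = 0$ with $\partial$ to produce an $\mathfrak{L}(\partial\Lnenth)$-type term, and identify the leading divergence piece with $\mathcal{D}$.

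For the transport equations \eqref{E:INTROGRADENTTRANSPORT} and \eqref{E:INTROVORTICITYTRANSPORT}, I would commute $u^\kappa \partial_\kappa$ past $\partial_\alpha$ applied to \eqref{E:INTROENTTRANSPORT} (yielding $u^\kappa\partial_\kappa \GradEnt_\alpha = -(\partial_\alpha u^\kappa) \GradEnt_\kappa$, which is linear in $\partial u$), and then derive the evolution of $\vort^\alpha$ by taking the $\upeta$-curl of the momentum-type equation and using the definition of $\vort$ from Def.~\ref{D:VORTICITYDEF}; this is a relativistic analogue of Helmholtz's identity and produces only $\mathfrak{L}(\partial \Lnenth, \partial u)$ source terms. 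The divergence identity \eqref{E:INTRODIVERGENCEOFVORTICITY} follows by computing $\partial_\kappa \vort^\kappa$ from its definition $\vort \sim \partial u + \partial \Lnenth$ and noting that the would-be top-order $\partial^2 u$ terms cancel by antisymmetry, leaving a linear dependence on $\partial \Lnenth$. The two nontrivial top-order equations are \eqref{E:INTROTRANSPORTOFMODIFIEDENTROPYDIVERGENCE} and \eqref{E:INTROTRANSPORTVORTICITYOFVORTICITY}: here I would take $u^\kappa\partial_\kappa$ of the natural definitions $\mathcal{D} \sim \partial_\alpha S^\alpha + (\text{correction})$ and $\mathcal{C}^\alpha \sim (\mathrm{curl}\,\vort)^\alpha + (\text{correction})$, with the corrections chosen so that when one commutes $u^\kappa\partial_\kappa$ past $\partial$, the dangerous terms involving $\partial^2 u$ or $\partial^2 \Lnenth$ that do not already sit under a null-form structure are exactly cancelled by derivatives of the correction pieces, leaving only $\mathcal{C}$, $\mathcal{D}$, null-form, and linear remainders.

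The main obstacle, and the real content of the theorem, will be the algebraic identification of the correction terms that define $\mathcal{C}^\alpha$ and $\mathcal{D}$ and the verification that every would-be top-order remainder has either (i) the antisymmetric structure of a standard $g$-null form in Def.~\ref{D:STANDARDNULLFORMS}, or (ii) an explicit factor of $u^\alpha$ against a vector which, after contraction, provides the additional regularity expected from the transport/modified-variable framework. This bookkeeping requires systematically decomposing every tensor appearing after differentiation into its $u$-parallel and $\upeta$-orthogonal parts, as emphasized in the introduction. The analogous non-relativistic calculation in \cite{jLjS2016a,jS2017a} gives a reliable template, but in the relativistic case extra care is needed because the four-velocity normalization $u_\alpha u^\alpha = -1$ mixes spatial and temporal derivatives in every commutator, so each cancellation must be re-checked with the Lorentzian signature and with the thermodynamic identities relating $\uprho$, $p$, $\Lnenth$, $\Ent$, and the sound speed that enters $g$.
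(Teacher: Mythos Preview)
Your proposal is correct and follows essentially the same route as the paper: the paper differentiates the first-order system by suitable geometric operators, groups the principal part into $\square_g$ or $u^{\kappa}\partial_{\kappa}$, and then carries out a lengthy but elementary algebraic bookkeeping to exhibit every quadratic remainder as a standard $g$-null form, with $\mathcal{C}^{\alpha}$ and $\mathcal{D}$ designed exactly so that the would-be derivative-losing terms in their transport equations cancel. The only organizational difference worth noting is that for the vorticity transport equation and its differentiated versions the paper works via Lie derivatives $\mathcal{L}_u$ (computing $\mathcal{L}_u \vort^{\alpha}$, $\mathcal{L}_u d(\Enth u_{\flat})$, and $\mathcal{L}_u \uperpvort^{\alpha}(\vort)$ and then expanding), rather than directly curling the momentum equation as you suggest; this makes the cancellations from the antisymmetry of $\upepsilon^{\alpha\beta\gamma\delta}$ and the commutation $\mathcal{L}_u d = d\mathcal{L}_u$ more transparent, but is equivalent to your description.
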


\subsection{Connections to the study of shock waves}
\label{SS:SHOCKWAVES}
As we have mentioned, the relativistic Euler equations are an example of a quasilinear hyperbolic PDE system.
A central feature of the study of such systems is that initially smooth solutions can form shock singularities in finite time. 
By a ``shock,'' we roughly mean that one of the solution's partial derivatives with respect to the standard coordinates
blows up in finite time while the solution itself remains bounded.
In the last decade, for interesting classes of quasilinear hyperbolic PDEs in multiple spatial dimensions,
there has been dramatic progress \cites{dC2007,jS2016b,dCsM2014,jSgHjLwW2016,sM2018,sMpY2017,jLjS2018,jS2017c,js2018b}
on our understanding of the formation of shocks 
as well as our understanding of
the subsequent behavior of solutions past their singularities
\cites{dCaL2016,dC2017}
(where the equations are verified in a weak sense past singularities). 

The works cited above have roots in the work of John \cite{fJ1974} on singularity formation for quasilinear
wave equations in one spatial dimension
as well as Alinhac's foundational works \cites{sA1999a,sA2001b}, which were the first
to provide a constructive description of shock formation for quasilinear wave equations
in more than one spatial dimension without symmetry assumptions.
More precisely, Alinhac's approach allowed him to follow the solution precisely
to the time of first blowup, but not further. His work yielded sharp information
about the first singularity, but only for a subset of ``non-degenerate'' initial data such that 
the solution's first singularity is isolated in the constant-time hypersurface of first blowup;
in particular, his proof did not apply to spherically symmetric initial data, where
the ``first'' singularity typically corresponds to blowup on a sphere.

Subsequently, Christodoulou \cite{dC2007} proved a breakthrough result
on the formation of shocks for solutions to the relativistic Euler equations
in irrotational (that is, vorticity-free) and isentropic regions of spacetime.
More precisely, for the family of quasilinear wave equations that arise in the study 
of the irrotational and isentropic 
relativistic Euler equations,\footnote{For solutions with vanishing vorticity and constant entropy,
one can introduce a potential function $\Phi$ and reformulate the relativistic Euler equations
as a quasilinear wave equation in $\Phi$.}
Christodoulou gave a complete description of the maximal development 
of an open set (without symmetry assumptions) of initial data
and showed in particular that an open subset of these data lead to shock-forming\footnote{One of the key results of 
\cite{dC2007} is conditional: for small data, the only possible singularities that can form are
shocks driven by the intersection of the acoustic characteristics. Here ``small'' means a small perturbation of the 
data of a non-vacuum constant fluid state, where the size of the perturbation is measured relative to a high-order Sobolev norm. 
Another result of \cite{dC2007}
is that there is an open subset of small data, perhaps strictly contained in the aforementioned set of data, 
such that the acoustic characteristics do in fact intersect in 
finite time. The results of \cite{dC2007} leave open the possibility 
that there might exist some non-trivial small global solutions.} 
solutions.
Moreover, he gave a precise geometric description of the set of 
spacetime points where blowup occurs by showing that
the singularity formation is exactly characterized by the
intersection of the acoustic characteristics. In practice, 
he accomplished this by constructing an acoustical \emph{eikonal function} $U$,
whose level sets are acoustic characteristics (see Subsubsect.\,\ref{SSS:NONLINEARGEOMETRICOPTICS} for further discussion),
and then constructing an initially positive geometric scalar function $\upmu \sim 1/\partial U$
known as the \emph{inverse foliation density} of the characteristics,
such that $\upmu \to 0$ corresponds to the intersection of the characteristics
and the blowup of $\partial U$ and of the fluid solution's derivatives too. 
Analytically, $\upmu$ plays the role of a weight that appears throughout the work \cite{dC2007}, and the main theme of
the proof is to control the solution all the way up to the region where $\upmu = 0$.
We stress that \cite{dC2007} was the first work that provided sharp information about 
the boundary of the maximal development in more than one spatial dimension
in the context of shock formation. Roughly, the maximal development
is the largest possible classical solution that is uniquely determined by the initial data;
see \cites{jSb2016,wW2013} for further discussion. 

To prove his results, Christodoulou relied on a novel formulation of the 
relativistic Euler equations. However, since he studied the shock formation 
only in irrotational and isentropic  
regions, he was able to introduce a potential function $\Phi$, 
and his new formulation of the equations was drastically simpler
than the equations of Theorem~\ref{T:MAINTHEROEMSCHEMATIC}. In fact,
the equations are exactly the covariant wave equation system 
$\square_{\widetilde{g}} \partial_{\alpha} \Phi = 0$ (with $\alpha = 0,1,2,3$),
where $\widetilde{g}$ is an appropriate scalar function multiple of the acoustical metric $g$ and
$\widetilde{g} = \widetilde{g}(\partial \Phi)$. 
In particular, Christodoulou was able to avoid deriving/relying on the $\mbox{\upshape transport}$-$\mbox{\upshape div}$-$\mbox{\upshape curl}$ equations
from Theorem~\ref{T:MAINTHEROEMSCHEMATIC}, and he therefore did not need to derive elliptic
estimates for the fluid variables. 
In total, the potential formulation leads to dramatic simplifications compared to the
equations of Theorem~\ref{T:MAINTHEROEMSCHEMATIC},
especially in the context of the study of shock waves;
it seems quite miraculous that the equations of
Theorem~\ref{T:MAINTHEROEMSCHEMATIC} have structures that are compatible
with extending Christodoulou's results away from the irrotational and isentropic case
(see below for further discussion).

Although the sharp information that Christodoulou derived about the maximal
development is of interest in itself, it is also an essential ingredient for setting up the
shock development problem. The shock development problem, which was recently partially\footnote{In \cite{dC2017},
Christodoulou solved the ``restricted'' shock development problem, in which he ignored the jump in entropy and 
vorticity across the shock hypersurface.} 
solved in the breakthrough work \cite{dC2017} 
(see also the precursor work \cite{dCaL2016} in spherical symmetry),
is the problem of constructing the shock hypersurface of discontinuity
(across which the solution jumps) 
as well as constructing a unique weak solution 
in a neighborhood of the shock hypersurface
(uniqueness is enforced by selection criteria that are equivalent to the well-known Rankine--Hugoniot conditions).
Christodoulou's description of the maximal development provided substantial new information 
that was not available under Alinhac's approach;
as we mentioned above,
due to some technical limitations tied to his reliance on Nash--Moser estimates,
Alinhac was able to follow the solution only to the constant-time hypersurface of first blowup.
In contrast, by exploiting some delicate tensorial regularity properties of
eikonal functions for wave equations (see below for more details),
Christodoulou was able to avoid Nash--Moser estimates; this was a key ingredient in
his following the solution to the boundary of the maximal development. 
Readers can consult \cite{gHsKjSwW2016} for a survey of some of these works, 
with a focus on the geometric and analytic
techniques that lie behind the proofs.

We now aim to connect the works mentioned above to the new formulation of the relativistic Euler equations
that we provide in this paper. To this end, for the equations in the works mentioned above,
we first highlight the main structural features that allowed the proofs to go through.
Specifically, the works \cites{dC2007,jS2016b,dCsM2014,jSgHjLwW2016,sM2018,sMpY2017,jLjS2018,jS2017c,js2018b} 
crucially relied on the following ingredients:
\begin{enumerate}
	\item (\textbf{Nonlinear geometric optics}). The authors relied on 
		geometric decompositions adapted to the characteristic hypersurfaces
		(also known as ``characteristics'' or ``null hypersurfaces'' in the context of wave equations)
		corresponding to the solution variable whose derivatives blow up. 
		This was implemented with the help of an 
		\emph{eikonal function} $U$, whose level sets are characteristics.
		The eikonal function is a solution to the \emph{eikonal equation}, which
		is a fully nonlinear transport equation that is coupled to the solution
		in the sense that the coefficients of the eikonal equation depend on the solution.
		Moreover, the authors showed that the intersection of the characteristics
		corresponds to the formation of a singularity in the derivatives of the eikonal function
		and in the derivatives of the solution.
	\item (\textbf{Quasilinear null structure}). The authors found a formulation of the equations
		exhibiting remarkable null structures, where the notion of ``null'' is tied to the true characteristics,
		which are solution-dependent in view of the quasilinear nature of the equations.
		These structures allow one to derive sharp, fully nonlinear decompositions
		along characteristic hypersurfaces that reveal exactly which directional derivatives blow up
		and that precisely identify the terms driving the blowup 
		(which are typically of Riccati-type, i.e., in analogy with the nonlinearities in the ODE $\dot{y} = y^2$).
	\item (\textbf{Regularity properties and singular high-order energy estimates}). The authors' formulation allows one to derive
		sufficient $L^2$-type Sobolev regularity for all unknowns in the problem, including the eikonal function,
		\emph{whose regularity properties are tied to the regularity of the solution through the dependence of 
		the coefficients of the eikonal equation on the solution}.
		In particular, to close these estimates, 
		\emph{the authors had to show that various solution variables are one degree more differentiable
			compared to the degree of differentiability guaranteed by standard energy estimates.}
	\item (\textbf{Structures amenable to commutations with geometric vectorfields}). 
		The authors' formulation is such that one can commute \emph{all} of the equations with geometric vectorfields
		constructed out of the eikonal function $U$, generating only controllable commutator error terms.
		By  ``controllable,'' we mean both from the point of view of regularity and from the point of view of 
		the strength of their singular nature. 
		In the works \cites{jLjS2018,jS2017c,js2018b} that treat systems with multiple characteristic speeds,
		these are particularly delicate tasks that are quite sensitive to the structure of the equations;
		one key reason behind their delicate nature is that the eikonal function 
		(and thus the geometric vectorfields constructed from it)
		can be fully adapted only to ``one speed,'' that is, to the characteristics whose intersection
		correspond to the singularity.
	\end{enumerate}

In the remainder of this subsection, 
we explain why our new formulation of the relativistic Euler equations
has all four of the features listed above
and is therefore well-suited for studying shocks without symmetry assumptions.
Readers can consult the works \cites{jLjS2016a,jS2017a,jS2018c} for 
related but extended discussion in the case of the non-relativistic compressible Euler equations.

\subsubsection{Nonlinear geometric optics and geometric coordinates}
\label{SSS:NONLINEARGEOMETRICOPTICS}
First, to implement nonlinear geometric optics, one can construct an eikonal function. 
In the context of the relativistic Euler equations, 
one would construct an eikonal function $U$ adapted to the acoustic characteristics, 
that is, a solution to the eikonal equation 
\begin{align} \label{E:EIKONAL}
	(g^{-1})^{\alpha \beta} \partial_{\alpha} U \partial_{\beta} U = 0,
\end{align}
supplemented by appropriate initial conditions,
where $g= g(\Lnenth,\Ent,u)$ is the acoustical metric (see Def.\,\ref{D:ACOUSTICALMETRICANDINVERSE}).
Note that $U$ is adapted to the ``wave part'' of the system and not the transport part.
In the context of the relativistic Euler equations, this is reasonable in the sense that the 
transport part corresponds to the evolution of vorticity and entropy, and there are no known blowup
results for these quantities, even in one spatial dimension.\footnote{In one spatial dimension, the
vorticity must vanish, but the entropy can be dynamic.}
Put differently, $U$ is adapted to the ``portion'' of the relativistic Euler flow that
is expected to develop singularities. 
More generally, eikonal functions are a natural tool 
for the study of wave-like systems, regardless of whether or not one is studying shocks.
We also stress that introducing an eikonal function
is essentially the same as relying on
the method of characteristics. However, in more than one spatial dimension,
the method of characteristics must be supplemented with an exceptionally technical ingredient that we further describe below: 
energy estimates that hold all the way up to the shock.

The first instance of an eikonal function being used to study the global properties of solutions 
to a quasilinear hyperbolic PDE occurred not in the context of singularity formation, but rather in a celebrated
global existence result: the Christodoulou--Klainerman \cite{dCsK1993} proof
of the stability of the Minkowski spacetime as a solution to the Einstein-vacuum equations.
Alinhac's aforementioned works \cites{sA1999a,sA2001b} 
were the first instances in which an eikonal function
was used to study a non-trivial set 
of solutions (without symmetry assumptions)
to a quasilinear wave equation all the way up to the first singularity. 
Eikonal functions also played a fundamental role in all of the other shock formation results mentioned above.
They have also played a role in other contexts, such as low-regularity local
well-posedness for quasilinear wave equations \cites{sKiR2003,sKiRjS2015,hSdT2005,qW2017}.
In all of these works, the eikonal equation is a fully nonlinear hyperbolic PDE 
that is coupled to the PDE system of interest (here the relativistic Euler equations) 
through its coefficients (here through the acoustical metric, since $g= g(\Lnenth,\Ent,u)$). 
As we mentioned above,
in the case of the relativistic Euler equations, 
the level sets of $U$ are characteristics for the ``wave part'' of the system.
Following Alinhac \cites{sA1999a,sA2001b} and Christodoulou \cite{dC2007}, 
in order to study the formation of shocks in relativistic Euler solutions, 
one completes $U$ to a \emph{geometric coordinate system} 
\begin{align} \label{E:GEOMETRICCOORDINATES}
	(t,U,\vartheta^1,\vartheta^2) 
\end{align}
on spacetime,
where $t = x^0$ is the Minkowski time coordinate and the $\vartheta^A$ are solutions to the transport equation
$(g^{-1})^{\alpha \beta} \partial_{\alpha} U \partial_{\beta} \vartheta^A = 0$
supplemented by appropriate initial conditions on the initial constant-time hypersurface $\Sigma_0$. 
Note that $(t,\vartheta^1,\vartheta^2)$ can be viewed as a coordinate system
along each characteristic 
hypersurface $\lbrace U = \mbox{\upshape const} \rbrace$.

\subsubsection{Nonlinear null structure}
\label{SSS:NONLINEARNULL}
We now aim to explain the role that the nonlinear null structure of the equations
played in the works \cites{dC2007,jS2016b,dCsM2014,jSgHjLwW2016,sM2018,sMpY2017,jLjS2018,jS2017c,js2018b}
and to explain why the equations of Theorem~\ref{T:MAINTHEROEMSCHEMATIC} enjoy the same good structures.
In total, one could say that the equations of Theorem~\ref{T:MAINTHEROEMSCHEMATIC}
have been geometrically decomposed into terms
that are capable of generating shocks and ``harmless'' terms,
whose nonlinear structure is such that they do not interfere with the shock formation mechanisms.
To flesh out these notions, we first provide some background material. In the works cited above,
the main idea behind proving shock formation
is to study the solution relative to the geometric coordinates \eqref{E:GEOMETRICCOORDINATES}
and to show that in fact, the solution remains rather smooth in these coordinates,
all the way up to the shock. This approach allows one to transform the problem of shock formation
into a more traditional one in which one tries to derive long-time estimates for the solution
relative to the geometric coordinates. 
One then recovers the blowup of the solution's derivatives with respect to the \emph{original} coordinates 
by showing that the geometric coordinates
degenerate in a precise fashion relative to the standard rectangular coordinates as the shock forms;
the degeneration is exactly tied to the vanishing of the inverse foliation density $\upmu$ that we mentioned earlier.
Although the above description might seem compellingly simple,
as we explain in Subsubsect.\,\ref{SSS:REGULARITYANDSINGULARENERGYESTIMATES},
in implementing this approach, one encounters severe analytical difficulties.

We now highlight another key aspect of the proofs in the works cited above: showing that Euclidean-unit-length 
derivatives of the solution in directions \emph{tangent} to the characteristics remain bounded all the way up 
to the shock. It turns out that in terms of the geometric coordinates \eqref{E:GEOMETRICCOORDINATES},
		this is equivalent to showing that the
		$
		\frac{\partial}{\partial t}
		$
		and	
		$
		\frac{\partial}{\partial \vartheta^A }
		$ 
		derivatives of the solution remain bounded all the way up to the shock.
	Put differently, the following holds: 
	\begin{quote}
		The singularity occurs only for derivatives of the solution with respect to vectorfields
		that are transversal to the characteristics
		and non-degenerate\footnote{In all known shock formation results, 
		at the location of shock singularities,
		the geometric partial derivative vectorfield $\frac{\partial}{\partial U}$ 
		has vanishing
		Euclidean length 
		(i.e., $\updelta_{ab} \left(\frac{\partial}{\partial U}\right)^a \left(\frac{\partial}{\partial U}\right)^b = 0$, where
		$\left\lbrace \left(\frac{\partial}{\partial U}\right)^a \right\rbrace_{a=1,2,3}$ denotes the rectangular spatial components of 
		$\frac{\partial}{\partial U}$ and $\updelta_{ab}$ is the Kronecker delta). 
		That is, at the shock singularities,
		$\frac{\partial}{\partial U}$
		degenerates with respect to the rectangular coordinates.
		Due to this degeneracy, the solution's $\frac{\partial}{\partial U}$ derivatives can remain bounded all the way up to the shock, 
		even though 
		$\frac{\partial}{\partial U}$
		is transversal to the characteristics.}
		with respect to the rectangular coordinates.
	\end{quote}
In the works cited above, to prove all of these facts, 
the authors had to control various inhomogeneous error terms 
by showing that they enjoy a good nonlinear
null structure \emph{relative to the wave characteristics}. 
A key conclusion of the present article is that the derivative-quadratic inhomogeneous terms 
in the equations of Theorem~\ref{T:MAINTHEROEMSCHEMATIC}
enjoy the same good structure (which we further describe just below).
In fact, \emph{all terms} on the RHSs of \emph{all equations} 
of Theorem~\ref{T:MAINTHEROEMSCHEMATIC} are harmless in that 
they do not drive the Riccati-type blowup that lies behind shock formation.
Consequently, the equations of Theorem~\ref{T:MAINTHEROEMSCHEMATIC} pinpoint the
dangerous nonlinear terms in the relativistic Euler equations:
 
\begin{quote}
	The terms capable of driving shock formation are of Riccati-type and are
	hidden in the covariant wave operator terms 
	on LHSs~\linebreak\eqref{E:INTROCOVWAVELNENTHALPY}-\eqref{E:INTROCOVWAVEVELOCITY}.
	These terms become visible only when the covariant wave operator terms 
	are expanded relative to the standard coordinates.
\end{quote}

In view of the above remarks, one might wonder why it is important to ``hide'' the
dangerous terms in the covariant wave operator. The answer is that there is an advanced
framework for constructing geometric vectorfields adapted to wave equations,
and \emph{the framework is tailored to covariant wave operators}.\footnote{Roughly, these covariant wave operators
are equivalent to divergence-form wave operators. In this way, one could say that a better theory is available for
divergence-form wave operators than for non-divergence-form wave operators. This reminds one of the
situation in elliptic PDE theory, where better results are known
for elliptic PDEs in divergence form compared to ones in non-divergence form.} 
As we explain later in this subsection, this geometric
framework seems to be essential in more than one spatial dimension,\footnote{In one spatial dimension, one can rely
exclusively on the method of characteristics and thus avoid energy estimates.} 
when one is forced to commute the wave equations with suitable vectorfields and to derive energy estimates.

We now further describe the good structure found
in the terms on the RHSs of the equations of Theorem~\ref{T:MAINTHEROEMSCHEMATIC}.
The good nonlinear ``null structure'' is found precisely in the (quadratic) 
null form terms $\mathfrak{Q}$ appearing on the RHSs of the equations of Theorem~\ref{T:MAINTHEROEMSCHEMATIC}.
More precisely, these $\mathfrak{Q}$ are \emph{null forms relative to the acoustical metric $g$}, 
which means that they are linear combinations 
(with coefficients that are allowed to depend on the solution variables -- but not their derivatives) 
of the standard null forms relative to $g$ (see Def.\,\ref{D:STANDARDNULLFORMS}).
The key property of null forms relative to $g$ is that given \emph{any} hypersurface $\Charhypersurface$ 
that is characteristic relative to $g$ 
(e.g., any level set of any eikonal function $U$ that solves equation \eqref{E:EIKONAL}), 
we have the following well-known schematic decomposition:
\begin{align} \label{E:NULLFORMSCHEMATICDECOMP}
	\mathfrak{Q}(\partial \phi, \partial \psi) 
	& = \mathscr{T} \phi \cdot \partial \psi + \mathscr{T} \psi \cdot \partial \phi,
\end{align}
where $\mathscr{T}$ denotes a differentiation in a direction \emph{tangent} to $\Charhypersurface$
and $\partial$ denotes a generic directional derivative;
see, for example, \cite{jLjS2016a} for a standard proof of \eqref{E:NULLFORMSCHEMATICDECOMP}.
Equation \eqref{E:NULLFORMSCHEMATICDECOMP} implies that even though $\mathfrak{Q}$ is quadratic, 
it \emph{never involves two differentiations in directions transversal to any characteristic}.
Since, in all known proofs, it is precisely the transversal derivatives that blow up when a shock forms
(since the Riccati-type terms that drive the blowup are precisely quadratic in the transversal derivatives),
we see that $g$-null forms are linear in the tensorial component of the solution that blows up.
This can be viewed as \emph{the absence of the worst possible combinations of terms in} $\mathfrak{Q}$.
In terms of the geometric coordinates \eqref{E:GEOMETRICCOORDINATES},
null forms do not contain any ``dangerous'' terms proportional to
$	
	\frac{\partial}{\partial U} \phi \cdot \frac{\partial}{\partial U} \psi
$.
We also note that, obviously, the terms $\mathfrak{L}$ from Theorem~\ref{T:MAINTHEROEMSCHEMATIC}
cannot contain any dangerous quadratic terms since they are linear in the solution's derivatives.
In contrast, upon expanding the covariant wave operator terms on 
LHSs~\eqref{E:INTROCOVWAVELNENTHALPY}-\eqref{E:INTROCOVWAVEVELOCITY}
relative to the standard coordinates, one typically encounters terms that
are quadratic in derivatives of $\Lnenth$ and $u$ that are transversal to the characteristics;
as we highlighted above, it is precisely such ``Riccati-type'' terms that can drive the formation of a shock.
We stress that near a shock, such transversal-derivative-quadratic terms are much larger
than the null form terms.
We also stress that for the relativistic Euler equations,
one encounters such transversal-derivative-quadratic terms
on LHSs \eqref{E:INTROCOVWAVELNENTHALPY}-\eqref{E:INTROCOVWAVEVELOCITY}
under \emph{any} equation of state 
aside from a single exceptional one. In the irrotational and isentropic case 
(in which case the relativistic Euler equations reduce to a quasilinear wave equation satisfied by a potential function), 
this exceptional equation of state was identified in \cite{dC2007}; 
it corresponds to the quasilinear wave equation satisfied by a timelike minimal surface graph in an ambient Minkowski spacetime,
which can be expressed as follows:
$
\displaystyle
\partial_{\alpha} 
\left\lbrace
	\frac{(\upeta^{-1})^{\alpha \beta} \partial_{\beta} \Phi}{
	\sqrt{1 + (\upeta^{-1})^{\kappa \lambda}(\partial_{\kappa} \Phi)(\partial_{\lambda} \Phi)}}
\right\rbrace
= 0$.

In view of the previous paragraph, we would like to highlight the following point:
\begin{quote}
	Proofs of shock formation 
	are unstable under typical perturbations of the equations by 
	nonlinear terms that are of quadratic or higher-order in derivatives.
	However, proofs of shock formation for wave equations 
	typically \emph{are} stable under perturbations
	of the equations by null forms that are adapted to the metric of the shock-forming wave. 
	By ``stable,'' we
	mean in the following sense: as the shock forms, 
	null form terms become ``asymptotically negligible''
	compared to the shock-driving terms (for the reasons described above).
\end{quote}
The reason that the precise structure of the nonlinearities is so important for the proofs 
is that the known framework is designed precisely
to handle specific kinds of singularity-driving derivative-quadratic
terms: the kind that are hidden in the covariant wave operator terms 
on LHSs~\eqref{E:INTROCOVWAVELNENTHALPY}-\eqref{E:INTROCOVWAVEVELOCITY}.
In the context of the relativistic Euler equations,
this means that if \emph{any} of the equations of 
Theorem~\ref{T:MAINTHEROEMSCHEMATIC} had contained, on the right-hand side,
an inhomogeneous non-$g$-null-form quadratic term of type 
$(\partial \Lnenth)^2$, $\partial u \cdot \partial \Lnenth$, $(\partial u)^2$, etc., 
or a term of type $(\partial \Lnenth)^3$, $(\partial \Lnenth)^4$, etc., then the 
\emph{only known framework for proving
shock formation would not work.} The difficulty is that adding 
such terms to the equation could in principle radically alter the expected blowup-rate 
or even altogether prevent the formation of a singularity; either way,
this would invalidate\footnote{As is explained in \cite{jLjS2016a}, in the known framework for 
proving shock formation, one crucially relies on the fact that the derivatives of the solution blow up at a linear 
rate, that is like $\frac{C}{T_{(Lifespan)} - t}$,
where $C$ is a constant and $T_{(Lifespan)} > 0$ is the (future) classical lifespan of the solution; if one perturbs the equation
by adding terms that are expected to alter this blowup-rate, then one should expect that
the known approach for proving shock formation will not work (at least in its current form).} 
the known approach for proving shock formation.
One might draw an analogy with the Riccati ODE
$\dot{y} = y^2$, which we suggest as a caricature model for the formation of shocks
(in the case of the relativistic Euler equations, $y$ should be identified with $\partial \Lnenth$ and/or $\partial u$).
Note that for all data $y(0) = y_0$ with $y_0 > 0$,
the solution to the Riccati ODE blows up in finite time.
Now if one perturbs the Riccati ODE to obtain the perturbed equation
$\dot{y} = y^2 \pm \epsilon y^3$, with $\epsilon$ a small positive number, 
then depending on the sign of $\pm$, the perturbed solutions with $y_0 > 0$ will either
exist for all time or will blow up at a quite different rate 
compared to the blowup-rate 
for the unperturbed equation.

\subsubsection{Regularity properties and singular high-order energy estimates}
\label{SSS:REGULARITYANDSINGULARENERGYESTIMATES}
In the rigorous mathematical study of quasilinear hyperbolic PDEs in more than one spatial dimension,
one is forced to derive energy estimates for the solution's higher derivatives
by commuting the equations with appropriate differential operators.
Indeed, all known approaches to studying even the basic local well-posedness theory for such equations
rely on deriving estimates in $L^2$-based Sobolev spaces. In the works 
\cites{dC2007,jS2016b,dCsM2014,jSgHjLwW2016,sM2018,sMpY2017,jLjS2018,jS2017c,js2018b}
on shock formation in multiple spatial dimensions,
the authors controlled the solutions' higher geometric derivatives by 
differentiating the equations with geometric ``commutator vectorfields''
$Z$ that are adapted to the characteristics, more precisely to the characteristics 
corresponding to the variables that form a shock singularity.
As we mentioned earlier, the $Z$ are designed to avoid generating uncontrollable commutator error terms. It turns out that
all $Z$ that have been successfully used to study shock formation
have the schematic structure $Z^{\alpha} \sim \partial U$, where 
$Z^{\alpha}$ denotes a rectangular component of $Z$ and $U$ is the eikonal function.

Although the geometric vectorfields $Z$ exhibit good commutation properties 
with the differential operators corresponding to the characteristics to which they are adapted, 
the regularity theory of the vectorfields themselves is very delicate
and is intimately tied to that of the solution. We now further explain this fact
in the context of wave equations whose principal operator is
$(g^{-1})^{\alpha \beta} \partial_{\alpha} \partial_{\beta}$.
The corresponding eikonal equation is 
the nonlinear transport equation $(g^{-1})^{\alpha \beta} \partial_{\alpha} U \partial_{\beta} U = 0$.
The key point is that the standard regularity theory
of transport equations yields only that $U$ is as regular as its coefficients,
that is, as regular as $g_{\alpha \beta}$.
In the context of the relativistic Euler equations
(where the formation of a shock corresponds to the intersection of the wave characteristics and $g = g(\Lnenth,\Ent,u)$),
this suggests that one might expect $U$ to be only as regular as $\Lnenth$, $\Ent$, and $u$. 
Since, as we mentioned in the previous paragraph, we have $Z^{\alpha} \sim \partial U$,
this leads to the following severe difficulty: in commuting equation 
the wave equation \eqref{E:INTROCOVWAVELNENTHALPY}
with $Z$, one obtains the wave equation
$\square_g (Z \Lnenth) = \square_g Z^{\alpha} \cdot \partial_{\alpha} \Lnenth + \cdots \sim \partial^3 U \cdot \partial \Lnenth + \cdots$
(one would obtain similar wave equations for $Z \Ent$ and $Z u^{\alpha}$ upon commuting
equations \eqref{E:INTROCOVWAVEVELOCITY} and \eqref{E:INTROCOVWAVEENT} with $Z$).
The difficulty is that the above discussion suggests that the factor $\partial^3 U$ can be controlled only in terms of 
\emph{three} derivatives of $\Lnenth$, $\Ent$, and $u$, while standard energy estimates for the wave equations
$\square_g (Z \Lnenth) = \cdots$, 
$\square_g (Z \Ent) = \cdots$,
and $\square_g (Z u^{\alpha}) = \cdots$ yield control of only \emph{two} derivatives of
$\Lnenth$, $\Ent$, and $u$. 
This suggests that there is a loss of regularity and in fact, this is the reason that Alinhac 
used Nash--Moser estimates in his works \cites{sA1999a,sA2001b}.
However, for wave equations, one can in fact overcome this loss of regularity by
exploiting some delicate tensorial properties of the eikonal equation
$(g^{-1})^{\alpha \beta} \partial_{\alpha} U \partial_{\beta} U = 0$
and of the wave equation itself relative to geometric coordinates, 
which together can be used to show that in directions \emph{tangent to the characteristics}, 
some geometric tensors constructed out of the derivatives of
$U$ are \emph{one degree more differentiable than one might naively expect}. 
In particular, the factor $\partial^3 U$ in the aforementioned product $\partial^3 U \cdot \partial \Lnenth$
has special structure and enjoys this gain in regularity.
These crucial structures were first observed by Christodoulou--Klainerman in their proof \cite{dCsK1993} 
of the stability of Minkowski spacetime as a solution to Einstein's equations, 
and later by Klainerman--Rodnianski in their proof of improved-regularity local well-posedness \cite{sKiR2003} 
for a general class of scalar quasilinear wave equations.
In total, using this gain in regularity along the characteristics and carefully 
accounting for the precise tensorial structure of the product 
$\partial^3 U \cdot \partial \Lnenth$ highlighted above, one can 
avoid the loss of derivatives tied to the product $\partial^3 U \cdot \partial \Lnenth$.

Despite the fact that the procedure described above allows one to avoid losing derivatives, 
at least in the context of wave equations,\footnote{Actually, it is not known whether or not the 
derivative-loss-avoiding procedure can be implemented for general systems of wave equations featuring
more than one distinct wave operator. From this perspective, we find it fortunate that the equations of Theorem~\ref{T:MAINTHEROEMSCHEMATIC}
feature only one wave operator.} 
one pays a steep price: it turns out that upon implementing this procedure, 
one introduces a dangerous factor into the wave equation energy identities, one that in fact blows up as the shock forms.
More precisely, the singular factor is $1/\upmu$, where $\upmu$ is the inverse foliation density mentioned earlier,
with $\upmu \to 0$ signifying the formation of a shock.
This leads to singular top-order a priori energy estimates for the wave equation solutions relative to the geometric coordinates. 
At first glance, these singular geometric energy estimates
might seem to obstruct the philosophy of obtaining regular estimates relative to the geometric coordinates.
However, below the top derivative level, one can allow the loss of a derivative,
and it turns out that this allows one to derive improved (i.e., less singular) energy estimates below the top derivative level.
In fact, by an induction-from-the-top-down argument, one can show that the mid-derivative-level and below geometric energies 
remain bounded up to the shock. This allows one to show that indeed, the solution remains rather smooth relative
to the geometric coordinates, which in practice is a crucial ingredient that is needed to close 
the proof. It also turns out that many steps are needed to descend to the level of a non-singular energy, 
which in practice means that one must assume that the data have a lot of Sobolev regularity to close the proof;
see \cite{gHsKjSwW2016} for an in-depth overview of these issues in the context of quasilinear wave equations.

The structures described above, which allow one to avoid the loss of derivatives
in eikonal functions for quasilinear wave equations, are rather delicate. 
Thus, it is not a priori clear that one can also avoid the loss of derivatives
in eikonal functions for the relativistic Euler equations.
A key advantage of our new formulation of the relativistic Euler equations
is that it can be used to prove that \emph{one can still avoid the loss of derivatives, 
even though there is deep coupling between the wave and transport equations in the new formulation}.
That is, one can show that the acoustic eikonal function $U$
(see \eqref{E:EIKONAL}, where $g= g(\Lnenth,\Ent,u)$ is the acoustical metric from Def.\,\ref{D:ACOUSTICALMETRICANDINVERSE})
for the relativistic Euler equations has enough regularity to be used in the study of shock formation;
see three paragraphs below for further discussion.
However, this requires one to first prove that the fluid variables have a consistent
amount of regularity among themselves. 
At first thought, the desired consistency of regularity might seem to follow
from standard local well-posedness.
However, all standard local well-posedness results for 
the relativistic Euler equations are based on first-order formulations,
which are not known to be sufficient for avoiding a loss of derivatives in the eikonal function $U$;
the above outline for how to avoid derivative loss in $U$ implicitly relied on the assumption that 
$\Lnenth$,
$\Ent$,
and $u^{\alpha}$  
solve wave equations whose source terms have an allowable amount of regularity,
which, as we will explain, for the relativistic Euler equations is a true -- but deep -- fact.
Moreover, the first-order formulations do not seem to be sufficient for
studying solutions all the way up to a shock; 
as we have mentioned, the known framework for studying shocks crucially relies on
the special null structures exhibited by the equations of Theorem~\ref{T:MAINTHEROEMSCHEMATIC}.

In view of the regularity concerns raised in the previous paragraph,
one must carefully check that (under suitable assumptions on the initial data), 
all terms in the equations of Theorem~\ref{T:MAINTHEROEMSCHEMATIC} 
have a \emph{consistent amount of regularity}. We stress that this is not obvious, as we now illustrate
by counting derivatives. For example, to control $\partial u^{\alpha}$ in $L^2$ using standard energy estimates
for the wave equation \eqref{E:INTROCOVWAVEVELOCITY}, one must control, also in $L^2$, 
the source term $\mathcal{C}^{\alpha}$ on RHS~\eqref{E:INTROCOVWAVEVELOCITY}.
Note that from the point of 
view of regularity, we have the schematic relationship (see \eqref{E:MODIFIEDVORTICITYOFVORTICITY} for the definition of $\mathcal{C}^{\alpha}$)
$\mathcal{C}^{\alpha} \sim \uperpvort^{\alpha}(\vort) \sim \partial \vort$.
Moreover, since $\vort$ solves the transport equation \eqref{E:INTROVORTICITYTRANSPORT},
whose source term depends on $\partial u$ and $\partial \Lnenth$, this suggests 
that $\partial \vort$ should be no more regular\footnote{In the absence of special structures,
solutions to transport equations are not more regular than their source terms.} 
than $(\partial^2 u, \partial^2 \Lnenth)$ and thus $\mathcal{C}^{\alpha}$ should be no more regular than
$(\partial^2 u, \partial^2 \Lnenth)$. In total, this discussion suggests that the wave equation for $u$ 
has the following schematic structure from the point of view of regularity:
$\square_g u^{\alpha} = \partial^2 u + \cdots$. That is, this discussion \emph{suggests}
that in order to control $\partial u$ in $L^2$ using standard energy estimates
for wave equations, we must control $\partial^2 u$ in $L^2$. This approach therefore \emph{seems} to lead
to a loss in derivatives, which is a serious obstacle to using the equations of
Theorem~\ref{T:MAINTHEROEMSCHEMATIC} to prove any rigorous result. Similar difficulties arise
in the study of $\Lnenth$ and $\Ent$, due to the source term $\mathcal{D}$ 
in the wave equations \eqref{E:INTROCOVWAVELNENTHALPY} and \eqref{E:INTROCOVWAVEENT}.

A crucial feature of the equations of Theorem~\ref{T:MAINTHEROEMSCHEMATIC} is that 
\emph{one can in fact overcome the loss-of-derivative-difficulty for the fluid variables described in the previous paragraph}. 
To this end, one must rely on the $\mbox{\upshape transport}$-$\mbox{\upshape div}$-$\mbox{\upshape curl}$ equations
for $\vort$ and $\GradEnt$; see Subsect.\,\ref{SS:ENERGYNORMTIEDTOELLITPICESTIMATES}
and the proofs of Prop.\,\ref{P:APRIORIESTIMATES} and Theorem~\ref{T:UPGRADEDLOCALWELLPOSEDNESS}
for the details on how one can use these equations and elliptic estimates
to avoid the loss of derivatives. Equally important for applications to shock waves 
is the fact that the elliptic div-curl estimates, which occur across space, 
are compatible with the proof of the formation of a spatially localized shock singularity
and with the singular high-order geometric energy estimates described earlier in this subsubsection.
These are delicate issues, especially since the elliptic estimates involve derivatives in directions
transversal to the characteristics, i.e., in the singular directions;
see \cite{jLjS2016a} for an overview of how to derive the relevant
elliptic estimates in the context of shock-forming solutions to the non-relativistic compressible Euler equations.

We now return to the issue of the regularity of the acoustic eikonal function $U$
for the relativistic Euler equations
(see \eqref{E:EIKONAL}, where $g= g(\Lnenth,\Ent,u)$ is the acoustical metric from Def.\,\ref{D:ACOUSTICALMETRICANDINVERSE}).
As we explained above, in order to avoid a loss of regularity in $U$,
one needs to show that its regularity theory is compatible with the regularity of the fluid
variables. It turns out that this requires proving, in particular, 
that $\square_g \Lnenth$, 
$\square_g \Ent$,
and
$\square_g u^{\alpha}$
have the same regularity as 
$\partial \Lnenth$, 
$\partial \Ent$, 
and $\partial u^{\alpha}$.
The connection between
$\square_g \Lnenth$, 
$\square_g \Ent$,
and
$\square_g u^{\alpha}$
and the regularity theory of $U$ is through the null mean curvature
of the level sets of $U$, a critically important geometric quantity
whose evolution equation\footnote{The evolution equation is in fact the famous \emph{Raychaudhuri equation}, which plays
an important role in general relativity.} 
depends on a certain component of the
Ricci curvature tensor of the Lorentzian metric $g(\Lnenth,\Ent,u)$,
whose rectangular components can be shown to depend on
$\square_g \Lnenth$, 
$\square_g \Ent$,
and
$\square_g u^{\alpha}$.
We will not further discuss this crucial technical issue here;
we instead refer readers to \cite{gHsKjSwW2016}*{Section~3.4} for further discussion of the regularity theory of
eikonal functions in the context of shock formation for quasilinear wave equations.
In view of the wave equations \eqref{E:INTROCOVWAVELNENTHALPY}-\eqref{E:INTROCOVWAVEENT},
we see that 
obtaining the desired regularity 
for
$\square_g \Lnenth$, 
$\square_g \Ent$,
and
$\square_g u^{\alpha}$
requires, in particular, 
establishing that the source terms $\mathcal{C}^{\alpha}$ and $\mathcal{D}$ on RHSs~\eqref{E:INTROCOVWAVELNENTHALPY}-\eqref{E:INTROCOVWAVEENT}
have the same regularity as $\partial \Lnenth$, $\partial \Ent$, and $\partial u^{\alpha}$.
This is again tantamount to showing that 
the vorticity and entropy are one degree more differentiable compared
to the regularity guaranteed by deriving standard energy estimates 
for first-order formulations of the equations;
to obtain the desired extra regularity for $\mathcal{C}^{\alpha}$ and $\mathcal{D}$, 
one can again rely on the $\mbox{\upshape transport}$-$\mbox{\upshape div}$-$\mbox{\upshape curl}$ equations mentioned in the previous paragraph.
We prove a rigorous version of this gain in regularity in Theorem \ref{T:UPGRADEDLOCALWELLPOSEDNESS}, 
in which we use the new formulation of the relativistic Euler equations
to prove a local well-posedness result that in particular yields
the desired extra differentiability (assuming that it is present in the initial data).

Although one might view the results of Theorem \ref{T:UPGRADEDLOCALWELLPOSEDNESS} 
as expected consequences of our new formulation of the relativistic Euler equations,
we highlight that its proof relies on a few ingredients that 
are not entirely straightforward:
\begin{enumerate}
	\renewcommand{\labelenumi}{\textbf{\roman{enumi})}}
	\item
	\emph{Time-continuity} for the $L^2$ norms of the 
		vorticity and entropy at top-order, i.e, including the 
		extra differentiability of these variables, is 
		non-standard in view of the necessity of invoking elliptic-hyperbolic estimates.
	\item The $\mbox{\upshape transport}$-$\mbox{\upshape div}$-$\mbox{\upshape curl}$ systems featured in the new formulation of the equations
		involve \emph{spacetime} divergence and curl operators, but we need 
to extract $L^2$ regularity along the constant-time hypersurfaces. This requires connecting
the spacetime divergence and curl to spatial elliptic estimates, which in turn requires some geometric and technical insights.
\item For the wave equation energy estimates, one cannot use the multiplier\footnote{See Subsubsect.\,\ref{SSS:ESTIMATESFORCOVARIANTWAVEEQUATIONS} for
additional details regarding the multiplier method in the context of wave equations.}
$\partial_t$  
when the three-velocity is large, since the corresponding energy will not
necessarily be coercive\footnote{Equations 
\eqref{E:SPEEDOFSOUNDBOUNDS},
\eqref{E:UISUNITLENGTH},
and
\eqref{E:ACOUSTICALMETRIC}
collectively imply that when $\sum_{a=1}^3|u^a|$ 
is large, $g(\partial_t,\partial_t) = g_{00} = - 1 + (\speed^{-2} - 1) u_a u^a$
can be positive, i.e., $\partial_t$ can be spacelike with respect to the acoustical metric $g$;
it is well-known that this can lead to indefinite energies if the standard partial time derivative vectorfield $\partial_t$ 
is used as a multiplier in the wave equation energy estimates.
\label{F:PARTIALTNOTTIMELIKE}} in this case. 
Consequently, one has to use the four-velocity as a multiplier.\footnote{The use of $u$ as a multiplier 
is likely familiar to researchers who have previously studied the 
relativistic Euler equations, but it might be unknown to the broader PDE community. 
We also remark that in searching the literature, we were unable to find results that,
given our new formulation of the relativistic Euler equations, 
could be directly applied to establish points \textbf{i)} and \textbf{ii)} above.
Moreover, we were not able to locate a local well-posedness result for elliptic-hyperbolic systems 
that can be directly applied to our new formulation of the equations.
In particular, we could not locate a result that would directly
imply continuous dependence of solutions on the initial data up to top order, 
i.e., a result that applies in the case when the vorticity and entropy 
enjoy the aforementioned extra regularity.
}
\end{enumerate}

\subsubsection{Structures amenable to commutations with geometric vectorfields}
\label{SSS:COMMUTINGWITHGEOMETRICVECTORFIELDS}
A key point is that the geometric vectorfields $Z$ described in 
Subsubsect.\,\ref{SSS:REGULARITYANDSINGULARENERGYESTIMATES}
are adapted only to the principal part of the shock-forming solution variables,
e.g., the operator $\square_g$ in the case that a wave equation solution
is the shock-forming variable.
However, to close the proof of shock formation for a system in which
wave equations of the type $\square_g \cdot = \cdots$ are \emph{coupled} 
to other equations, one must commute that $Z$ through \emph{all} of the equations
in the system. One then has to handle the commutator terms generated
by commuting the $Z$ through the other equations.
It turns out, perhaps not surprisingly, that commuting $Z$ through a 
generic second-order differential operator $\partial^2$ leads
to uncontrollable error terms, from the point of view of regularity and from
the point of view of the singular nature of the commutator error terms; see 
the work \cite{jLjS2016a} on the non-relativistic compressible barotropic Euler equations 
for further discussion on this point. However, as was first shown in \cite{jLjS2016a},
it is possible to commute the $Z$ through an arbitrary first-order
differential operator $\partial$ by first weighting it by $\upmu$ 
(where $\upmu$ is the inverse foliation density mentioned above); 
it can be shown that this leads to commutator error terms that are controllable under the scope of the approach.
\textbf{It is for this reason that we have formulated Theorem~\ref{T:MAINTHEROEMSCHEMATIC}
in such a way that all of the equations are of the type $\square_g \cdot = \cdots$
or are first-order}; i.e., the equations of
Theorem~\ref{T:MAINTHEROEMSCHEMATIC} are such that the approach 
described in \cites{jLjS2016a} can be applied. Put differently,
the geometric vectorfields $Z$ that are of essential importance for 
commuting the wave equations of
Theorem~\ref{T:MAINTHEROEMSCHEMATIC}
can also be commuted through all of the remaining equations,
generating only controllable error terms.

\section{A first-order formulation of the relativistic Euler equations, geometric tensorfields, and the modified fluid variables}
\label{S:SOMEBACKGROUND}
In this section, we introduce some notation, 
define the fluid variables that play a role in the subsequent discussion,
introduce some geometric tensorfields associated to the flow,
and provide the standard first-order formulation of the relativistic Euler equations 
that will serve as a starting point for our main results.
Most of the discussion here is standard and therefore, we are somewhat terse;
we refer readers to \cite{dC2007}*{Chapter~1} for a detailed introduction to the relativistic Euler equations.
Subsubsect.\,\ref{SSS:MODIFIEDFLUIDVARIABLES}, however, is not standard. 
In that subsubsection, we define \emph{modified fluid variables}, which are special combinations
of the derivatives of the vorticity and entropy. 
The structures revealed by Theorem~\ref{T:NEWFORMULATIONRELEULER}
imply (see the proof of Theorem~\ref{T:UPGRADEDLOCALWELLPOSEDNESS} for additional details)
that these special combinations enjoy a gain of one derivative
compared to the regularity afforded by standard estimates. As we mentioned
in the introduction, this gain is crucial for applications to shock waves.

\subsection{Notation and conventions}
\label{SS:NOTATIONANDCONVENTIONS}
We somewhat follow the setup of \cite{dC2007}, but there are some differences, including sign differences
and notational differences.

Greek ``spacetime'' indices $\alpha, \beta, \cdots$ take on the values $0,1,2,3$, while Latin ``spatial'' indices $a,b,\cdots$ 
take on the values $1,2,3$. Repeated indices are summed over (from $0$ to $3$ if they are Greek, and from $1$ to $3$ if they are Latin).
Greek and Latin indices are lowered and raised with the Minkowski metric $\upeta$ and its inverse $\upeta^{-1}$,
and \textbf{not with the acoustical metric $g$ of Def.\,\ref{D:ACOUSTICALMETRICANDINVERSE}}.
Moreover, 
$\upepsilon_{\alpha \beta \gamma \delta}$ denotes the fully antisymmetric symbol normalized by
$\upepsilon_{0123} = 1$. Note that $\upepsilon^{0123} = - 1$.

If $X^{\alpha}$ is a vectorfield and $\xi_{\beta_1 \cdots \beta_m}^{\alpha_1 \cdots \alpha_l}$
is a type $\binom{l}{m}$ tensorfield, then
\begin{align} \label{E:LIEDERIVATIVE}
	(\mathcal{L}_X \xi)_{\beta_1 \cdots \beta_m}^{\alpha_1 \cdots \alpha_l}
	& = X^{\kappa} \partial_{\kappa} \xi_{\beta_1 \cdots \beta_m}^{\alpha_1 \cdots \alpha_l}
			-
			\sum_{a=1}^l
			(\partial_{\kappa} X^{\alpha_a}) \xi_{\beta_1 \cdots \beta_m}^{\alpha_1 \cdots \alpha_{a-1} \kappa \alpha_{a+1} \cdots \alpha_l}
			\\
			& \ \
			+
			\sum_{b=1}^m
			(\partial_{\beta_b} X^{\kappa}) \xi_{\beta_1 \cdots \beta_{b-1} \kappa \beta_{b+1} \cdots \beta_m}^{\alpha_1 \cdots \alpha_l}  
			\notag
\end{align}
denotes the Lie derivative of $\xi$ with respect to $X$.

We derive all of our results relative to a Minkowski-rectangular coordinate system $\lbrace x^{\alpha} \rbrace_{\alpha=0,1,2,3}$,
that is, a coordinate system on $\mathbb{R}^{1+3}$ in which the Minkowski metric $\upeta$ takes the form
$\upeta_{\alpha \beta} := \mbox{\upshape diag}(-1,1,1,1)$.
$\lbrace \partial_{\alpha} \rbrace_{\alpha=0,1,2,3}$ denotes the corresponding rectangular
coordinate partial derivative vectorfields. We sometimes use the alternate notation $x^0 := t$ and $\partial_t := \partial_0$.

Throughout, $d$ denotes the exterior derivative operator. In particular, if $f$ is a scalar function, 
then $(df)_{\alpha} := \partial_{\alpha} f$, 
and if $V$ is a one-form, then 
$(d V)_{\alpha \beta} := \partial_{\alpha} V_{\beta} - \partial_{\beta} V_{\alpha}$.
We use the notation $V_{\flat}$ to denote the one-form that is $\upeta$-dual to the vectorfield $V$, i.e.,
$(V_{\flat})_{\alpha} := \upeta_{\alpha \kappa} V^{\kappa}$.

\subsection{Definitions of the fluid variables and related geometric quantities}
\label{SS:DEFSOFFLUIDVARIABLESANDRELATEDGEOMETRICQUANTITIES}
In this subsection, we define the fluid variables and geometric quantities that
play a role in the subsequent discussion.

\subsubsection{The basic fluid variables}
\label{SSS:FLUIDVARIABLES}
The fluid four velocity $u^{\alpha}$ is future-directed and normalized by $u_{\alpha} u^{\alpha} = - 1$.
$p$ denotes the pressure, $\uprho$ denotes the proper energy density, $n$ denotes the proper number density,
$\Ent$ denotes the entropy per particle, $\Temp$ denotes the temperature,
and 
\begin{align} \label{E:ENTHALPYDEFINITION}
\Enth = (\uprho + p)/n 
\end{align}
is the enthalpy per particle. 
Thermodynamics supplies the following laws:
\begin{align} \label{E:LAWSOFTHERMODYNAMICS}
\Enth
& =
\frac{\partial \uprho}{\partial n}\left|_{\Ent}\right.,
&
\Temp 
& = \frac{1}{n} \frac{\partial \uprho}{\partial \Ent} \left|\right._n,
&
d \Enth 
& = \frac{dp}{n} + \Temp d \Ent,
\end{align}
where 
$\frac{\partial}{\partial n}\left|_{\Ent}\right.$
denotes partial differentiation with respect to $n$ at fixed $\Ent$
and
$\frac{\partial}{\partial \Ent} \left|\right._n$ denotes partial differentiation with
respect to $\Ent$ at fixed $n$. Below we employ similar partial differentiation notation,
and in Def.\,\ref{D:ALTERNATEPARTIALDERIVATIVENOTATION}, we introduce alternate
partial differentiation notation, which we use throughout the remainder of the article.

\subsubsection{The $u$-orthogonal vorticity of a one-form and auxiliary fluid variables}
\label{SSS:AUXFLUIDVARIABLES}
In this subsubsection, we define some auxiliary fluid variables that will play a role
throughout the paper. By ``auxiliary,'' we mean that they are determined
by the variables introduced in Subsubsect.\,\ref{SSS:FLUIDVARIABLES}.

We start by defining the $u$-orthogonal vorticity of a one-form.
\begin{definition}[The $u$-orthogonal vorticity of a one-form]
	\label{D:VORTICITYOFAONEFORM}
	Given a one-form $V$, we define the corresponding $u$-orthogonal vorticity
	vectorfield as follows:
	\begin{align} \label{E:UORTHGONALVORTICITYOFONEFORM} 
		\uperpvort^{\alpha}(V)
		& := - \upepsilon^{\alpha \beta \gamma \delta} u_{\beta} \partial_{\gamma} V_{\delta}.
	\end{align}
\end{definition}

\begin{definition}[Vorticity vectorfield]
\label{D:VORTICITYDEF}
We define the vorticity vectorfield $\vort^{\alpha}$ as follows:
\begin{align} \label{E:VORTICITYDEF}
		\vort^{\alpha}
		& 
		:=
		\uperpvort^{\alpha}(\Enth u)
		= - \upepsilon^{\alpha \beta \gamma \delta} u_{\beta} \partial_{\gamma} (\Enth u_{\delta}).
\end{align}
\end{definition}

We find it convenient to work with the natural log of the enthalpy.
\begin{definition}[Logarithmic enthalpy]
	Let $\overline{\Enth} > 0$ be a fixed constant value of the enthalpy. We define
	the (dimensionless) logarithmic enthalpy $\Lnenth$ as follows:
	\begin{align} \label{E:LOGARITHMICENTHALPY}
		\Lnenth
		&:= \ln \left(\Enth/\overline{\Enth} \right).
	\end{align}
\end{definition}

\begin{definition}[The quantity $\TempoverEnth$]
	\label{D:TEMPOVERENTH}
	We define the quantity $\TempoverEnth$ as follows:
	\begin{align} \label{E:TEMPOVERENTH}
		\TempoverEnth
		&:= \frac{\Temp}{\Enth}.
	\end{align}
\end{definition}

\begin{definition} [Entropy gradient one-form]
	\label{D:ENTROPYGRADIENT}
	We define the entropy gradient one-form $\GradEnt_{\alpha}$ as follows:
	\begin{align} \label{E:ENTROPYGRADIENT}
		\GradEnt_{\alpha}
		& := \partial_{\alpha} \Ent.
	\end{align}
\end{definition}

\subsubsection{Equation of state and speed of sound}
To obtain a closed system of equations,
we assume an \emph{equation of state} of the form $p = p(\uprho,\Ent)$.
The \emph{speed of sound} is defined by
\begin{align} \label{E:SPEEDOFSOUND}
	\speed 
	& := 
	\sqrt{\frac{\partial p}{\partial \uprho} \left|\right._{\Ent}}.
\end{align}
For reasons that will become clear in Subsect.\,\ref{SS:FIRSTIRDERFORMULATION}, 
in the rest of the article, we view the speed of sound
to be a function of $\Lnenth$ and $\Ent$:
\begin{align}
	\speed = \speed(\Lnenth,\Ent).
	\label{E:SOUNDSPEEDDEFENTHALTPYENTROPY}
\end{align}

In this article, we will confine our study to equations of state and solutions that verify
\begin{align} \label{E:SPEEDOFSOUNDBOUNDS}
	0 < \speed \leq 1.
\end{align}	
The upper bound in \eqref{E:SPEEDOFSOUNDBOUNDS} signifies that the speed of sound is no
bigger than the speed of light. In this article, 
we exploit both inequalities in \eqref{E:SPEEDOFSOUNDBOUNDS}.
We use the bound $\speed \leq 1$ to ensure that
we can always solve for time derivatives of the solution in terms of 
spatial derivatives; see the discussion surrounding equation \eqref{E:A0DET}.
The bound $\speed > 0$ is important because some 
of the equations featured in Theorem~\ref{T:NEWFORMULATIONRELEULER} contain factors of $\speed^{-1}$.

\subsubsection{Projection onto the Minkowski-orthogonal complement of the four-velocity  and the acoustical metric}
\label{SS:GEOMETRICTENSORS}
 We start by introducing the tensorfield $\Pi^{\alpha \beta}$, defined by
\begin{align} \label{E:UPERPPROJECTION}
\Pi^{\alpha \beta}
	& := (\upeta^{-1})^{\alpha \beta} + u^{\alpha} u^{\beta}.
\end{align}
It is straightforward to see $\Pi$ is the projection onto the $\upeta$-orthogonal complement of $u$.
In particular, $\Pi^{\alpha \kappa} u_{\kappa} = 0$.
	
We now introduce the acoustical metric $g$. It is a Lorentzian\footnote{That is, the signature of the $4 \times 4$ matrix $g_{\alpha \beta}$, viewed as a quadratic form,
is $(-,+,+,+)$.} metric that drives the propagation of sound waves.

\begin{definition}[Acoustical metric and its inverse]
\label{D:ACOUSTICALMETRICANDINVERSE}
\begin{subequations}
We define the acoustical metric
$g_{\alpha \beta}$
and its inverse\footnote{It is straightforward to check that
$(g^{-1})^{\alpha \kappa} g_{\kappa \beta} = \updelta_{\beta}^{\alpha}$,
where $\updelta_{\beta}^{\alpha}$ is the Kronecker delta. That is, $g^{-1}$ is indeed the inverse of $g$.}
$(g^{-1})^{\alpha \beta}$
as follows:
\begin{align}
	g_{\alpha \beta}
	& := \speed^{-2} \upeta_{\alpha \beta} 
		+ (\speed^{-2} - 1) u_{\alpha} u_{\beta},
			\label{E:ACOUSTICALMETRIC} \\
(g^{-1})^{\alpha \beta}
	& := \speed^2 \Pi^{\alpha \beta} - u^{\alpha} u^{\beta}
		= \speed^2 (\upeta^{-1})^{\alpha \beta} 
			+ (\speed^2 - 1)
				u^{\alpha} u^{\beta}.
				\label{E:INVERSEACOUSTICALMETRIC}
\end{align}
\end{subequations}
\end{definition}

It is straightforward to compute that relative to the rectangular 
coordinates, we have
\begin{subequations}
\begin{align}
\mbox{\upshape det} g 
& = - \speed^{-6},
	\label{E:DETG} \\
|\mbox{\upshape det} g|^{1/2} 
(g^{-1})^{\alpha \beta}
& = 	\speed^{-1} (\upeta^{-1})^{\alpha \beta} 
			+ 
			(\speed^{-1} - \speed^{-3})
			u^{\alpha} u^{\beta}.
			\label{E:DETGGINVERSE}
\end{align}
\end{subequations}

The notation featured in the next definition will allow for a simplified presentation
of various equations.
\begin{definition}[Partial derivatives with respect to $\Lnenth$ and $\Ent$]
	\label{D:ALTERNATEPARTIALDERIVATIVENOTATION}
	If $Q$ is a quantity that can be expressed as a function of
	$(\Lnenth,\Ent)$, then
	\begin{subequations}
	\begin{align} \label{E:ALTERNATEPARTIALDERIVATIVENOTATION}
	Q_{;\Lnenth}
	= Q_{;\Lnenth}(\Lnenth,\Ent)
	& := \frac{\partial Q}{\partial \Lnenth}\left|_{\Ent}\right.,
		\\
	Q_{;\Ent}
	= Q_{;\Ent}(\Lnenth,\Ent)
	& := \frac{\partial Q}{\partial \Ent}\left|_{\Lnenth}\right.,
\end{align}
\end{subequations}
where 
$\frac{\partial}{\partial \Lnenth}\left|_{\Ent}\right.$ denotes partial differentiation
with respect to $\Lnenth$ at fixed $\Ent$ and
$\frac{\partial}{\partial \Ent}\left|_{\Lnenth}\right.$ denotes partial differentiation
with respect to $\Ent$ at fixed $\Lnenth$.
\end{definition}

\subsubsection{Modified fluid variables}
\label{SSS:MODIFIEDFLUIDVARIABLES}
In our analysis, we will have to control the vorticity of the vorticity, that is,
$ \uperpvort^{\alpha}(\vort)$.
The following modified version of $\uperpvort^{\alpha}(\vort)$, denoted by $\mathcal{C}^{\alpha}$ obeys a transport equation
(see \eqref{E:MAINTHMTRANSPORTFORMODIFIEDVORTICITYOFVORTICITY})
with a better structure 
(from the point of view of the regularity of the RHS and also the null structure of the RHS)
than the one satisfied by $\uperpvort^{\alpha}(\vort)$.
Similar remarks apply to the modified version of the divergence of entropy
gradient, which we denote by $\mathcal{D}$
(see equation \eqref{E:MAINTHMSMODIFIEDDIVERGENCEENTROPYGRADIENTTRANSPORT}
for the transport equation verified by $\mathcal{D}$).

\begin{definition}[Modified fluid variables]
	\label{D:MODIFIEDVARIABLES}
	\begin{subequations}
	\begin{align} 
			\mathcal{C}^{\alpha}
		& := \uperpvort^{\alpha}(\vort)
			+
		\speed^{-2}
		\upepsilon^{\alpha \beta \gamma \delta} 
		u_{\beta}
		(\partial_{\gamma} \Lnenth) \vort_{\delta}
			\label{E:MODIFIEDVORTICITYOFVORTICITY} \\
		& \ \
			+
			(\Temp - \Temp_{;\Lnenth}) 
			\GradEnt^{\alpha} 
			(\partial_{\kappa} u^{\kappa})
		+
		(\Temp - \Temp_{;\Lnenth}) u^{\alpha} (\GradEnt^{\kappa} \partial_{\kappa} \Lnenth)
		\notag
		\\& \ \
		+
		(\Temp_{;\Lnenth} - \Temp) \GradEnt^{\kappa} ((\upeta^{-1})^{\alpha \lambda} \partial_{\lambda} u_{\kappa}),
				\notag \\
		\mathcal{D}
		& := \frac{1}{n} (\partial_{\kappa} \GradEnt^{\kappa})
			+
			\frac{1}{n} (\GradEnt^{\kappa} \partial_{\kappa} \Lnenth)
			-
			\frac{1}{n} \speed^{-2} (\GradEnt^{\kappa} \partial_{\kappa} \Lnenth).
			 \label{E:MODIFIEDDIVERGENCEOFENTROPY}
\end{align}
\end{subequations}

	\end{definition}

\subsection{A standard first-order formulation of the relativistic Euler equations}
\label{SS:FIRSTIRDERFORMULATION}
In formulating the relativistic Euler equations as a first-order hyperbolic system,
we will consider $\Lnenth$, $\Ent$, and $\lbrace u^{\alpha} \rbrace_{\alpha=0,1,2,3}$ 
to be the fundamental unknowns.\footnote{On might argue that it is more accurate to think of $u^0$ as being ``redundant'' in the sense that 
it is algebraically determined in terms of $\lbrace u^a \rbrace_{a=1,2,3}$ via the condition $u^0 > 0$ and the
normalization condition \eqref{E:UISUNITLENGTH}. 
In fact, in most of Sect.\,\ref{S:WELLPOSEDNESS},
we adopt this point of view. However, prior to Sect.\,\ref{S:WELLPOSEDNESS}, we do not adopt this
point of view.}
In terms of these variables
and the quantities defined in \eqref{E:SPEEDOFSOUND},
\eqref{E:UPERPPROJECTION},
and \eqref{E:TEMPOVERENTH},
the relativistic Euler equations are
\begin{align}
	u^{\kappa} \partial_{\kappa} \Lnenth
	+
	\speed^2 \partial_{\kappa} u^{\kappa}
	& = 0,
		\label{E:ENTHALPYEVOLUTION} \\
	u^{\kappa} \partial_{\kappa} u^{\alpha}
	+ 
	\Pi^{\alpha \kappa} \partial_{\kappa} \Lnenth
	-
	\TempoverEnth
	(\upeta^{-1})^{\alpha \kappa} \partial_{\kappa} s
	& = 0,
	\label{E:VELOCITYEVOLUTIONWITHPROJECTION}
		\\
	u^{\kappa} \partial_{\kappa} \Ent
	& = 0.
	\label{E:ENTROPYEVOLUTION}
\end{align}
It is straightforward to see that the following constraint is preserved by the flow of 
equations \eqref{E:VELOCITYEVOLUTIONWITHPROJECTION}-\eqref{E:ENTROPYEVOLUTION}.
\begin{align} \label{E:UISUNITLENGTH}
	u_{\kappa} u^{\kappa}
	& = - 1.
\end{align}

\begin{remark}[More common first-order formulations]
Many authors define the relativistic Euler equations to be the system 
comprising \eqref{E:UISUNITLENGTH}, 
\eqref{E:PARTIACLECURRENTCONSERVATION},
and the four equations $\partial_{\kappa} T^{\alpha \kappa} = 0$,
where $T^{\alpha \beta} := (\uprho + p)u^{\alpha} u^{\beta} + p (\upeta^{-1})^{\alpha \beta}$
is the fluid's energy-momentum tensor.
These equations are in fact equivalent (at least in the case of $C^1$ solutions with $\uprho > 0$) 
to equations \eqref{E:ENTHALPYEVOLUTION}-\eqref{E:UISUNITLENGTH}.
We refer readers to \cite{dC2007}*{Chapter~1} for background material 
that is sufficient for understanding the equivalence.
\end{remark}

Note that \eqref{E:ENTROPYEVOLUTION} is equivalent to
\begin{align} \label{E:VELOCITYANDENTGRADIENTAREMINKOWSKIPERP}
	u^{\kappa} \GradEnt_{\kappa}
	& = 0.
\end{align}

Equation \eqref{E:VELOCITYEVOLUTIONWITHPROJECTION}
can be written more explicitly as
\begin{align} \label{E:VELOCITYEVOLUTION} 
	u^{\kappa} \partial_{\kappa} u_{\alpha}
	+
	\partial_{\alpha} \Lnenth
	+
	u_{\alpha} u^{\kappa} \partial_{\kappa} \Lnenth
	-
	\TempoverEnth \GradEnt_{\alpha}
	& = 0.
\end{align}

Also, from \eqref{E:VELOCITYEVOLUTION},
we easily derive
\begin{align} \label{E:EVOLUTIONFORVELOCITYTIMESENTHALPHY}
	u^{\kappa} \partial_{\kappa} (\Enth u_{\alpha})
	+
	\partial_{\alpha} \Enth
	-
	\Temp \GradEnt_{\alpha}
	& = 0.
\end{align}	

Moreover, differentiating \eqref{E:ENTROPYEVOLUTION} with a rectangular coordinate partial
derivative, we deduce
\begin{align} \label{E:ENTROPYGRADIENTEVOLUTION}
	u^{\kappa} \partial_{\kappa} \GradEnt_{\alpha}
	& = - \GradEnt_{\kappa} (\partial_{\alpha} u^{\kappa}).
\end{align}

In our analysis, we will also use the following evolution equation for $n$:
\begin{align} \label{E:PARTIACLECURRENTCONSERVATION}
	u^{\kappa} \partial_{\kappa} n
	+
	n \partial_{\kappa} u^{\kappa}
	& = 0.
\end{align}
To obtain \eqref{E:PARTIACLECURRENTCONSERVATION}, we first use
equations \eqref{E:ENTHALPYEVOLUTION} and \eqref{E:ENTROPYEVOLUTION},
the thermodynamic relation
$d \Enth  = dp/n + \Temp d \Ent$,
and the relation $\Enth = (\uprho + p)/n$
to deduce
$
u^{\kappa} \partial_{\kappa} p
+
\speed^2 (\uprho + p) \partial_{\kappa} u^{\kappa}
=
0
$.
We then use this equation, 
\eqref{E:SPEEDOFSOUND},
and equation \eqref{E:ENTROPYEVOLUTION}
to deduce
$
u^{\kappa} \partial_{\kappa} \uprho
+
(\uprho + p) \partial_{\kappa} u^{\kappa}
=
0
$.
Next, using this equation
and equation \eqref{E:ENTROPYEVOLUTION},
we deduce
$
\frac{\partial \uprho(n,s)}{\partial n}\left|_{\Ent}\right.
u^{\kappa} \partial_{\kappa} n
+
(\uprho + p) \partial_{\kappa} u^{\kappa}
=
0
$.
Finally, from this equation and the thermodynamic relation
$
\uprho + p = n \frac{\partial \uprho(n,s)}{\partial n}\left|_{\Ent}\right.
$,
we conclude \eqref{E:PARTIACLECURRENTCONSERVATION}.

For future use, we also note that equations
\eqref{E:ENTHALPYEVOLUTION}-\eqref{E:ENTROPYEVOLUTION}
can be written (using \eqref{E:UISUNITLENGTH}) in the form
\begin{align} \label{E:MATRIXVECTORFORMFIRSTORDEREULER}
A^{\alpha} \partial_{\alpha} 
\begin{pmatrix}
	\Lnenth  \\	
	u^0 \\
	u^1  \\
	u^2 \\
	u^3 \\
	\Ent
\end{pmatrix}
	& = 0,
\end{align}
where for $\alpha = 0,1,2,3$, $A^{\alpha}$ is a $6 \times 6$ matrix that is a smooth function
of the solution array $(\Lnenth,u^0,u^1,u^2,u^3,\Ent)$.
In particular, we compute that
\begin{align} \label{E:A0EXPLICIT}
	A^0
	& = 
\begin{pmatrix}
	u^0 & \speed^2 & 0 & 0 & 0 & 0 \\
	u_a u^a & u^0 & 0 & 0 & 0 & \TempoverEnth \\
	u^0 u^1 & 0 & u^0 & 0 & 0 & 0 \\
	u^0 u^2 & 0 & 0 & u^0 & 0 & 0 \\
	u^0 u^3 & 0 & 0 & 0 & u^0 & 0 \\
	0 & 0 & 0 & 0 & 0 & u^0
\end{pmatrix},
\end{align}
and we compute that
\begin{align} \label{E:A0DET}
	\mbox{\upshape det} A^0
	& = (u^0)^6 - \speed^2 (u^0)^4 u_a u^a
		= (1 + u_a u^a)^4
			\left\lbrace
				1  
				+ (1 - \speed^2) u_b u^b
			\right\rbrace.
\end{align}
In particular, in view of \eqref{E:SPEEDOFSOUNDBOUNDS},
we deduce from \eqref{E:A0DET} that $A^0$ is invertible.

\section{The new formulation of the relativistic Euler equations}
\label{S:NEWFORMULATIONRELATIVISTICEULER}
In the next theorem, we provide the main result of the article:
the new formulation of the relativistic Euler equations.

\begin{theorem}[New formulation of the relativistic Euler equations]
\label{T:NEWFORMULATIONRELEULER}
For $C^3$ solutions $(\Lnenth,\Ent,u^{\alpha})$
to the relativistic Euler equations \eqref{E:ENTHALPYEVOLUTION}-\eqref{E:ENTROPYEVOLUTION} + \eqref{E:UISUNITLENGTH}, 
the following equations hold,
where the phrase ``$g$-\textbf{null form}''
refers to a linear combination of the standard $g$-null forms of
Def.\,\ref{D:STANDARDNULLFORMS} with coefficients that are allowed to depend on the quantities
$(\Lnenth,\Ent,u^{\alpha},\GradEnt^{\alpha},\vort^{\alpha})$
(but \textbf{not} their derivatives).

\medskip

\noindent \underline{\textbf{Wave equations}}.
The logarithmic enthalpy $\Lnenth$ verifies the following covariant wave equation
(see Footnote~\ref{FN:COVWAVEOPARBITRARYCOORDS} on pg.\,\pageref{FN:COVWAVEOPARBITRARYCOORDS}
for a formula for the covariant wave operator):
\begin{align}  \label{E:MAINTHMCOVARIANTWAVEENTHALPY}
		\square_g \Lnenth
	&  = 
			n \speed^2 \TempoverEnth \mathcal{D}
			+
			\mathfrak{Q}_{(\Lnenth)}
			+
			\mathfrak{L}_{(\Lnenth)},
	\end{align}
where $\mathfrak{Q}_{(\Lnenth)}$ is the $g$-\textbf{null form} defined by 
\begin{subequations}
\begin{align} \label{E:MAINTHMENTHALPHYWAVEEQNNULLFORM}
	\mathfrak{Q}_{(\Lnenth)}
	& :=
		-
			\speed^{-1}
			\speed_{;\Lnenth}
			(g^{-1})^{\kappa \lambda}
			(\partial_{\kappa} \Lnenth)
			(\partial_{\lambda} \Lnenth)
			\\
			& \ \
		+
		\speed^2 
		\left\lbrace
			(\partial_{\kappa} u^{\kappa}) 
			(\partial_{\lambda} u^{\lambda})
			-
			(\partial_{\lambda} u^{\kappa}) 
			(\partial_{\kappa} u^{\lambda})
		\right\rbrace,
		\notag
\end{align}
and $\mathfrak{L}_{(\Lnenth)}$, which is at most linear in the derivatives of $(\Lnenth,\Ent,u^{\alpha},\GradEnt^{\alpha},\vort^{\alpha})$, is defined by 
\begin{align} \label{E:MAINTHMENTHALPHYWAVEEQNLINEARTERMS}
	\mathfrak{L}_{(\Lnenth)}
	& :=
		\left\lbrace
			(1 - \speed^2) \TempoverEnth 
			+
			\speed^2 \TempoverEnth_{;\Lnenth} 
			-
			\speed \speed_{;\Ent}
		\right\rbrace
		(\GradEnt^{\kappa} \partial_{\kappa} \Lnenth)
		+
		\speed^2 \TempoverEnth_{;\Ent} 
		\GradEnt_{\kappa} \GradEnt^{\kappa}.
\end{align}
\end{subequations}

Moreover, the rectangular four-velocity components\footnote{We stress that on LHS~\eqref{E:MAINTHMCOVARIANTWAVEVELOCITY},
the components $u^{\alpha}$ are treated as scalar functions under the action of the covariant wave operator $\square_g$.} 
$u^{\alpha}$ verify the following covariant wave equations:
\begin{align}  \label{E:MAINTHMCOVARIANTWAVEVELOCITY}
		\square_g u^{\alpha}
		& 
		= 
			-
			\frac{\speed^2}{\Enth} 
			\mathcal{C}^{\alpha}
			+ 
			\mathfrak{Q}_{(u^{\alpha})}
			+
			\mathfrak{L}_{(u^{\alpha})},
\end{align}
where $\mathfrak{Q}_{(u^{\alpha})}$ is the $g$-\textbf{null form} defined by
\begin{subequations}
\begin{align} \label{E:MAINTHMVELOCITYEQNNULLFORM}
	\mathfrak{Q}_{(u^{\alpha})}
	& :=
		(\upeta^{-1})^{\alpha \lambda}
		\left\lbrace
			(\partial_{\kappa} u^{\kappa}) (\partial_{\lambda} \Lnenth)
			- 
			(\partial_{\lambda} u^{\kappa}) (\partial_{\kappa} \Lnenth)
		\right\rbrace
			\\
			& \ \
			+\speed^2 
			u^{\alpha} 
			\left\lbrace
				(\partial_{\kappa} u^{\lambda}) (\partial_{\lambda} u^{\kappa})
				-
				(\partial_{\lambda} u^{\lambda}) (\partial_{\kappa} u^{\kappa}) 
			\right\rbrace
			\notag
		\\
& \ \
			- 
			\left\lbrace
				1 + \speed^{-1} \speed_{;\Lnenth} 
			\right\rbrace
			(g^{-1})^{\kappa \lambda} 
			(\partial_{\kappa} \Lnenth) 
			(\partial_{\lambda} u^{\alpha}),
			\notag
\end{align}
and $\mathfrak{L}_{(u^{\alpha})}$, which is at most linear in the derivatives of $(\Lnenth,\Ent,u^{\alpha},\GradEnt^{\alpha},\vort^{\alpha})$, is defined by 
\begin{align} \label{E:MAINTHMVELOCITYWAVEEQNLINEARTERMS}
	\mathfrak{L}_{(u^{\alpha})}
	& :=
			-
			\frac{\speed^2}{\Enth} 
			\upepsilon^{\alpha \beta \gamma \delta} 
			(\partial_{\beta} u_{\gamma}) 
			\vort_{\delta}
			+
			\frac{(1 - \speed^2)}{\Enth}
			\upepsilon^{\alpha \beta \gamma \delta} 
			u_{\beta}
			(\partial_{\gamma} \Lnenth) \vort_{\delta}
			\\
			& \ \
			+
			\frac{(1 - \speed^2) \TempoverEnth }{\Enth} 
			\upepsilon^{\alpha \beta \gamma \delta} 
			\GradEnt_{\beta} 
			u_{\gamma} 
			\vort_{\delta}	
			\notag
				\\
		& \ \
		+
		\left\lbrace
			\TempoverEnth
			-
			\speed \speed_{;\Ent}
		\right\rbrace
		(\GradEnt^{\kappa} \partial_{\kappa} u^{\alpha})
		+
		\TempoverEnth
		(\speed^2 - 1) 
		u^{\alpha}
		\GradEnt^{\kappa} 
		(u^{\lambda} \partial_{\lambda} u_{\kappa})
			\notag \\
		& \ \
				+
				\GradEnt^{\kappa}
				\left\lbrace
					\speed^2
					\TempoverEnth 
					+
				\frac{(\Temp - \Temp_{;\Lnenth}) \speed^2}{\Enth}
				\right\rbrace
				((\upeta^{-1})^{\alpha \lambda} \partial_{\lambda} u_{\kappa})  
				\notag
					\\
		& \ \
		+
		\left\lbrace
		2 \speed^{-1} \speed_{;\Lnenth} \TempoverEnth 
		\GradEnt^{\alpha}
		+ 
		2 \speed^{-1} \speed_{;\Ent} 
		\GradEnt^{\alpha}
		-
		\TempoverEnth_{;\Lnenth}
		\GradEnt^{\alpha} 
		\right\rbrace
		(u^{\kappa} \partial_{\kappa} \Lnenth) 
		\notag
			\\
	& \ \
		+
		\GradEnt^{\alpha}
		\left\lbrace
			\frac{(\Temp - \Temp_{;\Lnenth}) \speed^2}{\Enth}
			-
			\TempoverEnth 
		\right\rbrace
		(\partial_{\kappa} u^{\kappa})
		+
		\frac{(\Temp - \Temp_{;\Lnenth}) \speed^2}{\Enth}
		u^{\alpha}
		(\GradEnt^{\kappa} \partial_{\kappa} \Lnenth).
		\notag
\end{align}
\end{subequations}

\medskip

\noindent \underline{\textbf{Auxiliary wave equation for $\Ent$}}.
The entropy $\Ent$ verifies the following covariant wave equation:\footnote{The wave equation \eqref{E:MAINTHMENTAUXILIARYWAVEQUATION} is auxiliary 
in the sense that we do not use it in our proof of
Theorem~\ref{T:UPGRADEDLOCALWELLPOSEDNESS}. However, in applications (for example, in the study of shock formation), 
one has to compute $\square_g$ applied to the scalar component functions $g_{\alpha \beta}$,
and, by virtue of the chain rule, the quantity $\square_g \Ent$ arises in such computations.
It is for this reason that we have included equation \eqref{E:MAINTHMENTAUXILIARYWAVEQUATION} in this paper.
\label{FN:JUSTIFICATIONENTROPYWAVEEQUATION} }
\begin{align} \label{E:MAINTHMENTAUXILIARYWAVEQUATION}
	\square_g \Ent
	& = 	
		\speed^2 n \mathcal{D}
		+
		\mathfrak{L}_{(\Ent)},
\end{align}
where $\mathfrak{L}_{(\Ent)}$, which is at most linear in the derivatives of $(\Lnenth,\Ent,u^{\alpha},\GradEnt^{\alpha},\vort^{\alpha})$, is defined by 
\begin{align} \label{E:MAINTHMENTAUXILIARYWAVEQUATIONLINEARTERMS}
	\mathfrak{L}_{(\Ent)}
	& :=
		\left\lbrace
		1
		- 
		\speed^2
		- 
		\speed \speed_{;\Lnenth} 
		\right\rbrace
		(\GradEnt^{\kappa} \partial_{\kappa} \Lnenth)
		-
		\speed \speed_{;\Ent}
		\GradEnt_{\kappa} \GradEnt^{\kappa}.
\end{align}

\medskip

\noindent \underline{\textbf{Transport equations}}.
The rectangular components of the entropy gradient vectorfield
$\GradEnt^{\alpha}$, whose $\upeta$-dual is defined in \eqref{E:ENTROPYGRADIENT},
verify the following transport equations:
\begin{align} \label{E:MAINTHMENTROPYGRADIENTEVOLUTION}
	u^{\kappa} \partial_{\kappa} \GradEnt^{\alpha}
	& = - 
	\GradEnt_{\kappa}
	((\upeta^{-1})^{\alpha \lambda} \partial_{\lambda} u^{\kappa}).
\end{align}

Moreover, the rectangular components of the vorticity vectorfield
$\vort^{\alpha}$, which is defined in \eqref{E:VORTICITYDEF},
verify the following transport equations:
\begin{align}	\label{E:MAINTHMVORTICITYTRANSPORT}
		u^{\kappa} \partial_{\kappa} \vort^{\alpha} 
		& = 
			-	
			u^{\alpha} (\vort^{\kappa} \partial_{\kappa} \Lnenth) 
			+
			\vort^{\kappa} \partial_{\kappa} u^{\alpha}
			-
			\vort^{\alpha} (\partial_{\kappa} u^{\kappa}) 
			\\
			& \ \ 
			+
			(\Temp - \Temp_{;\Lnenth}) 
			\upepsilon^{\alpha \beta \gamma \delta} 
			u_{\beta}
			(\partial_{\gamma} \Lnenth) 
			\GradEnt_{\delta}
			+
			\TempoverEnth 
			u^{\alpha}
		 \vort^{\kappa} \GradEnt_{\kappa}.
		 \notag
	\end{align}

\medskip

\noindent \underline{\textbf{$\mbox{\upshape Transport}$-$\mbox{\upshape div}$-$\mbox{\upshape curl}$ systems}}.
The modified divergence of the entropy gradient 
$\mathcal{D}$ (which is defined in \eqref{E:MODIFIEDDIVERGENCEOFENTROPY})
and the rectangular components
$\uperpvort^{\alpha}(\GradEnt)$
of the $u$-orthogonal vorticity
of the entropy gradient vectorfield
(see definition \eqref{E:UORTHGONALVORTICITYOFONEFORM})
verify the following $\mbox{\upshape transport}$-$\mbox{\upshape div}$-$\mbox{\upshape curl}$ system:
	\begin{subequations}
	\begin{align}	\label{E:MAINTHMSMODIFIEDDIVERGENCEENTROPYGRADIENTTRANSPORT}
		u^{\kappa} \partial_{\kappa} \mathcal{D}
		& = 
			\frac{2}{n}
			\left\lbrace
				(\partial_{\kappa} \GradEnt^{\kappa}) (\partial_{\lambda} u^{\lambda})
				-
				(\partial_{\lambda} \GradEnt^{\kappa}) (\partial_{\kappa} u^{\lambda})
			\right\rbrace
			\\
			& \ \
			+
				\frac{1}{n}
				\speed^{-2}
				u^{\kappa}
				\left\lbrace
					(\partial_{\kappa} \Lnenth) 
					(\partial_{\lambda} \GradEnt^{\lambda})
					-
					(\partial_{\lambda} \Lnenth)
					(\partial_{\kappa} \GradEnt^{\lambda})
				\right\rbrace	
				\notag
					\\
		& \ \
			+
			\frac{\GradEnt_{\kappa} \mathcal{C}^{\kappa}}{n \Enth}
			+
			\mathfrak{Q}_{(\mathcal{D})}
			+
			\mathfrak{L}_{(\mathcal{D})},
			\notag
				\\
		\uperpvort^{\alpha}(\GradEnt)
		& = 0,
		\label{E:MAINTHMUPERPCURLOFENTROPYGRADIENTISZERO}
	\end{align}
	\end{subequations}
	where $\mathfrak{Q}_{(\mathcal{D})}$ is the $g$-\textbf{null form} defined by
	\begin{subequations}
	\begin{align} \label{E:MAINTHMENTROPYGRADIENTDIVERGENCENULLFORM}
		\mathfrak{Q}_{(\mathcal{D})}
		& := 
		\frac{1}{n}
					\speed^{-2} 
					\GradEnt^{\kappa}
					\left\lbrace
						(\partial_{\kappa} u^{\lambda}) (\partial_{\lambda} \Lnenth)
						-
						(\partial_{\lambda} u^{\lambda}) (\partial_{\kappa} \Lnenth)
					\right\rbrace,
	\end{align}
	and $\mathfrak{L}_{(\mathcal{D})}$, which is linear in the derivatives of $(\Lnenth,\Ent,u^{\alpha},\GradEnt^{\alpha},\vort^{\alpha})$, is defined by 
	\begin{align} \label{E:MAINTHMENTROPYGRADIENTDIVERGENCELINEARTERMS}
		\mathfrak{L}_{(\mathcal{D})}
		& :=
			\frac{(1 - \speed^{-2})}{n \Enth}
			\upepsilon^{\alpha \beta \gamma \delta} 
			\GradEnt_{\alpha}
			u_{\beta}
			(\partial_{\gamma} \Lnenth) \vort_{\delta}
			+
			\frac{1}{n \Enth} 
			\upepsilon^{\alpha \beta \gamma \delta} 
			\GradEnt_{\alpha} (\partial_{\beta} u_{\gamma}) \vort_{\delta}
			\\
		& \ \
			+
			\frac{\GradEnt^{\kappa} \GradEnt^{\lambda}}{n}
			\left\lbrace
				\frac{(\Temp - \Temp_{;\Lnenth})}{\Enth}
				-
				2 \TempoverEnth
			\right\rbrace
			(\partial_{\kappa} u_{\lambda})
			\notag \\
		& \ \
			+
			\frac{\GradEnt_{\kappa} \GradEnt^{\kappa}}{n}
			\left\lbrace
				\frac{(\Temp_{;\Lnenth} - \Temp) }{\Enth}
				+
				2 \speed^{-1} \speed_{;\Ent}
				-
				\speed^2
				\TempoverEnth_{;\Lnenth} 
				+
				\TempoverEnth
			\right\rbrace
			(\partial_{\lambda} u^{\lambda}).
			\notag
	\end{align}
	\end{subequations}
	
Finally, the divergence of the vorticity vectorfield $\vort^{\alpha}$ 
(which is defined in \eqref{E:VORTICITYDEF})
and the rectangular components $\mathcal{C}^{\alpha}$ of the modified vorticity of the vorticity
(which is defined in \eqref{E:MODIFIEDVORTICITYOFVORTICITY})
verify the following equations:
\begin{subequations}
\begin{align}  \label{E:MAINTHMDIVOFVORTICITY}
	\partial_{\alpha} \vort^{\alpha}
	& = 
		- 
		\vort^{\kappa} \partial_{\kappa} \Lnenth
		+
		2 \TempoverEnth \vort^{\kappa} \GradEnt_{\kappa},
			\\
	u^{\kappa} \partial_{\kappa} \mathcal{C}^{\alpha}
	& = 
		\mathcal{C}^{\kappa} \partial_{\kappa} u^{\alpha}
		- 
		2 \mathcal{C}^{\alpha} (\partial_{\kappa} u^{\kappa}) 
		+
		u^{\alpha} 
		\mathcal{C}^{\kappa}
		(u^{\lambda} \partial_{\lambda} u_{\kappa}) 
		\label{E:MAINTHMTRANSPORTFORMODIFIEDVORTICITYOFVORTICITY}		\\
		& \ \
			-
			2 \upepsilon^{\alpha \beta \gamma \delta} 
			u_{\beta}
			(\partial_{\gamma} \vort^{\kappa}) (\partial_{\delta} u_{\kappa})
		\notag \\
		& \ \
			+
		(\Temp_{;\Lnenth} - \Temp) 
		\left\lbrace
			(\upeta^{-1})^{\alpha \kappa}
			+
			2 u^{\alpha} u^{\kappa}
		\right\rbrace
			\left\lbrace
				(\partial_{\kappa} \Lnenth) (\partial_{\lambda} \GradEnt^{\lambda})
				-
				(\partial_{\lambda} \Lnenth) (\partial_{\kappa} \GradEnt^{\lambda})
			\right\rbrace
			\notag \\
		& \ \
		+
		(\Temp - \Temp_{;\Lnenth}) n u^{\alpha} (u^{\kappa} \partial_{\kappa} \Lnenth) \mathcal{D} 
			\notag
			\\
		& \ \
		+
		(\Temp - \Temp_{;\Lnenth}) 
		\TempoverEnth 
		\GradEnt^{\alpha} 
		(\partial_{\kappa} \GradEnt^{\kappa})
		+
		(\Temp_{;\Lnenth} - \Temp) 
		\TempoverEnth 
		\GradEnt_{\kappa}
		((\upeta^{-1})^{\alpha \lambda} \partial_{\lambda} \GradEnt^{\kappa}) 
		\notag \\
	& \ \
		+ 
		\mathfrak{Q}_{(\mathcal{C}^{\alpha})}
		+
		\mathfrak{L}_{(\mathcal{C}^{\alpha})},
		\notag
	\end{align}
	\end{subequations}
where $\mathfrak{Q}_{(\mathcal{C}^{\alpha})}$ is the $g$-\textbf{null form} defined by 
\begin{subequations}
\begin{align} \label{E:MAINTHMVORTICITYOFVORTICITYEASYNULLFORMS}
	\mathfrak{Q}_{(\mathcal{C}^{\alpha})}
	& := 
			-
			\speed^{-2}
			\upepsilon^{\kappa \beta \gamma \delta} 
			(\partial_{\kappa} u^{\alpha})
			u_{\beta}
			(\partial_{\gamma} \Lnenth) \vort_{\delta}
		\\
		& \ \
		+
		(\speed^{-2} + 2)
		\upepsilon^{\alpha \beta \gamma \delta} 
		u_{\beta}
		(\partial_{\gamma} \Lnenth) \vort^{\kappa} (\partial_{\delta} u_{\kappa})	
		\notag
			\\
	& \ \
	+
	\speed^{-2}
	\upepsilon^{\alpha \beta \gamma \delta} 
	u_{\beta}
	\vort_{\delta}
	\left\lbrace
		(\partial_{\kappa} u^{\kappa}) (\partial_{\gamma} \Lnenth)
		- 
		(\partial_{\gamma} u^{\kappa}) (\partial_{\kappa} \Lnenth)
	\right\rbrace
	\notag
			\\
	& \ \
		+
	\left\lbrace	
		(\Temp_{;\Lnenth;\Lnenth} - \Temp_{\Lnenth}) 
		+
		\speed^{-2} 
		(\Temp - \Temp_{;\Lnenth})
	\right\rbrace
	u^{\kappa}
	(\upeta^{-1})^{\alpha \lambda}
	\GradEnt^{\beta}
		\notag \\
	&
	\ \ \ \ \
	\times
	\lbrace
		(\partial_{\kappa} \Lnenth)
		(\partial_{\lambda} u_{\beta})
		-
		(\partial_{\lambda} \Lnenth)
		(\partial_{\kappa} u_{\beta})
	\rbrace
	\notag
		\\
	& \ \
		+
		(\Temp_{;\Lnenth} - \Temp) 
		\GradEnt^{\kappa}
		u^{\lambda}
		\left\lbrace
			(\partial_{\kappa} u^{\alpha})
			(\partial_{\lambda} \Lnenth) 
			-
			(\partial_{\lambda} u^{\alpha})
			(\partial_{\kappa} \Lnenth)
		\right\rbrace
		\notag
		\\
	& \ \
		+
	(\Temp_{;\Lnenth} - \Temp) 
	\left\lbrace
		(\upeta^{-1})^{\alpha \kappa} 
		+
		u^{\alpha} u^{\kappa}
	\right\rbrace
	\GradEnt^{\beta}
	\left\lbrace
		(\partial_{\kappa} u_{\beta})
		(\partial_{\lambda} u^{\lambda})
		- 
		(\partial_{\lambda} u_{\beta}) 
		(\partial_{\kappa} u^{\lambda})
	\right\rbrace
	\notag
		\\
	& \ \
		+
		(\Temp_{;\Lnenth} - \Temp) 
		\GradEnt^{\alpha} 
		\left\lbrace
			(\partial_{\kappa} u^{\lambda}) \partial_{\lambda} u^{\kappa} 
				-
			(\partial_{\lambda} u^{\lambda})
			(\partial_{\kappa} u^{\kappa})
		\right\rbrace
				\notag \\
	& \ \
		+
		(\Temp_{;\Lnenth} - \Temp)
			\GradEnt^{\kappa}
			\left\lbrace
				(\partial_{\kappa} u^{\alpha})
				(\partial_{\lambda} u^{\lambda})
				-	 
				 (\partial_{\lambda} u^{\alpha})
				 (\partial_{\kappa} u^{\lambda})
			\right\rbrace
			\notag \\
& \ \
	+
	\GradEnt^{\alpha}
		\left\lbrace
			\speed^{-2}
			(\Temp_{\Lnenth} - \Temp_{;\Lnenth;\Lnenth})
			+
			\speed^{-4} 
			(\Temp_{;\Lnenth} - \Temp)
		\right\rbrace
		(g^{-1})^{\kappa \lambda} (\partial_{\kappa} \Lnenth) (\partial_{\lambda} \Lnenth),
		\notag 
\end{align}
and $\mathfrak{L}_{(\mathcal{C}^{\alpha})}$, which is linear in the derivatives of $(\Lnenth,\Ent,u^{\alpha},\GradEnt^{\alpha},\vort^{\alpha})$, is defined by 
\begin{align} \label{E:MAINTHMVORTICITYOFVORTICITYLINEARTERMS}
	\mathfrak{L}_{(\mathcal{C}^{\alpha})}
	& := 
		\frac{2 \TempoverEnth}{\Enth}
				(\vort^{\kappa} \GradEnt_{\kappa} \vort^{\alpha})
		-
		\frac{2}{\Enth} \vort^{\alpha} (\vort^{\kappa} \partial_{\kappa} \Lnenth) 
			\\
	& \ \
		+
		2 \speed^{-3} \speed_{;\Ent} 
	\upepsilon^{\alpha \beta \gamma \delta} 
	u_{\beta}
	\GradEnt_{\gamma}
	\vort_{\delta}
	(u^{\kappa} \partial_{\kappa} \Lnenth)
	\notag
		\\
	& \ \
			-
			2 \TempoverEnth
			\upepsilon^{\alpha \beta \gamma \delta}
			u_{\beta}
			\GradEnt_{\gamma} 
			\vort^{\kappa} (\partial_{\delta} u_{\kappa})
			-
		\TempoverEnth
		\upepsilon^{\alpha \beta \gamma \delta} 
		\GradEnt_{\beta}
		u_{\gamma}
		\vort_{\delta} 
		(\partial_{\kappa} u^{\kappa})
			\notag
			\\
	& \ \
	+
	\frac{1}{\Enth} 
	(\Temp - \Temp_{;\Lnenth}) 
	\upepsilon^{\kappa \beta \gamma \delta} 
	(\partial_{\kappa} u^{\alpha})
	\GradEnt_{\beta} u_{\gamma} \vort_{\delta}
	+
		\speed^{-2}
		\TempoverEnth
		\upepsilon^{\alpha \beta \gamma \delta} 
		\GradEnt_{\beta}
		(\partial_{\gamma} \Lnenth) \vort_{\delta}
		\notag
		\\
		& \ \
	-
			\speed^{-2}
			\TempoverEnth
			u^{\alpha}
			\upepsilon^{\kappa \beta \gamma \delta} 
			\GradEnt_{\kappa}
			u_{\beta}
			(\partial_{\gamma} \Lnenth) \vort_{\delta}
		\notag
		\\
& \ \
	+ 
	(\Temp_{;\Lnenth} - \Temp) 
	\TempoverEnth
	\GradEnt_{\kappa} \GradEnt^{\kappa}
	(u^{\lambda} \partial_{\lambda} u^{\alpha})
	\notag
		\\
& \ \
	+
	u^{\alpha}
	\GradEnt_{\kappa} \GradEnt^{\kappa}
	\left\lbrace
		(\Temp_{;\Lnenth} - \Temp) 
		\TempoverEnth
		+
		(\Temp_{;\Lnenth;\Ent} - \Temp_{;\Ent})
	\right\rbrace
	(u^{\lambda} \partial_{\lambda} \Lnenth) 
		\notag
		\\
	& \ \
			+
		\GradEnt^{\alpha} 
			\left\lbrace
				(\Temp_{;\Ent} - \Temp_{;\Lnenth;\Ent})
				+
				(\Temp - \Temp_{;\Lnenth}) 
				\TempoverEnth_{;\Lnenth} 
			\right\rbrace
			(\GradEnt^{\kappa} \partial_{\kappa} \Lnenth)
							\notag
				\\
	& \ \
		+
	\GradEnt_{\kappa} \GradEnt^{\kappa}
	\lbrace
		(\Temp_{;\Lnenth;\Lnenth} - \Temp_{\Lnenth})
		\TempoverEnth
		+
		(\Temp_{;\Lnenth;\Ent} - \Temp_{;\Ent})
		+
		(\Temp - \Temp_{;\Lnenth})
		\TempoverEnth 
		\speed^{-2} 
		+
		(\Temp_{;\Lnenth} - \Temp) \TempoverEnth_{;\Lnenth} 
	\rbrace
		\notag \\
	& 
		\ \ \ \ \ \ \ \ \ \ \ 
		\times
		((\upeta^{-1})^{\alpha \lambda} \partial_{\lambda} \Lnenth).
		\notag 
\end{align}
\end{subequations}
	
\end{theorem}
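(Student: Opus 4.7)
The overall strategy is to take the first-order system \eqref{E:ENTHALPYEVOLUTION}--\eqref{E:ENTROPYEVOLUTION} together with the constraint \eqref{E:UISUNITLENGTH}, differentiate it in various geometric ways (by $\partial_\alpha$, by $u^\kappa\partial_\kappa$, and by the $\upepsilon$-contraction that defines $\uperpvort$), and repeatedly substitute back the first-order equations together with their immediate consequences \eqref{E:EVOLUTIONFORVELOCITYTIMESENTHALPHY}, \eqref{E:ENTROPYGRADIENTEVOLUTION} and \eqref{E:PARTIACLECURRENTCONSERVATION} so as to convert ``bad'' second-order terms into terms with the structure advertised in Theorem~\ref{T:NEWFORMULATIONRELEULER}. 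The acoustical metric \eqref{E:ACOUSTICALMETRIC}--\eqref{E:INVERSEACOUSTICALMETRIC} together with the identity \eqref{E:DETGGINVERSE} will play a crucial organizing role: once the correct linear combination of $u^\kappa\partial_\kappa$- and $\partial_\alpha$-differentiated equations is assembled, a Minkowski divergence reorganizes itself into $\square_g$ applied to a scalar, and derivative-quadratic remainders organize themselves into the $g$-null forms of Def.\,\ref{D:STANDARDNULLFORMS}.

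For the wave equations \eqref{E:MAINTHMCOVARIANTWAVEENTHALPY}, \eqref{E:MAINTHMCOVARIANTWAVEVELOCITY}, \eqref{E:MAINTHMENTAUXILIARYWAVEQUATION} the plan is to expand, in rectangular coordinates,
\[
\square_g f = \speed^{3}\,\partial_{\alpha}\!\left\lbrace \speed^{-1}(\upeta^{-1})^{\alpha\beta}\partial_{\beta}f + (\speed^{-1}-\speed^{-3}) u^{\alpha}u^{\beta}\partial_{\beta}f\right\rbrace,
\]
and to specialize $f$ successively to $\Lnenth$, $u^{\alpha}$, $\Ent$. For $f=\Lnenth$, \eqref{E:ENTHALPYEVOLUTION} lets one replace $u^{\beta}\partial_{\beta}\Lnenth$ by $-\speed^{2}\partial_{\kappa}u^{\kappa}$, after which commuting $\partial_{\alpha}$ past $\partial_{\kappa}$ and inserting \eqref{E:VELOCITYEVOLUTION} (and its divergence) generates the antisymmetric quadratic pair in \eqref{E:MAINTHMENTHALPHYWAVEEQNNULLFORM} together with a divergence $\partial_{\kappa}\GradEnt^{\kappa}$; the definition \eqref{E:MODIFIEDDIVERGENCEOFENTROPY} of $\mathcal{D}$ is designed so that this piece is exactly $n\speed^{2}\TempoverEnth\mathcal{D}$ plus terms proportional to $\GradEnt^{\kappa}\partial_{\kappa}\Lnenth$ that fit inside $\mathfrak{L}_{(\Lnenth)}$. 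The same blueprint applied to $u^{\alpha}$ produces, after using \eqref{E:VELOCITYEVOLUTION} and the identity $\partial^{\alpha}u^{\beta}-\partial^{\beta}u^{\alpha} = -\upepsilon^{\alpha\beta\gamma\delta}u_{\gamma}\vort_{\delta}/\Enth + \cdots$ (a consequence of \eqref{E:VORTICITYDEF}), the vorticity-of-vorticity term $\uperpvort^{\alpha}(\vort)$, which is absorbed using \eqref{E:MODIFIEDVORTICITYOFVORTICITY}. The wave equation for $\Ent$ is obtained most economically by applying the above formula for $\square_g f$ directly to $\Ent$ and inserting $u^{\beta}\partial_{\beta}\Ent=0$ and \eqref{E:VELOCITYANDENTGRADIENTAREMINKOWSKIPERP} to produce the $\mathcal{D}$ source and the linear terms in $\mathfrak{L}_{(\Ent)}$.

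The transport equations for $\GradEnt^{\alpha}$ and $\vort^{\alpha}$ and the identities \eqref{E:MAINTHMDIVOFVORTICITY} and \eqref{E:MAINTHMUPERPCURLOFENTROPYGRADIENTISZERO} are more direct. Equation \eqref{E:MAINTHMENTROPYGRADIENTEVOLUTION} is simply \eqref{E:ENTROPYGRADIENTEVOLUTION} with indices raised by $\upeta^{-1}$. For \eqref{E:MAINTHMVORTICITYTRANSPORT} one applies $u^{\kappa}\partial_{\kappa}$ to \eqref{E:VORTICITYDEF}, commutes it through $\upepsilon^{\alpha\beta\gamma\delta}u_{\beta}\partial_{\gamma}$ using the antisymmetry of $\upepsilon$, and substitutes \eqref{E:EVOLUTIONFORVELOCITYTIMESENTHALPHY}; the baroclinic source $-\Temp\GradEnt_{\delta}$ in \eqref{E:EVOLUTIONFORVELOCITYTIMESENTHALPHY}, after splitting $\Temp$ using $\Temp=\Temp_{;\Lnenth}+(\Temp-\Temp_{;\Lnenth})$ in the style of Def.\,\ref{D:ALTERNATEPARTIALDERIVATIVENOTATION}, delivers exactly the $(\Temp-\Temp_{;\Lnenth})\,\upepsilon u(\partial\Lnenth)\GradEnt$ term. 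The divergence identity \eqref{E:MAINTHMDIVOFVORTICITY} follows from applying $\partial_{\alpha}$ to \eqref{E:VORTICITYDEF} and simplifying with \eqref{E:VELOCITYEVOLUTION} and \eqref{E:VELOCITYANDENTGRADIENTAREMINKOWSKIPERP}, while \eqref{E:MAINTHMUPERPCURLOFENTROPYGRADIENTISZERO} is the tautology $\upepsilon^{\alpha\beta\gamma\delta}u_{\beta}\partial_{\gamma}\partial_{\delta}\Ent=0$.

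The most delicate steps, and what I expect to be the main obstacle, are the transport equations \eqref{E:MAINTHMSMODIFIEDDIVERGENCEENTROPYGRADIENTTRANSPORT} and \eqref{E:MAINTHMTRANSPORTFORMODIFIEDVORTICITYOFVORTICITY} for the modified variables. For $\mathcal{D}$, I would apply $u^{\kappa}\partial_{\kappa}$ to \eqref{E:MODIFIEDDIVERGENCEOFENTROPY}, commute $u^{\kappa}\partial_{\kappa}$ past $\partial_{\lambda}$ generating the commutator $-(\partial_{\lambda}u^{\kappa})\partial_{\kappa}\GradEnt^{\lambda}$, and then replace every remaining first-order material derivative using \eqref{E:ENTHALPYEVOLUTION}, \eqref{E:VELOCITYEVOLUTION}, \eqref{E:MAINTHMENTROPYGRADIENTEVOLUTION}, and \eqref{E:PARTIACLECURRENTCONSERVATION}. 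The precise coefficients in the definition \eqref{E:MODIFIEDDIVERGENCEOFENTROPY} are chosen so that the non-null-form remainders cancel against the commutator, leaving the clean antisymmetric pair on RHS~\eqref{E:MAINTHMSMODIFIEDDIVERGENCEENTROPYGRADIENTTRANSPORT} plus the coupling $\GradEnt_{\kappa}\mathcal{C}^{\kappa}/(n\Enth)$ and the $g$-null form $\mathfrak{Q}_{(\mathcal{D})}$. For $\mathcal{C}^{\alpha}$, the analogous but substantially more involved computation begins by applying $u^{\kappa}\partial_{\kappa}$ to $\uperpvort^{\alpha}(\vort)$ and using \eqref{E:MAINTHMVORTICITYTRANSPORT} inside the $\upepsilon$-contraction; the identities $u_{\kappa}\vort^{\kappa}=0$ and $u_{\kappa}u^{\kappa}=-1$ (together with their differentiated consequences $u_{\kappa}\partial_{\alpha}u^{\kappa}=0$) are essential for eliminating the derivative-losing terms. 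The lower-order corrections appended to $\uperpvort^{\alpha}(\vort)$ in \eqref{E:MODIFIEDVORTICITYOFVORTICITY}--and the split $\Temp=\Temp_{;\Lnenth}+(\Temp-\Temp_{;\Lnenth})$--are precisely those needed to absorb the residual non-null quadratic combinations in $\partial\GradEnt$, $\partial\vort$, $\partial\Lnenth$, $\partial u$ into the explicit null form \eqref{E:MAINTHMVORTICITYOFVORTICITYEASYNULLFORMS} and the linear piece \eqref{E:MAINTHMVORTICITYOFVORTICITYLINEARTERMS}. The bulk of the work will lie in the meticulous bookkeeping that verifies that no derivative-quadratic terms survive outside of the standard $g$-null forms, and this is where most care is required in matching numerical coefficients.
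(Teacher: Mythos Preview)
Your proposal is correct and follows essentially the same route as the paper: expand $\square_g$ via \eqref{E:DETGGINVERSE}, substitute the first-order equations and their differentiated consequences, and absorb the residual second-order and derivative-quadratic terms into the modified variables $\mathcal{C}^{\alpha}$, $\mathcal{D}$ and the $g$-null forms. The paper's execution differs only in organization: it systematically uses the Lie derivative $\mathcal{L}_u$ (rather than $u^{\kappa}\partial_{\kappa}$ directly) when treating $\vort^{\alpha}$ and $\uperpvort^{\alpha}(\vort)$, which cleanly packages the commutator terms, and it isolates the ``meticulous bookkeeping'' you anticipate for $\mathcal{C}^{\alpha}$ into a single long preliminary lemma (Lemma~\ref{L:IDOFNULLSTRUCTUREFORDIVCURLTRANSPORTSYSTEM}) listing roughly twenty identities $\mathscr{Q}_1,\dots,\mathscr{Q}_{21}$ that exhibit the null-form structure or the cancellations term by term before reassembling them in Prop.\,\ref{P:EQUATIONSFORMODIFIEDVORTICITYVORTICITY}.
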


\begin{remark}[Special structure of the inhomogeneous terms]
	\label{R:SPECIALSTRUCTUREINHOMOGENEOUSTERMS}
	We emphasize the following two points, which are of crucial importance for applications to shock waves
	(see Subsubsect.\,\ref{SSS:NONLINEARNULL} for further discussion):
	\textbf{i)} all inhomogeneous terms on the RHSs of the equations of the theorem
	are at most quadratic in the derivatives of 
	$(\Lnenth,\Ent,u^{\alpha},\GradEnt^{\alpha},\vort^{\alpha})$
	and \textbf{ii)}
	\textbf{all derivative-quadratic terms on the RHSs of the equations of the theorem
	are linear combinations of standard $g$-null forms}. 
	In particular, the following are linear combinations of standard $g$-null forms, even though we did not explicitly state
	so in the theorem: the terms on the first and second lines
	of RHS~\eqref{E:MAINTHMSMODIFIEDDIVERGENCEENTROPYGRADIENTTRANSPORT}
	and the terms on the second and third lines of RHS~\eqref{E:MAINTHMTRANSPORTFORMODIFIEDVORTICITYOFVORTICITY}.
	We have separated these null forms, which involve the derivatives of $\vort$ and $\GradEnt$, 
	because they need to be handled with elliptic estimates, at least at the top derivative level
	(see the proof of Theorem~\ref{T:UPGRADEDLOCALWELLPOSEDNESS}).
	This is different compared to the terms $\mathfrak{Q}_{(\Lnenth)}$, $\mathfrak{Q}_{(u^{\alpha})}$, $\mathfrak{Q}_{(\mathcal{D})}$, 
	and $\mathfrak{Q}_{(\mathcal{C}^{\alpha})}$,
	which can be handled with standard energy estimates at all derivative levels.
\end{remark}

\begin{proof}
	Theorem~\ref{T:NEWFORMULATIONRELEULER} follows from a lengthy series of calculations, 
	most of which we derive later in the paper,
	except that we have somewhat reorganized 
	(using only simple algebra)
	the terms on the right-hand sides of the equations of the theorem.
	More precisely, we prove \eqref{E:MAINTHMCOVARIANTWAVEENTHALPY}-\eqref{E:MAINTHMENTHALPHYWAVEEQNLINEARTERMS} in
	Prop.\,\ref{P:COVARIANTWAVEENTHALPY}.
	
	We prove \eqref{E:MAINTHMCOVARIANTWAVEVELOCITY}-\eqref{E:MAINTHMVELOCITYWAVEEQNLINEARTERMS}
	in Prop.\,\ref{P:COVARIANTWAVEVELOCITY}.
	
	We prove \eqref{E:MAINTHMENTAUXILIARYWAVEQUATION}-\eqref{E:MAINTHMENTAUXILIARYWAVEQUATIONLINEARTERMS}
	in Prop.\,\ref{P:COVARIANTWAVEENT}.
	
	Equation \eqref{E:MAINTHMENTROPYGRADIENTEVOLUTION} follows from raising the indices of \eqref{E:ENTROPYGRADIENTEVOLUTION}
	with the inverse Minkowski metric.
	
	We prove \eqref{E:MAINTHMVORTICITYTRANSPORT}
	in Prop.\,\ref{P:VORTICITYTRANSPORT}.
	
	Except for \eqref{E:MAINTHMUPERPCURLOFENTROPYGRADIENTISZERO},
	\eqref{E:MAINTHMSMODIFIEDDIVERGENCEENTROPYGRADIENTTRANSPORT}-\eqref{E:MAINTHMENTROPYGRADIENTDIVERGENCELINEARTERMS}
	follow from Prop.\,\ref{P:DIVERGENCEOFENTROPYGRADIENTTRANSPORT}.
	\eqref{E:MAINTHMUPERPCURLOFENTROPYGRADIENTISZERO} is a simple consequence of definition
	\eqref{E:UORTHGONALVORTICITYOFONEFORM}
	and the symmetry property 
	$\partial_{\alpha} \GradEnt_{\beta} = \partial_{\beta} \GradEnt_{\alpha}$
	(see \eqref{E:ENTSYMMETRYOFMIXEDPARTIALS}).
	
	Finally, we prove
	\eqref{E:MAINTHMDIVOFVORTICITY}-\eqref{E:MAINTHMVORTICITYOFVORTICITYLINEARTERMS}
	in Prop.\,\ref{P:EQUATIONSFORMODIFIEDVORTICITYVORTICITY}.
\end{proof}

\section{Preliminary identities}
\label{S:PRELIMINID}
In the next lemma, we derive some preliminary identities that we will later use when
deriving the equations stated in Theorem~\ref{T:NEWFORMULATIONRELEULER}.

\begin{lemma}[Some useful identities]
	\label{L:IDENTITIES}
	Assume that $(\Lnenth,\Ent,u^{\alpha})$ is a $C^2$ solution 
	to \eqref{E:ENTHALPYEVOLUTION}-\eqref{E:ENTROPYEVOLUTION} + \eqref{E:UISUNITLENGTH},
	and let $V_{\alpha}$ be any $C^1$ one-form.
	Then the following identities hold:
	\begin{align} \label{E:ENTSYMMETRYOFMIXEDPARTIALS}
		\partial_{\alpha} \GradEnt_{\beta}
		& = \partial_{\beta} \GradEnt_{\alpha},
	\end{align}
	
	\begin{align} \label{E:VORTISORTHGONALTOU}
		\vort^{\kappa} u_{\kappa} = 0,
	\end{align}
	
	\begin{align} \label{E:DERIVATIVEOFVELOCITYCONTRACTEDWITHVELOCITYISZERO}
		u^{\kappa} \partial_{\alpha} u_{\kappa}
		& = 0,
	\end{align}
	
	\begin{align} \label{E:TRANSFERDERIVATIVESFROMENTROPYGRADIENTTOVORTICITY}
		u^{\kappa} \partial_{\alpha} \GradEnt_{\kappa}
		& =
		-
		\GradEnt^{\kappa} \partial_{\alpha} u_{\kappa},
	\end{align}
	
		\begin{align} \label{E:TRANSFERDERIVATIVESFROMVORTICITYTOVELOCITY}
			u^{\kappa} \partial_{\alpha} \vort_{\kappa}
			& =
			-
			\vort^{\kappa} \partial_{\alpha} u_{\kappa},
		\end{align}

	\begin{align} \label{E:COORDINATEDERIVATIVEINTERMSOFUDERIVAIVESANDORTHOGONALDERIVATIVES}
		\partial_{\alpha}
		& =
			-
			u_{\alpha} u^{\kappa} \partial_{\kappa} 
			+
			\Pi_{\alpha}^{\ \kappa} \partial_{\kappa},
	\end{align}
	
	\begin{align} \label{E:DIVOFVINTERMSOFUDERIVAIVESANDORTHOGONALDIVERGENCE}
		\partial_{\kappa} V^{\kappa}
		& = 
		-
		u_{\kappa} u^{\lambda} \partial_{\lambda} V^{\kappa}
		+
		\Pi^{\kappa \lambda} \partial_{\kappa} V_{\lambda},
	\end{align}
	
	\begin{align} \label{E:ANTISYMMETRICPARTOFGRADIENTVINTERMSOFVORTICITYANDUPARTOFV}
			\partial_{\alpha} V_{\beta}
			 - 
			\partial_{\beta} V_{\alpha}
				& = 
				\upepsilon_{\alpha \beta \gamma \delta} u^{\gamma} \uperpvort^{\delta}(V)
				+
				u_{\alpha} u^{\kappa} \partial_{\beta} V_{\kappa}
				-
				u_{\beta} u^{\kappa} \partial_{\alpha} V_{\kappa}
				\\
				& \ \ +
				u_{\beta} u^{\kappa} \partial_{\kappa} V_{\alpha}
				-
				u_{\alpha} u^{\kappa} \partial_{\kappa} V_{\beta},
				\notag
	\end{align}
	
	\begin{align} \label{E:UPERPNORMSQUAREDOFANTISYMMETRICPARTINTERMSOFSQUARENORMOFVORTICITY}
		\Pi^{\alpha \beta} \Pi^{\gamma \delta} 
			(\partial_{\alpha} V_{\gamma} - \partial _{\gamma} V_{\alpha}) 
			(\partial_{\beta} V_{\delta} - \partial_{\delta} V_{\beta})
		& = 
		2 \Pi^{\alpha \beta} \uperpvort_{\alpha}(V) \uperpvort_{\beta}(V).
	\end{align}
	
	Moreover, if $u^{\kappa} V_{\kappa} = 0$, then
	\begin{align} \label{E:VANDUAREORTHOGONALANTISYMMETRICPARTOFGRADIENTVINTERMSOFVORTICITYANDUPARTOFV}
			\partial_{\alpha} V_{\beta}
			 - 
			\partial_{\beta} V_{\alpha}
				& = 
				\upepsilon_{\alpha \beta \gamma \delta} u^{\gamma} \uperpvort^{\delta}(V)
				-
				u_{\alpha} V_{\kappa} \partial_{\beta} u^{\kappa}
				+
				u_{\beta} V_{\kappa} \partial_{\alpha} u^{\kappa} 
				\\
				& \ \
				+
				u_{\beta} u^{\kappa} \partial_{\kappa} V_{\alpha}
				-
				u_{\alpha} u^{\kappa} \partial_{\kappa} V_{\beta}.
				\notag
	\end{align}
	
	In addition, the following identity holds,
	where the indices for $\epsilon$ on LHS~\eqref{E:LIEUVOLUMEFORMINDICESUP} are raised before Lie differentiation:
	\begin{align}
		\mathcal{L}_u (\upepsilon^{\alpha \beta \gamma \delta})
		& = (-\partial_{\kappa} u^{\kappa}) \upepsilon^{\alpha \beta \gamma \delta}.
		 \label{E:LIEUVOLUMEFORMINDICESUP}	
	\end{align}

	Furthermore, the following identities hold:
	\begin{align}
		\mathcal{L}_u  (u_{\flat})_{\alpha}
		& = u^{\kappa} \partial_{\kappa} u_{\alpha}
			= 
			- 
			\partial_{\alpha} \Lnenth
			-
			u_{\alpha} u^{\kappa} \partial_{\kappa} \Lnenth
			+
			\TempoverEnth \GradEnt_{\alpha},
				\label{E:LIEUVELOCITYONEFORM}
	\end{align}
	
	\begin{align} \label{E:LIEUOFEXTERIORDERIVATIVEOFENTHALPHYTIMESVELOCITYONEFORMID}
		\mathcal{L}_u d (\Enth u_{\flat})
		& = d \mathcal{L}_u (\Enth u_{\flat}),
	\end{align}
	
	\begin{align} \label{E:LIEUOFEXTERIORDERIVATIVEOFENTHALPHYTIMESVELOCITYONEFORMEVOLUTION}
		[\mathcal{L}_u d (\Enth u_{\flat})]_{\alpha \beta}
		& =
		\Temp_{;\Lnenth}(\partial_{\alpha} \Lnenth) \partial_{\beta} \Ent
		-
		\Temp_{;\Lnenth} (\partial_{\alpha} \Ent) \partial_{\beta} \Lnenth,
	\end{align}
	
	\begin{align} \label{E:IDANTISYMMETRICPARTOFGRADIENTOFENTHALPHYTIMESVELOCITY}
		\partial_{\alpha} (\Enth u_{\beta})
		-
		\partial_{\beta} (\Enth u_{\alpha})
		& 
		= 
		\upepsilon_{\alpha \beta \gamma \delta} u^{\gamma} \vort^{\delta}
		+
		\Temp 
		\left\lbrace
			\GradEnt_{\alpha} u_{\beta}  
			- 
			\GradEnt_{\beta} u_{\alpha} 
		\right\rbrace,
	\end{align}
		
	\begin{align} \label{E:CONTRACTIONVOLUMEFORMANDEXTERIORDERIVATIVEOFENTHALPHYTIMESVELOCITY}
	\upepsilon^{\alpha \beta \gamma \delta}
	\partial_{\gamma} (\Enth u_{\delta})
	& 
	= \vort^{\alpha} u^{\beta} - u^{\alpha} \vort^{\beta}
	+
	\Temp
	\upepsilon^{\alpha \beta \gamma \delta}
	\GradEnt_{\gamma} u_{\delta}, 
\end{align}

	\begin{align} \label{E:PARITALALPHAUBETAMINUSPARTIALBETAUALPHA}
		\partial_{\alpha} u_{\beta}
		-
		\partial_{\beta} u_{\alpha}
		& 
		= \frac{1}{\Enth} \upepsilon_{\alpha \beta \gamma \delta} u^{\gamma} \vort^{\delta}
			- 
			(\partial_{\alpha} \Lnenth) u_{\beta}
			+ 
			(\partial_{\beta} \Lnenth) u_{\alpha}
		\\
		& \ \
			+
		\TempoverEnth 
		\left\lbrace
			\GradEnt_{\alpha} u_{\beta}  
			- 
			\GradEnt_{\beta} u_{\alpha} 
		\right\rbrace,
		\notag
	\end{align}

	\begin{align} \label{E:IDVELOCITYDERIVATIVEOFVELOCITYCONTRACTEDWITHENTROPYGRADIENT}
		(u^{\kappa} \partial_{\kappa} u_{\lambda}) \GradEnt^{\lambda}
		& = - \GradEnt^{\kappa} \partial_{\kappa} \Lnenth
			+
			\TempoverEnth \GradEnt_{\kappa} \GradEnt^{\kappa},
	\end{align}
	
	\begin{align} \label{E:IDVELOCITYDERIVATIVEOFENTROPYGRADIENTCONTRACTEDWITHVELOCITY}
		(u^{\kappa} \partial_{\kappa} \GradEnt_{\lambda}) u^{\lambda}
		& = \GradEnt^{\kappa} \partial_{\kappa} \Lnenth
			-
			\TempoverEnth \GradEnt_{\kappa} \GradEnt^{\kappa},
	\end{align}

	\begin{align} \label{E:ENTROPYGRADIENTCONTRACTEDAGAINSTSECONDSLOTPARITALALPHAUBETAMINUSPARTIALBETAUALPHA}
		\GradEnt^{\kappa} \partial_{\alpha} u_{\kappa}
		& 
		= 
			\GradEnt^{\kappa} \partial_{\kappa} u_{\alpha}
			+ 
			(\GradEnt^{\kappa} \partial_{\kappa} \Lnenth) u_{\alpha}
			-
			\TempoverEnth 
			\GradEnt^{\kappa} \GradEnt_{\kappa} u_{\alpha} 
			+
			\frac{1}{\Enth} \upepsilon_{\alpha \kappa \gamma \delta} \GradEnt^{\kappa} u^{\gamma} \vort^{\delta}
				\\
		& =
			\GradEnt^{\kappa} \partial_{\kappa} u_{\alpha}
			- 
			(\GradEnt^{\kappa} u^{\lambda} \partial_{\lambda} u_{\kappa}) u_{\alpha}
			+
			\frac{1}{\Enth} \upepsilon_{\alpha \beta \gamma \delta} \GradEnt^{\beta} u^{\gamma} \vort^{\delta},
			\notag
	\end{align}

	\begin{align} \label{E:VORTICITYCONTRACTEDPARITALALPHAUBETAMINUSPARTIALBETAUALPHA}
			\vort^{\kappa} \partial_{\kappa} u_{\alpha}
			& 
		= 
			\vort^{\kappa} \partial_{\alpha} u_{\kappa}
			- 
			(\vort^{\kappa} \partial_{\kappa} \Lnenth) u_{\alpha}
			+
			\TempoverEnth
			\vort^{\kappa} \GradEnt_{\kappa} u_{\alpha},
	\end{align}

	\begin{align} \label{E:GRADIENTOFUCONTRACTEDAGAINSTVOLUMEFORM}
		\upepsilon^{\alpha \beta \gamma \delta} \partial_{\gamma} u_{\delta}
		& = 
			\frac{1}{\Enth}
			\vort^{\alpha} u^{\beta} 
			- 
			\frac{1}{\Enth}
			u^{\alpha} \vort^{\beta}
			-
			\upepsilon^{\alpha \beta \gamma \delta} (\partial_{\gamma} \Lnenth) u_{\delta}
			+
			\TempoverEnth
			\upepsilon^{\alpha \beta \gamma \delta}
			\GradEnt_{\gamma} u_{\delta},
	\end{align}
	
	\begin{align} \label{E:GRADIENTOFUCONTRACTEDAGAINSTVOLUMEFORMANDVELOCITY}
		\upepsilon^{\alpha \beta \gamma \delta} u_{\beta} \partial_{\gamma} u_{\delta}
		& = 
			-
			\frac{1}{\Enth}
			\vort^{\alpha},
	\end{align}
	
	\begin{align} \label{E:IDANTISYMMETRICPARTOFGRADIENTOFVORTICITY}
	\partial_{\gamma} \vort_{\delta}
	-
	\partial_{\delta} \vort_{\gamma}
	& = 
	\upepsilon_{\gamma \delta \kappa \lambda} u^{\kappa} \uperpvort^{\lambda}(\vort)
	-
	(u^{\kappa} \partial_{\kappa} \vort_{\delta}) u_{\gamma}
	+
	u^{\kappa} (\partial_{\delta} \vort_{\kappa}) u_{\gamma}
	\\
	& \ \
	+
	(u^{\kappa} \partial_{\kappa} \vort_{\gamma}) u_{\delta}
	-
	u^{\kappa} (\partial_{\gamma} \vort_{\kappa}) u_{\delta},
	\notag
\end{align}

\begin{align} \label{E:CONTRACTIONOFEXTERIORDERIVAITVEOFVORTICITYOFVORTICITYAGAINSTVOLUMEFORM}
	\upepsilon^{\alpha \beta \gamma \delta} 
	\partial_{\gamma} \vort_{\delta}
	& = 
	\uperpvort^{\alpha}(\vort) u^{\beta} 
	-
	u^{\alpha} \uperpvort^{\beta}(\vort)
	+
	\upepsilon^{\alpha \beta \gamma \delta}
	(u^{\kappa} \partial_{\kappa} \vort_{\gamma}) u_{\delta}
		\\
	& \ \
	-
	\upepsilon^{\alpha \beta \gamma \delta}
	u^{\kappa} (\partial_{\gamma} \vort_{\kappa}) u_{\delta}.
	\notag
\end{align}
	
\end{lemma}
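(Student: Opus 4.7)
The plan is to derive the identities in a logical order, establishing the algebraic backbone first and then feeding it into the thermodynamic/evolutionary identities. I would begin with the purely kinematic statements. Identity \eqref{E:ENTSYMMETRYOFMIXEDPARTIALS} is immediate from the definition $\GradEnt_\alpha = \partial_\alpha \Ent$ and Schwarz's theorem. Identity \eqref{E:VORTISORTHGONALTOU} follows by contracting the defining formula \eqref{E:VORTICITYDEF} with $u_\alpha$ and using the antisymmetry of $\upepsilon^{\alpha\beta\gamma\delta}$ in $(\alpha,\beta)$. Identity \eqref{E:DERIVATIVEOFVELOCITYCONTRACTEDWITHVELOCITYISZERO} is obtained by differentiating $u_\kappa u^\kappa = -1$. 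Then \eqref{E:TRANSFERDERIVATIVESFROMENTROPYGRADIENTTOVORTICITY} and \eqref{E:TRANSFERDERIVATIVESFROMVORTICITYTOVELOCITY} come from differentiating the Minkowski orthogonality relations \eqref{E:VELOCITYANDENTGRADIENTAREMINKOWSKIPERP} and \eqref{E:VORTISORTHGONALTOU}. Identities \eqref{E:COORDINATEDERIVATIVEINTERMSOFUDERIVAIVESANDORTHOGONALDERIVATIVES} and \eqref{E:DIVOFVINTERMSOFUDERIVAIVESANDORTHOGONALDIVERGENCE} then follow by substituting $\updelta_\alpha^{\ \kappa} = -u_\alpha u^\kappa + \Pi_\alpha^{\ \kappa}$, which is the tensorial content of \eqref{E:UPERPPROJECTION}.

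Next I would prove the two central decomposition identities \eqref{E:ANTISYMMETRICPARTOFGRADIENTVINTERMSOFVORTICITYANDUPARTOFV} and \eqref{E:VANDUAREORTHOGONALANTISYMMETRICPARTOFGRADIENTVINTERMSOFVORTICITYANDUPARTOFV}; these are the engine behind most of the remaining statements. The strategy is to split the two-form $(dV)_{\alpha\beta}$ into its $u$-parallel and $u$-orthogonal pieces using the projector $\Pi$. Writing $\updelta_\alpha^{\ \mu}\updelta_\beta^{\ \nu} = (\Pi_\alpha^{\ \mu} - u_\alpha u^\mu)(\Pi_\beta^{\ \nu} - u_\beta u^\nu)$ and applying it to $(dV)_{\mu\nu}$ produces four pieces; the $u$-parallel pieces give exactly the $u_\alpha u^\kappa \partial \cdot V_\kappa$ terms on the right-hand side, and the fully $\Pi$-projected antisymmetric piece is rewritten using the standard identity $\Pi^{\mu}_{\ \gamma}\Pi^{\nu}_{\ \delta}\upepsilon_{\mu\nu\alpha\beta}u^\alpha = \upepsilon_{\gamma\delta\alpha\beta}u^\alpha$ to convert it into $\upepsilon_{\alpha\beta\gamma\delta}u^\gamma \uperpvort^\delta(V)$. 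When $u^\kappa V_\kappa = 0$ one substitutes $u^\kappa \partial_\alpha V_\kappa = - V_\kappa \partial_\alpha u^\kappa$ to get the simplified form \eqref{E:VANDUAREORTHOGONALANTISYMMETRICPARTOFGRADIENTVINTERMSOFVORTICITYANDUPARTOFV}. Identity \eqref{E:UPERPNORMSQUAREDOFANTISYMMETRICPARTINTERMSOFSQUARENORMOFVORTICITY} then follows from \eqref{E:ANTISYMMETRICPARTOFGRADIENTVINTERMSOFVORTICITYANDUPARTOFV} after contracting with $\Pi^{\alpha\beta}\Pi^{\gamma\delta}$ (the $u$-parallel pieces die), and expanding the product of two $\upepsilon$'s in terms of Minkowski metrics and then projecting with $\Pi$.

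The Lie-derivative identities are next. Identity \eqref{E:LIEUVOLUMEFORMINDICESUP} follows from $\mathcal{L}_u \upepsilon_{\alpha\beta\gamma\delta} = (\partial_\kappa u^\kappa)\upepsilon_{\alpha\beta\gamma\delta}$ (the standard formula, since $\upepsilon$ with lower indices is $\sqrt{|\det\upeta|}$ times the Levi-Civita symbol and Minkowski has constant determinant) combined with the sign flip from raising four indices by $\upeta^{-1}$ and the fact that $\mathcal{L}_u \upeta \neq 0$ is accounted for. Identity \eqref{E:LIEUVELOCITYONEFORM} combines the coordinate formula \eqref{E:LIEDERIVATIVE} (together with \eqref{E:DERIVATIVEOFVELOCITYCONTRACTEDWITHVELOCITYISZERO}) with the velocity evolution equation \eqref{E:VELOCITYEVOLUTION}. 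Then \eqref{E:LIEUOFEXTERIORDERIVATIVEOFENTHALPHYTIMESVELOCITYONEFORMID} is the standard fact $\mathcal{L}_X \circ d = d \circ \mathcal{L}_X$. The key step is \eqref{E:LIEUOFEXTERIORDERIVATIVEOFENTHALPHYTIMESVELOCITYONEFORMEVOLUTION}: using \eqref{E:EVOLUTIONFORVELOCITYTIMESENTHALPHY} and the Lie-derivative/derivative commutation just above, one has $[\mathcal{L}_u d(\Enth u_\flat)]_{\alpha\beta} = \partial_\alpha(\Temp \GradEnt_\beta - \partial_\beta \Enth) - \partial_\beta(\Temp \GradEnt_\alpha - \partial_\alpha \Enth)$; the $\partial \partial \Enth$ terms cancel, and expanding $\partial_\alpha \Temp = \Temp_{;\Lnenth}\partial_\alpha\Lnenth + \Temp_{;\Ent}\GradEnt_\alpha$ makes the $\Temp_{;\Ent}$ contributions antisymmetrize to zero, leaving exactly the claimed $\Temp_{;\Lnenth}$ terms.

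Finally I would deduce the remaining algebraic consequences. Identity \eqref{E:IDANTISYMMETRICPARTOFGRADIENTOFENTHALPHYTIMESVELOCITY} is obtained by applying \eqref{E:VANDUAREORTHOGONALANTISYMMETRICPARTOFGRADIENTVINTERMSOFVORTICITYANDUPARTOFV} to $V = \Enth u_\flat$ (which satisfies $u^\kappa (\Enth u_\flat)_\kappa = -\Enth$, so one must either use the full \eqref{E:ANTISYMMETRICPARTOFGRADIENTVINTERMSOFVORTICITYANDUPARTOFV} and simplify the $u_\alpha u^\kappa\partial_\beta(\Enth u_\kappa)$ terms using \eqref{E:DERIVATIVEOFVELOCITYCONTRACTEDWITHVELOCITYISZERO}, which collapses them to the $\Temp\{\GradEnt_\alpha u_\beta - \GradEnt_\beta u_\alpha\}$ contribution via \eqref{E:EVOLUTIONFORVELOCITYTIMESENTHALPHY}); \eqref{E:PARITALALPHAUBETAMINUSPARTIALBETAUALPHA} is \eqref{E:IDANTISYMMETRICPARTOFGRADIENTOFENTHALPHYTIMESVELOCITY} divided by $\Enth$ after using $\partial_\alpha \Enth = \Enth \partial_\alpha \Lnenth$. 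Then \eqref{E:CONTRACTIONVOLUMEFORMANDEXTERIORDERIVATIVEOFENTHALPHYTIMESVELOCITY}, \eqref{E:GRADIENTOFUCONTRACTEDAGAINSTVOLUMEFORM}, \eqref{E:GRADIENTOFUCONTRACTEDAGAINSTVOLUMEFORMANDVELOCITY}, \eqref{E:ENTROPYGRADIENTCONTRACTEDAGAINSTSECONDSLOTPARITALALPHAUBETAMINUSPARTIALBETAUALPHA}, and \eqref{E:VORTICITYCONTRACTEDPARITALALPHAUBETAMINUSPARTIALBETAUALPHA} follow by contracting \eqref{E:IDANTISYMMETRICPARTOFGRADIENTOFENTHALPHYTIMESVELOCITY} or \eqref{E:PARITALALPHAUBETAMINUSPARTIALBETAUALPHA} with $\upepsilon$, $u$, $\GradEnt$, or $\vort$ and using the algebraic identity $\upepsilon^{\alpha\beta\gamma\delta}\upepsilon_{\mu\nu\gamma\delta} = -2(\updelta^\alpha_\mu \updelta^\beta_\nu - \updelta^\alpha_\nu \updelta^\beta_\mu)$ and \eqref{E:VORTISORTHGONALTOU}. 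Identities \eqref{E:IDVELOCITYDERIVATIVEOFVELOCITYCONTRACTEDWITHENTROPYGRADIENT} and \eqref{E:IDVELOCITYDERIVATIVEOFENTROPYGRADIENTCONTRACTEDWITHVELOCITY} are immediate contractions of \eqref{E:LIEUVELOCITYONEFORM} with $\GradEnt^\alpha$ using \eqref{E:VELOCITYANDENTGRADIENTAREMINKOWSKIPERP}, combined with \eqref{E:TRANSFERDERIVATIVESFROMENTROPYGRADIENTTOVORTICITY}. Finally, \eqref{E:IDANTISYMMETRICPARTOFGRADIENTOFVORTICITY} is \eqref{E:ANTISYMMETRICPARTOFGRADIENTVINTERMSOFVORTICITYANDUPARTOFV} applied to $V = \vort$ after invoking \eqref{E:TRANSFERDERIVATIVESFROMVORTICITYTOVELOCITY}, and \eqref{E:CONTRACTIONOFEXTERIORDERIVAITVEOFVORTICITYOFVORTICITYAGAINSTVOLUMEFORM} follows by contracting \eqref{E:IDANTISYMMETRICPARTOFGRADIENTOFVORTICITY} with $\upepsilon$.

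The main obstacle is bookkeeping rather than any single subtle step: the decomposition identity \eqref{E:ANTISYMMETRICPARTOFGRADIENTVINTERMSOFVORTICITYANDUPARTOFV} is the conceptual heart, but once it is available the remaining identities are specializations and contractions, and the only real care required is in \eqref{E:LIEUOFEXTERIORDERIVATIVEOFENTHALPHYTIMESVELOCITYONEFORMEVOLUTION}, where one must track the $\Temp_{;\Lnenth}$ versus $\Temp_{;\Ent}$ contributions carefully to see the latter antisymmetrize to zero.
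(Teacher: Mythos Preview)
Your proposal is correct and follows essentially the same route as the paper's proof. The only cosmetic differences are that the paper derives the key decomposition \eqref{E:ANTISYMMETRICPARTOFGRADIENTVINTERMSOFVORTICITYANDUPARTOFV} by computing $\upepsilon_{\alpha\beta\gamma\delta} u^\gamma \uperpvort^\delta(V)$ directly via the three-index $\upepsilon\cdot\upepsilon$ contraction identity (rather than your projector-splitting framing, which is the same algebra), and it re-derives \eqref{E:IDANTISYMMETRICPARTOFGRADIENTOFENTHALPHYTIMESVELOCITY} and \eqref{E:IDANTISYMMETRICPARTOFGRADIENTOFVORTICITY} by repeating that $\upepsilon\cdot\upepsilon$ computation rather than specializing the general identity as you do.
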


\begin{proof}
		\eqref{E:ENTSYMMETRYOFMIXEDPARTIALS} follows from definition \eqref{E:ENTROPYGRADIENT} and the symmetry property
		$\partial_{\alpha} \partial_{\beta} \Ent = \partial_{\beta} \partial_{\alpha} \Ent$.
		
		\eqref{E:VORTISORTHGONALTOU} is a simple consequence of definition \eqref{D:VORTICITYDEF}.
		
		\eqref{E:DERIVATIVEOFVELOCITYCONTRACTEDWITHVELOCITYISZERO} follows from differentiating
		\eqref{E:UISUNITLENGTH} with $\partial_{\alpha}$.
		
		\eqref{E:TRANSFERDERIVATIVESFROMENTROPYGRADIENTTOVORTICITY} follows from differentiating
		\eqref{E:VELOCITYANDENTGRADIENTAREMINKOWSKIPERP} with $\partial_{\alpha}$.
		
		\eqref{E:TRANSFERDERIVATIVESFROMVORTICITYTOVELOCITY} follows from differentiating
		\eqref{E:VORTISORTHGONALTOU} with $\partial_{\alpha}$.
		
		\eqref{E:COORDINATEDERIVATIVEINTERMSOFUDERIVAIVESANDORTHOGONALDERIVATIVES}
		follows directly from definition \eqref{E:UPERPPROJECTION}.
		
		\eqref{E:DIVOFVINTERMSOFUDERIVAIVESANDORTHOGONALDIVERGENCE} then
		follows from \eqref{E:COORDINATEDERIVATIVEINTERMSOFUDERIVAIVESANDORTHOGONALDERIVATIVES}.
		
		To prove \eqref{E:ANTISYMMETRICPARTOFGRADIENTVINTERMSOFVORTICITYANDUPARTOFV},
	we first use definition \eqref{E:UORTHGONALVORTICITYOFONEFORM}
	to express the first product on RHS~\eqref{E:ANTISYMMETRICPARTOFGRADIENTVINTERMSOFVORTICITYANDUPARTOFV} as follows:
	\begin{align} \label{E:CONTRACTIONVOLUMEFORMVELOCITYVORTICITY}
		\upepsilon_{\alpha \beta \gamma \delta} u^{\gamma} \uperpvort^{\delta}(V)
		& =
		- 
		\upepsilon_{\alpha \beta \gamma \delta} 
		\upepsilon^{\delta \theta \kappa \lambda}
		u^{\gamma} u_{\theta} \partial_{\kappa} V_{\lambda}.
	\end{align}
	Next, we observe the following identity for the first two factors on RHS~\eqref{E:CONTRACTIONVOLUMEFORMVELOCITYVORTICITY}:
	\begin{align} \label{E:VOLUMEFORMONECONTRACTIONIDENTITY}
		- 
		\upepsilon_{\alpha \beta \gamma \delta} 
		\upepsilon^{\delta \theta \kappa \lambda}
		& 
		=
		\upepsilon_{\alpha \beta \gamma \delta} 
		\upepsilon^{\theta \kappa \lambda \delta}
			\\
		& = \updelta_{\alpha}^{\theta} \updelta_{\gamma}^{\kappa} \updelta_{\beta}^{\lambda}
				-
				\updelta_{\alpha}^{\theta} \updelta_{\gamma}^{\lambda} \updelta_{\beta}^{\kappa}
				+
				\updelta_{\alpha}^{\lambda} \updelta_{\gamma}^{\theta} \updelta_{\beta}^{\kappa}
				-
				\updelta_{\alpha}^{\lambda} \updelta_{\gamma}^{\kappa} \updelta_{\beta}^{\theta}
				+
				\updelta_{\alpha}^{\kappa} \updelta_{\gamma}^{\lambda} \updelta_{\beta}^{\theta}
				-
				\updelta_{\alpha}^{\kappa} \updelta_{\gamma}^{\theta} \updelta_{\beta}^{\lambda}.
			\notag
	\end{align}
	Using \eqref{E:VOLUMEFORMONECONTRACTIONIDENTITY} to substitute
	on RHS~\eqref{E:CONTRACTIONVOLUMEFORMVELOCITYVORTICITY},
	we deduce, in view of \eqref{E:UISUNITLENGTH},
	the following identity:
	\begin{align} \label{E:SECONDIDCONTRACTIONVOLUMEFORMVELOCITYVORTICITY}
		- 
		\upepsilon_{\alpha \beta \gamma \delta} 
		\upepsilon^{\delta \theta \kappa \lambda}
		u_{\theta} \partial_{\kappa} V_{\lambda}
		u^{\gamma} 
		& = 
			u_{\alpha} u^{\kappa} \partial_{\kappa} V_{\beta} 
			-
			u_{\alpha} u^{\kappa} \partial_{\beta} V_{\kappa}
			- 
			\partial_{\beta} V_{\alpha}
			\\
			& \ \ 
			-
			u_{\beta} u^{\kappa} \partial_{\kappa} V_{\alpha}
			+
			u_{\beta} u^{\kappa} \partial_{\alpha} V_{\kappa}
			+ 
			\partial_{\alpha} V_{\beta}.
			\notag
	\end{align}
	Combining \eqref{E:CONTRACTIONVOLUMEFORMVELOCITYVORTICITY}
	and \eqref{E:SECONDIDCONTRACTIONVOLUMEFORMVELOCITYVORTICITY} and
	rearranging the terms, we arrive at the desired identity
	\eqref{E:ANTISYMMETRICPARTOFGRADIENTVINTERMSOFVORTICITYANDUPARTOFV}.
	
	\eqref{E:VANDUAREORTHOGONALANTISYMMETRICPARTOFGRADIENTVINTERMSOFVORTICITYANDUPARTOFV}
	then follows from \eqref{E:ANTISYMMETRICPARTOFGRADIENTVINTERMSOFVORTICITYANDUPARTOFV}
	and the relation
	$u^{\kappa} \partial_{\alpha} V_{\kappa} = - V_{\kappa} \partial_{\alpha} u^{\kappa}$,
	which follows from differentiating the assumed identity
	$u^{\kappa} V_{\kappa} = 0$ with $\partial_{\alpha}$.
	
	To prove \eqref{E:UPERPNORMSQUAREDOFANTISYMMETRICPARTINTERMSOFSQUARENORMOFVORTICITY}, we first use
	\eqref{E:ANTISYMMETRICPARTOFGRADIENTVINTERMSOFVORTICITYANDUPARTOFV}
	to deduce
	\begin{align}
		\Pi^{\alpha \beta} & \Pi^{\gamma \delta} 
		(\partial_{\alpha} V_{\gamma} - \partial _{\gamma} V_{\alpha}) 
		(\partial_{\beta} V_{\delta} - \partial_{\delta} V_{\beta})
		 \\
		 & 
			=\Pi^{\alpha \beta} \Pi^{\gamma \delta}
				\upepsilon_{\alpha \gamma \kappa \lambda} 
				\upepsilon_{\beta \delta \mu \nu}
				u^{\kappa} \uperpvort^{\lambda}(V)
				u^{\mu} \uperpvort^{\nu}(V).
				\notag
	\end{align}
	Next, we note the following identity,
	which follows easily from definition \eqref{E:UPERPPROJECTION}:
	\begin{align} \label{E:PROJECTIONDOESNOTAFFECTEPSILONUCONTRACTION}
		\Pi^{\alpha \beta} & \Pi^{\gamma \delta}
				\upepsilon_{\alpha \gamma \kappa \lambda} 
				\upepsilon_{\beta \delta \mu \nu}
				u^{\kappa} \uperpvort^{\lambda}(V)
				u^{\mu} \uperpvort^{\nu}(V)
				\\
		& = (\upeta^{-1})^{\alpha \beta} (\upeta^{-1})^{\gamma \delta}
				\upepsilon_{\alpha \gamma \kappa \lambda} 
				\upepsilon_{\beta \delta \mu \nu}
				u^{\kappa} \uperpvort^{\lambda}(V)
				u^{\mu} \uperpvort^{\nu}(V)
				\notag
					\\
	 &  = \upepsilon^{\alpha \beta \kappa \lambda} 
				\upepsilon_{\alpha \beta \mu \nu}
				u_{\kappa} \uperpvort_{\lambda}(V)
				u^{\mu} \uperpvort^{\nu}(V).
				\notag
	\end{align}
	From \eqref{E:PROJECTIONDOESNOTAFFECTEPSILONUCONTRACTION},
	the identity
	$
	\upepsilon^{\alpha \beta \kappa \lambda} 
	\upepsilon_{\alpha \beta \mu \nu}
	= 
	2 \updelta_{\mu}^{\lambda} \updelta_{\nu}^{\kappa}
	- 
	2 \updelta_{\mu}^{\kappa} \updelta_{\nu}^{\lambda}
	$,
	\eqref{E:UISUNITLENGTH},
	and the simple identity
	$u_{\alpha} \uperpvort^{\alpha}(V) = 0$ (which follows easily from definition \eqref{E:UORTHGONALVORTICITYOFONEFORM}),
	we find that 
	$\mbox{RHS~\eqref{E:PROJECTIONDOESNOTAFFECTEPSILONUCONTRACTION}} = 2 \uperpvort_{\alpha}(V) \uperpvort^{\alpha}(V)$.
	Again using that $u_{\alpha} \uperpvort^{\alpha}(V) = 0$, we conclude, in view of definition \eqref{E:UPERPPROJECTION},
	the desired identity \eqref{E:UPERPNORMSQUAREDOFANTISYMMETRICPARTINTERMSOFSQUARENORMOFVORTICITY}.
	
	\eqref{E:LIEUVOLUMEFORMINDICESUP} is a standard geometric identity,
		as is \eqref{E:LIEUOFEXTERIORDERIVATIVEOFENTHALPHYTIMESVELOCITYONEFORMID}.

		To prove \eqref{E:LIEUVELOCITYONEFORM},
		we first note the Lie differentiation identity
		$
		\mathcal{L}_u  (u_{\flat})_{\alpha}
		= u^{\kappa} \partial_{\kappa} u_{\alpha}
			+
			u_{\kappa} \partial_{\alpha} u^{\kappa}
		$,
		which follows from \eqref{E:LIEDERIVATIVE}.
		\eqref{E:LIEUVELOCITYONEFORM} follows from this identity,
	  \eqref{E:VELOCITYEVOLUTION}, and \eqref{E:DERIVATIVEOFVELOCITYCONTRACTEDWITHVELOCITYISZERO}.
		
		To prove \eqref{E:LIEUOFEXTERIORDERIVATIVEOFENTHALPHYTIMESVELOCITYONEFORMEVOLUTION},
	we first use
	\eqref{E:EVOLUTIONFORVELOCITYTIMESENTHALPHY}
	and the Lie derivative formula \eqref{E:LIEDERIVATIVE}
	to deduce that
	$
	\mathcal{L}_u (\Enth u_{\flat})_{\alpha}
	= 
	u^{\kappa} \partial_{\kappa} (\Enth u_{\alpha})
	+ 
	\Enth u_{\kappa} \partial_{\alpha} u^{\kappa}
	= 
	- \partial_{\alpha} \Enth
	+
	\Temp \partial_{\alpha} \Ent
	+
	\Enth u_{\kappa} \partial_{\alpha} u^{\kappa}
	$.
	From \eqref{E:DERIVATIVEOFVELOCITYCONTRACTEDWITHVELOCITYISZERO}, 
	we see that the last product on the RHS of this identity vanishes.
	Hence, taking the exterior derivative of the identity, we obtain
	$
	 [d \mathcal{L}_u (\Enth u_{\flat})]_{\alpha \beta}
	= \Temp_{;\Lnenth}(\partial_{\alpha} \Lnenth) \partial_{\beta} \Ent
		-
		\Temp_{;\Lnenth} (\partial_{\alpha} \Ent) \partial_{\beta} \Lnenth
	$.
	The desired identity
	\eqref{E:LIEUOFEXTERIORDERIVATIVEOFENTHALPHYTIMESVELOCITYONEFORMEVOLUTION}
	now follows from this identity and
	\eqref{E:LIEUOFEXTERIORDERIVATIVEOFENTHALPHYTIMESVELOCITYONEFORMID}.
		
		To prove \eqref{E:IDANTISYMMETRICPARTOFGRADIENTOFENTHALPHYTIMESVELOCITY},
		we first use definition \eqref{E:VORTICITYDEF}
		to compute that
		\begin{align}
				\upepsilon_{\alpha \beta \gamma \delta} u^{\gamma} \vort^{\delta}
			=
			-
			\upepsilon_{\alpha \beta \gamma \delta} 
			\upepsilon^{\delta \kappa \theta \lambda} 
			u^{\gamma} 
			u_{\kappa} \partial_{\theta} (\Enth u_{\lambda}).
			\notag
		\end{align}
	Using the identity
	$\upepsilon_{\alpha \beta \gamma \delta} 
	\upepsilon^{\delta \kappa \theta \lambda}
	=
	-
	\upepsilon_{\alpha \beta \gamma \delta} 
	\upepsilon^{\lambda \kappa \theta \delta}
	= 
		\updelta_{\alpha}^{\lambda} \updelta_{\beta}^{\kappa} \updelta_{\gamma}^{\theta}
		-
		\updelta_{\alpha}^{\lambda} \updelta_{\beta}^{\theta} \updelta_{\gamma}^{\kappa}
		+
		\updelta_{\alpha}^{\kappa} \updelta_{\beta}^{\theta} \updelta_{\gamma}^{\lambda}
		-
		\updelta_{\alpha}^{\kappa} \updelta_{\beta}^{\lambda} \updelta_{\gamma}^{\theta}
		+
		\updelta_{\alpha}^{\theta} \updelta_{\beta}^{\lambda} \updelta_{\gamma}^{\kappa}
		-
		\updelta_{\alpha}^{\theta} \updelta_{\beta}^{\kappa} \updelta_{\gamma}^{\lambda}
	$,
	we deduce, in view of \eqref{E:UISUNITLENGTH}, 
	that
	\begin{align} \label{E:FIRSTSTEPIDANTISYMMETRICPARTOFGRADIENTOFENTHALPHYTIMESVELOCITY}
			-
			\upepsilon_{\alpha \beta \gamma \delta} 
			\upepsilon^{\delta \kappa \theta \lambda} 
			u^{\gamma} 
			u_{\kappa} \partial_{\theta} (\Enth u_{\lambda})
		& = 
			\partial_{\alpha} (\Enth u_{\beta})
			-
			\partial_{\beta} (\Enth u_{\alpha})
			\\
	& \ \
			- 
			u_{\beta} u^{\kappa} \partial_{\kappa} (\Enth u_{\alpha})
			-
			u_{\alpha} u^{\kappa} \partial_{\beta} (\Enth u_{\kappa})
			\notag
			\\
			& \ \
			+
			u_{\alpha} u^{\kappa} \partial_{\kappa} (\Enth u_{\beta})
			+
			u_{\beta} u^{\kappa} \partial_{\alpha} (\Enth u_{\kappa}).
			\notag
	\end{align}
	Using \eqref{E:UISUNITLENGTH},
	\eqref{E:EVOLUTIONFORVELOCITYTIMESENTHALPHY},
	and \eqref{E:DERIVATIVEOFVELOCITYCONTRACTEDWITHVELOCITYISZERO},
	we compute that the last four products on RHS~\eqref{E:FIRSTSTEPIDANTISYMMETRICPARTOFGRADIENTOFENTHALPHYTIMESVELOCITY}
	sum to $\Temp (u_{\alpha} \GradEnt_{\beta} - u_{\beta} \GradEnt_{\alpha})$, 
	which yields the desired identity \eqref{E:IDANTISYMMETRICPARTOFGRADIENTOFENTHALPHYTIMESVELOCITY}.
	
To prove \eqref{E:CONTRACTIONVOLUMEFORMANDEXTERIORDERIVATIVEOFENTHALPHYTIMESVELOCITY},
we first contract 
$\frac{1}{2} \upepsilon^{\alpha \beta \gamma \delta}$
against \eqref{E:IDANTISYMMETRICPARTOFGRADIENTOFENTHALPHYTIMESVELOCITY}
to obtain the identity
\begin{align} \label{E:FIRSTSTEPCONTRACTIONVOLUMEFORMANDEXTERIORDERIVATIVEOFENTHALPHYTIMESVELOCITY}
	\upepsilon^{\alpha \beta \gamma \delta}
	\partial_{\gamma} (\Enth u_{\delta})
	& =
	\frac{1}{2} \upepsilon^{\alpha \beta \gamma \delta}
	\upepsilon_{\gamma \delta \kappa \lambda} 
	u^{\kappa} \vort^{\lambda}
	+
	\Temp
	\upepsilon^{\alpha \beta \gamma \delta}
	\GradEnt_{\gamma} u_{\delta}.
\end{align}
\eqref{E:CONTRACTIONVOLUMEFORMANDEXTERIORDERIVATIVEOFENTHALPHYTIMESVELOCITY}
now follows from		
using the identity
$
\frac{1}{2} \upepsilon^{\alpha \beta \gamma \delta}
	\upepsilon_{\gamma \delta \kappa \lambda} 
	= \updelta_{\kappa}^{\beta} \updelta_{\lambda}^{\alpha} 
		-
		\updelta_{\kappa}^{\alpha} \updelta_{\lambda}^{\beta} 
$
to substitute for the factor
$
\frac{1}{2} \upepsilon^{\alpha \beta \gamma \delta}
	\upepsilon_{\gamma \delta \kappa \lambda} 
$
on RHS~\eqref{E:FIRSTSTEPCONTRACTIONVOLUMEFORMANDEXTERIORDERIVATIVEOFENTHALPHYTIMESVELOCITY}.
	
	\eqref{E:PARITALALPHAUBETAMINUSPARTIALBETAUALPHA}
	follows from \eqref{E:IDANTISYMMETRICPARTOFGRADIENTOFENTHALPHYTIMESVELOCITY} and simple computations.
	
	To prove \eqref{E:IDVELOCITYDERIVATIVEOFVELOCITYCONTRACTEDWITHENTROPYGRADIENT},
	we contract $\GradEnt^{\alpha}$
	against equation \eqref{E:VELOCITYEVOLUTION} 
	and use equation \eqref{E:VELOCITYANDENTGRADIENTAREMINKOWSKIPERP}.
	
	\eqref{E:IDVELOCITYDERIVATIVEOFENTROPYGRADIENTCONTRACTEDWITHVELOCITY}
	then follows from \eqref{E:TRANSFERDERIVATIVESFROMENTROPYGRADIENTTOVORTICITY} 
	and 
	\eqref{E:IDVELOCITYDERIVATIVEOFVELOCITYCONTRACTEDWITHENTROPYGRADIENT}.
	
	To prove the first equality in \eqref{E:ENTROPYGRADIENTCONTRACTEDAGAINSTSECONDSLOTPARITALALPHAUBETAMINUSPARTIALBETAUALPHA},
	we contract $\GradEnt^{\beta}$ against \eqref{E:PARITALALPHAUBETAMINUSPARTIALBETAUALPHA}
	and use equation \eqref{E:VELOCITYANDENTGRADIENTAREMINKOWSKIPERP}. To obtain the second
	equality in \eqref{E:ENTROPYGRADIENTCONTRACTEDAGAINSTSECONDSLOTPARITALALPHAUBETAMINUSPARTIALBETAUALPHA},
	we use the first equality and the identity \eqref{E:IDVELOCITYDERIVATIVEOFVELOCITYCONTRACTEDWITHENTROPYGRADIENT}.

	\eqref{E:VORTICITYCONTRACTEDPARITALALPHAUBETAMINUSPARTIALBETAUALPHA}
	follows from contracting \eqref{E:PARITALALPHAUBETAMINUSPARTIALBETAUALPHA}
	against $\vort^{\alpha}$
	and using \eqref{E:VORTISORTHGONALTOU}.
	
	To prove \eqref{E:GRADIENTOFUCONTRACTEDAGAINSTVOLUMEFORM},
	we first use \eqref{E:PARITALALPHAUBETAMINUSPARTIALBETAUALPHA}
	to deduce that
	\begin{align} \label{E:FIRSTSTEPGRADIENTOFUCONTRACTEDAGAINSTVOLUMEFORM}
		\upepsilon^{\alpha \beta \gamma \delta} \partial_{\gamma} u_{\delta}
		& = 
			\frac{1}{2}
			\frac{1}{\Enth} 
			\upepsilon^{\alpha \beta \gamma \delta}
			\upepsilon_{\gamma \delta \kappa \lambda} u^{\kappa} \vort^{\lambda}
			-
			\upepsilon^{\alpha \beta \gamma \delta} (\partial_{\gamma} \Lnenth) u_{\delta}
			+
			\TempoverEnth
			\upepsilon^{\alpha \beta \gamma \delta}
			\GradEnt_{\gamma} u_{\delta}.
	\end{align}
	\eqref{E:GRADIENTOFUCONTRACTEDAGAINSTVOLUMEFORM}
	now follows from using the identity
	$
	\frac{1}{2}
		\upepsilon^{\alpha \beta \gamma \delta}
		\upepsilon_{\gamma \delta \kappa \lambda}
	= \updelta_{\kappa}^{\beta} \updelta_{\lambda}^{\alpha} 
		-
		\updelta_{\kappa}^{\alpha} \updelta_{\lambda}^{\beta} 
	$
	to substitute for the product
	$
	\frac{1}{2}
	\upepsilon^{\alpha \beta \gamma \delta}
	\upepsilon_{\gamma \delta \kappa \lambda}
	$
	on RHS~\eqref{E:FIRSTSTEPGRADIENTOFUCONTRACTEDAGAINSTVOLUMEFORM}.
	
	To prove \eqref{E:GRADIENTOFUCONTRACTEDAGAINSTVOLUMEFORMANDVELOCITY},
	we contract \eqref{E:GRADIENTOFUCONTRACTEDAGAINSTVOLUMEFORM} against $u_{\beta}$
	and use
	\eqref{E:UISUNITLENGTH} and \eqref{E:VORTISORTHGONALTOU}.
	
	To prove \eqref{E:IDANTISYMMETRICPARTOFGRADIENTOFVORTICITY},
	we first use definition \eqref{E:UORTHGONALVORTICITYOFONEFORM}
	to express the first product on RHS~\eqref{E:IDANTISYMMETRICPARTOFGRADIENTOFVORTICITY}
	as follows:
	\begin{align} \label{E:FIRSTSTEPIDANTISYMMETRICPARTOFGRADIENTOFVORTICITY}
		\upepsilon_{\gamma \delta \kappa \lambda} u^{\kappa} \uperpvort^{\lambda}(\vort)
		& 
		= 
		-
		\upepsilon_{\gamma \delta \kappa \lambda} 
		\upepsilon^{\lambda \theta \alpha \beta}
		u^{\kappa} 
		u_{\theta} \partial_{\alpha} \vort_{\beta}.
	\end{align}
	Next, we use the identity
	$
		-
		\upepsilon_{\gamma \delta \kappa \lambda} 
		\upepsilon^{\lambda \theta \alpha \beta}
		=
		\upepsilon_{\gamma \delta \kappa \lambda} 
		\upepsilon^{\theta \alpha \beta \lambda}
		=
		\updelta_{\gamma}^{\theta} \updelta_{\delta}^{\beta} \updelta_{\kappa}^{\alpha}
		-
		\updelta_{\gamma}^{\theta} \updelta_{\delta}^{\alpha} \updelta_{\kappa}^{\beta}
		+
		\updelta_{\gamma}^{\alpha} \updelta_{\delta}^{\theta} \updelta_{\kappa}^{\beta}
		-
		\updelta_{\gamma}^{\alpha} \updelta_{\delta}^{\beta} \updelta_{\kappa}^{\theta}
		+
		\updelta_{\gamma}^{\beta} \updelta_{\delta}^{\alpha} \updelta_{\kappa}^{\theta}
		-
		\updelta_{\gamma}^{\beta} \updelta_{\delta}^{\theta} \updelta_{\kappa}^{\alpha}
 	$
	to substitute on RHS~\eqref{E:FIRSTSTEPIDANTISYMMETRICPARTOFGRADIENTOFVORTICITY}, thereby obtaining,
	in view of \eqref{E:UISUNITLENGTH},
	the following identity:
	\begin{align} \label{E:SECONDSTEPIDANTISYMMETRICPARTOFGRADIENTOFVORTICITY}
		\upepsilon_{\gamma \delta \kappa \lambda} u^{\kappa} \uperpvort^{\lambda}(\vort)
		& 
		= 
		u_{\gamma} u^{\kappa} \partial_{\kappa} \vort_{\delta}
		-
		u_{\gamma} u^{\kappa} \partial_{\delta} \vort_{\kappa}
		+
		u_{\delta} u^{\kappa} \partial_{\gamma} \vort_{\kappa}
		+ 
		\partial_{\gamma} \vort_{\delta}
		\\
		& \ \
		- 
		\partial_{\delta} \vort_{\gamma}
		-
		u_{\delta} u^{\kappa} \partial_{\kappa} \vort_{\gamma}.
		\notag
	\end{align}
	Finally, we note that
	it is straightforward to see that
	\eqref{E:SECONDSTEPIDANTISYMMETRICPARTOFGRADIENTOFVORTICITY}
	is equivalent to the desired identity
	\eqref{E:IDANTISYMMETRICPARTOFGRADIENTOFVORTICITY}.

	To prove \eqref{E:CONTRACTIONOFEXTERIORDERIVAITVEOFVORTICITYOFVORTICITYAGAINSTVOLUMEFORM},
	we first contract \eqref{E:IDANTISYMMETRICPARTOFGRADIENTOFVORTICITY}
	against
	$\frac{1}{2} \upepsilon^{\alpha \beta \gamma \delta}$
	to deduce
	\begin{align} \label{E:FIRSTSTEPCONTRACTIONOFEXTERIORDERIVAITVEOFVORTICITYOFVORTICITYAGAINSTVOLUMEFORM}
		\upepsilon^{\alpha \beta \gamma \delta} 
		\partial_{\gamma} \vort_{\delta}
		& =
			\frac{1}{2} \upepsilon^{\alpha \beta \gamma \delta}
			\upepsilon_{\gamma \delta \kappa \lambda} 
			u^{\kappa} \uperpvort^{\lambda}(\vort)
	+
	\upepsilon^{\alpha \beta \gamma \delta}
	(u^{\kappa} \partial_{\kappa} \vort_{\gamma}) u_{\delta}
	\\
	& \ \ 
	-
	\upepsilon^{\alpha \beta \gamma \delta}
	u^{\kappa} (\partial_{\gamma} \vort_{\kappa}) u_{\delta}.
	\notag
	\end{align}
	Using the identity 
$
\frac{1}{2} 
	\upepsilon^{\alpha \beta \gamma \delta}
	\upepsilon_{\gamma \delta \kappa \lambda}
	=
	\frac{1}{2} 
	\upepsilon^{\gamma \delta \alpha \beta}
	\upepsilon_{\gamma \delta \kappa \lambda}
	= \updelta_{\kappa}^{\beta} \updelta_{\lambda}^{\alpha}
	-
	\updelta_{\kappa}^{\alpha} \updelta_{\lambda}^{\beta}
$
to substitute in the first product on
RHS~\eqref{E:FIRSTSTEPCONTRACTIONOFEXTERIORDERIVAITVEOFVORTICITYOFVORTICITYAGAINSTVOLUMEFORM},
we arrive at the desired identity
\eqref{E:CONTRACTIONOFEXTERIORDERIVAITVEOFVORTICITYOFVORTICITYAGAINSTVOLUMEFORM}.
	
\end{proof}

\section{Wave equations}
\label{S:WAVEEQUATIONS}
In this section, with the help of the preliminary identities of Lemma~\ref{L:IDENTITIES},
we derive the covariant wave equations 
\eqref{E:MAINTHMCOVARIANTWAVEENTHALPY},
\eqref{E:MAINTHMCOVARIANTWAVEVELOCITY},
and \eqref{E:ENTAUXILIARYWAVEQUATION}.

\subsection{Covariant wave operator}
\label{SS:COVARIANTWAVEOPERATOR}
We start by establishing a formula for the covariant wave operator 
of the acoustical metric acting on a scalar function.

\begin{lemma}[Covariant wave operator of $g$]
	\label{L:COVARIANTWAVEOPERATORAPPLIEDTOSCALAR}
	Assume that $(\Lnenth,\Ent,u^{\alpha})$ is a $C^2$ solution 
	to \eqref{E:ENTHALPYEVOLUTION}-\eqref{E:ENTROPYEVOLUTION} + \eqref{E:UISUNITLENGTH}.
	Then the covariant wave operator of the acoustical metric $g=g(\Lnenth,\Ent,u)$ 
	(see Def.\,\ref{D:ACOUSTICALMETRICANDINVERSE})
	acts on
	scalar functions $\phi$ as follows, where
	RHS~\eqref{E:COVARIANTWAVEOPERATORAPPLIEDTOSCALAR} is expressed relative
	to the rectangular coordinates:
	\begin{align} \label{E:COVARIANTWAVEOPERATORAPPLIEDTOSCALAR}
		\square_g \phi
		& = 
			(\speed^2 - 1)
			u^{\kappa} \partial_{\kappa} (u^{\lambda} \partial_{\lambda} \phi)
			+
			\speed^2 ((\upeta^{-1})^{\kappa \lambda} \partial_{\kappa} \partial_{\lambda} \phi)
				\\
		& \ \
			+
			(\speed^2 - 1)
			(\partial_{\kappa} u^{\kappa}) 
			(u^{\lambda} \partial_{\lambda} \phi)
			+ 
				2 \speed^{-1} \speed_{;\Lnenth}
				(u^{\kappa} \partial_{\kappa} \Lnenth) 
				(u^{\lambda} \partial_{\lambda} \phi)
			\notag \\
		& \ \
			- \speed^{-1} \speed_{;\Lnenth} 
				(g^{-1})^{\kappa \lambda}
				(\partial_{\kappa} \Lnenth)
				(\partial_{\lambda} \phi)
				\notag
					\\
		& \ \
				-
				\speed \speed_{;\Ent}
				(\GradEnt^{\kappa} \partial_{\kappa} \phi).
				\notag
	\end{align}
\end{lemma}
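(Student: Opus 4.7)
The strategy is to start from the coordinate formula for the covariant wave operator given in Footnote~\ref{FN:COVWAVEOPARBITRARYCOORDS}, namely $\square_g \phi = |\mbox{\upshape det} g|^{-1/2} \partial_\alpha \bigl( |\mbox{\upshape det} g|^{1/2} (g^{-1})^{\alpha\beta} \partial_\beta \phi \bigr)$, and to substitute directly the explicit formulas already available in \eqref{E:DETG}--\eqref{E:DETGGINVERSE}. In particular, $|\mbox{\upshape det} g|^{-1/2} = \speed^3$ and
\[
|\mbox{\upshape det} g|^{1/2} (g^{-1})^{\alpha\beta} \partial_\beta \phi
= \speed^{-1} (\upeta^{-1})^{\alpha\beta} \partial_\beta \phi
+ (\speed^{-1} - \speed^{-3}) u^\alpha (u^\beta \partial_\beta \phi).
\]

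I would then apply $\partial_\alpha$ and multiply by $\speed^3$, organizing the computation by separating ``second-order'' from ``first-order'' contributions. The second-order part comes from the pieces of $\partial_\alpha$ hitting $\partial_\beta \phi$ or $u^\beta \partial_\beta \phi$: the flat Laplacian term contributes $\speed^2 (\upeta^{-1})^{\kappa\lambda} \partial_\kappa \partial_\lambda \phi$, and the product rule on the longitudinal summand produces $(\speed^2-1) u^\kappa \partial_\kappa(u^\lambda \partial_\lambda \phi) + (\speed^2-1)(\partial_\kappa u^\kappa)(u^\lambda \partial_\lambda \phi)$. These account for the first line and the leading term of the second line of RHS~\eqref{E:COVARIANTWAVEOPERATORAPPLIEDTOSCALAR}.

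To handle the remaining contributions, which come from $\partial_\alpha$ hitting the $\speed$-factors, I would use that $\speed = \speed(\Lnenth,\Ent)$ by \eqref{E:SOUNDSPEEDDEFENTHALTPYENTROPY} to write $\partial_\alpha \speed = \speed_{;\Lnenth} \partial_\alpha \Lnenth + \speed_{;\Ent} \partial_\alpha \Ent$. Differentiating $\speed^{-1}$ inside the flat piece yields $-\speed \, \speed_{;\Lnenth} (\upeta^{-1})^{\kappa\lambda}(\partial_\kappa \Lnenth)(\partial_\lambda \phi) - \speed \, \speed_{;\Ent} \GradEnt^\kappa \partial_\kappa \phi$, which immediately matches the final term on RHS~\eqref{E:COVARIANTWAVEOPERATORAPPLIEDTOSCALAR}. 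Differentiating $\speed^{-1} - \speed^{-3}$ inside the longitudinal piece contracts against $u^\alpha$, and invoking the entropy transport equation \eqref{E:ENTROPYEVOLUTION} to eliminate $u^\alpha \partial_\alpha \Ent = 0$ reduces the result to a single term proportional to $(3\speed^{-1} - \speed) \speed_{;\Lnenth} (u^\kappa \partial_\kappa \Lnenth)(u^\lambda \partial_\lambda \phi)$.

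The final step is an algebraic repackaging. Expanding $-\speed^{-1} \speed_{;\Lnenth} (g^{-1})^{\kappa\lambda} (\partial_\kappa \Lnenth)(\partial_\lambda \phi)$ via \eqref{E:INVERSEACOUSTICALMETRIC} reproduces the $-\speed \, \speed_{;\Lnenth} (\upeta^{-1})^{\kappa\lambda}(\partial_\kappa\Lnenth)(\partial_\lambda \phi)$ contribution from above plus an extra longitudinal piece $-(\speed - \speed^{-1}) \speed_{;\Lnenth} (u^\kappa \partial_\kappa \Lnenth)(u^\lambda \partial_\lambda \phi)$; combining this with the $(3\speed^{-1} - \speed) \speed_{;\Lnenth}$ coefficient from the previous paragraph leaves precisely the residual $2\speed^{-1} \speed_{;\Lnenth} (u^\kappa \partial_\kappa \Lnenth)(u^\lambda \partial_\lambda \phi)$ displayed on the RHS. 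The only real bookkeeping challenge is tracking the various $\speed^{\pm 1}, \speed^{\pm 3}$ powers (arising both from $|\mbox{\upshape det} g|^{-1/2}$ and from $\partial_\alpha \speed^{-k}$) and verifying that they recombine cleanly into the $(g^{-1})^{\kappa\lambda}$-contracted form; aside from the entropy transport identity $u^\kappa \partial_\kappa \Ent = 0$, no further input from the Euler system nor from the preliminary identities of Lemma~\ref{L:IDENTITIES} is required.
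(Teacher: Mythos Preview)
Your proposal is correct and follows essentially the same route as the paper's proof: expand $\square_g \phi$ using the coordinate formula together with \eqref{E:DETG}--\eqref{E:DETGGINVERSE}, then repackage the $\speed$-derivative terms into the $(g^{-1})^{\kappa\lambda}$-contracted form via \eqref{E:INVERSEACOUSTICALMETRIC} and eliminate the entropy contribution along $u$ using \eqref{E:ENTROPYEVOLUTION}. The paper records an intermediate expression before invoking \eqref{E:INVERSEACOUSTICALMETRIC} and \eqref{E:ENTROPYEVOLUTION}, but the computation is the same as what you outline.
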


\begin{proof}
	It is a standard fact that relative to arbitrary coordinates
	(and in particular relative to the rectangular coordinates),
	we have
\begin{align}
\square_g \phi & = 
\frac{1}{\mbox{$\sqrt{|\mbox{\upshape det} g|}$}}
\partial_{\kappa}\left(\sqrt{|\mbox{\upshape det} g|} (g^{-1})^{\kappa \lambda} \partial_{\lambda} \phi \right).
\notag
\end{align}	
Using this formula and \eqref{E:DETG}-\eqref{E:DETGGINVERSE}, we compute that
	\begin{align} \label{E:FIRSTCOMPUTATIONCOVARIANTWAVEOPERATORAPPLIEDTOSCALAR}
		\square_g \phi
		& = 
			\speed^3
			\partial_{\kappa}
			\left\lbrace
				-
				(\speed^{-3} - \speed^{-1})
				u^{\kappa} (u^{\lambda} \partial_{\lambda} \phi)
				+ 
				\speed^{-1} ((\upeta^{-1})^{\kappa \lambda} \partial_{\lambda} \phi)
			\right\rbrace
				\\
		& = 
			-
			(1 - \speed^2)
			u^{\kappa} \partial_{\kappa} (u^{\lambda} \partial_{\lambda} \phi)
			-
			(1 - \speed^2)
			(\partial_{\kappa} u^{\kappa}) (u^{\lambda} \partial_{\lambda} \phi)
					\notag \\
		& \ \
			+ (3 \speed^{-1} - \speed) 
				(u^{\kappa} \partial_{\kappa} \speed) 
				(u^{\lambda} \partial_{\lambda} \phi)
				-
				\speed
				(\upeta^{-1})^{\kappa \lambda} (\partial_{\kappa} \speed) (\partial_{\lambda} \phi)
					\notag \\
		&  \ \
			+
			\speed^2 ((\upeta^{-1})^{\kappa \lambda} \partial_{\kappa} \partial_{\lambda} \phi).
				\notag
	\end{align}
	The desired identity \eqref{E:COVARIANTWAVEOPERATORAPPLIEDTOSCALAR} now follows
	from \eqref{E:FIRSTCOMPUTATIONCOVARIANTWAVEOPERATORAPPLIEDTOSCALAR},
	\eqref{E:INVERSEACOUSTICALMETRIC}, 
	the evolution equation \eqref{E:ENTROPYEVOLUTION},
	and straightforward computations.
\end{proof}

\subsection{Covariant wave equation for the logarithmic enthalpy}
\label{E:WAVEEQUATIONFORLOGARITHMICENTHALPY}
We now derive the covariant wave equation \eqref{E:MAINTHMCOVARIANTWAVEENTHALPY}.

\begin{proposition} [Covariant wave equation for the logarithmic enthalpy]
\label{P:COVARIANTWAVEENTHALPY}
Assume that $(\Lnenth,\Ent,u^{\alpha})$ is a $C^2$ solution 
to \eqref{E:ENTHALPYEVOLUTION}-\eqref{E:ENTROPYEVOLUTION} + \eqref{E:UISUNITLENGTH}.
Then the logarithmic enthalpy $\Lnenth$ verifies the following covariant wave equation:
\begin{align}  \label{E:COVARIANTWAVEENTHALPY}
		\square_g \Lnenth
	&  = 
			n \speed^2 \TempoverEnth \mathcal{D}
			-
			\speed^{-1}
			\speed_{;\Lnenth}
			(g^{-1})^{\kappa \lambda}
			(\partial_{\kappa} \Lnenth)
			(\partial_{\lambda} \Lnenth)
			\\
			& \ \ 
			+
		\speed^2 
		\left\lbrace
			(\partial_{\kappa} u^{\kappa}) (\partial_{\lambda} u^{\lambda})
			-
			(\partial_{\kappa} u^{\lambda}) (\partial_{\lambda} u^{\kappa})
		\right\rbrace
		\notag
			\\
	& \ \
		+
		(1 - \speed^2) \TempoverEnth (\GradEnt^{\kappa} \partial_{\kappa} \Lnenth)
		-
		\speed \speed_{;\Ent} (\GradEnt^{\kappa} \partial_{\kappa} \Lnenth)
		+
		\speed^2 \TempoverEnth_{;\Lnenth} (\GradEnt^{\kappa} \partial_{\kappa} \Lnenth)
		+
		\speed^2 \TempoverEnth_{;\Ent} \GradEnt_{\kappa} \GradEnt^{\kappa}.
		\notag
\end{align}

\end{proposition}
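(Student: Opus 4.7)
The plan is to start from Lemma~\ref{L:COVARIANTWAVEOPERATORAPPLIEDTOSCALAR} applied to $\phi = \Lnenth$ and to reduce the two second-order building blocks $u^{\kappa}\partial_{\kappa}(u^{\lambda}\partial_{\lambda}\Lnenth)$ and $(\upeta^{-1})^{\kappa\lambda}\partial_{\kappa}\partial_{\lambda}\Lnenth$ to first-order quantities plus a single factor of $\partial_{\alpha}\GradEnt^{\alpha}$, using only the first-order equations \eqref{E:ENTHALPYEVOLUTION}--\eqref{E:ENTROPYEVOLUTION} and the chain rule for the thermodynamic functions $\speed(\Lnenth,\Ent)$ and $\TempoverEnth(\Lnenth,\Ent)$. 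The derivative-quadratic remainder should then reorganize into the $g$-null form $\mathfrak{Q}_{(\Lnenth)}$, while the $\partial_{\alpha}\GradEnt^{\alpha}$ factor is absorbed into $n\speed^{2}\TempoverEnth\mathcal{D}$ via definition \eqref{E:MODIFIEDDIVERGENCEOFENTROPY}.

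First, I would differentiate $u^{\lambda}\partial_{\lambda}\Lnenth = -\speed^{2}\partial_{\kappa}u^{\kappa}$ (which is just \eqref{E:ENTHALPYEVOLUTION}) along $u$; the entropy transport \eqref{E:ENTROPYEVOLUTION} gives $u^{\kappa}\partial_{\kappa}\speed^{2} = 2\speed\speed_{;\Lnenth}(u^{\kappa}\partial_{\kappa}\Lnenth)$, and a further use of \eqref{E:ENTHALPYEVOLUTION} yields
\begin{align*}
u^{\kappa}\partial_{\kappa}(u^{\lambda}\partial_{\lambda}\Lnenth)
&= 2\speed^{-1}\speed_{;\Lnenth}(u^{\kappa}\partial_{\kappa}\Lnenth)^{2}
- \speed^{2}u^{\kappa}\partial_{\kappa}\partial_{\lambda}u^{\lambda}.
\end{align*}
Second, I would split $(\upeta^{-1})^{\kappa\lambda} = -u^{\kappa}u^{\lambda} + \Pi^{\kappa\lambda}$ via \eqref{E:UPERPPROJECTION}. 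For the $u\otimes u$ part I would use $u^{\kappa}u^{\lambda}\partial_{\kappa}\partial_{\lambda}\Lnenth = u^{\kappa}\partial_{\kappa}(u^{\lambda}\partial_{\lambda}\Lnenth) - (u^{\kappa}\partial_{\kappa}u^{\lambda})(\partial_{\lambda}\Lnenth)$ and eliminate $u^{\kappa}\partial_{\kappa}u^{\lambda}$ via the momentum equation \eqref{E:VELOCITYEVOLUTIONWITHPROJECTION} rearranged as $u^{\kappa}\partial_{\kappa}u^{\alpha} = -\Pi^{\alpha\kappa}\partial_{\kappa}\Lnenth + \TempoverEnth\GradEnt^{\alpha}$. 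For the $\Pi$ part, I would take $\partial_{\alpha}$ of the momentum equation in the form $\Pi^{\alpha\kappa}\partial_{\kappa}\Lnenth = -u^{\kappa}\partial_{\kappa}u^{\alpha} + \TempoverEnth\GradEnt^{\alpha}$, exploiting that $\partial_{\alpha}(\upeta^{-1})^{\alpha\kappa} = 0$ in rectangular coordinates, using \eqref{E:ENTHALPYEVOLUTION} to replace $\partial_{\alpha}u^{\alpha}$, and applying the chain rule to $\TempoverEnth(\Lnenth,\Ent)$. This produces precisely the antisymmetric quadratic $-(\partial_{\alpha}u^{\kappa})(\partial_{\kappa}u^{\alpha})$, one further $u^{\kappa}\partial_{\kappa}\partial_{\alpha}u^{\alpha}$ term, the factor $\TempoverEnth\partial_{\alpha}\GradEnt^{\alpha}$, and the linear-in-$\GradEnt$ terms carrying the coefficients $\TempoverEnth_{;\Lnenth}$ and $\TempoverEnth_{;\Ent}$.

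Assembling everything with the weights prescribed by Lemma~\ref{L:COVARIANTWAVEOPERATORAPPLIEDTOSCALAR}, the two $u^{\kappa}\partial_{\kappa}\partial_{\alpha}u^{\alpha}$ contributions enter with opposite signs and equal magnitudes $\pm\speed^{2}(\speed^{2}-1)$ and cancel, while the two $(\upeta^{-1})^{\kappa\lambda}(\partial_{\kappa}\Lnenth)(\partial_{\lambda}\Lnenth)$ pieces generated by the $u\otimes u$ and $\Pi$ splits also cancel, leaving the explicit $-\speed^{-1}\speed_{;\Lnenth}(g^{-1})^{\kappa\lambda}(\partial_{\kappa}\Lnenth)(\partial_{\lambda}\Lnenth)$ term from Lemma~\ref{L:COVARIANTWAVEOPERATORAPPLIEDTOSCALAR} intact. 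The $(u^{\kappa}\partial_{\kappa}\Lnenth)^{2}$ terms from all sources --- the two from \eqref{E:ENTHALPYEVOLUTION}-driven chain-rule steps, the one from $(\partial_{\kappa}u^{\kappa})(u^{\lambda}\partial_{\lambda}\Lnenth)$ reduced via continuity, and the explicit $2\speed^{-1}\speed_{;\Lnenth}(u^{\kappa}\partial_{\kappa}\Lnenth)^{2}$ of Lemma~\ref{L:COVARIANTWAVEOPERATORAPPLIEDTOSCALAR} --- collect into the single coefficient $\speed^{-2}$, and a final application of \eqref{E:ENTHALPYEVOLUTION} converts $\speed^{-2}(u^{\kappa}\partial_{\kappa}\Lnenth)^{2} = \speed^{2}(\partial_{\kappa}u^{\kappa})^{2}$; together with the previously isolated $-\speed^{2}(\partial_{\alpha}u^{\kappa})(\partial_{\kappa}u^{\alpha})$, this completes the null form $\mathfrak{Q}_{(\Lnenth)}$. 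Finally, $\speed^{2}\TempoverEnth\partial_{\alpha}\GradEnt^{\alpha}$ is rewritten as $n\speed^{2}\TempoverEnth\mathcal{D} + (1-\speed^{2})\TempoverEnth(\GradEnt^{\kappa}\partial_{\kappa}\Lnenth)$ by reading off \eqref{E:MODIFIEDDIVERGENCEOFENTROPY}, and the residual linear-in-$\GradEnt$ terms assemble into $\mathfrak{L}_{(\Lnenth)}$.

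The main obstacle is purely the bookkeeping: several chain-rule steps introduce $\speed_{;\Lnenth}$-weighted $(u^{\kappa}\partial_{\kappa}\Lnenth)^{2}$ contributions with a priori incompatible signs, and the mutual cancellation of the $\speed_{;\Lnenth}$-pieces --- leaving the clean coefficient $\speed^{-2}$ that matches exactly what is needed to synthesize the null form via \eqref{E:ENTHALPYEVOLUTION} --- hinges on repeatedly alternating between the two equivalent forms $\partial_{\kappa}u^{\kappa} = -\speed^{-2}(u^{\lambda}\partial_{\lambda}\Lnenth)$ and $u^{\lambda}\partial_{\lambda}\Lnenth = -\speed^{2}\partial_{\kappa}u^{\kappa}$ of the continuity equation, and on the simultaneous cancellation of all non-null quadratic pieces in $\partial\Lnenth$ and $\partial u$ (in particular, the disappearance of the $(\upeta^{-1})^{\kappa\lambda}(\partial_{\kappa}\Lnenth)(\partial_{\lambda}\Lnenth)$ pieces across the two splits).
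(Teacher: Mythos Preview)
Your proposal is correct and follows essentially the same route as the paper: apply Lemma~\ref{L:COVARIANTWAVEOPERATORAPPLIEDTOSCALAR} with $\phi=\Lnenth$, take the spacetime divergence of the velocity equation to express $\speed^2(\upeta^{-1})^{\kappa\lambda}\partial_\kappa\partial_\lambda\Lnenth$, convert $\speed^2 u^\kappa\partial_\kappa\partial_\lambda u^\lambda$ back via \eqref{E:ENTHALPYEVOLUTION}, observe the cancellation of the second-order terms, and finish by substituting for $\partial_\kappa\GradEnt^\kappa$ using \eqref{E:MODIFIEDDIVERGENCEOFENTROPY}. The paper works directly with the expanded form \eqref{E:VELOCITYEVOLUTION} rather than first splitting $(\upeta^{-1})^{\kappa\lambda}=-u^\kappa u^\lambda+\Pi^{\kappa\lambda}$ as you do, so its intermediate identities look slightly different, but the two computations are line-by-line equivalent. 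One small slip in your narrative: the quadratic $\partial\Lnenth$ pieces that cancel between your two splits are $\Pi^{\kappa\lambda}(\partial_\kappa\Lnenth)(\partial_\lambda\Lnenth)$-weighted (equivalently, they are the two copies of $(u^\kappa\partial_\kappa u^\lambda)(\partial_\lambda\Lnenth)$), not $(\upeta^{-1})^{\kappa\lambda}$-weighted; this does not affect the argument.
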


\begin{proof}
	From \eqref{E:COVARIANTWAVEOPERATORAPPLIEDTOSCALAR} with $\phi := \Lnenth$,
	we deduce
	\begin{align} \label{E:FIRSTSTEPCOVARIANTWAVEENTHALPY}
		\square_g \Lnenth
		& = 
			(\speed^2 - 1)
			u^{\kappa} \partial_{\kappa} (u^{\lambda} \partial_{\lambda} \Lnenth)
			+
			\speed^2 ((\upeta^{-1})^{\kappa \lambda} \partial_{\kappa} \partial_{\lambda} \Lnenth)
				\\
		& \ \
			+
			(\speed^2 - 1)
			(\partial_{\kappa} u^{\kappa}) (u^{\lambda} \partial_{\lambda} \Lnenth)
			+ 
				2 \speed^{-1} \speed_{;\Lnenth}
				(u^{\kappa} \partial_{\kappa} \Lnenth)
				(u^{\lambda} \partial_{\lambda} \Lnenth)
				\notag \\
		& \ \
			- \speed^{-1} \speed_{;\Lnenth} 
				(g^{-1})^{\kappa \lambda}
				(\partial_{\kappa} \Lnenth)
				(\partial_{\lambda} \Lnenth)
				\notag
					\\
		& \ \	
				-
				\speed \speed_{;\Ent}
				(\GradEnt^{\kappa} \partial_{\kappa} \Lnenth).
				\notag
	\end{align}
	
	Next, we differentiate equation \eqref{E:VELOCITYEVOLUTION} with $\partial_{\beta}$,
	contract against $(\upeta^{-1})^{\alpha \beta}$, and multiply by $\speed^2$
	to obtain the identity
	\begin{align}
		\speed^2 ((\upeta^{-1})^{\kappa \lambda} \partial_{\kappa} \partial_{\lambda} \Lnenth)
		& = 
		- 
		\speed^2 (u^{\kappa} \partial_{\kappa} \partial_{\lambda} u^{\lambda})
		-
		\speed^2 (\partial_{\kappa} u^{\lambda}) (\partial_{\lambda} u^{\kappa})
			\label{E:FLATWAVEOPERATOROFLOGENTHID} \\
	& \ \
		-
		\speed^2 u^{\kappa} \partial_{\kappa} (u^{\lambda} \partial_{\lambda} \Lnenth)
		-
		\speed^2 (\partial_{\kappa} u^{\kappa}) (u^{\lambda} \partial_{\lambda} \Lnenth)
		\notag
			\\
	& \ \
		+
		\speed^2 \TempoverEnth (\partial_{\kappa} \GradEnt^{\kappa})
		+
		\speed^2 \TempoverEnth_{;\Lnenth} (\GradEnt^{\kappa} \partial_{\kappa} \Lnenth)
		+
		\speed^2 \TempoverEnth_{;\Ent} \GradEnt^{\kappa} \GradEnt_{\kappa}.
		\notag
	\end{align}
	
	Next, we use \eqref{E:ENTHALPYEVOLUTION}
	and the evolution equation \eqref{E:ENTROPYEVOLUTION}
	to rewrite the first product on RHS~\eqref{E:FLATWAVEOPERATOROFLOGENTHID}
	as follows:
	\begin{align} \label{E:SPEEDSQUAREDTIMESUDERIVATIVEOFDIVUID}
		- \speed^2 (u^{\kappa} \partial_{\kappa} \partial_{\lambda} u^{\lambda})
		& = \speed^2 u^{\kappa} \partial_{\kappa} (\speed^{-2} u^{\lambda} \partial_{\lambda} \Lnenth)
			\\
		& = 
			u^{\kappa} \partial_{\kappa} (u^{\lambda} \partial_{\lambda} \Lnenth)
			- 
			2 \speed^{-1} \speed_{;\Lnenth} 
			(u^{\kappa} \partial_{\kappa} \Lnenth)
			(u^{\lambda} \partial_{\lambda} \Lnenth).
			\notag
	\end{align}
	
	Using \eqref{E:SPEEDSQUAREDTIMESUDERIVATIVEOFDIVUID} to substitute for the first product
	on RHS~\eqref{E:FLATWAVEOPERATOROFLOGENTHID} and then using the resulting identity
	to substitute for the
	product $\speed^2 (\upeta^{-1})^{\kappa \lambda} \partial_{\kappa} \partial_{\lambda} \Lnenth$
	on RHS~\eqref{E:FIRSTSTEPCOVARIANTWAVEENTHALPY}, we deduce
	\begin{align} \label{E:SECONDSTEPCOVARIANTWAVEENTHALPY}	
		\square_g \Lnenth
		& = 
		-
		\speed^2 (\partial_{\kappa} u^{\lambda}) (\partial_{\lambda} u^{\kappa})
		-
		(\partial_{\kappa} u^{\kappa}) (u^{\lambda} \partial_{\lambda} \Lnenth)	
		\\
		& \ \
			- \speed^{-1} \speed_{;\Lnenth} 
				(g^{-1})^{\kappa \lambda}
				(\partial_{\kappa} \Lnenth)
				(\partial_{\lambda} \Lnenth)
				\notag
					\\
	& \ \
		-
		\speed \speed_{;\Ent} (\GradEnt^{\kappa} \partial_{\kappa} \Lnenth)
		+
		\speed^2 \TempoverEnth (\partial_{\kappa} \GradEnt^{\kappa})
		+
		\speed^2 \TempoverEnth_{;\Lnenth} (\GradEnt^{\kappa} \partial_{\kappa} \Lnenth)
		+
		\speed^2 \TempoverEnth_{;\Ent} \GradEnt^{\kappa} \GradEnt_{\kappa}.
		\notag
	\end{align}
	Finally, we use equation \eqref{E:ENTHALPYEVOLUTION}
	to substitute for the factor
	$u^{\lambda} \partial_{\lambda} \Lnenth$
	in the second product on RHS~\eqref{E:SECONDSTEPCOVARIANTWAVEENTHALPY},
	and we use definition \eqref{E:MODIFIEDDIVERGENCEOFENTROPY}
	to express the product 
	$
	\speed^2 \TempoverEnth (\partial_{\kappa} \GradEnt^{\kappa})
	$
	on RHS~\eqref{E:SECONDSTEPCOVARIANTWAVEENTHALPY}
	as
	$
	n \speed^2 \TempoverEnth \mathcal{D}
	+
	(1 - \speed^2) \TempoverEnth (\GradEnt^{\kappa} \partial_{\kappa} \Lnenth)
	$,
	which in total yields the desired equation \eqref{E:COVARIANTWAVEENTHALPY}.
	
\end{proof}

\subsection{Covariant wave equation for the rectangular components of the four-velocity}
\label{SS:WAVEEQUATIONFORFOURVELOCITYCOMPONENTS}
We now derive the covariant wave equation \eqref{E:MAINTHMCOVARIANTWAVEVELOCITY}.

\begin{proposition} [Covariant wave equation for the rectangular four-velocity components]
\label{P:COVARIANTWAVEVELOCITY}
Assume that $(\Lnenth,\Ent,u^{\alpha})$ is a $C^2$ solution 
to \eqref{E:ENTHALPYEVOLUTION}-\eqref{E:ENTROPYEVOLUTION} + \eqref{E:UISUNITLENGTH}.
Then the rectangular velocity components $u^{\alpha}$ 
verify the following covariant wave equations:
\begingroup
\allowdisplaybreaks
\begin{align}  \label{E:COVARIANTWAVEVELOCITY} 
		\square_g u^{\alpha}
		& = 	
			-
			\frac{\speed^2}{\Enth} 
			\mathcal{C}^{\alpha}
				\\
		& \ \
			-
			\frac{\speed^2}{\Enth} 
			\upepsilon^{\alpha \beta \gamma \delta} 
			(\partial_{\beta} u_{\gamma}) 
			\vort_{\delta}
			+
			\frac{(1 - \speed^2)}{\Enth}
			\upepsilon^{\alpha \beta \gamma \delta} 
			u_{\beta}
			(\partial_{\gamma} \Lnenth) \vort_{\delta}
			\notag
			\\
			& \ \
			+
			\frac{(1 - \speed^2) \TempoverEnth }{\Enth} 
			\upepsilon^{\alpha \beta \gamma \delta} \GradEnt_{\beta} u_{\gamma} \vort_{\delta}
				\notag \\
		& \ \
				- 
				(g^{-1})^{\kappa \lambda} (\partial_{\kappa} \Lnenth) (\partial_{\lambda} u^{\alpha})
				- 
				\speed^{-1} \speed_{;\Lnenth} 
				(g^{-1})^{\kappa \lambda}
				(\partial_{\kappa} \Lnenth)
				(\partial_{\lambda} u^{\alpha})
				\notag \\
		& \ \
			+
				(\upeta^{-1})^{\alpha \lambda}
				\left\lbrace
					(\partial_{\kappa} u^{\kappa}) (\partial_{\lambda} \Lnenth)
					- 
					(\partial_{\lambda} u^{\kappa}) (\partial_{\kappa} \Lnenth)
				\right\rbrace
				\notag
				\\
				& \ \
				+
			\speed^2 
			u^{\alpha} 
			\left\lbrace
				(\partial_{\kappa} u^{\lambda}) (\partial_{\lambda} u^{\kappa}) 
				-
				(\partial_{\lambda} u^{\lambda}) (\partial_{\kappa} u^{\kappa}) 
			\right\rbrace
		\notag
					\\
		& \ \
		-
		\speed \speed_{;\Ent} (\GradEnt^{\kappa} \partial_{\kappa} u^{\alpha})
		+
		\TempoverEnth
		(\GradEnt^{\kappa} \partial_{\kappa} u^{\alpha})
	   \notag
	   \\
	   & \ \		
		+
		(\speed^2 - 1) \TempoverEnth
		u^{\alpha}
		(\GradEnt^{\kappa} u^{\lambda} \partial_{\lambda} u_{\kappa}) 
			+
				\speed^2
				\TempoverEnth 
				\GradEnt^{\kappa}
				((\upeta^{-1})^{\alpha \lambda} \partial_{\lambda} u_{\kappa})  
				\notag
					\\
		& \ \
		+ 
		2 \speed^{-1} \speed_{;\Ent} 
		\GradEnt^{\alpha}
		(u^{\kappa} \partial_{\kappa} \Lnenth) 
		+
		2 \speed^{-1} \speed_{;\Lnenth} \TempoverEnth 
		\GradEnt^{\alpha}
		(u^{\kappa} \partial_{\kappa} \Lnenth)
		\notag
		\\
		& \ \
		-
		\TempoverEnth_{;\Lnenth} \GradEnt^{\alpha} (u^{\kappa} \partial_{\kappa} \Lnenth) 
		-
		\TempoverEnth \GradEnt^{\alpha} (\partial_{\kappa} u^{\kappa}) 
		\notag
			\\
	& \ \
		+
		(\Temp - \Temp_{;\Lnenth})
		\frac{\speed^2}{\Enth}
		\GradEnt^{\alpha} 
		(\partial_{\kappa} u^{\kappa})
		+
		(\Temp - \Temp_{;\Lnenth}) 
		\frac{\speed^2}{\Enth}
		u^{\alpha} 
		(\GradEnt^{\kappa} \partial_{\kappa} \Lnenth)
		\notag
		\\
		& \ \
		+
		(\Temp_{;\Lnenth} - \Temp) 
		\frac{\speed^2}{\Enth}
		\GradEnt^{\kappa} 
		((\upeta^{-1})^{\alpha \lambda} \partial_{\lambda} u_{\kappa}).
		\notag
	\end{align}
\endgroup
\end{proposition}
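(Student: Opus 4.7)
The plan is to apply the scalar covariant wave operator formula \eqref{E:COVARIANTWAVEOPERATORAPPLIEDTOSCALAR} of Lemma~\ref{L:COVARIANTWAVEOPERATORAPPLIEDTOSCALAR} with $\phi := u^{\alpha}$ (each rectangular component being treated as a scalar function under $\square_g$). In the resulting expression, every term other than $\speed^2 (\upeta^{-1})^{\kappa\lambda} \partial_\kappa \partial_\lambda u^\alpha$ is already either at most linear or is quadratic in the first-order derivatives of $(\Lnenth, \Ent, u^\beta)$. After using the velocity evolution equation \eqref{E:VELOCITYEVOLUTION} to replace any remaining occurrence of $u^\kappa \partial_\kappa u^\alpha$ by first derivatives of $\Lnenth$ and of the entropy gradient, these contributions can be assigned directly to the terms $\mathfrak{Q}_{(u^\alpha)}$ or $\mathfrak{L}_{(u^\alpha)}$ on the right-hand side of \eqref{E:COVARIANTWAVEVELOCITY}.

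The crucial step is the handling of the flat Minkowski wave operator term via the ``div--curl'' split
\[
(\upeta^{-1})^{\kappa\lambda}\partial_\kappa \partial_\lambda u_\alpha
= \partial_\alpha (\partial_\kappa u^\kappa) + \partial^\kappa (\partial_\kappa u_\alpha - \partial_\alpha u_\kappa).
\]
In the divergence piece, equation \eqref{E:ENTHALPYEVOLUTION} gives $\partial_\kappa u^\kappa = -\speed^{-2} u^\kappa \partial_\kappa \Lnenth$, and differentiating with $\partial_\alpha$ yields terms that, after the product rule and a second application of \eqref{E:ENTHALPYEVOLUTION}, assemble (with the help of the formula \eqref{E:INVERSEACOUSTICALMETRIC} for $(g^{-1})^{\kappa\lambda}$) into contributions that are manifestly of the form of $g$-null forms appearing in $\mathfrak{Q}_{(u^\alpha)}$. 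For the antisymmetric curl piece, identity \eqref{E:PARITALALPHAUBETAMINUSPARTIALBETAUALPHA} expresses $\partial_\kappa u_\alpha - \partial_\alpha u_\kappa$ in terms of $\frac{1}{\Enth}\upepsilon_{\kappa\alpha\gamma\delta} u^\gamma \vort^\delta$ plus lower-order corrections driven by $\partial \Lnenth$ and $\GradEnt$; applying $\partial^\kappa$ and using definition \eqref{E:UORTHGONALVORTICITYOFONEFORM} produces, from the leading term, a multiple of $\uperpvort^\alpha(\vort)$.

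Next, I would collect all the lower-order quadratic contributions arising from differentiating $1/\Enth$, $u^\gamma$, and $\vort^\delta$ in the leading piece, together with the derivatives of the correction terms in \eqref{E:PARITALALPHAUBETAMINUSPARTIALBETAUALPHA}; comparing with definition \eqref{E:MODIFIEDVORTICITYOFVORTICITY} of $\mathcal{C}^\alpha$, one can recognize a precise combination that assembles into the full expression $-(\speed^2/\Enth)\mathcal{C}^\alpha$. Any remaining quadratic terms involving $\partial \vort$ must be eliminated using the vorticity transport equation \eqref{E:MAINTHMVORTICITYTRANSPORT}, which replaces any $u$-directional derivative of $\vort$ by a first-order quantity and thereby prevents an apparent loss of derivatives. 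Throughout, the identities \eqref{E:DERIVATIVEOFVELOCITYCONTRACTEDWITHVELOCITYISZERO}, \eqref{E:TRANSFERDERIVATIVESFROMVORTICITYTOVELOCITY}, and \eqref{E:IDVELOCITYDERIVATIVEOFVELOCITYCONTRACTEDWITHENTROPYGRADIENT} of Lemma~\ref{L:IDENTITIES} are used to rewrite the various $u$-contracted derivative combinations in the specific form that appears in $\mathfrak{L}_{(u^\alpha)}$.

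The main obstacle is the sheer amount of bookkeeping. One must verify that every quadratic term produced by the calculation either (i) appears as a summand of the definition of $\mathcal{C}^\alpha$, (ii) is a $g$-null form in the precise sense of Def.\,\ref{D:STANDARDNULLFORMS} (which requires explicit use of the formula \eqref{E:INVERSEACOUSTICALMETRIC} for $(g^{-1})^{\kappa\lambda}$ together with the antisymmetry properties of $\upepsilon$), or (iii) is only linear in the derivatives of $(\Lnenth,\Ent,u^\beta,\GradEnt^\beta,\vort^\beta)$. The identification of the exact coefficients in $\mathfrak{Q}_{(u^\alpha)}$ and $\mathfrak{L}_{(u^\alpha)}$---in particular the subtle cancellations between the $\speed$-derivative terms produced by the covariant wave operator and those produced by differentiating $1/\Enth$ and $\speed^{-2}$ in the div--curl decomposition---is where essentially all of the delicate algebra resides; no single step is conceptually difficult, but none of the cancellations is automatic.
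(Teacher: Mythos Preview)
Your approach is essentially the paper's: apply \eqref{E:COVARIANTWAVEOPERATORAPPLIEDTOSCALAR} with $\phi=u_\alpha$, split $\speed^2\,\square_{\upeta} u_\alpha$ into $\speed^2\partial_\alpha(\partial_\kappa u^\kappa)$ plus $\speed^2\partial^\kappa(\partial_\kappa u_\alpha-\partial_\alpha u_\kappa)$, substitute \eqref{E:PARITALALPHAUBETAMINUSPARTIALBETAUALPHA} into the antisymmetric piece, and recognize the resulting $-\tfrac{\speed^2}{\Enth}\uperpvort_\alpha(\vort)$ before trading it for $-\tfrac{\speed^2}{\Enth}\mathcal{C}_\alpha$ via \eqref{E:MODIFIEDVORTICITYOFVORTICITY}. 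Two clarifications are worth flagging. First, the vorticity transport equation \eqref{E:MAINTHMVORTICITYTRANSPORT} is \emph{not} needed: after the split, the only $\partial\vort$ contribution is precisely $\upepsilon_{\lambda\alpha\gamma\delta}u^\gamma\partial^\lambda\vort^\delta=-\uperpvort_\alpha(\vort)$, with no residual terms. Second, differentiating the correction $-(\partial_\kappa\Lnenth)u_\alpha$ in \eqref{E:PARITALALPHAUBETAMINUSPARTIALBETAUALPHA} generates $-\speed^2(\square_{\upeta}\Lnenth)u_\alpha$, which the paper handles by substituting the expression for $\square_{\upeta}\Lnenth$ obtained in the proof of Prop.~\ref{P:COVARIANTWAVEENTHALPY}; this is where the null form $\speed^2 u^\alpha\{(\partial_\kappa u^\lambda)(\partial_\lambda u^\kappa)-(\partial_\lambda u^\lambda)(\partial_\kappa u^\kappa)\}$ originates, and where an $n\speed^2\TempoverEnth\mathcal{D}\,u_\alpha$ term appears and then cancels against its counterpart from the $\TempoverEnth(\partial_\kappa\GradEnt^\kappa)u_\alpha$ contribution. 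You should make that substitution step explicit, since without it the second-order $\partial^2\Lnenth$ term cannot be eliminated.
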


\begin{proof}
		From \eqref{E:COVARIANTWAVEOPERATORAPPLIEDTOSCALAR}
		with $\phi := u_{\alpha}$,
		we deduce 
		
		\begingroup
		\allowdisplaybreaks
		\begin{align}  \label{E:FIRSTCOVARIANTWAVEVELOCITY}
		\square_g u_{\alpha}
		& = 
			(\speed^2 - 1)
			u^{\kappa} \partial_{\kappa} (u^{\lambda} \partial_{\lambda} u_{\alpha})
			+
			\speed^2 ((\upeta^{-1})^{\kappa \lambda} \partial_{\kappa} \partial_{\lambda} u_{\alpha})
				\\
		& \ \
			+
			(\speed^2 - 1)
			(\partial_{\kappa} u^{\kappa}) (u^{\lambda} \partial_{\lambda} u_{\alpha})
			+ 
				2 \speed^{-1} \speed_{;\Lnenth}
				(u^{\kappa} \partial_{\kappa} \Lnenth) 
				(u^{\lambda} \partial_{\lambda} u_{\alpha})
			\notag \\
		& \ \
			- \speed^{-1} \speed_{;\Lnenth} 
				(g^{-1})^{\kappa \lambda}
				(\partial_{\kappa} \Lnenth)
				(\partial_{\lambda} u_{\alpha})
				-
				\speed \speed_{;\Ent}
				(\GradEnt^{\kappa} \partial_{\kappa} u_{\alpha}).
				\notag
	\end{align}
	\endgroup
		
	Next, we use equations 
	\eqref{E:ENTROPYEVOLUTION},
	\eqref{E:VELOCITYEVOLUTION},
	and
	the second line of 
	\eqref{E:ANOTHERENTROPYGRADIENTEVOLUTION}
	[where below, we derive \eqref{E:ANOTHERENTROPYGRADIENTEVOLUTION}
	 using an independent argument] 
	to rewrite the first product on RHS~\eqref{E:FIRSTCOVARIANTWAVEVELOCITY}
	as follows:
	
	\begingroup
\allowdisplaybreaks
	\begin{align} \label{E:TWOUDERIVATIVESOFUALPHAIDENTITY}
		(\speed^2 - 1) u^{\kappa} \partial_{\kappa} (u^{\lambda} \partial_{\lambda} u_{\alpha})
		& 
		=
		(1 - \speed^2) (u^{\kappa} \partial_{\kappa} \partial_{\alpha} \Lnenth)
		\\
		& \ \ 
		+ 
		(1 - \speed^2)  
		\left\lbrace
			u^{\kappa} \partial_{\kappa} (u^{\lambda} \partial_{\lambda} \Lnenth)
		\right\rbrace
		u_{\alpha}
		\notag
			\\
		& \ \
			+
			(1 - \speed^2) (u^{\kappa} \partial_{\kappa} u_{\alpha}) (u^{\lambda} \partial_{\lambda} \Lnenth)
			+
			(\speed^2 - 1) u^{\kappa} \partial_{\kappa} (\TempoverEnth \GradEnt_{\alpha})
				\notag
				\\
		& 
		=
		(1 - \speed^2) (u^{\kappa} \partial_{\kappa} \partial_{\alpha} \Lnenth)
		+ 
		(1 - \speed^2)  
		\left\lbrace
			u^{\kappa} \partial_{\kappa} (u^{\lambda} \partial_{\lambda} \Lnenth)
		\right\rbrace
		u_{\alpha}
			\notag \\
		& \ \
			+
			(1 - \speed^2) (u^{\kappa} \partial_{\kappa} u_{\alpha}) (u^{\lambda} \partial_{\lambda} \Lnenth)
			+
			(\speed^2 - 1) \TempoverEnth_{;\Lnenth} (u^{\kappa} \partial_{\kappa} \Lnenth) \GradEnt_{\alpha}
			\notag	\\
	& \ \	
			+
			(1 - \speed^2) \TempoverEnth
			(\GradEnt^{\kappa} \partial_{\kappa} u_{\alpha})
			+
			\frac{1}{\Enth} 
			(1 - \speed^2) \TempoverEnth
			\upepsilon_{\alpha \beta \gamma \delta} \GradEnt^{\beta} u^{\gamma} \vort^{\delta}
			\notag
			\\
			& \ \
			+
			(\speed^2 - 1) \TempoverEnth
			\GradEnt^{\kappa} (u^{\lambda} \partial_{\lambda} u_{\kappa}) u_{\alpha}.
			\notag
	\end{align}
	\endgroup
	
	Next, we use 
	definition \eqref{E:MODIFIEDDIVERGENCEOFENTROPY},
	the identity \eqref{E:PARITALALPHAUBETAMINUSPARTIALBETAUALPHA},
	and the evolution equations 
	\eqref{E:ENTHALPYEVOLUTION},
	\eqref{E:ENTROPYEVOLUTION},
	and \eqref{E:ENTROPYGRADIENTEVOLUTION}
	to rewrite the second product on RHS~\eqref{E:FIRSTCOVARIANTWAVEVELOCITY}
	as follows:
	\begin{align} \label{E:TOPDERIVATIVEVELOCITYTERMIDENTITY}
		\speed^2 ((\upeta^{-1})^{\kappa \lambda} \partial_{\kappa} \partial_{\lambda} u_{\alpha})
		& =
			\speed^2 (\partial_{\alpha} \partial_{\kappa} u^{\kappa})
				\\
		& \ \
			+
			\speed^2
			(\upeta^{-1})^{\kappa \lambda} \partial_{\kappa}
			\Big\lbrace
				\frac{1}{\Enth} \upepsilon_{\lambda \alpha \gamma \delta} u^{\gamma} \vort^{\delta}
				- 
				(\partial_{\lambda} \Lnenth) u_{\alpha}
				+
				(\partial_{\alpha} \Lnenth) u_{\lambda}
				\notag
				\\
				& \ \ \ \ \ \ \ \ \ \ \ \ \ \ \ \ \ \ \ \ \ \ \
				+
				\TempoverEnth \GradEnt_{\lambda} u_{\alpha}
				-
				\TempoverEnth \GradEnt_{\alpha} u_{\lambda}
			\Big\rbrace
			\notag	\\
	& = 
			(\speed^2 - 1) (u^{\kappa} \partial_{\kappa} \partial_{\alpha} \Lnenth)
			- 
			(\partial_{\alpha} u^{\kappa}) (\partial_{\kappa} \Lnenth)
			\notag
			\\
			& \ \
			+ 
			2 \speed^{-1} \speed_{;\Lnenth} (\partial_{\alpha} \Lnenth) (u^{\kappa} \partial_{\kappa} \Lnenth)
			+ 
			2 \speed^{-1} \speed_{;\Ent} \GradEnt_{\alpha} (u^{\kappa} \partial_{\kappa} \Lnenth)
			\notag \\
	& \ \
			- 
			\speed^2
			\frac{1}{\Enth} 
			\upepsilon_{\lambda \alpha \gamma \delta} 
			((\upeta^{-1})^{\kappa \lambda} \partial_{\kappa} \Lnenth) 
			u^{\gamma} \vort^{\delta}
			\notag
			\\
			& \ \
			+
			\speed^2
			\frac{1}{\Enth} \upepsilon_{\lambda \alpha \gamma \delta} ((\upeta^{-1})^{\kappa \lambda} \partial_{\kappa} u^{\gamma}) 
			\vort^{\delta}
				\notag \\
	& \ \ +
			\speed^2
			\frac{1}{\Enth} \upepsilon_{\lambda \alpha \gamma \delta} u^{\gamma} ((\upeta^{-1})^{\kappa \lambda} \partial_{\kappa} \vort^{\delta})
		\notag
			\\
	&  \ \
				- 
				\speed^2
				((\upeta^{-1})^{\kappa \lambda} \partial_{\kappa} \partial_{\lambda} \Lnenth)
				u_{\alpha}
				- 
				\speed^2
				(\upeta^{-1})^{\kappa \lambda} 
				(\partial_{\kappa} \Lnenth) (\partial_{\lambda} u_{\alpha})
				\notag
				\\
				& \ \
				+ 
				\speed^2
				(\partial_{\alpha} \Lnenth) (\partial_{\kappa} u^{\kappa})
				\notag 
					\\
		& \ \
			+
			\speed^2
			\TempoverEnth_{;\Lnenth} 
			(\GradEnt^{\kappa} \partial_{\kappa} \Lnenth)
			 u_{\alpha}
			+
			\speed^2
			\TempoverEnth_{;\Ent} 
			\GradEnt_{\kappa} \GradEnt^{\kappa}
			 u_{\alpha}
			\notag
			\\
			& \ \
			+
			\speed^2
			\TempoverEnth 
			(\partial_{\kappa} \GradEnt^{\kappa}) u_{\alpha}
			+
			\speed^2
			\TempoverEnth (\GradEnt^{\kappa} \partial_{\kappa} u_{\alpha})
				\notag \\
		& \ \
				-
				\speed^2
				\TempoverEnth_{;\Lnenth} (u^{\kappa} \partial_{\kappa} \Lnenth) \GradEnt_{\alpha} 
				-
				\speed^2
				\TempoverEnth (u^{\kappa} \partial_{\kappa} \GradEnt_{\alpha})
				-
				\speed^2
				\TempoverEnth (\partial_{\kappa} u^{\kappa}) \GradEnt_{\alpha}
					\notag
					\\
		& = n \speed^2 \TempoverEnth \mathcal{D} u_{\alpha}
				\notag \\
		& \ \
			+
			(\speed^2 - 1) (u^{\kappa} \partial_{\kappa} \partial_{\alpha} \Lnenth)
			- 
			(\partial_{\alpha} u^{\kappa}) (\partial_{\kappa} \Lnenth)
			\notag
			\\
			& \ \
			+ 
			2 \speed^{-1} \speed_{;\Lnenth} (\partial_{\alpha} \Lnenth) (u^{\kappa} \partial_{\kappa} \Lnenth)
			\notag \\
	& \ \
			- 
			\speed^2
			\frac{1}{\Enth} 
			\upepsilon_{\lambda \alpha \gamma \delta} 
			((\upeta^{-1})^{\kappa \lambda} \partial_{\kappa} \Lnenth) 
			u^{\gamma} \vort^{\delta}
			\notag
			\\
			& \ \
			+
			\speed^2
			\frac{1}{\Enth} \upepsilon_{\lambda \alpha \gamma \delta} ((\upeta^{-1})^{\kappa \lambda} \partial_{\kappa} u^{\gamma}) 
			\vort^{\delta}
				\notag \\
	& \ \ +
			\speed^2
			\frac{1}{\Enth} \upepsilon_{\lambda \alpha \gamma \delta} u^{\gamma} ((\upeta^{-1})^{\kappa \lambda} \partial_{\kappa} \vort^{\delta})
		\notag
			\\
	&  \ \
				- 
				\speed^2
				((\upeta^{-1})^{\kappa \lambda} \partial_{\kappa} \partial_{\lambda} \Lnenth)
				u_{\alpha}
				- 
				\speed^2
				(\upeta^{-1})^{\kappa \lambda} 
				(\partial_{\kappa} \Lnenth) (\partial_{\lambda} u_{\alpha})
				\notag 
				\\
				& \ \
				+ 
				\speed^2
				(\partial_{\alpha} \Lnenth) (\partial_{\kappa} u^{\kappa})
				\notag 
					\\
		& \ \
			+
			\speed^2
			\TempoverEnth_{;\Lnenth} 
			(\GradEnt^{\kappa} \partial_{\kappa} \Lnenth)
			 u_{\alpha}
			+
			\speed^2
			\TempoverEnth_{;\Ent} 
			\GradEnt_{\kappa} \GradEnt^{\kappa}
			 u_{\alpha}
			 +
			\speed^2
			\TempoverEnth (\GradEnt^{\kappa} \partial_{\kappa} u_{\alpha})
				\notag \\
		& \ \
				-
				\speed^2
				\TempoverEnth_{;\Lnenth} (u^{\kappa} \partial_{\kappa} \Lnenth) \GradEnt_{\alpha} 
				+
				\speed^2
				\TempoverEnth (\partial_{\alpha} u^{\kappa}) \GradEnt_{\kappa} 
				-
				\speed^2
				\TempoverEnth (\partial_{\kappa} u^{\kappa}) \GradEnt_{\alpha}
					\notag
					 \\
			& \ \
				+ 
				(1 - \speed^2) \TempoverEnth (\GradEnt^{\kappa} \partial_{\kappa} \Lnenth) u_{\alpha}
				+ 
				2 \speed^{-1} \speed_{;\Ent} (u^{\kappa} \partial_{\kappa} \Lnenth) \GradEnt_{\alpha}.
				\notag
	\end{align}

	Next, we use the identity \eqref{E:COVARIANTWAVEOPERATORAPPLIEDTOSCALAR} with $\phi := \Lnenth$
	to substitute for the term $\square_g \Lnenth$ on
	LHS~\eqref{E:COVARIANTWAVEENTHALPY}, 
	which yields the identity
	\begin{align} \label{E:FLATWAVEOPENTHID}
			\speed^2 ((\upeta^{-1})^{\kappa \lambda} \partial_{\kappa} \partial_{\lambda} \Lnenth)
			& = 
			\speed^2 
			\left\lbrace
			(\partial_{\kappa} u^{\kappa}) (\partial_{\lambda} u^{\lambda})
			-
			(\partial_{\lambda} u^{\kappa}) (\partial_{\kappa} u^{\lambda}) 
		\right\rbrace
			\\
		& \ \
			+
			(1 - \speed^2)
			u^{\kappa} \partial_{\kappa} (u^{\lambda} \partial_{\lambda} \Lnenth)
			+
			(1 - \speed^2)
			(\partial_{\kappa} u^{\kappa}) (u^{\lambda} \partial_{\lambda} \Lnenth)
					\notag \\
		& \ \
			- 2 \speed^{-1} \speed_{;\Lnenth} 
				(u^{\kappa} \partial_{\kappa} \Lnenth)
				(u^{\lambda} \partial_{\lambda} \Lnenth)
					\notag 
						\\
		& \ \
			+
			n \speed^2 \TempoverEnth \mathcal{D}
				\notag \\
		& \ \
		+
		(1 - \speed^2) \TempoverEnth (\GradEnt^{\kappa} \partial_{\kappa} \Lnenth)
		+
		\speed^2 \TempoverEnth_{;\Lnenth} (\GradEnt^{\kappa} \partial_{\kappa} \Lnenth)
		+
		\speed^2 \TempoverEnth_{;\Ent} \GradEnt_{\kappa} \GradEnt^{\kappa}.
		\notag
	\end{align}
	
	From \eqref{E:FLATWAVEOPENTHID},
	it follows that the product 
	$
	- \speed^2
		((\upeta^{-1})^{\kappa \lambda} \partial_{\kappa} \partial_{\lambda} \Lnenth)
		u_{\alpha}
	$
	on \linebreak RHS~\eqref{E:TOPDERIVATIVEVELOCITYTERMIDENTITY} 
	can be expressed as
	\begin{align} \label{E:SECONDFLATWAVEOPENTHID}
			- \speed^2
				((\upeta^{-1})^{\kappa \lambda} \partial_{\kappa} \partial_{\lambda} \Lnenth)
				u_{\alpha} 
			& = 
			\speed^2 
			\left\lbrace
				(\partial_{\kappa} u^{\lambda}) (\partial_{\lambda} u^{\kappa}) 
				-
				(\partial_{\lambda} u^{\lambda}) (\partial_{\kappa} u^{\kappa})
			\right\rbrace
			u_{\alpha} 
			\\
		& \ \
			+ 
			(\speed^2 - 1)
			\left\lbrace
				u^{\kappa} \partial_{\kappa} (u^{\lambda} \partial_{\lambda} \Lnenth)
			\right\rbrace
			u_{\alpha} 
			\notag 
			\\
			& \ \
			+
			(\speed^2 - 1)
			(\partial_{\kappa} u^{\kappa}) (u^{\lambda} \partial_{\lambda} \Lnenth)
			u_{\alpha} 
					\notag \\
		& \ \
			+ 2 \speed^{-1} \speed_{;\Lnenth} 
				(u^{\kappa} \partial_{\kappa} \Lnenth)
				(u^{\lambda} \partial_{\lambda} \Lnenth)
				u_{\alpha}
					\notag 
					\\
	& \ \
			-
			n \speed^2 \TempoverEnth \mathcal{D} u_{\alpha}
				\notag \\
		& \ \
		+
		(\speed^2 - 1) \TempoverEnth (\GradEnt^{\kappa} \partial_{\kappa} \Lnenth) u_{\alpha}
		-
		\speed^2 \TempoverEnth_{;\Lnenth} (\GradEnt^{\kappa} \partial_{\kappa} \Lnenth) u_{\alpha}
		\notag 
		\\
		& \ \
		-
		\speed^2 \TempoverEnth_{;\Ent} \GradEnt_{\kappa} \GradEnt^{\kappa} u_{\alpha}.
		\notag
	\end{align}
	
	Using \eqref{E:SECONDFLATWAVEOPENTHID} to substitute for the term
	$
	- 
	\speed^2
	((\upeta^{-1})^{\kappa \lambda} \partial_{\kappa} \partial_{\lambda} \Lnenth)
	u_{\alpha}
	$
	on \linebreak
	RHS~\eqref{E:TOPDERIVATIVEVELOCITYTERMIDENTITY},
	we obtain the identity
	\begin{align} \label{E:REWRITTENTOPDERIVATIVEVELOCITYTERMIDENTITY}
		\speed^2 ((\upeta^{-1})^{\kappa \lambda} \partial_{\kappa} \partial_{\lambda} u_{\alpha})
		& = 
			(\speed^2 - 1) (u^{\kappa} \partial_{\kappa} \partial_{\alpha} \Lnenth)
			- 
			(\partial_{\alpha} u^{\kappa}) (\partial_{\kappa} \Lnenth)
			\\
			& \ \
			+ 
			2 \speed^{-1} \speed_{;\Lnenth} (\partial_{\alpha} \Lnenth) (u^{\kappa} \partial_{\kappa} \Lnenth)
			\notag
			\\
	& \ \
			- 
			\speed^2
			\frac{1}{\Enth} 
			\upepsilon_{\lambda \alpha \gamma \delta} 
			((\upeta^{-1})^{\kappa \lambda} \partial_{\kappa} \Lnenth) 
			u^{\gamma} \vort^{\delta}
			\notag
			\\
			& \ \
			+
			\speed^2
			\frac{1}{\Enth} \upepsilon_{\lambda \alpha \gamma \delta} ((\upeta^{-1})^{\kappa \lambda} \partial_{\kappa} u^{\gamma}) 
			\vort^{\delta}
				\notag \\
	& \ \ +
			\speed^2
			\frac{1}{\Enth} \upepsilon_{\lambda \alpha \gamma \delta} u^{\gamma} ((\upeta^{-1})^{\kappa \lambda} \partial_{\kappa} \vort^{\delta})
		\notag
			\\
	& \ \
			+
			\speed^2 
			\left\lbrace
				(\partial_{\kappa} u^{\lambda}) (\partial_{\lambda} u^{\kappa})
				-
				(\partial_{\lambda} u^{\lambda}) (\partial_{\kappa} u^{\kappa})
			\right\rbrace
			u_{\alpha} 
			\notag \\
		& \ \
			+ 
			(\speed^2 - 1)
			\left\lbrace
				u^{\kappa} \partial_{\kappa} (u^{\lambda} \partial_{\lambda} \Lnenth)
			\right\rbrace
			u_{\alpha} 
			\notag 
			\\
			& \ \
			+
			(\speed^2 - 1)
			(\partial_{\kappa} u^{\kappa}) (u^{\lambda} \partial_{\lambda} \Lnenth)
			u_{\alpha} 
					\notag \\
		& \ \
			+ 2 \speed^{-1} \speed_{;\Lnenth} 
				(u^{\kappa} \partial_{\kappa} \Lnenth)
				(u^{\lambda} \partial_{\lambda} \Lnenth)
				u_{\alpha}
					\notag 
					\\
	&  \ \
				- 
				\speed^2
				(\upeta^{-1})^{\kappa \lambda} 
				(\partial_{\kappa} \Lnenth) (\partial_{\lambda} u_{\alpha})
				+ 
				\speed^2
				(\partial_{\alpha} \Lnenth) (\partial_{\kappa} u^{\kappa})
				\notag 
					\\
		& \ \
			+
			\speed^2
			\TempoverEnth (\GradEnt^{\kappa} \partial_{\kappa} u_{\alpha})
				\notag \\
		& \ \
				-
				\speed^2
				\TempoverEnth_{;\Lnenth} (u^{\kappa} \partial_{\kappa} \Lnenth) \GradEnt_{\alpha} 
				+
				\speed^2
				\TempoverEnth (\partial_{\alpha} u^{\kappa}) \GradEnt_{\kappa} 
				-
				\speed^2
				\TempoverEnth (\partial_{\kappa} u^{\kappa}) \GradEnt_{\alpha}
				\notag 
				\\
				& \ \ 
				+ 
				2 \speed^{-1} \speed_{;\Ent} (u^{\kappa} \partial_{\kappa} \Lnenth) \GradEnt_{\alpha}.
					\notag 
	\end{align}
		
		Using \eqref{E:TWOUDERIVATIVESOFUALPHAIDENTITY}
		and \eqref{E:REWRITTENTOPDERIVATIVEVELOCITYTERMIDENTITY}
		to substitute for the first and second products on RHS~\eqref{E:FIRSTCOVARIANTWAVEVELOCITY},
		and reorganizing the terms,
		we deduce (where we have added and subtracted $(\partial_{\kappa} u^{\kappa}) (\partial_{\alpha} \Lnenth)$
		on the third and fourth lines of 
		RHS~\eqref{E:SECONDCOVARIANTWAVEVELOCITY})
		\begin{align}  \label{E:SECONDCOVARIANTWAVEVELOCITY} 
		\square_g u_{\alpha}
		& = \speed^2 \frac{1}{\Enth} 
				\upepsilon_{\lambda \alpha \gamma \delta} u^{\gamma} ((\upeta^{-1})^{\kappa \lambda} \partial_{\kappa} \vort^{\delta})
				\\
		& \ \
				+
				(1 - \speed^2) (u^{\kappa} \partial_{\kappa} u_{\alpha}) (u^{\lambda} \partial_{\lambda} \Lnenth)
				- 
				\speed^2
				(\upeta^{-1})^{\kappa \lambda} 
				(\partial_{\kappa} \Lnenth) 
				(\partial_{\lambda} u_{\alpha})
				\notag
				\\
				& \ \
				+
				\left\lbrace
					(\partial_{\kappa} u^{\kappa}) (\partial_{\alpha} \Lnenth)
					- 
					(\partial_{\alpha} u^{\kappa}) (\partial_{\kappa} \Lnenth)
				\right\rbrace
				\notag \\
	& \ \
			+ 
			(\speed^2 - 1)
			(\partial_{\kappa} u^{\kappa})
			(\partial_{\alpha} \Lnenth)
			+
			(\speed^2 - 1)
			(\partial_{\kappa} u^{\kappa}) 
			(u^{\lambda} \partial_{\lambda} \Lnenth)
			u_{\alpha}
			\notag
			\\
			& \ \
			+
			(\speed^2 - 1)
			(\partial_{\kappa} u^{\kappa}) 
			(u^{\lambda} \partial_{\lambda} u_{\alpha})
					\notag \\
		& \ \
			+
			2 \speed^{-1} \speed_{;\Lnenth} (\partial_{\alpha} \Lnenth) (u^{\kappa} \partial_{\kappa} \Lnenth)
			+ 
			2 \speed^{-1} \speed_{;\Lnenth} 
			(u^{\kappa} \partial_{\kappa} \Lnenth)
			(u^{\lambda} \partial_{\lambda} \Lnenth) 
			u_{\alpha}
			\notag
			\\
			& \ \
			+ 
				2 \speed^{-1} \speed_{;\Lnenth}
				(u^{\kappa} \partial_{\kappa} \Lnenth) 
				(u^{\lambda} \partial_{\lambda} u_{\alpha})
				\notag \\
		& \ \
			- 
			\speed^2
			\frac{1}{\Enth} 
			\upepsilon_{\lambda \alpha \gamma \delta} 
			((\upeta^{-1})^{\kappa \lambda} \partial_{\kappa} \Lnenth) 
			u^{\gamma} \vort^{\delta}
			+
			\speed^2
			\frac{1}{\Enth} \upepsilon_{\lambda \alpha \gamma \delta} ((\upeta^{-1})^{\kappa \lambda} \partial_{\kappa} u^{\gamma}) 
			\vort^{\delta}
			\notag	\\
		& \ \
			+
			\speed^2 
			\left\lbrace
				(\partial_{\kappa} u^{\lambda}) (\partial_{\lambda} u^{\kappa})
				-
				(\partial_{\lambda} u^{\lambda}) (\partial_{\kappa} u^{\kappa}) 
			\right\rbrace
			u_{\alpha} 
		\notag 
		\\
		& \ \
		- 
		\speed^{-1} \speed_{;\Lnenth} 
				(g^{-1})^{\kappa \lambda}
				(\partial_{\kappa} \Lnenth)
				(\partial_{\lambda} u_{\alpha})
		-
		\speed \speed_{;\Ent} (\GradEnt^{\kappa} \partial_{\kappa} u_{\alpha})
		\notag
			\\
	& \ \
				+ 
				2 \speed^{-1} \speed_{;\Ent} (u^{\kappa} \partial_{\kappa} \Lnenth) \GradEnt_{\alpha}
				-
				\TempoverEnth_{;\Lnenth} (u^{\kappa} \partial_{\kappa} \Lnenth) \GradEnt_{\alpha} 
				+
				\speed^2
				\TempoverEnth (\partial_{\alpha} u^{\kappa}) \GradEnt_{\kappa} 
		       \notag
		       \\
		       & \ \
		       	-
				\speed^2
				\TempoverEnth (\partial_{\kappa} u^{\kappa}) \GradEnt_{\alpha}
			+
			\TempoverEnth
			(\GradEnt^{\kappa} \partial_{\kappa} u_{\alpha})
			\notag 
			\\
			& \ \
			+
			\frac{1}{\Enth} 
			(1 - \speed^2) \TempoverEnth
			\upepsilon_{\alpha \beta \gamma \delta} \GradEnt^{\beta} u^{\gamma} \vort^{\delta}
			+
			(\speed^2 - 1) \TempoverEnth
			(\GradEnt^{\kappa} u^{\lambda} \partial_{\lambda} u_{\kappa}) u_{\alpha}.
			\notag
	\end{align}
	
	Next, using \eqref{E:INVERSEACOUSTICALMETRIC},
	we observe the following identity for the two terms
	on the second line of RHS~\eqref{E:SECONDCOVARIANTWAVEVELOCITY}:
	\begin{align} \label{E:IDENTITYDERIVEDFROMVELOCITYEVOLUTION}
			&
			(1 - \speed^2) (u^{\kappa} \partial_{\kappa} u_{\alpha}) (u^{\lambda} \partial_{\lambda} \Lnenth)
				- 
				\speed^2
				(\upeta^{-1})^{\kappa \lambda} 
				(\partial_{\kappa} \Lnenth) (\partial_{\lambda} u_{\alpha})
				\\
				& =
				- 
				(g^{-1})^{\kappa \lambda} (\partial_{\kappa} \Lnenth) (\partial_{\lambda} u_{\alpha}).
				\notag
	\end{align}
	Moreover, using equation \eqref{E:VELOCITYEVOLUTION},
	we see that the terms on the fourth through seventh lines of RHS~\eqref{E:SECONDCOVARIANTWAVEVELOCITY} sum to 
		$
		(\speed^2 - 1)
		\TempoverEnth (\partial_{\kappa} u^{\kappa}) \GradEnt_{\alpha}
		+
		2 \speed^{-1} \speed_{;\Lnenth} \TempoverEnth 
		(u^{\kappa} \partial_{\kappa} \Lnenth)
		\GradEnt_{\alpha}
	$.
	In addition, appealing to definition \eqref{E:UORTHGONALVORTICITYOFONEFORM} with $V_{\alpha} := \vort_{\alpha}$, 
	we obtain the following identity for the first product on RHS~\eqref{E:SECONDCOVARIANTWAVEVELOCITY}:
	$
	\speed^2
			\frac{1}{\Enth} \upepsilon_{\lambda \alpha \gamma \delta} u^{\gamma} ((\upeta^{-1})^{\kappa \lambda} \partial_{\kappa} \vort^{\delta})
	= -
			\speed^2
			\frac{1}{\Enth} 
			\uperpvort_{\alpha}(\vort)		
	$.
	From these facts, 
	\eqref{E:SECONDCOVARIANTWAVEVELOCITY},
	and
	\eqref{E:IDENTITYDERIVEDFROMVELOCITYEVOLUTION},
	we obtain the following equation:
	\begin{align}  \label{E:THIRDCOVARIANTWAVEVELOCITY} 
		\square_g u_{\alpha}
		& = 	
			-
			\speed^2
			\frac{1}{\Enth} 
			\uperpvort_{\alpha}(\vort)
				\\
		& \ \
			- 
			\speed^2
			\frac{1}{\Enth} 
			\upepsilon_{\lambda \alpha \gamma \delta} 
			((\upeta^{-1})^{\kappa \lambda} \partial_{\kappa} \Lnenth) 
			u^{\gamma} \vort^{\delta}
			+
			\speed^2
			\frac{1}{\Enth} \upepsilon_{\lambda \alpha \gamma \delta} ((\upeta^{-1})^{\kappa \lambda} \partial_{\kappa} u^{\gamma}) 
			\vort^{\delta}
				\notag \\
		& \ \
				+
				\left\lbrace
					(\partial_{\kappa} u^{\kappa}) (\partial_{\alpha} \Lnenth)
					- 
					(\partial_{\alpha} u^{\kappa}) (\partial_{\kappa} \Lnenth)
				\right\rbrace
				- 
				(g^{-1})^{\kappa \lambda} 
				(\partial_{\kappa} \Lnenth) 
				(\partial_{\lambda} u_{\alpha})
				\notag \\
		& \ \
			+
			\speed^2 
			\left\lbrace
				(\partial_{\kappa} u^{\lambda}) (\partial_{\lambda} u^{\kappa}) 
				-
				 (\partial_{\lambda} u^{\lambda}) (\partial_{\kappa} u^{\kappa})
			\right\rbrace
			u_{\alpha} 
			\notag
			\\
			& \ \
		- 
		\speed^{-1} \speed_{;\Lnenth} 
		(g^{-1})^{\kappa \lambda}
		(\partial_{\kappa} \Lnenth)
		(\partial_{\lambda} u_{\alpha})
		-
		\speed \speed_{;\Ent} (\GradEnt^{\kappa} \partial_{\kappa} u_{\alpha})
		\notag \\
		& \ \
				+ 
				2 \speed^{-1} \speed_{;\Ent} (u^{\kappa} \partial_{\kappa} \Lnenth) \GradEnt_{\alpha}
			-
			\TempoverEnth_{;\Lnenth} (u^{\kappa} \partial_{\kappa} \Lnenth) \GradEnt_{\alpha} 
			+
				\speed^2
				\TempoverEnth (\partial_{\alpha} u^{\kappa}) \GradEnt_{\kappa} 
				\notag
					\\
	& \ \	
			+
			\TempoverEnth
			(\GradEnt^{\kappa} \partial_{\kappa} u_{\alpha})
			+
			\frac{1}{\Enth} 
			(1 - \speed^2) \TempoverEnth
			\upepsilon_{\alpha \beta \gamma \delta} \GradEnt^{\beta} u^{\gamma} \vort^{\delta}
			\notag
			\\
			& \ \
			+
			(\speed^2 - 1) \TempoverEnth
			(\GradEnt^{\kappa} u^{\lambda} \partial_{\lambda} u_{\kappa}) u_{\alpha}
			\notag
				\\
	& \ \
		-
		\TempoverEnth (\partial_{\kappa} u^{\kappa}) \GradEnt_{\alpha}
		+
		2 \speed^{-1} \speed_{;\Lnenth} \TempoverEnth 
		(u^{\kappa} \partial_{\kappa} \Lnenth)
		\GradEnt_{\alpha}.
		\notag
	\end{align}
	Using definition \eqref{E:MODIFIEDVORTICITYOFVORTICITY}
	to express the product 
	$-
			\speed^2
			\frac{1}{\Enth} 
			\uperpvort_{\alpha}(\vort)
	$
	on RHS~\eqref{E:THIRDCOVARIANTWAVEVELOCITY} 
	as 
	$
	-
			\speed^2
			\frac{1}{\Enth} 
			\mathcal{C}_{\alpha}
	+ 
	\cdots
	$,
	reorganizing the terms on the RHS of the resulting identity,
	and raising the $\alpha$ index with $\upeta^{-1}$,
	we arrive at the desired identity \eqref{E:COVARIANTWAVEVELOCITY}.
	
\end{proof}

\subsection{Covariant wave equation for the entropy}
\label{SS:WAVEEQUATIONFORENT}
In this subsection, we derive the covariant wave equation \eqref{E:MAINTHMENTAUXILIARYWAVEQUATION}.

\begin{proposition} [Covariant wave equation for $\Ent$]
\label{P:COVARIANTWAVEENT}
Assume that $(\Lnenth,\Ent,u^{\alpha})$ is a $C^2$ solution 
to \eqref{E:ENTHALPYEVOLUTION}-\eqref{E:ENTROPYEVOLUTION} + \eqref{E:UISUNITLENGTH}.
Then the entropy $\Ent$ 
verifies the following covariant wave equation:
\begin{align}  \label{E:ENTAUXILIARYWAVEQUATION} 
		\square_g \Ent
		& = 	
		\speed^2 n \mathcal{D}
		+
		\GradEnt^{\kappa} \partial_{\kappa} \Lnenth
		- 
		\speed^2 \GradEnt^{\kappa} \partial_{\kappa} \Lnenth
		- 
		\speed \speed_{;\Lnenth} 
		\GradEnt^{\kappa} \partial_{\kappa} \Lnenth
		-
		\speed \speed_{;\Ent}
		\GradEnt_{\kappa} \GradEnt^{\kappa}.
\end{align}

\end{proposition}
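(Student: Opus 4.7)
The strategy is to apply Lemma \ref{L:COVARIANTWAVEOPERATORAPPLIEDTOSCALAR} with $\phi := \Ent$ and then systematically kill or rewrite each term on the right-hand side of \eqref{E:COVARIANTWAVEOPERATORAPPLIEDTOSCALAR} using the transport equation \eqref{E:ENTROPYEVOLUTION}, the orthogonality \eqref{E:VELOCITYANDENTGRADIENTAREMINKOWSKIPERP}, and the definition \eqref{E:MODIFIEDDIVERGENCEOFENTROPY} of $\mathcal{D}$. In contrast to the proofs of Props.\,\ref{P:COVARIANTWAVEENTHALPY} and \ref{P:COVARIANTWAVEVELOCITY}, the entropy case benefits from massive cancellations because $u^{\kappa}\partial_{\kappa}\Ent\equiv 0$, so essentially all ``transport-type'' structure disappears and only the flat spatial Laplacian-like piece remains to be processed.

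First, I substitute $\phi = \Ent$ into \eqref{E:COVARIANTWAVEOPERATORAPPLIEDTOSCALAR} and invoke \eqref{E:ENTROPYEVOLUTION}, namely $u^{\kappa}\partial_{\kappa}\Ent = 0$, to annihilate the first, third, and fourth terms on the right-hand side (everything containing $u^{\lambda}\partial_{\lambda}\Ent$). This reduces the identity to
\begin{align*}
\square_g \Ent = \speed^2 (\upeta^{-1})^{\kappa\lambda}\partial_{\kappa}\partial_{\lambda}\Ent
-\speed^{-1}\speed_{;\Lnenth}(g^{-1})^{\kappa\lambda}(\partial_{\kappa}\Lnenth)(\partial_{\lambda}\Ent)
-\speed\,\speed_{;\Ent}(\GradEnt^{\kappa}\partial_{\kappa}\Ent).
\end{align*}

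Next, I rewrite the middle term. Using $\partial_{\lambda}\Ent = \GradEnt_{\lambda}$ together with \eqref{E:INVERSEACOUSTICALMETRIC} and the Minkowski-orthogonality $u^{\lambda}\GradEnt_{\lambda} = 0$ from \eqref{E:VELOCITYANDENTGRADIENTAREMINKOWSKIPERP}, the $u^{\kappa}u^{\lambda}$ contribution in $(g^{-1})^{\kappa\lambda}$ drops out and I obtain $(g^{-1})^{\kappa\lambda}(\partial_{\kappa}\Lnenth)(\partial_{\lambda}\Ent) = \speed^2\GradEnt^{\kappa}\partial_{\kappa}\Lnenth$. This converts the middle term into $-\speed\,\speed_{;\Lnenth}\GradEnt^{\kappa}\partial_{\kappa}\Lnenth$, producing one of the terms in the target expression. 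The last term is already in final form since $\GradEnt^{\kappa}\partial_{\kappa}\Ent = \GradEnt_{\kappa}\GradEnt^{\kappa}$.

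Finally, I attack the principal term $\speed^2(\upeta^{-1})^{\kappa\lambda}\partial_{\kappa}\partial_{\lambda}\Ent = \speed^2\partial_{\kappa}\GradEnt^{\kappa}$, which is where the modified variable $\mathcal{D}$ enters. Solving \eqref{E:MODIFIEDDIVERGENCEOFENTROPY} for $\partial_{\kappa}\GradEnt^{\kappa}$ yields
\begin{align*}
\speed^2\partial_{\kappa}\GradEnt^{\kappa} = \speed^2 n\mathcal{D} - \speed^2\GradEnt^{\kappa}\partial_{\kappa}\Lnenth + \GradEnt^{\kappa}\partial_{\kappa}\Lnenth,
\end{align*}
and substituting this into the reduced identity directly produces the stated formula \eqref{E:ENTAUXILIARYWAVEQUATION}. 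There is essentially no obstacle in this argument — the only ``move'' requiring care is recognizing that the $u^{\kappa}u^{\lambda}$ piece of $(g^{-1})^{\kappa\lambda}$ drops out of the middle term by the $u$-$\GradEnt$ orthogonality, which is the mechanism by which the equation ends up genuinely simpler than its enthalpy and velocity analogues.
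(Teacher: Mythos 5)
Your proof is correct and follows essentially the same route as the paper's: apply Lemma~\ref{L:COVARIANTWAVEOPERATORAPPLIEDTOSCALAR} with $\phi = \Ent$, use the transport equation \eqref{E:ENTROPYEVOLUTION} to annihilate the $u$-derivative terms, use \eqref{E:INVERSEACOUSTICALMETRIC} together with $u^\kappa \GradEnt_\kappa = 0$ to reduce the $(g^{-1})^{\kappa\lambda}(\partial_\kappa\Lnenth)(\partial_\lambda\Ent)$ term to $\speed^2 \GradEnt^\kappa\partial_\kappa\Lnenth$, and solve the definition \eqref{E:MODIFIEDDIVERGENCEOFENTROPY} for $\partial_\kappa\GradEnt^\kappa$ to eliminate the divergence in favor of $\mathcal{D}$. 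The only stylistic difference is that you spell out the intermediate reduction a bit more explicitly than the paper, but the logic and the supporting identities invoked are the same.
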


\begin{proof}
Applying \eqref{E:COVARIANTWAVEOPERATORAPPLIEDTOSCALAR} with $\phi := \Ent$,
using \eqref{E:INVERSEACOUSTICALMETRIC} to algebraically substitute
for the factor of $(g^{-1})^{\kappa \lambda}$ on RHS~\eqref{E:COVARIANTWAVEOPERATORAPPLIEDTOSCALAR},
and using the evolution equation \eqref{E:ENTROPYEVOLUTION}
(which implies that many factors on RHS~\eqref{E:COVARIANTWAVEOPERATORAPPLIEDTOSCALAR} vanish),
we deduce, in view of definition \eqref{E:ENTROPYGRADIENT},
that
\begin{align} \label{E:FIRSTENTAUXILIARYWAVEQUATION}
		\square_g \Ent
		& = 
				\speed^2 \partial_{\kappa} \GradEnt^{\kappa}
				- 
				\speed \speed_{;\Lnenth} 
				\GradEnt^{\kappa} \partial_{\kappa} \Lnenth
				-
				\speed \speed_{;\Ent}
				\GradEnt_{\kappa} \GradEnt^{\kappa}.
\end{align}
We then solve for $\partial_{\kappa} \GradEnt^{\kappa}$ in terms
of the remaining terms in definition \eqref{E:MODIFIEDDIVERGENCEOFENTROPY}
and then use the resulting identity to algebraically substitute for the factor
$\partial_{\kappa} \GradEnt^{\kappa}$ in the first product on
RHS~\eqref{E:FIRSTENTAUXILIARYWAVEQUATION}, 
which in total yields the desired equation \eqref{E:ENTAUXILIARYWAVEQUATION}.

\end{proof}

\section{Transport equations for the entropy gradient and the modified divergence of the entropy}
	\label{S:TRANSPORTFORMODDIVERGENCEOFENTROPY}
		In this section, with the help of the preliminary identities of Lemma~\ref{L:IDENTITIES},
		we derive equations \eqref{E:MAINTHMENTROPYGRADIENTEVOLUTION} and \eqref{E:MAINTHMSMODIFIEDDIVERGENCEENTROPYGRADIENTTRANSPORT}.
		We start by deriving \eqref{E:MAINTHMENTROPYGRADIENTEVOLUTION} (more precisely, its $\upeta$-dual).
	
	\begin{proposition}[Transport equation for the entropy gradient]
	\label{P:ENTROPYGRADIENTTRANSPORT}
	Assume that $(\Lnenth,\Ent,u^{\alpha})$ is a $C^2$ solution 
	to \eqref{E:ENTHALPYEVOLUTION}-\eqref{E:ENTROPYEVOLUTION} + \eqref{E:UISUNITLENGTH}.
	Then the rectangular components the $\GradEnt_{\alpha}$
	of the entropy gradient vectorfield (see Def.\,\ref{D:ENTROPYGRADIENT})
	verify the following transport equations:
	\begin{align}	\label{E:ANOTHERENTROPYGRADIENTEVOLUTION}
		u^{\kappa} \partial_{\kappa} \GradEnt_{\alpha} 
		& = 
			-	 
			\GradEnt^{\kappa} \partial_{\kappa} u_{\alpha}
			-
			\frac{1}{\Enth} \upepsilon_{\alpha \beta \gamma \delta} \GradEnt^{\beta} u^{\gamma} \vort^{\delta}
			- 
			(\GradEnt^{\kappa} \partial_{\kappa} \Lnenth) u_{\alpha}
			+ 
			\TempoverEnth \GradEnt_{\kappa} \GradEnt^{\kappa} u_{\alpha}
				\\
			& = 
			-	 
			\GradEnt^{\kappa} \partial_{\kappa} u_{\alpha}
			-
			\frac{1}{\Enth} \upepsilon_{\alpha \beta \gamma \delta} \GradEnt^{\beta} u^{\gamma} \vort^{\delta}
			+
			\GradEnt^{\kappa} (u^{\lambda} \partial_{\lambda} u_{\kappa}) u_{\alpha}.
			\notag
		\end{align}
	\end{proposition}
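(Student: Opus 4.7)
The proposition concerns rewriting the basic transport equation \eqref{E:ENTROPYGRADIENTEVOLUTION}, namely $u^{\kappa}\partial_{\kappa}\GradEnt_{\alpha} = -\GradEnt_{\kappa}\partial_{\alpha}u^{\kappa}$, in a form in which the spatial gradient of $u$ has been replaced by a material ($u$-directional) derivative of $u$ plus geometric correction terms. My plan is to work directly from \eqref{E:ENTROPYGRADIENTEVOLUTION} and push the index on $u$ using the preliminary identities already collected in Lemma~\ref{L:IDENTITIES}; no new computation is needed.

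First, raise the index on $u^{\kappa}$ with the Minkowski metric to rewrite $-\GradEnt_{\kappa}\partial_{\alpha}u^{\kappa} = -\GradEnt^{\kappa}\partial_{\alpha}u_{\kappa}$. Now I can invoke the first equality of identity \eqref{E:ENTROPYGRADIENTCONTRACTEDAGAINSTSECONDSLOTPARITALALPHAUBETAMINUSPARTIALBETAUALPHA}, which expresses $\GradEnt^{\kappa}\partial_{\alpha}u_{\kappa}$ as $\GradEnt^{\kappa}\partial_{\kappa}u_{\alpha} + (\GradEnt^{\kappa}\partial_{\kappa}\Lnenth)u_{\alpha} - \TempoverEnth \GradEnt^{\kappa}\GradEnt_{\kappa}u_{\alpha} + \tfrac{1}{\Enth}\upepsilon_{\alpha\kappa\gamma\delta}\GradEnt^{\kappa}u^{\gamma}\vort^{\delta}$. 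Substituting and relabeling the dummy index in the $\upepsilon$ contraction ($\kappa \leftrightarrow \beta$) immediately yields the first equality of \eqref{E:ANOTHERENTROPYGRADIENTEVOLUTION}.

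For the second equality, I group the last two terms $-(\GradEnt^{\kappa}\partial_{\kappa}\Lnenth)u_{\alpha} + \TempoverEnth \GradEnt_{\kappa}\GradEnt^{\kappa}u_{\alpha}$ and recognize them, by identity \eqref{E:IDVELOCITYDERIVATIVEOFVELOCITYCONTRACTEDWITHENTROPYGRADIENT}, as $\GradEnt^{\kappa}(u^{\lambda}\partial_{\lambda}u_{\kappa})u_{\alpha}$. This gives the second form and completes the proof.

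The argument is almost purely algebraic, and the only place where care is required is keeping track of the sign on the $\upepsilon$-term (arising from the antisymmetry already built into \eqref{E:ENTROPYGRADIENTCONTRACTEDAGAINSTSECONDSLOTPARITALALPHAUBETAMINUSPARTIALBETAUALPHA}) and correctly matching the free index $\alpha$ when relabeling dummy indices. The genuinely substantive content — namely the identity \eqref{E:ENTROPYGRADIENTCONTRACTEDAGAINSTSECONDSLOTPARITALALPHAUBETAMINUSPARTIALBETAUALPHA}, which encodes the decomposition of $\partial u$ into a $u$-directional derivative plus a vorticity-type piece — has already been established in Lemma~\ref{L:IDENTITIES}, so no further obstacle arises here.
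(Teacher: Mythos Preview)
Your proof is correct and essentially the same as the paper's. The paper starts from \eqref{E:ENTROPYGRADIENTEVOLUTION} and contracts $\GradEnt^{\kappa}$ against the basic identity \eqref{E:PARITALALPHAUBETAMINUSPARTIALBETAUALPHA} directly (using \eqref{E:UISUNITLENGTH} and \eqref{E:VELOCITYANDENTGRADIENTAREMINKOWSKIPERP} to simplify), whereas you invoke the pre-contracted form \eqref{E:ENTROPYGRADIENTCONTRACTEDAGAINSTSECONDSLOTPARITALALPHAUBETAMINUSPARTIALBETAUALPHA}; since the latter was itself obtained in Lemma~\ref{L:IDENTITIES} by exactly that contraction, the two arguments are equivalent, and both use \eqref{E:IDVELOCITYDERIVATIVEOFVELOCITYCONTRACTEDWITHENTROPYGRADIENT} for the second equality.
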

	
	\begin{proof}
	From equation \eqref{E:ENTROPYGRADIENTEVOLUTION},
	the identity \eqref{E:PARITALALPHAUBETAMINUSPARTIALBETAUALPHA}, 
	\eqref{E:UISUNITLENGTH},
	and
	\eqref{E:VELOCITYANDENTGRADIENTAREMINKOWSKIPERP},
	we deduce
	\begin{align} \label{E:FIRSTSTEPANOTHERENTROPYGRADIENTEVOLUTION}
		u^{\kappa} \partial_{\kappa} \GradEnt_{\alpha}
		& = - 
				\GradEnt^{\kappa} \partial_{\kappa} u_{\alpha}
				-
			\frac{1}{\Enth} \upepsilon_{\alpha \beta \gamma \delta} \GradEnt^{\beta} u^{\gamma} \vort^{\delta}
			+ 
			(\partial_{\alpha} \Lnenth) \GradEnt^{\kappa} u_{\kappa}
			- 
			(\GradEnt^{\kappa} \partial_{\kappa} \Lnenth) u_{\alpha}
		\\
		& \ \
			-
		\TempoverEnth 
		\left\lbrace
			\GradEnt_{\alpha} \GradEnt^{\kappa} u_{\kappa}  
			- 
			\GradEnt^{\kappa} \GradEnt_{\kappa} u_{\alpha} 
		\right\rbrace
		\notag
				\\
		& = 
			-	 
			\GradEnt^{\kappa} \partial_{\kappa} u_{\alpha}
			-
			\frac{1}{\Enth} \upepsilon_{\alpha \beta \gamma \delta} \GradEnt^{\beta} u^{\gamma} \vort^{\delta}
			- 
			(\GradEnt^{\kappa} \partial_{\kappa} \Lnenth) u_{\alpha}
			+
			\TempoverEnth \GradEnt^{\kappa} \GradEnt_{\kappa} u_{\alpha},
			\notag
	\end{align}
	which yields the first line of \eqref{E:ANOTHERENTROPYGRADIENTEVOLUTION}.
	To obtain the second line of \eqref{E:ANOTHERENTROPYGRADIENTEVOLUTION}
	from the first, we use the identity \eqref{E:IDVELOCITYDERIVATIVEOFVELOCITYCONTRACTEDWITHENTROPYGRADIENT}.
		
	\end{proof}

	We now derive equation \eqref{E:MAINTHMSMODIFIEDDIVERGENCEENTROPYGRADIENTTRANSPORT}.

	\begin{proposition}[Transport equation for the modified divergence of the entropy]
	\label{P:DIVERGENCEOFENTROPYGRADIENTTRANSPORT}
	Assume that $(\Lnenth,\Ent,u^{\alpha})$ is a $C^3$ solution 
	to \eqref{E:ENTHALPYEVOLUTION}-\eqref{E:ENTROPYEVOLUTION} + \eqref{E:UISUNITLENGTH}.
	Then the modified divergence of the entropy gradient 
	$\mathcal{D}$, which is defined in \eqref{E:MODIFIEDDIVERGENCEOFENTROPY}, 
	verifies the following transport equation:
	\begin{align}	\label{E:MODIFIEDDIVERGENCEENTROPYGRADIENTTRANSPORT}
		u^{\kappa} \partial_{\kappa} \mathcal{D}
		& = 
			\frac{2}{n}
			\left\lbrace
				(\partial_{\kappa} \GradEnt^{\kappa}) (\partial_{\lambda} u^{\lambda})
				-
				(\partial_{\lambda} \GradEnt^{\kappa}) (\partial_{\kappa} u^{\lambda})
			\right\rbrace
			\\
			& \ \
			+
				\frac{1}{n}
				\speed^{-2}
				u^{\kappa}
				\left\lbrace
					(\partial_{\kappa} \Lnenth) 
					(\partial_{\lambda} \GradEnt^{\lambda})
					-
					(\partial_{\lambda} \Lnenth)
					(\partial_{\kappa} \GradEnt^{\lambda})
				\right\rbrace	
				\notag
				\\
		& \ \
			+
					\frac{1}{n}
					\speed^{-2} 
					\GradEnt^{\kappa}
					\left\lbrace
						(\partial_{\kappa} u^{\lambda}) (\partial_{\lambda} \Lnenth)
						-
						(\partial_{\lambda} u^{\lambda}) (\partial_{\kappa} \Lnenth)
					\right\rbrace
					\notag \\
		& \ \
			+
			\frac{\GradEnt_{\kappa} \mathcal{C}^{\kappa}}{n \Enth} 
			\notag \\
		& \ \
			+
			\frac{(1 - \speed^{-2})}{n \Enth}
			\upepsilon^{\alpha \beta \gamma \delta} 
			\GradEnt_{\alpha}
			u_{\beta}
			(\partial_{\gamma} \Lnenth) \vort_{\delta}
			+
			\frac{1}{n \Enth} 
			\upepsilon^{\alpha \beta \gamma \delta} 
			\GradEnt_{\alpha} (\partial_{\beta} u_{\gamma}) \vort_{\delta}
			\notag	\\
		& \ \
			+
			\frac{(\Temp - \Temp_{;\Lnenth})}{n \Enth}
			\GradEnt^{\kappa} (\GradEnt^{\lambda} \partial_{\lambda} u_{\kappa})
			-
			\frac{2 \TempoverEnth }{n} \GradEnt^{\kappa} (\GradEnt^{\lambda} \partial_{\lambda} u_{\kappa})
			\notag \\
		& \ \
			+
			\frac{(\Temp_{;\Lnenth} - \Temp) }{n \Enth}
			\GradEnt_{\kappa} \GradEnt^{\kappa} 
			(\partial_{\lambda} u^{\lambda})
			+ 
			\frac{2 \speed^{-1} \speed_{;\Ent} }{n} \GradEnt_{\kappa} \GradEnt^{\kappa} (\partial_{\lambda} u^{\lambda})
			\notag
			\\
			& \ \
			-
			\frac{\speed^2 \TempoverEnth_{;\Lnenth}}{n}
			 \GradEnt_{\kappa} \GradEnt^{\kappa} (\partial_{\lambda} u^{\lambda})
			+
			\frac{\TempoverEnth}{n}
		 \GradEnt_{\kappa} \GradEnt^{\kappa} (\partial_{\lambda} u^{\lambda}).
			\notag
	\end{align}
	
	\end{proposition}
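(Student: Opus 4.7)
The plan is to apply the operator $u^{\kappa} \partial_{\kappa}$ directly to the defining expression \eqref{E:MODIFIEDDIVERGENCEOFENTROPY} of $\mathcal{D}$, and then systematically rewrite each resulting term using the transport equation of Proposition \ref{P:ENTROPYGRADIENTTRANSPORT} for $\GradEnt$, the enthalpy evolution equation \eqref{E:ENTHALPYEVOLUTION}, the entropy equation \eqref{E:ENTROPYEVOLUTION}, the particle conservation equation \eqref{E:PARTIACLECURRENTCONSERVATION}, and the algebraic identities of Lemma \ref{L:IDENTITIES}. The factor $u^{\kappa} \partial_{\kappa}(1/n) = (\partial_{\kappa} u^{\kappa})/n$ is handled immediately via \eqref{E:PARTIACLECURRENTCONSERVATION}, and each occurrence of $u^{\kappa} \partial_{\kappa} \speed^{-2}$ is treated by the chain rule applied to $\speed = \speed(\Lnenth,\Ent)$ together with \eqref{E:ENTHALPYEVOLUTION}--\eqref{E:ENTROPYEVOLUTION}.

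The nontrivial work is in the term $u^{\kappa} \partial_{\kappa}(\partial_{\lambda} \GradEnt^{\lambda})$. I will apply the commutator identity
\begin{align*}
u^{\kappa} \partial_{\kappa} \partial_{\lambda} \GradEnt^{\lambda}
= \partial_{\lambda}(u^{\kappa} \partial_{\kappa} \GradEnt^{\lambda})
- (\partial_{\lambda} u^{\kappa}) \partial_{\kappa} \GradEnt^{\lambda}
\end{align*}
and substitute Proposition \ref{P:ENTROPYGRADIENTTRANSPORT} into the first piece on the right-hand side. Differentiating the right-hand side of \eqref{E:ANOTHERENTROPYGRADIENTEVOLUTION} produces, among other terms, the divergence $\upepsilon^{\alpha\beta\gamma\delta} \partial_{\alpha}(\GradEnt_{\beta} u_{\gamma} \vort_{\delta})/\Enth$, which, upon contracting the free index into the $\GradEnt$-factor already present and expanding with the Leibniz rule, naturally yields the quantity $\GradEnt_{\kappa} \uperpvort^{\kappa}(\vort)/(n\Enth)$; this is exactly the combination that the modified vorticity $\mathcal{C}^{\kappa}$ of Definition \ref{D:MODIFIEDVARIABLES} is designed to capture. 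An analogous commutator trick applied to $u^{\kappa} \partial_{\kappa} \partial_{\lambda} \Lnenth$ in the factor $u^{\kappa} \partial_{\kappa}(\GradEnt^{\lambda} \partial_{\lambda} \Lnenth)$ then converts derivatives of $u^{\kappa} \partial_{\kappa} \Lnenth$ into $- \speed^{2} \partial_{\lambda} \partial_{\kappa} u^{\kappa}$-type expressions via \eqref{E:ENTHALPYEVOLUTION}, which combine with existing $\speed^{-2}$-weighted terms.

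Once every $u$-derivative of a fluid variable has been eliminated in favor of the listed evolution equations, I will reorganize. The $\uperpvort^{\kappa}(\vort)$ piece gets absorbed into $\GradEnt_{\kappa} \mathcal{C}^{\kappa}/(n\Enth)$ by subtracting off the five correction terms in \eqref{E:MODIFIEDVORTICITYOFVORTICITY}; those corrections, each being linear in derivatives of $(\Lnenth,\Ent,u)$, combine with surviving analogous linear remainders to produce the terms in $\mathfrak{L}_{(\mathcal{D})}$. The remaining derivative-quadratic terms fall into two groups: the pure $\mathfrak{Q}_{\mu\nu}$-antisymmetrized pairs visible on the first three lines of RHS~\eqref{E:MODIFIEDDIVERGENCEENTROPYGRADIENTTRANSPORT}, and a $g$-null form $\mathfrak{Q}_{(\mathcal{D})} \propto \speed^{-2} \GradEnt^{\kappa} \mathfrak{Q}_{\kappa\lambda}(\partial u^{\lambda}, \partial \Lnenth)$-type expression that emerges after using \eqref{E:INVERSEACOUSTICALMETRIC} to repackage $\upeta^{-1}$ plus $u \otimes u$ combinations.

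The principal obstacle is bookkeeping: the definition of $\mathcal{C}^{\alpha}$ involves five correction terms weighted by $(\Temp - \Temp_{;\Lnenth})$ and by $\speed^{-2}$, and one must verify that the coefficient of each such term produced by the commutator expansion matches \emph{exactly} what appears in \eqref{E:MODIFIEDVORTICITYOFVORTICITY}, so that after subtraction only the $\mathfrak{L}_{(\mathcal{D})}$ remainders survive. A second subtle point is recognizing which derivative-quadratic residues are genuine standard $g$-null forms (of type $\mathfrak{Q}_{\mu\nu}$ or $\mathfrak{Q}^{(g)}$) only after identifying the right linear combination; in particular, terms of schematic type $u^{\kappa}(\partial_{\kappa}\Lnenth)(\partial_{\lambda}\GradEnt^{\lambda})$ are null forms only in view of the antisymmetric pairing with $u^{\kappa}(\partial_{\lambda}\Lnenth)(\partial_{\kappa}\GradEnt^{\lambda})$ that appears on the second line of RHS~\eqref{E:MODIFIEDDIVERGENCEENTROPYGRADIENTTRANSPORT}. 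Tracking these pairings is where all of the delicate numerical cancellations mentioned in the introduction must be verified.
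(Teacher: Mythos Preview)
Your proposal is correct and follows essentially the same route as the paper. The paper phrases the opening step as ``apply $(\upeta^{-1})^{\alpha\lambda}\partial_{\lambda}$ to the transport equation \eqref{E:ANOTHERENTROPYGRADIENTEVOLUTION},'' which is identical to your commutator identity after one line of algebra; it then handles the $-\GradEnt^{\kappa}\partial_{\kappa}\partial_{\lambda}u^{\lambda}$ term via \eqref{E:ENTHALPYEVOLUTION} exactly as you describe, performs the null-form pairings you identify, multiplies by $1/n$ using \eqref{E:PARTIACLECURRENTCONSERVATION}, and finally trades $\GradEnt_{\kappa}\uperpvort^{\kappa}(\vort)$ for $\GradEnt_{\kappa}\mathcal{C}^{\kappa}$ plus the correction terms from \eqref{E:MODIFIEDVORTICITYOFVORTICITY}. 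The only cosmetic difference is ordering: the paper delays the multiplication by $1/n$ until after the null-form decompositions, whereas you propose to carry it from the start.
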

	
	\begin{proof}
		We apply $(\upeta^{-1})^{\alpha \lambda} \partial_{\lambda}$ to equation \eqref{E:ANOTHERENTROPYGRADIENTEVOLUTION}
		(where we use the first equality in \eqref{E:ANOTHERENTROPYGRADIENTEVOLUTION})
		and use the evolution equation \eqref{E:ENTROPYEVOLUTION}
		and the identity
		\eqref{E:IDVELOCITYDERIVATIVEOFVELOCITYCONTRACTEDWITHENTROPYGRADIENT}
		to deduce
		\begin{align}	\label{E:FIRSTSTEPDIVERGENCEENTROPYGRADIENTTRANSPORT}
		u^{\kappa} \partial_{\kappa} \partial_{\lambda} \GradEnt^{\lambda} 
		& = 
			- u^{\kappa} \partial_{\kappa} (\GradEnt^{\lambda} \partial_{\lambda} \Lnenth)
			-
			2	
			(\partial_{\lambda} \GradEnt^{\kappa}) (\partial_{\kappa} u^{\lambda})
				\\
		& \ \
			-	 
			\GradEnt^{\kappa} \partial_{\kappa} \partial_{\lambda} u^{\lambda}
			- 
			(\GradEnt^{\kappa} \partial_{\kappa} \Lnenth) (\partial_{\lambda} u^{\lambda})
				\notag \\
		& \ \
			+
			\frac{1}{\Enth}
			\upepsilon^{\alpha \beta \gamma \delta} 
			(\partial_{\alpha} \Lnenth)
			\GradEnt_{\beta} u_{\gamma} \vort_{\delta}
			-
			\frac{1}{\Enth} 
			\upepsilon^{\alpha \beta \gamma \delta} \GradEnt_{\beta} u_{\gamma} (\partial_{\alpha} \vort_{\delta})
			\notag 
			\\
			& \ \
			-
			\frac{1}{\Enth} 
			\upepsilon^{\alpha \beta \gamma \delta} 
			\GradEnt_{\beta} (\partial_{\alpha} u_{\gamma}) \vort_{\delta}
			\notag \\
		& \ \
			+
			\TempoverEnth_{;\Lnenth} \GradEnt_{\kappa} \GradEnt^{\kappa} (u^{\lambda} \partial_{\lambda} \Lnenth)
			+
			2 \TempoverEnth \GradEnt^{\kappa} (u^{\lambda} \partial_{\lambda} \GradEnt_{\kappa})
			+
			\TempoverEnth \GradEnt_{\kappa} \GradEnt^{\kappa} (\partial_{\lambda} u^{\lambda}).
			\notag
	\end{align}
	Next, we use the evolution equations
	\eqref{E:ENTHALPYEVOLUTION} and \eqref{E:ENTROPYEVOLUTION}
	to rewrite the third product on RHS~\eqref{E:FIRSTSTEPDIVERGENCEENTROPYGRADIENTTRANSPORT} as follows:
	\begin{align} \label{E:IDFORENTROPYDERIVATIVEOFGRADIENTVELOCITY}
		-	 
		\GradEnt^{\kappa} \partial_{\kappa} \partial_{\lambda} u^{\lambda}
		& = \GradEnt^{\kappa} \partial_{\kappa} (\speed^{-2} u^{\lambda} \partial_{\lambda} \Lnenth)
			\\
		& = \speed^{-2} (\GradEnt^{\kappa} u^{\lambda} \partial_{\lambda} \partial_{\kappa} \Lnenth)
				- 
				2 \speed^{-3} \speed_{;\Lnenth} (\GradEnt^{\kappa} \partial_{\kappa} \Lnenth) (u^{\lambda} \partial_{\lambda} \Lnenth)
				\notag
				\\
				& \ \
				- 
				2 \speed^{-3} \speed_{;\Ent} \GradEnt_{\kappa} \GradEnt^{\kappa} (u^{\lambda} \partial_{\lambda} \Lnenth)
				+
				\speed^{-2} (\GradEnt^{\kappa} \partial_{\kappa} u^{\lambda}) (\partial_{\lambda} \Lnenth)
					\notag \\
		& = 
				u^{\kappa} \partial_{\kappa} 
				(\speed^{-2} \GradEnt^{\lambda} \partial_{\lambda} \Lnenth)
				+
				\speed^{-2} (\GradEnt^{\kappa} \partial_{\kappa} u^{\lambda}) (\partial_{\lambda} \Lnenth)
				-
				\speed^{-2} (u^{\kappa} \partial_{\kappa}  \GradEnt^{\lambda}) (\partial_{\lambda} \Lnenth)
				\notag \\
		& \ \ 
				- 
				2 \speed^{-3} \speed_{;\Ent} \GradEnt_{\kappa} \GradEnt^{\kappa} (u^{\lambda} \partial_{\lambda} \Lnenth).
					\notag 
	\end{align}
	Next, with the help of the evolution equation \eqref{E:ENTHALPYEVOLUTION},
	we decompose the second and third products on RHS~\eqref{E:IDFORENTROPYDERIVATIVEOFGRADIENTVELOCITY} as follows:
	\begin{align} \label{E:FIRSTDECOMPFORNULLFORM}
			\speed^{-2} (\GradEnt^{\kappa} \partial_{\kappa} u^{\lambda}) (\partial_{\lambda} \Lnenth)
			& = \speed^{-2} (\GradEnt^{\kappa} \partial_{\kappa} \Lnenth) (\partial_{\lambda} u^{\lambda})
				\\
			& \ \
				+
					\speed^{-2} 
					\left\lbrace
						(\GradEnt^{\kappa} \partial_{\kappa} u^{\lambda}) (\partial_{\lambda} \Lnenth)
						-
						(\partial_{\lambda} u^{\lambda}) (\GradEnt^{\kappa} \partial_{\kappa} \Lnenth)
					\right\rbrace,
				\notag		\\
		- \speed^{-2} (u^{\kappa} \partial_{\kappa} \GradEnt^{\lambda}) (\partial_{\lambda} \Lnenth)
			& = -
			\speed^{-2} (u^{\kappa} \partial_{\kappa} \Lnenth) (\partial_{\lambda} \GradEnt^{\lambda})
					\label{E:SECONDDECOMPFORNULLFORM} \\
			&  \ \
				+
				\speed^{-2}
				\left\lbrace
					(u^{\kappa} \partial_{\kappa} \Lnenth) (\partial_{\lambda} \GradEnt^{\lambda})
					-
					(u^{\kappa} \partial_{\kappa} \GradEnt^{\lambda}) (\partial_{\lambda} \Lnenth)
				\right\rbrace
				\notag
					\\
			& = (\partial_{\kappa} u^{\kappa}) (\partial_{\lambda} \GradEnt^{\lambda})
					\notag \\
			&  \ \
				+
				\speed^{-2}
				\left\lbrace
					(u^{\kappa} \partial_{\kappa} \Lnenth) (\partial_{\lambda} \GradEnt^{\lambda})
					-
					(u^{\kappa} \partial_{\kappa}  \GradEnt^{\lambda}) (\partial_{\lambda} \Lnenth)
				\right\rbrace.
				\notag
	\end{align}
	Using \eqref{E:FIRSTDECOMPFORNULLFORM}-\eqref{E:SECONDDECOMPFORNULLFORM}
	to substitute for the second and third products on RHS~\eqref{E:IDFORENTROPYDERIVATIVEOFGRADIENTVELOCITY}
	and then using the resulting identity to substitute
	for the third product on RHS~\eqref{E:FIRSTSTEPDIVERGENCEENTROPYGRADIENTTRANSPORT},
	we obtain the following equation:
	\begin{align}	\label{E:SECONDSTEPDIVERGENCEENTROPYGRADIENTTRANSPORT}
		&
		u^{\kappa} \partial_{\kappa} 
		\left\lbrace
			\partial_{\lambda} \GradEnt^{\lambda} 
			+
			\GradEnt^{\lambda} \partial_{\lambda} \Lnenth
			-
			\speed^{-2} (\GradEnt^{\lambda} \partial_{\lambda} \Lnenth)
		\right\rbrace
			\\
		& = 
			(\partial_{\kappa} \GradEnt^{\kappa}) (\partial_{\lambda} u^{\lambda})
			-
			2	(\partial_{\lambda} \GradEnt^{\kappa}) (\partial_{\kappa} u^{\lambda})
				\notag \\
		& \ \
				- 
				(\GradEnt^{\kappa} \partial_{\kappa} \Lnenth) (\partial_{\lambda} u^{\lambda})
				+
				\speed^{-2} (\GradEnt^{\kappa} \partial_{\kappa} \Lnenth) (\partial_{\lambda} u^{\lambda})
			\notag \\
		& \ \
			+
					\speed^{-2} 
					\left\lbrace
						(\GradEnt^{\kappa} \partial_{\kappa} u^{\lambda}) (\partial_{\lambda} \Lnenth)
						-
						(\partial_{\lambda} u^{\lambda}) (\GradEnt^{\kappa} \partial_{\kappa} \Lnenth)
					\right\rbrace
					\notag \\
		& \ \
				+
				\speed^{-2}
				\left\lbrace
					(u^{\kappa} \partial_{\kappa} \Lnenth) (\partial_{\lambda} \GradEnt^{\lambda})
					-
					(u^{\kappa} \partial_{\kappa}  \GradEnt^{\lambda}) (\partial_{\lambda} \Lnenth)
				\right\rbrace
					\notag \\
		& \ \
			+
			\frac{1}{\Enth}
			\upepsilon^{\alpha \beta \gamma \delta} 
			(\partial_{\alpha} \Lnenth)
			\GradEnt_{\beta} u_{\gamma} \vort_{\delta}
			-
			\frac{1}{\Enth} 
			\upepsilon^{\alpha \beta \gamma \delta} \GradEnt_{\beta} u_{\gamma} (\partial_{\alpha} \vort_{\delta})
			-
			\frac{1}{\Enth} 
			\upepsilon^{\alpha \beta \gamma \delta} 
			\GradEnt_{\beta} (\partial_{\alpha} u_{\gamma}) \vort_{\delta}
			\notag \\
		& \ \
			+
			\TempoverEnth_{;\Lnenth} \GradEnt_{\kappa} \GradEnt^{\kappa} (u^{\lambda} \partial_{\lambda} \Lnenth)
			+
			2 \TempoverEnth \GradEnt^{\kappa} (u^{\lambda} \partial_{\lambda} \GradEnt_{\kappa})
			+
			\TempoverEnth \GradEnt_{\kappa} \GradEnt^{\kappa} (\partial_{\lambda} u^{\lambda})
			\notag
			\\
			& \ \ 
			- 
			2 \speed^{-3} \speed_{;\Ent} \GradEnt_{\kappa} \GradEnt^{\kappa} (u^{\lambda} \partial_{\lambda} \Lnenth).
			\notag
	\end{align}
	We now multiply both sides of \eqref{E:SECONDSTEPDIVERGENCEENTROPYGRADIENTTRANSPORT}
	by $1/n$, commute the factor of $1/n$ under the operator $u^{\kappa} \partial_{\kappa}$
	on LHS~\eqref{E:SECONDSTEPDIVERGENCEENTROPYGRADIENTTRANSPORT},
	use equation
	\eqref{E:PARTIACLECURRENTCONSERVATION}
	(which in particular implies that 
	$u^{\kappa} \partial_{\kappa}(1/n) = (1/n) \partial_{\kappa} u^{\kappa}$),
	and use equation \eqref{E:ENTHALPYEVOLUTION}
	to replace the two factors of
	$u^{\lambda} \partial_{\lambda} \Lnenth$
	on the last and next-to-last lines of RHS~\eqref{E:SECONDSTEPDIVERGENCEENTROPYGRADIENTTRANSPORT}
	with
	$
	- \speed^2 \partial_{\lambda} u^{\lambda}
	$,
	thereby obtaining the following equation:
	\begin{align}	\label{E:THIRDSTEPDIVERGENCEENTROPYGRADIENTTRANSPORT}
		&
		u^{\kappa} \partial_{\kappa} 
		\left\lbrace
			\frac{1}{n} (\partial_{\lambda} \GradEnt^{\lambda})
			+
			\frac{1}{n} (\GradEnt^{\lambda} \partial_{\lambda} \Lnenth)
			-
			\frac{1}{n} \speed^{-2} (\GradEnt^{\lambda} \partial_{\lambda} \Lnenth)
		\right\rbrace
			\\
		& = 
			\frac{2}{n}
			\left\lbrace
				(\partial_{\kappa} \GradEnt^{\kappa}) (\partial_{\lambda} u^{\lambda})
				-
				(\partial_{\lambda} \GradEnt^{\kappa}) (\partial_{\kappa} u^{\lambda})
			\right\rbrace
				\notag \\
		& \ \
			+
					\frac{1}{n}
					\speed^{-2} 
					\left\lbrace
						(\GradEnt^{\kappa} \partial_{\kappa} u^{\lambda}) (\partial_{\lambda} \Lnenth)
						-
						(\partial_{\lambda} u^{\lambda}) (\GradEnt^{\kappa} \partial_{\kappa} \Lnenth)
					\right\rbrace
					\notag \\
		& \ \
				+
				\frac{1}{n}
				\speed^{-2}
				\left\lbrace
					(u^{\kappa} \partial_{\kappa} \Lnenth) (\partial_{\lambda} \GradEnt^{\lambda})
					-
					(\partial_{\lambda} \Lnenth) (u^{\kappa} \partial_{\kappa}  \GradEnt^{\lambda}) 
				\right\rbrace
					\notag \\
		& \ \
			+
			\frac{1}{n \Enth}
			\upepsilon^{\alpha \beta \gamma \delta} 
			(\partial_{\alpha} \Lnenth)
			\GradEnt_{\beta} u_{\gamma} \vort_{\delta}
			-
			\frac{1}{n \Enth} 
			\upepsilon^{\alpha \beta \gamma \delta} \GradEnt_{\beta} u_{\gamma} (\partial_{\alpha} \vort_{\delta})
			\notag
			\\
			& \ \
			-
			\frac{1}{n \Enth} 
			\upepsilon^{\alpha \beta \gamma \delta} 
			\GradEnt_{\beta} (\partial_{\alpha} u_{\gamma}) \vort_{\delta}
			\notag \\
		& \ \
			+
			\frac{2 \TempoverEnth}{n} \GradEnt^{\kappa} (u^{\lambda} \partial_{\lambda} \GradEnt_{\kappa})
			+ 
			\frac{2 \speed^{-1} \speed_{;\Ent}}{n}\GradEnt_{\kappa} \GradEnt^{\kappa} (\partial_{\lambda} u^{\lambda})
			-
			\frac{\speed^2 \TempoverEnth_{;\Lnenth} }{n}
			\GradEnt_{\kappa} \GradEnt^{\kappa} (\partial_{\lambda} u^{\lambda})
			\notag
			\\
			& \ \ 
			+
			\frac{\TempoverEnth}{n} \GradEnt_{\kappa} \GradEnt^{\kappa} (\partial_{\lambda} u^{\lambda}).
			\notag
	\end{align}
	Next, we use definitions 
	\eqref{E:UORTHGONALVORTICITYOFONEFORM}  
	and
	\eqref{E:MODIFIEDVORTICITYOFVORTICITY}
	and the identity \eqref{E:VELOCITYANDENTGRADIENTAREMINKOWSKIPERP}
	to obtain the following identity
	for the second product on the fourth line of
	RHS~\eqref{E:THIRDSTEPDIVERGENCEENTROPYGRADIENTTRANSPORT}:
	\begin{align} \label{E:MODIFIEDVORTID}
		-
		\frac{1}{n \Enth} 
		\upepsilon^{\alpha \beta \gamma \delta} \GradEnt_{\beta} u_{\gamma} (\partial_{\alpha} \vort_{\delta})
		&  =
			-
			\frac{1}{n \Enth} 
			\upepsilon^{\alpha \beta \gamma \delta} \GradEnt_{\alpha} u_{\beta} (\partial_{\gamma} \vort_{\delta})
				\\
		& = \frac{\mathcal{C}^{\kappa} \GradEnt_{\kappa}}{n \Enth} 
			-
			\frac{1}{n \Enth}
			\speed^{-2}
			\upepsilon^{\alpha \beta \gamma \delta} 
			\GradEnt_{\alpha}
			u_{\beta}
			(\partial_{\gamma} \Lnenth) \vort_{\delta}
			\notag	\\
		& \ \
			+
			\frac{(\Temp_{;\Lnenth} - \Temp) }{n \Enth}
			\GradEnt_{\kappa} \GradEnt^{\kappa} 
			(\partial_{\lambda} u^{\lambda})
			+
			\frac{(\Temp - \Temp_{;\Lnenth})}{n \Enth}
			\GradEnt^{\kappa} (\GradEnt^{\lambda} \partial_{\lambda} u_{\kappa}).
			\notag
	\end{align}
	Using \eqref{E:MODIFIEDVORTID}
	to substitute for the second product on the fourth line of
	\linebreak RHS~\eqref{E:THIRDSTEPDIVERGENCEENTROPYGRADIENTTRANSPORT},
	using \eqref{E:ENTROPYGRADIENTEVOLUTION} to express the 
	first product on the next-to-last line of 
	RHS~\eqref{E:THIRDSTEPDIVERGENCEENTROPYGRADIENTTRANSPORT} as
	$
	\frac{2 \TempoverEnth}{n} \GradEnt^{\kappa} (u^{\lambda} \partial_{\lambda} \GradEnt_{\kappa})
	=
	-
	\frac{2 \TempoverEnth }{n} \GradEnt^{\kappa} (\GradEnt^{\lambda} \partial_{\lambda} u_{\kappa})
	$,
	and noting that the terms in parentheses on LHS~\eqref{E:THIRDSTEPDIVERGENCEENTROPYGRADIENTTRANSPORT}
	are equal to $\mathcal{D}$
	(see \eqref{E:MODIFIEDDIVERGENCEOFENTROPY}),
  we arrive at the desired evolution equation \eqref{E:MODIFIEDDIVERGENCEENTROPYGRADIENTTRANSPORT}.
	\end{proof}

\section{Transport equation for the vorticity}
\label{S:TRANSPORTFORVORTICITY}
In this section, with the help of the preliminary identities of Lemma~\ref{L:IDENTITIES},
we derive equation \eqref{E:MAINTHMVORTICITYTRANSPORT}. We also derive some preliminary 
identities that, in the next section, we will use when deriving equation \eqref{E:MAINTHMTRANSPORTFORMODIFIEDVORTICITYOFVORTICITY}.
We collect all of these results in the following proposition.

	\begin{proposition}[Transport equation for the vorticity]
	\label{P:VORTICITYTRANSPORT}
	Assume that $(\Lnenth,\Ent,u^{\alpha})$ is a $C^3$ solution 
	to \eqref{E:ENTHALPYEVOLUTION}-\eqref{E:ENTROPYEVOLUTION} + \eqref{E:UISUNITLENGTH}.
	Then the rectangular components $\vort^{\alpha}$ of the vorticity vectorfield defined 
	in \eqref{E:VORTICITYDEF} verify the following transport equations:
	\begin{align}	\label{E:VORTICITYTRANSPORT}
		u^{\kappa} \partial_{\kappa} \vort^{\alpha} 
		& = 
			\vort^{\kappa} \partial_{\kappa} u^{\alpha}
			-
			(\partial_{\kappa} u^{\kappa}) \vort^{\alpha} 
			-	
			(\vort^{\kappa} \partial_{\kappa} \Lnenth) u^{\alpha}
			\\
			& \ \ 
			+
			(\Temp - \Temp_{;\Lnenth})
			\upepsilon^{\alpha \beta \gamma \delta} u_{\beta}
			(\partial_{\gamma} \Lnenth) \GradEnt_{\delta}
			+
			\TempoverEnth \vort^{\kappa} \GradEnt_{\kappa} u^{\alpha}.
			\notag
	\end{align}
	
	Moreover, the following identity holds:
		\begin{align}	\label{E:LIEDERIVATIVEVORTICITYONEFORM}
		(\mathcal{L}_u \vort_{\flat})_{\alpha}
		& = 
			\vort^{\kappa} \partial_{\kappa} u_{\alpha}
			+
			\vort^{\kappa} (\partial_{\alpha} u_{\kappa})
			-
			(\partial_{\kappa} u^{\kappa}) \vort_{\alpha} 
			+
			(u^{\kappa} \partial_{\kappa} u_{\lambda}) u_{\alpha} \vort^{\lambda}
			\\
			& \ \
			+
			(\Temp - \Temp_{;\Lnenth})
			\upepsilon_{\alpha}^{\ \beta \gamma \delta} 
			u_{\beta}
			(\partial_{\gamma} \Lnenth) \GradEnt_{\delta}.
			\notag
	\end{align}
	
	In addition, the following identity holds:
	\begin{align} \label{E:EXTERIORDERIVATIVELIEDERIVATIVEVORTICITYONEFORM}
	& (d \mathcal{L}_u \vort_{\flat})_{\alpha \beta}
		\\
	& =
			(\partial_{\alpha} \vort^{\kappa}) (\partial_{\kappa} u_{\beta})
			-
			(\partial_{\beta} \vort^{\kappa}) (\partial_{\kappa} u_{\alpha})
		\notag		 \\
	& \ \
			+
			\vort^{\kappa} \partial_{\kappa} \partial_{\alpha} u_{\beta}
			-
			\vort^{\kappa} \partial_{\kappa} \partial_{\beta} u_{\alpha}
			\notag
				\\
		& \ \
			+
			(\partial_{\alpha} \vort^{\kappa}) (\partial_{\beta} u_{\kappa})
			-
			(\partial_{\beta} \vort^{\kappa}) (\partial_{\alpha} u_{\kappa})
				\notag \\
		& \ \
			- 
			(\partial_{\alpha} \partial_{\kappa} u^{\kappa}) \vort_{\beta}
			+
			(\partial_{\beta} \partial_{\kappa} u^{\kappa}) \vort_{\alpha}
				\notag \\
		& \ \
			- 
			(\partial_{\kappa} u^{\kappa}) (\partial_{\alpha} \vort_{\beta})
			+
			(\partial_{\kappa} u^{\kappa}) (\partial_{\beta} \vort_{\alpha})
				\notag \\
		& \ \
			+ 
			(\partial_{\alpha} u_{\beta}) \vort^{\lambda}
			(u^{\kappa} \partial_{\kappa} u_{\lambda}) 
			-
			(\partial_{\beta} u_{\alpha}) \vort^{\lambda}
			(u^{\kappa} \partial_{\kappa} u_{\lambda}) 
			\notag \\
		& \ \
			+ 
			u_{\beta} (\partial_{\alpha} \vort^{\lambda})
			(u^{\kappa} \partial_{\kappa} u_{\lambda})
			-
			u_{\alpha} (\partial_{\beta} \vort^{\lambda})
			(u^{\kappa} \partial_{\kappa} u_{\lambda})
			\notag \\
		& \ \
			+ 
			u_{\beta} \vort^{\lambda}
			(\partial_{\alpha} u^{\kappa}) (\partial_{\kappa} u_{\lambda})
			-
			u_{\alpha} \vort^{\lambda}
			(\partial_{\beta} u^{\kappa}) (\partial_{\kappa} u_{\lambda})
				\notag \\
		&  \ \
			+ 
			u_{\beta} \vort^{\lambda}
			(u^{\kappa} \partial_{\kappa} \partial_{\alpha} u_{\lambda}) 
			-
			u_{\alpha} \vort^{\lambda}
			(u^{\kappa} \partial_{\kappa} \partial_{\beta} u_{\lambda})
				\notag
					\\
		& \ \
			+
			(\Temp_{\Lnenth} - \Temp_{;\Lnenth;\Lnenth})
			\upepsilon_{\beta \kappa}^{\ \ \gamma \delta} 
			u^{\kappa} 
			(\partial_{\alpha} \Lnenth)
			(\partial_{\gamma} \Lnenth) \GradEnt_{\delta}
			\notag
			\\
			& \ \
			+
			(\Temp_{;\Lnenth;\Lnenth} - \Temp_{\Lnenth})
			\upepsilon_{\alpha \kappa}^{\ \ \gamma \delta} 
			u^{\kappa} 
			(\partial_{\beta} \Lnenth)
			(\partial_{\gamma} \Lnenth) \GradEnt_{\delta}
			\notag
			\\
		& \ \
			+
			(\Temp_{;\Ent} - \Temp_{;\Lnenth;\Ent})
			\upepsilon_{\beta \kappa}^{\ \ \gamma \delta} 
			u^{\kappa} 
			\GradEnt_{\alpha}
			(\partial_{\gamma} \Lnenth) \GradEnt_{\delta}
			+
			(\Temp_{;\Lnenth;\Ent} - \Temp_{;\Ent})
			\upepsilon_{\alpha \kappa}^{\ \ \gamma \delta} 
			u^{\kappa} 
			\GradEnt_{\beta}
			(\partial_{\gamma} \Lnenth) \GradEnt_{\delta}
			\notag
				\\
		& \ \
			+
			(\Temp - \Temp_{;\Lnenth}) 
			\upepsilon_{\beta \kappa}^{\ \ \gamma \delta} 
			(\partial_{\alpha} u^{\kappa})
			(\partial_{\gamma} \Lnenth) \GradEnt_{\delta}
			+
			(\Temp_{;\Lnenth} - \Temp) 
			\upepsilon_{\alpha \kappa}^{\ \ \gamma \delta} 
			(\partial_{\beta} u^{\kappa})
			(\partial_{\gamma} \Lnenth) \GradEnt_{\delta}
			\notag 
			\\
		& \ \
			+
			(\Temp - \Temp_{;\Lnenth}) 
			\upepsilon_{\beta \kappa}^{\ \ \gamma \delta} 
			 u^{\kappa}
			(\partial_{\alpha} \partial_{\gamma} \Lnenth) \GradEnt_{\delta}
			+
			(\Temp_{;\Lnenth} - \Temp) 
			\upepsilon_{\alpha \kappa}^{\ \ \gamma \delta} 
			u^{\kappa}
			(\partial_{\beta} \partial_{\gamma} \Lnenth) \GradEnt_{\delta}
			\notag 
				\\
		& \ \
			+
			(\Temp - \Temp_{;\Lnenth}) 
			\upepsilon_{\beta \kappa}^{\ \ \gamma \delta} 
			 u^{\kappa}
			 (\partial_{\gamma} \Lnenth) (\partial_{\alpha} \GradEnt_{\delta})
			+
			(\Temp_{;\Lnenth} - \Temp) 
			\upepsilon_{\alpha \kappa}^{\ \ \gamma \delta} 
			u^{\kappa}
			(\partial_{\gamma} \Lnenth) (\partial_{\beta} \GradEnt_{\delta}).
			\notag 
	\end{align}
	
	Finally, the rectangular components $\uperpvort^{\alpha}(\vort)$ of the vorticity of the vorticity,
	which is defined 
	by \eqref{E:UORTHGONALVORTICITYOFONEFORM} and \eqref{E:VORTICITYDEF},
	verify the following transport equations:
	\begin{align} \label{E:VORTICITYOFVORTICITYFIRSTEVOLUTION}
		&u^{\kappa} \partial_{\kappa} \uperpvort^{\alpha}(\vort)
		 = 
			\uperpvort^{\kappa}(\vort) \partial_{\kappa} u^{\alpha}
			- 
			(\partial_{\kappa} u^{\kappa}) \uperpvort^{\alpha}(\vort)
			\\
			& \ \ 
			+
				u^{\alpha} 
				(u^{\kappa} \partial_{\kappa} u_{\beta}) 
				\uperpvort^{\beta}(\vort)
				\notag
					\\
	& \ \
			+ 
			\upepsilon^{\alpha \beta \gamma \delta} 
			u_{\beta}
			(\partial_{\gamma} \partial_{\kappa} u^{\kappa}) \vort_{\delta}
			-
			\upepsilon^{\alpha \beta \gamma \delta} 
			u_{\beta}
			(\vort^{\kappa} \partial_{\kappa} \partial_{\gamma} u_{\delta})
				\notag \\
	& \ \
			+ \upepsilon^{\alpha \beta \gamma \delta} (u^{\kappa} \partial_{\kappa} u_{\beta}) 
				(u^{\lambda} \partial_{\lambda} \vort_{\delta}) u_{\gamma}
			+ \upepsilon^{\alpha \beta \gamma \delta} (u^{\kappa} \partial_{\kappa} u_{\beta}) 
				\vort^{\lambda} (\partial_{\delta} u_{\lambda}) u_{\gamma}
			\notag		\\
	& \ \
			-
			\upepsilon^{\alpha \beta \gamma \delta} 
			u_{\beta}
			(\partial_{\gamma} \vort^{\kappa}) (\partial_{\delta} u_{\kappa})
		-
		\upepsilon^{\alpha \beta \gamma \delta} 
			u_{\beta}
			(\partial_{\gamma} \vort^{\kappa}) (\partial_{\kappa} u_{\delta})
				\notag \\
	& \ \
			+ 
			\upepsilon^{\alpha \beta \gamma \delta} 
			u_{\beta}
			(\partial_{\kappa} u^{\kappa}) (\partial_{\gamma} \vort_{\delta})
				\notag \\
		& \ \
			- 
			\upepsilon^{\alpha \beta \gamma \delta} 
			u_{\beta}
			(\partial_{\gamma} u_{\delta}) \vort^{\lambda}
			(u^{\kappa} \partial_{\kappa} u_{\lambda})
			\notag
				\\
		& \ \
			+
			(\Temp_{\Lnenth} - \Temp_{;\Lnenth;\Lnenth})
			\GradEnt^{\alpha}  
			(\upeta^{-1})^{\kappa \lambda} (\partial_{\kappa} \Lnenth)
			(\partial_{\lambda} \Lnenth)
			\notag
			\\
			& \ \ 
			+
			(\Temp_{\Lnenth} - \Temp_{;\Lnenth;\Lnenth})
			 \GradEnt^{\alpha}
			(u^{\kappa} \partial_{\kappa} \Lnenth)
			(u^{\lambda} \partial_{\lambda} \Lnenth)
			\notag
			\\
	& \ \
			+
			(\Temp_{;\Lnenth;\Lnenth} - \Temp_{\Lnenth})
			u^{\alpha} 
			(\GradEnt^{\kappa} \partial_{\kappa} \Lnenth)
			(u^{\lambda} \partial_{\lambda} \Lnenth) 
			\notag
			\\
			& \ \
			+
			(\Temp_{;\Lnenth;\Lnenth} - \Temp_{\Lnenth})
			((\upeta^{-1})^{\alpha \kappa}\partial_{\kappa} \Lnenth) 
			(\GradEnt^{\lambda} \partial_{\lambda} \Lnenth)
			\notag
			\\
	& \ \
			+
			(\Temp_{;\Ent} - \Temp_{;\Lnenth;\Ent})
			\GradEnt^{\alpha} 
			(\GradEnt^{\kappa} \partial_{\kappa} \Lnenth) 
			+
			(\Temp_{;\Lnenth;\Ent} - \Temp_{;\Ent})
			u^{\alpha} 
			\GradEnt_{\kappa} \GradEnt^{\kappa}
			(u^{\lambda} \partial_{\lambda} \Lnenth) 
			\notag
			\\
			& \ \
			+
			(\Temp_{;\Lnenth;\Ent} - \Temp_{;\Ent})
			\GradEnt_{\kappa} \GradEnt^{\kappa}
			((\upeta^{-1})^{\alpha \lambda} \partial_{\lambda} \Lnenth) 
				\notag
				\\
		& \ \
			+
			(\Temp - \Temp_{;\Lnenth}) 
			\GradEnt^{\alpha}
			(\partial_{\kappa} u^{\kappa})
			(u^{\lambda} \partial_{\lambda} \Lnenth) 
			+
			(\Temp_{;\Lnenth} - \Temp) 
			(\GradEnt^{\kappa} \partial_{\kappa} u^{\alpha})
			(u^{\lambda} \partial_{\lambda} \Lnenth) 
				\notag 
			\\
		& \ \
			+
			(\Temp - \Temp_{;\Lnenth})  
			\GradEnt^{\alpha} 
			((\upeta^{-1})^{\kappa \lambda} \partial_{\kappa} \partial_{\lambda} \Lnenth) 
			+
			(\Temp - \Temp_{;\Lnenth}) 
			\GradEnt^{\alpha}  
			(u^{\kappa} u^{\lambda} \partial_{\kappa} \partial_{\lambda} \Lnenth) 
				\notag 
				\\
		& \ \
			+
			(\Temp_{;\Lnenth} - \Temp) 
			 u^{\alpha}
			(\GradEnt^{\kappa} u^{\lambda} \partial_{\kappa} \partial_{\lambda} \Lnenth) 
			+
			(\Temp_{;\Lnenth} - \Temp) 
			 (\upeta^{-1})^{\alpha \lambda}
			(\GradEnt^{\kappa} \partial_{\kappa} \partial_{\lambda} \Lnenth) 
				\notag 
				\\
		& \ \
			+
			(\Temp - \Temp_{;\Lnenth}) 
			(\upeta^{-1})^{\kappa \lambda}
			(\partial_{\kappa} \Lnenth) (\partial_{\lambda}\GradEnt^{\alpha})
			+
			(\Temp - \Temp_{;\Lnenth})  
			(u^{\kappa} \partial_{\kappa} \Lnenth) (u^{\lambda} \partial_{\lambda} \GradEnt^{\alpha})
			\notag 
			\\
		& \ \
			+
			(\Temp_{;\Lnenth} - \Temp) 
			u^{\alpha} 
			(u^{\kappa} \partial_{\kappa} \Lnenth) (\partial_{\lambda} \GradEnt^{\lambda})
			+
			(\Temp_{;\Lnenth} - \Temp) 
			((\upeta^{-1})^{\alpha \kappa} \partial_{\kappa} \Lnenth) (\partial_{\lambda} \GradEnt^{\lambda})
			\notag 
			\\
		& \ \
			+
		(\Temp_{;\Lnenth} - \Temp) 
		 ((\upeta^{-1})^{\alpha \kappa} \partial_{\kappa} \Lnenth) u_{\beta} (u^{\lambda} \partial_{\lambda} \GradEnt^{\beta})
			\notag
			\\
			& \ \
			+
			(\Temp - \Temp_{;\Lnenth}) 
			u^{\alpha}
			(\upeta^{-1})^{\kappa \lambda} \partial_{\kappa} \Lnenth) u_{\beta} (\partial_{\lambda} \GradEnt^{\beta}).
			\notag 
\end{align}

\end{proposition}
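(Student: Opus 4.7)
The plan is to treat the four displayed equations in sequence, relying throughout on the Lie-derivative reformulation of $\vort$ afforded by the representation $\vort^{\alpha} = -\tfrac{1}{2}\upepsilon^{\alpha \beta \gamma \delta} u_{\beta} [d(\Enth u_{\flat})]_{\gamma \delta}$ (which follows from Def.\,\ref{D:VORTICITYDEF} after symmetrizing the second derivative under the antisymmetric $\upepsilon$-contraction), together with the identities of Lemma~\ref{L:IDENTITIES}. For \eqref{E:VORTICITYTRANSPORT}, I first convert the partial derivative into a Lie derivative via the general formula $u^{\kappa} \partial_{\kappa} V^{\alpha} = \mathcal{L}_u V^{\alpha} + V^{\kappa} \partial_{\kappa} u^{\alpha}$ from \eqref{E:LIEDERIVATIVE}, which produces the first term $\vort^{\kappa} \partial_{\kappa} u^{\alpha}$ on RHS~\eqref{E:VORTICITYTRANSPORT}. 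The remaining task is to compute $\mathcal{L}_u \vort^{\alpha}$ by applying Leibniz to the three-factor representation above: the $\mathcal{L}_u \upepsilon^{\alpha \beta \gamma \delta}$ piece is handled by \eqref{E:LIEUVOLUMEFORMINDICESUP} and yields the $-(\partial_{\kappa} u^{\kappa})\vort^{\alpha}$ contribution; the $\mathcal{L}_u u_{\beta}$ piece is handled by \eqref{E:LIEUVELOCITYONEFORM}, and its three summands, after recombining with the $u_{\beta}$-contraction identity \eqref{E:CONTRACTIONVOLUMEFORMANDEXTERIORDERIVATIVEOFENTHALPHYTIMESVELOCITY}, generate the $-(\vort^{\kappa} \partial_{\kappa} \Lnenth) u^{\alpha}$ and $\TempoverEnth \vort^{\kappa} \GradEnt_{\kappa} u^{\alpha}$ terms as well as a part of the $(\Temp - \Temp_{;\Lnenth})\upepsilon\cdots$ contribution; finally, the $\mathcal{L}_u d(\Enth u_{\flat})$ piece is given by \eqref{E:LIEUOFEXTERIORDERIVATIVEOFENTHALPHYTIMESVELOCITYONEFORMEVOLUTION}, contributing the $\Temp_{;\Lnenth}$ portion. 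The correct assembly of the thermodynamic coefficient $(\Temp - \Temp_{;\Lnenth})$ is the delicate sign-tracking step.

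Equation \eqref{E:LIEDERIVATIVEVORTICITYONEFORM} then follows by combining the one-form Lie derivative formula $(\mathcal{L}_u \vort_{\flat})_{\alpha} = u^{\kappa} \partial_{\kappa} \vort_{\alpha} + \vort_{\kappa} \partial_{\alpha} u^{\kappa}$ coming from \eqref{E:LIEDERIVATIVE} with the index-lowered form of \eqref{E:VORTICITYTRANSPORT}, using \eqref{E:DERIVATIVEOFVELOCITYCONTRACTEDWITHVELOCITYISZERO} and \eqref{E:IDVELOCITYDERIVATIVEOFVELOCITYCONTRACTEDWITHENTROPYGRADIENT} to match the form displayed. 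For \eqref{E:EXTERIORDERIVATIVELIEDERIVATIVEVORTICITYONEFORM}, I apply the antisymmetrized gradient $\partial_{\alpha}(\cdot)_{\beta} - \partial_{\beta}(\cdot)_{\alpha}$ directly to \eqref{E:LIEDERIVATIVEVORTICITYONEFORM}: many symmetric-in-$(\alpha,\beta)$ terms drop out, while the derivatives falling on the thermodynamic coefficients $\Temp$ and $\Temp_{;\Lnenth}$ are expanded by the chain rule (using the notation of Def.\,\ref{D:ALTERNATEPARTIALDERIVATIVENOTATION} and the identity $\partial_\alpha \Lnenth$-versus-$\GradEnt_\alpha$ expansion). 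This yields the six thermodynamic-coefficient lines at the end of \eqref{E:EXTERIORDERIVATIVELIEDERIVATIVEVORTICITYONEFORM}, while the derivatives falling on the other factors produce the first eight antisymmetrized-derivative lines.

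Finally, for \eqref{E:VORTICITYOFVORTICITYFIRSTEVOLUTION}, I rewrite $\uperpvort^{\alpha}(\vort) = -\tfrac{1}{2} \upepsilon^{\alpha \beta \gamma \delta} u_{\beta} (d \vort_{\flat})_{\gamma \delta}$ (again by antisymmetrization under $\upepsilon$-contraction) and repeat the Lie-derivative strategy of the first step with $d(\Enth u_{\flat})$ replaced by $d \vort_{\flat}$. The starting identity is $u^{\kappa} \partial_{\kappa} \uperpvort^{\alpha}(\vort) = \mathcal{L}_u \uperpvort^{\alpha}(\vort) + \uperpvort^{\kappa}(\vort) \partial_{\kappa} u^{\alpha}$, and the Leibniz expansion uses \eqref{E:LIEUVOLUMEFORMINDICESUP} for $\mathcal{L}_u \upepsilon$, \eqref{E:LIEUVELOCITYONEFORM} for $\mathcal{L}_u u_{\beta}$, and the crucial interchange $\mathcal{L}_u d \vort_{\flat} = d \mathcal{L}_u \vort_{\flat}$ (a consequence of $[\mathcal{L}_u, d] = 0$) combined with the already-derived \eqref{E:EXTERIORDERIVATIVELIEDERIVATIVEVORTICITYONEFORM}. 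The hard part will be the bookkeeping in this final step: the product of \eqref{E:EXTERIORDERIVATIVELIEDERIVATIVEVORTICITYONEFORM} with $\upepsilon^{\alpha \beta \gamma \delta} u_{\beta}$ must be contracted and then every thermodynamic coefficient $\Temp_{;\Lnenth;\Lnenth}$, $\Temp_{;\Lnenth;\Ent}$, etc.\ must line up with exactly the displayed signed combinations $(\Temp_{\Lnenth} - \Temp_{;\Lnenth;\Lnenth})$, $(\Temp_{;\Ent} - \Temp_{;\Lnenth;\Ent})$, etc. The elementary identities of Lemma~\ref{L:IDENTITIES} supply every algebraic rearrangement required; the obstacle is organizing the dozens of resulting terms so that the cancellations responsible for the null structure of Theorem~\ref{T:NEWFORMULATIONRELEULER} are manifest rather than obscured.
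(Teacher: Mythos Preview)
Your proposal is correct and follows essentially the same approach as the paper: the Lie-derivative/Leibniz strategy applied to the three-factor representation of $\vort^{\alpha}$ and then of $\uperpvort^{\alpha}(\vort)$, invoking \eqref{E:LIEUVOLUMEFORMINDICESUP}, \eqref{E:LIEUVELOCITYONEFORM}, \eqref{E:LIEUOFEXTERIORDERIVATIVEOFENTHALPHYTIMESVELOCITYONEFORMEVOLUTION}, \eqref{E:CONTRACTIONVOLUMEFORMANDEXTERIORDERIVATIVEOFENTHALPHYTIMESVELOCITY}, the commutation $\mathcal{L}_u d = d \mathcal{L}_u$, and the already-derived \eqref{E:EXTERIORDERIVATIVELIEDERIVATIVEVORTICITYONEFORM}. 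The only cosmetic difference is that the paper derives \eqref{E:LIEDERIVATIVEVORTICITYONEFORM} from the intermediate expression prior to expanding $(u^{\kappa}\partial_{\kappa} u_{\lambda})\vort^{\lambda}$ via \eqref{E:VELOCITYEVOLUTION}, whereas you propose to start from the fully expanded \eqref{E:VORTICITYTRANSPORT} and recombine; and in the final step the paper explicitly invokes \eqref{E:CONTRACTIONOFEXTERIORDERIVAITVEOFVORTICITYOFVORTICITYAGAINSTVOLUMEFORM} and an $\upepsilon\cdot\upepsilon$ contraction identity to unpack the thermodynamic terms, which falls under what you call the ``hard bookkeeping.''
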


\begin{remark}
	Note that RHS~\eqref{E:VORTICITYOFVORTICITYFIRSTEVOLUTION} features some terms that explicitly depend on two derivatives of $u$,
	falsely suggesting that there is derivative loss, that is, 
	that $\uperpvort^{\alpha}(\vort)$ cannot be more regular than $\partial^2 u$.
	For this reason, equation \eqref{E:VORTICITYOFVORTICITYFIRSTEVOLUTION}
	is not suitable for obtaining top-order energy estimates for
	$\uperpvort^{\alpha}(\vort)$.
	To overcome this difficulty, we will
	derive a $\mbox{\upshape transport}$-$\mbox{\upshape div}$-$\mbox{\upshape curl}$ system 
	for $\vort$ that does not lose derivatives; 
	see Prop.\,\ref{P:EQUATIONSFORMODIFIEDVORTICITYVORTICITY}.
	\end{remark}

\begin{proof}[Proof of Prop.\,\ref{P:VORTICITYTRANSPORT}]
	We first prove \eqref{E:VORTICITYTRANSPORT}.
	From definition \eqref{E:VORTICITYDEF} and the Lie differentiation formula \eqref{E:LIEDERIVATIVE},
	we deduce that
	\begin{align} \label{E:FIRSTSTEPVORTICITYTRANSPORT}
		u^{\kappa} \partial_{\kappa} \vort^{\alpha}
		-
		\vort^{\kappa} \partial_{\kappa} u^{\alpha}
		& = \mathcal{L}_u \vort^{\alpha}
		=  - \frac{1}{2} 
			\mathcal{L}_u 
			\left\lbrace
				\upepsilon^{\alpha \beta \gamma \delta} u_{\beta} (d (\Enth u_{\flat}))_{\gamma \delta}
			\right\rbrace.
	\end{align}
	Using \eqref{E:FIRSTSTEPVORTICITYTRANSPORT}, 
	the Leibniz rule for Lie derivatives,
	definition \eqref{E:VORTICITYDEF},
	\eqref{E:LIEUVOLUMEFORMINDICESUP},
	the first identity in \eqref{E:LIEUVELOCITYONEFORM},
	\eqref{E:LIEUOFEXTERIORDERIVATIVEOFENTHALPHYTIMESVELOCITYONEFORMEVOLUTION},
	and \eqref{E:CONTRACTIONVOLUMEFORMANDEXTERIORDERIVATIVEOFENTHALPHYTIMESVELOCITY},
	we compute that
	\begin{align}	\label{E:SECONDSTEPVORTICITYTRANSPORT}
		u^{\kappa} \partial_{\kappa} \vort^{\alpha} 
		& = 
			\vort^{\kappa} \partial_{\kappa} u^{\alpha}
			-
			(\partial_{\kappa} u^{\kappa}) \vort^{\alpha}
			+
			(u^{\kappa} \partial_{\kappa} u_{\lambda}) u^{\alpha} \vort^{\lambda}
			-
			(u^{\kappa} \partial_{\kappa} u_{\lambda}) u^{\lambda} \vort^{\alpha}
			\\
			& \ \ 
			-
			\Temp 
			\upepsilon^{\alpha \beta \gamma \delta}
			(u^{\kappa} \partial_{\kappa} u_{\beta})
			\GradEnt_{\gamma} u_{\delta}
			- 
			\Temp_{;\Lnenth}
			\upepsilon^{\alpha \beta \gamma \delta} u_{\beta}
			(\partial_{\gamma} \Lnenth) \GradEnt_{\delta}.
			\notag
	\end{align}
	Using \eqref{E:DERIVATIVEOFVELOCITYCONTRACTEDWITHVELOCITYISZERO}, we see that
	the fourth product on RHS~\eqref{E:SECONDSTEPVORTICITYTRANSPORT} vanishes.
	Next, we use 
	\eqref{E:VELOCITYEVOLUTION}
	and \eqref{E:VORTISORTHGONALTOU}
	to obtain the following identity for the third product on RHS~\eqref{E:SECONDSTEPVORTICITYTRANSPORT}:
	$
	(u^{\kappa} \partial_{\kappa} u_{\lambda}) u^{\alpha} \vort^{\lambda} 
	= - (\vort^{\kappa} \partial_{\kappa} \Lnenth) u^{\alpha}
		+
		\TempoverEnth \vort^{\kappa} \GradEnt_{\kappa} u^{\alpha}
	$.
	Next, we use
	\eqref{E:VELOCITYEVOLUTION}
	to obtain the following identity for the fifth product on RHS~\eqref{E:SECONDSTEPVORTICITYTRANSPORT}:
	$
	-
			\Temp 
			\upepsilon^{\alpha \beta \gamma \delta}
			(u^{\kappa} \partial_{\kappa} u_{\beta})
			\GradEnt_{\gamma} u_{\delta}
	= 	\Temp 
			\upepsilon^{\alpha \beta \gamma \delta}
			(\partial_{\beta} \Lnenth)
			\GradEnt_{\gamma} u_{\delta}
			=
			\Temp
			\upepsilon^{\alpha \beta \gamma \delta} u_{\beta}
			(\partial_{\gamma} \Lnenth) \GradEnt_{\delta}
	$.
	Substituting these two identities for the third and fifth products on RHS~\eqref{E:SECONDSTEPVORTICITYTRANSPORT},
	we arrive at the desired identity \eqref{E:VORTICITYTRANSPORT}.
	
	Equation \eqref{E:LIEDERIVATIVEVORTICITYONEFORM} follows from 
	the Lie derivative identity
\begin{align}
(\mathcal{L}_u \vort_{\flat})_{\alpha} = u^{\kappa} \partial_{\kappa} \vort_{\alpha} + \vort^{\kappa} \partial_{\alpha} u_{\kappa}
\notag
\end{align}	
	(see \eqref{E:LIEDERIVATIVE}),
	from using \eqref{E:DERIVATIVEOFVELOCITYCONTRACTEDWITHVELOCITYISZERO} to observe the vanishing of 
	the fourth product on RHS~\eqref{E:SECONDSTEPVORTICITYTRANSPORT},
	and from using the identity for the fifth product on RHS~\eqref{E:SECONDSTEPVORTICITYTRANSPORT}
	proved in the previous paragraph.
	
\eqref{E:EXTERIORDERIVATIVELIEDERIVATIVEVORTICITYONEFORM} then follows from taking the exterior derivative
of equation \eqref{E:LIEDERIVATIVEVORTICITYONEFORM} and carrying out straightforward computations.
		
To derive \eqref{E:VORTICITYOFVORTICITYFIRSTEVOLUTION}, we first
use definition \eqref{E:UORTHGONALVORTICITYOFONEFORM} to 
deduce
\begin{align} \label{E:LIEUVORTICITYOFVORTICITYDEFINITION}
		\mathcal{L}_u \uperpvort^{\alpha}(\vort)
		& = 
			- \frac{1}{2} \mathcal{L}_u (\upepsilon^{\alpha \beta \gamma \delta} u_{\beta} (d \vort_{\flat})_{\gamma \delta}).
\end{align}
Next, we use \eqref{E:LIEUVORTICITYOFVORTICITYDEFINITION},
the Leibniz rule for Lie derivatives,
\eqref{E:LIEUVOLUMEFORMINDICESUP},
the first equality in \eqref{E:LIEUVELOCITYONEFORM},
and the standard commutation identity
$\mathcal{L}_u d \vort_{\flat} = d \mathcal{L}_u \vort_{\flat}$
to deduce
\begin{align} \label{E:FIRSTCOMPUTATIONLIEUVORTICITYOFVORTICITY}
		\mathcal{L}_u \uperpvort^{\alpha}(\vort)
		& = 
			- (\partial_{\kappa} u^{\kappa}) \uperpvort^{\alpha}(\vort)
			- \frac{1}{2} \upepsilon^{\alpha \beta \gamma \delta} (u^{\kappa} \partial_{\kappa} u_{\beta}) (d \vort_{\flat})_{\gamma \delta}
			\\
			& \ \
			- \frac{1}{2} \upepsilon^{\alpha \beta \gamma \delta} u_{\beta} (d \mathcal{L}_u \vort_{\flat})_{\gamma \delta}.
			\notag
\end{align}
Next, using \eqref{E:CONTRACTIONOFEXTERIORDERIVAITVEOFVORTICITYOFVORTICITYAGAINSTVOLUMEFORM},
we express the second product
on RHS~\eqref{E:FIRSTCOMPUTATIONLIEUVORTICITYOFVORTICITY} as follows:
\begin{align} \label{E:CONTRACTIONVOLUMEFORMUDERIVATIVEOFUEXTERIORDERIVATIVEOFVORTICITYOFVORTICITY}
	- \frac{1}{2} \upepsilon^{\alpha \beta \gamma \delta} (u^{\kappa} \partial_{\kappa} u_{\beta}) (d \vort_{\flat})_{\gamma \delta}
	& = 
	- \uperpvort^{\alpha}(\vort) (u^{\kappa} \partial_{\kappa} u_{\beta}) u^{\beta}
	\\
	& \ \
	+
	u^{\alpha} (u^{\kappa} \partial_{\kappa} u_{\beta}) \uperpvort^{\beta}(\vort) 
	\notag
	\\
	& \ \
	-
	\upepsilon^{\alpha \beta \gamma \delta} (u^{\kappa} \partial_{\kappa} u_{\beta})
	(u^{\lambda} \partial_{\lambda} \vort_{\gamma}) u_{\delta}
	\notag
	\\
	& \ \
	+
	\upepsilon^{\alpha \beta \gamma \delta} (u^{\kappa} \partial_{\kappa} u_{\beta})
	(u^{\lambda} \partial_{\gamma} \vort_{\lambda}) u_{\delta}.
	\notag
\end{align}
Next, using \eqref{E:DERIVATIVEOFVELOCITYCONTRACTEDWITHVELOCITYISZERO},
we observe that the first product on 
RHS~\eqref{E:CONTRACTIONVOLUMEFORMUDERIVATIVEOFUEXTERIORDERIVATIVEOFVORTICITYOFVORTICITY} vanishes.
From this fact, 
the Lie derivative identity
$\mathcal{L}_u \uperpvort^{\alpha}(\vort) 
= 
u^{\kappa} \partial_{\kappa} \uperpvort^{\alpha}(\vort) 
- 
\uperpvort^{\kappa}(\vort) \partial_{\kappa} u^{\alpha}$
(see \eqref{E:LIEDERIVATIVE}),
\eqref{E:FIRSTCOMPUTATIONLIEUVORTICITYOFVORTICITY},
and 
\eqref{E:CONTRACTIONVOLUMEFORMUDERIVATIVEOFUEXTERIORDERIVATIVEOFVORTICITYOFVORTICITY},
we deduce
\begin{align} \label{E:SECONDCOMPUTATIONLIEUVORTICITYOFVORTICITY}
		u^{\kappa} \partial_{\kappa} \uperpvort^{\alpha}(\vort) 
		& = 
			\uperpvort^{\kappa}(\vort) \partial_{\kappa} u^{\alpha}
			- 
			(\partial_{\kappa} u^{\kappa}) \uperpvort^{\alpha}(\vort)
			\\
			& \ \
			+
			u^{\alpha} (u^{\kappa} \partial_{\kappa} u_{\beta}) \uperpvort^{\beta}(\vort) 
			\notag
			\\
	& \ \
	+
	\upepsilon^{\alpha \beta \gamma \delta} 
	(u^{\kappa} \partial_{\kappa} u_{\beta})
	(u^{\lambda} \partial_{\gamma} \vort_{\lambda}) u_{\delta}
	\notag
	\\
	& \ \
	-
	\upepsilon^{\alpha \beta \gamma \delta} (u^{\kappa} \partial_{\kappa} u_{\beta})
	(u^{\lambda} \partial_{\lambda} \vort_{\gamma}) u_{\delta}
		\notag \\ 
	& \ \
		- \frac{1}{2} \upepsilon^{\alpha \beta \gamma \delta} u_{\beta} (d \mathcal{L}_u \vort_{\flat})_{\gamma \delta}.
		\notag
\end{align}

Next, we use \eqref{E:TRANSFERDERIVATIVESFROMVORTICITYTOVELOCITY}
and the antisymmetry of $\upepsilon^{\cdots}$
to express the product on the third line of RHS~\eqref{E:SECONDCOMPUTATIONLIEUVORTICITYOFVORTICITY}
as
\begin{align}
	\upepsilon^{\alpha \beta \gamma \delta} 
	(u^{\kappa} \partial_{\kappa} u_{\beta})
	(u^{\lambda} \partial_{\gamma} \vort_{\lambda}) u_{\delta}
	=
	\upepsilon^{\alpha \beta \gamma \delta} (u^{\kappa} \partial_{\kappa} u_{\beta}) 
	\vort^{\lambda} (\partial_{\delta} u_{\lambda}) u_{\gamma},
	\notag
\end{align}
use the antisymmetry of $\upepsilon^{\cdots}$
to express the product on the fourth line of RHS~\eqref{E:SECONDCOMPUTATIONLIEUVORTICITYOFVORTICITY}
\begin{align}
-
	\upepsilon^{\alpha \beta \gamma \delta} (u^{\kappa} \partial_{\kappa} u_{\beta})
	(u^{\lambda} \partial_{\lambda} \vort_{\gamma}) u_{\delta}
=
 \upepsilon^{\alpha \beta \gamma \delta} (u^{\kappa} \partial_{\kappa} u_{\beta}) 
				(u^{\lambda} \partial_{\lambda} \vort_{\delta}) u_{\gamma},
				\notag
\end{align}
use \eqref{E:EXTERIORDERIVATIVELIEDERIVATIVEVORTICITYONEFORM} to substitute for the factor 
$(d \mathcal{L}_u \vort_{\flat})_{\gamma \delta}$
in the last product on RHS~
\linebreak \eqref{E:SECONDCOMPUTATIONLIEUVORTICITYOFVORTICITY},
and carry out straightforward computations,
thereby deducing that
\begin{align} \label{E:ALMOSTDONEVORTICITYOFVORTICITYFIRSTEVOLUTION}
		u^{\kappa} \partial_{\kappa} \uperpvort^{\alpha}(\vort)
		& = 
			\uperpvort^{\kappa}(\vort) \partial_{\kappa} u^{\alpha}
			- 
			(\partial_{\kappa} u^{\kappa}) \uperpvort^{\alpha}(\vort)
			\\
			& \ \
			+
				u^{\alpha} 
				(u^{\kappa} \partial_{\kappa} u_{\beta}) 
				\uperpvort^{\beta}(\vort)
				\notag
					\\
	& \ \
			+ 
			\upepsilon^{\alpha \beta \gamma \delta} 
			u_{\beta}
			(\partial_{\gamma} \partial_{\kappa} u^{\kappa}) \vort_{\delta}
			-
			\upepsilon^{\alpha \beta \gamma \delta} 
			u_{\beta}
			(\vort^{\kappa} \partial_{\kappa} \partial_{\gamma} u_{\delta})
				\notag \\
	& \ \
			+ \upepsilon^{\alpha \beta \gamma \delta} (u^{\kappa} \partial_{\kappa} u_{\beta}) 
				(u^{\lambda} \partial_{\lambda} \vort_{\delta}) u_{\gamma}
			\notag
			\\
			& \ \
			+ 
			\upepsilon^{\alpha \beta \gamma \delta} (u^{\kappa} \partial_{\kappa} u_{\beta}) 
			\vort^{\lambda} (\partial_{\delta} u_{\lambda}) u_{\gamma}
			\notag		\\
	& \ \
			-
			\upepsilon^{\alpha \beta \gamma \delta} 
			u_{\beta}
			(\partial_{\gamma} \vort^{\kappa}) (\partial_{\delta} u_{\kappa})
		-
		\upepsilon^{\alpha \beta \gamma \delta} 
			u_{\beta}
			(\partial_{\gamma} \vort^{\kappa}) (\partial_{\kappa} u_{\delta})
				\notag \\
	& \ \
			+ 
			\upepsilon^{\alpha \beta \gamma \delta} 
			u_{\beta}
			(\partial_{\kappa} u^{\kappa}) (\partial_{\gamma} \vort_{\delta})
				\notag \\
		& \ \
			- 
			\upepsilon^{\alpha \beta \gamma \delta} 
			u_{\beta}
			(\partial_{\gamma} u_{\delta}) \vort^{\lambda}
			(u^{\kappa} \partial_{\kappa} u_{\lambda})
			\notag
				\\
		& \ \
			+
			(\Temp_{;\Lnenth;\Lnenth} - \Temp_{\Lnenth})
			\upepsilon^{\alpha \beta \gamma \delta}
			\upepsilon_{\delta \kappa}^{\ \ \mu \nu} 
			u_{\beta}
			u^{\kappa} 
			(\partial_{\gamma} \Lnenth)
			(\partial_{\mu} \Lnenth) \GradEnt_{\nu}
			\notag
			\\
		& \ \
			+
			(\Temp_{;\Lnenth;\Ent} - \Temp_{;\Ent})
			\upepsilon^{\alpha \beta \gamma \delta}
			\upepsilon_{\delta \kappa}^{\ \ \mu \nu}
			u_{\beta}
			u^{\kappa} 
			\GradEnt_{\gamma}
			(\partial_{\mu} \Lnenth) \GradEnt_{\nu}
				\notag
				\\
		& \ \
			+
			(\Temp_{;\Lnenth} - \Temp) 
			\upepsilon^{\alpha \beta \gamma \delta}
			\upepsilon_{\delta \kappa}^{\ \ \mu \nu}
			u_{\beta}
			(\partial_{\gamma} u^{\kappa})
			(\partial_{\mu} \Lnenth) \GradEnt_{\nu}
				\notag 
			\\
		& \ \
			+
			(\Temp_{;\Lnenth} - \Temp) 
			\upepsilon^{\alpha \beta \gamma \delta}
			\upepsilon_{\delta \kappa}^{\ \ \mu \nu}
			u_{\beta}
			u^{\kappa}
			(\partial_{\gamma} \partial_{\mu} \Lnenth) \GradEnt_{\nu}
				\notag 
				\\
		& \ \
			+
			(\Temp_{;\Lnenth} - \Temp) 
			\upepsilon^{\alpha \beta \gamma \delta}
			\upepsilon_{\delta \kappa}^{\ \ \mu \nu}
			u_{\beta} u^{\kappa}
			(\partial_{\mu} \Lnenth) (\partial_{\gamma} \GradEnt_{\nu}).
			\notag 
\end{align}
Finally, we
use the identity
\begin{align}
\upepsilon^{\alpha \beta \gamma \delta}
\upepsilon_{\delta \kappa}^{\ \ \mu \nu}
& =
(\upeta^{-1})^{\nu \alpha} \updelta_{\kappa}^{\beta} (\upeta^{-1})^{\mu \gamma}
-
(\upeta^{-1})^{\nu \alpha} \updelta_{\kappa}^{\gamma} (\upeta^{-1})^{\mu \beta}
\notag
\\
& \ \
+
(\upeta^{-1})^{\nu \gamma} \updelta_{\kappa}^{\alpha} (\upeta^{-1})^{\mu \beta}
-
(\upeta^{-1})^{\nu \gamma} \updelta_{\kappa}^{\beta} (\upeta^{-1})^{\mu \alpha}
\notag
\\
& \ \
+
(\upeta^{-1})^{\nu \beta} \updelta_{\kappa}^{\gamma} (\upeta^{-1})^{\mu \alpha}
-
(\upeta^{-1})^{\nu \beta} \updelta_{\kappa}^{\alpha} (\upeta^{-1})^{\mu \gamma}
\notag
\end{align}
to substitute for the five products
$
\upepsilon^{\alpha \beta \gamma \delta}
\upepsilon_{\delta \kappa}^{\ \ \mu \nu}
$
on RHS~\eqref{E:ALMOSTDONEVORTICITYOFVORTICITYFIRSTEVOLUTION}.
Also using 
\eqref{E:UISUNITLENGTH},
\eqref{E:VELOCITYANDENTGRADIENTAREMINKOWSKIPERP},
and \eqref{E:DERIVATIVEOFVELOCITYCONTRACTEDWITHVELOCITYISZERO},
we arrive at the desired identity \eqref{E:VORTICITYOFVORTICITYFIRSTEVOLUTION}.

\end{proof}

\section{The $\mbox{\upshape transport}$-$\mbox{\upshape div}$-$\mbox{\upshape curl}$ system for the vorticity}
\label{S:TRANSPORTDIVCURLFORVORTICITY}
Our main goal in this section is to derive equations \eqref{E:MAINTHMDIVOFVORTICITY} and 
\eqref{E:MAINTHMTRANSPORTFORMODIFIEDVORTICITYOFVORTICITY}.
We accomplish this in Prop.\,\ref{P:EQUATIONSFORMODIFIEDVORTICITYVORTICITY}. 
Before proving the proposition, we will first establish some preliminary identities.

\subsection{Preliminary identities}
In the next lemma, we derive a large collection of identities that we will use in deriving
the transport equation verified by the vectorfield
$\mathcal{C}^{\alpha}$ defined in \eqref{E:MODIFIEDVORTICITYOFVORTICITY}.

\begin{lemma}[Identification of the null structure of some terms tied to 
	the $\mbox{\upshape transport}$-$\mbox{\upshape div}$-$\mbox{\upshape curl}$ system for the vorticity] 
	\label{L:IDOFNULLSTRUCTUREFORDIVCURLTRANSPORTSYSTEM}
	Assume that $(\Lnenth,\Ent,u^{\alpha})$ is a $C^2$ solution 
	to \eqref{E:ENTHALPYEVOLUTION}-\eqref{E:ENTROPYEVOLUTION} + \eqref{E:UISUNITLENGTH}.
	Then the following identities hold
	for some of the terms on the third through seventh lines of RHS~\eqref{E:VORTICITYOFVORTICITYFIRSTEVOLUTION}:
	\begin{subequations}
\begin{align} \label{E:TERM1ID}
	\upepsilon^{\alpha \beta \gamma \delta} 
	u_{\beta}
	(\partial_{\gamma} \partial_{\kappa} u^{\kappa}) \vort_{\delta}
	& = 		
	-
	u^{\kappa} \partial_{\kappa} 
	\left\lbrace
		\speed^{-2}
		\upepsilon^{\alpha \beta \gamma \delta} 
		u_{\beta}
		(\partial_{\gamma} \Lnenth) \vort_{\delta}
	\right\rbrace
		\\
		& \ \
		-
		2
		(\partial_{\kappa} u^{\kappa})
		\speed^{-2}
		\upepsilon^{\alpha \beta \gamma \delta} 
		u_{\beta}
		(\partial_{\gamma} \Lnenth) \vort_{\delta}
		\notag
		\\
& \ \
	+
		\speed^{-2}
		\upepsilon^{\alpha \beta \gamma \delta} 
		(u^{\kappa} \partial_{\kappa} u_{\beta})
		(\partial_{\gamma} \Lnenth) \vort_{\delta}
	\notag
		\\
& \ \
		+
		\speed^{-2}
		\upepsilon^{\alpha \beta \gamma \delta} 
		u_{\beta}
		(\partial_{\gamma} \Lnenth) 
		\vort^{\kappa} (\partial_{\delta} u_{\kappa})
			\notag \\
	& \ \
	+
	\speed^{-2}
	(\Temp - \Temp_{;\Lnenth})  
	(\GradEnt^{\kappa} \partial_{\kappa} \Lnenth)
	((\upeta^{-1})^{\alpha \lambda} \partial_{\lambda} \Lnenth) 
		\notag \\
& \ \
	+
	\speed^{-2}
	(\Temp - \Temp_{;\Lnenth}) 
	u^{\alpha}
	(\GradEnt^{\kappa} \partial_{\kappa} \Lnenth)
	(u^{\lambda} \partial_{\lambda} \Lnenth)
	\notag \\
& \ \
+ \speed^{-2}
	(\Temp_{;\Lnenth} - \Temp) 
	\GradEnt^{\alpha} 
	(u^{\kappa} \partial_{\kappa} \Lnenth)
	(u^{\lambda} \partial_{\lambda} \Lnenth)
	\notag \\
& \ \
	+	 
	\speed^{-2}
	(\Temp_{;\Lnenth} - \Temp) 
	\GradEnt^{\alpha}
	(\upeta^{-1})^{\kappa \lambda}
	(\partial_{\kappa} \Lnenth)
	(\partial_{\lambda} \Lnenth)
		\notag
		\\
	& \ \
	+
	\speed^{-2}
	\upepsilon^{\alpha \beta \gamma \delta} 
	u_{\beta}
	\left\lbrace
		(\partial_{\kappa} u^{\kappa}) (\partial_{\gamma} \Lnenth) 
		- 
		(\partial_{\gamma} u^{\kappa}) (\partial_{\kappa} \Lnenth) 
	\right\rbrace
	\vort_{\delta}	
		\notag
			\\
	& \ \
		+
	2 \speed^{-3} \speed_{;\Ent} 
	(u^{\kappa} \partial_{\kappa} \Lnenth)
	\upepsilon^{\alpha \beta \gamma \delta} 
	u_{\beta}
	\GradEnt_{\gamma}
	\vort_{\delta},
	\notag
\end{align}

\begin{align} \label{E:TERM2ID}
	-
	\upepsilon^{\alpha \beta \gamma \delta} 
	u_{\beta}
	(\vort^{\kappa} \partial_{\kappa} \partial_{\gamma} u_{\delta})
	& = 
		\frac{1}{\Enth} (\vort^{\kappa} \partial_{\kappa} \vort^{\alpha})
		-
		\frac{1}{\Enth} \vort^{\alpha} (\vort^{\kappa} \partial_{\kappa} \Lnenth)
			\\
	&  \ \
		-
		\frac{1}{\Enth}
		u^{\alpha} \vort^{\lambda} 
		(\vort^{\kappa} \partial_{\kappa} u_{\lambda})
		+
		\upepsilon^{\alpha \beta \gamma \delta} 
		u_{\beta}
		(\partial_{\gamma} \Lnenth)
		\vort^{\kappa} (\partial_{\delta} u_{\kappa})
				\notag
				\\
	& \ \
			-
			\TempoverEnth
			\upepsilon^{\alpha \beta \gamma \delta}
			u_{\beta}
			\GradEnt_{\gamma} 
			\vort^{\kappa} (\partial_{\delta} u_{\kappa}),
			\notag
\end{align}

\begin{align} \label{E:TERM3ID}
	\upepsilon^{\alpha \beta \gamma \delta} (u^{\kappa} \partial_{\kappa} u_{\beta}) 
	(u^{\lambda} \partial_{\lambda} \vort_{\delta}) u_{\gamma}
	& =
	-
	\upepsilon^{\alpha \beta \gamma \delta} 
	(\partial_{\beta} \Lnenth) 
	u_{\gamma} \vort^{\kappa} (\partial_{\delta} u_{\kappa}) 
	\\
	& \ \
	+
	(\partial_{\kappa} u^{\kappa})
	\upepsilon^{\alpha \beta \gamma \delta} 
	(\partial_{\beta} \Lnenth) 
	u_{\gamma}
	\vort_{\delta} 
	\notag
		\\
	& \ \
	+
	(\Temp - \Temp_{;\Lnenth})
	((\upeta^{-1})^{\alpha \kappa} \partial_{\kappa} \Lnenth) 
	(\GradEnt^{\lambda} \partial_{\lambda} \Lnenth) 
	\notag
	\\
	& \ \
	+
  (\Temp - \Temp_{;\Lnenth})
	u^{\alpha}
	(u^{\kappa} \partial_{\kappa} \Lnenth)
	(\GradEnt^{\lambda} \partial_{\lambda} \Lnenth) 
	\notag \\
& \ \
	+	
	(\Temp_{;\Lnenth} - \Temp)
	\GradEnt^{\alpha} 
	(u^{\kappa} \partial_{\kappa} \Lnenth)
	(u^{\lambda} \partial_{\lambda} \Lnenth)
	\notag
	\\
	& \ \
	+
	(\Temp_{;\Lnenth} - \Temp)
	\GradEnt^{\alpha} 
	(\upeta^{-1})^{\kappa \lambda}
	(\partial_{\kappa} \Lnenth) 
	(\partial_{\lambda} \Lnenth)
	\notag
		\\
& \ \
	+
	\TempoverEnth
	\upepsilon^{\alpha \beta \gamma \delta} 
	\GradEnt_{\beta}
	u_{\gamma}
	\vort^{\kappa} (\partial_{\delta} u_{\kappa})
		\notag
			\\
	& \ \
	-
	\TempoverEnth
	(\partial_{\kappa} u^{\kappa})
	\upepsilon^{\alpha \beta \gamma \delta} 
	\GradEnt_{\beta}
	u_{\gamma}
	\vort_{\delta} 
		\notag \\
& \ \
	+
	\TempoverEnth
	(\Temp_{;\Lnenth} - \Temp) 
	((\upeta^{-1})^{\kappa \alpha} \partial_{\kappa} \Lnenth) \GradEnt^{\lambda} \GradEnt_{\lambda}
	\notag
	\\
	& \ \
	+
	\TempoverEnth
	(\Temp_{;\Lnenth} - \Temp) 
	u^{\alpha}
 (u^{\kappa} \partial_{\kappa} \Lnenth) \GradEnt^{\lambda} \GradEnt_{\lambda} 
	\notag \\
& \ \
	+
	\TempoverEnth
	(\Temp - \Temp_{;\Lnenth}) 
	\GradEnt^{\alpha}
	(\GradEnt^{\kappa} \partial_{\kappa} \Lnenth),
	\notag
\end{align}

\begin{align} \label{E:TERM4ID}
	\upepsilon^{\alpha \beta \gamma \delta} (u^{\kappa} \partial_{\kappa} u_{\beta}) 
	u_{\gamma}
	\vort^{\lambda} (\partial_{\delta} u_{\lambda}) 
	& = 
	-
	\upepsilon^{\alpha \beta \gamma \delta} (\partial_{\beta} \Lnenth) 
	u_{\gamma}
	\vort^{\lambda} (\partial_{\delta} u_{\lambda})
	\\
	& \ \
	+
	\TempoverEnth
	\upepsilon^{\alpha \beta \gamma \delta} \GradEnt_{\beta} 
	u_{\gamma}
	\vort^{\lambda} (\partial_{\delta} u_{\lambda}),
	\notag
\end{align}

\begin{align} \label{E:TERM5ID}
	- \upepsilon^{\alpha \beta \gamma \delta} 
	u_{\beta}
	(\partial_{\gamma} \vort^{\kappa}) (\partial_{\kappa} u_{\delta})
	& = 
		-
		\upepsilon^{\alpha \beta \gamma \delta} 
		u_{\beta}
		(\partial_{\gamma} \vort^{\kappa}) (\partial_{\delta} u_{\kappa})
		\\
		& \ \
		-
		\upepsilon^{\alpha \beta \gamma \delta} 
		u_{\beta}
		(\partial_{\gamma} \Lnenth)
		\vort^{\kappa}
		(\partial_{\delta} u_{\kappa})
		\notag
			\\
& \ \
			-
			\frac{1}{\Enth} 
			(\vort^{\kappa} \partial_{\kappa} \vort^{\alpha}) 
			+
			\frac{1}{\Enth} 
			\vort^{\alpha}
			(\partial_{\kappa} \vort^{\kappa})
			\notag \\
& \ \
		-
		\frac{1}{\Enth} 
		\vort^{\alpha}
		\vort^{\lambda}
		(u^{\kappa} \partial_{\kappa} u_{\lambda}) 
		+
		\frac{1}{\Enth} 
		u^{\alpha} 
		\vort^{\lambda}
		(\vort^{\kappa} \partial_{\kappa} u_{\lambda})
		\notag		\\
	& \ \
		-
		\TempoverEnth
		\upepsilon^{\alpha \beta \gamma \delta} 
		u_{\beta}
		(\partial_{\gamma} u^{\kappa})
		\vort_{\kappa}
		\GradEnt_{\delta},
		\notag
\end{align}

\begin{align} \label{E:TERM6ID}
	\upepsilon^{\alpha \beta \gamma \delta} 
	u_{\beta}
	(\partial_{\kappa} u^{\kappa}) (\partial_{\gamma} \vort_{\delta})
	& = 
		-
		(\partial_{\kappa} u^{\kappa})
		\uperpvort^{\alpha}(\vort),
\end{align}

\begin{align} \label{E:TERM7ID}
	-
	\upepsilon^{\alpha \beta \gamma \delta} 
	u_{\beta}
	(\partial_{\gamma} u_{\delta}) \vort^{\lambda}
	(u^{\kappa} \partial_{\kappa} u_{\lambda}) 
	& =
	\frac{1}{\Enth}
	\vort^{\alpha}
	\vort^{\lambda}
	(u^{\kappa} \partial_{\kappa} u_{\lambda}).
\end{align}
\end{subequations}

Moreover, we have
\begin{subequations}
\begin{align} \label{E:TOPORDERENTHTERMKEYID}
			&
			(\Temp - \Temp_{;\Lnenth}) 
			\GradEnt^{\alpha} 
			((\upeta^{-1})^{\kappa \lambda} \partial_{\kappa} \partial_{\lambda} \Lnenth) 
			+
			(\Temp - \Temp_{;\Lnenth}) 
			\GradEnt^{\alpha}  
			(u^{\kappa} u^{\lambda} \partial_{\kappa} \partial_{\lambda} \Lnenth) 
				\\
		& = 
		 u^{\kappa} \partial_{\kappa}
		\left\lbrace
			(\Temp_{;\Lnenth} - \Temp) 
			\GradEnt^{\alpha} 
			(\partial_{\lambda} u^{\lambda})
		\right\rbrace
			\notag \\
	& \ \
		+
		(\Temp_{\Lnenth} - \Temp_{;\Lnenth;\Lnenth})
		\GradEnt^{\alpha}
		(u^{\kappa} \partial_{\kappa} \Lnenth)
		(\partial_{\lambda} u^{\lambda})
		+
		(\Temp - \Temp_{;\Lnenth}) 
		(u^{\kappa} \partial_{\kappa} \GradEnt^{\alpha})
		(\partial_{\lambda} u^{\lambda})
		\notag \\
	& \ \
		+
		(\Temp_{;\Lnenth} - \Temp) 
		\GradEnt^{\alpha} 
		(\partial_{\kappa} u^{\lambda}) 
		(\partial_{\lambda} u^{\kappa})
		+
		(\Temp_{;\Lnenth} - \Temp) 
		\GradEnt^{\alpha} 
		(u^{\kappa} \partial_{\kappa} u^{\lambda}) 
		(\partial_{\lambda} \Lnenth)
		\notag
		\\
		& \ \
		+
		(\Temp_{;\Lnenth} - \Temp) 
		\GradEnt^{\alpha} 
		(\partial_{\kappa} u^{\kappa}) (u^{\lambda} \partial_{\lambda} \Lnenth)
		\notag
			\\
	& \ \
		+
		(\Temp - \Temp_{;\Lnenth})  
		\TempoverEnth 
		\GradEnt^{\alpha} 
		(\partial_{\kappa} \GradEnt^{\kappa})
		+
		(\Temp - \Temp_{;\Lnenth}) 
		\TempoverEnth_{;\Lnenth} 
		\GradEnt^{\alpha}
		(\GradEnt^{\kappa} \partial_{\kappa} \Lnenth)
		+
		(\Temp - \Temp_{;\Lnenth}) 
		\TempoverEnth_{;\Ent} 
		\GradEnt^{\alpha}	
		\GradEnt_{\kappa} \GradEnt^{\kappa},
		\notag
\end{align}

\begin{align} \label{E:TWODERIVATIVEENTHID3}
			(\Temp_{;\Lnenth} - \Temp) 
			 u^{\alpha}
			(\GradEnt^{\kappa} u^{\lambda} \partial_{\kappa} \partial_{\lambda} \Lnenth) 
			& = 
				u^{\kappa} \partial_{\kappa}
				\left\lbrace
					(\Temp_{;\Lnenth} - \Temp) u^{\alpha} (\GradEnt^{\lambda} \partial_{\lambda} \Lnenth)
				\right\rbrace
					\\
		& \ \
			+
			(\Temp_{\Lnenth} - \Temp_{;\Lnenth;\Lnenth})
			u^{\alpha}
			(u^{\kappa} \partial_{\kappa} \Lnenth)
			(\GradEnt^{\lambda} \partial_{\lambda} \Lnenth)
			\notag
			\\
			& \ \
			+
			(\Temp - \Temp_{;\Lnenth}) (u^{\kappa} \partial_{\kappa} u^{\alpha}) 
			(\GradEnt^{\lambda} \partial_{\lambda} \Lnenth)
				\notag \\
		& \ \
			+
			(\Temp - \Temp_{;\Lnenth})
			u^{\alpha} (u^{\kappa} \partial_{\kappa} \GradEnt^{\lambda}) (\partial_{\lambda} \Lnenth),
			\notag
\end{align}

\begin{align} \label{E:TWODERIVATIVEENTHID4}
			(\Temp_{;\Lnenth} - \Temp) 
			(\upeta^{-1})^{\alpha \lambda}
			(\GradEnt^{\kappa} \partial_{\kappa} \partial_{\lambda} \Lnenth) 
			& =
			u^{\kappa} \partial_{\kappa}
			\left\lbrace
				(\Temp - \Temp_{;\Lnenth})  (\upeta^{-1})^{\alpha \lambda} \GradEnt^{\beta} (\partial_{\lambda} u_{\beta})
			\right\rbrace
			\\
	& \ \
		+
		(\Temp_{;\Lnenth;\Lnenth} - \Temp_{\Lnenth}) 
		(u^{\kappa} \partial_{\kappa} \Lnenth)
		(\upeta^{-1})^{\alpha \lambda} \GradEnt^{\beta} (\partial_{\lambda} u_{\beta})
		\notag
		\\
		& \ \
		+
		(\Temp_{;\Lnenth} - \Temp)
		(u^{\kappa} \partial_{\kappa} \GradEnt^{\beta})
		(\upeta^{-1})^{\alpha \lambda} (\partial_{\lambda} u_{\beta})
		\notag \\
	& \ \
		+
		(\Temp - \Temp_{;\Lnenth}) 
		\GradEnt^{\beta} ((\upeta^{-1})^{\alpha \lambda} \partial_{\lambda} u_{\beta}) (u^{\kappa} \partial_{\kappa} \Lnenth)
		\notag
		\\
		& \ \
		+
		(\Temp - \Temp_{;\Lnenth})  \GradEnt^{\beta} ((\upeta^{-1})^{\alpha \lambda} \partial_{\lambda} u^{\kappa}) 
		(\partial_{\kappa} u_{\beta})
		\notag \\
	& \ \
	+ 
	(\Temp - \Temp_{;\Lnenth}) \TempoverEnth ((\upeta^{-1})^{\alpha \lambda} \partial_{\lambda} \GradEnt^{\beta}) \GradEnt_{\beta}
	\notag
	\\
	& \ \
	+
	(\Temp_{;\Lnenth} - \Temp) \TempoverEnth_{;\Lnenth} ((\upeta^{-1})^{\alpha \lambda} \partial_{\lambda} \Lnenth) 
	\GradEnt_{\kappa} \GradEnt^{\kappa}
		\notag \\
&  \ \
	+
	(\Temp_{;\Lnenth} - \Temp) \TempoverEnth_{;\Ent} \GradEnt^{\alpha} \GradEnt_{\kappa} \GradEnt^{\kappa}
	\notag
	\\
	& \ \
	+
	2 (\Temp_{;\Lnenth} - \Temp) \TempoverEnth ((\upeta^{-1})^{\alpha \lambda} \partial_{\lambda} \GradEnt^{\kappa}) \GradEnt_{\kappa}.
	\notag
\end{align}

\end{subequations}

\noindent \underline{\textbf{Identities that reveal null form structure and cancellations}}.
\, The following identities hold:\footnote{Our labeling of the terms $\mathscr{Q}_2$, $\mathscr{Q}_3$, etc.\ 
	is tied to the order in which terms appear in our proof of \eqref{E:TRANSPORTFORMODIFIEDVORTICITYOFVORTICITY}.}
\begin{subequations}
\begin{align} \label{E:Q2ID}
		\mathscr{Q}_2
		&
		:=
		(\Temp_{;\Lnenth} - \Temp) 
		((\upeta^{-1})^{\alpha \kappa} \partial_{\kappa} \Lnenth) \partial_{\lambda} \GradEnt^{\lambda}
		+
		(\Temp - \Temp_{;\Lnenth})  
		(\upeta^{-1})^{\kappa \lambda}
		(\partial_{\kappa} \Lnenth) 
		(\partial_{\lambda} \GradEnt^{\alpha})
			\\
		& = 
			(\Temp_{;\Lnenth} - \Temp) 
			(\upeta^{-1})^{\alpha \kappa}
			\left\lbrace
				(\partial_{\kappa} \Lnenth) (\partial_{\lambda} \GradEnt^{\lambda})
				-
				(\partial_{\lambda} \Lnenth) (\partial_{\kappa} \GradEnt^{\lambda})
			\right\rbrace,
			\notag
\end{align}

\begin{align} \label{E:Q4ID}
	\mathscr{Q}_4
	&
	:=
			(\Temp_{\Lnenth} - \Temp_{;\Lnenth;\Lnenth})
			\GradEnt^{\alpha}  
			(\upeta^{-1})^{\kappa \lambda} (\partial_{\kappa} \Lnenth)
			(\partial_{\lambda} \Lnenth)
			\\
			& \ \
			+
			(\Temp_{\Lnenth} - \Temp_{;\Lnenth;\Lnenth})
			 \GradEnt^{\alpha}
			(u^{\kappa} \partial_{\kappa} \Lnenth)
			(u^{\lambda} \partial_{\lambda} \Lnenth)
			+
			(\Temp_{\Lnenth} - \Temp_{;\Lnenth;\Lnenth})
			\GradEnt^{\alpha}
			(u^{\kappa} \partial_{\kappa} \Lnenth)
			(\partial_{\lambda} u^{\lambda})
			\notag
				\\
	& = 	\speed^{-2}
				(\Temp_{\Lnenth} - \Temp_{;\Lnenth;\Lnenth})
				\GradEnt^{\alpha}
				(g^{-1})^{\kappa \lambda} (\partial_{\kappa} \Lnenth) (\partial_{\lambda} \Lnenth),
	\notag
\end{align}

\begin{align} \label{E:Q5ID}
	\mathscr{Q}_5
	&
	:=
	(\Temp_{;\Lnenth;\Lnenth} - \Temp_{\Lnenth})
	((\upeta^{-1})^{\alpha \kappa}\partial_{\kappa} \Lnenth) 
	(\GradEnt^{\lambda} \partial_{\lambda} \Lnenth)
	\\
	& \ \
	+
	(\Temp_{;\Lnenth;\Lnenth} - \Temp_{\Lnenth}) 
	(u^{\kappa} \partial_{\kappa} \Lnenth)
	(\upeta^{-1})^{\alpha \lambda} \GradEnt^{\beta} (\partial_{\lambda} u_{\beta})
	\notag
		\\
	& =
	(\Temp_{;\Lnenth;\Lnenth} - \Temp_{\Lnenth}) 
	\GradEnt^{\beta}
	u^{\kappa} 
	(\upeta^{-1})^{\alpha \lambda}
	\left\lbrace
		(\partial_{\kappa} \Lnenth)
		(\partial_{\lambda} u_{\beta})
		-
		(\partial_{\lambda} \Lnenth)
		(\partial_{\kappa} u_{\beta})
	\right\rbrace
		\notag \\
	& \ \
	+
	(\Temp_{;\Lnenth;\Lnenth} - \Temp_{\Lnenth})
	\TempoverEnth
	((\upeta^{-1})^{\alpha \kappa}\partial_{\kappa} \Lnenth) 
	\GradEnt^{\lambda} \GradEnt_{\lambda},
	\notag
\end{align}

\begin{align} \label{E:Q6ID}
	\mathscr{Q}_6
	&
	:=
	\speed^{-2}
	(\Temp_{;\Lnenth} - \Temp) 
	\GradEnt^{\alpha} 
	(u^{\kappa} \partial_{\kappa} \Lnenth)
	(u^{\lambda} \partial_{\lambda} \Lnenth)
	\\
	& \ \
	+	 
	\speed^{-2}
	(\Temp_{;\Lnenth} - \Temp) 
	\GradEnt^{\alpha}
	(\upeta^{-1})^{\kappa \lambda}
	(\partial_{\kappa} \Lnenth)
	(\partial_{\lambda} \Lnenth)
	+
	(\Temp - \Temp_{;\Lnenth})  
	\GradEnt^{\alpha} 
	(\partial_{\kappa} u^{\kappa})
	(\partial_{\lambda} u^{\lambda})
	\notag
		\\
& = 
	\speed^{-4}
	(\Temp_{;\Lnenth} - \Temp) 
	\GradEnt^{\alpha} 
	(g^{-1})^{\kappa \lambda} (\partial_{\kappa} \Lnenth) (\partial_{\lambda} \Lnenth),
	\notag
\end{align}

\begin{align} \label{E:Q7ID}
	\mathscr{Q}_7
	&
	:=
	(\Temp_{;\Lnenth} - \Temp)
	\GradEnt^{\alpha} 
	(u^{\kappa} \partial_{\kappa} \Lnenth)
	(u^{\lambda} \partial_{\lambda} \Lnenth)
	+
	(\Temp_{;\Lnenth} - \Temp)
	\GradEnt^{\alpha} 
	(\upeta^{-1})^{\kappa \lambda}
	(\partial_{\kappa} \Lnenth) 
	(\partial_{\lambda} \Lnenth)
	\\
	& \ \
	+
	(\Temp_{;\Lnenth} - \Temp) 
	\GradEnt^{\alpha} 
	(u^{\kappa} \partial_{\kappa} u^{\lambda}) (\partial_{\lambda} \Lnenth)
	\notag
		\\
	& =
	(\Temp_{;\Lnenth} - \Temp) 
	\TempoverEnth 
	\GradEnt^{\alpha} 
	(\GradEnt^{\kappa} \partial_{\kappa} \Lnenth),
	\notag
\end{align}

\begin{align} \label{E:Q9ID}
	\mathscr{Q}_9
	&
	:=
	(\Temp_{;\Lnenth} - \Temp) 
			(\GradEnt^{\kappa} \partial_{\kappa} u^{\alpha})
			(\partial_{\lambda} u^{\lambda})
			+
			(\Temp_{;\Lnenth} - \Temp) 
			(u^{\kappa} \partial_{\kappa} u^{\alpha})
			(\GradEnt^{\lambda} \partial_{\lambda} \Lnenth)
			\\
			& \ \
			+
			(\Temp - \Temp_{;\Lnenth}) 
			((\upeta^{-1})^{\kappa \lambda} \partial_{\kappa} u^{\alpha})
			\GradEnt^{\beta} 
			(\partial_{\lambda} u_{\beta})
			\notag
				\\
			& =
			(\Temp_{;\Lnenth} - \Temp)
			\GradEnt^{\kappa}
			\left\lbrace
				(\partial_{\kappa} u^{\alpha})
				(\partial_{\lambda} u^{\lambda})
				-	 
				 (\partial_{\lambda} u^{\alpha})
				 (\partial_{\kappa} u^{\lambda})
			\right\rbrace
			\notag \\
	& \ \
	+
	\frac{1}{\Enth} 
	(\Temp - \Temp_{;\Lnenth}) 
	\upepsilon^{\kappa \beta \gamma \delta} 
	(\partial_{\kappa} u^{\alpha})
	\GradEnt_{\beta} u_{\gamma} \vort_{\delta}
	+ 
	\TempoverEnth
	(\Temp_{;\Lnenth} - \Temp) 
	(u^{\kappa} \partial_{\kappa} u^{\alpha})
	\GradEnt^{\lambda} \GradEnt_{\lambda},
	\notag
\end{align}

\begin{align} \label{E:Q11ID}
	\mathscr{Q}_{11}
	&
	:=
	(\Temp - \Temp_{;\Lnenth})  
	(u^{\kappa} \partial_{\kappa} \GradEnt^{\alpha})
	(\partial_{\lambda} u^{\lambda})
	+
	(\Temp - \Temp_{;\Lnenth}) 
	\GradEnt^{\beta} ((\upeta^{-1})^{\alpha \lambda} \partial_{\lambda} u^{\kappa}) (\partial_{\kappa} u_{\beta})
		\\
	& = 
	(\Temp_{;\Lnenth} - \Temp) 
	\GradEnt^{\beta}
	(\upeta^{-1})^{\alpha \kappa} 
	\left\lbrace
		(\partial_{\kappa} u_{\beta})
		(\partial_{\lambda} u^{\lambda})
		- 
		(\partial_{\lambda} u_{\beta}) 
		(\partial_{\kappa} u^{\lambda})
	\right\rbrace,
	\notag
\end{align}

\begin{align} \label{E:Q12ID}
		\mathscr{Q}_{12}
		& :=
		2
		(\Temp_{;\Lnenth} - \Temp) 
		(\partial_{\kappa} u^{\kappa})
		(\upeta^{-1})^{\alpha \lambda} 
		\GradEnt^{\beta} 
		(\partial_{\lambda} u_{\beta})
				\\
	& \ \
		+
		\speed^{-2}
		(\Temp - \Temp_{;\Lnenth}) 
		(\GradEnt^{\kappa} \partial_{\kappa} \Lnenth)
		((\upeta^{-1})^{\alpha \lambda} \partial_{\lambda} \Lnenth) 
		\notag
		\\
		& \ \
		+
		(\Temp_{;\Lnenth} - \Temp)
		(u^{\kappa} \partial_{\kappa} \GradEnt^{\beta})
		((\upeta^{-1})^{\alpha \lambda} \partial_{\lambda} u_{\beta})
		\notag
			\\
	& =
		(\Temp_{;\Lnenth} - \Temp)
		(\upeta^{-1})^{\alpha \kappa} 
		\GradEnt^{\beta}	
			\left\lbrace
				(\partial_{\kappa} u_{\beta})
				(\partial_{\lambda} u^{\lambda})
				-
				(\partial_{\lambda} u_{\beta})
				(\partial_{\kappa} u^{\lambda})
			\right\rbrace
		\notag \\
	& \ \
		+ 
		\speed^{-2} 
		(\Temp_{;\Lnenth} - \Temp)
		(\upeta^{-1})^{\alpha \kappa} 
		\GradEnt^{\beta} u^{\lambda}
		\left\lbrace
			(\partial_{\lambda} u_{\beta}) (\partial_{\kappa} \Lnenth)
			-
			(\partial_{\kappa} u_{\beta}) (\partial_{\lambda} \Lnenth)
		\right\rbrace
		\notag \\
	& \ \
		+
		\TempoverEnth 
		\speed^{-2} 
		(\Temp - \Temp_{;\Lnenth}) \GradEnt_{\kappa} \GradEnt^{\kappa} ((\upeta^{-1})^{\alpha \lambda} \partial_{\lambda} \Lnenth),
	  \notag 
\end{align}

\begin{align} \label{E:Q13ID}
	\mathscr{Q}_{13}
	&
	:=
	(\Temp - \Temp_{;\Lnenth}) 
	u^{\alpha} 
	(\partial_{\kappa} u^{\kappa})
	(\GradEnt^{\lambda} \partial_{\lambda} \Lnenth)
	+
	(\Temp - \Temp_{;\Lnenth}) 
	u^{\alpha} 
	(u^{\kappa} \partial_{\kappa} u^{\lambda}) 
	\GradEnt^{\beta} 
	(\partial_{\lambda} u_{\beta})
		\\
	& =
	(\Temp_{;\Lnenth} - \Temp) 
	u^{\alpha} 
	\GradEnt^{\beta}
	u^{\lambda} 
	\left\lbrace
		(\partial_{\kappa} u^{\kappa})
		(\partial_{\lambda} u_{\beta}) 
		-
	  (\partial_{\lambda} u^{\kappa}) 
		(\partial_{\kappa} u_{\beta})
	\right\rbrace
		\notag \\
& \ \
	+
	\TempoverEnth 
	(\Temp - \Temp_{;\Lnenth}) 
	u^{\alpha} 
	\GradEnt_{\kappa} \GradEnt^{\kappa}
	(\partial_{\lambda} u^{\lambda}),
	\notag
\end{align}

\begin{align} \label{E:Q14ID}
		\mathscr{Q}_{14}
		&	
		:=
			(\Temp - \Temp_{;\Lnenth}) 
			u^{\alpha} 
			(\partial_{\kappa} u^{\kappa})
			(\GradEnt^{\lambda} \partial_{\lambda} \Lnenth)
			+
			(\Temp - \Temp_{;\Lnenth}) 
			u^{\alpha} 
		  (u^{\kappa} \partial_{\kappa} \Lnenth)
			(\GradEnt^{\lambda} \partial_{\lambda} \Lnenth)
			\\
			& \ \
			+
			(\Temp - \Temp_{;\Lnenth}) 
			u^{\alpha}
			(\upeta^{-1})^{\kappa \lambda}
			(\partial_{\kappa} \Lnenth) u_{\beta} (\partial_{\lambda} \GradEnt^{\beta})
			\notag
				\\
		& = 
				n (\Temp - \Temp_{;\Lnenth}) u^{\alpha} (u^{\kappa} \partial_{\kappa} \Lnenth) \mathcal{D}
				\notag \\
		& \ \ 
		+
		(\Temp_{;\Lnenth} - \Temp) 
		u^{\alpha}
		u^{\kappa} 
		\left\lbrace
			(\partial_{\kappa} \Lnenth) (\partial_{\lambda} \GradEnt^{\lambda})
			-
			(\partial_{\lambda} \Lnenth) (\partial_{\kappa} \GradEnt^{\lambda})
		\right\rbrace,
		\notag
\end{align}

\begin{align} \label{E:Q15ID}
		\mathscr{Q}_{15}
		&
		:=
		(\partial_{\kappa} u^{\kappa})
		\upepsilon^{\alpha \beta \gamma \delta} 
		(\partial_{\beta} \Lnenth) 
		u_{\gamma}
		\vort_{\delta}  
		+
		\speed^{-2}
		\upepsilon^{\alpha \beta \gamma \delta} 
		(u^{\kappa} \partial_{\kappa} u_{\beta})
		(\partial_{\gamma} \Lnenth) \vort_{\delta}
			\\
		& = 
		\speed^{-2}
		\TempoverEnth
		\upepsilon^{\alpha \beta \gamma \delta} 
		\GradEnt_{\beta}
		(\partial_{\gamma} \Lnenth) \vort_{\delta},
		\notag
\end{align}

\begin{align} \label{E:Q16ID}
		  \mathscr{Q}_{16}
			& :=
			-
			\speed^{-2}
			u^{\alpha}
			\upepsilon^{\sigma \beta \gamma \delta} 
			(u^{\kappa} \partial_{\kappa} u_{\sigma})
			u_{\beta}
			(\partial_{\gamma} \Lnenth) \vort_{\delta}
				\\
			& =
			-
			\speed^{-2}
			\TempoverEnth
			u^{\alpha}
			\upepsilon^{\kappa \beta \gamma \delta} 
			\GradEnt_{\kappa}
			u_{\beta}
			(\partial_{\gamma} \Lnenth) \vort_{\delta},
			\notag
\end{align}

\begin{align} \label{E:Q18ID}
	\mathscr{Q}_{18}
	&
	:=
	(\Temp - \Temp_{;\Lnenth})
	((\upeta^{-1})^{\alpha \kappa} \partial_{\kappa} \Lnenth) 
	(\GradEnt^{\lambda} \partial_{\lambda} \Lnenth) 
	\\
	& \ \
	+
	(\Temp_{;\Lnenth} - \Temp) 
	((\upeta^{-1})^{\alpha \kappa} \partial_{\kappa} \Lnenth) u_{\beta} (u^{\lambda} \partial_{\lambda} \GradEnt^{\beta})
	\notag
		\\
	& =
	\TempoverEnth 
	(\Temp - \Temp_{;\Lnenth})
	\GradEnt_{\kappa} \GradEnt^{\kappa}
	((\upeta^{-1})^{\alpha \lambda} \partial_{\lambda} \Lnenth),
	\notag
\end{align}

\begin{align} \label{E:Q19ID}
			\mathscr{Q}_{19}
			& 
			:=
			(\Temp_{;\Lnenth} - \Temp) 
			u^{\alpha} 
			(u^{\kappa} \partial_{\kappa} u_{\sigma})
			\GradEnt^{\sigma} 
			(\partial_{\lambda} u^{\lambda})
			\\
			& \ \
			+
			\speed^{-2}
			(\Temp - \Temp_{;\Lnenth}) 
			u^{\alpha}
			(\GradEnt^{\kappa} \partial_{\kappa} \Lnenth)
			(u^{\lambda} \partial_{\lambda} \Lnenth)
			\notag
				\\
			& =
			\TempoverEnth 
			(\Temp_{;\Lnenth} - \Temp) 
			u^{\alpha} 
			\GradEnt_{\kappa} \GradEnt^{\kappa}
			(\partial_{\lambda} u^{\lambda}),
			\notag
\end{align}

\begin{align} \label{E:Q20ID}
		\mathscr{Q}_{20}
		& :=
		(\Temp - \Temp_{;\Lnenth}) \GradEnt^{\beta} ((\upeta^{-1})^{\alpha \lambda} \partial_{\lambda} u_{\beta}) (u^{\kappa} \partial_{\kappa} \Lnenth)
		\\
		& \ \
		+
		(\Temp - \Temp_{;\Lnenth}) 
		(u^{\kappa} \partial_{\kappa} \Lnenth) (u^{\lambda} \partial_{\lambda} \GradEnt^{\alpha})
		\notag
		\\
		& 	= 0,
		\notag
\end{align}

\begin{align}  \label{E:Q21ID}
	\mathscr{Q}_{21}
	&:=
	(\Temp_{;\Lnenth} - \Temp) 
		u^{\alpha} 
		(u^{\kappa} \partial_{\kappa} u_{\beta})
		u^{\beta} (\GradEnt^{\lambda} \partial_{\lambda} \Lnenth)
		\\
		&	= 0.
		\notag
\end{align}
\end{subequations}

\end{lemma}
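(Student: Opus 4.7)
The plan is to verify each identity separately, since the lemma is essentially an enormous algebraic accounting that reorganizes certain second-derivative and quadratic expressions into forms that exhibit either transport structure (via $u^\kappa \partial_\kappa(\cdot)$) or null-form structure relative to $g$. For all of the computations, the essential input is threefold: the preliminary identities of Lemma~\ref{L:IDENTITIES}; the first-order evolution equations \eqref{E:ENTHALPYEVOLUTION}--\eqref{E:ENTROPYEVOLUTION} + \eqref{E:UISUNITLENGTH}; and the algebraic formula \eqref{E:INVERSEACOUSTICALMETRIC} for $(g^{-1})^{\alpha \beta}$, the latter being the bridge that converts $\upeta^{-1}$-contractions plus $u \otimes u$-contractions into $g^{-1}$-contractions, i.e.\ into genuine $g$-null forms.

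The identities \eqref{E:TERM1ID}--\eqref{E:TERM7ID} trade two-derivative factors for transport-style factors. For \eqref{E:TERM1ID}, I would first use the evolution equation \eqref{E:ENTHALPYEVOLUTION} to rewrite $\partial_\kappa u^\kappa = - \speed^{-2} u^\kappa \partial_\kappa \Lnenth$, apply $\partial_\gamma$, and then recognize the resulting $u$-directional derivative of the combination $\speed^{-2} \upepsilon^{\alpha \beta \gamma \delta} u_\beta (\partial_\gamma \Lnenth) \vort_\delta$ by distributing $\partial_\gamma$ and reinserting \eqref{E:ENTHALPYEVOLUTION}. For \eqref{E:TERM2ID} I would swap $\vort^\kappa \partial_\kappa \partial_\gamma u_\delta = \partial_\gamma(\vort^\kappa \partial_\kappa u_\delta) - (\partial_\gamma \vort^\kappa)(\partial_\kappa u_\delta)$ and then use \eqref{E:GRADIENTOFUCONTRACTEDAGAINSTVOLUMEFORMANDVELOCITY} and \eqref{E:VORTICITYCONTRACTEDPARITALALPHAUBETAMINUSPARTIALBETAUALPHA} to process the first piece. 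Identities \eqref{E:TERM3ID}--\eqref{E:TERM7ID} follow the same playbook: substitute \eqref{E:LIEUVELOCITYONEFORM} to eliminate $u^\kappa \partial_\kappa u_\beta$ in favor of $-\partial_\beta \Lnenth - u_\beta(u^\kappa \partial_\kappa \Lnenth) + \TempoverEnth \GradEnt_\beta$, then use \eqref{E:GRADIENTOFUCONTRACTEDAGAINSTVOLUMEFORMANDVELOCITY} or \eqref{E:TRANSFERDERIVATIVESFROMVORTICITYTOVELOCITY} to absorb undesired terms. For \eqref{E:TOPORDERENTHTERMKEYID}--\eqref{E:TWODERIVATIVEENTHID4}, the strategy is to extract a $u^\kappa \partial_\kappa(\ldots)$ factor via the product/chain rule, paying attention to the $\partial_\kappa \partial_\lambda$ symmetry and to \eqref{E:ANOTHERENTROPYGRADIENTEVOLUTION}, then apply \eqref{E:ENTHALPYEVOLUTION} to the spatial divergence of $u$ exactly as in \eqref{E:SPEEDSQUAREDTIMESUDERIVATIVEOFDIVUID}.

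The null-form identities $\mathscr{Q}_n$ are conceptually uniform. For \eqref{E:Q4ID}, I would read \eqref{E:INVERSEACOUSTICALMETRIC} as
\begin{equation*}
    (g^{-1})^{\kappa \lambda}(\partial_\kappa \Lnenth)(\partial_\lambda \Lnenth) = \speed^{2}(\upeta^{-1})^{\kappa \lambda}(\partial_\kappa \Lnenth)(\partial_\lambda \Lnenth) + (\speed^{2} - 1)(u^\kappa \partial_\kappa \Lnenth)(u^\lambda \partial_\lambda \Lnenth),
\end{equation*}
and then combine the three summands on the first two lines of \eqref{E:Q4ID} using \eqref{E:ENTHALPYEVOLUTION} to exchange $\partial_\lambda u^\lambda$ for $-\speed^{-2} u^\kappa \partial_\kappa \Lnenth$. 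The same template--recognize $\upeta^{-1}$-quadratic plus $(u \otimes u)$-quadratic structure and collapse it to $g^{-1}$--handles \eqref{E:Q6ID}. The remaining $\mathscr{Q}_n$ identities either manifest the antisymmetric $\mathfrak{Q}_{\mu \nu}(\partial \phi, \partial \psi)$ pattern directly (after using \eqref{E:ENTHALPYEVOLUTION} to pair a $u^\kappa \partial_\kappa$ with a $\partial_\lambda u^\lambda$, as in \eqref{E:Q2ID}, \eqref{E:Q5ID}, \eqref{E:Q9ID}, \eqref{E:Q11ID}, \eqref{E:Q12ID}, \eqref{E:Q13ID}, \eqref{E:Q14ID}), or they are outright cancellations that follow from \eqref{E:LIEUVELOCITYONEFORM}--\eqref{E:IDVELOCITYDERIVATIVEOFVELOCITYCONTRACTEDWITHENTROPYGRADIENT}, \eqref{E:ANOTHERENTROPYGRADIENTEVOLUTION}, and \eqref{E:VELOCITYANDENTGRADIENTAREMINKOWSKIPERP} (these give \eqref{E:Q15ID}, \eqref{E:Q16ID}, \eqref{E:Q18ID}, \eqref{E:Q19ID}, \eqref{E:Q20ID}, \eqref{E:Q21ID}, where the last two vanish because they reduce to $u^\kappa \partial_\alpha u_\kappa = 0$ or $u^\kappa \GradEnt_\kappa = 0$ after applying the transport equations).

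The genuine obstacle is not conceptual but combinatorial: the sign and coefficient bookkeeping is heavy, the various $\upepsilon \cdot \upepsilon$ contractions must be expanded consistently, and several of the $\mathscr{Q}_n$ require adding and subtracting the same term (typically $(\partial_\kappa u^\kappa)(u^\lambda \partial_\lambda \Lnenth)$ or $\TempoverEnth \GradEnt_\kappa \GradEnt^\kappa (\partial_\lambda u^\lambda)$) to unmask the null structure. To minimize the chance of error I would verify the identities in the order \eqref{E:Q2ID}, \eqref{E:Q4ID}, \eqref{E:Q5ID}, \eqref{E:Q6ID}, \eqref{E:Q7ID}, \eqref{E:Q11ID}, \eqref{E:Q12ID}, \eqref{E:Q13ID}, \eqref{E:Q14ID}, \eqref{E:Q15ID}, \eqref{E:Q16ID}, \eqref{E:Q18ID}, \eqref{E:Q19ID}, \eqref{E:Q20ID}, \eqref{E:Q21ID}, then \eqref{E:Q9ID}, and finally tackle the more intricate \eqref{E:TERM1ID}--\eqref{E:TERM7ID} and \eqref{E:TOPORDERENTHTERMKEYID}--\eqref{E:TWODERIVATIVEENTHID4}, since these reuse many of the intermediate simplifications already established for the $\mathscr{Q}_n$ family.
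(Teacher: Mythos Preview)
Your proposal is correct and tracks the paper's approach closely: both treat the lemma as a long chain of algebraic verifications, driven by the evolution equations \eqref{E:ENTHALPYEVOLUTION}--\eqref{E:ENTROPYEVOLUTION}, the preliminary identities of Lemma~\ref{L:IDENTITIES}, and the formula \eqref{E:INVERSEACOUSTICALMETRIC} for $(g^{-1})$. One small deviation worth noting: for \eqref{E:TERM2ID} the paper does not split via the Leibniz rule as you suggest, but instead applies $u_\beta \vort^\kappa \partial_\kappa$ directly to the identity \eqref{E:GRADIENTOFUCONTRACTEDAGAINSTVOLUMEFORM} for $\upepsilon^{\alpha\beta\gamma\delta}\partial_\gamma u_\delta$, which avoids introducing and then cancelling a $(\partial_\gamma \vort^\kappa)(\partial_\kappa u_\delta)$ cross-term; your route would still arrive at the same place but with more bookkeeping.
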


\begin{proof}
We split the proof into many pieces.

\noindent $\bullet$ \textbf{Proof of \eqref{E:TERM1ID}}:	
We first use equation \eqref{E:ENTHALPYEVOLUTION} to deduce
\begin{align} \label{E:TERM1IDFIRSTSTEP}
	\upepsilon^{\alpha \beta \gamma \delta}
	u_{\beta}
	(\partial_{\gamma} \partial_{\kappa} u^{\kappa}) \vort_{\delta}
	& = 		
	-
	\upepsilon^{\alpha \beta \gamma \delta} 
	u_{\beta}
	\left\lbrace
		\partial_{\gamma} (\speed^{-2} u^{\kappa} \partial_{\kappa} \Lnenth)
	\right\rbrace
	\vort_{\delta}
		\\
& =
	-
	\upepsilon^{\alpha \beta \gamma \delta} 
	u_{\beta}
	(\speed^{-2} u^{\kappa} \partial_{\kappa} \partial_{\gamma} \Lnenth) \vort_{\delta}
	\notag	\\
& \ \
	+
	2 \speed^{-3} \speed_{;\Lnenth} 
	(u^{\kappa} \partial_{\kappa} \Lnenth)
	\upepsilon^{\alpha \beta \gamma \delta} 
	u_{\beta}
	(\partial_{\gamma} \Lnenth)
	\vort_{\delta}
	\notag
	\\
	& \ \
	+
	2 \speed^{-3} \speed_{;\Ent} 
	(u^{\kappa} \partial_{\kappa} \Lnenth)
	\upepsilon^{\alpha \beta \gamma \delta} 
	u_{\beta}
	\GradEnt_{\gamma}
	\vort_{\delta}
		\notag \\
& \ \
	-
	\speed^{-2}
	\upepsilon^{\alpha \beta \gamma \delta} 
	u_{\beta}
	(\partial_{\gamma} u^{\kappa}) 
	(\partial_{\kappa} \Lnenth) 
	\vort_{\delta}.
		\notag
\end{align}
Next, we rewrite the first term on RHS~\eqref{E:TERM1IDFIRSTSTEP}
as a perfect $u^{\kappa} \partial_{\kappa}$ derivative
plus error terms, thereby obtaining, with the help of 
\eqref{E:ENTROPYEVOLUTION}, 
the following identity:
\begin{align} \label{E:TERM1IDSECONDSTEP}
\upepsilon^{\alpha \beta \gamma \delta}
	u_{\beta}
	(\partial_{\gamma} \partial_{\kappa} u^{\kappa}) \vort_{\delta}
	& = 	 
		-
		u^{\kappa} \partial_{\kappa}
		\left\lbrace
		\upepsilon^{\alpha \beta \gamma \delta} 
		\speed^{-2} 
		u_{\beta}
		(\partial_{\gamma} \Lnenth) \vort_{\delta}
	\right\rbrace
	\\
	& \ \
	+
	\speed^{-2} 
	\upepsilon^{\alpha \beta \gamma \delta} 
	(u^{\kappa} \partial_{\kappa} u_{\beta})
	(\partial_{\gamma} \Lnenth) 
	\vort_{\delta}
	\notag
		\\
& \ \
	+
	\speed^{-2}
	\upepsilon^{\alpha \beta \gamma \delta} 
	u_{\beta}
	(\partial_{\gamma} \Lnenth)
	(u^{\kappa} \partial_{\kappa} \vort_{\delta})
	\notag
	\\
	& \ \
	-
	\speed^{-2}
	\upepsilon^{\alpha \beta \gamma \delta} 
	u_{\beta}
	(\partial_{\gamma} u^{\kappa}) 
	(\partial_{\kappa} \Lnenth) 
	\vort_{\delta}
		\notag \\
	& \ \
	+
	2 \speed^{-3} \speed_{;\Ent} 
	(u^{\kappa} \partial_{\kappa} \Lnenth)
	\upepsilon^{\alpha \beta \gamma \delta} 
	u_{\beta}
	\GradEnt_{\gamma}
	\vort_{\delta}.
	\notag
\end{align}
Using equation \eqref{E:VORTICITYTRANSPORT}
to substitute for the factor $u^{\kappa} \partial_{\kappa} \vort_{\delta}$
in the third product on RHS~\eqref{E:TERM1IDSECONDSTEP},
we deduce
\begin{align} \label{E:TERM1IDSTHIRDSTEP}
	\upepsilon^{\alpha \beta \gamma \delta}
	u_{\beta}
	(\partial_{\gamma} \partial_{\kappa} u^{\kappa}) \vort_{\delta}
	& = 
		-
		u^{\kappa} \partial_{\kappa}
		\left\lbrace
			\speed^{-2}
			\upepsilon^{\alpha \beta \gamma \delta} 
			u_{\beta}
			(\partial_{\gamma} \Lnenth) 
			\vort_{\delta}
	\right\rbrace
	\\
	& \ \
	+
	\speed^{-2} 
	\upepsilon^{\alpha \beta \gamma \delta} 
	(u^{\kappa} \partial_{\kappa} u_{\beta})
	(\partial_{\gamma} \Lnenth) 
	\vort_{\delta}	
	\notag
		\\
& \ \
	+
	\speed^{-2}
	\upepsilon^{\alpha \beta \gamma \delta} 
	u_{\beta}
	(\partial_{\gamma} \Lnenth)
	(\vort^{\kappa} \partial_{\kappa} u_{\delta})
	\notag
	\\
	& \ \
	-
	\speed^{-2}
	(\partial_{\kappa} u^{\kappa}) 
	\upepsilon^{\alpha \beta \gamma \delta} 
	u_{\beta}
	(\partial_{\gamma} \Lnenth)
	\vort_{\delta} 
	\notag \\
	&  
	\ \
	+
	\speed^{-2}
	(\Temp - \Temp_{;\Lnenth}) 
	\upepsilon^{\alpha \beta \gamma \delta} 
	\upepsilon_{\delta \nu}^{\ \ \kappa \lambda}
	u_{\beta} u^{\nu}
	(\partial_{\gamma} \Lnenth)
	(\partial_{\kappa} \Lnenth) \GradEnt_{\lambda}
	\notag
	\\
	& \ \
	-
	\speed^{-2}
	\upepsilon^{\alpha \beta \gamma \delta} 
	u_{\beta}
	(\partial_{\gamma} u^{\kappa}) 
	(\partial_{\kappa} \Lnenth) 
	\vort_{\delta}
		\notag
		\\
& \ \
	+
	2 \speed^{-3} \speed_{;\Ent} 
	(u^{\kappa} \partial_{\kappa} \Lnenth)
	\upepsilon^{\alpha \beta \gamma \delta} 
	u_{\beta}
	\GradEnt_{\gamma}
	\vort_{\delta}.
	\notag
\end{align}
Next, 
using the identity
\eqref{E:VORTICITYCONTRACTEDPARITALALPHAUBETAMINUSPARTIALBETAUALPHA},
we express the third product on RHS~\eqref{E:TERM1IDSTHIRDSTEP} as follows:
\begin{align} \label{E:TERM1IDFOURTHSTEP}
\speed^{-2}
\upepsilon^{\alpha \beta \gamma \delta} 
u_{\beta}
(\partial_{\gamma} \Lnenth)
(\vort^{\kappa} \partial_{\kappa} u_{\delta})
& =
	\speed^{-2}
	\upepsilon^{\alpha \beta \gamma \delta} 
	u_{\beta}
	(\partial_{\gamma} \Lnenth)
	\vort^{\kappa} (\partial_{\delta} u_{\kappa}).
\end{align}	
Next, using the identity
\begin{align}
	- \upepsilon^{\alpha \beta \gamma \delta} 
		\upepsilon_{\delta \nu}^{\ \ \kappa \lambda}
	& =
	(\upeta^{-1})^{\lambda \beta} \updelta_{\nu}^{\alpha} (\upeta^{-1})^{\kappa \gamma}
	-
	(\upeta^{-1})^{\lambda \beta} \updelta_{\nu}^{\gamma} (\upeta^{-1})^{\kappa \alpha}
	\notag
		\\
& \ \
	+
	(\upeta^{-1})^{\lambda \gamma} \updelta_{\nu}^{\beta} (\upeta^{-1})^{\kappa \alpha}
	-
	(\upeta^{-1})^{\lambda \gamma} \updelta_{\nu}^{\alpha} (\upeta^{-1})^{\kappa \beta}
	\notag
	\\
	& \ \
	+
	(\upeta^{-1})^{\lambda \alpha} \updelta_{\nu}^{\gamma} (\upeta^{-1})^{\kappa \beta}
	-
	(\upeta^{-1})^{\lambda \alpha} \updelta_{\nu}^{\beta} (\upeta^{-1})^{\kappa \gamma}
	\notag
\end{align}
and equations 
\eqref{E:UISUNITLENGTH}, 
\eqref{E:VELOCITYANDENTGRADIENTAREMINKOWSKIPERP},
and
\eqref{E:VORTISORTHGONALTOU},
we express the third-from-last product on RHS~\eqref{E:TERM1IDSTHIRDSTEP} as follows:
\begin{align} \label{E:TERM1IDFIFTHSTEP}
	&
	\speed^{-2}
	(\Temp - \Temp_{;\Lnenth}) 
	\upepsilon^{\alpha \beta \gamma \delta} 
	\upepsilon_{\delta \nu}^{\ \ \kappa \lambda}
	u_{\beta} u^{\nu}
	(\partial_{\gamma} \Lnenth)
	(\partial_{\kappa} \Lnenth) \GradEnt_{\lambda}
	\\
	 & = 
	\speed^{-2}
	(\Temp - \Temp_{;\Lnenth}) 
	(\GradEnt^{\kappa} \partial_{\kappa} \Lnenth)
	((\upeta^{-1})^{\alpha \lambda} \partial_{\lambda} \Lnenth) 
	\notag
		\\
 & \ \
	+
	\speed^{-2}
	(\Temp - \Temp_{;\Lnenth}) 
	u^{\alpha}
	(\GradEnt^{\kappa} \partial_{\kappa} \Lnenth)
	(u^{\lambda} \partial_{\lambda} \Lnenth)
	\notag \\
 & \ \
		+ \speed^{-2}
	(\Temp_{;\Lnenth} - \Temp) 
	\GradEnt^{\alpha} 
	(u^{\kappa} \partial_{\kappa} \Lnenth)
	(u^{\lambda} \partial_{\lambda} \Lnenth)
	\notag \\
 \ \
& \ \
	+	 
	\speed^{-2}
	(\Temp_{;\Lnenth} - \Temp) 
	\GradEnt^{\alpha}
	(\upeta^{-1})^{\kappa \lambda}
	(\partial_{\kappa} \Lnenth)
	(\partial_{\lambda} \Lnenth).
	\notag
\end{align}

Using \eqref{E:TERM1IDFOURTHSTEP} and \eqref{E:TERM1IDFIFTHSTEP}
to substitute for the relevant products on \linebreak RHS~\eqref{E:TERM1IDSTHIRDSTEP},
adding and subtracting
$
\speed^{-2}
\upepsilon^{\alpha \beta \gamma \delta} 
u_{\beta}
\vort_{\delta}
(\partial_{\kappa} u^{\kappa}) (\partial_{\gamma} \Lnenth) 
$,
and reorganizing the terms,
we arrive at the desired identity \eqref{E:TERM1ID}.

\medskip

\noindent $\bullet$ \textbf{Proof of \eqref{E:TERM2ID}}:
We first use \eqref{E:GRADIENTOFUCONTRACTEDAGAINSTVOLUMEFORM}
to deduce
\begin{align} \label{E:TERM2IDFIRSTSTEP}
	&
	\upepsilon^{\alpha \beta \gamma \delta} 
	u_{\beta}
	(\vort^{\kappa} \partial_{\kappa} \partial_{\gamma} u_{\delta})
		\\
	& = 
	u_{\beta}
	\vort^{\kappa} \partial_{\kappa} 
	\left\lbrace
		\frac{1}{\Enth}
			\vort^{\alpha} u^{\beta} 
			- 
			\frac{1}{\Enth}
			u^{\alpha} \vort^{\beta}
			-
			\upepsilon^{\alpha \beta \gamma \delta} (\partial_{\gamma} \Lnenth) u_{\delta}
			+
			\TempoverEnth
			\upepsilon^{\alpha \beta \gamma \delta}
			\GradEnt_{\gamma} u_{\delta}
	\right\rbrace.
	\notag
\end{align}
The desired identity \eqref{E:TERM2ID}
now follows from 
\eqref{E:LOGARITHMICENTHALPY},
\eqref{E:UISUNITLENGTH},
\eqref{E:VORTISORTHGONALTOU},
\eqref{E:DERIVATIVEOFVELOCITYCONTRACTEDWITHVELOCITYISZERO},
\eqref{E:TRANSFERDERIVATIVESFROMVORTICITYTOVELOCITY},
\eqref{E:VORTICITYCONTRACTEDPARITALALPHAUBETAMINUSPARTIALBETAUALPHA},
\eqref{E:TERM2IDFIRSTSTEP},
and straightforward calculations.

\medskip

\noindent $\bullet$ \textbf{Proof of \eqref{E:TERM3ID}}:
We first use equation 
\eqref{E:VELOCITYEVOLUTION} to 
substitute for the factor
$
u^{\kappa} \partial_{\kappa} u_{\beta}
$
on LHS~\eqref{E:TERM3ID}, thereby obtaining the identity
\begin{align} \label{E:TERM3IDFIRSTSTEP}
	\upepsilon^{\alpha \beta \gamma \delta} (u^{\kappa} \partial_{\kappa} u_{\beta}) 
	(u^{\lambda} \partial_{\lambda} \vort_{\delta}) u_{\gamma}
	& =
	-
	\upepsilon^{\alpha \beta \gamma \delta} 
	(\partial_{\beta} \Lnenth) 
	(u^{\lambda} \partial_{\lambda} \vort_{\delta}) u_{\gamma}
	\\
	& \ \
	+
	\TempoverEnth
	\upepsilon^{\alpha \beta \gamma \delta} 
	\GradEnt_{\beta}
	(u^{\lambda} \partial_{\lambda} \vort_{\delta}) u_{\gamma}.
	\notag
\end{align}
We then use equation \eqref{E:VORTICITYTRANSPORT}
to substitute for the two factors of
$u^{\lambda} \partial_{\lambda} \vort_{\delta}$ 
on RHS~\eqref{E:TERM3IDFIRSTSTEP},
which yields the identity
\begin{align} \label{E:TERM3IDSECONDSTEP}
	\upepsilon^{\alpha \beta \gamma \delta} (u^{\kappa} \partial_{\kappa} u_{\beta}) 
	(u^{\lambda} \partial_{\lambda} \vort_{\delta}) u_{\gamma}
	& =
	-
	\upepsilon^{\alpha \beta \gamma \delta} 
	(\partial_{\beta} \Lnenth) 
	u_{\gamma}
	(\vort^{\kappa} \partial_{\kappa} u_{\delta}) 
	\\
	& \ \ 
	+
	(\partial_{\kappa} u^{\kappa})
	\upepsilon^{\alpha \beta \gamma \delta} 
	(\partial_{\beta} \Lnenth) 
	u_{\gamma} \vort_{\delta}
	\notag
		\\
	& \ \
	+
	(\Temp_{;\Lnenth} - \Temp) 
	\upepsilon^{\alpha \beta \gamma \delta} 
	\upepsilon_{\delta \nu}^{\ \ \kappa \lambda} 
	(\partial_{\beta} \Lnenth) 
	u^{\nu}
	(\partial_{\kappa} \Lnenth) \GradEnt_{\lambda}
	u_{\gamma}
	\notag
	\\
	& \ \
	+
	\TempoverEnth
	\upepsilon^{\alpha \beta \gamma \delta} 
	\GradEnt_{\beta}
	u_{\gamma}
	(\vort^{\kappa} \partial_{\kappa} u_{\delta}) 
		\notag
			\\
	& \ \
	-
	\TempoverEnth
	(\partial_{\kappa} u^{\kappa})
	\upepsilon^{\alpha \beta \gamma \delta} 
	\GradEnt_{\beta}
	u_{\gamma} \vort_{\delta} 
	\notag
	\\
	&  \ \
	+
	\TempoverEnth
	(\Temp - \Temp_{;\Lnenth}) 
	\upepsilon^{\alpha \beta \gamma \delta} 
	\upepsilon_{\delta \nu}^{\ \ \kappa \lambda} u^{\nu}
	(\partial_{\kappa} \Lnenth) \GradEnt_{\lambda}
	\GradEnt_{\beta} u_{\gamma}.
	\notag
\end{align}
Next, we use the identity
\eqref{E:VORTICITYCONTRACTEDPARITALALPHAUBETAMINUSPARTIALBETAUALPHA}
to express the first and fourth products on RHS~\eqref{E:TERM3IDSECONDSTEP} as follows:
\begin{align} \label{E:IDFORFIRSTPRODUCTONRHSTERM3IDSECONDSTEP}
	-
	\upepsilon^{\alpha \beta \gamma \delta} 
	(\partial_{\beta} \Lnenth) 
	(\vort^{\lambda} \partial_{\lambda} u_{\delta}) u_{\gamma}
& =
-
	\upepsilon^{\alpha \beta \gamma \delta} 
	(\partial_{\beta} \Lnenth) 
	\vort^{\lambda} (\partial_{\delta} u_{\lambda}) u_{\gamma},
		\\
	\TempoverEnth
	\upepsilon^{\alpha \beta \gamma \delta} 
	\GradEnt_{\beta}
	u_{\gamma}
	(\vort^{\kappa} \partial_{\kappa} u_{\delta}) 
	& = 
	\TempoverEnth
	\upepsilon^{\alpha \beta \gamma \delta} 
	\GradEnt_{\beta}
	u_{\gamma} \vort^{\kappa} (\partial_{\delta} u_{\kappa}) .
	\label{E:IDFORFOURTHPRODUCTONRHSTERM3IDSECONDSTEP}
\end{align}
We then use the identity
\begin{align} 
	\upepsilon^{\alpha \beta \gamma \delta} 
	\upepsilon_{\delta \nu}^{\ \ \kappa \lambda}
	& =
		(\upeta^{-1})^{\lambda \beta} \updelta_{\nu}^{\gamma} (\upeta^{-1})^{\kappa \alpha}
		-
		(\upeta^{-1})^{\lambda \beta} \updelta_{\nu}^{\alpha} (\upeta^{-1})^{\kappa \gamma}
		\notag
			\\
& \ \
	+
	(\upeta^{-1})^{\lambda \gamma} \updelta_{\nu}^{\alpha} (\upeta^{-1})^{\kappa \beta}
	-
	(\upeta^{-1})^{\lambda \gamma} \updelta_{\nu}^{\beta} (\upeta^{-1})^{\kappa \alpha}
	\notag
	\\
	& \ \
	+
	(\upeta^{-1})^{\lambda \alpha} \updelta_{\nu}^{\beta} (\upeta^{-1})^{\kappa \gamma}
	-
	(\upeta^{-1})^{\lambda \alpha} \updelta_{\nu}^{\gamma} (\upeta^{-1})^{\kappa \beta}
	\notag
\end{align}
and equations 
\eqref{E:UISUNITLENGTH}
and
\eqref{E:VELOCITYANDENTGRADIENTAREMINKOWSKIPERP}
to express the third product on RHS~\eqref{E:TERM3IDSECONDSTEP} as follows:
\begin{align} \label{E:IDFORTHIRDPRODUCTONRHSTERM3IDSECONDSTEP}
	&
	(\Temp_{;\Lnenth} - \Temp) 
	\upepsilon^{\alpha \beta \gamma \delta} 
	\upepsilon_{\delta \nu}^{\ \ \kappa \lambda} 
	(\partial_{\beta} \Lnenth) 
	u^{\nu}
	(\partial_{\kappa} \Lnenth) \GradEnt_{\lambda}
	u_{\gamma}
		\\
	& =  
	(\Temp - \Temp_{;\Lnenth})
	((\upeta^{-1})^{\alpha \kappa} \partial_{\kappa} \Lnenth) 
	(\GradEnt^{\lambda} \partial_{\lambda} \Lnenth) 
	\notag
		\\
& \ \
	+
  (\Temp - \Temp_{;\Lnenth})
	u^{\alpha}
	(u^{\kappa} \partial_{\kappa} \Lnenth)
	(\GradEnt^{\lambda} \partial_{\lambda} \Lnenth) 
	\notag \\
& \ \
	+	
	(\Temp_{;\Lnenth} - \Temp)
	\GradEnt^{\alpha} 
	(u^{\kappa} \partial_{\kappa} \Lnenth)
	(u^{\lambda} \partial_{\lambda} \Lnenth)
		\notag \\
&  \ \
	+
	(\Temp_{;\Lnenth} - \Temp)
	\GradEnt^{\alpha} 
	(\upeta^{-1})^{\kappa \lambda}
	(\partial_{\kappa} \Lnenth) 
	(\partial_{\lambda} \Lnenth).
	\notag
\end{align}
Similarly, we express the last product on RHS~\eqref{E:TERM3IDSECONDSTEP} as follows:
\begin{align} \label{E:IDFORLASTPRODUCTONRHSTERM3IDSECONDSTEP}
	\TempoverEnth
	(\Temp - \Temp_{;\Lnenth}) 
	\upepsilon^{\alpha \beta \gamma \delta} 
	\upepsilon_{\delta \nu}^{\ \ \kappa \lambda} u^{\nu}
	(\partial_{\kappa} \Lnenth) \GradEnt_{\lambda}
	\GradEnt_{\beta} u_{\gamma}
	& =  
	\TempoverEnth
	(\Temp_{;\Lnenth} - \Temp) 
	((\upeta^{-1})^{\kappa \alpha} \partial_{\kappa} \Lnenth) \GradEnt^{\lambda} \GradEnt_{\lambda}
	\\
	& \ \
	+
	\TempoverEnth
	(\Temp_{;\Lnenth} - \Temp) 
	u^{\alpha}
	(u^{\kappa} \partial_{\kappa} \Lnenth) \GradEnt^{\lambda} \GradEnt_{\lambda} 
	\notag
		\\
& \ \
	+
\TempoverEnth
(\Temp - \Temp_{;\Lnenth}) 
\GradEnt^{\alpha}
(\GradEnt^{\kappa} \partial_{\kappa} \Lnenth).
	\notag
\end{align}

Using \eqref{E:IDFORFIRSTPRODUCTONRHSTERM3IDSECONDSTEP}-\eqref{E:IDFORFOURTHPRODUCTONRHSTERM3IDSECONDSTEP}
and
\eqref{E:IDFORTHIRDPRODUCTONRHSTERM3IDSECONDSTEP}-\eqref{E:IDFORLASTPRODUCTONRHSTERM3IDSECONDSTEP}
to substitute for
the relevant products on RHS~\eqref{E:TERM3IDSECONDSTEP},
we arrive at the desired identity \eqref{E:TERM3ID}.

\medskip

\noindent $\bullet$ \textbf{Proof of \eqref{E:TERM4ID}}:
\eqref{E:TERM4ID} follows easily from using equation \eqref{E:VELOCITYEVOLUTION} 
to substitute for the factor
$u^{\kappa} \partial_{\kappa} u_{\beta}$ 
on the LHS.

\medskip

\noindent $\bullet$ \textbf{Proof of \eqref{E:TERM5ID}}:
We first use the identity \eqref{E:PARITALALPHAUBETAMINUSPARTIALBETAUALPHA}
to deduce
\begin{align} \label{E:TERM5IDFIRSTSTEP}
	\upepsilon^{\alpha \beta \gamma \delta} 
	u_{\beta}
	(\partial_{\gamma} \vort^{\kappa}) (\partial_{\kappa} u_{\delta})
	& = 
		\upepsilon^{\alpha \beta \gamma \delta} 
		u_{\beta}
		(\partial_{\gamma} \vort^{\kappa}) (\partial_{\delta} u_{\kappa})
		\\
		& \ \
		+
		\frac{1}{\Enth}
		\upepsilon^{\alpha \beta \gamma \delta} 
		\upepsilon_{\kappa \delta \theta \lambda} u^{\theta} \vort^{\lambda}
		u_{\beta}
		(\partial_{\gamma} \vort^{\kappa}) 
		\notag
			\\
	& \ \
		+
		\upepsilon^{\alpha \beta \gamma \delta} 
		u_{\beta}
		(\partial_{\gamma} \vort^{\kappa})  u_{\kappa} (\partial_{\delta} \Lnenth)
		-
		\TempoverEnth
		\upepsilon^{\alpha \beta \gamma \delta} 
		u_{\beta}
		(\partial_{\gamma} \vort^{\kappa})
		u_{\kappa}
		\GradEnt_{\delta}.
			\notag
\end{align}
Next, we note the identity
\begin{align}
\upepsilon^{\alpha \beta \gamma \delta} 
\upepsilon_{\kappa \delta \theta \lambda}
	=
	-
	\upepsilon^{\alpha \beta \gamma \delta} 
	\upepsilon_{\kappa \lambda \theta \delta} 
	& =
	\updelta_{\kappa}^{\alpha} \updelta_{\lambda}^{\beta} \updelta_{\theta}^{\gamma}
	-
	\updelta_{\kappa}^{\beta} \updelta_{\lambda}^{\alpha} \updelta_{\theta}^{\gamma}
	\notag
		\\
& \ \
	+
	\updelta_{\kappa}^{\beta} \updelta_{\lambda}^{\gamma} \updelta_{\theta}^{\alpha}
	-
	\updelta_{\kappa}^{\alpha} \updelta_{\lambda}^{\gamma} \updelta_{\theta}^{\beta}
	+
	\updelta_{\kappa}^{\gamma} \updelta_{\lambda}^{\alpha} \updelta_{\theta}^{\beta}
	-
	\updelta_{\kappa}^{\gamma} \updelta_{\lambda}^{\beta} \updelta_{\theta}^{\alpha},
\notag
\end{align}
which, in view of \eqref{E:UISUNITLENGTH},
\eqref{E:VORTISORTHGONALTOU},
and \eqref{E:TRANSFERDERIVATIVESFROMVORTICITYTOVELOCITY},
allows us to express the second product on RHS~\eqref{E:TERM5IDFIRSTSTEP} as follows:
\begin{align} \label{E:TERM5IDTRICKYTERMID}
		\frac{1}{\Enth}
		\upepsilon^{\alpha \beta \gamma \delta} 
		\upepsilon_{\kappa \delta \theta \lambda} u^{\theta} \vort^{\lambda}
		u_{\beta}
		(\partial_{\gamma} \vort^{\kappa}) 
	& = 
			-
			\frac{1}{\Enth} 
			\vort^{\alpha}
			u_{\lambda}
			(u^{\kappa} \partial_{\kappa} \vort^{\lambda}) 
			\\
			& \ \
			+
			\frac{1}{\Enth} 
			u^{\alpha} 
			u_{\lambda}
			(\vort^{\kappa} \partial_{\kappa} \vort^{\lambda})
			\notag
				\\
	& \ \
			+
			\frac{1}{\Enth} 
			(\vort^{\kappa} \partial_{\kappa} \vort^{\alpha}) 
			-
			\frac{1}{\Enth} 
			\vort^{\alpha}
			(\partial_{\kappa} \vort^{\kappa})
			\notag
				\\
	& = 
			\frac{1}{\Enth} 
			\vort^{\alpha}
			\vort^{\lambda}
			(u^{\kappa} \partial_{\kappa} u_{\lambda}) 
			-
			\frac{1}{\Enth} 
			u^{\alpha} 
			\vort^{\lambda}
			(\vort^{\kappa} \partial_{\kappa} u_{\lambda})
				\notag \\
	& \ \
			+
			\frac{1}{\Enth} 
			(\vort^{\kappa} \partial_{\kappa} \vort^{\alpha}) 
			-
			\frac{1}{\Enth} 
			\vort^{\alpha}
			(\partial_{\kappa} \vort^{\kappa}).
			\notag
\end{align}
Using \eqref{E:TERM5IDTRICKYTERMID}
to substitute for the second product on RHS~\eqref{E:TERM5IDFIRSTSTEP},
and using \eqref{E:TRANSFERDERIVATIVESFROMVORTICITYTOVELOCITY}
to express the third product on RHS~\eqref{E:TERM5IDFIRSTSTEP} as
$
\upepsilon^{\alpha \beta \gamma \delta} 
u_{\beta}
(\partial_{\gamma} \vort^{\kappa}) 
u_{\kappa}
(\partial_{\delta} \Lnenth) 
=
-
\upepsilon^{\alpha \beta \gamma \delta} 
u_{\beta}
\vort^{\kappa}
(\partial_{\gamma} u_{\kappa}) (\partial_{\delta} \Lnenth)
=
\upepsilon^{\alpha \beta \gamma \delta} 
u_{\beta}
(\partial_{\gamma} \Lnenth)
\vort^{\kappa}
(\partial_{\delta} u_{\kappa}) 
$
and the last product on RHS~\eqref{E:TERM5IDFIRSTSTEP}
as
$
-
		\TempoverEnth
		\upepsilon^{\alpha \beta \gamma \delta} 
		u_{\beta}
		(\partial_{\gamma} \vort^{\kappa})
		 u_{\kappa}
		\GradEnt_{\delta}
=	\TempoverEnth
		\upepsilon^{\alpha \beta \gamma \delta} 
		u_{\beta}
		(\partial_{\gamma} u^{\kappa})
		 \vort_{\kappa}
		\GradEnt_{\delta}		
$,
we arrive at the desired identity
\eqref{E:TERM5ID}.

\medskip

\noindent $\bullet$ \textbf{Proof of \eqref{E:TERM6ID}}:
\eqref{E:TERM6ID} follows from definition \eqref{E:UORTHGONALVORTICITYOFONEFORM}
with $V_{\delta} := \vort_{\delta}$.

\medskip

\noindent $\bullet$ \textbf{Proof of \eqref{E:TERM7ID}}:
\eqref{E:TERM7ID} is a straightforward consequence of
\eqref{E:GRADIENTOFUCONTRACTEDAGAINSTVOLUMEFORM},
\eqref{E:UISUNITLENGTH}, and \eqref{E:VORTISORTHGONALTOU}.

\medskip

\noindent $\bullet$ \textbf{Proof of \eqref{E:TOPORDERENTHTERMKEYID}}:
We first use \eqref{E:FLATWAVEOPERATOROFLOGENTHID} to express LHS~\eqref{E:TOPORDERENTHTERMKEYID} as follows:
\begin{align} \label{E:FIRSTSTEPTOPORDERENTHTERMKEYID}
		&
		(\Temp - \Temp_{;\Lnenth}) 
		\GradEnt^{\alpha}
		((\upeta^{-1})^{\kappa \lambda} \partial_{\kappa} \partial_{\lambda} \Lnenth)
		+
		(\Temp - \Temp_{;\Lnenth}) 
		\GradEnt^{\alpha}
		(u^{\kappa} u^{\lambda} \partial_{\kappa} \partial_{\lambda} \Lnenth)
			\\
		& = 
		(\Temp_{;\Lnenth} - \Temp) 
		\GradEnt^{\alpha} 
		(u^{\lambda} \partial_{\lambda} \partial_{\kappa} u^{\kappa})
		+
		(\Temp_{;\Lnenth} - \Temp) 
		\GradEnt^{\alpha}
		(\partial_{\kappa} u^{\lambda}) (\partial_{\lambda} u^{\kappa}) 
			\notag \\
	& \ \
		+
		(\Temp_{;\Lnenth} - \Temp) 
		\GradEnt^{\alpha}
		(u^{\kappa} \partial_{\kappa} u^{\lambda}) (\partial_{\lambda} \Lnenth)
		+
		(\Temp_{;\Lnenth} - \Temp) 
		\GradEnt^{\alpha}
		(\partial_{\kappa} u^{\kappa}) (u^{\lambda} \partial_{\lambda} \Lnenth)
		\notag
			\\
	& \ \
		+
		(\Temp - \Temp_{;\Lnenth}) 
		\TempoverEnth
		\GradEnt^{\alpha}
		(\partial_{\kappa} \GradEnt^{\kappa})
		+
		(\Temp - \Temp_{;\Lnenth}) 
		\TempoverEnth_{;\Lnenth} 
		\GradEnt^{\alpha}
		(\GradEnt^{\kappa} \partial_{\kappa} \Lnenth)
		+
		(\Temp - \Temp_{;\Lnenth}) 
		\TempoverEnth_{;\Ent} 
		\GradEnt^{\alpha}
		\GradEnt_{\kappa} \GradEnt^{\kappa}.
		\notag
	\end{align}
	Next, with the help of equation \eqref{E:ENTROPYEVOLUTION}, we rewrite the first
	product on \linebreak RHS~\eqref{E:FIRSTSTEPTOPORDERENTHTERMKEYID} as follows:
	\begin{align} \label{E:SECONDSTEPTOPORDERENTHTERMKEYID}
		(\Temp_{;\Lnenth} - \Temp) 
		\GradEnt^{\alpha} 
		(u^{\lambda} \partial_{\lambda} \partial_{\kappa} u^{\kappa})
		&
		= 
		u^{\kappa} \partial_{\kappa} 
		\left\lbrace
			(\Temp_{;\Lnenth} - \Temp) 
			\GradEnt^{\alpha} 
			(\partial_{\lambda} u^{\lambda})
		\right\rbrace
			\\
		& \ \
			+ 
			(\Temp_{\Lnenth} - \Temp_{;\Lnenth;\Lnenth})
				\GradEnt^{\alpha}  
				(u^{\kappa} \partial_{\kappa} \Lnenth)
				(\partial_{\lambda} u^{\lambda})
				\notag
				\\
				& \ \
				+
				(\Temp - \Temp_{;\Lnenth}) 
				(u^{\kappa} \partial_{\kappa} \GradEnt^{\alpha})
				(\partial_{\lambda} u^{\lambda}).
				\notag
	\end{align}
	Using \eqref{E:SECONDSTEPTOPORDERENTHTERMKEYID} to substitute for the first
	product on RHS~\eqref{E:FIRSTSTEPTOPORDERENTHTERMKEYID}, we arrive at the
	desired identity \eqref{E:TOPORDERENTHTERMKEYID}.

\medskip

\noindent $\bullet$ \textbf{Proof of \eqref{E:TWODERIVATIVEENTHID3}}:
\eqref{E:TWODERIVATIVEENTHID3} is a straightforward consequence of equation \eqref{E:ENTROPYEVOLUTION}.

\medskip

\noindent $\bullet$ \textbf{Proof of \eqref{E:TWODERIVATIVEENTHID4}}:
We first differentiate equation \eqref{E:IDVELOCITYDERIVATIVEOFVELOCITYCONTRACTEDWITHENTROPYGRADIENT}
with $(\upeta^{-1})^{\alpha \lambda} \partial_{\lambda}$ 
and then multiply the resulting identity by $(\Temp_{;\Lnenth} - \Temp)$
to obtain
\begin{align} \label{E:FIRSTSTEPTWODERIVATIVEENTHID4}
	(\Temp_{;\Lnenth} - \Temp)
	(\upeta^{-1})^{\alpha \lambda}
	(\GradEnt^{\kappa} \partial_{\kappa} \partial_{\lambda} \Lnenth)
	& = 
		(\Temp - \Temp_{;\Lnenth})
		(\upeta^{-1})^{\alpha \lambda}
		\GradEnt^{\beta} 
		(u^{\kappa} \partial_{\kappa} \partial_{\lambda} u_{\beta})
		\\
		& \ \
		+
		(\Temp - \Temp_{;\Lnenth})
		((\upeta^{-1})^{\alpha \lambda} \partial_{\lambda} \GradEnt^{\kappa}) (\partial_{\kappa} \Lnenth)
		\notag
			\\
	& \ \
		+
		(\Temp - \Temp_{;\Lnenth})
		((\upeta^{-1})^{\alpha \lambda} \partial_{\lambda} \GradEnt^{\beta}) (u^{\kappa} \partial_{\kappa} u_{\beta})
		\notag
		\\
		& \ \
		+
		(\Temp - \Temp_{;\Lnenth})
		\GradEnt^{\beta} ((\upeta^{-1})^{\alpha \lambda} \partial_{\lambda} u^{\kappa}) (\partial_{\kappa} u_{\beta})
		\notag \\
& \ \
	+
	(\Temp_{;\Lnenth} - \Temp)
	\TempoverEnth_{;\Lnenth} 
	((\upeta^{-1})^{\alpha \lambda} \partial_{\lambda} \Lnenth) \GradEnt_{\kappa} \GradEnt^{\kappa}
	\notag
	\\
	& \ \
	+
	(\Temp_{;\Lnenth} - \Temp)
	\TempoverEnth_{;\Ent}
	\GradEnt^{\alpha} \GradEnt_{\kappa} \GradEnt^{\kappa}
		\notag \\
& \ \
	+
	2 
	(\Temp_{;\Lnenth} - \Temp)
	\TempoverEnth 
	((\upeta^{-1})^{\alpha \lambda} \partial_{\lambda} \GradEnt^{\kappa}) \GradEnt_{\kappa}.
	\notag
\end{align}
Next, with the help of equation \eqref{E:ENTROPYEVOLUTION}, we rewrite the
first product on \linebreak RHS~\eqref{E:FIRSTSTEPTWODERIVATIVEENTHID4} as follows:
\begin{align} \label{E:SECONDSTEPTWODERIVATIVEENTHID4}
		&
		(\Temp - \Temp_{;\Lnenth})
		(\upeta^{-1})^{\alpha \lambda}
		\GradEnt^{\beta} 
		(u^{\kappa} \partial_{\kappa} \partial_{\lambda} u_{\beta})
			\\
		& 
		=
		 u^{\kappa} \partial_{\kappa} 
		\left\lbrace
			(\Temp - \Temp_{;\Lnenth})
			(\upeta^{-1})^{\alpha \lambda}
			\GradEnt^{\beta} (\partial_{\lambda} u_{\beta})
		\right\rbrace
			\notag \\
		& \ \
			+ 
			(\Temp_{;\Lnenth;\Lnenth} - \Temp_{\Lnenth}) 
				(u^{\kappa} \partial_{\kappa} \Lnenth)
				(\upeta^{-1})^{\alpha \lambda}
				\GradEnt^{\beta} (\partial_{\lambda} u_{\beta})
					\notag \\
		& \ \
				+ 
				(\Temp_{;\Lnenth} - \Temp) 
				(u^{\kappa} \partial_{\kappa} \GradEnt^{\beta})
				(\upeta^{-1})^{\alpha \lambda}
				(\partial_{\lambda} u_{\beta}).
				\notag
\end{align}
Next, we use equations
\eqref{E:VELOCITYEVOLUTION} 
and \eqref{E:TRANSFERDERIVATIVESFROMENTROPYGRADIENTTOVORTICITY}
to express the sum of the second and third products on RHS~\eqref{E:FIRSTSTEPTWODERIVATIVEENTHID4}
as follows:
\begin{align}  \label{E:THIRDSTEPTWODERIVATIVEENTHID4}
	&
	(\Temp - \Temp_{;\Lnenth})
	((\upeta^{-1})^{\alpha \lambda} \partial_{\lambda} \GradEnt^{\kappa}) (\partial_{\kappa} \Lnenth)
	+
	(\Temp - \Temp_{;\Lnenth})
	((\upeta^{-1})^{\alpha \lambda} \partial_{\lambda} \GradEnt^{\beta}) (u^{\kappa} \partial_{\kappa} u_{\beta})	
		\\
	& =
		(\Temp_{;\Lnenth} - \Temp)
		((\upeta^{-1})^{\alpha \lambda} \partial_{\lambda} \GradEnt^{\beta}) u_{\beta} (u^{\kappa} \partial_{\kappa} \Lnenth)
		+
		(\Temp - \Temp_{;\Lnenth})
		\TempoverEnth
		((\upeta^{-1})^{\alpha \lambda} \partial_{\lambda} \GradEnt^{\beta}) \GradEnt_{\beta}
		\notag
		\\
	& =
		(\Temp - \Temp_{;\Lnenth})
		\GradEnt^{\beta}
		((\upeta^{-1})^{\alpha \lambda} \partial_{\lambda} u_{\beta}) (u^{\kappa} \partial_{\kappa} \Lnenth)
		+
		(\Temp - \Temp_{;\Lnenth})
		\TempoverEnth
		((\upeta^{-1})^{\alpha \lambda} \partial_{\lambda} \GradEnt^{\beta}) \GradEnt_{\beta}.
		\notag
\end{align}
Using \eqref{E:SECONDSTEPTWODERIVATIVEENTHID4}
to substitute for the first product on RHS~\eqref{E:FIRSTSTEPTWODERIVATIVEENTHID4},
and using \eqref{E:THIRDSTEPTWODERIVATIVEENTHID4}
to substitute for the second and third products on RHS~\eqref{E:FIRSTSTEPTWODERIVATIVEENTHID4},
we arrive at the desired identity \eqref{E:TWODERIVATIVEENTHID4}.

\medskip

\noindent $\bullet$ \textbf{Proof of \eqref{E:Q2ID}}:
We simply use \eqref{E:ENTSYMMETRYOFMIXEDPARTIALS} to express 
the second product on LHS~\eqref{E:Q2ID}
as follows: 
\begin{align}
(\Temp - \Temp_{;\Lnenth}) 
(\upeta^{-1})^{\kappa \lambda}
(\partial_{\kappa} \Lnenth) (\partial_{\lambda}\GradEnt^{\alpha})
= 
(\Temp - \Temp_{;\Lnenth})
(\upeta^{-1})^{\alpha \kappa}
(\partial_{\lambda} \Lnenth)
(\partial_{\kappa} \GradEnt^{\lambda}).
\notag
\end{align}

\medskip

\noindent $\bullet$ \textbf{Proof of \eqref{E:Q4ID}}:
We use equation \eqref{E:ENTHALPYEVOLUTION}
to substitute for the last factor
$\partial_{\lambda} u^{\lambda}$ on LHS~\eqref{E:Q4ID}
and then appeal to equation \eqref{E:INVERSEACOUSTICALMETRIC}.

\medskip

\noindent $\bullet$ \textbf{Proof of \eqref{E:Q5ID}}:
We first use 
\eqref{E:IDVELOCITYDERIVATIVEOFVELOCITYCONTRACTEDWITHENTROPYGRADIENT}
to express the first product on LHS~\eqref{E:Q5ID}
as follows:
\begin{align} \label{E:Q5IDFIRSTSTEP}
	&
	(\Temp_{;\Lnenth;\Lnenth} - \Temp_{\Lnenth})
	((\upeta^{-1})^{\alpha \kappa}\partial_{\kappa} \Lnenth) 
	(\GradEnt^{\lambda} \partial_{\lambda} \Lnenth)
	\\
	&	= 
	(\Temp_{\Lnenth} - \Temp_{;\Lnenth;\Lnenth})
	((\upeta^{-1})^{\alpha \kappa}\partial_{\kappa} \Lnenth) 
	(u^{\lambda} \partial_{\lambda} u_{\beta}) \GradEnt^{\beta}
	\notag
		\\
& \ \
	+
	(\Temp_{;\Lnenth;\Lnenth} - \Temp_{\Lnenth})
	\TempoverEnth
	((\upeta^{-1})^{\alpha \kappa}\partial_{\kappa} \Lnenth) 
	\GradEnt^{\lambda} \GradEnt_{\lambda}.
	\notag
\end{align}
Using \eqref{E:Q5IDFIRSTSTEP}
to substitute for the first product on LHS~\eqref{E:Q5ID},
we arrive at the desired identity.

\medskip

\noindent $\bullet$ \textbf{Proof of \eqref{E:Q6ID}}:
To prove \eqref{E:Q6ID},
we first use equation \eqref{E:ENTHALPYEVOLUTION}
to express the last product on
LHS~\eqref{E:Q6ID} as follows:
\begin{align} \label{E:Q6IDFIRSTSTEP}
	(\Temp - \Temp_{;\Lnenth}) 
	\GradEnt^{\alpha} 
	(\partial_{\kappa} u^{\kappa})
	(\partial_{\lambda} u^{\lambda})
& =
	\speed^{-4}
	(\Temp - \Temp_{;\Lnenth}) 
	\GradEnt^{\alpha} 
	(u^{\kappa} \partial_{\kappa} \Lnenth)
	(u^{\lambda} \partial_{\lambda} \Lnenth).
\end{align}
Using \eqref{E:Q6IDFIRSTSTEP} to substitute for 
the last product on LHS~\eqref{E:Q6ID}
and appealing to equation \eqref{E:INVERSEACOUSTICALMETRIC},
we arrive at the desired identity.

\medskip

\noindent $\bullet$ \textbf{Proof of \eqref{E:Q7ID}}:
We first use \eqref{E:VELOCITYEVOLUTION} to substitute for the 
factor $u^{\kappa} \partial_{\kappa} u^{\lambda}$ in the last product on LHS~\eqref{E:Q7ID},
thereby obtaining the following identity:
\begin{align} \label{E:FIRSTSTEPQ7ID}
(\Temp_{;\Lnenth} - \Temp) 
\GradEnt^{\alpha} 
(u^{\kappa} \partial_{\kappa} u^{\lambda}) (\partial_{\lambda} \Lnenth)
& =
	(\Temp - \Temp_{;\Lnenth}) 
	\GradEnt^{\alpha} 
	(\upeta^{-1})^{\kappa \lambda}
	(\partial_{\kappa} \Lnenth)
	(\partial_{\lambda} \Lnenth)
	\\
	& \ \
	+
	(\Temp - \Temp_{;\Lnenth}) 
	\GradEnt^{\alpha}
	(u^{\kappa} \partial_{\kappa} \Lnenth)
	(u^{\lambda} \partial_{\lambda} \Lnenth)
	\notag
		\\
& \ \
	+
	(\Temp_{;\Lnenth} - \Temp) 
	\TempoverEnth 
	\GradEnt^{\alpha} 
	(\GradEnt^{\kappa} \partial_{\kappa} \Lnenth).
	\notag
\end{align}
Using \eqref{E:FIRSTSTEPQ7ID} to substitute for the last product on 
LHS~\eqref{E:Q7ID}, we arrive at the desired identity.

\medskip

\noindent $\bullet$ \textbf{Proof of \eqref{E:Q9ID}}:
We first use 
\eqref{E:ENTSYMMETRYOFMIXEDPARTIALS},
\eqref{E:TRANSFERDERIVATIVESFROMENTROPYGRADIENTTOVORTICITY},
and the first equality in \eqref{E:ANOTHERENTROPYGRADIENTEVOLUTION}
to express the last product on LHS~\eqref{E:Q9ID} as follows:
\begin{align} \label{E:FIRSTSTEPQ9ID}
	(\Temp - \Temp_{;\Lnenth}) 
	((\upeta^{-1})^{\kappa \lambda} \partial_{\kappa} u^{\alpha})
	\GradEnt^{\beta} 
	(\partial_{\lambda} u_{\beta})
	& 
	=
	(\Temp_{;\Lnenth} - \Temp) 
	(\partial_{\kappa} u^{\alpha})
	(u^{\beta} \partial_{\beta} \GradEnt^{\kappa})
		\\
	& = 
	(\Temp - \Temp_{;\Lnenth}) 
	(\partial_{\kappa} u^{\alpha})
	(\GradEnt^{\lambda} \partial_{\lambda} u^{\kappa})
	\notag
	\\
	& \ \
	+
	\frac{1}{\Enth} 
	(\Temp - \Temp_{;\Lnenth}) 
	\upepsilon^{\kappa \beta \gamma \delta} 
	(\partial_{\kappa} u^{\alpha})
	\GradEnt_{\beta} u_{\gamma} \vort_{\delta}
		\notag \\
 & \ \
	+
	(\Temp - \Temp_{;\Lnenth}) 
	(u^{\kappa} \partial_{\kappa} u^{\alpha})
	(\GradEnt^{\lambda} \partial_{\lambda} \Lnenth)
	\notag
	\\
	& \ \
	+ 
	\TempoverEnth
	(\Temp_{;\Lnenth} - \Temp) 
	(u^{\kappa} \partial_{\kappa} u^{\alpha})
	\GradEnt^{\lambda} \GradEnt_{\lambda}.
	\notag
\end{align}
Using \eqref{E:FIRSTSTEPQ9ID} to substitute for the last product on LHS~\eqref{E:Q9ID},
we arrive at the desired identity.

\medskip

\noindent $\bullet$ \textbf{Proof of \eqref{E:Q11ID}}:
We use
\eqref{E:ENTSYMMETRYOFMIXEDPARTIALS}
and
\eqref{E:TRANSFERDERIVATIVESFROMENTROPYGRADIENTTOVORTICITY}
to express the first product on LHS~\eqref{E:Q11ID} as follows:
\begin{align} \label{E:FIRSTSTEPQ11ID}
	(\Temp - \Temp_{;\Lnenth}) 
	(u^{\kappa} \partial_{\kappa} \GradEnt^{\alpha})
	(\partial_{\lambda} u^{\lambda})
	& =
	(\Temp - \Temp_{;\Lnenth}) 
	(u^{\kappa} (\upeta^{-1})^{\alpha \beta} \partial_{\beta} \GradEnt_{\kappa})
	(\partial_{\lambda} u^{\lambda})
		\\
	& =
	(\Temp_{;\Lnenth} - \Temp) 
	\GradEnt^{\beta}
	((\upeta^{-1})^{\alpha \kappa} \partial_{\kappa} u_{\beta})
	(\partial_{\lambda} u^{\lambda}).
	\notag
\end{align}
Using \eqref{E:FIRSTSTEPQ11ID} to substitute for 
the first product on LHS~\eqref{E:Q11ID},
we conclude the desired identity.

\medskip

\noindent $\bullet$ \textbf{Proof of \eqref{E:Q12ID}}:
To prove \eqref{E:Q12ID},
we first note the following identity,
which we derive below:
\begin{align} \label{E:FIRSTSTEPQ12ID}
		&
		\speed^{-2}
		(\Temp - \Temp_{;\Lnenth}) 
		(\GradEnt^{\kappa} \partial_{\kappa} \Lnenth)
		((\upeta^{-1})^{\alpha \lambda} \partial_{\lambda} \Lnenth)
		\\
		& \ \
		+
		(\Temp_{;\Lnenth} - \Temp)
		(u^{\kappa} \partial_{\kappa} \GradEnt^{\beta})
		((\upeta^{-1})^{\alpha \lambda} \partial_{\lambda} u_{\beta})
		\notag
			\\
	& = 
		(\Temp - \Temp_{;\Lnenth})
		\GradEnt^{\kappa}
		(\partial_{\beta} u_{\kappa})
		((\upeta^{-1})^{\alpha \lambda} \partial_{\lambda} u^{\beta})
			\notag \\
	& \ \
		+ 
		\speed^{-2} (\Temp_{;\Lnenth} - \Temp)  
		(u^{\lambda} \partial_{\lambda} u_{\beta}) \GradEnt^{\beta} ((\upeta^{-1})^{\alpha \kappa} \partial_{\kappa}  \Lnenth)
		\notag
		\\
		& \ \
		+ 
		\speed^{-2} (\Temp - \Temp_{;\Lnenth}) 
		((\upeta^{-1})^{\alpha \kappa} \partial_{\kappa} u_{\beta}) \GradEnt^{\beta} (u^{\lambda} \partial_{\lambda} \Lnenth)
		\notag \\
	&  \ \
		+
		(\Temp - \Temp_{;\Lnenth}) 
		((\upeta^{-1})^{\alpha \kappa} \partial_{\kappa} u_{\beta}) \GradEnt^{\beta} (\partial_{\lambda} u^{\lambda})
		+
		\TempoverEnth \speed^{-2} (\Temp - \Temp_{;\Lnenth}) \GradEnt^{\beta} \GradEnt_{\beta} 
		((\upeta^{-1})^{\alpha \kappa} \partial_{\kappa} \Lnenth).
		\notag
\end{align}
Using \eqref{E:FIRSTSTEPQ12ID}
to substitute for the sum of the second and third products on LHS~\eqref{E:Q12ID},
we conclude the desired identity \eqref{E:Q12ID}.

It remains for us to prove \eqref{E:FIRSTSTEPQ12ID}.
To proceed, we first use 
\eqref{E:ENTSYMMETRYOFMIXEDPARTIALS}
and
\eqref{E:TRANSFERDERIVATIVESFROMENTROPYGRADIENTTOVORTICITY}
to express the second product on LHS~\eqref{E:FIRSTSTEPQ12ID} as follows:
\begin{align} \label{E:SECONDSTEPQ12ID}
&
(\Temp_{;\Lnenth} - \Temp) (u^{\kappa} \partial_{\kappa} \GradEnt^{\beta}) 
((\upeta^{-1})^{\alpha \lambda} \partial_{\lambda} u_{\beta})
	\\
	& 
	= 
	(\Temp_{;\Lnenth} - \Temp) (u^{\kappa} \partial_{\beta} \GradEnt_{\kappa}) 
	((\upeta^{-1})^{\alpha \lambda} \partial_{\lambda} u^{\beta})
	\notag
		\\
& = 
	(\Temp - \Temp_{;\Lnenth}) (\GradEnt^{\beta} \partial_{\lambda} u_{\beta}) 
	((\upeta^{-1})^{\alpha \kappa} \partial_{\kappa} u^{\lambda})
		\notag \\
	& 
	= (\Temp - \Temp_{;\Lnenth}) ((\upeta^{-1})^{\alpha \kappa} \partial_{\kappa} u_{\beta}) 
	\GradEnt^{\beta} (\partial_{\lambda} u^{\lambda})
	 \notag \\
	&  \ \
		+ 
		(\Temp_{;\Lnenth} - \Temp) ((\upeta^{-1})^{\alpha \kappa} \partial_{\kappa} u_{\beta}) \GradEnt^{\beta} 
		(\partial_{\lambda} u^{\lambda})
			\notag \\
	& \ \
		+
		(\Temp - \Temp_{;\Lnenth}) (\GradEnt^{\beta} \partial_{\lambda} u_{\beta}) 
		((\upeta^{-1})^{\alpha \kappa} \partial_{\kappa} u^{\lambda}),
		\notag
\end{align}
where to obtain the last equality, we have added and subtracted
\[
(\Temp_{;\Lnenth} - \Temp) ((\upeta^{-1})^{\alpha \kappa} \partial_{\kappa} u_{\beta}) 
\GradEnt^{\beta} (\partial_{\lambda} u^{\lambda}).
\]
Next, we use equation \eqref{E:ENTHALPYEVOLUTION}
to substitute for the factor $\partial_{\lambda} u^{\lambda}$ in the first product on RHS~\eqref{E:SECONDSTEPQ12ID},
which allows us to express the product as follows:
\begin{align} \label{E:THIRDSTEPQ12ID}
	&
	(\Temp - \Temp_{;\Lnenth}) ((\upeta^{-1})^{\alpha \kappa} \partial_{\kappa} u_{\beta}) \GradEnt^{\beta} 
	(\partial_{\lambda} u^{\lambda}) 
	\\
	& 
	= 
	\speed^{-2} (\Temp_{;\Lnenth} - \Temp) ((\upeta^{-1})^{\alpha \kappa} \partial_{\kappa} u_{\beta}) \GradEnt^{\beta} 
	(u^{\lambda} \partial_{\lambda} \Lnenth)
	\notag
		\\
	& 
	= 
	\speed^{-2} (\Temp_{;\Lnenth} - \Temp)
		(u^{\lambda} \partial_{\lambda} u_{\beta}) \GradEnt^{\beta} ((\upeta^{-1})^{\alpha \kappa} \partial_{\kappa} \Lnenth)  
	 \notag	\\
	& \ \
		+ 
		\speed^{-2} (\Temp - \Temp_{;\Lnenth})
		(u^{\lambda} \partial_{\lambda} u_{\beta}) \GradEnt^{\beta} ((\upeta^{-1})^{\alpha \kappa} \partial_{\kappa} \Lnenth) 
			\notag \\
	& \ \
		+ 
		\speed^{-2} 
		(\Temp_{;\Lnenth} - \Temp) 
		((\upeta^{-1})^{\alpha \kappa} \partial_{\kappa} u_{\beta}) \GradEnt^{\beta} (u^{\lambda} \partial_{\lambda} \Lnenth),
		\notag 
\end{align}
where to obtain the last equality, we have added and subtracted
\[
\speed^{-2} (\Temp - \Temp_{;\Lnenth}) 
(u^{\lambda} \partial_{\lambda} u_{\beta}) \GradEnt^{\beta} ((\upeta^{-1})^{\alpha \kappa} \partial_{\kappa} \Lnenth).
\]
Next, we use equation \eqref{E:IDVELOCITYDERIVATIVEOFVELOCITYCONTRACTEDWITHENTROPYGRADIENT}
to express the first product on RHS~\eqref{E:THIRDSTEPQ12ID} as follows:
\begin{align}	 \label{E:SIXTHSTEPQ12ID}
	&
	\speed^{-2} (\Temp_{;\Lnenth} - \Temp)
	(u^{\lambda} \partial_{\lambda} u_{\beta}) \GradEnt^{\beta} ((\upeta^{-1})^{\alpha \kappa} \partial_{\kappa} \Lnenth)
	\\
	& = 
	\speed^{-2} (\Temp - \Temp_{;\Lnenth})
	(\GradEnt^{\kappa} \partial_{\kappa} \Lnenth) ((\upeta^{-1})^{\alpha \lambda} \partial_{\lambda} \Lnenth)
	\notag
		\\
& \ \
	+
	\TempoverEnth
	\speed^{-2} (\Temp_{;\Lnenth} - \Temp)
	\GradEnt_{\kappa} \GradEnt^{\kappa} ((\upeta^{-1})^{\alpha \lambda} \partial_{\lambda} \Lnenth).
	\notag
\end{align}
Combining
\eqref{E:SECONDSTEPQ12ID}-\eqref{E:SIXTHSTEPQ12ID}, we find that
\begin{align} \label{E:SEVENTHSTEPQ12ID}
		&
		\speed^{-2}
		(\Temp - \Temp_{;\Lnenth}) 
		(\GradEnt^{\kappa} \partial_{\kappa} \Lnenth)
		((\upeta^{-1})^{\alpha \lambda} \partial_{\lambda} \Lnenth)
			\\
	& = 
		(\Temp_{;\Lnenth} - \Temp)
		(u^{\kappa} \partial_{\kappa} \GradEnt^{\beta})
		((\upeta^{-1})^{\alpha \lambda} \partial_{\lambda} u_{\beta})
			\notag 
				\\
	& \ \
		+
		(\Temp - \Temp_{;\Lnenth}) ((\upeta^{-1})^{\alpha \kappa} \partial_{\kappa} u_{\beta}) \GradEnt^{\beta} 
		(\partial_{\lambda} u^{\lambda}) 
		+
		(\Temp_{;\Lnenth} - \Temp) (\GradEnt^{\beta} \partial_{\lambda} u_{\beta}) ((\upeta^{-1})^{\alpha \kappa} \partial_{\kappa} u^{\lambda})
		\notag \\
	& \ \
		+
		\speed^{-2} (\Temp_{;\Lnenth} - \Temp)
		(u^{\lambda} \partial_{\lambda} u_{\beta}) \GradEnt^{\beta} ((\upeta^{-1})^{\alpha \kappa} \partial_{\kappa} \Lnenth) 
		\notag
		\\
		& \ \
		+ 
		\speed^{-2} (\Temp - \Temp_{;\Lnenth})
		((\upeta^{-1})^{\alpha \kappa} \partial_{\kappa} u_{\beta}) \GradEnt^{\beta} (u^{\lambda} \partial_{\lambda} \Lnenth)
		\notag \\
& \ \
	+
	\TempoverEnth
	\speed^{-2} (\Temp - \Temp_{;\Lnenth})
	\GradEnt_{\kappa} \GradEnt^{\kappa} ((\upeta^{-1})^{\alpha \lambda} \partial_{\lambda} \Lnenth).
	\notag
\end{align}
Using \eqref{E:SEVENTHSTEPQ12ID}
to substitute for the first product on LHS~\eqref{E:FIRSTSTEPQ12ID},
we deduce
\begin{align} \label{E:EIGHTSTEPQ12ID}
		&
		\speed^{-2}
		(\Temp - \Temp_{;\Lnenth}) 
		(\GradEnt^{\kappa} \partial_{\kappa} \Lnenth)
		((\upeta^{-1})^{\alpha \lambda} \partial_{\lambda} \Lnenth)
		\\
		& \ \
		+
		(\Temp_{;\Lnenth} - \Temp)
		(u^{\kappa} \partial_{\kappa} \GradEnt^{\beta})
		((\upeta^{-1})^{\alpha \lambda} \partial_{\lambda} u_{\beta})
		\notag
			\\
	& = 
		2 (\Temp_{;\Lnenth} - \Temp)
		(u^{\kappa} \partial_{\kappa} \GradEnt^{\beta})
		((\upeta^{-1})^{\alpha \lambda} \partial_{\lambda} u_{\beta})
			\notag 
				\\
	& \ \
		+
		(\Temp - \Temp_{;\Lnenth}) ((\upeta^{-1})^{\alpha \kappa} \partial_{\kappa} u_{\beta}) \GradEnt^{\beta} (\partial_{\lambda} u^{\lambda}) 
		+
		(\Temp_{;\Lnenth} - \Temp) (\GradEnt^{\beta} \partial_{\lambda} u_{\beta}) (\upeta^{-1})^{\alpha \kappa} (\partial_{\kappa} u^{\lambda})
		\notag \\
	& \ \
		+ 
		\speed^{-2} ( \Temp_{;\Lnenth} - \Temp)
		(u^{\lambda} \partial_{\lambda} u_{\beta}) \GradEnt^{\beta} ((\upeta^{-1})^{\alpha \kappa} \partial_{\kappa} \Lnenth) 
		\notag
		\\
		& \ \
		+ 
		\speed^{-2} (\Temp - \Temp_{;\Lnenth})
		((\upeta^{-1})^{\alpha \kappa} \partial_{\kappa} u_{\beta}) \GradEnt^{\beta} (u^{\lambda} \partial_{\lambda} \Lnenth)
		\notag \\
& \ \
	+
	\TempoverEnth
	\speed^{-2} (\Temp - \Temp_{;\Lnenth})
	\GradEnt_{\kappa} \GradEnt^{\kappa} ((\upeta^{-1})^{\alpha \lambda} \partial_{\lambda} \Lnenth).
	\notag
\end{align}
Next, we use 
\eqref{E:ENTSYMMETRYOFMIXEDPARTIALS}
and
\eqref{E:TRANSFERDERIVATIVESFROMENTROPYGRADIENTTOVORTICITY}
to express the first product on RHS~\eqref{E:EIGHTSTEPQ12ID} as
\begin{align} \label{E:NINTHSTEPQ12ID}
		&
		2 (\Temp_{;\Lnenth} - \Temp)
		(u^{\kappa} \partial_{\kappa} \GradEnt^{\beta})
		((\upeta^{-1})^{\alpha \lambda} \partial_{\lambda} u_{\beta})
		\\
		& =
	2 (\Temp_{;\Lnenth} - \Temp)
		(u^{\kappa} \partial_{\beta} \GradEnt_{\kappa})
		((\upeta^{-1})^{\alpha \lambda} \partial_{\lambda} u^{\beta})
		\notag
			\\
& =
	2 (\Temp - \Temp_{;\Lnenth})
		(\GradEnt^{\kappa} \partial_{\beta} u_{\kappa})
		((\upeta^{-1})^{\alpha \lambda} \partial_{\lambda} u^{\beta}).
		\notag
\end{align}
Using \eqref{E:NINTHSTEPQ12ID} to substitute for the first product on RHS~\eqref{E:EIGHTSTEPQ12ID},
we arrive at the desired identity \eqref{E:FIRSTSTEPQ12ID}. This completes the proof of \eqref{E:Q12ID}.

\medskip

\noindent $\bullet$ \textbf{Proof of \eqref{E:Q13ID}}:
We use the identity \eqref{E:IDVELOCITYDERIVATIVEOFVELOCITYCONTRACTEDWITHENTROPYGRADIENT}
to substitute for the factor $\GradEnt^{\lambda} \partial_{\lambda} \Lnenth$ on LHS~\eqref{E:Q13ID},
thus obtaining
\begin{align} \label{E:FIRSTSTEPQ13ID}
	(\Temp - \Temp_{;\Lnenth}) 
	u^{\alpha} 
	(\partial_{\kappa} u^{\kappa})
	(\GradEnt^{\lambda} \partial_{\lambda} \Lnenth)
	& =
	(\Temp_{;\Lnenth} - \Temp) 
	u^{\alpha} 
	(\partial_{\kappa} u^{\kappa})
	\GradEnt^{\beta}
	(u^{\lambda} \partial_{\lambda} u_{\beta}) 
	\\
	& \ \
	+
	(\Temp - \Temp_{;\Lnenth}) 
	\TempoverEnth 
	u^{\alpha} 
	(\partial_{\kappa} u^{\kappa})
	\GradEnt^{\lambda} \GradEnt_{\lambda}.
	\notag
\end{align}
Using \eqref{E:FIRSTSTEPQ13ID} to substitute for
the first product on LHS~\eqref{E:Q13ID}, we arrive at the desired identity.

\medskip

\noindent $\bullet$ \textbf{Proof of \eqref{E:Q14ID}}:
We first use equation \eqref{E:ENTSYMMETRYOFMIXEDPARTIALS} to
express the last product on LHS~\eqref{E:Q14ID} as follows:
\begin{align} \label{E:FIRSTQ14ID}
		(\Temp - \Temp_{;\Lnenth}) &
		u^{\alpha}
		(\upeta^{-1})^{\kappa \lambda}
		(\partial_{\kappa} \Lnenth) u_{\beta} (\partial_{\lambda} \GradEnt^{\beta})
			\\
		& =
		(\Temp - \Temp_{;\Lnenth}) 
		u^{\alpha}
		(\partial_{\kappa} \Lnenth) (u^{\beta} \partial_{\beta} \GradEnt^{\kappa})
			\notag \\
		& =
		(\Temp - \Temp_{;\Lnenth}) 
		u^{\alpha}
		(u^{\kappa} \partial_{\kappa} \Lnenth) (\partial_{\lambda} \GradEnt^{\lambda})
			\notag \\
		& \ \ 
		+
		(\Temp_{;\Lnenth} - \Temp) 
		u^{\alpha}
		u^{\lambda} 
		\left\lbrace
			(\partial_{\lambda} \Lnenth) (\partial_{\kappa} \GradEnt^{\kappa})
			-
			(\partial_{\kappa} \Lnenth) (\partial_{\lambda} \GradEnt^{\kappa})
		\right\rbrace,
		\notag
\end{align}
where to obtain the second equality in \eqref{E:FIRSTQ14ID},
we added and subtracted
$
(\Temp_{;\Lnenth} - \Temp) 
u^{\alpha}
(u^{\kappa} \partial_{\kappa} \Lnenth) (\partial_{\lambda} \GradEnt^{\lambda})
$.
Next, we solve for $\partial_{\lambda} \GradEnt^{\lambda}$ in terms
of the remaining terms in definition \eqref{E:MODIFIEDDIVERGENCEOFENTROPY}
and then use the resulting identity to algebraically substitute for the factor
$\partial_{\lambda} \GradEnt^{\lambda}$ in the first product on
RHS~\eqref{E:FIRSTQ14ID}, 
which yields the identity
\begin{align} \label{E:SECONDQ14ID}
(\Temp - \Temp_{;\Lnenth}) 
u^{\alpha}
(u^{\kappa} \partial_{\kappa} \Lnenth) (\partial_{\lambda} \GradEnt^{\lambda})
& =
	n
	(\Temp - \Temp_{;\Lnenth}) 
	u^{\alpha}
	(u^{\kappa} \partial_{\kappa} \Lnenth) \mathcal{D}
	\\
	& \ \
	+
	(\Temp_{;\Lnenth} - \Temp) 
	u^{\alpha}
	(u^{\kappa} \partial_{\kappa} \Lnenth) (\GradEnt^{\lambda} \partial_{\lambda} \Lnenth)
	\notag
		\\
&  \ \
	+
	\speed^{-2}
	(\Temp - \Temp_{;\Lnenth}) 
	u^{\alpha}
	(u^{\kappa} \partial_{\kappa} \Lnenth) (\GradEnt^{\lambda} \partial_{\lambda} \Lnenth).
	\notag
\end{align}
Next, we use equation \eqref{E:ENTHALPYEVOLUTION}
to substitute for the factor $u^{\kappa} \partial_{\kappa} \Lnenth$
in the last product on RHS~\eqref{E:SECONDQ14ID}, which yields the identity
\begin{align} \label{E:THIRDQ14ID}
(\Temp - \Temp_{;\Lnenth}) 
u^{\alpha}
(u^{\kappa} \partial_{\kappa} \Lnenth) (\partial_{\lambda} \GradEnt^{\lambda})
& =
	n
	(\Temp - \Temp_{;\Lnenth}) 
	u^{\alpha}
	(u^{\kappa} \partial_{\kappa} \Lnenth) \mathcal{D}
	\\
	& \ \
	+
	(\Temp_{;\Lnenth} - \Temp) 
	u^{\alpha}
	(u^{\kappa} \partial_{\kappa} \Lnenth) (\GradEnt^{\lambda} \partial_{\lambda} \Lnenth)
	\notag
		\\
& \ \
	+
	(\Temp_{;\Lnenth} - \Temp) 
	u^{\alpha}
	(\partial_{\kappa} u^{\kappa}) (\GradEnt^{\lambda} \partial_{\lambda} \Lnenth).
	\notag
\end{align}
Substituting RHS~\eqref{E:THIRDQ14ID} for the first product
on RHS~\eqref{E:FIRSTQ14ID} and then using
the resulting identity to substitute for the
last product on LHS~\eqref{E:Q14ID},
we arrive at the desired identity \eqref{E:Q14ID}.

\medskip

\noindent $\bullet$ \textbf{Proof of \eqref{E:Q15ID}}:
We first use equation \eqref{E:VELOCITYEVOLUTION} 
to substitute for the factor of
$u^{\kappa} \partial_{\kappa} u_{\beta}$
in the second product on LHS~\eqref{E:Q15ID},
which yields the identity
\begin{align} \label{E:FIRSTSTEPQ15ID}
		&
		(\partial_{\kappa} u^{\kappa})
		\upepsilon^{\alpha \beta \gamma \delta} 
		(\partial_{\beta} \Lnenth) 
		u_{\gamma}
		\vort_{\delta} 
		+
		\speed^{-2}
		\upepsilon^{\alpha \beta \gamma \delta} 
		(u^{\kappa} \partial_{\kappa} u_{\beta})
		(\partial_{\gamma} \Lnenth) \vort_{\delta}
			\\
		& = 
		(\partial_{\kappa} u^{\kappa})
		\upepsilon^{\alpha \beta \gamma \delta} 
		(\partial_{\beta} \Lnenth) 
		u_{\gamma}
		\vort_{\delta} 
		-
		\speed^{-2}
		(u^{\kappa} \partial_{\kappa} \Lnenth)
		\upepsilon^{\alpha \beta \gamma \delta} 
		u_{\beta}
		(\partial_{\gamma} \Lnenth) 
		\vort_{\delta}
		\notag
			\\
	& \ \
		+
		\speed^{-2}
		\TempoverEnth
		\upepsilon^{\alpha \beta \gamma \delta} 
		\GradEnt_{\beta}
		(\partial_{\gamma} \Lnenth) \vort_{\delta}.
		\notag
\end{align}
Using equation \eqref{E:ENTHALPYEVOLUTION} to substitute for
the factor $\partial_{\kappa} u^{\kappa}$ in the first product on RHS~\eqref{E:FIRSTSTEPQ15ID}
and taking into account the antisymmetry of $\epsilon$,
we see that the first and second products on
RHS~\eqref{E:FIRSTSTEPQ15ID} cancel, 
which yields the desired identity \eqref{E:Q15ID}.

\medskip

\noindent $\bullet$ \textbf{Proof of \eqref{E:Q16ID}}:
We simply use equation \eqref{E:VELOCITYEVOLUTION} 
to substitute for the factor
$u^{\kappa} \partial_{\kappa} u_{\sigma}$
on LHS~\eqref{E:Q16ID}.

\medskip

\noindent $\bullet$ \textbf{Proof of \eqref{E:Q18ID}}:
We simply multiply equation \eqref{E:IDVELOCITYDERIVATIVEOFENTROPYGRADIENTCONTRACTEDWITHVELOCITY}
by 
$
(\Temp_{;\Lnenth} - \Temp)
	(\upeta^{-1})^{\alpha \kappa} \partial_{\kappa} \Lnenth 
$.

\medskip

\noindent $\bullet$ \textbf{Proof of \eqref{E:Q19ID}}:
We use equation \eqref{E:IDVELOCITYDERIVATIVEOFVELOCITYCONTRACTEDWITHENTROPYGRADIENT}
to substitute for the factor \linebreak
$
(u^{\kappa} \partial_{\kappa} u_{\sigma}) \GradEnt^{\sigma}
$
in the first product on LHS~\eqref{E:Q19ID}
and equation \eqref{E:ENTHALPYEVOLUTION}
to substitute for the factor
$
u^{\lambda} \partial_{\lambda} \Lnenth
$
in the second product on LHS~\eqref{E:Q19ID}.

\medskip

\noindent $\bullet$ \textbf{Proof of \eqref{E:Q20ID}}:
We simply use equation \eqref{E:ENTROPYGRADIENTEVOLUTION}
to substitute for the factor
$u^{\lambda} \partial_{\lambda} \GradEnt^{\alpha}$
in the second product on LHS~\eqref{E:Q20ID}.

\medskip

\noindent $\bullet$ \textbf{Proof of \eqref{E:Q21ID}}:
\eqref{E:Q21ID} follows from \eqref{E:DERIVATIVEOFVELOCITYCONTRACTEDWITHVELOCITYISZERO}.

\end{proof}

\subsection{The $\mbox{\upshape transport}$-$\mbox{\upshape div}$-$\mbox{\upshape curl}$ system}
Armed with Lemma~\ref{L:IDOFNULLSTRUCTUREFORDIVCURLTRANSPORTSYSTEM},
we now derive the main result of this section.

\begin{proposition}[The $\mbox{\upshape transport}$-$\mbox{\upshape div}$-$\mbox{\upshape curl}$ system for the vorticity]
	\label{P:EQUATIONSFORMODIFIEDVORTICITYVORTICITY}
Assume that $(\Lnenth,\Ent,u^{\alpha})$ is a $C^3$ solution 
to \eqref{E:ENTHALPYEVOLUTION}-\eqref{E:ENTROPYEVOLUTION} + \eqref{E:UISUNITLENGTH}.
Then the divergence of the vorticity vectorfield $\vort^{\alpha}$ defined in \eqref{E:VORTICITYDEF} verifies the following identity:
\begin{align} \label{E:DIVOFVORTICITY}
	\partial_{\alpha} \vort^{\alpha}
	& = - \vort^{\kappa} \partial_{\kappa} \Lnenth
		+
		2 \TempoverEnth \vort^{\kappa} \GradEnt_{\kappa}.
\end{align}

Moreover, the rectangular components $\mathcal{C}^{\alpha}$ of the modified vorticity of the vorticity,
which is defined in \eqref{E:MODIFIEDVORTICITYOFVORTICITY},
verify the following transport equations:
\begin{align}  \label{E:TRANSPORTFORMODIFIEDVORTICITYOFVORTICITY}
		u^{\kappa} \partial_{\kappa} \mathcal{C}^{\alpha}
		& = 
		\mathcal{C}^{\kappa} \partial_{\kappa} u^{\alpha}
		- 
		2 (\partial_{\kappa} u^{\kappa}) \mathcal{C}^{\alpha}
		+
		u^{\alpha} 
		(u^{\kappa} \partial_{\kappa} u_{\lambda}) 
		\mathcal{C}^{\lambda}
					\\
		& \ \
			-
			2 \upepsilon^{\alpha \beta \gamma \delta} 
			u_{\beta}
			(\partial_{\gamma} \vort^{\kappa}) (\partial_{\delta} u_{\kappa})
		\notag \\
		& \ \
			+
		(\Temp_{;\Lnenth} - \Temp) 
		\left\lbrace
			(\upeta^{-1})^{\alpha \kappa}
			+
			2 u^{\alpha} u^{\kappa}
		\right\rbrace
			\left\lbrace
				(\partial_{\kappa} \Lnenth) (\partial_{\lambda} \GradEnt^{\lambda})
				-
				(\partial_{\lambda} \Lnenth) (\partial_{\kappa} \GradEnt^{\lambda})
			\right\rbrace
			\notag \\
		& \ \
		+
		n (\Temp - \Temp_{;\Lnenth}) u^{\alpha} (u^{\kappa} \partial_{\kappa} \Lnenth) \mathcal{D} 
			\notag
			\\
		& \ \
		+
		(\Temp - \Temp_{;\Lnenth}) 
		\TempoverEnth 
		\GradEnt^{\alpha} 
		\partial_{\kappa} \GradEnt^{\kappa}
		+
		(\Temp_{;\Lnenth} - \Temp) 
		\TempoverEnth 
		((\upeta^{-1})^{\alpha \lambda} \partial_{\lambda} \GradEnt^{\kappa}) 
		\GradEnt_{\kappa}
			\notag \\
	& \ \
		+
		\mathfrak{Q}_{(\mathcal{C}^{\alpha})}
		+
		\mathfrak{L}_{(\mathcal{C}^{\alpha})},
		\notag
	\end{align}
where $\mathfrak{Q}_{(\mathcal{C}^{\alpha})}$ is the linear combination of 
\textbf{null forms} defined by
\begin{align} \label{E:VORTICITYOFVORTICITYEASYNULLFORMS}
	\mathfrak{Q}_{(\mathcal{C}^{\alpha})}
	& := 
			-
			\speed^{-2}
			\upepsilon^{\kappa \beta \gamma \delta} 
			(\partial_{\kappa} u^{\alpha})
			u_{\beta}
			(\partial_{\gamma} \Lnenth) \vort_{\delta}
			\\
			& \ \
		+
		(\speed^{-2} + 2)
		\upepsilon^{\alpha \beta \gamma \delta} 
		u_{\beta}
		(\partial_{\gamma} \Lnenth) \vort^{\kappa} (\partial_{\delta} u_{\kappa})
		\notag
			\\
	& \ \
	+
	\speed^{-2}
	\upepsilon^{\alpha \beta \gamma \delta} 
	u_{\beta}
	\vort_{\delta}
	\left\lbrace
		(\partial_{\kappa} u^{\kappa}) (\partial_{\gamma} \Lnenth)
		- 
		(\partial_{\gamma} u^{\kappa}) (\partial_{\kappa} \Lnenth)
	\right\rbrace
	\notag
			\\
	& \ \
		+
	\left\lbrace	
		(\Temp_{;\Lnenth;\Lnenth} - \Temp_{\Lnenth}) 
		+
		\speed^{-2} 
		(\Temp - \Temp_{;\Lnenth})
	\right\rbrace
	(\upeta^{-1})^{\alpha \lambda}
	\GradEnt^{\beta}
	u^{\kappa} 
		\notag \\
	& 
	\ \ \ \ \ 
	\times
	\left\lbrace
		(\partial_{\kappa} \Lnenth)
		(\partial_{\lambda} u_{\beta})
		-
		(\partial_{\lambda} \Lnenth)
		(\partial_{\kappa} u_{\beta})
	\right\rbrace
	\notag
		\\
	& \ \
		+
		(\Temp_{;\Lnenth} - \Temp) 
		\GradEnt^{\kappa}
		u^{\lambda}
		\left\lbrace
			(\partial_{\kappa} u^{\alpha})
			(\partial_{\lambda} \Lnenth) 
			-
			(\partial_{\lambda} u^{\alpha})
			(\partial_{\kappa} \Lnenth)
		\right\rbrace
		\notag
		\\
	& \ \
		+
	(\Temp_{;\Lnenth} - \Temp) 
	\left\lbrace
		(\upeta^{-1})^{\alpha \kappa} 
		+
		u^{\alpha} u^{\kappa}
	\right\rbrace
	\GradEnt^{\beta}
		\notag \\
	& 
	\ \ \ \ \ 
	\times
	\left\lbrace
		(\partial_{\kappa} u_{\beta})
		(\partial_{\lambda} u^{\lambda})
		- 
		(\partial_{\lambda} u_{\beta}) 
		(\partial_{\kappa} u^{\lambda})
	\right\rbrace
	\notag
		\\
	& \ \
		+
		(\Temp_{;\Lnenth} - \Temp) 
		\GradEnt^{\alpha} 
		\left\lbrace
			(\partial_{\kappa} u^{\lambda}) (\partial_{\lambda} u^{\kappa})
				-
			(\partial_{\kappa} u^{\kappa})
			(\partial_{\lambda} u^{\lambda})
		\right\rbrace
				\notag \\
	& \ \
		+
		(\Temp_{;\Lnenth} - \Temp)
			\GradEnt^{\kappa}
			\left\lbrace
				(\partial_{\kappa} u^{\alpha})
				(\partial_{\lambda} u^{\lambda})
				-	 
				 (\partial_{\lambda} u^{\alpha})
				 (\partial_{\kappa} u^{\lambda})
			\right\rbrace
			\notag \\
& \ \
	+
	\GradEnt^{\alpha}
		\left\lbrace
			\speed^{-2}
			(\Temp_{\Lnenth} - \Temp_{;\Lnenth;\Lnenth})
			+
			\speed^{-4} 
			(\Temp_{;\Lnenth} - \Temp)
		\right\rbrace
		(g^{-1})^{\kappa \lambda} (\partial_{\kappa} \Lnenth) (\partial_{\lambda} \Lnenth),
		\notag 
\end{align}
and $\mathfrak{L}_{(\mathcal{C}^{\alpha})}$, which is at most linear in the derivatives of the solution
variables, is defined by
\begin{align} \label{E:VORTICITYOFVORTICITYLINEARTERMS}
	\mathfrak{L}_{(\mathcal{C}^{\alpha})}
	& := 
		\frac{2 \TempoverEnth}{\Enth}
				\vort^{\kappa} \GradEnt_{\kappa}
				\vort^{\alpha}
		-
		\frac{2}{\Enth} \vort^{\alpha} (\vort^{\kappa} \partial_{\kappa} \Lnenth) 
			\\
	& \ \
		+
		2 \speed^{-3} \speed_{;\Ent} 
	(u^{\kappa} \partial_{\kappa} \Lnenth)
	\upepsilon^{\alpha \beta \gamma \delta} 
	u_{\beta}
	\GradEnt_{\gamma}
	\vort_{\delta}
	\notag
		\\
	& \ \
			-
			2 \TempoverEnth
			\upepsilon^{\alpha \beta \gamma \delta}
			u_{\beta}
			\GradEnt_{\gamma} 
			\vort^{\kappa} (\partial_{\delta} u_{\kappa})
			-
		\TempoverEnth
		(\partial_{\kappa} u^{\kappa})
		\upepsilon^{\alpha \beta \gamma \delta} 
		\GradEnt_{\beta}
		u_{\gamma}
		\vort_{\delta} 
			\notag
			\\
	& \ \
	+
	\frac{1}{\Enth} 
	(\Temp - \Temp_{;\Lnenth}) 
	\upepsilon^{\kappa \beta \gamma \delta} 
	(\partial_{\kappa} u^{\alpha})
	\GradEnt_{\beta} u_{\gamma} \vort_{\delta}
	+
		\speed^{-2}
		\TempoverEnth
		\upepsilon^{\alpha \beta \gamma \delta} 
		\GradEnt_{\beta}
		(\partial_{\gamma} \Lnenth) \vort_{\delta}
		\notag
		\\
		& \ \
	-
			\speed^{-2}
			\TempoverEnth
			u^{\alpha}
			\upepsilon^{\kappa \beta \gamma \delta} 
			\GradEnt_{\kappa}
			u_{\beta}
			(\partial_{\gamma} \Lnenth) \vort_{\delta}
		\notag
		\\
& \ \
	+ 
	\TempoverEnth
	(\Temp_{;\Lnenth} - \Temp) 
	\GradEnt_{\kappa} \GradEnt^{\kappa}
	(u^{\lambda} \partial_{\lambda} u^{\alpha})
	\notag
		\\
& \ \
	+
	\TempoverEnth
	(\Temp_{;\Lnenth} - \Temp) 
	u^{\alpha}
	\GradEnt_{\kappa} \GradEnt^{\kappa}
	(u^{\lambda} \partial_{\lambda} \Lnenth) 
	+
	(\Temp_{;\Lnenth;\Ent} - \Temp_{;\Ent})
	u^{\alpha} 
	\GradEnt_{\kappa} \GradEnt^{\kappa}
	(u^{\lambda} \partial_{\lambda} \Lnenth) 
		\notag
		\\
	& \ \
			+
			(\Temp_{;\Ent} - \Temp_{;\Lnenth;\Ent})
			\GradEnt^{\alpha} 
			(\GradEnt^{\kappa} \partial_{\kappa} \Lnenth) 
			+
		(\Temp - \Temp_{;\Lnenth}) 
		\TempoverEnth_{;\Lnenth} 
		\GradEnt^{\alpha}
		(\GradEnt^{\kappa} \partial_{\kappa} \Lnenth)
							\notag
				\\
	& \ \
		+
	\TempoverEnth
	(\Temp_{;\Lnenth;\Lnenth} - \Temp_{\Lnenth})
	\GradEnt_{\kappa} \GradEnt^{\kappa}
	((\upeta^{-1})^{\alpha \lambda}\partial_{\lambda} \Lnenth) 
		+
		(\Temp_{;\Lnenth;\Ent} - \Temp_{;\Ent})
		\GradEnt_{\kappa} \GradEnt^{\kappa}
		((\upeta^{-1})^{\alpha \lambda} \partial_{\lambda} \Lnenth) 
		\notag \\
& \ \
	+
	\TempoverEnth 
	\speed^{-2} 
	(\Temp - \Temp_{;\Lnenth}) \GradEnt_{\kappa} \GradEnt^{\kappa} ((\upeta^{-1})^{\alpha \lambda} \partial_{\lambda} \Lnenth)
	+
	(\Temp_{;\Lnenth} - \Temp) \TempoverEnth_{;\Lnenth} 
	\GradEnt_{\kappa} \GradEnt^{\kappa} ((\upeta^{-1})^{\alpha \lambda} \partial_{\lambda} \Lnenth).
		\notag
\end{align}
\end{proposition}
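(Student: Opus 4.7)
\medskip

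\noindent \textbf{Proof plan.} The proof splits into two parts. The divergence identity \eqref{E:DIVOFVORTICITY} is the easier piece: starting from the definition $\vort^{\alpha} = -\upepsilon^{\alpha \beta \gamma \delta} u_{\beta} \partial_{\gamma}(\Enth u_{\delta})$, I would take $\partial_{\alpha}$ and distribute, using the antisymmetry of $\upepsilon$ to kill the term with two derivatives acting on $\Enth u_{\delta}$. What remains is $-\upepsilon^{\alpha \beta \gamma \delta} (\partial_{\alpha} u_{\beta}) \partial_{\gamma}(\Enth u_{\delta})$, which I would expand via the Leibniz rule and then rewrite using \eqref{E:IDANTISYMMETRICPARTOFGRADIENTOFENTHALPHYTIMESVELOCITY} (or equivalently \eqref{E:PARITALALPHAUBETAMINUSPARTIALBETAUALPHA}) for the antisymmetric part of $\partial(\Enth u_{\flat})$. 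The $\upepsilon \cdot \upepsilon$ contractions collapse via \eqref{E:UISUNITLENGTH}, \eqref{E:VORTISORTHGONALTOU}, \eqref{E:VELOCITYANDENTGRADIENTAREMINKOWSKIPERP}, and \eqref{E:DERIVATIVEOFVELOCITYCONTRACTEDWITHVELOCITYISZERO}, and the coefficients arrange themselves into the stated RHS.

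The hard part is equation \eqref{E:TRANSPORTFORMODIFIEDVORTICITYOFVORTICITY}. My starting point is Proposition \ref{P:VORTICITYTRANSPORT}, specifically the evolution equation \eqref{E:VORTICITYOFVORTICITYFIRSTEVOLUTION} for the unmodified vorticity of vorticity $\uperpvort^{\alpha}(\vort)$. That equation contains several ``dangerous'' inhomogeneous terms which at face value involve two derivatives of $u$ or of $\Lnenth$, namely the terms on lines three through seven and the $(\Temp - \Temp_{;\Lnenth})$--weighted terms involving $\partial_{\kappa}\partial_{\lambda}\Lnenth$. The modification encoded in Definition \ref{D:MODIFIEDVARIABLES} is precisely engineered so that the correction terms in $\mathcal{C}^{\alpha}$, when differentiated along $u$, produce contributions that absorb exactly these bad terms up to null forms. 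Concretely, I would apply $u^{\kappa}\partial_{\kappa}$ to each correction term in the definition \eqref{E:MODIFIEDVORTICITYOFVORTICITY}: the $\speed^{-2}\upepsilon^{\alpha\beta\gamma\delta} u_{\beta}(\partial_{\gamma}\Lnenth)\vort_{\delta}$ piece matches identity \eqref{E:TERM1ID}, the $(\Temp-\Temp_{;\Lnenth})\GradEnt^{\alpha}(\partial_{\kappa}u^{\kappa})$ piece matches \eqref{E:TOPORDERENTHTERMKEYID}, the $u^{\alpha}$--parallel piece matches \eqref{E:TWODERIVATIVEENTHID3}, and the remaining $\GradEnt^{\kappa}((\upeta^{-1})^{\alpha\lambda}\partial_{\lambda}u_{\kappa})$ piece matches \eqref{E:TWODERIVATIVEENTHID4}. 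Adding these four identities to \eqref{E:VORTICITYOFVORTICITYFIRSTEVOLUTION} produces $u^{\kappa}\partial_{\kappa}\mathcal{C}^{\alpha}$ on the left, and on the right the second-order derivatives of $u$ and $\Lnenth$ are replaced by expressions that are only first-order in the solution, together with a large collection of derivative-quadratic terms.

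The remaining labor is organizational: one must verify that every derivative-quadratic term that is not manifestly a $g$-null form can be combined with others to form one. This is exactly what the bookkeeping identities \eqref{E:TERM2ID}--\eqref{E:TERM7ID} and \eqref{E:Q2ID}--\eqref{E:Q21ID} accomplish. My plan is to group the resulting terms by their tensorial type and coefficient, matching them against the list $\mathscr{Q}_2,\mathscr{Q}_4,\ldots,\mathscr{Q}_{21}$; each group collapses to either a standard $g$-null form (via the identity $\speed^{-2}u^{\kappa}u^{\lambda} + (\upeta^{-1})^{\kappa\lambda} = \speed^{-2}(g^{-1})^{\kappa\lambda}$ from \eqref{E:INVERSEACOUSTICALMETRIC}, or via direct antisymmetrization as in \eqref{E:Q2ID}) or, when the structure forces it, a genuinely linear remainder that gets absorbed into $\mathfrak{L}_{(\mathcal{C}^{\alpha})}$. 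Finally, the terms involving $\mathcal{C}^{\kappa}\partial_{\kappa} u^{\alpha}$, $(\partial_{\kappa}u^{\kappa})\mathcal{C}^{\alpha}$, and the $\mathcal{D}$--contribution on the RHS of \eqref{E:TRANSPORTFORMODIFIEDVORTICITYOFVORTICITY} arise when one trades bare factors of $\uperpvort^{\alpha}(\vort)$ and $\partial_{\lambda}\GradEnt^{\lambda}$ for $\mathcal{C}^{\alpha}$ and $\mathcal{D}$ using the definitions.

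\medskip

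\noindent \textbf{Main obstacle.} The conceptual content is almost entirely captured in Lemma \ref{L:IDOFNULLSTRUCTUREFORDIVCURLTRANSPORTSYSTEM}; what remains is a delicate accounting problem. The chief difficulty is tracking the precise numerical coefficients: many of the null-form cancellations succeed only because powers of $\speed^{-2}$, factors of $(\Temp - \Temp_{;\Lnenth})$, and the combinations $\Temp_{;\Lnenth;\Lnenth} - \Temp_{\Lnenth}$ appear with exactly the right signs. I expect no genuine obstruction, but careful bookkeeping—arguably best done by sorting the contributions by their dependence on $\GradEnt$, $\vort$, and $u^{\alpha}$ and verifying matching coefficients term by term—is essential. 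Once every piece lands in either $\mathfrak{Q}_{(\mathcal{C}^{\alpha})}$ or $\mathfrak{L}_{(\mathcal{C}^{\alpha})}$, the stated equation follows.
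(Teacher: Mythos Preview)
Your proposal is correct and follows essentially the same route as the paper: for \eqref{E:DIVOFVORTICITY} you take the divergence of the definition and collapse the $\upepsilon$-contractions using the identities you cite, and for \eqref{E:TRANSPORTFORMODIFIEDVORTICITYOFVORTICITY} you start from \eqref{E:VORTICITYOFVORTICITYFIRSTEVOLUTION}, absorb the second-derivative terms via \eqref{E:TERM1ID}--\eqref{E:TERM7ID} and \eqref{E:TOPORDERENTHTERMKEYID}--\eqref{E:TWODERIVATIVEENTHID4} (which produce exactly the $u^{\kappa}\partial_{\kappa}$ of the correction terms in $\mathcal{C}^{\alpha}$), trade $\uperpvort^{\alpha}(\vort)$ for $\mathcal{C}^{\alpha}$, and then sort the remaining derivative-quadratic terms into null forms via \eqref{E:Q2ID}--\eqref{E:Q21ID}. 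Your diagnosis that the conceptual work lives in Lemma~\ref{L:IDOFNULLSTRUCTUREFORDIVCURLTRANSPORTSYSTEM} and that what remains is careful coefficient-tracking is exactly right.
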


\begin{proof}
We split the proof into several pieces.

\noindent $\bullet$ \textbf{Proof of \eqref{E:DIVOFVORTICITY}}:
First, from definition \eqref{E:VORTICITYDEF}
and the antisymmetry of $\upepsilon^{\kappa \lambda \gamma \delta}$,
we deduce
\begin{align} \label{E:FIRSTIDDIVOFVORTICITY}
	\partial_{\kappa} \vort^{\kappa}
	& = 
	- 
	\upepsilon^{\kappa \lambda \gamma \delta} 
	(\partial_{\kappa} u_{\lambda}) 
	\partial_{\gamma} (\Enth u_{\delta}).
\end{align}
Next, using \eqref{E:CONTRACTIONVOLUMEFORMANDEXTERIORDERIVATIVEOFENTHALPHYTIMESVELOCITY},
we deduce that
\begin{align} \label{E:SECONDIDDIVOFVORTICITY}
	\mbox{RHS~\eqref{E:FIRSTIDDIVOFVORTICITY}} 
	& = 
		\vort^{\lambda} (u^{\kappa} \partial_{\kappa} u_{\lambda})
		-
		\vort^{\kappa} u^{\lambda} (\partial_{\kappa} u_{\lambda})
		-
		\Temp
		\upepsilon^{\kappa \lambda \gamma \delta}
		(\partial_{\kappa} u_{\lambda}) 
		\GradEnt_{\gamma} u_{\delta}.
\end{align}
Using \eqref{E:DERIVATIVEOFVELOCITYCONTRACTEDWITHVELOCITYISZERO},
we see that the second product on RHS~\eqref{E:SECONDIDDIVOFVORTICITY} vanishes.
Moreover, using equation \eqref{E:VELOCITYEVOLUTION} and the identity \eqref{E:VORTISORTHGONALTOU},
we can express the first product on RHS~\eqref{E:SECONDIDDIVOFVORTICITY}
as follows:
\begin{align} \label{E:THIRDIDDIVOFVORTICITY}
	\vort^{\lambda} (u^{\kappa} \partial_{\kappa} u_{\lambda})
	& = - \vort^{\kappa} \partial_{\kappa} \Lnenth
		+
		\TempoverEnth \vort^{\kappa} \GradEnt_{\kappa}.
\end{align}
In addition, using definition \eqref{E:TEMPOVERENTH} and the identity \eqref{E:GRADIENTOFUCONTRACTEDAGAINSTVOLUMEFORMANDVELOCITY},
we can express the last product on RHS~\eqref{E:SECONDIDDIVOFVORTICITY}
as follows:
\begin{align} \label{E:FOURTHIDDIVOFVORTICITY}
	-
	\Temp
	\upepsilon^{\kappa \lambda \gamma \delta}
	(\partial_{\kappa} u_{\lambda}) 
	\GradEnt_{\gamma} u_{\delta}
	& = 
	\TempoverEnth \vort^{\kappa} \GradEnt_{\kappa}.
\end{align}
Combining these calculations, we arrive at the desired identity \eqref{E:DIVOFVORTICITY}.

\medskip

\noindent $\bullet$ \textbf{Proof of \eqref{E:TRANSPORTFORMODIFIEDVORTICITYOFVORTICITY}}:
The proof is a series of lengthy calculations in which we observe many cancellations.
We start by using \eqref{E:TERM1ID}-\eqref{E:TERM7ID}
to substitute for all of the terms on the third through seventh
lines of RHS~\eqref{E:VORTICITYOFVORTICITYFIRSTEVOLUTION}
except for the term
$-
			\upepsilon^{\alpha \beta \gamma \delta} 
			u_{\beta}
			(\partial_{\gamma} \vort^{\kappa}) 
			(\partial_{\delta} u_{\kappa})$
from the fifth line,
which we leave as is.
We also use \eqref{E:DIVOFVORTICITY}
to express the fourth product 
on RHS~\eqref{E:TERM5ID} as
$
\frac{1}{\Enth} 
\vort^{\alpha}
(\partial_{\kappa} \vort^{\kappa})
=
-
\frac{1}{\Enth} 
\vort^{\alpha}
(\vort^{\kappa} \partial_{\kappa} \Lnenth)
+
\frac{2 \TempoverEnth}{\Enth} 
\vort^{\alpha}
\vort^{\kappa} \GradEnt_{\kappa}
$,
and we use
\eqref{E:TOPORDERENTHTERMKEYID}-\eqref{E:TWODERIVATIVEENTHID4}
to substitute for the four products 
(which depend on the second derivatives of $\Lnenth$)
on the sixth-to-last and fifth-to-last
lines of RHS~\eqref{E:VORTICITYOFVORTICITYFIRSTEVOLUTION},
thereby obtaining the following equation
(where at this stage in the argument, we have simply performed a term-by-term substitution
and have not yet organized the terms):
\begingroup
\allowdisplaybreaks
\begin{align} \label{E:RETRYVORTICITYOFVORTICITYFIRSTEVOLUTION}
		u^{\kappa} \partial_{\kappa} \uperpvort^{\alpha}(\vort)
		& = 
			\uperpvort^{\kappa}(\vort) \partial_{\kappa} u^{\alpha}
			- 
			(\partial_{\kappa} u^{\kappa}) \uperpvort^{\alpha}(\vort)
			\\
			& \ \
			+
				u^{\alpha} 
				(u^{\kappa} \partial_{\kappa} u_{\beta}) 
				\uperpvort^{\beta}(\vort)
				\notag
				\\
		& \ \
			-
			u^{\kappa} \partial_{\kappa} 
			\left\lbrace
				\speed^{-2}
				\upepsilon^{\alpha \beta \gamma \delta} 
				u_{\beta}
				(\partial_{\gamma} \Lnenth) 
				\vort_{\delta}
			\right\rbrace
			\notag
			\\
			& \ \
			-
		2
		(\partial_{\kappa} u^{\kappa})
		\speed^{-2}
		\upepsilon^{\alpha \beta \gamma \delta} 
		u_{\beta}
		(\partial_{\gamma} \Lnenth) \vort_{\delta}
		\notag \\
& \ \
	+
		\speed^{-2}
		\upepsilon^{\alpha \beta \gamma \delta} 
		(u^{\kappa} \partial_{\kappa} u_{\beta})
		(\partial_{\gamma} \Lnenth) \vort_{\delta}
	\notag
		\\
& \ \
		+
		\speed^{-2}
		\upepsilon^{\alpha \beta \gamma \delta} 
		u_{\beta}
		(\partial_{\gamma} \Lnenth) 
		\vort^{\kappa} 
		(\partial_{\delta} u_{\kappa})
			\notag \\
	& \ \
	+
	\speed^{-2}
	(\Temp - \Temp_{;\Lnenth}) 
	(\GradEnt^{\kappa} \partial_{\kappa} \Lnenth)
	((\upeta^{-1})^{\alpha \lambda} \partial_{\lambda} \Lnenth) 
		\notag \\
& \ \
	+
	\speed^{-2}
	(\Temp - \Temp_{;\Lnenth}) 
	u^{\alpha}
	(\GradEnt^{\kappa} \partial_{\kappa} \Lnenth)
	(u^{\lambda} \partial_{\lambda} \Lnenth)
	\notag \\
& \ \
+ \speed^{-2}
	(\Temp_{;\Lnenth} - \Temp) 
	\GradEnt^{\alpha} 
	(u^{\kappa} \partial_{\kappa} \Lnenth)
	(u^{\lambda} \partial_{\lambda} \Lnenth)
	\notag \\
& \ \
	+	 
	\speed^{-2}
	(\Temp_{;\Lnenth} - \Temp) 
	\GradEnt^{\alpha}
	(\upeta^{-1})^{\kappa \lambda}
	(\partial_{\kappa} \Lnenth)
	(\partial_{\lambda} \Lnenth)
		\notag
		\\
	& \ \
	+
	\speed^{-2}
	\upepsilon^{\alpha \beta \gamma \delta} 
	u_{\beta}
	\left\lbrace
		(\partial_{\kappa} u^{\kappa}) (\partial_{\gamma} \Lnenth) 
		- 
		(\partial_{\gamma} u^{\kappa}) (\partial_{\kappa} \Lnenth) 
	\right\rbrace
	\vort_{\delta}	
		\notag
			\\
	& \ \
		+
	2 \speed^{-3} \speed_{;\Ent} 
	(u^{\kappa} \partial_{\kappa} \Lnenth)
	\upepsilon^{\alpha \beta \gamma \delta} 
	u_{\beta}
	\GradEnt_{\gamma}
	\vort_{\delta}
	\notag
		\notag \\
& \ \
		+ 
		\frac{1}{\Enth} (\vort^{\kappa} \partial_{\kappa} \vort^{\alpha})
		-
		\frac{1}{\Enth} (\vort^{\kappa} \partial_{\kappa} \Lnenth)  \vort^{\alpha}
			\notag \\
	&  \ \
		-
		\frac{1}{\Enth}
		u^{\alpha} \vort^{\lambda}
		(\vort^{\kappa} \partial_{\kappa} u_{\lambda})
		+
		\upepsilon^{\alpha \beta \gamma \delta} 
		u_{\beta}
		(\partial_{\gamma} \Lnenth)
		\vort^{\kappa} (\partial_{\delta} u_{\kappa})
				\notag
				\\
	& \ \
			-
			\TempoverEnth
			\upepsilon^{\alpha \beta \gamma \delta}
			u_{\beta}
			\GradEnt_{\gamma} 
			\vort^{\kappa} 
			(\partial_{\delta} u_{\kappa})
			\notag
			\\
	& \ \
	-
	\upepsilon^{\alpha \beta \gamma \delta} 
	(\partial_{\beta} \Lnenth) 
	u_{\gamma} \vort^{\kappa} (\partial_{\delta} u_{\kappa}) 
	+
	(\partial_{\kappa} u^{\kappa})
	\upepsilon^{\alpha \beta \gamma \delta} 
	(\partial_{\beta} \Lnenth) 
	u_{\gamma}
	\vort_{\delta} 
		\notag \\
	& \ \
	+
	(\Temp - \Temp_{;\Lnenth})
	((\upeta^{-1})^{\alpha \kappa} \partial_{\kappa} \Lnenth) 
	(\GradEnt^{\lambda} \partial_{\lambda} \Lnenth) 
	\notag
	\\
	& \ \
	+
  (\Temp - \Temp_{;\Lnenth})
	u^{\alpha}
	(u^{\kappa} \partial_{\kappa} \Lnenth)
	(\GradEnt^{\lambda} \partial_{\lambda} \Lnenth) 
	\notag \\
& \ \
	+	
	(\Temp_{;\Lnenth} - \Temp)
	\GradEnt^{\alpha} 
	(u^{\kappa} \partial_{\kappa} \Lnenth)
	(u^{\lambda} \partial_{\lambda} \Lnenth)
	\notag
	\\
	& \ \
	+
	(\Temp_{;\Lnenth} - \Temp)
	\GradEnt^{\alpha} 
	(\upeta^{-1})^{\kappa \lambda}
	(\partial_{\kappa} \Lnenth) 
	(\partial_{\lambda} \Lnenth)
	\notag
		\\
& \ \
	+
	\TempoverEnth
	\upepsilon^{\alpha \beta \gamma \delta} 
	\GradEnt_{\beta}
	u_{\gamma}
	\vort^{\kappa} (\partial_{\delta} u_{\kappa})
		\notag
			\\
	& \ \
	-
	\TempoverEnth
	(\partial_{\kappa} u^{\kappa})
	\upepsilon^{\alpha \beta \gamma \delta} 
	\GradEnt_{\beta}
	u_{\gamma}
	\vort_{\delta} 
		\notag \\
& \ \
	+
	\TempoverEnth
	(\Temp_{;\Lnenth} - \Temp) 
	((\upeta^{-1})^{\kappa \alpha} \partial_{\kappa} \Lnenth) \GradEnt^{\lambda} \GradEnt_{\lambda}
    \notag
    \\
    & \ \	
	+
	\TempoverEnth
	(\Temp_{;\Lnenth} - \Temp) 
	u^{\alpha}
 (u^{\kappa} \partial_{\kappa} \Lnenth) \GradEnt^{\lambda} \GradEnt_{\lambda} 
	+
	\TempoverEnth
	(\Temp - \Temp_{;\Lnenth}) 
	\GradEnt^{\alpha}
	(\GradEnt^{\kappa} \partial_{\kappa} \Lnenth)
	\notag
		\\
& \ \
	-
	\upepsilon^{\alpha \beta \gamma \delta} (\partial_{\beta} \Lnenth) 
	u_{\gamma}
	\vort^{\lambda} (\partial_{\delta} u_{\lambda})
	+
	\TempoverEnth
	\upepsilon^{\alpha \beta \gamma \delta} \GradEnt_{\beta} 
	u_{\gamma}
	\vort^{\lambda} (\partial_{\delta} u_{\lambda})
		\notag \\
& \ \
			-
			\upepsilon^{\alpha \beta \gamma \delta} 
			u_{\beta}
			(\partial_{\gamma} \vort^{\kappa}) (\partial_{\delta} u_{\kappa})
			\notag	\\
& \ \
		-
		\upepsilon^{\alpha \beta \gamma \delta} 
		u_{\beta}
		(\partial_{\gamma} \vort^{\kappa}) (\partial_{\delta} u_{\kappa})
		-
		\upepsilon^{\alpha \beta \gamma \delta} 
		u_{\beta}
		(\partial_{\gamma} \Lnenth)
		\vort^{\kappa}
		(\partial_{\delta} u_{\kappa})
		\notag	\\
& \ \
			-
			\frac{1}{\Enth} 
			(\vort^{\kappa} \partial_{\kappa} \vort^{\alpha}) 
			-
			\frac{1}{\Enth} 
			\vort^{\alpha}
			(\vort^{\kappa} \partial_{\kappa} \Lnenth)
			+
			\frac{2 \TempoverEnth}{\Enth} 
			\vort^{\alpha}
			\vort^{\kappa} \GradEnt_{\kappa}
			\notag \\
& \ \
		-
		\frac{1}{\Enth} 
		\vort^{\alpha}
		\vort^{\lambda}
		(u^{\kappa} \partial_{\kappa} u_{\lambda}) 
		+
		\frac{1}{\Enth} 
		u^{\alpha} 
		\vort^{\lambda}
		(\vort^{\kappa} \partial_{\kappa} u_{\lambda})
		\notag		\\
	& \ \
		-
		\TempoverEnth
		\upepsilon^{\alpha \beta \gamma \delta} 
		u_{\beta}
		(\partial_{\gamma} u^{\kappa})
		\vort_{\kappa}
		\GradEnt_{\delta}
		\notag
		\\
	& \ \
		-
		(\partial_{\kappa} u^{\kappa})
		\uperpvort^{\alpha}(\vort)
			\notag \\
	& \ \
		+
		\frac{1}{\Enth}
		\vort^{\alpha}
		\vort^{\lambda}
		(u^{\kappa} \partial_{\kappa} u_{\lambda})
			\notag \\
	& \ \
			+
			(\Temp_{\Lnenth} - \Temp_{;\Lnenth;\Lnenth})
			\GradEnt^{\alpha}  
			(\upeta^{-1})^{\kappa \lambda} (\partial_{\kappa} \Lnenth) 
			(\partial_{\lambda} \Lnenth)
			\notag
			\\
			& \ \
			+
			(\Temp_{\Lnenth} - \Temp_{;\Lnenth;\Lnenth})
			 \GradEnt^{\alpha}
			(u^{\kappa} \partial_{\kappa} \Lnenth)
			(u^{\lambda} \partial_{\lambda} \Lnenth)
			\notag
			\\
	& \ \
			+
			(\Temp_{;\Lnenth;\Lnenth} - \Temp_{\Lnenth})
			u^{\alpha} 
			(\GradEnt^{\kappa} \partial_{\kappa} \Lnenth)
			(u^{\lambda} \partial_{\lambda} \Lnenth) 
			\notag
			\\
			& \ \
			+
			(\Temp_{;\Lnenth;\Lnenth} - \Temp_{\Lnenth})
			((\upeta^{-1})^{\alpha \kappa}\partial_{\kappa} \Lnenth) 
			(\GradEnt^{\lambda} \partial_{\lambda} \Lnenth)
			\notag
			\\
	& \ \
			+
			(\Temp_{;\Ent} - \Temp_{;\Lnenth;\Ent})
			\GradEnt^{\alpha} 
			(\GradEnt^{\kappa} \partial_{\kappa} \Lnenth) 
			+
			(\Temp_{;\Lnenth;\Ent} - \Temp_{;\Ent})
			u^{\alpha} 
			\GradEnt_{\kappa} \GradEnt^{\kappa}
			(u^{\lambda} \partial_{\lambda} \Lnenth) 
			\notag
			\\
			& \ \
			+
			(\Temp_{;\Lnenth;\Ent} - \Temp_{;\Ent})
			\GradEnt_{\kappa} \GradEnt^{\kappa}
			((\upeta^{-1})^{\alpha \lambda} \partial_{\lambda} \Lnenth) 
				\notag
				\\
		& \ \
			+
			(\Temp - \Temp_{;\Lnenth}) 
			\GradEnt^{\alpha}
			(\partial_{\kappa} u^{\kappa})
			(u^{\lambda} \partial_{\lambda} \Lnenth) 
			\notag
			\\
			& \ \
			+
			(\Temp_{;\Lnenth} - \Temp) 
			(\GradEnt^{\kappa} \partial_{\kappa} u^{\alpha})
			(u^{\lambda} \partial_{\lambda} \Lnenth) 
				\notag 
			\\
		& \ \
			+
			u^{\kappa} \partial_{\kappa}
		\left\lbrace
			(\Temp_{;\Lnenth} - \Temp) 
			\GradEnt^{\alpha} 
			(\partial_{\lambda} u^{\lambda})
		\right\rbrace
			\notag \\
	& \ \
		+ 
		(\Temp_{\Lnenth} - \Temp_{;\Lnenth;\Lnenth})
		\GradEnt^{\alpha}
		(u^{\kappa} \partial_{\kappa} \Lnenth)
		(\partial_{\lambda} u^{\lambda})
		\notag
		\\
		& \ \
		+
		(\Temp - \Temp_{;\Lnenth}) 
		(u^{\kappa} \partial_{\kappa} \GradEnt^{\alpha})
		(\partial_{\lambda} u^{\lambda})
		\notag \\
	& \ \
		+
		(\Temp_{;\Lnenth} - \Temp) 
		\GradEnt^{\alpha} 
		(\partial_{\kappa} u^{\lambda}) (\partial_{\lambda} u^{\kappa})
		\notag
		\\
		& \ \
		+
		(\Temp_{;\Lnenth} - \Temp) 
		\GradEnt^{\alpha} 
		(u^{\kappa} \partial_{\kappa} u^{\lambda}) (\partial_{\lambda} \Lnenth)
		+
		(\Temp_{;\Lnenth} - \Temp) 
		\GradEnt^{\alpha} 
		(\partial_{\kappa} u^{\kappa}) (u^{\lambda} \partial_{\lambda} \Lnenth)
		\notag
			\\
	& \ \
		+
		(\Temp - \Temp_{;\Lnenth}) 
		\TempoverEnth 
		\GradEnt^{\alpha} 
		(\partial_{\kappa} \GradEnt^{\kappa})
		+
		(\Temp - \Temp_{;\Lnenth}) 
		\TempoverEnth_{;\Lnenth} 
		\GradEnt^{\alpha}
		(\GradEnt^{\kappa} \partial_{\kappa} \Lnenth)
		\notag
		\\
		& \ \
		+
		(\Temp - \Temp_{;\Lnenth}) 
		\TempoverEnth_{;\Ent} 
		\GradEnt^{\alpha}	
		\GradEnt_{\kappa} \GradEnt^{\kappa}
			\notag \\
		& \ \
				+
				u^{\kappa} \partial_{\kappa}
				\left\lbrace
					(\Temp_{;\Lnenth} - \Temp) u^{\alpha} (\GradEnt^{\lambda} \partial_{\lambda} \Lnenth)
				\right\rbrace
				\notag	\\
		& \ \
			+
			(\Temp_{\Lnenth} - \Temp_{;\Lnenth;\Lnenth})
			u^{\alpha}
			(u^{\kappa} \partial_{\kappa} \Lnenth)
			(\GradEnt^{\lambda} \partial_{\lambda} \Lnenth)
			\notag
			\\
			& \ \
			+
			(\Temp - \Temp_{;\Lnenth}) (u^{\kappa} \partial_{\kappa} u^{\alpha}) 
			(\GradEnt^{\lambda} \partial_{\lambda} \Lnenth)
			\notag
			\\
			& \ \
			+
			(\Temp - \Temp_{;\Lnenth})
			u^{\alpha} (u^{\kappa} \partial_{\kappa} \GradEnt^{\lambda}) (\partial_{\lambda} \Lnenth)
			\notag
				\\
			& \ \
			+ 
			u^{\kappa} \partial_{\kappa}
			\left\lbrace
				(\Temp - \Temp_{;\Lnenth}) (\upeta^{-1})^{\alpha \lambda} \GradEnt^{\beta} (\partial_{\lambda} u_{\beta})
			\right\rbrace
			\notag \\
	& \ \
		+
		(\Temp_{;\Lnenth;\Lnenth} - \Temp_{\Lnenth}) 
		(u^{\kappa} \partial_{\kappa} \Lnenth)
		\GradEnt^{\beta} ((\upeta^{-1})^{\alpha \lambda} \partial_{\lambda} u_{\beta})
		\notag
		\\
		& \ \
		+
		(\Temp_{;\Lnenth} - \Temp)
		(u^{\kappa} \partial_{\kappa} \GradEnt^{\beta})
		((\upeta^{-1})^{\alpha \lambda} \partial_{\lambda} u_{\beta})
		\notag \\
	& \ \
		+
		(\Temp - \Temp_{;\Lnenth}) \GradEnt^{\beta} ((\upeta^{-1})^{\alpha \lambda} \partial_{\lambda} u_{\beta}) (u^{\kappa} \partial_{\kappa} \Lnenth)
		\notag
		\\
		& \ \
		+
		(\Temp - \Temp_{;\Lnenth}) \GradEnt^{\beta} ((\upeta^{-1})^{\alpha \lambda} \partial_{\lambda} u^{\kappa}) (\partial_{\kappa} u_{\beta})
		\notag \\
	& \ \
	+
	(\Temp - \Temp_{;\Lnenth}) \TempoverEnth ((\upeta^{-1})^{\alpha \lambda} \partial_{\lambda} \GradEnt^{\beta}) \GradEnt_{\beta}
	\notag
	\\
	& \ \ 
	+
	(\Temp_{;\Lnenth} - \Temp) \TempoverEnth_{;\Lnenth} ((\upeta^{-1})^{\alpha \lambda} \partial_{\lambda} \Lnenth) \GradEnt_{\kappa} \GradEnt^{\kappa}
		\notag \\
&  \ \
	+
	(\Temp_{;\Lnenth} - \Temp) \TempoverEnth_{;\Ent} \GradEnt^{\alpha} \GradEnt_{\kappa} \GradEnt^{\kappa}
	+
	2 (\Temp_{;\Lnenth} - \Temp) \TempoverEnth ((\upeta^{-1})^{\alpha \lambda} \partial_{\lambda} \GradEnt^{\kappa}) \GradEnt_{\kappa}
	\notag	
		\\
		& \ \
			+
			(\Temp - \Temp_{;\Lnenth}) 
			(\upeta^{-1})^{\kappa \lambda}
			(\partial_{\kappa} \Lnenth) (\partial_{\lambda}\GradEnt^{\alpha})
			\notag
			\\
			& \ \
			+
			(\Temp - \Temp_{;\Lnenth}) 
			(u^{\kappa} \partial_{\kappa} \Lnenth) (u^{\lambda} \partial_{\lambda} \GradEnt^{\alpha})
			\notag 
			\\
		& \ \
			+
			(\Temp_{;\Lnenth} - \Temp) 
			u^{\alpha} 
			(u^{\kappa} \partial_{\kappa} \Lnenth) (\partial_{\lambda} \GradEnt^{\lambda})
			\notag
			\\
			& \ \
			+
			(\Temp_{;\Lnenth} - \Temp) 
			((\upeta^{-1})^{\alpha \kappa} \partial_{\kappa} \Lnenth) (\partial_{\lambda} \GradEnt^{\lambda})
			\notag 
			\\
		& \ \
			+
		(\Temp_{;\Lnenth} - \Temp) 
		 ((\upeta^{-1})^{\alpha \kappa} \partial_{\kappa} \Lnenth) u_{\beta} (u^{\lambda} \partial_{\lambda} \GradEnt^{\beta})
			\notag
			\\
			& \ \
			+
			(\Temp - \Temp_{;\Lnenth}) 
			u^{\alpha}
			(\upeta^{-1})^{\kappa \lambda}
			 (\partial_{\kappa} \Lnenth) u_{\beta} (\partial_{\lambda} \GradEnt^{\beta}).
			\notag 
\end{align}
\endgroup

Next, we bring the four perfect-derivative terms 
$u^{\kappa} \partial_{\kappa} \lbrace \cdots \rbrace$
on  RHS \linebreak \eqref{E:RETRYVORTICITYOFVORTICITYFIRSTEVOLUTION} over to the left-hand side, 
which yields the equation

\begin{align} \label{E:MAINTERMSWRITTENVORTICITYOFVORTICITYEVOLUTION}
		u^{\kappa} \partial_{\kappa} 
		& \Big\lbrace
			\uperpvort^{\alpha}(\vort)
			+
			\speed^{-2}
			\upepsilon^{\alpha \beta \gamma \delta} 
			u_{\beta}
			(\partial_{\gamma} \Lnenth) \vort_{\delta}
			+
			(\Temp - \Temp_{;\Lnenth}) 
			\GradEnt^{\alpha} 
			(\partial_{\lambda} u^{\lambda})
				\\
		& \ \ \ \ \ \ 
			+
			(\Temp - \Temp_{;\Lnenth}) 
			u^{\alpha} 
			(\GradEnt^{\lambda} \partial_{\lambda} \Lnenth)
			+
			(\Temp_{;\Lnenth} - \Temp) 
			(\upeta^{-1})^{\alpha \lambda} 
			\GradEnt^{\beta} 
			(\partial_{\lambda} u_{\beta})
		\Big\rbrace
			\notag \\
		& = 
			\uperpvort^{\kappa}(\vort) \partial_{\kappa} u^{\alpha}
			- 
			2 (\partial_{\kappa} u^{\kappa}) \uperpvort^{\alpha}(\vort)
			+
			u^{\alpha} 
			(u^{\kappa} \partial_{\kappa} u_{\beta}) 
			\uperpvort^{\beta}(\vort)
			+
			\cdots,
				\notag 
\end{align}
where the terms $\cdots$ do not involve $\uperpvort(\vort)$. 
Next, we solve for $\uperpvort(\vort)$ in terms
of the remaining terms in definition \eqref{E:MODIFIEDVORTICITYOFVORTICITY}
and then use the resulting identity to algebraically substitute for the four instances of $\uperpvort(\vort)$
in equation \eqref{E:MAINTERMSWRITTENVORTICITYOFVORTICITYEVOLUTION}
(note in particular 
that the terms in braces on LHS~\eqref{E:MAINTERMSWRITTENVORTICITYOFVORTICITYEVOLUTION} are equal to $\mathcal{C}^{\alpha}$).
In total, this yields the following equation, where we have placed the terms
generated by the algebraic substitution 
on the first through tenth lines of RHS~\eqref{E:SECONDSTEPVORTICITYOFVORTICITYFIRSTEVOLUTION}:
\begingroup
\allowdisplaybreaks
\begin{align} \label{E:SECONDSTEPVORTICITYOFVORTICITYFIRSTEVOLUTION}
		u^{\kappa} \partial_{\kappa} \mathcal{C}^{\alpha}
		& = 
			\mathcal{C}^{\kappa} \partial_{\kappa} u^{\alpha}
			- 
			2 (\partial_{\kappa} u^{\kappa}) \mathcal{C}^{\alpha}
			+
			u^{\alpha} 
			(u^{\kappa} \partial_{\kappa} u_{\beta}) 
			\mathcal{C}^{\beta}
				\\
		& \ \
			- 
			\speed^{-2}
			\upepsilon^{\kappa \beta \gamma \delta} 
			(\partial_{\kappa} u^{\alpha})
			u_{\beta}
			(\partial_{\gamma} \Lnenth) \vort_{\delta}
			+
			(\Temp_{;\Lnenth} - \Temp) 
			(\GradEnt^{\kappa} \partial_{\kappa} u^{\alpha}) 
			(\partial_{\lambda} u^{\lambda})
				\notag \\
		& \ \
			+
			(\Temp_{;\Lnenth} - \Temp) (u^{\kappa} \partial_{\kappa} u^{\alpha}) (\GradEnt^{\lambda} \partial_{\lambda} \Lnenth)
			+
			(\Temp - \Temp_{;\Lnenth}) (\upeta^{-1})^{\kappa \lambda} (\partial_{\kappa} u^{\alpha}) \GradEnt^{\beta} (\partial_{\lambda} u_{\beta})
			\notag \\
			& \ \
			+ 2 (\partial_{\kappa} u^{\kappa}) 
				\speed^{-2}
				\upepsilon^{\alpha \beta \gamma \delta} 
				u_{\beta}
				(\partial_{\gamma} \Lnenth) 
				\vort_{\delta}
		+
		2 
		(\Temp - \Temp_{;\Lnenth}) 
		\GradEnt^{\alpha}
		(\partial_{\kappa} u^{\kappa})
		(\partial_{\lambda} u^{\lambda})
			\notag \\
	& \ \
		+
		2 
		(\Temp - \Temp_{;\Lnenth}) u^{\alpha} 
		(\partial_{\kappa} u^{\kappa}) 
		(\GradEnt^{\lambda} \partial_{\lambda} \Lnenth)
		\notag
		\\
		& \ \
		+
		2 
		(\Temp_{;\Lnenth} - \Temp) 
		(\partial_{\kappa} u^{\kappa}) 
		\GradEnt^{\beta} ((\upeta^{-1})^{\alpha \lambda} \partial_{\lambda} u_{\beta})
		\notag \\
		& \ \
		-	
		u^{\alpha} 
		(u^{\kappa} \partial_{\kappa} u_{\sigma}) 
		\speed^{-2}
		\upepsilon^{\sigma \beta \gamma \delta} 
		u_{\beta}
		(\partial_{\gamma} \Lnenth) \vort_{\delta}
		\notag
		\\
		& \ \
		+
		(\Temp_{;\Lnenth} - \Temp) 
		u^{\alpha} 
		(u^{\kappa} \partial_{\kappa} u_{\sigma})
		 \GradEnt^{\sigma}
		(\partial_{\lambda} u^{\lambda})
			\notag \\
	& \ \
		+
		(\Temp_{;\Lnenth} - \Temp) 
		u^{\alpha} 
		(u^{\kappa} \partial_{\kappa} u_{\beta})
		u^{\beta} (\GradEnt^{\lambda} \partial_{\lambda} \Lnenth)
		\notag
		\\
		& \ \
		+
		(\Temp - \Temp_{;\Lnenth}) 
		u^{\alpha} 
		(u^{\kappa} \partial_{\kappa} u^{\lambda})
		\GradEnt^{\beta} (\partial_{\lambda} u_{\beta})
				\notag \\
		& \ \
		-
		2
		(\partial_{\kappa} u^{\kappa})
		\speed^{-2}
		\upepsilon^{\alpha \beta \gamma \delta} 
		u_{\beta}
		(\partial_{\gamma} \Lnenth) \vort_{\delta}
		\notag \\
& \ \
	+
		\speed^{-2}
		\upepsilon^{\alpha \beta \gamma \delta} 
		(u^{\kappa} \partial_{\kappa} u_{\beta})
		(\partial_{\gamma} \Lnenth) \vort_{\delta}
	\notag
		\\
& \ \
		+
		\speed^{-2}
		\upepsilon^{\alpha \beta \gamma \delta} 
		u_{\beta}
		(\partial_{\gamma} \Lnenth) 
		\vort^{\kappa} 
		(\partial_{\delta} u_{\kappa})
			\notag \\
	& \ \
	+
	\speed^{-2}
	(\Temp - \Temp_{;\Lnenth}) 
	(\GradEnt^{\kappa} \partial_{\kappa} \Lnenth)
	((\upeta^{-1})^{\alpha \lambda} \partial_{\lambda} \Lnenth) 
		\notag \\
& \ \
	+
	\speed^{-2}
	(\Temp - \Temp_{;\Lnenth}) 
	u^{\alpha}
	(\GradEnt^{\kappa} \partial_{\kappa} \Lnenth)
	(u^{\lambda} \partial_{\lambda} \Lnenth)
	\notag \\
& \ \
+ \speed^{-2}
	(\Temp_{;\Lnenth} - \Temp) 
	\GradEnt^{\alpha} 
	(u^{\kappa} \partial_{\kappa} \Lnenth)
	(u^{\lambda} \partial_{\lambda} \Lnenth)
	\notag \\
& \ \
	+	 
	\speed^{-2}
	(\Temp_{;\Lnenth} - \Temp) 
	\GradEnt^{\alpha}
	(\upeta^{-1})^{\kappa \lambda}
	(\partial_{\kappa} \Lnenth)
	(\partial_{\lambda} \Lnenth)
		\notag
		\\
	& \ \
	+
	\speed^{-2}
	\upepsilon^{\alpha \beta \gamma \delta} 
	u_{\beta}
	\left\lbrace
		(\partial_{\kappa} u^{\kappa}) (\partial_{\gamma} \Lnenth) 
		- 
		(\partial_{\gamma} u^{\kappa}) (\partial_{\kappa} \Lnenth) 
	\right\rbrace
	\vort_{\delta}	
		\notag
			\\
	& \ \
		+
	2 \speed^{-3} \speed_{;\Ent} 
	(u^{\kappa} \partial_{\kappa} \Lnenth)
	\upepsilon^{\alpha \beta \gamma \delta} 
	u_{\beta}
	\GradEnt_{\gamma}
	\vort_{\delta}
	\notag
		\notag \\
& \ \
		+ 
		\frac{1}{\Enth} (\vort^{\kappa} \partial_{\kappa} \vort^{\alpha})
		-
		\frac{1}{\Enth} \vort^{\alpha} (\vort^{\kappa} \partial_{\kappa} \Lnenth) 
			\notag \\
	&  \ \
		-
		\frac{1}{\Enth}
		u^{\alpha} \vort^{\lambda}
		(\vort^{\kappa} \partial_{\kappa} u_{\lambda})
		+
		\upepsilon^{\alpha \beta \gamma \delta} 
		u_{\beta}
		(\partial_{\gamma} \Lnenth)
		\vort^{\kappa} (\partial_{\delta} u_{\kappa})
				\notag
				\\
	& \ \
			-
			\TempoverEnth
			\upepsilon^{\alpha \beta \gamma \delta}
			u_{\beta}
			\GradEnt_{\gamma} 
			\vort^{\kappa} (\partial_{\delta} u_{\kappa})
			\notag
			\\
	& \ \
	-
	\upepsilon^{\alpha \beta \gamma \delta} 
	(\partial_{\beta} \Lnenth) 
	u_{\gamma} \vort^{\kappa} (\partial_{\delta} u_{\kappa}) 
	+
	(\partial_{\kappa} u^{\kappa})
	\upepsilon^{\alpha \beta \gamma \delta} 
	(\partial_{\beta} \Lnenth) 
	u_{\gamma}
	\vort_{\delta} 
		\notag \\
	& \ \
	+
	(\Temp - \Temp_{;\Lnenth})
	((\upeta^{-1})^{\alpha \kappa} \partial_{\kappa} \Lnenth) 
	(\GradEnt^{\lambda} \partial_{\lambda} \Lnenth) 
	+
  (\Temp - \Temp_{;\Lnenth})
	u^{\alpha}
	(u^{\kappa} \partial_{\kappa} \Lnenth)
	(\GradEnt^{\lambda} \partial_{\lambda} \Lnenth) 
	\notag \\
& \ \
	+	
	(\Temp_{;\Lnenth} - \Temp)
	\GradEnt^{\alpha} 
	(u^{\kappa} \partial_{\kappa} \Lnenth)
	(u^{\lambda} \partial_{\lambda} \Lnenth)
	+
	(\Temp_{;\Lnenth} - \Temp)
	\GradEnt^{\alpha} 
	(\upeta^{-1})^{\kappa \lambda}
	(\partial_{\kappa} \Lnenth) 
	(\partial_{\lambda} \Lnenth)
	\notag
		\\
& \ \
	+
	\TempoverEnth
	\upepsilon^{\alpha \beta \gamma \delta} 
	\GradEnt_{\beta}
	u_{\gamma}
	\vort^{\kappa} (\partial_{\delta} u_{\kappa})
		\notag
			\\
	& \ \
	-
	\TempoverEnth
	(\partial_{\kappa} u^{\kappa})
	\upepsilon^{\alpha \beta \gamma \delta} 
	\GradEnt_{\beta}
	u_{\gamma}
	\vort_{\delta} 
		\notag \\
& \ \
	+
	\TempoverEnth
	(\Temp_{;\Lnenth} - \Temp) 
	((\upeta^{-1})^{\kappa \alpha} \partial_{\kappa} \Lnenth) \GradEnt^{\lambda} \GradEnt_{\lambda}
	+
	\TempoverEnth
	(\Temp_{;\Lnenth} - \Temp) 
	u^{\alpha}
 (u^{\kappa} \partial_{\kappa} \Lnenth) \GradEnt^{\lambda} \GradEnt_{\lambda} 
		\notag
		\\
		& \ \
	+
	\TempoverEnth
	(\Temp - \Temp_{;\Lnenth}) 
	\GradEnt^{\alpha}
	(\GradEnt^{\kappa} \partial_{\kappa} \Lnenth)
	\notag
		\\
& \ \
	-
	\upepsilon^{\alpha \beta \gamma \delta} (\partial_{\beta} \Lnenth) 
	u_{\gamma}
	\vort^{\lambda} (\partial_{\delta} u_{\lambda})
	+
	\TempoverEnth
	\upepsilon^{\alpha \beta \gamma \delta} \GradEnt_{\beta} 
	u_{\gamma}
	\vort^{\lambda} (\partial_{\delta} u_{\lambda})
		\notag \\
& \ \
			-
			\upepsilon^{\alpha \beta \gamma \delta} 
			u_{\beta}
			(\partial_{\gamma} \vort^{\kappa}) (\partial_{\delta} u_{\kappa})
			\notag	\\
& \ \
		-
		\upepsilon^{\alpha \beta \gamma \delta} 
		u_{\beta}
		(\partial_{\gamma} \vort^{\kappa}) (\partial_{\delta} u_{\kappa})
		-
		\upepsilon^{\alpha \beta \gamma \delta} 
		u_{\beta}
		(\partial_{\gamma} \Lnenth)
		\vort^{\kappa}
		(\partial_{\delta} u_{\kappa})
		\notag	\\
& \ \
			-
			\frac{1}{\Enth} 
			(\vort^{\kappa} \partial_{\kappa} \vort^{\alpha}) 
			-
			\frac{1}{\Enth} 
			\vort^{\alpha}
			(\vort^{\kappa} \partial_{\kappa} \Lnenth)
			+
			\frac{2 \TempoverEnth}{\Enth} 
			\vort^{\alpha}
			\vort^{\kappa} \GradEnt_{\kappa}
			\notag \\
& \ \
		-
		\frac{1}{\Enth} 
		\vort^{\alpha}
		\vort^{\lambda}
		(u^{\kappa} \partial_{\kappa} u_{\lambda}) 
		+
		\frac{1}{\Enth} 
		u^{\alpha} 
		\vort^{\lambda}
		(\vort^{\kappa} \partial_{\kappa} u_{\lambda})
		\notag		\\
	& \ \
		-
		\TempoverEnth
		\upepsilon^{\alpha \beta \gamma \delta} 
		u_{\beta}
		(\partial_{\gamma} u^{\kappa})
		\vort_{\kappa}
		\GradEnt_{\delta}
		\notag
		\\
	& \ \
		+
		\frac{1}{\Enth}
		\vort^{\alpha}
		\vort^{\lambda}
		(u^{\kappa} \partial_{\kappa} u_{\lambda})
			\notag \\
	& \ \
			+
			(\Temp_{\Lnenth} - \Temp_{;\Lnenth;\Lnenth})
			\GradEnt^{\alpha}  
			(\upeta^{-1})^{\kappa \lambda} 
			(\partial_{\kappa} \Lnenth) 
			(\partial_{\lambda} \Lnenth)
		\notag
		\\
		& \ \
			+
			(\Temp_{\Lnenth} - \Temp_{;\Lnenth;\Lnenth})
			 \GradEnt^{\alpha}
			(u^{\kappa} \partial_{\kappa} \Lnenth)
			(u^{\lambda} \partial_{\lambda} \Lnenth)
			\notag
			\\
	& \ \
			+
			(\Temp_{;\Lnenth;\Lnenth} - \Temp_{\Lnenth})
			u^{\alpha} 
			(\GradEnt^{\kappa} \partial_{\kappa} \Lnenth)
			(u^{\lambda} \partial_{\lambda} \Lnenth) 
		\notag
		\\
		& \ \
			+
			(\Temp_{;\Lnenth;\Lnenth} - \Temp_{\Lnenth})
			((\upeta^{-1})^{\alpha \kappa}\partial_{\kappa} \Lnenth) 
			(\GradEnt^{\lambda} \partial_{\lambda} \Lnenth)
			\notag
			\\
	& \ \
			+
			(\Temp_{;\Ent} - \Temp_{;\Lnenth;\Ent})
			\GradEnt^{\alpha} 
			(\GradEnt^{\kappa} \partial_{\kappa} \Lnenth) 
			+
			(\Temp_{;\Lnenth;\Ent} - \Temp_{;\Ent})
			u^{\alpha} 
			\GradEnt_{\kappa} \GradEnt^{\kappa}
			(u^{\lambda} \partial_{\lambda} \Lnenth) 
		\notag
		\\
		& \ \
			+
			(\Temp_{;\Lnenth;\Ent} - \Temp_{;\Ent})
			\GradEnt_{\kappa} \GradEnt^{\kappa}
			((\upeta^{-1})^{\alpha \lambda} \partial_{\lambda} \Lnenth) 
				\notag
				\\
		& \ \
			+
			(\Temp - \Temp_{;\Lnenth}) 
			\GradEnt^{\alpha}
			(\partial_{\kappa} u^{\kappa})
			(u^{\lambda} \partial_{\lambda} \Lnenth) 
			+
			(\Temp_{;\Lnenth} - \Temp) 
			(\GradEnt^{\kappa} \partial_{\kappa} u^{\alpha})
			(u^{\lambda} \partial_{\lambda} \Lnenth) 
				\notag 
			\\
	& \ \
		+ 
		(\Temp_{\Lnenth} - \Temp_{;\Lnenth;\Lnenth})
		\GradEnt^{\alpha}
		(u^{\kappa} \partial_{\kappa} \Lnenth)
		(\partial_{\lambda} u^{\lambda})
		+
		(\Temp - \Temp_{;\Lnenth}) 
		(u^{\kappa} \partial_{\kappa} \GradEnt^{\alpha})
		(\partial_{\lambda} u^{\lambda})
		\notag \\
	& \ \
		+
		(\Temp_{;\Lnenth} - \Temp) 
		\GradEnt^{\alpha} 
		(\partial_{\kappa} u^{\lambda}) 
		(\partial_{\lambda} u^{\kappa})
		+
		(\Temp_{;\Lnenth} - \Temp) 
		\GradEnt^{\alpha} 
		(u^{\kappa} \partial_{\kappa} u^{\lambda}) 
		(\partial_{\lambda} \Lnenth)
		\notag
		\\
		& \ \
		+
		(\Temp_{;\Lnenth} - \Temp) 
		\GradEnt^{\alpha} 
		(\partial_{\kappa} u^{\kappa}) 
		(u^{\lambda} \partial_{\lambda} \Lnenth)
		\notag
			\\
	& \ \
		+
		(\Temp - \Temp_{;\Lnenth}) 
		\TempoverEnth 
		\GradEnt^{\alpha} 
		(\partial_{\kappa} \GradEnt^{\kappa})
		+
		(\Temp - \Temp_{;\Lnenth}) 
		\TempoverEnth_{;\Lnenth} 
		\GradEnt^{\alpha}
		(\GradEnt^{\kappa} \partial_{\kappa} \Lnenth)
		\notag
		\\
		& \ \
		+
		(\Temp - \Temp_{;\Lnenth}) 
		\TempoverEnth_{;\Ent} 
		\GradEnt^{\alpha}	
		\GradEnt_{\kappa} \GradEnt^{\kappa}
			\notag \\
		& \ \
			+
			(\Temp_{\Lnenth} - \Temp_{;\Lnenth;\Lnenth})
			u^{\alpha}
			(u^{\kappa} \partial_{\kappa} \Lnenth)
			(\GradEnt^{\lambda} \partial_{\lambda} \Lnenth)
			+
			(\Temp - \Temp_{;\Lnenth}) 
			(u^{\kappa} \partial_{\kappa} u^{\alpha}) 
			(\GradEnt^{\lambda} \partial_{\lambda} \Lnenth)
		\notag
		\\
		& \ \
			+
			(\Temp - \Temp_{;\Lnenth})
			u^{\alpha} 
			(u^{\kappa} \partial_{\kappa} \GradEnt^{\lambda}) 
			(\partial_{\lambda} \Lnenth)
			\notag
				\\
		& \ \
		+
		(\Temp_{;\Lnenth;\Lnenth} - \Temp_{\Lnenth}) 
		(u^{\kappa} \partial_{\kappa} \Lnenth)
		\GradEnt^{\beta} ((\upeta^{-1})^{\alpha \lambda} \partial_{\lambda} u_{\beta})
		\notag
		\\
		& \ \
		+
		(\Temp_{;\Lnenth} - \Temp)
		(u^{\kappa} \partial_{\kappa} \GradEnt^{\beta})
		((\upeta^{-1})^{\alpha \lambda} \partial_{\lambda} u_{\beta})
		\notag \\
	& \ \
		+
		(\Temp - \Temp_{;\Lnenth}) \GradEnt^{\beta} ((\upeta^{-1})^{\alpha \lambda} \partial_{\lambda} u_{\beta}) (u^{\kappa} \partial_{\kappa} \Lnenth)
		\notag
		\\
		& \ \
		+ 
		(\Temp - \Temp_{;\Lnenth}) \GradEnt^{\beta} ((\upeta^{-1})^{\alpha \lambda} \partial_{\lambda} u^{\kappa}) (\partial_{\kappa} u_{\beta})
		\notag \\
	& \ \
	+
	(\Temp - \Temp_{;\Lnenth}) \TempoverEnth ((\upeta^{-1})^{\alpha \lambda} \partial_{\lambda} \GradEnt^{\beta}) \GradEnt_{\beta}
	+
	(\Temp_{;\Lnenth} - \Temp) \TempoverEnth_{;\Lnenth} ((\upeta^{-1})^{\alpha \lambda} \partial_{\lambda} \Lnenth) \GradEnt_{\kappa} \GradEnt^{\kappa}
		\notag \\
&  \ \
	+
	(\Temp_{;\Lnenth} - \Temp) \TempoverEnth_{;\Ent} \GradEnt^{\alpha} \GradEnt_{\kappa} \GradEnt^{\kappa}
	+
	2 (\Temp_{;\Lnenth} - \Temp) \TempoverEnth ((\upeta^{-1})^{\alpha \lambda} \partial_{\lambda} \GradEnt^{\kappa}) \GradEnt_{\kappa}
	\notag	
		\\
		& \ \
			+
			(\Temp - \Temp_{;\Lnenth}) 
			(\upeta^{-1})^{\kappa \lambda}
			(\partial_{\kappa} \Lnenth) 
			(\partial_{\lambda}\GradEnt^{\alpha})
			+
			(\Temp - \Temp_{;\Lnenth}) 
			(u^{\kappa} \partial_{\kappa} \Lnenth) 
			(u^{\lambda} \partial_{\lambda} \GradEnt^{\alpha})
			\notag 
			\\
		& \ \
			+
			(\Temp_{;\Lnenth} - \Temp) 
			u^{\alpha} 
			(u^{\kappa} \partial_{\kappa} \Lnenth) 
			(\partial_{\lambda} \GradEnt^{\lambda})
			+
			(\Temp_{;\Lnenth} - \Temp) 
			((\upeta^{-1})^{\alpha \kappa} \partial_{\kappa} \Lnenth) 
			(\partial_{\lambda} \GradEnt^{\lambda})
			\notag 
			\\
		& \ \
			+
		(\Temp_{;\Lnenth} - \Temp) 
		 ((\upeta^{-1})^{\alpha \kappa} \partial_{\kappa} \Lnenth) u_{\beta} (u^{\lambda} \partial_{\lambda} \GradEnt^{\beta})
		\notag
		\\
		& \ \
			+
			(\Temp - \Temp_{;\Lnenth}) 
			u^{\alpha}
			(\upeta^{-1})^{\kappa \lambda}
			 (\partial_{\kappa} \Lnenth) u_{\beta} (\partial_{\lambda} \GradEnt^{\beta}).
			\notag 
\end{align}
\endgroup

Next, we reorganize the terms on RHS~\eqref{E:SECONDSTEPVORTICITYOFVORTICITYFIRSTEVOLUTION}
to obtain the equation
\begin{align} \label{E:REORGANIZEDVORTICITYTRANSPORT}
	u^{\kappa} \partial_{\kappa} \mathcal{C}^{\alpha}
		& = 
			\mathcal{C}^{\kappa} \partial_{\kappa} u^{\alpha}
			- 
			2 (\partial_{\kappa} u^{\kappa}) \mathcal{C}^{\alpha}
			+
			u^{\alpha} 
			(u^{\kappa} \partial_{\kappa} u_{\beta}) 
			\mathcal{C}^{\beta}
			+
			\sum_{i=1}^{21} \mathscr{Q}_i
			+
			\mathscr{L},
\end{align}
where
\begingroup
\allowdisplaybreaks
\begin{align} \label{E:Q1DEF}
	\mathscr{Q}_1
	& :=
			-
			2 \upepsilon^{\alpha \beta \gamma \delta} 
			u_{\beta}
			(\partial_{\gamma} \vort^{\kappa}) (\partial_{\delta} u_{\kappa}),
				\\
	\mathscr{Q}_2
	& := 
		(\Temp_{;\Lnenth} - \Temp) 
		((\upeta^{-1})^{\alpha \kappa} \partial_{\kappa} \Lnenth) \partial_{\lambda} \GradEnt^{\lambda}
	+
	(\Temp - \Temp_{;\Lnenth}) 
	(\upeta^{-1})^{\kappa \lambda}
	(\partial_{\kappa} \Lnenth) (\partial_{\lambda}\GradEnt^{\alpha}),
		\label{E:Q2DEF} \\
\mathscr{Q}_3
& := 
	(\Temp_{;\Lnenth} - \Temp) 
	u^{\alpha} 
	(u^{\kappa} \partial_{\kappa} \Lnenth) (\partial_{\lambda} \GradEnt^{\lambda})
	+
	(\Temp - \Temp_{;\Lnenth})
	u^{\alpha} (u^{\kappa} \partial_{\kappa} \GradEnt^{\lambda}) (\partial_{\lambda} \Lnenth),
	 \label{E:Q3DEF} \\
\mathscr{Q}_4
	& : = 
			(\Temp_{\Lnenth} - \Temp_{;\Lnenth;\Lnenth})
			\GradEnt^{\alpha}  
			(\upeta^{-1})^{\kappa \lambda} (\partial_{\kappa} \Lnenth)
			(\partial_{\lambda} \Lnenth) 
				\label{E:Q4DEF} \\
				& \ \
			+
			(\Temp_{\Lnenth} - \Temp_{;\Lnenth;\Lnenth})
			 \GradEnt^{\alpha}
			(u^{\kappa} \partial_{\kappa} \Lnenth)
			(u^{\lambda} \partial_{\lambda} \Lnenth)
			+ 
			(\Temp_{\Lnenth} - \Temp_{;\Lnenth;\Lnenth})
			\GradEnt^{\alpha}
			(u^{\kappa} \partial_{\kappa} \Lnenth)
			(\partial_{\lambda} u^{\lambda}),
			\notag
			\\
	\mathscr{Q}_5
	& : =
			(\Temp_{;\Lnenth;\Lnenth} - \Temp_{\Lnenth})
			(\upeta^{-1})^{\alpha \kappa}
			(\partial_{\kappa} \Lnenth) 
			(\GradEnt^{\lambda} \partial_{\lambda} \Lnenth)
				\label{E:Q5DEF} \\
			& \ \
				+
			(\Temp_{;\Lnenth;\Lnenth} - \Temp_{\Lnenth}) 
			(u^{\kappa} \partial_{\kappa} \Lnenth)
			\GradEnt^{\beta} ((\upeta^{-1})^{\alpha \lambda} \partial_{\lambda} u_{\beta}),
			\notag 
			\\
	\mathscr{Q}_6
	& :=
	\speed^{-2}
	(\Temp_{;\Lnenth} - \Temp) 
	\GradEnt^{\alpha} 
	(u^{\kappa} \partial_{\kappa} \Lnenth)
	(u^{\lambda} \partial_{\lambda} \Lnenth)
		\label{E:Q6DEF} \\
	& \ \
	+	 
	\speed^{-2}
	(\Temp_{;\Lnenth} - \Temp) 
	\GradEnt^{\alpha}
	(\upeta^{-1})^{\kappa \lambda}
	(\partial_{\kappa} \Lnenth)
	(\partial_{\lambda} \Lnenth)
	+
	(\Temp - \Temp_{;\Lnenth}) 
	\GradEnt^{\alpha} 
	(\partial_{\kappa} u^{\kappa})
	(\partial_{\lambda} u^{\lambda}),
	\notag
	\\
\mathscr{Q}_7
&:=
	(\Temp_{;\Lnenth} - \Temp)
	\GradEnt^{\alpha} 
	(u^{\kappa} \partial_{\kappa} \Lnenth)
	(u^{\lambda} \partial_{\lambda} \Lnenth)
		\label{E:Q7DEF} \\
		& \ \
	+
	(\Temp_{;\Lnenth} - \Temp)
	\GradEnt^{\alpha} 
	(\upeta^{-1})^{\kappa \lambda}
	(\partial_{\kappa} \Lnenth) 
	(\partial_{\lambda} \Lnenth)
	+
	(\Temp_{;\Lnenth} - \Temp) 
	\GradEnt^{\alpha} 
	(u^{\kappa} \partial_{\kappa} u^{\lambda}) (\partial_{\lambda} \Lnenth),
	\notag
	\\
\mathscr{Q}_8
	& :=
		(\Temp_{;\Lnenth} - \Temp) 
		\GradEnt^{\alpha} 
		(\partial_{\kappa} u^{\lambda}) (\partial_{\lambda} u^{\kappa}) 
			+
			(\Temp - \Temp_{;\Lnenth}) 
			\GradEnt^{\alpha} 
			(\partial_{\kappa} u^{\kappa})
			(\partial_{\lambda} u^{\lambda}),
				\label{E:Q8DEF} \\
\mathscr{Q}_9
& := 
	(\Temp_{;\Lnenth} - \Temp) 
			(\GradEnt^{\kappa} \partial_{\kappa} u^{\alpha})
			(\partial_{\lambda} u^{\lambda})
			+
			(\Temp_{;\Lnenth} - \Temp) 
			(u^{\kappa} \partial_{\kappa} u^{\alpha})
			(\GradEnt^{\lambda} \partial_{\lambda} \Lnenth)
				\label{E:Q9DEF} \\
				& \ \
			+
			(\Temp - \Temp_{;\Lnenth}) 
			((\upeta^{-1})^{\kappa \lambda} \partial_{\kappa} u^{\alpha})
			\GradEnt^{\beta} 
			(\partial_{\lambda} u_{\beta}),
	\notag
	\\
\mathscr{Q}_{10}
& :=
	(\Temp_{;\Lnenth} - \Temp) 
	(\GradEnt^{\kappa} \partial_{\kappa} u^{\alpha})
	(u^{\lambda} \partial_{\lambda} \Lnenth) 
	+
	(\Temp - \Temp_{;\Lnenth})
	(u^{\kappa} \partial_{\kappa} u^{\alpha}) (\GradEnt^{\lambda} \partial_{\lambda} \Lnenth),
	  \label{E:Q10DEF} \\
\mathscr{Q}_{11}
& := 
		(\Temp - \Temp_{;\Lnenth}) 
		(u^{\kappa} \partial_{\kappa} \GradEnt^{\alpha})
		(\partial_{\lambda} u^{\lambda})
			 \label{E:Q11DEF} \\
	& \ \
		+
		(\Temp - \Temp_{;\Lnenth}) \GradEnt^{\beta} ((\upeta^{-1})^{\alpha \lambda} \partial_{\lambda} u^{\kappa}) (\partial_{\kappa} u_{\beta}),
	\notag
	\\
\mathscr{Q}_{12}
& := 
			2
			(\Temp_{;\Lnenth} - \Temp) 
			(\partial_{\kappa} u^{\kappa})
			(\upeta^{-1})^{\alpha \lambda} 
			\GradEnt^{\beta} 
			(\partial_{\lambda} u_{\beta})
				\label{E:Q12DEF} \\
	& \ \
		+
		\speed^{-2}
		(\Temp - \Temp_{;\Lnenth}) 
		(\GradEnt^{\kappa} \partial_{\kappa} \Lnenth)
		((\upeta^{-1})^{\alpha \lambda} \partial_{\lambda} \Lnenth) 
		\notag
		\\
		& \ \
		+
		(\Temp_{;\Lnenth} - \Temp)
		(u^{\kappa} \partial_{\kappa} \GradEnt^{\beta})
		((\upeta^{-1})^{\alpha \lambda} \partial_{\lambda} u_{\beta}),
		\notag
			\\
\mathscr{Q}_{13}
& :=
		  (\Temp - \Temp_{;\Lnenth}) 
			u^{\alpha} 
			(\partial_{\kappa} u^{\kappa})
			(\GradEnt^{\lambda} \partial_{\lambda} \Lnenth)
			+
			(\Temp - \Temp_{;\Lnenth}) 
			u^{\alpha} 
			(u^{\kappa} \partial_{\kappa} u^{\lambda}) 
			\GradEnt^{\beta} 
			(\partial_{\lambda} u_{\beta}),
				\label{E:Q13DEF} \\
\mathscr{Q}_{14}
& := 	(\Temp - \Temp_{;\Lnenth}) 
			u^{\alpha} 
			(\partial_{\kappa} u^{\kappa})
			(\GradEnt^{\lambda} \partial_{\lambda} \Lnenth)
			+
			(\Temp - \Temp_{;\Lnenth}) 
			u^{\alpha} 
		  (u^{\kappa} \partial_{\kappa} \Lnenth)
			(\GradEnt^{\lambda} \partial_{\lambda} \Lnenth)
				\label{E:Q14DEF} \\
		& \ \
			+
			(\Temp - \Temp_{;\Lnenth}) 
			u^{\alpha}
			(\upeta^{-1})^{\kappa \lambda}
			(\partial_{\kappa} \Lnenth) u_{\beta} (\partial_{\lambda} \GradEnt^{\beta}),
	\notag
	\\
	\mathscr{Q}_{15}	
	& :=
		(\partial_{\kappa} u^{\kappa})
		\upepsilon^{\alpha \beta \gamma \delta} 
		(\partial_{\beta} \Lnenth) 
		u_{\gamma}
		\vort_{\delta} 
		+
	  \speed^{-2}
		\upepsilon^{\alpha \beta \gamma \delta} 
		(u^{\kappa} \partial_{\kappa} u_{\beta})
		(\partial_{\gamma} \Lnenth) \vort_{\delta},
			\label{E:Q15DEF} \\
	\mathscr{Q}_{16}
	& :=
			-
			\speed^{-2}
			u^{\alpha}
			\upepsilon^{\sigma \beta \gamma \delta} 
			(u^{\kappa} \partial_{\kappa} u_{\sigma})
			u_{\beta}
			(\partial_{\gamma} \Lnenth) \vort_{\delta},
			\label{E:Q16DEF} \\
	\mathscr{Q}_{17}
	& := 
		-
			\speed^{-2}
			\upepsilon^{\kappa \beta \gamma \delta} 
			(\partial_{\kappa} u^{\alpha})
			u_{\beta}
			(\partial_{\gamma} \Lnenth) \vort_{\delta}
			\label{E:Q17DEF} \\
	& \ \
		+
		(\speed^{-2} + 2)
		\upepsilon^{\alpha \beta \gamma \delta} 
		u_{\beta}
		(\partial_{\gamma} \Lnenth) \vort^{\kappa} (\partial_{\delta} u_{\kappa})
			\notag \\
	& \ \
	+
	\speed^{-2}
	\upepsilon^{\alpha \beta \gamma \delta} 
	u_{\beta}
	\vort_{\delta}
	\left\lbrace
		(\partial_{\kappa} u^{\kappa}) (\partial_{\gamma} \Lnenth)
		- 
		(\partial_{\gamma} u^{\kappa}) (\partial_{\kappa} \Lnenth)
	\right\rbrace,
	\notag
		\\
	\mathscr{Q}_{18}
	& := 
	(\Temp - \Temp_{;\Lnenth})
	((\upeta^{-1})^{\alpha \kappa} \partial_{\kappa} \Lnenth) 
	(\GradEnt^{\lambda} \partial_{\lambda} \Lnenth) 
		\label{E:Q18DEF} \\
	& \ \
	+
	(\Temp_{;\Lnenth} - \Temp) 
	((\upeta^{-1})^{\alpha \kappa} \partial_{\kappa} \Lnenth) u_{\beta} (u^{\lambda} \partial_{\lambda} \GradEnt^{\beta}),
	\notag
	\\
	\mathscr{Q}_{19}
	& := 
		(\Temp_{;\Lnenth} - \Temp) 
			u^{\alpha} 
			(u^{\kappa} \partial_{\kappa} u_{\sigma})
			\GradEnt^{\sigma} 
			(\partial_{\lambda} u^{\lambda})
				\label{E:Q19DEF} \\
	& \ \
			+
			\speed^{-2}
			(\Temp - \Temp_{;\Lnenth}) 
			u^{\alpha}
			(\GradEnt^{\kappa} \partial_{\kappa} \Lnenth)
			(u^{\lambda} \partial_{\lambda} \Lnenth),
	\notag
	\\
	\mathscr{Q}_{20}
	& := 
		(\Temp - \Temp_{;\Lnenth}) \GradEnt^{\beta} ((\upeta^{-1})^{\alpha \lambda} \partial_{\lambda} u_{\beta}) (u^{\kappa} \partial_{\kappa} \Lnenth)
		\label{E:Q20DEF}\\
		& \ \
		+
		(\Temp - \Temp_{;\Lnenth}) 
		(u^{\kappa} \partial_{\kappa} \Lnenth) (u^{\lambda} \partial_{\lambda} \GradEnt^{\alpha}),
	\notag
	\\
	\mathscr{Q}_{21}
	& := (\Temp_{;\Lnenth} - \Temp) 
		u^{\alpha} 
		(u^{\kappa} \partial_{\kappa} u_{\beta})
		(u^{\beta} \GradEnt^{\lambda} \partial_{\lambda} \Lnenth),
		\label{E:Q21DEF}
\end{align}
and
\begin{align} \label{E:VORTOFVORTFIRSTLINEARTERMDEF}
	\mathscr{L} 
	& := 
		-
		\frac{2}{\Enth} 
		\vort^{\alpha}
		(\vort^{\kappa} \partial_{\kappa} \Lnenth)
		+
		\frac{2 \TempoverEnth}{\Enth} 
		\vort^{\alpha}
		\vort^{\kappa} \GradEnt_{\kappa}
			\\
	& \ \
		+
		2 \speed^{-3} \speed_{;\Ent} 
		(u^{\kappa} \partial_{\kappa} \Lnenth)
		\upepsilon^{\alpha \beta \gamma \delta} 
		u_{\beta}
		\GradEnt_{\gamma}
		\vort_{\delta}
			-
			\TempoverEnth
			\upepsilon^{\alpha \beta \gamma \delta}
			u_{\beta}
			\GradEnt_{\gamma} 
			\vort^{\kappa} (\partial_{\delta} u_{\kappa})
			\notag
			\\
	& \ \
	+
	2 \TempoverEnth
	\upepsilon^{\alpha \beta \gamma \delta} 
	\GradEnt_{\beta}
	u_{\gamma}
	\vort^{\kappa} (\partial_{\delta} u_{\kappa})
		\notag
			\\
	& \ \
	-
	\TempoverEnth
	(\partial_{\kappa} u^{\kappa})
	\upepsilon^{\alpha \beta \gamma \delta} 
	\GradEnt_{\beta}
	u_{\gamma}
	\vort_{\delta} 
		\notag \\
& \ \
	+
	\TempoverEnth
	(\Temp_{;\Lnenth} - \Temp) 
	((\upeta^{-1})^{\kappa \alpha} \partial_{\kappa} \Lnenth) \GradEnt^{\lambda} \GradEnt_{\lambda}
	+
	\TempoverEnth
	(\Temp_{;\Lnenth} - \Temp) 
	u^{\alpha}
 (u^{\kappa} \partial_{\kappa} \Lnenth) \GradEnt^{\lambda} \GradEnt_{\lambda} 
	\notag
	\\
	& \ \
	+
	\TempoverEnth
	(\Temp - \Temp_{;\Lnenth}) 
	\GradEnt^{\alpha}
	(\GradEnt^{\kappa} \partial_{\kappa} \Lnenth)
	\notag
		\\
	& \ \
		 -
		\TempoverEnth
		\upepsilon^{\alpha \beta \gamma \delta} 
		u_{\beta}
		(\partial_{\gamma} u^{\kappa})
		\vort_{\kappa}
		\GradEnt_{\delta}
		\notag
			\\
& \ \
			+
			(\Temp_{;\Ent} - \Temp_{;\Lnenth;\Ent})
			\GradEnt^{\alpha} 
			(\GradEnt^{\kappa} \partial_{\kappa} \Lnenth) 
			+
			(\Temp_{;\Lnenth;\Ent} - \Temp_{;\Ent})
			u^{\alpha} 
			\GradEnt_{\kappa} \GradEnt^{\kappa}
			(u^{\lambda} \partial_{\lambda} \Lnenth) 
			\notag
			\\
			& \ \
			+
			(\Temp_{;\Lnenth;\Ent} - \Temp_{;\Ent})
			\GradEnt_{\kappa} \GradEnt^{\kappa}
			((\upeta^{-1})^{\alpha \lambda} \partial_{\lambda} \Lnenth) 
				\notag \\
	& \ \
		+
		(\Temp - \Temp_{;\Lnenth}) 
		\TempoverEnth 
		\GradEnt^{\alpha} 
		(\partial_{\kappa} \GradEnt^{\kappa})
		+
		(\Temp - \Temp_{;\Lnenth}) 
		\TempoverEnth_{;\Lnenth} 
		\GradEnt^{\alpha}
		(\GradEnt^{\kappa} \partial_{\kappa} \Lnenth)
		+
		(\Temp - \Temp_{;\Lnenth}) 
		\TempoverEnth_{;\Ent} 
		\GradEnt^{\alpha}	
		\GradEnt_{\kappa} \GradEnt^{\kappa}
		\notag				
			\\
	& \ \
	+
	(\Temp - \Temp_{;\Lnenth}) \TempoverEnth ((\upeta^{-1})^{\alpha \lambda} \partial_{\lambda} \GradEnt^{\beta}) \GradEnt_{\beta}
	+
	(\Temp_{;\Lnenth} - \Temp) \TempoverEnth_{;\Lnenth} ((\upeta^{-1})^{\alpha \lambda} \partial_{\lambda} \Lnenth) \GradEnt_{\kappa} \GradEnt^{\kappa}
		\notag \\
&  \ \
	+
	(\Temp_{;\Lnenth} - \Temp) \TempoverEnth_{;\Ent} \GradEnt^{\alpha} \GradEnt_{\kappa} \GradEnt^{\kappa}
	+
	2 (\Temp_{;\Lnenth} - \Temp) \TempoverEnth ((\upeta^{-1})^{\alpha \lambda} \partial_{\lambda} \GradEnt^{\kappa}) \GradEnt_{\kappa}.
	\notag
\end{align}
\endgroup

Note that the terms on RHSs~\eqref{E:Q1DEF}-\eqref{E:Q21DEF} are precisely quadratic in the
first-order derivatives of the solution variables
$(\Lnenth,u^{\alpha},\vort^{\alpha},\GradEnt^{\alpha})_{\alpha=0,1,2,3}$ 
while the terms on RHS~\eqref{E:VORTOFVORTFIRSTLINEARTERMDEF} are at most linear
in the derivatives of the solution variables. 
We will now show that $\mathscr{Q}_1$, $\mathscr{Q}_2$, $\cdots$, $\mathscr{Q}_{21}$
can be expressed as null forms 
or terms that are at most linear
in the derivatives of the solution variables. 
To this end, we simply use
\eqref{E:Q2ID}-\eqref{E:Q21ID}
to algebraically substitute for
$\mathscr{Q}_2$,
$\mathscr{Q}_4$,
$\mathscr{Q}_5$,
$\mathscr{Q}_6$,
$\mathscr{Q}_7$,
$\mathscr{Q}_9$,
$\mathscr{Q}_{11}$,
$\mathscr{Q}_{12}$,
$\mathscr{Q}_{13}$,
$\mathscr{Q}_{14}$,
$\mathscr{Q}_{15}$,
$\mathscr{Q}_{16}$,
$\mathscr{Q}_{18}$,
$\mathscr{Q}_{19}$,
$\mathscr{Q}_{20}$,
and
$\mathscr{Q}_{21}$ 
(we do not substitute for
$\mathscr{Q}_1$,
$\mathscr{Q}_3$,
$\mathscr{Q}_8$,
$\mathscr{Q}_{10}$,
and
$\mathscr{Q}_{17}$
since these terms are already manifestly linear combinations of null forms).
Following this substitution, there are only two kinds of
terms on RHS~\eqref{E:REORGANIZEDVORTICITYTRANSPORT}:
null forms and terms that 
are at most linear
in the derivatives of the solution variables.
We now place all null forms on RHS~\eqref{E:VORTICITYOFVORTICITYEASYNULLFORMS}
except for null forms that involve the derivatives of
$\vort$
or
$\GradEnt$; these null forms we place directly on RHS~\eqref{E:TRANSPORTFORMODIFIEDVORTICITYOFVORTICITY}.
We then place all terms that are linear in
$
\mathcal{C}
$,
linear in
$
\mathcal{D}
$,
linear in the first-order derivatives of $\vort$,
or
linear in the first-order derivatives of $\GradEnt$
directly on RHS~\eqref{E:TRANSPORTFORMODIFIEDVORTICITYOFVORTICITY}.
Finally, we place all remaining terms,
which are at most linear
in the derivatives of the solution variables
and do not depend on the derivatives of
$\vort$ or $\GradEnt$, on RHS~\eqref{E:VORTICITYOFVORTICITYLINEARTERMS}.
This completes the proof of the proposition.

\end{proof}

\section{Local well-posedness with additional regularity for the vorticity and entropy}
\label{S:WELLPOSEDNESS}
Our main goal in this section is to prove Theorem~\ref{T:UPGRADEDLOCALWELLPOSEDNESS}, which
is a local well-posedness result for the relativistic Euler equations
based on our new formulation of the equations, that is, based on the equations of Theorem~\ref{T:NEWFORMULATIONRELEULER}. 
The main new feature of
Theorem~\ref{T:UPGRADEDLOCALWELLPOSEDNESS} compared to standard local well-posedness results for the
relativistic Euler equations
(see Theorem~\ref{T:STANDLOCALWELLPOSEDNESS} for a statement of standard local well-posedness)
is that it yields an extra degree of differentiability for the vorticity
and the entropy, assuming that the initial vorticity and entropy enjoy the same extra differentiability.
We stress that this gain in regularity holds even though the logarithmic enthalpy and four-velocity
do not generally enjoy the same gain. As we described in Subsect.\,\ref{SS:SHOCKWAVES}, 
this extra regularity for the vorticity and the entropy is essential for the study of shocks 
in more than one spatial dimension.

For convenience, instead of proving local well-posedness for the relativistic Euler equations on 
the standard Minkowski spacetime background, 
we instead consider the spacetime background $(\mathbb{R} \times \mathbb{T}^3,\upeta)$,
where the ``spatial manifold'' $\mathbb{T}^3$ is the standard three-dimensional torus and, 
relative to standard coordinates on $\mathbb{R} \times \mathbb{T}^3$, 
$\upeta_{\alpha \beta} := \mbox{\upshape diag}(-1,1,1,1)$ is the standard Minkowski metric.
Thus, strictly speaking, in this section, $\upeta$ denotes a tensor on a different manifold
compared to the rest of the paper, but this minor change has no substantial bearing on the
discussion. In particular, the relativistic Euler equations on
$(\mathbb{R} \times \mathbb{T}^3,\upeta)$ take the same form that they take in
Theorem~\ref{T:NEWFORMULATIONRELEULER}. The advantage of the compact spatial topology is 
that it allows for a simplified approach to some technical aspects of the proof of local well-posedness.
However, the arguments that we give in this section feature all of the main ideas needed 
to prove local well-posedness on the standard Minkowski spacetime background
(in which the spacetime manifold is diffeomorphic to $\mathbb{R}^{1+3}$).

\subsection{Notation, norms, and basic tools from analysis}
\label{SS:NOTATIONANDNORM}

\subsubsection{Notation}
\label{SSS:NOTATION}
Throughout this section, 
$\lbrace x^{\alpha} \rbrace_{\alpha = 0,1,2,3}$
denote standard rectangular coordinates on $\mathbb{R} \times \mathbb{T}^3$, 
where $\lbrace x^a \rbrace_{a=1,2,3}$ are standard local coordinates
on $\mathbb{T}^3$, and we often use the alternate notation $t := x^0$.
Note that even though $\lbrace x^a \rbrace_{a=1,2,3}$ are only locally defined on $\mathbb{T}^3$, 
the coordinate partial derivative vectorfields $\lbrace \partial_a \rbrace_{a=1,2,3}$ 
can be extended to a smooth global
frame on $\mathbb{T}^3$; by a slight abuse of notation, we will denote
the globally defined ``spatial'' frame by $\lbrace \partial_a \rbrace_{a=1,2,3}$,
and the corresponding globally defined ``spacetime frame'' by
$\lbrace \partial_{\alpha} \rbrace_{\alpha=0,1,2,3}$.
Also, we often use the alternate partial derivative notation $\partial_t := \partial_0$.
\begin{align} \label{E:SIGMATDEF}
	\Sigma_t 
	& := \lbrace (t,x) \ | \ x \in \mathbb{T}^3 \rbrace
\end{align}
denotes the standard flat constant-time hypersurface. 

Throughout Sect.\,\ref{S:WELLPOSEDNESS}, we use the same conventions for lowering and raising indices
stated in Subsect.\,\ref{SS:NOTATIONANDCONVENTIONS}, 
i.e., we lower and raise indices with the Minkowski metric and its inverse.
Note that for Latin ``spatial'' indices, this is equivalent to lowering and raising 
via the Euclidean metric $\updelta_{ij} = \mbox{\upshape diag}(1,1,1)$
and its inverse $\updelta^{ij} = \mbox{\upshape diag}(1,1,1)$.
Finally, we note that we sometimes identify the Euclidean metric or its inverse
with the Kronecker delta.

To each ``spatial multi-index'' $\vec{I} = (\iota_1,\iota_2,\iota_3)$,
where the $\iota_a$ are non-negative integers,
we associate the spatial differential operator 
$\partial_{\vec{I}} := \partial_1^{\iota_1} \partial_2^{\iota_2} \partial_3^{\iota_3}$.
Note that $\partial_{\vec{I}}$ is an operator of order $|\vec{I}| := \iota_1 + \iota_2 + \iota_3$.

If $V$ is a spacetime vectorfield or a one-form, 
then $\underline{V}$ denotes the $\upeta$-orthogonal projection of $V$ onto $\Sigma_t$,
that is, the ``spatial part'' of $V$.
For example, $\underline{\vort}$ is the vectorfield on $\Sigma_t$ 
with rectangular components
$\underline{\vort}^i := \vort^i$ for $i=1,2,3$.
Moreover, we use the notation
\begin{align} \label{E:STANDARDCURL}
	\curlthreed^i(W) := \varepsilon^{ijk} \partial_j W_k
\end{align}
to denote the standard Euclidean curl operator acting on one-forms on $\Sigma_t$,
where $\upepsilon^{ijk}$ is the fully antisymmetric symbol normalized by
$\upepsilon^{123} = 1$.

\subsubsection{Norms}
\begin{definition}[Lebesgue and Sobolev norms]
	\label{D:LEBESGUEANDSOBOLEVNORMS}
We define the following \linebreak Lebesgue norms for scalar functions $f$:
\begin{align} 
		\| f \|_{L^{\infty}(\mathbb{T}^3)}
	& := 
		\mbox{ess sup}_{x \in \mathbb{T}^3}
			|f(x)|,
		\label{E:LINFTYNORM} 
			\\
	\| f \|_{L^2(\mathbb{T}^3)}
	& := 
		\left\lbrace
			\int_{\mathbb{T}^3}
				f^2(x)
			\, dx
		\right\rbrace^{1/2},
			\label{E:L2NORM} 
\end{align}
where in the rest of Sect.\,\ref{S:WELLPOSEDNESS}, 
$dx := dx^1 dx^2 dx^3$ denotes the standard volume form on $\mathbb{T}^3$
induced by the Euclidean metric $\mbox{\upshape diag}(1,1,1)$. 

\begin{remark}[Extending the definitions of the norms from $\mathbb{T}^3$ to $\Sigma_t$]
	\label{R:NORMSALONGSIGMAT}
	In our proof of local well-posedness, 
	we will use norms in which 
	the manifold $\mathbb{T}^3$ from 
	Def.\,\ref{D:LEBESGUEANDSOBOLEVNORMS} is replaced
	with the constant time slice $\Sigma_t = \lbrace t \rbrace \times \mathbb{T}^3$,
	which is diffeomorphic to $\mathbb{T}^3$. We will not explicitly define these norms
	along $\Sigma_t$ since their definitions are obvious analogs of the ones appearing in
	Def.\,\ref{D:LEBESGUEANDSOBOLEVNORMS}. For example,
	$
	\| f \|_{L^2(\Sigma_t)}
	:= 
		\left\lbrace
			\int_{\Sigma_t}
				f^2(t,x)
			\, dx
		\right\rbrace^{1/2}
		$,
		which is also equal to
		$\left\lbrace
			\int_{\mathbb{T}^3}
				f^2(t,x)
			\, dx
		\right\rbrace^{1/2}
	$. Here, we are using that the volume form induced by the Minkowski metric on $\Sigma_t$
	equals $dx$.
	Similar remarks apply to other norms on 
	$\mathbb{T}^3$ introduced later in this subsubsection.
\end{remark}

We define the following Sobolev norms for integers $r \geq 0$:
\begin{subequations}
\begin{align}
	\| f \|_{H^r(\mathbb{T}^3)}
	& := 
		\left\lbrace
			\sum_{|\vec{I}| \leq r}
			\| \partial_{\vec{I}} f \|_{L^2(\mathbb{R}^3)}^2
		\right\rbrace^{1/2},
		\label{E:HMNORM}
		\\
\| f \|_{\dot{H}^r(\mathbb{T}^3)}
	& := 
		\left\lbrace
			\sum_{|\vec{I}| = r}
			\| \partial_{\vec{I}} f \|_{L^2(\mathbb{R}^3)}^2
		\right\rbrace^{1/2}.
		\label{E:HOMOGENEOUSHMNORM}
\end{align}
\end{subequations}

If $r \in \mathbb{R}$ is not an integer, then we define\footnote{As is well known, 
when $r$ is an integer, RHS~\eqref{E:NOTINTEGERHMNORM} defines a norm
that is equivalent to the norm defined in \eqref{E:HMNORM}.}
\begin{align} \label{E:NOTINTEGERHMNORM}
\| f \|_{H^r(\mathbb{T}^3)}
	& := 
		\left\lbrace
			\sum_{(k_1,k_2,k_3) \in \mathbb{Z}^3}
				(1 + |k|^2)^r \left|\hat{f}(k_1,k_2,k_3) \right|^2
		\right\rbrace^{1/2},
\end{align}
where 
$\hat{f}(k_1,k_2,k_3) 
:=
\int_{\mathbb{T}^3} 
	f(x) e^{- 2 \pi i \sum_{a=1}^3 x^a k_a} 
\, dx
$
is the spatial Fourier transform of $f$
and
$|k|^2 := \sum_{a=1}^3 k_a^2$.

\end{definition}

If $U = (U_1,\cdots,U_m)$ is an array of scalar-valued functions
and $\| \cdot \|$ denotes any of the norms introduced in this subsubsection,
then we define
\begin{align} \label{E:NORMOFARRAY}
	\| U \|
	:= \sum_{a=1}^m \| U_a \|.
\end{align}

\begin{definition}[Some additional function spaces]
	\label{D:CONTINOUOUSMAPSINTOBANACHSPACES}
	If $\mathfrak{B}$ is a Banach space with norm $\| \cdot \|_{\mathfrak{B}}$
	and $r \geq 0$ is an integer,
	then $C^r([0,T],\mathfrak{B})$ denotes the space of 
	$r$-times continuously differentiable functions from $[0,T]$ to $\mathfrak{B}$.
	We omit the superscript when $r=0$.
	We denote the corresponding norm of an element $f$ of this space by
	$\| f \|_{C^r([0,T],\mathfrak{B})}
	: =
	\max_{t \in [0,T]}
	\sum_{k=0}^r
	\|  f^{(k)}(t) \|_{\mathfrak{B}}
	$,
	where $f^{(k)}$ denotes the $k^{th}$ derivative of $f$ with respect to $t$.

	$L^{\infty}([0,T],\mathfrak{B})$
	denotes the space of 
	functions from $[0,T]$ to $\mathfrak{B}$
	that are essentially bounded over the interval $[0,T]$.
	We denote the corresponding norm of an element $f$ of this space by
	$\| f \|_{L^{\infty}([0,T],\mathfrak{B})}
	: =
	\mbox{ess sup}_{t \in [0,T]}
	\|  f(t) \|_{\mathfrak{B}}$.
	
	$C^r(\mathbb{T}^3)$ denotes the space of functions on $\mathbb{T}^3$
	that are $r$-times continuously differentiable.	
	We omit the superscript when $r=0$.
	We denote the corresponding norm of an element $f$ of this space by
	\[
	\| f \|_{C^r(\mathbb{T}^3)} := 
	\sum_{|\vec{I}| \leq r} \max_{x \in \mathbb{T}^3} |\partial_{\vec{I}}f(x)|.
	\]
\end{definition}

We now fix, for the rest of Sect.\,\ref{S:WELLPOSEDNESS}, an integer $N$ subject to
\begin{align} \label{E:NISATLEASTTHREE}
	N \geq 3.
\end{align}

\subsubsection{Basic analytical tools}
In our analysis, we will rely on the following standard results;
see, e.g., \cites{AdamsSobolev,PalaisSeminar,TaylorPDE3} for proofs.

\begin{lemma} [Sobolev embedding, product, difference, and interpolation estimates]
\label{L:STANDARDSOBOLEVRESULTS}
If $r > 3/2$, then $H^r(\mathbb{T}^{3})$ continuously embeds into
$C(\mathbb{T}^{3})$,
and there exists a constant $C_r > 0$ such that the following estimate
holds for $v \in H^r(\mathbb{T}^{3})$:
\begin{align} \label{E:STANDARDSOBOLEVEMBEDDING}
	\| v \|_{C(\mathbb{T}^{3})}
	& 
	\leq 
	C_r \| v \|_{H^r(\mathbb{T}^{3})}.
\end{align}

Let $r \geq 0$ be an integer and let $v:= (v_1,\cdots,v_A)$
and $w:= (w_1,\cdots,w_B)$
be finite-dimensional arrays of real-valued
functions on $\mathbb{T}^{3}$ such that $v_a \in \dot{H}^r(\mathbb{T}^{3}) \cap C(\mathbb{T}^{3})$
for $1 \leq a \leq A$ and 
$w_b \in C(\mathbb{T}^{3})$
$1 \leq b \leq B$.
Let
\begin{align} \label{E:ORDERROPERATORS}
	\mathcal{I}_r 
	& := \left\lbrace (\vec{I}_1,\cdots,\vec{I}_A) \ | \ \sum_{a=1}^A |\vec{I}_a| = r \right\rbrace.
\end{align}
Assume that $w(\mathbb{T}^3) \subset \mbox{\upshape int} \mathscr{K}$,
where $\mathscr{K}$ is a compact subset of $\mathbb{R}^B$,
and let $\mathrm{f}$ be a smooth real-valued function 
on an open subset of $\mathbb{R}^B$ containing $\mathscr{K}$.
Then the following estimate holds:
	\begin{align} \label{E:WITHSMOOTHFUNCTIONFRAMENORML2PRODUCTBOUNDINERMSOFLINFINITYANDHSDOT}
		&
		\max_{(\vec{I}_1,\cdots,\vec{I}_A) \in \mathcal{I}_r}
		\left\| 
			\mathrm{f}(w) \prod_{a=1}^A \partial_{\vec{I}_a} v_a 
		\right\|_{L^2(\mathbb{T}^3)}
			\\
		& 
		\leq
		C_{\mathrm{f},\mathscr{K},r}
		\sum_{a=1}^A \| v_a \|_{\dot{H}^r(\mathbb{T}^{3})} 
		\prod_{b \neq a} \| v_b \|_{C(\mathbb{T}^{3})}.
		\notag
	\end{align}
	
	Moreover, under the same assumptions stated in the previous paragraph, 
	if $(\vec{I}_1,\cdots,\vec{I}_A) \in \mathcal{I}_r$, 
	then the map $(v,w) \rightarrow \mathrm{f}(w) \prod_{a=1}^A \partial_{\vec{I}_a} v_a$
	is continuous from $\left(\dot{H}^r(\mathbb{T}^{3}) \cap C(\mathbb{T}^{3})\right)^A 
	\times \left(C(\mathbb{T}^{3})\right)^B$ 
	to $L^2(\mathbb{T}^{3})$.
	In particular,
	let $\updelta = \updelta_w > 0$ be such that the following holds:\footnote{Such a $\updelta > 0$ exists due to the compactness
of $w(\mathbb{T}^3)$ and $\mathscr{K}$, where the compactness of
$w(\mathbb{T}^3)$ follows from the assumption that the $v_a$ are continuous.}
if
$d(p,w(\mathbb{T}^3)) < \updelta$,
$d(q,w(\mathbb{T}^3)) < \updelta$,
and $d(p,q) < \updelta$,
where $d$ is the standard Euclidean distance function on $\mathbb{R}^B$,
then the straight line segment 
joining $p$ to $q$ is contained in $\mbox{\upshape int} \mathscr{K}$.
Then if $(v,w)$ and $(\widetilde{v},\widetilde{w})$ are two array pairs
of the type described in the previous paragraph such that
$
		\| w - \widetilde{w} \|_{C(\mathbb{T}^3)}
		\leq \updelta
	$,
	and if $r > 3/2$,
	then the following estimate holds
	(where the function $\mathrm{f}$ is assumed to be
	the same in both appearances on LHS~\eqref{E:CONTINUITYESTIMATEWITHSMOOTHFUNCTIONFRAMENORML2PRODUCTBOUNDINERMSOFLINFINITYANDHMDOT}
	and $\mathcal{I}_r$ is defined by \eqref{E:ORDERROPERATORS}):
	\begin{align} \label{E:CONTINUITYESTIMATEWITHSMOOTHFUNCTIONFRAMENORML2PRODUCTBOUNDINERMSOFLINFINITYANDHMDOT}
		\max_{(\vec{I}_1,\cdots,\vec{I}_A) \in \mathcal{I}_r}
		&
		\left\| 
			\mathrm{f}(w) \prod_{a=1}^A \partial_{\vec{I}_a} v_a 
			-
			\mathrm{f}(\widetilde{w}) \prod_{a=1}^A \partial_{\vec{I}_a} \widetilde{v}_a 
		\right\|_{L^2(\mathbb{T}^3)}
			\\
		& \leq
		C_{\mathrm{f},\mathscr{K},\| v \|_{H^r(\mathbb{T}^{3})},\| \widetilde{v} \|_{H^r(\mathbb{T}^{3})},A,r}
		\left\lbrace
			\| v - \widetilde{v} \|_{H^r(\mathbb{T}^{3})}
			+
			\| w - \widetilde{w} \|_{C(\mathbb{T}^{3})}
		\right\rbrace.
		\notag
	\end{align}
	
Furthermore, if $r > 3/2$ and $v_a \in H^r(\mathbb{T}^{3})$ for $a=1,2$, then
$v_1 v_2 \in H^r(\mathbb{T}^{3})$,
and there exists a constant $C_r > 0$ such that
\begin{align} \label{E:SOBOLEVALGEBRAPROP}
	\| v_1 v_2 \|_{H^r(\mathbb{T}^{3})}
	& \leq C_r \| v_1 \|_{H^r(\mathbb{T}^{3})} \| v_2 \|_{H^r(\mathbb{T}^{3})},
\end{align} 
and function multiplication $(v_1,v_2) \rightarrow v_1 v_2$
is a continuous map from \linebreak $H^r(\mathbb{T}^{3}) \times H^r(\mathbb{T}^{3})$ to $H^r(\mathbb{T}^{3})$.

Finally, if $0 \leq s \leq r$ and $v \in H^r(\mathbb{T}^{3})$, then 
there exists a constant $C_{r,s} > 0$ such that
\begin{align} \label{E:STANDARDSOBOLEVINTERPOLATION}
	\| v \|_{H^s(\mathbb{T}^3)}
	& \leq C_{r,s} 
		\| v \|_{L^2(\mathbb{T}^3)}^{1-\frac{s}{r}}
		\| v \|_{H^r(\mathbb{T}^3)}^{\frac{s}{r}}.
\end{align}
\end{lemma}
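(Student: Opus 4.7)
The plan is to derive all five estimates from standard Fourier-analytic tools on $\mathbb{T}^3$ together with the Gagliardo--Nirenberg interpolation inequality. First, for the Sobolev embedding \eqref{E:STANDARDSOBOLEVEMBEDDING}, I would pass to Fourier series and estimate
\[
\|v\|_{C(\mathbb{T}^3)} \leq \sum_{k \in \mathbb{Z}^3} |\hat v(k)| \leq \Bigl(\sum_k (1+|k|^2)^{-r}\Bigr)^{1/2} \|v\|_{H^r(\mathbb{T}^3)}
\]
by Cauchy--Schwarz; the first factor is finite precisely when $2r > 3$. The interpolation inequality \eqref{E:STANDARDSOBOLEVINTERPOLATION} follows from the same tools: writing $(1+|k|^2)^s = (1+|k|^2)^{r(s/r)} \cdot 1^{1-s/r}$ and applying H\"older's inequality to the Plancherel expression for $\|v\|_{H^s(\mathbb{T}^3)}^2$ with exponents $r/s$ and $r/(r-s)$ yields the desired interpolation.

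For the product estimate \eqref{E:WITHSMOOTHFUNCTIONFRAMENORML2PRODUCTBOUNDINERMSOFLINFINITYANDHSDOT}, I would pull out $\|\mathrm{f}(w)\|_{L^\infty(\mathbb{T}^3)} \leq \sup_{\mathscr{K}} |\mathrm{f}|$ and bound the remaining product using the Gagliardo--Nirenberg inequality: setting $r_a := |\vec{I}_a|$ and $p_a := 2r/r_a$, one has
\[
\|\partial_{\vec{I}_a} v_a\|_{L^{p_a}(\mathbb{T}^3)} \leq C \|v_a\|_{L^\infty(\mathbb{T}^3)}^{1-r_a/r} \|v_a\|_{\dot H^r(\mathbb{T}^3)}^{r_a/r}.
\]
H\"older's inequality with $\sum 1/p_a = \sum r_a/(2r) = 1/2$ then yields
\[
\Bigl\|\prod_a \partial_{\vec{I}_a} v_a\Bigr\|_{L^2(\mathbb{T}^3)} \leq C \prod_a \|v_a\|_{L^\infty(\mathbb{T}^3)}^{1-r_a/r} \|v_a\|_{\dot H^r(\mathbb{T}^3)}^{r_a/r}.
\]
Factoring out $\prod_a \|v_a\|_{L^\infty(\mathbb{T}^3)}$ and applying weighted AM--GM to the ratios $\|v_a\|_{\dot H^r(\mathbb{T}^3)}/\|v_a\|_{L^\infty(\mathbb{T}^3)}$ with weights $r_a/r$ (which sum to $1$) then converts the product into the sum on the right-hand side of \eqref{E:WITHSMOOTHFUNCTIONFRAMENORML2PRODUCTBOUNDINERMSOFLINFINITYANDHSDOT}.

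The continuity estimate \eqref{E:CONTINUITYESTIMATEWITHSMOOTHFUNCTIONFRAMENORML2PRODUCTBOUNDINERMSOFLINFINITYANDHMDOT} follows by telescoping the difference into one term of the form $(\mathrm{f}(w) - \mathrm{f}(\widetilde w)) \prod_a \partial_{\vec{I}_a} v_a$ plus $A$ terms in each of which a single factor $\partial_{\vec{I}_c} v_c$ is replaced by $\partial_{\vec{I}_c}(v_c - \widetilde v_c)$. The smallness hypothesis $\|w - \widetilde w\|_{C(\mathbb{T}^3)} \leq \updelta$ together with the mean-value theorem applied along the straight segments lying inside $\mathscr{K}$ gives $\|\mathrm{f}(w) - \mathrm{f}(\widetilde w)\|_{C(\mathbb{T}^3)} \leq C_{\mathrm{f},\mathscr{K}} \|w - \widetilde w\|_{C(\mathbb{T}^3)}$, and each term is then controlled by applying \eqref{E:WITHSMOOTHFUNCTIONFRAMENORML2PRODUCTBOUNDINERMSOFLINFINITYANDHSDOT}, using the Sobolev embedding ($r > 3/2$) to bound the $C$-norms of $v_a$ and $\widetilde v_a$ by their $H^r$-norms. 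The algebra property \eqref{E:SOBOLEVALGEBRAPROP} is then a direct corollary: expand $\partial_{\vec{I}}(v_1 v_2)$ via the Leibniz rule, apply the product estimate with $A=2$ and $\mathrm{f}\equiv 1$, and invoke \eqref{E:STANDARDSOBOLEVEMBEDDING} to replace $L^\infty$-norms by $H^r$-norms; continuity of the multiplication map follows by the same telescoping trick applied to $v_1 v_2 - \widetilde v_1 \widetilde v_2 = (v_1 - \widetilde v_1) v_2 + \widetilde v_1 (v_2 - \widetilde v_2)$. The only subtlety, requiring the most care, is the bookkeeping in \eqref{E:WITHSMOOTHFUNCTIONFRAMENORML2PRODUCTBOUNDINERMSOFLINFINITYANDHSDOT} when some $r_a$ equals $0$ or $r$ (where the exponents $p_a$ degenerate at $\infty$ or $2$); these cases are handled by directly placing the corresponding factor in $L^\infty(\mathbb{T}^3)$ or $L^2(\mathbb{T}^3)$ and adjusting the H\"older partition so that the remaining exponents still sum to $1/2$.
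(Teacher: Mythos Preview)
Your sketch is correct and follows the standard arguments one finds in the references the paper cites. Note, however, that the paper itself does not give a proof of this lemma: it simply states the results and refers the reader to standard texts (Adams, Palais, Taylor) for proofs. So there is no ``paper's own proof'' to compare against here --- your proposal goes beyond what the paper does by actually outlining the arguments. The Fourier/Cauchy--Schwarz proof of embedding, the H\"older-on-Fourier-side proof of interpolation, the Gagliardo--Nirenberg/H\"older/AM--GM route to the Moser-type product estimate, and the telescoping argument for the difference estimate are all the canonical approaches and are precisely what one would find in those references.
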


\begin{remark}[The same estimates hold along $\Sigma_t$]
	All of the results of Lemma \linebreak \ref{L:STANDARDSOBOLEVRESULTS}
	hold verbatim if we replace $\mathbb{T}^3$ by $\Sigma_t$
	throughout.
\end{remark}

\subsubsection{An $L^2$-in-time continuity result for transport equations}
We will use the following simple technical result in our proof of local well-posedness.

\begin{lemma}[An $L^2$-in-time continuity result for transport equations]
	\label{L:CONTINUITYRESULTFORTRANSPORTEQUATIONS}
	Let $T > 0$. Assume that $\mathscr{F} \in L^{\infty}\big([0,T],L^2(\mathbb{T}^3) \big)$,
	and let $f$ be the solution to the inhomogeneous transport equation initial value problem
	\begin{align}	
		u^{\alpha} \partial_{\alpha} f
		& = \mathscr{F},
			\\
		f|_{\Sigma_0}
		& := \mathring{f} \in L^2(\Sigma_0).
	\end{align}
	Assume further that $u^{\alpha} \in L^{\infty}\big([0,T],C^1(\mathbb{T}^3) \big)$ for $\alpha = 0,1,2,3$.
	Then
	\begin{align} \label{E:L2CONTINUITYFORTRANSPROTEQUATION}
		f \in C\big([0,T],L^2(\mathbb{T}^3)\big).
	\end{align}
\end{lemma}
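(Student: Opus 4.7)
The plan is to prove continuity by representing $f$ via the method of characteristics. Since $u$ is the future-directed four-velocity satisfying $u^{\kappa} u_{\kappa} = -1$, one has $u^0 \geq 1$, so the equation may be divided by $u^0$ and rewritten as $\partial_t f + b^a \partial_a f = \widetilde{G}$, with $b^a := u^a/u^0 \in L^{\infty}([0,T], C^1(\mathbb{T}^3))$ and $\widetilde{G} := \mathscr{F}/u^0 \in L^{\infty}([0,T], L^2(\mathbb{T}^3))$. Because $b$ is Lipschitz in $x$ with essentially bounded Lipschitz constant in $t$, the Carath\'{e}odory form of Cauchy--Lipschitz yields a flow $\psi_{s,t}: \mathbb{T}^3 \to \mathbb{T}^3$ solving $\frac{d}{dt}\psi_{s,t}(x) = b(t,\psi_{s,t}(x))$, $\psi_{s,s}(x) = x$. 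Gronwall applied to the variational equation shows that each $\psi_{s,t}$ is a bi-Lipschitz diffeomorphism whose Jacobian determinant is bounded above and away from zero uniformly in $(s,t) \in [0,T]^2$, and that $(s,t,x) \mapsto \psi_{s,t}(x)$ is jointly continuous.

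The characteristic representation
\begin{align*}
f(t,y) = \mathring{f}(\psi_{t,0}(y)) + \int_0^t \widetilde{G}(s, \psi_{t,s}(y)) \, ds
\end{align*}
then reduces the claim to checking continuity of each summand as a map into $L^2(\mathbb{T}^3)$. A change of variables combined with the uniform Jacobian estimate gives $\|v \circ \psi_{t,s}\|_{L^2(\mathbb{T}^3)} \leq C \|v\|_{L^2(\mathbb{T}^3)}$ for all $v \in L^2(\mathbb{T}^3)$, uniformly in $(s,t) \in [0,T]^2$. For the first term, if $v \in C(\mathbb{T}^3)$ then pointwise continuity of $\psi_{t,0}$ in $t$ together with dominated convergence yields $v \circ \psi_{t,0} \to v \circ \psi_{t_0,0}$ in $L^2$; approximating $\mathring{f}$ by continuous functions and using the uniform operator bound extends this to general $\mathring{f}$ via a three-$\varepsilon$ argument. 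For the integral term, the splitting
\begin{align*}
\int_0^t \widetilde{G}(s, \psi_{t,s}) \, ds - \int_0^{t_0} \widetilde{G}(s, \psi_{t_0,s}) \, ds = \int_{t_0}^t \widetilde{G}(s, \psi_{t,s}) \, ds + \int_0^{t_0} \left[ \widetilde{G}(s, \psi_{t,s}) - \widetilde{G}(s, \psi_{t_0,s}) \right] ds
\end{align*}
controls the first piece by $C|t - t_0| \|\widetilde{G}\|_{L^{\infty}([0,T], L^2(\mathbb{T}^3))}$ via Minkowski's inequality, while the second is treated by applying the composition-continuity argument pointwise in $s$ (yielding vanishing of the $L^2_x$ norm of the integrand as $t \to t_0$) together with dominated convergence in $s$ (the integrand is dominated by $2C \|\widetilde{G}(s,\cdot)\|_{L^2} \in L^1([0,t_0])$).

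The main obstacle is the low time regularity of $u$: since $u^{\alpha}$ is only $L^{\infty}$ in $t$, one cannot differentiate $u^0$ in time, so the direct energy-identity route based on $\frac{d}{dt} \int u^0 f^2 \, dx$ is unavailable. The characteristic approach sidesteps this, as constructing the flow only needs measurability in $t$ together with uniform Lipschitz bounds in $x$. A secondary subtlety is that an $L^{\infty}_t L^2_x$ source cannot be approximated by continuous-in-$t$ functions in its own norm; this is precisely why the argument is organized to work pointwise in the $s$-variable, with the $s$-integration handled by dominated convergence rather than by approximation in $t$.
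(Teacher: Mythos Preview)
Your proof is correct but follows a different route from the paper. The paper proceeds by an energy method: it approximates $\mathring{f}$ in $L^2$ by smooth functions $\mathring{f}_k$, derives an integral identity for $\|f - \mathring{f}_k\|_{L^2(\Sigma_t)}^2$ by integrating the divergence identity $\partial_t(f-\mathring{f}_k)^2 + \partial_a\bigl[(u^a/u^0)(f-\mathring{f}_k)^2\bigr] = \{\partial_a(u^a/u^0)\}(f-\mathring{f}_k)^2 + 2(f-\mathring{f}_k)(u^0)^{-1}\{-u^a\partial_a\mathring{f}_k + \mathscr{F}\}$ (obtained after dividing the transport equation by $u^0$), applies Gronwall to get $\|f-\mathring{f}_k\|_{L^2(\Sigma_t)}^2 \leq \bigl[k^{-2} + C_T t(1+\|\mathring{f}_k\|_{H^1}^2)\bigr]e^{C_T t}$, and then sends $t\downarrow 0$ followed by $k\to\infty$. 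Your characteristic method avoids energy identities altogether, constructing the flow via Carath\'eodory and writing $f$ explicitly. Both approaches ultimately exploit only the spatial $C^1$ regularity of $u$ and never differentiate $u$ in time; your remark that the energy route is unavailable is therefore too pessimistic---it is only the specific form $\frac{d}{dt}\int u^0 f^2\,dx$ that would require $\partial_t u^0$, whereas the paper's divergence-form identity (after division by $u^0$) involves only $\partial_a(u^a/u^0)$. Your approach has the advantage of giving an explicit representation and handling the low-regularity solution concept transparently; the paper's is shorter and avoids the flow machinery, at the cost of needing the approximating data $\mathring{f}_k$ to lie in $H^1$ so that $u^a\partial_a\mathring{f}_k$ makes sense.
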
	

\begin{proof}
	We will prove right continuity at $t=0$; continuity at any other time
	$t \in (0,T]$ could be proved using similar arguments.
	More precisely, we will show that
	\begin{align} \label{E:ENERGYESTIMATETRANSPORTORIGINALDATASUBTRACTEDOFFLIMCONTINUITYESTIMATE}
		\lim_{t \downarrow 0}
		\left\| 
			f(t,\cdot) 
			- 
			\mathring{f}
		\right\|_{L^2(\mathbb{T}^3)}
		& = 0.
	\end{align}
	To proceed, we let $\lbrace \mathring{f}_k \rbrace_{k = 1}^{\infty} \subset C^{\infty}(\mathbb{T}^3)$
	be a sequence of smooth functions such that
	\begin{align} \label{E:SMOOTHFUNCTIONSNEARDATA}
		\|  \mathring{f} - \mathring{f}_k \|_{L^2(\Sigma_0)}
		& \leq \frac{1}{k}.
	\end{align}
	Note that
	\begin{align} \label{E:TRANSPORTEQUATIONWITH}	
		u^{\alpha} \partial_{\alpha} (f - \mathring{f}_k)
		& = 
				-
				u^a \partial_a \mathring{f}_k
				+
				\mathscr{F}.
	\end{align}
	Hence, a standard integration by parts argument 
	based on the divergence identity
	\begin{align} \label{E:DIVIDL2CONTINUITYFORTRANSPORT}
	\partial_t 
	\left\lbrace
		(f - \mathring{f}_k)
	\right\rbrace^2
	& =
	\left\lbrace
					\partial_a
					\left(
						\frac{u^a}{u^0}
					\right)
				\right\rbrace
				(f - \mathring{f}_k)^2
					\\
	& \ \
	+
	2 
	\frac{(f - \mathring{f}_k)}{u^0}
	\left\lbrace
		-
		u^a \partial_a \mathring{f}_k
		+
		\mathscr{F}
	\right\rbrace
		\notag \\
& \ \
	-
	\partial_a
	\left\lbrace
			\left(
						\frac{u^a}{u^0}
					\right)
	(f - \mathring{f}_k)^2
	\right\rbrace
	\notag
	\end{align}
	yields that
	for $0 \leq t \leq T$, we have
	\begin{align} \label{E:BASICENERGYIDTRANSPORT}
		\| f - \mathring{f}_k \|_{L^2(\Sigma_t)}^2
		& =
		\| \mathring{f} - \mathring{f}_k \|_{L^2(\Sigma_0)}^2
			\\
	& \ \
		+
		\int_{\tau = 0}^t
			\int_{\Sigma_{\tau}}
				\left\lbrace
					\partial_a
					\left(
						\frac{u^a}{u^0}
					\right)
				\right\rbrace
				(f - \mathring{f}_k)^2
			\, dx
		\, d \tau
		\notag	\\
	& \	\
		+
		2
		\int_{\tau = 0}^t
			\int_{\Sigma_{\tau}}
				\frac{(f - \mathring{f}_k)}{u^0}
				\left\lbrace
					-
					u^a \partial_a \mathring{f}_k
					+
					\mathscr{F}
				\right\rbrace
			\, dx
		\, d \tau.
		\notag
	\end{align}
	In particular, from 
	\eqref{E:SMOOTHFUNCTIONSNEARDATA},
	\eqref{E:BASICENERGYIDTRANSPORT},
	our assumptions on $\mathscr{F}$ and $u^{\alpha}$,
	and Young's inequality, 
	we find that if $0 \leq t \leq T$,
	then there is a constant $C_T$ 
	(independent of $k$)
	such that
	\begin{align} \label{E:FIRSTINEQUALITYDERIVEDFROMBASICENERGYIDTRANSPORT}
		\| f - \mathring{f}_k \|_{L^2(\Sigma_t)}^2
		& 
		\leq
		\frac{1}{k^2}
		+
		C_T
		\int_{\tau = 0}^t
			\left\lbrace
				1
				+
				\| \mathring{f}_k \|_{H^1(\Sigma_0)}^2
			\right\rbrace
		\, d \tau
			\\
	& \ \
		+
		C_T
		\int_{\tau = 0}^t
			\| f - \mathring{f}_k \|_{L^2(\Sigma_{\tau})}^2
		\, d \tau.
		\notag
	\end{align}
	From \eqref{E:FIRSTINEQUALITYDERIVEDFROMBASICENERGYIDTRANSPORT}
	and Gronwall's inequality,
	we deduce (allowing $C_T$ to vary from line to line in the rest of the proof) that
	if $0 \leq t \leq T$,
	then the following inequality holds:
	\begin{align} \label{E:BASICENERGYTRANSPORTGRONWALLED}
		\| f - \mathring{f}_k \|_{L^2(\Sigma_t)}^2
		& \leq 
			\left\lbrace
				\frac{1}{k^2}
				+
				C_T t
				\left(
				1
				+
				\| \mathring{f}_k \|_{H^1(\Sigma_0)}^2
			\right)
			\right\rbrace
			\exp(C_T t).
	\end{align}
	From \eqref{E:BASICENERGYTRANSPORTGRONWALLED},
	\eqref{E:SMOOTHFUNCTIONSNEARDATA},
	and the triangle inequality,
	it follows that
	\begin{align} \label{E:TRANSPORTEQUATIONBOUNDEDL2NROM}
		\lim_{t \to 0^+} \sup_{0 \leq \tau \leq t}  \| f - \mathring{f} \|_{L^2(\Sigma_{\tau})}
		& \leq \frac{2}{k}.
	\end{align}
	Finally, allowing $k \to \infty$ in \eqref{E:TRANSPORTEQUATIONBOUNDEDL2NROM},
	we conclude \eqref{E:ENERGYESTIMATETRANSPORTORIGINALDATASUBTRACTEDOFFLIMCONTINUITYESTIMATE}.
	We have therefore proved the lemma.
	
\end{proof}

\subsection{The regime of hyperbolicity}
\label{SS:REGIMEOFHYPERBOLICITY}
Our proof of well-posedness relies on a standard assumption, namely that the
solution lies in the interior of the region of state space where the equations
are hyperbolic without degeneracy. This notion is precisely captured by the next
definition.

\begin{definition}[Regime of hyperbolicity]
	\label{D:REGIMEOFHYPERBOLICITY}
	We define the regime of hyperbolicity $\mathcal{H}$ to be the following subset of state-space:
	\begin{align} \label{E:REGIMEOFHYPERBOLICITY}
		\mathcal{H}
		:= \left\lbrace
					(\Lnenth,\Ent,u^1,u^2,u^3) \in \mathbb{R}^5
					\ | \ 0 < \speed(\Lnenth,\Ent) \leq 1
			 \right\rbrace.
	\end{align}
\end{definition}

\subsection{Standard local well-posedness}
\label{SS:STANDARDLOCALWELLPOSEDNESS}
Our principal goal in this subsection is to state Theorem~\ref{T:UPGRADEDLOCALWELLPOSEDNESS},
which is our main local well-posedness result exhibiting the gain
in regularity for the vorticity and entropy. Most aspects of the theorem are standard.
We summarize these standard aspects in Theorem~\ref{T:STANDLOCALWELLPOSEDNESS},
which will serve as a precursor to our proof of Theorem~\ref{T:UPGRADEDLOCALWELLPOSEDNESS}.

\begin{remark}[Some non-standard aspects of Theorem~\ref{T:UPGRADEDLOCALWELLPOSEDNESS}]
One of the non-standard aspects of Theorem~\ref{T:UPGRADEDLOCALWELLPOSEDNESS}
is that it shows the continuous time-dependence of the top-order derivatives of
$\vort$ and $\Ent$ in the norm $\| \cdot \|_{L^2(\Sigma_t)}$.
The proof relies on some results that are not easy to locate in the literature,
tied in part to the fact that the required estimates are of elliptic-hyperbolic type.
In our proof of Theorem~\ref{T:UPGRADEDLOCALWELLPOSEDNESS}, we will show how
to obtain these top-order time-continuity results.
A second non-standard aspect of Theorem~\ref{T:UPGRADEDLOCALWELLPOSEDNESS}
is that the $\mbox{\upshape transport}$-$\mbox{\upshape div}$-$\mbox{\upshape curl}$ systems 
(specifically \eqref{E:MAINTHMSMODIFIEDDIVERGENCEENTROPYGRADIENTTRANSPORT}-\eqref{E:MAINTHMUPERPCURLOFENTROPYGRADIENTISZERO} 
and \eqref{E:MAINTHMDIVOFVORTICITY}-\eqref{E:MAINTHMTRANSPORTFORMODIFIEDVORTICITYOFVORTICITY})
leading to the gain in
regularity for $\vort$ and $\Ent$
involve \emph{spacetime} divergence and curl operators.
Hence, additional arguments are needed to
obtain the needed \emph{spatial} elliptic estimates along 
$\Sigma_t$; 
the key ingredients in this vein 
are provided by the identity \eqref{E:ALGEBRAICEXPRESSIONFORICOMMUTEDGEOMETRICDIVERGENCEOFVORTICITY} 
and Lemma~\ref{L:COMPARISONBASEDONELLIPTIC}.
\end{remark}

\begin{remark}[The ``fundamental'' initial data]
	In the rest of Sect.\,\ref{S:WELLPOSEDNESS}, we view
	$\mathring{\Lnenth} := \Lnenth|_{\Sigma_0}$,
	$\mathring{\Ent} := \Ent|_{\Sigma_0}$,
	and
	$\mathring{u}^i := u^i|_{\Sigma_0}$
	to be the ``fundamental'' initial data in the following sense:
	with the help of the relativistic Euler equations 
	\eqref{E:ENTHALPYEVOLUTION}-\eqref{E:ENTROPYEVOLUTION} + \eqref{E:UISUNITLENGTH},
	along $\Sigma_0$, all of the other quantities that are relevant for our analysis
	can be expressed in term of the fundamental initial data;
	see Lemma~\ref{L:EXPRESSIONSFORALGEBRAICQUANTITIESINTERMSOFHYPERBOLICANDELLIPTIC}.
\end{remark}

\begin{theorem}[Standard local well-posedness]
	\label{T:STANDLOCALWELLPOSEDNESS}
	Let
	$\mathring{\Lnenth} := \Lnenth|_{\Sigma_0}$,
	$\mathring{\Ent} := \Ent|_{\Sigma_0}$,
	and
	$\mathring{u}^i := u^i|_{\Sigma_0}$
	be initial data\footnote{The datum $u^0|_{\Sigma_0}$ is determined from the other data by virtue of the constraint \eqref{E:UISUNITLENGTH}.} 
	for the relativistic Euler equations
	\eqref{E:ENTHALPYEVOLUTION}-\eqref{E:ENTROPYEVOLUTION} + \eqref{E:UISUNITLENGTH}.
	Assume that for some integer $N \geq 3$, we have
	\begin{align} \label{E:FIRSTTHMSTANDARDDATAASSUMPTIONS} 
		\mathring{\Lnenth},
			\,
		\mathring{\Ent},
			\,
		\mathring{u}^i
		& \in H^N(\Sigma_0).
	\end{align}
	Assume moreover that there is a compact subset 
	$\mathfrak{K} \subset \mbox{\upshape int} \mathcal{H}$ 
	(where $\mbox{int} \mathcal{H}$ is the interior of $\mathcal{H}$)
	such that for all $p \in \Sigma_0$, we have
	\[
	(\mathring{\Lnenth}(p),\mathring{\Ent}(p),\mathring{u}^1(p),\mathring{u}^2(p),\mathring{u}^3(p)) 
	\in \mbox{\upshape int} \mathfrak{K}.
	\]
	Then there exists a time $T > 0$ depending only on\footnote{In fact, using
	additional arguments not presented here, one can show that
	for any fixed real number $r > 5/2$,
	the time of existence can be controlled by a function of
	$\mathfrak{K}$, 
	$\| \mathring{\Lnenth} \|_{H^r(\Sigma_0)}$,
	$\| \mathring{\Ent} \|_{H^r(\Sigma_0)}$,
	and $\| \mathring{u}^i \|_{H^r(\Sigma_0)}$. Of course, if the initial data enjoy additional Sobolev regularity,
	then the additional regularity persists in the solution during its classical lifespan.
	\label{FN:SHARPERTIMEOFEXISTENCENORMS}}
	$\mathfrak{K}$, 
	$\| \mathring{\Lnenth} \|_{H^3(\Sigma_0)}$,
	$\| \mathring{\Ent} \|_{H^3(\Sigma_0)}$,
	and $\| \mathring{u}^i \|_{H^3(\Sigma_0)}$,
	such that a unique classical solution 
	$(\Lnenth,\Ent,u^{\alpha},\vort^{\alpha})$
	exists on the slab
	$[0,T] \times \mathbb{T}^3$ 
	and satisfies
	$(\Lnenth(p),\Ent(p),u^1(p),u^2(p),u^3(p)) \in \mbox{int} \mathfrak{K}$
	for $p \in [0,T] \times \mathbb{T}^3$.
	Moreover, the solution depends continuously on the initial data,\footnote{In particular,
	there is a $\left(H^3(\Sigma_0) \right)^5$-neighborhood of 
	$(\mathring{\Lnenth},\mathring{\Ent},\mathring{u}^i)$
	such that all data in the neighborhood launch solutions that 
	exist on the same slab $[0,T] \times \mathbb{T}^3$
	and, assuming also that the data belong to $\left(H^N(\Sigma_0) \right)^5$, 
	enjoy the regularity properties stated in the theorem. \label{FN:CONTINDEPONDATA}} 
	and its components relative to the standard coordinates enjoy the following regularity properties:
	\begin{subequations}
	\begin{align}
		\Lnenth,
			\, 
		\Ent,
			\,
		u^{\alpha} & \in C\big([0,T],H^N(\mathbb{T}^3)\big),
		\label{E:FIRSTTHMSTANDARDCONTINUITYINHN} \\
		\GradEnt^{\alpha},
			\,
		\vort^{\alpha} 
		& \in C\big([0,T],H^{N-1}(\mathbb{T}^3)\big).
		\label{E:NOTYETGAINOFONEDERIVATIVE}
	\end{align}
	\end{subequations}
\end{theorem}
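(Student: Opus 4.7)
The plan is to treat Theorem~\ref{T:STANDLOCALWELLPOSEDNESS} as a standard application of the theory of quasilinear symmetrizable hyperbolic systems to the first-order formulation \eqref{E:MATRIXVECTORFORMFIRSTORDEREULER}--\eqref{E:A0EXPLICIT}, with the auxiliary claim \eqref{E:NOTYETGAINOFONEDERIVATIVE} following by direct inspection once the fundamental variables are controlled. First I would view the five fundamental unknowns as $\mathbf{V} := (\Lnenth,\Ent,u^1,u^2,u^3)$ and eliminate $u^0$ via the positivity condition $u^0 > 0$ and the constraint \eqref{E:UISUNITLENGTH}, so that $u^0 = \sqrt{1 + u_a u^a}$; since \eqref{E:UISUNITLENGTH} is propagated by the flow of \eqref{E:VELOCITYEVOLUTIONWITHPROJECTION}--\eqref{E:ENTROPYEVOLUTION}, one could alternatively retain the formulation \eqref{E:MATRIXVECTORFORMFIRSTORDEREULER} and verify that the constraint is preserved (this is immediate from \eqref{E:DERIVATIVEOFVELOCITYCONTRACTEDWITHVELOCITYISZERO}). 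By \eqref{E:A0DET} together with the upper bound $\speed \leq 1$ enforced on $\mathfrak{K}$, the matrix $A^0$ is uniformly invertible on $\mathfrak{K}$, so the system reduces to $\partial_t \mathbf{V} = \mathbf{B}^a(\mathbf{V}) \partial_a \mathbf{V}$ with coefficients smooth on $\operatorname{int} \mathfrak{K}$.

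Next I would invoke the known existence of a positive-definite symbol-level symmetrizer $S(\mathbf{V})$ for the relativistic Euler equations on $\operatorname{int} \mathcal{H}$, as in \cite{aR1992a}, which makes $S(\mathbf{V}) A^a(\mathbf{V})$ symmetric and furnishes a coercive Friedrichs-type energy. With symmetrizability in hand, the solution $(\Lnenth,\Ent,u^{\alpha}) \in C([0,T],H^N(\mathbb{T}^3))$ in \eqref{E:FIRSTTHMSTANDARDCONTINUITYINHN} is produced by the standard Kato--Majda iteration: construct a sequence $\mathbf{V}^{(k)}$ of linear symmetric hyperbolic problems frozen at $\mathbf{V}^{(k-1)}$, derive uniform $L^\infty_t H^N_x$ bounds by commuting $\partial_{\vec{I}}$ (with $|\vec{I}| \leq N$) through the equation and using Moser-type commutator estimates, which at this regularity level reduce to the product estimates \eqref{E:WITHSMOOTHFUNCTIONFRAMENORML2PRODUCTBOUNDINERMSOFLINFINITYANDHSDOT} and \eqref{E:SOBOLEVALGEBRAPROP}; prove contraction in the low norm $L^\infty_t L^2_x$; and pass to the limit, upgrading weak-* convergence in $H^N$ to strong continuity in time by the usual Bona--Smith regularization argument combined with energy-equality in the top norm. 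The lifespan $T > 0$ depends only on $\mathfrak{K}$ and $\|(\mathring{\Lnenth},\mathring{\Ent},\mathring{u}^i)\|_{H^3(\Sigma_0)}$ because the Sobolev embedding $H^3(\mathbb{T}^3) \hookrightarrow C^0(\mathbb{T}^3)$ (valid since $N \geq 3 > 3/2$) controls the $L^\infty$ distance of $\mathbf{V}(t)$ to $\partial \mathfrak{K}$ on the short time interval, as required by footnote~\ref{FN:SHARPERTIMEOFEXISTENCENORMS}. Continuous dependence on the data in the sense of footnote~\ref{FN:CONTINDEPONDATA} is then obtained by comparing two solutions: their difference solves a linear symmetric hyperbolic equation whose coefficients and source are controlled via Lemma~\ref{L:STANDARDSOBOLEVRESULTS} by the $H^N$-norms of the two individual solutions, yielding convergence in $L^\infty_t H^{N-1}_x$, and the standard frequency-localization argument then upgrades this to convergence in $L^\infty_t H^N_x$.

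Once the main regularity \eqref{E:FIRSTTHMSTANDARDCONTINUITYINHN} is secured, the auxiliary statement \eqref{E:NOTYETGAINOFONEDERIVATIVE} is essentially a bookkeeping exercise. Since $\GradEnt_\alpha = \partial_\alpha \Ent$ by \eqref{E:ENTROPYGRADIENT}, the spatial components $\GradEnt_a = \partial_a \Ent$ lie in $C([0,T], H^{N-1}(\mathbb{T}^3))$ immediately, while \eqref{E:ENTROPYEVOLUTION} rewritten as $\GradEnt_0 = -(u^a/u^0)\GradEnt_a$ combined with \eqref{E:SOBOLEVALGEBRAPROP} yields the same regularity for $\GradEnt_0$, and raising the index with $\upeta^{-1}$ preserves this. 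Likewise, $\vort^\alpha$ is defined in \eqref{E:VORTICITYDEF} as a contraction involving one spacetime derivative of $\Enth u$ with smooth coefficients in $(\Lnenth, \Ent, u)$, so \eqref{E:SOBOLEVALGEBRAPROP} again gives $\vort^\alpha \in C([0,T], H^{N-1}(\mathbb{T}^3))$.

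The main obstacle, modest as it is, is the constraint-handling bookkeeping: one must either eliminate $u^0$ and verify that the reduced system on the five-dimensional state space remains symmetrizable on $\operatorname{int} \mathfrak{K}$ (with symmetrizer bounds uniform in $\mathfrak{K}$), or, if one retains $u^0$, carefully propagate the constraint \eqref{E:UISUNITLENGTH} and check that the symmetrizer descends consistently to the constraint manifold. A secondary technical point is ensuring that the state remains in $\operatorname{int} \mathfrak{K}$ throughout $[0,T]$, for which a simple continuity-in-time argument using Sobolev embedding suffices, provided $T$ is chosen small enough in terms of the initial distance of $\mathbf{V}|_{\Sigma_0}$ to $\partial \mathfrak{K}$.
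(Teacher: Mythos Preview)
Your proposal is correct and follows essentially the same high-level strategy as the paper: treat \eqref{E:ENTHALPYEVOLUTION}--\eqref{E:ENTROPYEVOLUTION} + \eqref{E:UISUNITLENGTH} as a first-order quasilinear hyperbolic system, derive a priori energy estimates for its linearized versions, and run the standard iteration/continuity machinery. The paper's own ``proof'' is in fact just a pointer to \cite{jS2008a}, so your sketch is more detailed than what the paper provides.

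The one methodological difference worth noting is in how the energy estimates are obtained. You invoke an explicit symmetrizer (citing \cite{aR1992a}) and the Kato--Majda framework, whereas the paper defers to \cite{jS2008a}, which derives the estimates via the \emph{method of energy currents} introduced by Christodoulou in \cite{dC2007}. The energy-current approach builds a vectorfield-valued quadratic form directly from the solution variables and applies the divergence theorem, avoiding the need to exhibit a matrix symmetrizer; it is more geometric and coordinate-invariant, which is convenient for the relativistic setting. Your symmetrizer route is entirely equivalent for the purposes of this theorem and arguably more familiar to readers trained in classical hyperbolic PDE theory. Both yield the same coercive $L^2$-type energies and the same lifespan control, so the choice is a matter of taste.

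One small point: in deducing $\vort^{\alpha} \in C([0,T],H^{N-1})$ you should explicitly note that the time derivative $\partial_t(\Enth u_\delta)$ appearing in \eqref{E:VORTICITYDEF} is controlled in $C([0,T],H^{N-1})$ by using the first-order equations to express it in terms of spatial derivatives (as in the discussion around \eqref{E:A0DET}); you allude to this but do not say it.
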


\begin{proof}[Discussion of the proof]
Theorem~\ref{T:STANDLOCALWELLPOSEDNESS} is standard.
Readers can consult, for example, \cite{jS2008a}
for detailed proofs in the case of the relativistic Euler equations
on a family of conformally flat\footnote{More precisely, in \cite{jS2008a},
the spacetime metrics are scalar function multiples of the Minkowski metric
on $\mathbb{R}^{1+3}$.} 
spacetimes.
The main step in the proof is deriving a priori energy estimates for
linearized versions of a first-order formulation of the equations,
such as \eqref{E:ENTHALPYEVOLUTION}-\eqref{E:ENTROPYEVOLUTION} + \eqref{E:UISUNITLENGTH}.
For a first-order formulation that is equivalent (for $C^1$ solutions)
to \eqref{E:ENTHALPYEVOLUTION}-\eqref{E:ENTROPYEVOLUTION} + \eqref{E:UISUNITLENGTH},
this step was carried out in detail in \cite{jS2008a}
using the method of energy currents, 
a technique that originated in the context of
the relativistic Euler equations in Christodoulou's
foundational work \cite{dC2007} on shock formation.

\end{proof}

\begin{remark}[$C^{\infty}$ data give rise to $C^{\infty}$ solutions]
	\label{R:CINFINITYDATAGIVECINFINITYSOLUTIONS}
	In view of the Sobolev embedding result \eqref{E:STANDARDSOBOLEVEMBEDDING},
	we see that Theorem~\ref{T:STANDLOCALWELLPOSEDNESS}
	implies that $C^{\infty}$ initial data give rise
	to (local-in-time) $C^{\infty}$ solutions.
\end{remark}

We now state our main local well-posedness theorem. Its proof is located
in Subsect.\,\ref{SS:PROOFOFTHEOREMUPGRADEDLOCALWELLPOSEDNESS}.

\begin{theorem}[Local well-posedness with improved regularity for the entropy and vorticity]
	\label{T:UPGRADEDLOCALWELLPOSEDNESS}
	Assume the hypotheses of Theorem~\ref{T:STANDLOCALWELLPOSEDNESS},
	but in addition to \eqref{E:FIRSTTHMSTANDARDDATAASSUMPTIONS},
	assume also that the initial vorticity and entropy
	enjoy one extra degree of Sobolev regularity. That is, 
	assume that for some integer $N \geq 3$ and $i=1,2,3$, 
	we have
	\begin{subequations}
	\begin{align}
		\mathring{\Lnenth},
			\,
		\mathring{u}^i 
		& \in H^N(\Sigma_0),
			\label{E:STANDARDDATAASSUMPTIONS} \\
		\mathring{\Ent} \in H^{N+1}(\Sigma_0),
			\,
			\mathring{\vort}^i
		& \in H^N(\Sigma_0),
		\label{E:GAINOFONEDERIVATIVEDATAASSUMPTIONS}
	\end{align}
	\end{subequations}
	where $\vort$ is defined in \eqref{E:VORTICITYDEF} and $\mathring{\vort}^i := \vort|_{\Sigma_0}^i$.
	
	Then the conclusions of Theorem~\ref{T:STANDLOCALWELLPOSEDNESS} hold,
	and the solution's components relative to standard coordinates enjoy the following regularity properties for $\alpha=0,1,2,3,$
	where $T > 0$ is the same time from Theorem~\ref{T:STANDLOCALWELLPOSEDNESS}:
	\begin{subequations}
	\begin{align}
		\Lnenth, 
		\,
		u^{\alpha} 
		& \in C\big([0,T],H^N(\mathbb{T}^3)\big),
		&&
			\label{E:STANDARDCONTINUITYINHN} \\
		\Ent 
		& \in C\big([0,T],H^{N+1}(\mathbb{T}^3)\big),
		&
		\GradEnt^{\alpha},
		\,
		\vort^{\alpha} 
		& \in C\big([0,T],H^N(\mathbb{T}^3)\big).
		\label{E:GAINOFONEDERIVATIVE}
	\end{align}
	\end{subequations}
	In particular, according to \eqref{E:GAINOFONEDERIVATIVE},
	the additional regularity of the entropy and vorticity featured in the initial data
	is propagated by the flow of the equations.
	Moreover, the solution depends continuously on the initial data
	relative the norms corresponding to \eqref{E:STANDARDCONTINUITYINHN}-\eqref{E:GAINOFONEDERIVATIVE}.
\end{theorem}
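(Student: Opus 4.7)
The plan is to start from the classical solution produced by Theorem~\ref{T:STANDLOCALWELLPOSEDNESS} on some slab $[0,T] \times \mathbb{T}^3$, which already supplies $\Lnenth, \Ent, u^{\alpha} \in C([0,T],H^N)$ and $\GradEnt^{\alpha}, \vort^{\alpha} \in C([0,T],H^{N-1})$, and to upgrade $\Ent$ by one derivative (to $H^{N+1}$) and $\GradEnt, \vort$ by one derivative (to $H^N$) by exploiting the new formulation of Theorem~\ref{T:NEWFORMULATIONRELEULER}. The enabling mechanism is the transport structure of the modified variables $\mathcal{C}^{\alpha}$ and $\mathcal{D}$ from Def.\,\ref{D:MODIFIEDVARIABLES}: although naively $\uperpvort(\vort) \sim \partial \vort$ and $\partial_\kappa \GradEnt^\kappa \sim \partial \GradEnt$ are of top order and would force a derivative loss when inserted as sources in the wave equations \eqref{E:MAINTHMCOVARIANTWAVEENTHALPY}--\eqref{E:MAINTHMCOVARIANTWAVEVELOCITY}, the \emph{modified} combinations $\mathcal{C}$ and $\mathcal{D}$ satisfy transport equations \eqref{E:MAINTHMSMODIFIEDDIVERGENCEENTROPYGRADIENTTRANSPORT} and \eqref{E:MAINTHMTRANSPORTFORMODIFIEDVORTICITYOFVORTICITY} whose right-hand sides involve only first-order derivatives of $(\Lnenth, \Ent, u, \GradEnt, \vort)$ together with $\mathcal{C}, \mathcal{D}$ themselves, so they are controllable in $H^{N-1}(\Sigma_t)$ without derivative loss via the standard transport energy identity.

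With this in mind, I would first approximate the initial data by smooth data (dense in the relevant Sobolev spaces), obtain smooth local solutions on a common slab via Theorem~\ref{T:STANDLOCALWELLPOSEDNESS} and Remark~\ref{R:CINFINITYDATAGIVECINFINITYSOLUTIONS}, and propagate the coupled a priori energy $\mathcal{E}(t) = \sum_{|\vec{I}| \leq N} \lbrace \| \partial_{\vec{I}} \Lnenth \|_{L^2(\Sigma_t)}^2 + \sum_{\alpha} \| \partial_{\vec{I}} u^{\alpha} \|_{L^2(\Sigma_t)}^2 \rbrace + \sum_{|\vec{I}| \leq N-1} \lbrace \sum_{\alpha} \| \partial_{\vec{I}} \mathcal{C}^{\alpha} \|_{L^2(\Sigma_t)}^2 + \| \partial_{\vec{I}} \mathcal{D} \|_{L^2(\Sigma_t)}^2 \rbrace$. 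The wave piece is handled by commuting $\partial_{\vec{I}}$ through the covariant wave equations \eqref{E:MAINTHMCOVARIANTWAVEENTHALPY}--\eqref{E:MAINTHMCOVARIANTWAVEVELOCITY} and contracting with the multiplier $u^{\alpha}$, which is $g$-timelike by direct computation from \eqref{E:INVERSEACOUSTICALMETRIC} and thus yields a coercive energy even when $\partial_t$ fails to be $g$-timelike (cf.\,Footnote~\ref{F:PARTIALTNOTTIMELIKE}); the $\mathcal{C}, \mathcal{D}$ sources appearing in those wave equations are controlled by the transport piece at order $N-1$. To recover the top-order $H^N(\Sigma_t)$ norm of $\vort$ and $\GradEnt$ from the $H^{N-1}$ control of the \emph{spacetime} quantities $\mathcal{C}, \mathcal{D}$, I would apply the spatial div-curl Hodge estimate $\| V \|_{H^N(\Sigma_t)} \lesssim \| \divthreed V \|_{H^{N-1}(\Sigma_t)} + \| \curlthreed V \|_{H^{N-1}(\Sigma_t)} + \| V \|_{L^2(\Sigma_t)}$ on $\mathbb{T}^3$: the identity \eqref{E:ANTISYMMETRICPARTOFGRADIENTVINTERMSOFVORTICITYANDUPARTOFV} decomposes the spacetime vorticity of a one-form into its spatial curl plus $u$-transport corrections, and together with the transport equations \eqref{E:MAINTHMENTROPYGRADIENTEVOLUTION} and \eqref{E:MAINTHMVORTICITYTRANSPORT} this promotes $\vort$ and $\GradEnt$ by one derivative. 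The relation $\partial_\alpha \Ent = \GradEnt_\alpha$ then immediately lifts $\Ent$ to $H^{N+1}$. Closing via Gronwall on $[0,T]$ and passing to the limit in the smooth approximating sequence (uniqueness at the baseline regularity coming from Theorem~\ref{T:STANDLOCALWELLPOSEDNESS}) yields a solution with $\Ent \in L^\infty([0,T], H^{N+1})$ and $\vort, \GradEnt \in L^\infty([0,T], H^N)$.

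The main obstacle I expect is upgrading $L^\infty_t$ to $C_t$ at top order and proving continuous dependence in these top norms, since the standard weak-continuity argument for transport equations does not directly handle the elliptic step. My plan is to apply Lemma~\ref{L:CONTINUITYRESULTFORTRANSPORTEQUATIONS} to each spatial derivative of $\mathcal{C}$ and $\mathcal{D}$ of order up to $N-1$, yielding $\mathcal{C}, \mathcal{D} \in C([0,T], H^{N-1})$; the spatial Hodge estimate, viewed as a continuous linear map on $\Sigma_t$ whose right-hand side depends continuously on $\mathcal{C}, \mathcal{D}$ and on lower-order nonlinear quantities, then promotes this to $\vort, \GradEnt \in C([0,T], H^N)$, and the relation $\GradEnt = d\Ent$ promotes $\Ent$ to $C([0,T], H^{N+1})$. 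Continuous dependence follows by running the same scheme on the equations satisfied by the difference of two solutions, which retain the elliptic-hyperbolic structure up to lower-order nonlinearities controlled via the product and composition estimates of Lemma~\ref{L:STANDARDSOBOLEVRESULTS}. The most delicate point is making the spacetime-versus-spatial div/curl conversion compatible with the top-order wave estimate so that no derivative is lost in the coupling; this depends crucially on the precise structure of Theorem~\ref{T:NEWFORMULATIONRELEULER}, specifically on the fact that $\mathcal{C}$ and $\mathcal{D}$---not the naive $\uperpvort(\vort)$ and $\partial_\kappa \GradEnt^\kappa$---are what appear on the right-hand sides of the wave equations and what satisfy the benign transport equations whose sources are only first-order in the derivatives of the state variables.
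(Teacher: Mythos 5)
Your overall architecture is the same as the paper's: start from Theorem~\ref{T:STANDLOCALWELLPOSEDNESS}, derive a closed $H^{N-1}$ energy inequality by coupling the wave-equation estimate (with $u$ as a $g$-timelike multiplier) to transport estimates for $\mathcal{C},\mathcal{D}$, recover top-order control of $\underline{\vort}, \underline{\GradEnt}$ via a spatial div-curl elliptic estimate, close via Gronwall and a smooth approximating sequence, obtain top-order time-continuity from Lemma~\ref{L:CONTINUITYRESULTFORTRANSPORTEQUATIONS} applied to $\mathcal{C},\mathcal{D}$, and prove continuous dependence by running the scheme on differences. This is indeed the paper's strategy in outline.

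However, there is a genuine gap at the pivotal step where you "apply the spatial div-curl Hodge estimate $\| V \|_{H^N(\Sigma_t)} \lesssim \| \divthreed V \|_{H^{N-1}} + \| \curlthreed V \|_{H^{N-1}} + \| V \|_{L^2}$.'' For this to close, you must bound $\| \divthreed \underline{\GradEnt} \|_{H^{N-1}} = \| \partial_a \GradEnt^a \|_{H^{N-1}}$ (and similarly for $\vort$) purely in terms of the hyperbolic variables $\mathbf{H}$ --- but this fails. The modified divergence $\mathcal{D}$ controls the \emph{spacetime} divergence $\partial_\kappa \GradEnt^\kappa$, and to extract $\partial_a \GradEnt^a$ you must solve for $\partial_t \GradEnt^0$. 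Using $\partial_t = (u^\kappa \partial_\kappa)/u^0 - (u^i \partial_i)/u^0$, the transport equation \eqref{E:MAINTHMENTROPYGRADIENTEVOLUTION}, and the constraint $\GradEnt^0 = \GradEnt_j u^j / u^0$ yields
\begin{align}
	\partial_a \GradEnt^a \;=\; \mathrm{f}(\mathbf{H}) \;+\; \frac{u^a u^b}{(u^0)^2}\,\partial_a \GradEnt_b,
	\notag
\end{align}
and the second term on the right is a \emph{full-strength first-order spatial derivative of $\GradEnt$}, i.e., one of the elliptic quantities $\mathbf{E}$ you are trying to estimate. Your identification of \eqref{E:ANTISYMMETRICPARTOFGRADIENTVINTERMSOFVORTICITYANDUPARTOFV} as the tool for the curl piece is correct, but there is no analogous cancellation for the flat divergence, and the transport equations \eqref{E:MAINTHMENTROPYGRADIENTEVOLUTION}, \eqref{E:MAINTHMVORTICITYTRANSPORT} only control the $u^\kappa \partial_\kappa$-directional derivatives, not the spatial divergence. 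An absorption argument exploiting the sharp constant-$1$ Hodge identity and the fact that the tensor $u^a u^b/(u^0)^2$ has operator norm $1-(u^0)^{-2} < 1$ could in principle salvage the flat approach, but it is delicate (and you would have to repeat it, with Leibniz commutators, at each order up to $N-1$); your use of $\lesssim$ suggests you have not noticed that the constant is critical.

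The paper resolves this by replacing the flat metric $\updelta^{ij}$ with the $u$-adapted inverse Riemannian metric $(G^{-1})^{ij} := \updelta^{ij} - u^i u^j/(u^0)^2$ (Def.~\ref{D:INVERSERIEMANNIANONSIGMAT}), designed precisely so that the leftover term cancels: $(G^{-1})^{ab}\partial_a \GradEnt_b = \mathrm{f}(\mathbf{H})$, and likewise $(G^{-1})^{ab}\partial_a \vort_b = \mathrm{f}(\mathbf{H})$ (see \eqref{E:ALGEBRAICHYPERBOLICONLYSMOOTHFUNCTIONEXPRESSION}). It then proves a $G^{-1}$-weighted elliptic estimate (Lemma~\ref{L:COMPARISONBASEDONELLIPTIC}, via the divergence identity \eqref{E:NONFLATELLIPTICIDENTITY}) and builds a $G^{-1}$-adapted Hilbert-space energy (Defs.~\ref{D:INNERPRODUCT}, \ref{D:ELLIPTICNORM}). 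This adaptation is also essential for your "main obstacle'': the top-order time-continuity is proved by showing that the $G^{-1}$-adapted energy $\mathbb{E}_{N;G^{-1};\upalpha_*}$ is continuous in time and combining this with weak convergence in the corresponding inner product, and the continuous-dependence step (via a modified Kato framework applied to $\mathbf{H}$) likewise requires the difference estimate \eqref{E:KEYINHOMOGENEOUSTERMELLIPTICVARIABLESESTIMATE}, which is again a $G^{-1}$-adapted elliptic estimate. Without the adapted metric (or a carefully tracked sharp-constant absorption argument), these steps do not close.
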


\subsection{A new inverse Riemannian metric and the classification of various combinations of solution variables}
\label{SS:NEWRIEMANNMETRICANDCLASSIFICATIONOFSOLUTIONVARIABLES}
In our proof of Theorem~\ref{T:UPGRADEDLOCALWELLPOSEDNESS},
when controlling the top-order derivatives of the
vorticity and entropy, we will rely on ``geometrically sharp'' elliptic estimates
in which the precise details of the principal coefficients of the elliptic operators are important
for our arguments. Due to the quasilinear nature of the relativistic Euler equations,
these precise elliptic estimates involve the inverse Riemannian metric $G^{-1}$ from the next definition.
In particular, we will need to use $G^{-1}$-based norms
when proving that the top-order derivatives
of $\vort$ and $\GradEnt$ are continuous in time with values in $L^2(\mathbb{T}^3)$
(these facts are contained within the statement \eqref{E:GAINOFONEDERIVATIVE});
the role of $G^{-1}$ in our analysis
will become clear in Subsect.\,\ref{SS:PROOFOFTHEOREMUPGRADEDLOCALWELLPOSEDNESS}.

\begin{definition}[An inverse Riemannian metric on $\Sigma_t$]
	\label{D:INVERSERIEMANNIANONSIGMAT}
	On each $\Sigma_t$, we define the inverse Riemannian metric $G^{-1}$ 
	as follows:
	\begin{align} \label{E:INVERSERIEMANNIANONSIGMAT}
		(G^{-1})^{ij}
		& := \updelta^{ij} - \frac{u^i u^j}{(u^0)^2},
	\end{align}
	where $\updelta^{ij} := \mbox{\upshape diag}(1,1,1)$
	is the standard Kronecker delta.
\end{definition}

\begin{remark}
	From the relation $\upeta_{\alpha \beta} u^{\alpha} u^{\beta} = - 1$,
	one can easily show that 
	$G^{-1}$ is Riemannian, that is, of signature $(+,+,+)$.
\end{remark}

In proving that the solution depends continuously on the initial data,
we will use a modified version of Kato's framework 
\cites{tK1970,tK1973,tK1975}. His framework
was designed to handle hyperbolic systems, while our formulation of
the relativistic Euler equations is elliptic-hyperbolic. For this reason,
we find it convenient to divide the solution variables into various
classes, which we provide in the next definition. Roughly, we will handle
the ``hyperbolic quantities'' using Kato's framework, and to handle the
remaining quantities, we will use elliptic estimates and algebraic relationships
to control them in terms of the hyperbolic quantities.

\begin{definition}[Classification of various combinations of solution variables]
	\label{D:CLASSIFICATIONOFSOLUTIONVARIABLES}
We define the \emph{hyperbolic quantities} $\mathbf{H}$,
the \emph{elliptic quantities} $\mathbf{E}$,
and the \emph{algebraic quantities}
$\mathbf{A}_{\mathbf{H}}$,
$\mathbf{A}_{\mathbf{H},\mathbf{E}}$,
and $\mathbf{A}$
as follows,
where the Euclidean curl operator $\curlthreed$ is defined in \eqref{E:STANDARDCURL}:
\begin{subequations}
\begin{align}	
	\mathbf{H}
	& := (\Lnenth,\Ent,u^a,\partial_a \Lnenth,\partial_a u^b,\vort^a,\GradEnt^a,\mathcal{C}^a,\mathcal{D})_{a,b=1,2,3},
	\label{E:HYPERBOLICVARS}
		\\
	\mathbf{E}
	& := (\partial_a \vort_b,\partial_a \GradEnt_b)_{a,b=1,2,3},
	\label{E:ELLIPTICVARS}
		\\
	\mathbf{A}_{\mathbf{H}}
	& := \left(u^0 - 1, \vort^0, \GradEnt^0, \mathcal{C}^0,
				\partial_t \Lnenth, \partial_t u^{\alpha}, \partial_a u^0, \partial_t \Ent \right)_{\alpha=0,1,2,3;a=1,2,3}
	\label{E:ALGEBRAICVARSHYPERBOLICONLY}
		\\
	& \ \ \ \
		\cup \left((G^{-1})^{cd} \partial_c \vort_d,
				(G^{-1})^{cd} \partial_c \GradEnt_d,
				\curlthreed^a(\underline{\vort}),
				\curlthreed^a(\underline{\GradEnt})\right)_{a=1,2,3},
					\notag \\
	\mathbf{A}_{\mathbf{H},\mathbf{E}}
	& := (\partial_t \vort_{\alpha},\partial_a \vort_0,\partial_t \GradEnt_{\alpha},\partial_a \GradEnt_0,
				\partial_b \vort^b, \partial_b \GradEnt^b)_{\alpha=0,1,2,3;a=1,2,3},
				\label{E:ALGEBRAICVARSHYPERBOLICELLIPTIC}
					\\
	\mathbf{A}
	& := \mathbf{A}_{\mathbf{H}} \cup \mathbf{A}_{\mathbf{H},\mathbf{E}}.
	\label{E:ALLALGEBRAICVARIABLES}
\end{align}
\end{subequations}

\end{definition}

Some remarks are in order.
\begin{itemize}
	\item The point of introducing the algebraic quantities 
		$\mathbf{A}$ is that, by virtue of the relativistic Euler equations, 
		they can be algebraically expressed in terms of $\mathbf{H}$ and $\mathbf{E}$
		(and thus are redundant);
		see Lemma~\ref{L:EXPRESSIONSFORALGEBRAICQUANTITIESINTERMSOFHYPERBOLICANDELLIPTIC}.
		We stress that in \eqref{E:ALGEBRAICVARSHYPERBOLICONLY},
		it is crucial that the inverse metric $G^{-1}$
		is the one from Def.\,\ref{D:INVERSERIEMANNIANONSIGMAT};
		the proof of \eqref{E:ALGEBRAICHYPERBOLICONLYSMOOTHFUNCTIONEXPRESSION}
		will clarify that it is essential that the inverse metric is precisely $G^{-1}$.
	\item The elliptic quantities $\mathbf{E}$ can be controlled 
		(in appropriate Sobolev norms)
		in terms of $\mathbf{H}$ via elliptic estimates; see Lemma~\ref{L:COMPARISONBASEDONELLIPTIC} and its proof.
	\item The hyperbolic quantities $\mathbf{H}$ solve evolution equations 
		with source terms that depend on $\mathbf{H}$ and $\mathbf{E}$.
		In view of the previous point, we see that one can bound the source terms
		(in appropriate Sobolev norms) in terms of $\mathbf{H}$.
		This will allow us to derive a closed system of energy inequalities
		that can be used to estimate $\mathbf{H}$. In view of the previous two points,
		we see that the estimates for $\mathbf{H}$ imply corresponding estimates for
		$\mathbf{E}$ and $\mathbf{A}$.
\end{itemize}

\begin{remark}[The hyperbolic quantities verify first-order hyperbolic equations]
	\label{R:HYPERBOLICVARIABLESSOLVEFIRSTORDERTRANSPORT}
In our proof of local well-posedness, we will use the fact
that the hyperbolic quantities $\mathbf{H}$ solve first-order hyperbolic equations.
More precisely, the elements $\Lnenth$, $\Ent$, and $u^a$ of \eqref{E:HYPERBOLICVARS}
satisfy the first-order hyperbolic system
\eqref{E:ENTHALPYEVOLUTION}-\eqref{E:ENTROPYEVOLUTION} + \eqref{E:UISUNITLENGTH},
the elements
$
\partial_a \Lnenth
$
and
$\partial_a u^b$
satisfy hyperbolic equations obtained by taking one spatial derivative of the
equations \eqref{E:ENTHALPYEVOLUTION}-\eqref{E:ENTROPYEVOLUTION} + \eqref{E:UISUNITLENGTH},
and
$
\GradEnt^a
$,
$
\vort^a
$,
$
\mathcal{C}^a
$,
and
$
\mathcal{D}
$
respectively satisfy the (spatial components of the)
transport equations
\eqref{E:MAINTHMENTROPYGRADIENTEVOLUTION},
\eqref{E:MAINTHMVORTICITYTRANSPORT},
\eqref{E:MAINTHMTRANSPORTFORMODIFIEDVORTICITYOFVORTICITY},
and
\eqref{E:MAINTHMSMODIFIEDDIVERGENCEENTROPYGRADIENTTRANSPORT};
it is in this sense that we consider the variables 
$\mathbf{H}$ to be ``hyperbolic.''
\end{remark}

\begin{lemma}[Expressions for the algebraic quantities in terms of the hyperbolic and elliptic quantities]
	\label{L:EXPRESSIONSFORALGEBRAICQUANTITIESINTERMSOFHYPERBOLICANDELLIPTIC}
	Assume that $(\Lnenth,\Ent,u^{\alpha})$ is a smooth solution 
	to \eqref{E:ENTHALPYEVOLUTION}-\eqref{E:ENTROPYEVOLUTION} + \eqref{E:UISUNITLENGTH}.
	Then we can express
	\begin{subequations}
	\begin{align} 
		\mathbf{A}_{\mathbf{H}} 
		& = \mathrm{f}(\mathbf{H}),
			\label{E:ALGEBRAICHYPERBOLICONLYSMOOTHFUNCTIONEXPRESSION} \\
		\mathbf{A}_{\mathbf{H},\mathbf{E}} 
		& = \mathrm{f}(\mathbf{H},\mathbf{E}),
			\label{E:ALGEBRAICHYPERBOLICANDELLIPTICSMOOTHFUNCTIONEXPRESSION} \\
	\mathbf{A}
	& = \mathrm{f}(\mathbf{H},\mathbf{E}),
	\label{E:ALLALGEBRAICSMOOTHFUNCTIONEXPRESSION}
	\end{align}
	\end{subequations}
	where in \eqref{E:ALGEBRAICHYPERBOLICONLYSMOOTHFUNCTIONEXPRESSION}-\eqref{E:ALLALGEBRAICSMOOTHFUNCTIONEXPRESSION},
	$\mathrm{f}$ is a schematically denoted smooth function that 
	satisfies $\mathrm{f}(0) = 0$ and that is allowed to vary from line to line.
	
	Moreover, let $\vec{I}$ be a spatial multi-index with $|\vec{I}| \geq 1$. Then
	\begin{align} \label{E:ALGEBRAICEXPRESSIONFORICOMMUTEDGEOMETRICDIVERGENCEOFVORTICITY} 
		&
		(G^{-1})^{ab} \partial_a \partial_{\vec{I}} \vort_b,
			\,
		(G^{-1})^{ab} \partial_a \partial_{\vec{I}} \GradEnt_b,
			\,
		\curlthreed^i(\partial_{\vec{I}} \underline{\vort})
			\\
		& = \mathop{\sum_{M=1}^{|\vec{I}|}}_{|\vec{J}_1| + \cdots + |\vec{J}_M| = |\vec{I}|}
				\mathrm{f}_{\vec{J}_1,\cdots,\vec{J}_M}(\mathbf{H}) 
				\prod_{m=1}^M
				\partial_{\vec{J}_m} \mathbf{H}.
				\notag
	\end{align}
	where $\mathrm{f}_{\vec{J}_1,\cdots,\vec{J}_M}$ are schematically denoted smooth functions 
	(not necessarily vanishing at $0$)
	and $\prod_{m=1}^M \partial_{\vec{J}_m} \mathbf{H}$ schematically denotes an order $M$ monomial in the derivatives of the
	elements of $\mathbf{H}$.
\end{lemma}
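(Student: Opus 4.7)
The plan is to process the elements of $\mathbf{A}$ in order of increasing complexity, feeding earlier reductions into later ones. The three structural ingredients behind every reduction are: (a) the pointwise constraints $u_\kappa u^\kappa = -1$, $u^\kappa\vort_\kappa = 0$, $u^\kappa\GradEnt_\kappa = 0$; (b) the invertibility of the matrix $A^0$ from \eqref{E:A0EXPLICIT}, guaranteed in the regime of hyperbolicity by \eqref{E:A0DET} and \eqref{E:SPEEDOFSOUNDBOUNDS}, which permits solving \eqref{E:MATRIXVECTORFORMFIRSTORDEREULER} algebraically for $(\partial_t\Lnenth,\partial_t u^\alpha,\partial_t\Ent)$; and (c) the transport/div/curl structure of Theorem~\ref{T:NEWFORMULATIONRELEULER}. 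With (a)--(c) in hand, the easy reductions are immediate: \eqref{E:UISUNITLENGTH} with $u^0>0$ gives $u^0-1\in\mathrm{f}(\mathbf{H})$ and, after $\partial_a$-differentiation, $\partial_a u^0 = (u^b/u^0)\partial_a u^b \in \mathrm{f}(\mathbf{H})$; the $u$-orthogonalities \eqref{E:VORTISORTHGONALTOU}, \eqref{E:VELOCITYANDENTGRADIENTAREMINKOWSKIPERP} give $\vort^0 = u^a\vort^a/u^0$ and $\GradEnt^0 = u^a\GradEnt^a/u^0$. For $\mathcal{C}^0$, contracting \eqref{E:MODIFIEDVORTICITYOFVORTICITY} with $u_\alpha$ kills the antisymmetric and $u$-orthogonal pieces and, via \eqref{E:IDVELOCITYDERIVATIVEOFVELOCITYCONTRACTEDWITHENTROPYGRADIENT}, collapses the rest to $u_\alpha\mathcal{C}^\alpha = (\Temp_{;\Lnenth}-\Temp)\TempoverEnth\GradEnt^\kappa\GradEnt_\kappa\in\mathrm{f}(\mathbf{H})$, whence $\mathcal{C}^0 = (u^a\mathcal{C}^a - u_\alpha\mathcal{C}^\alpha)/u^0\in\mathrm{f}(\mathbf{H})$.

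The main obstacle is showing that the elliptic-looking quantities $(G^{-1})^{cd}\partial_c\vort_d$, $(G^{-1})^{cd}\partial_c\GradEnt_d$, $\curlthreed^a(\underline\vort)$, $\curlthreed^a(\underline\GradEnt)$ live in $\mathrm{f}(\mathbf{H})$ rather than in $\mathrm{f}(\mathbf{H},\mathbf{E})$; this is exactly where the shape of $G^{-1}$ in \eqref{E:INVERSERIEMANNIANONSIGMAT} earns its keep. Expanding $(G^{-1})^{cd}\partial_c\vort_d = \partial_a\vort^a - u^au^b(u^0)^{-2}\partial_a\vort_b$, using $\partial_a\vort^a = \partial_\alpha\vort^\alpha + \partial_t\vort_0$ together with
\begin{align*}
u^\alpha u^\beta\partial_\alpha\vort_\beta = (u^0)^2\partial_t\vort_0 + u^0u^a(\partial_t\vort_a + \partial_a\vort_0) + u^au^b\partial_a\vort_b,
\end{align*}
and absorbing the mismatch via the $\partial_t$-differentiated orthogonality $u^\kappa\partial_t\vort_\kappa = -\vort_\kappa\partial_t u^\kappa$, I would collapse the expression to
\begin{align*}
(G^{-1})^{cd}\partial_c\vort_d = \partial_\alpha\vort^\alpha - \frac{u^\alpha u^\beta\partial_\alpha\vort_\beta}{(u^0)^2} + \frac{u^\kappa\partial_\kappa\vort_0}{u^0} + \mathrm{f}(\mathbf{H}).
\end{align*}
The first term is in $\mathrm{f}(\mathbf{H})$ by \eqref{E:MAINTHMDIVOFVORTICITY}; the second is in $\mathrm{f}(\mathbf{H})$ because differentiating $u^\kappa\vort_\kappa = 0$ along $u^\alpha\partial_\alpha$ and invoking \eqref{E:VELOCITYEVOLUTION} reduces it to $\vort^\beta\partial_\beta\Lnenth - \TempoverEnth\vort^\beta\GradEnt_\beta$; and the third is in $\mathrm{f}(\mathbf{H})$ by \eqref{E:MAINTHMVORTICITYTRANSPORT} applied to $\vort^0$. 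A parallel chain using \eqref{E:MODIFIEDDIVERGENCEOFENTROPY} and \eqref{E:MAINTHMENTROPYGRADIENTEVOLUTION} handles $(G^{-1})^{cd}\partial_c\GradEnt_d$. For $\curlthreed^a(\underline\vort)$, I would contract \eqref{E:IDANTISYMMETRICPARTOFGRADIENTOFVORTICITY} with the spatial volume form; the transport terms $u^\kappa\partial_\kappa\vort_b$ are in $\mathrm{f}(\mathbf{H})$ by \eqref{E:MAINTHMVORTICITYTRANSPORT}, and evaluating the spacetime $\upepsilon$-contraction produces $\curlthreed^a(\underline\vort) = -u^0\uperpvort^a(\vort) + u^a\uperpvort^0(\vort) + \mathrm{f}(\mathbf{H})$, which lands in $\mathrm{f}(\mathbf{H})$ because $\uperpvort^\alpha(\vort) = \mathcal{C}^\alpha - \mathrm{f}(\mathbf{H})$ by \eqref{E:MODIFIEDVORTICITYOFVORTICITY} and both $\mathcal{C}^a$ and $\mathcal{C}^0$ have already been reduced. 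The corresponding $\GradEnt$ identity is immediate: \eqref{E:MAINTHMUPERPCURLOFENTROPYGRADIENTISZERO} gives $\uperpvort^\alpha(\GradEnt) = 0$ outright.

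With $\mathbf{A}_{\mathbf{H}} = \mathrm{f}(\mathbf{H})$ in hand, the elements of $\mathbf{A}_{\mathbf{H},\mathbf{E}}$ are routine: $\partial_t\vort_\alpha$ and $\partial_t\GradEnt_\alpha$ come from solving for $\partial_t$ in \eqref{E:MAINTHMVORTICITYTRANSPORT} and \eqref{E:MAINTHMENTROPYGRADIENTEVOLUTION} (using $u^0 > 0$), combined with $\partial_t$-differentiated orthogonality for the $\alpha = 0$ case; $\partial_a\vort_0$ and $\partial_a\GradEnt_0$ come from $\partial_a$-differentiated orthogonality; and $\partial_b\vort^b$, $\partial_b\GradEnt^b$ come from subtracting the just-computed $\partial_t\vort^0$, $\partial_t\GradEnt^0$ from the full spacetime divergences, yielding \eqref{E:ALGEBRAICHYPERBOLICONLYSMOOTHFUNCTIONEXPRESSION}--\eqref{E:ALLALGEBRAICSMOOTHFUNCTIONEXPRESSION}. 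Finally, for the commuted identity \eqref{E:ALGEBRAICEXPRESSIONFORICOMMUTEDGEOMETRICDIVERGENCEOFVORTICITY}, I would argue by induction on $|\vec{I}|$: apply $\partial_{\vec{I}}$ to each of the three base algebraic identities just established, use the Leibniz rule to isolate $(G^{-1})^{cd}\partial_c\partial_{\vec{I}}\vort_d$ (respectively the analogous $\GradEnt$-divergence and curl) on one side, and invoke the Faà di Bruno expansion of $\partial_{\vec{I}}\mathrm{f}(\mathbf{H})$. Because $\mathbf{H}$ already contains the first-order spatial derivatives $\partial_a\Lnenth$ and $\partial_a u^b$, each commutator that redistributes a single $\partial_c$ from $\vort_b$ onto a coefficient smooth in $\mathbf{H}$ stays within the claimed derivative budget $|\vec{I}|$, giving the asserted schematic form after the induction step.
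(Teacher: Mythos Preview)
Your proposal is correct and follows essentially the same route as the paper's proof: the same structural ingredients (the constraint $u_\kappa u^\kappa=-1$, the $u$-orthogonalities of $\vort$ and $\GradEnt$, the invertibility of $A^0$, and the transport/div/curl system of Theorem~\ref{T:NEWFORMULATIONRELEULER}) are used in the same order to reduce each element of $\mathbf{A}_{\mathbf{H}}$ and $\mathbf{A}_{\mathbf{H},\mathbf{E}}$, and the commuted identity \eqref{E:ALGEBRAICEXPRESSIONFORICOMMUTEDGEOMETRICDIVERGENCEOFVORTICITY} is obtained in both cases by differentiating the base identity $\mathrm{f}(\mathbf{H})$ and tracking the derivative budget via Leibniz and Fa\`a di Bruno. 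The only organizational differences are that (i) for $(G^{-1})^{cd}\partial_c\vort_d$ you expand via the full spacetime split of $u^\alpha u^\beta\partial_\alpha\vort_\beta$, whereas the paper works more directly with $\partial_i\GradEnt^i=\partial_\alpha\GradEnt^\alpha-\partial_t\GradEnt^0$ and computes $\partial_t\GradEnt^0$ from the transport equation and the relation $\GradEnt^0=u^j\GradEnt_j/u^0$; and (ii) for $\curlthreed^a(\underline{\vort})$ you contract with the spatial volume form to produce $u^0\uperpvort^a(\vort)-u^a\uperpvort^0(\vort)$ (your sign is flipped, but this is inconsequential), whereas the paper works directly with $\partial_i\vort_j-\partial_j\vort_i$ via \eqref{E:VANDUAREORTHOGONALANTISYMMETRICPARTOFGRADIENTVINTERMSOFVORTICITYANDUPARTOFV}. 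These are equivalent computations.
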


\begin{proof}
	Throughout this proof, $\mathrm{f}$ is a smooth function that 
	can vary from line to line and satisfies $\mathrm{f}(0) = 0$
	(except that the functions $\mathrm{f}_{\vec{J}_1,\cdots,\vec{J}_M}$ on
	RHS~\eqref{E:ALGEBRAICEXPRESSIONFORICOMMUTEDGEOMETRICDIVERGENCEOFVORTICITY}
	do not necessarily satisfy $\mathrm{f}_{\vec{J}_1,\cdots,\vec{J}_M}(0)=0$).
	Moreover, $\mathbf{H}$ and $\mathbf{E}$
	are as defined in \eqref{E:HYPERBOLICVARS} and \eqref{E:ELLIPTICVARS}.
	
	We first prove \eqref{E:ALGEBRAICHYPERBOLICONLYSMOOTHFUNCTIONEXPRESSION}. 
	We must show that the elements of \eqref{E:ALGEBRAICVARSHYPERBOLICONLY}
	can be written as smooth functions of the elements of \eqref{E:HYPERBOLICVARS} that vanish at $0$.
	We first note that by the normalization condition $\upeta_{\kappa \lambda} u^{\kappa} u^{\lambda} = -1$,
	$u^0 - 1$ is a smooth function of the spatial components of $u$ that vanishes when $u^1=u^2=u^3 = 0$.
	From this fact and
	the identity $u^{\kappa} \GradEnt_{\kappa} = 0$ (see \eqref{E:VELOCITYANDENTGRADIENTAREMINKOWSKIPERP}),
	we deduce that $\GradEnt^0$ is a smooth function of the spatial components of $u$ and $\GradEnt$
	that vanishes at $0$.
	A similar result holds for $\vort^0$ by virtue of \eqref{E:VORTISORTHGONALTOU}.
	Next, we note that, in view of the above discussion and the
	discussion surrounding equation \eqref{E:A0DET},
	we can solve for the time derivatives of $\Lnenth$, $\Ent$, and $u^{\alpha}$ in terms of their spatial derivatives.
	Thus far, we have shown that
	$u^0 - 1, \vort^0, \GradEnt^0, \partial_t \Lnenth, \partial_t u^{\alpha}, \partial_a u^0, \partial_t \Ent$
	can be expressed as $\mathrm{f}(\mathbf{H})$. In the rest of the proof, we will use these facts
	without explicitly mentioning them every time. Next, we use
	definitions \eqref{E:UORTHGONALVORTICITYOFONEFORM} and \eqref{E:MODIFIEDVORTICITYOFVORTICITY} to deduce that 
	$u^{\kappa} \mathcal{C}_{\kappa} = \mathrm{f}(\mathbf{H})$.
	Using this equation to algebraically solve for $\mathcal{C}^0$, 
	we deduce that $\mathcal{C}^0 = \mathrm{f}(\mathbf{H})$, as desired.
	We will now show that
	$(G^{-1})^{cd} \partial_c \GradEnt_d = \mathrm{f}(\mathbf{H})$.
	To begin, we use definition \eqref{E:MODIFIEDDIVERGENCEOFENTROPY}
	to deduce that
	$
	\partial_i \GradEnt^i
	=
	\partial_\alpha \GradEnt^\alpha - \partial_t \GradEnt^0
	=
		n \mathcal{D} 
		- 
		\GradEnt^\kappa \partial_\kappa \Lnenth 
		+ 
		\speed^{-2} \GradEnt^\kappa \partial_\kappa \Lnenth 
		- 
		\partial_t \GradEnt^0
	= 
	\mathrm{f}(\mathbf{H})
	-
	\partial_t \GradEnt^0
	$.
	Next, using the identity $\partial_t = \frac{u^{\kappa} \partial_{\kappa}}{u^0} - \frac{u^i \partial_i}{u^0}$
	and the evolution equation \eqref{E:MAINTHMENTROPYGRADIENTEVOLUTION}
	with $\alpha = 0$,
	we find that
	$
	\partial_t \GradEnt^0
	=
	\mathrm{f}(\mathbf{H})
	- 
	\frac{u^i \partial_i \GradEnt^0}{u^0}
	$.
	Moreover, using \eqref{E:VELOCITYANDENTGRADIENTAREMINKOWSKIPERP},
	we find that $\GradEnt^0 = \frac{\GradEnt_j u^j}{u^0}$,
	from which we deduce that
	$
	\frac{u^i \partial_i \GradEnt^0}{u^0}
	= 
		\mathrm{f}(\mathbf{H})
		+
	 \frac{u^i u^j \partial_i \GradEnt_j }{(u^0)^2}
	$.
	Combining the above calculations, we find that
	$
	\partial_i \GradEnt^i
	-
	\frac{u^i u^j \partial_i \GradEnt_j }{(u^0)^2}
	=
	\mathrm{f}(\mathbf{H})
	$
	which, in view of definition \eqref{E:INVERSERIEMANNIANONSIGMAT},
	yields the desired relation $(G^{-1})^{cd} \partial_c \GradEnt_d = \mathrm{f}(\mathbf{H})$.
	The relation $(G^{-1})^{cd} \partial_c \vort_d  = \mathrm{f}(\mathbf{H})$
	can be proved using a similar argument based on equations \eqref{E:MAINTHMVORTICITYTRANSPORT}
	and \eqref{E:MAINTHMDIVOFVORTICITY}, and we omit the details.
	To show that
	$\curlthreed^a(\underline{\vort}) = \mathrm{f}(\mathbf{H})$,
	we first note that by definition
	\eqref{E:STANDARDCURL},
	it suffices to show that $\partial_i \vort_j - \partial_j \vort_i = \mathrm{f}(\mathbf{H})$
	for $i,j=1,2,3$.
	To proceed, we use \eqref{E:VANDUAREORTHOGONALANTISYMMETRICPARTOFGRADIENTVINTERMSOFVORTICITYANDUPARTOFV}
	with $V := \vort$ 
	(which is applicable in view of \eqref{E:VORTISORTHGONALTOU}),
	definition \eqref{E:MODIFIEDVORTICITYOFVORTICITY},
	and the transport equation \eqref{E:MAINTHMVORTICITYTRANSPORT}
	to deduce that
	$
	\partial_i \vort_j - \partial_j \vort_i 
	= 
	\upepsilon_{i j \gamma \delta} u^{\gamma} \uperpvort^{\delta}(\vort)
	+
	u_{j} u^{\kappa} \partial_{\kappa} \vort_{i}
	-
	u_{i} u^{\kappa} \partial_{\kappa} \vort_{j}
	+
	\mathrm{f}(\mathbf{H})
	=
	\mathrm{f}(\mathbf{H})$,
	which is the desired result.
	The fact that
	$
	\curlthreed^a(\underline{\GradEnt})
	=
	0
	=
	\mathrm{f}(\mathbf{H})
	$
	is a trivial consequence of the symmetry property \eqref{E:ENTSYMMETRYOFMIXEDPARTIALS}
	and definition \eqref{E:STANDARDCURL}.
	We have therefore proved \eqref{E:ALGEBRAICHYPERBOLICONLYSMOOTHFUNCTIONEXPRESSION}.

	We now prove \eqref{E:ALGEBRAICHYPERBOLICANDELLIPTICSMOOTHFUNCTIONEXPRESSION}.
	We must show that elements of \eqref{E:ALGEBRAICVARSHYPERBOLICELLIPTIC}
	can be written as smooth functions of the elements of \eqref{E:HYPERBOLICVARS}
	and the elements of \eqref{E:ELLIPTICVARS} that vanish at $0$.
	To handle
	$
	\partial_t \vort_i
	$,
	we use the identity 
	$\partial_t = \frac{u^{\kappa} \partial_{\kappa}}{u^0} - \frac{u^j \partial_j}{u^0}$ 
	and the transport equation \eqref{E:MAINTHMVORTICITYTRANSPORT}
	to deduce that
	$
	\partial_t \vort_i
	=
	\frac{u^{\kappa} \partial_{\kappa} \vort_i}{u^0}
	+
	\mathrm{f}(\mathbf{H},\mathbf{E})
	=
	\mathrm{f}(\mathbf{H},\mathbf{E})
	$
	as desired.
	To handle
	$\partial_t \vort_0$,
	we simply use \eqref{E:VORTISORTHGONALTOU}
	to obtain the identity $\vort^0 = \frac{\vort_j u^j}{u^0}$,
	differentiate this identity with respect to $\partial_t$,
	and then use the already proven facts that $\vort_j$, $u^{\alpha} - \updelta_0^{\alpha}$, 
	and their time derivatives are equal to $\mathrm{f}(\mathbf{H},\mathbf{E})$.
	Similarly, by differentiating the identity 
	$\vort^0 = \frac{\vort_j u^j}{u^0}$
	with $\partial_a$,
	we conclude that $\partial_a \vort_0 = \mathrm{f}(\mathbf{H},\mathbf{E})$.
	The relations
	$\partial_t \GradEnt_{\alpha} = \mathrm{f}(\mathbf{H},\mathbf{E})$
	and
	$\partial_a \GradEnt_0 = \mathrm{f}(\mathbf{H},\mathbf{E})$
	can be proved
	using a similar argument based on
	equations \eqref{E:VELOCITYANDENTGRADIENTAREMINKOWSKIPERP}
	and \eqref{E:MAINTHMENTROPYGRADIENTEVOLUTION},
	and we omit the details.
	The facts that
	$\partial_b \vort^b = \mathrm{f}(\mathbf{H},\mathbf{E})$
	and
	$\partial_b \GradEnt^b = \mathrm{f}(\mathbf{H},\mathbf{E})$
	follow trivially from the definitions.
	We have therefore proved \eqref{E:ALGEBRAICHYPERBOLICANDELLIPTICSMOOTHFUNCTIONEXPRESSION}.
	
	\eqref{E:ALLALGEBRAICSMOOTHFUNCTIONEXPRESSION} then follows from definition \eqref{E:ALLALGEBRAICVARIABLES}
	and \eqref{E:ALGEBRAICHYPERBOLICONLYSMOOTHFUNCTIONEXPRESSION}-\eqref{E:ALGEBRAICHYPERBOLICANDELLIPTICSMOOTHFUNCTIONEXPRESSION}.
	
	To prove \eqref{E:ALGEBRAICEXPRESSIONFORICOMMUTEDGEOMETRICDIVERGENCEOFVORTICITY},
	we first note that definition \eqref{E:ALGEBRAICVARSHYPERBOLICONLY}
	and \eqref{E:ALGEBRAICHYPERBOLICONLYSMOOTHFUNCTIONEXPRESSION} 
	imply that
	$(G^{-1})^{ab} \partial_a \vort_b$,
	$(G^{-1})^{ab} \partial_a \GradEnt_b$,
	and
	$\curlthreed^i(\underline{\vort})$
	are all of the form $\mathrm{f}(\mathbf{H})$.
	Hence, \eqref{E:ALGEBRAICEXPRESSIONFORICOMMUTEDGEOMETRICDIVERGENCEOFVORTICITY}
	follows from the Leibniz and chain rules
	and the definition \eqref{E:HYPERBOLICVARS} of $\mathbf{H}$.

\end{proof}

\subsection{Elliptic estimates and the corresponding energies}
\label{SS:ENERGYNORMTIEDTOELLITPICESTIMATES}
In this subsection, we construct the energies
that we will use to control the top-order derivatives of 
the vorticity and entropy; see Def.\,\ref{D:ELLIPTICNORM}.
The proof that the energies are coercive
relies on elliptic estimates; see the proof of Lemma~\ref{L:COMPARISONBASEDONELLIPTIC}.
We start by defining a bilinear form on the relevant Hilbert space of functions.
Lemma~\ref{L:COMPARISONBASEDONELLIPTIC} shows that the bilinear form
induces a norm on the Hilbert space.

\begin{definition}[A new Hilbert space inner product]
\label{D:INNERPRODUCT}
Let $(\underline{\vort},\underline{\GradEnt})$
denote the array of spatial components of the vorticity and entropy gradient 
(i.e., the $\upeta$-orthogonally projection of $(\vort,\GradEnt)$ onto $\Sigma_t$, as in Subsubsect.\,\ref{SSS:NOTATION}).
Let $\upalpha > 0$ be a parameter and let $M^{-1}(t,\cdot)$ be an inverse Riemannian metric on $\Sigma_t$.
We define the following bilinear form
on the corresponding Hilbert space $\left(H^N(\Sigma_t) \right)^3 \times \left(H^N(\Sigma_t) \right)^3$:
\begin{align} \label{E:INNERPRODUCT}
	&
	\left
	\langle 
		\left(\underline{\vort},\underline{\GradEnt} \right), 
		\left(\underline{\widetilde{\vort}},\underline{\widetilde{\GradEnt}} \right) 
	\right\rangle_{M^{-1};\upalpha}(t)
		\\
	& : = 
			\upalpha
			\sum_{|\vec{I}|=N-1}
			\int_{\Sigma_t}
				\left\lbrace
					(M^{-1})^{ab} \partial_a \partial_{\vec{I}} \vort_b
				\right\rbrace
				\left\lbrace
					(M^{-1})^{cd} \partial_c \partial_{\vec{I}} \widetilde{\vort}_d
				\right\rbrace
			\, dx
			\notag	\\
		& \ \
			+
			\upalpha
			\sum_{|\vec{I}|=N-1}
			\int_{\Sigma_t}
				\left\lbrace
					(M^{-1})^{ab} \partial_a \partial_{\vec{I}} \GradEnt_b
				\right\rbrace
				\left\lbrace
					(M^{-1})^{cd} \partial_c \partial_{\vec{I}} \widetilde{\GradEnt}_d
				\right\rbrace
			\, dx
				\notag \\
		& \ \
			+
			\upalpha
			\sum_{|\vec{I}|=N-1}
			\int_{\Sigma_t}
				(M^{-1})^{ab} 
				(M^{-1})^{cd} 
				\upepsilon_{aci} 
				\upepsilon_{bdj}
				\curlthreed^i(\partial_{\vec{I}} \underline{\vort}) 
				\curlthreed^j
				(\partial_{\vec{I}} \underline{\widetilde{\vort}})
			\, dx
			\notag \\
		& 
			\	\
			+
			\upalpha
			\sum_{|\vec{I}|=N-1}
			\int_{\Sigma_t}
			 (M^{-1})^{ab} 
				(M^{-1})^{cd} 
				\upepsilon_{aci}  
				\upepsilon_{bdj}
				\curlthreed^i(\partial_{\vec{I}} \underline{\GradEnt}) 
				\curlthreed^j(\partial_{\vec{I}} \underline{\widetilde{\GradEnt}})
			\, dx
				\notag \\
		& \ \
			+
			\sum_{|\vec{I}| \leq N-1}	
			\int_{\Sigma_t}
				\updelta^{ab}
				(\partial_{\vec{I}} \vort_a)
				(\partial_{\vec{I}} \widetilde{\vort}_b)
			\, dx
			+
			\sum_{|\vec{I}| \leq N-1}	
			\int_{\Sigma_t}
				\updelta^{ab}
				(\partial_{\vec{I}} \GradEnt_a)
				(\partial_{\vec{I}} \widetilde{\GradEnt}_b)
			\, dx,
			\notag
	\end{align}
	where $\updelta^{ab}$ is the standard Kronecker delta
	and $\upepsilon_{abc}$ is the fully antisymmetric symbol normalized by
	$\upepsilon_{123} = 1$.
\end{definition}

We now define the family of energies that we will use to
control the top-order derivatives of the vorticity and entropy.

\begin{definition}[``Elliptic'' energy]
	\label{D:ELLIPTICNORM}
	Let $N \geq 3$ be an integer, let $\upalpha > 0$ be a parameter (below we will choose it to be sufficiently small), 
	and let $M^{-1}(t,\cdot)$ be a $C^1$ inverse Riemannian metric on $\Sigma_t$.
	We define the square of the ``elliptic'' energy 
	$\mathbb{E}_{N;M^{-1};\upalpha}[(\underline{\vort},\underline{\GradEnt})] 
	=
	\mathbb{E}_{N;M^{-1};\upalpha}[(\underline{\vort},\underline{\GradEnt})](t)
	\geq 0$ as follows:
	\begin{align} \label{E:TOPORDERENERGY}
		\mathbb{E}_{N;M^{-1};\upalpha}^2[(\underline{\vort},\underline{\GradEnt})](t)
		& := 
		\left\langle 
			\left(\underline{\vort},\underline{\GradEnt}\right),\left(\underline{\vort},\underline{\GradEnt}\right) 
		\right\rangle_{M^{-1};\upalpha}(t).
	\end{align}
	\end{definition}

In the next lemma, with the help of elliptic estimates,
we exhibit the coercivity of 
$\mathbb{E}_{N;M^{-1};\upalpha}[(\underline{\vort},\underline{\GradEnt})](t)$.
The lemma shows in particular that if $\upalpha > 0$ is sufficiently small 
(depending on the inverse Riemannian metric $M^{-1}$), 
then the bilinear form from
Def.\,\ref{D:INNERPRODUCT} is a Hilbert space inner product.

\begin{lemma}[Energy-norm comparison estimate based on elliptic estimates]
	\label{L:COMPARISONBASEDONELLIPTIC}
	Let $T > 0$, and let $M^{-1} = M^{-1}(t,x)$ be an inverse Riemannian metric
	defined for $(t,x) \in [0,T] \times \mathbb{T}^3$.
	Let $\uplambda$ be the infimum of the eigenvalues of the $3 \times 3$ matrix 
	$(M^{-1})^{ij}(t,x)$ over $(t,x) \in [0,T] \times \mathbb{T}^3$, 
	and let $\Lambda$ be the supremum of the eigenvalues of the $3 \times 3$ matrix 
	$(M^{-1})^{ij}(t,x)$ over $(t,x) \in [0,T] \times \mathbb{T}^3$,
	and assume that $0 < \uplambda \leq \Lambda < \infty$.
	Let $\mathbb{E}_{N;M^{-1};\upalpha}[(\underline{\vort},\underline{\GradEnt})]$ 
	be as in Def.\,\ref{D:ELLIPTICNORM}.
	There exist a small constant $\upalpha_* > 0$ and a large constant $C > 0$
	such that $\upalpha_*^{-1}$ and $C$
	depend continuously in an increasing fashion on 
	\textbf{i)} $\max_{i,j=1,2,3} \| (M^{-1})^{ij} \|_{C\big([0,T],C^1(\mathbb{T}^3)\big)}$;
	\textbf{ii)} $\Lambda$;
	and \textbf{iii)} $\uplambda^{-1}$, 
	such that the following comparison estimates hold
	for $t \in [0,T]$:
	\begin{subequations}
	\begin{align} \label{E:ENERGYNORMCOMPARISON}
			&
			\mathbb{E}_{N;M^{-1};\upalpha_*}[(\underline{\vort},\underline{\GradEnt})](t)
			\leq
			C
			\sum_{a=1}^3
			\left\|
				\vort^a
			\right\|_{H^N(\Sigma_t)}
			+
			C
			\sum_{a=1}^3
			\left\|
				\GradEnt_a
			\right\|_{H^N(\Sigma_t)},
				\\
			&
			\sum_{a=1}^3
			\left\|
				\vort^a
			\right\|_{H^N(\Sigma_t)}
			+
			\sum_{a=1}^3
			\left\|
				\GradEnt_a
			\right\|_{H^N(\Sigma_t)}
			\leq 
			C \mathbb{E}_{N;M^{-1};\upalpha_*}[(\underline{\vort},\underline{\GradEnt})](t).
			\label{E:SECONDENERGYNORMCOMPARISON}
	\end{align}
	\end{subequations}
\end{lemma}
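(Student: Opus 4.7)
The plan is to prove the two inequalities \eqref{E:ENERGYNORMCOMPARISON} and \eqref{E:SECONDENERGYNORMCOMPARISON} separately. The upper bound is essentially a routine application of Cauchy-Schwarz and the pointwise bound $|(M^{-1})^{ij}|\leq\Lambda$: every top-order integrand in \eqref{E:INNERPRODUCT} (i.e., those carrying the $\upalpha$ factor) is dominated by a $\Lambda$-dependent multiple of $|\partial\partial_{\vec{I}}\underline{\vort}|^2 + |\partial\partial_{\vec{I}}\underline{\GradEnt}|^2$, whose $L^2(\Sigma_t)$-integrals are controlled by $\|\underline{\vort}\|_{H^N(\Sigma_t)}^2 + \|\underline{\GradEnt}\|_{H^N(\Sigma_t)}^2$; the two $\upalpha$-free sums are trivially bounded by the $H^{N-1}$ norms, which are dominated by the $H^N$ norms. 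Summing and taking square roots yields \eqref{E:ENERGYNORMCOMPARISON} for any fixed $\upalpha_* > 0$.

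The content of the lemma lies in the lower bound \eqref{E:SECONDENERGYNORMCOMPARISON}. The two $\upalpha$-free sums of \eqref{E:INNERPRODUCT} immediately furnish control of $\|\underline{\vort}\|_{H^{N-1}(\Sigma_t)}^2 + \|\underline{\GradEnt}\|_{H^{N-1}(\Sigma_t)}^2$ with $\upalpha_*$-independent constants. To upgrade to the full $H^N$ norms, the key ingredient I will use is the following variable-coefficient div-curl elliptic estimate on $\mathbb{T}^3$: for every smooth one-form $W$ on $\Sigma_t$,
\begin{align*}
\|\partial W\|_{L^2(\Sigma_t)}^2 \leq C_\star \left\{\left\|(M^{-1})^{ab}\partial_a W_b\right\|_{L^2(\Sigma_t)}^2 + \sum_{i=1}^3 \left\|\curlthreed^i(W)\right\|_{L^2(\Sigma_t)}^2 + \|W\|_{L^2(\Sigma_t)}^2\right\},
\end{align*}
where $C_\star$ depends continuously in an increasing fashion on $\uplambda^{-1}$, $\Lambda$, and $\max_{i,j}\|(M^{-1})^{ij}\|_{C^1(\Sigma_t)}$. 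The symmetric quadratic form $(M^{-1})^{ab}(M^{-1})^{cd}\upepsilon_{aci}\upepsilon_{bdj}$ on $\mathbb{R}^3$ (appearing in the curl term of \eqref{E:INNERPRODUCT}) has eigenvalues bounded below by a dimensional constant times $\uplambda^2$, so the $\upalpha$-weighted top-order divergence and curl pieces of $\mathbb{E}_{N;M^{-1};\upalpha_*}^2$ pointwise dominate a dimensional multiple of $\upalpha_*\uplambda^2\bigl(|(M^{-1})^{ab}\partial_a\partial_{\vec{I}}\vort_b|^2 + |\curlthreed(\partial_{\vec{I}}\underline{\vort})|^2\bigr)$ (and likewise for $\GradEnt$). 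Applying the elliptic estimate to $W = \partial_{\vec{I}}\underline{\vort}$ and $W = \partial_{\vec{I}}\underline{\GradEnt}$ for each $|\vec{I}|=N-1$, summing, and noting that $\sum_{|\vec{J}|=N}\|\partial_{\vec{J}}\underline{\vort}\|_{L^2(\Sigma_t)}^2$ is dimensionally comparable to $\sum_{|\vec{I}|=N-1}\|\partial\partial_{\vec{I}}\underline{\vort}\|_{L^2(\Sigma_t)}^2$, I obtain $\|\underline{\vort}\|_{H^N(\Sigma_t)}^2 + \|\underline{\GradEnt}\|_{H^N(\Sigma_t)}^2 \lesssim (1+\upalpha_*^{-1})\mathbb{E}_{N;M^{-1};\upalpha_*}^2$, with implicit constant depending on $\uplambda^{-1}$, $\Lambda$, and $\max\|(M^{-1})^{ij}\|_{C^1}$, which yields \eqref{E:SECONDENERGYNORMCOMPARISON}. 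The ``smallness'' of $\upalpha_*$ plays no role for coercivity per se; it merely enters through the overall constant $C$.

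The main technical obstacle is justifying the displayed variable-coefficient elliptic estimate. The underlying principle is standard: the first-order operator $W \mapsto \bigl((M^{-1})^{ab}\partial_a W_b,\; \curlthreed W\bigr)$ from one-forms on $\mathbb{T}^3$ to pairs (scalar, one-form) is overdetermined elliptic, since its principal symbol at $\xi\neq 0$ is injective (the curl part forces $W\parallel\xi$, whereupon the positive-definiteness of $M^{-1}$ forces $W=0$). One can therefore invoke standard $H^1$ a priori estimates for first-order overdetermined elliptic systems on a closed manifold, with the $\|W\|_{L^2}$ term on the right-hand side absorbing possible kernel elements. A more hands-on route is to integrate by parts twice in $\|(M^{-1})^{ab}\partial_a W_b\|_{L^2}^2$, combine the result with the elementary Bochner-type identity $\int_{\mathbb{T}^3}|\partial W|^2\,dx = \int_{\mathbb{T}^3}(\partial_a W^a)^2\,dx + \int_{\mathbb{T}^3}|\curlthreed W|^2\,dx$ valid on the torus, and absorb the error terms $\partial(M^{-1})\cdot\partial W\cdot W$ via Cauchy-Schwarz with a small parameter, paying an $\varepsilon$-weighted $\|\partial W\|_{L^2}^2$ that can be moved to the left-hand side at the cost of enlarging $\|W\|_{L^2}^2$. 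Either route delivers the estimate with a constant of the stated form, completing the plan.
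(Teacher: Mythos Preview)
Your proposal is correct and takes essentially the same route as the paper: both hinge on a variable-coefficient div--curl elliptic estimate applied to $W=\partial_{\vec{I}}\underline{\vort}$ and $W=\partial_{\vec{I}}\underline{\GradEnt}$ with $|\vec{I}|=N-1$. The paper proves this estimate from scratch via an explicit pointwise divergence identity (its equation \eqref{E:NONFLATELLIPTICIDENTITY}), which upon integration over $\Sigma_t$ yields
\[
\int_{\Sigma_t}\Bigl[\bigl((M^{-1})^{ab}\partial_a W_b\bigr)^2 + (M^{-1})(M^{-1})\upepsilon\upepsilon\,\curlthreed W\cdot\curlthreed W\Bigr]\,dx \;\geq\; \tfrac{\uplambda^2}{2}\|\partial W\|_{L^2}^2 - \tfrac{C}{\uplambda^2}\|W\|_{L^2}^2;
\]
the \emph{negative} lower-order term is then absorbed by the $\upalpha$-free $H^{N-1}$ part of the energy, and this is precisely why the paper needs $\upalpha_*$ small (it sets $\upalpha_* C/\uplambda^2 = 1/2$). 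You instead invoke the estimate in the form $\|\partial W\|_{L^2}^2 \leq C_\star\bigl(\|(M^{-1})^{ab}\partial_a W_b\|_{L^2}^2+\|\curlthreed W\|_{L^2}^2+\|W\|_{L^2}^2\bigr)$, with the lower-order term already sitting with a positive sign on the right; in that packaging any $\upalpha_*>0$ works and only enters the final constant through $\upalpha_*^{-1}$, which is why you observe that smallness of $\upalpha_*$ plays no role. Your ``hands-on route'' sketched at the end is exactly the paper's identity argument, so the two proofs are equivalent up to this organizational choice of where the lower-order term gets absorbed.
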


\begin{proof}
	We prove only \eqref{E:SECONDENERGYNORMCOMPARISON}
	since \eqref{E:ENERGYNORMCOMPARISON} can be proved using similar but simpler arguments.
	Throughout the proof, $C > 0$ denotes a constant with the dependence-properties stated in the lemma.
	To proceed, we note the following divergence identity
	for one-forms $V$ on $\Sigma_t$, 
	which can be directly verified:
	\begin{align} \label{E:NONFLATELLIPTICIDENTITY}
		&
		(M^{-1})^{ab}
		(M^{-1})^{cd}
		(\partial_a V_b)
		(\partial_c V_d)
			\\
		& 
		\ \
		+
		\overbrace{
		\frac{1}{2}
		(M^{-1})^{ab}
		(M^{-1})^{cd}
		\upepsilon_{aci}
		\upepsilon_{bdj}
		\curlthreed^i(V)
		\curlthreed^j(V)}^{\frac{1}{2}
		(M^{-1})^{ab}
		(M^{-1})^{cd}
		(\partial_a V_c - \partial_c V_a)
		(\partial_b V_d - \partial_d V_b)}
		\notag	\\
	& =
		(M^{-1})^{ab}
		(M^{-1})^{cd}
		(\partial_a V_c)
		(\partial_b V_d)
			\notag \\
		& 
		\	\
		+ 
		\frac{1}{2} 
		\left\lbrace
			\partial_a 
			\left[
			(M^{-1})^{ab}
			(M^{-1})^{cd}
			\right]
		\right\rbrace
		\left[
			V_c
			\partial_b V_d
			+
			V_c
			\partial_d V_b
		\right]
			\notag \\
		& \ \
		+ 
		\frac{1}{2} 
		\left\lbrace
			\partial_c 
			\left[
			(M^{-1})^{ab}
			(M^{-1})^{cd}
			\right]
		\right\rbrace
		\left[
			V_a
			\partial_b V_d
			+
			V_a
			\partial_d V_b
		\right]
		\notag
			\\
		& \ \
		-
		\frac{1}{2} 
		\left\lbrace
			\partial_b
			\left[
			(M^{-1})^{ab}
			(M^{-1})^{cd}
			\right]
		\right\rbrace
		\left[
		V_a
		\partial_c V_d
		+
		V_c
		\partial_a V_d
		\right]
		\notag \\
		& \ \
		-
		\frac{1}{2} 
		\left\lbrace
			\partial_d
			\left[
			(M^{-1})^{ab}
			(M^{-1})^{cd}
			\right]
		\right\rbrace
		\left[
			V_a
			\partial_c V_b
			+
			V_c
			\partial_a V_b
		\right]
		\notag
		\\
		& \ \
		+
		\frac{1}{2} 
		\partial_b 
		\left\lbrace
			(M^{-1})^{ab}
			(M^{-1})^{cd}
			\left[
			V_a
			\partial_c V_d
			+
			V_c
			\partial_a V_d
			\right]
		\right\rbrace
		\notag 
			\\
		& \ \
		+
		\frac{1}{2} 
		\partial_d 
		\left\lbrace
			(M^{-1})^{ab}
			(M^{-1})^{cd}
			\left[
			V_a
			\partial_c V_b
			+
			V_c
			\partial_a V_b
			\right]
		\right\rbrace
		\notag
		\\
		& 
		\ \
		-
		\frac{1}{2} 
		\partial_a 
		\left\lbrace
			(M^{-1})^{ab}
			(M^{-1})^{cd}
			\left[
			V_c
			\partial_b V_d
			+
			V_c
			\partial_d V_b
			\right]
		\right\rbrace
			\notag 
			\\
		& \ \
		-
		\frac{1}{2} 
		\partial_c 
		\left\lbrace
			(M^{-1})^{ab}
			(M^{-1})^{cd}
			\left[
			V_a
			\partial_b V_d
			+
			V_a
			\partial_d V_b
			\right]
		\right\rbrace.
			\notag 
\end{align}
	We now integrate \eqref{E:NONFLATELLIPTICIDENTITY} over $\Sigma_t$ with respect to $dx$
	and note that the integrals of the last four (perfect spatial derivative) terms on the right-hand side vanish.
	In view of our assumptions on the eigenvalues of $(M^{-1})^{ij}(t,\cdot)$, 
	we see that the integral of the first term
	$
	(M^{-1})^{ab}
		(M^{-1})^{cd}
		(\partial_a V_c)
		(\partial_b V_d)
	$
	on RHS~\eqref{E:NONFLATELLIPTICIDENTITY}
	is 
	$
	\geq 
	\uplambda^2
	\sum_{a,b=1}^3
		\left\|
			\partial_a V_b
		\right\|_{L^2(\Sigma_t)}^2
	$.
	Also using Young's inequality, we see that
	the integrals of the second through fifth terms
	on RHS~\eqref{E:NONFLATELLIPTICIDENTITY}
	(in which a derivative falls on $M^{-1}$)
	are collectively bounded from below by
	$\geq 
		-
		\frac{\uplambda^2}{2}
		\sum_{a,b=1}^3
		\left\|
			\partial_a V_b
		\right\|_{L^2(\Sigma_t)}^2
		-
		\frac{C}{\uplambda^2}
		\sum_{a=1}^3
		\left\|
			V_a
		\right\|_{L^2(\Sigma_t)}^2
		$.
	It follows that the integral of
	\eqref{E:NONFLATELLIPTICIDENTITY}
	is bounded from below by
	\[
	\geq 
		\frac{\uplambda^2}{2}
		\sum_{a,b=1}^3
		\left\|
			\partial_a V_b
		\right\|_{L^2(\Sigma_t)}^2
		-
		\frac{C}{\uplambda^2}
		\sum_{a=1}^3
		\left\|
			V_a
		\right\|_{L^2(\Sigma_t)}^2.
	\]
	The desired estimate \eqref{E:SECONDENERGYNORMCOMPARISON} now follows from	
	these considerations with $\underline{\vort}$ and $\underline{\GradEnt}$ in the role of $V$,
	and definitions \eqref{E:INNERPRODUCT} and \eqref{E:TOPORDERENERGY},
	where $\upalpha := \upalpha_* > 0$ is chosen so that
	$
	\upalpha_* \frac{C}{\uplambda^2} = \frac{1}{2}
	$,
	and $\frac{C}{\uplambda^2}$ is the (absolute value of the)
	coefficient from the previous inequality.
	We clarify that, by our conventions,
	factors of $\frac{1}{\uplambda^2}$
	can be absorbed into the constant $C$
	on RHS~\eqref{E:SECONDENERGYNORMCOMPARISON}.
	
\end{proof}

In the next lemma, we show that some Sobolev norms of the elliptic variables
$\mathbf{E}$ can be bounded by corresponding Sobolev norms of the hyperbolic
variables $\mathbf{H}$. We also derive related estimates for the difference of two solutions.
The main ingredients in the proofs are the elliptic estimates provided by Lemma~\ref{L:COMPARISONBASEDONELLIPTIC}.

\begin{lemma}[Controlling Sobolev norms of the elliptic variables in terms of Sobolev norms of the hyperbolic variables] 
		\label{L:SOBOLEVNORMSOFELLIPTICINTERMSOFHYPERBOLIC}\hfill

\medskip		

\noindent \textbf{(A)}. 
	Let
	$\mathring{\Lnenth} := \Lnenth|_{\Sigma_0}$,
	$\mathring{\Ent} := \Ent|_{\Sigma_0}$,
	and
	$\mathring{u}^i := u^i|_{\Sigma_0}$
	be initial data
	for the relativistic Euler equations
	\eqref{E:ENTHALPYEVOLUTION}-\eqref{E:ENTROPYEVOLUTION} + \eqref{E:UISUNITLENGTH},
	let $\mathring{\vort}^i := \vort^i|_{\Sigma_0}$,
	and let $(\Lnenth,\Ent,u^{\alpha})$ be the solution
	provided by Theorem~\ref{T:STANDLOCALWELLPOSEDNESS}.
	In particular, let $N \geq 3$ be an integer,
	let $[0,T] \times\mathbb{T}^3$
	be the slab of existence provided by the theorem, and let
	$\mathfrak{K}$ be the set featured in 
	Theorem ~\ref{T:STANDLOCALWELLPOSEDNESS}.
	Assume in addition that the rectangular components of 
	the initial data are elements of $C^{\infty}(\Sigma_0)$,
	and note that by Theorem~\ref{T:STANDLOCALWELLPOSEDNESS}
	and the Sobolev embedding result \eqref{E:STANDARDSOBOLEVEMBEDDING},
	the rectangular components of the solution belong to $C^{\infty}([0,T] \times \mathbb{T}^3)$.
	Let $\mathbf{E}$ and $\mathbf{H}$
	be the corresponding elliptic and hyperbolic variables as defined in 
	Def.\,\ref{D:CLASSIFICATIONOFSOLUTIONVARIABLES}.
	Then there exists a constant $C > 0$, depending only on: 
	\begin{enumerate}
	\item $N$ 
	\item $\mathfrak{K}$
	\item
	$\norm{\mathring{\Lnenth}}_{H^{N}(\Sigma_0)}
		+
		\norm{\mathring{\Ent}}_{H^{N+1}(\Sigma_0)}
		+
		\sum_{a=1}^3 \norm{\mathring{u}^a}_{H^{N}(\Sigma_0)}
		+
		\sum_{a=1}^3 \norm{\mathring{\vort}^a}_{H^{N}(\Sigma_0)}
		$
	\item  
	\begin{align*}
		&
		\| \Lnenth \|_{C\big([0,T],C^1(\mathbb{T}^3)\big)}
		+
		\| \Ent \|_{C\big([0,T],C^1(\mathbb{T}^3)\big)}
		+
		\sum_{a=1}^3 \| u^a \|_{C\big([0,T],C^1(\mathbb{T}^3)\big)}
			\\
	& \ \
		+
		\sum_{a=1}^3 \| \GradEnt^a \|_{C\big([0,T],C^1(\mathbb{T}^3)\big)}
		+
		\sum_{a=1}^3 \| \vort^a \|_{C\big([0,T],C^1(\mathbb{T}^3)\big)},
	\end{align*}
	\end{enumerate}
	such that the following estimate holds for $t \in [0,T]$:
	\begin{align} \label{E:KEYINHOMOGENEOUSTERMSINGLEELLIPTICVARIABLESESTIMATE}
		\norm{\mathbf{E}}_{H^{N-1}(\Sigma_t)} \leq C \norm{\mathbf{H}}_{H^{N-1}(\Sigma_t)}.
	\end{align}
	
	\medskip
	
	\textbf{(B)}. For $i=1,2$, let $(\Lnenth_{(i)},\Ent_{(i)},u_{(i)})$ be classical solutions
	to the relativistic Euler equations
	\eqref{E:ENTHALPYEVOLUTION}-\eqref{E:ENTROPYEVOLUTION} + \eqref{E:UISUNITLENGTH}
	that have the properties stated in part (A). Assume that the 
	slab of existence $[0,T] \times\mathbb{T}^3$ is the same for both solutions
	and that the set $\mathfrak{K}$ is the same for both solutions,
	that is, that there exists a compact set $\mathfrak{K} \subset \mbox{\upshape int} \mathcal{H}$ 
	such that for $i=1,2$, we have
	$(\Lnenth_{(i)},\Ent_{(i)},u_{(i)}^1,u_{(i)}^2,u_{(i)}^3)([0,T] \times \mathbb{T}^3) \subset \mbox{\upshape int} \mathfrak{K}$.
	Let $\mathbf{E}_{(i)}$ and $\mathbf{H}_{(i)}$ be the corresponding 
	elliptic and hyperbolic variables as defined in 
	Def.\,\ref{D:CLASSIFICATIONOFSOLUTIONVARIABLES}.
	Then there exist constants $\updelta > 0$ and $C > 0$, 
	depending only on:
	\begin{enumerate}
	\item $N$ 
	\item $\mathfrak{K}$
	\item
	\begin{align*}
	\sum_{i=1}^2
	\Big\lbrace
	\norm{\mathring{\Lnenth}_{(i)}}_{H^{N}(\Sigma_0)}
		+
		\norm{\mathring{\Ent}_{(i)}}_{H^{N+1}(\Sigma_0)}
		+
		\sum_{a=1}^3 \norm{\mathring{u}_{(i)}^a}_{H^{N}(\Sigma_0)}
		+
		\sum_{a=1}^3 \norm{\mathring{\vort}_{(i)}^a}_{H^{N}(\Sigma_0)}
	\Big\rbrace
	\end{align*}
	\item  
	\begin{align*}
		&
		\sum_{i=1}^2
		\Big\lbrace
		\| \Lnenth_{(i)} \|_{C\big([0,T],C^1(\mathbb{T}^3)\big)}
		+
		\| \Ent_{(i)} \|_{C \big([0,T],C^1(\mathbb{T}^3)\big)}
		+
		\sum_{a=1}^3 \| u_{(i)}^a \|_{C \big([0,T],C^1(\mathbb{T}^3)\big)}
			\\
	& \ \
		+
		\sum_{a=1}^3 \| \GradEnt_{(i)}^a \|_{C \big([0,T],C^1(\mathbb{T}^3)\big)}
		+
		\sum_{a=1}^3 \| \vort_{(i)}^a \|_{C \big([0,T],C^1(\mathbb{T}^3)\big)}
		\Big\rbrace
		\end{align*}
	\item 
	\begin{align*}
	&
	\sum_{i=1}^2
	\Big\lbrace
	\norm{\Lnenth_{(i)}}_{C\big([0,T],H^N(\mathbb{T}^3)\big)}
	+
	\sum_{a=1}^3
	\norm{u_{(i)}^a}_{C\big([0,T],H^N(\mathbb{T}^3)\big)}
	\\
	& 
	\ \
	+
	\norm{\Ent_{(i)}}_{C\big([0,T],H^{N+1}(\mathbb{T}^3)\big)}  
	+
	\sum_{a=1}^3
	\norm{\vort_{(i)}^a}_{C\big([0,T],H^N(\mathbb{T}^3)\big)}
	\Big\rbrace,
	\end{align*}
\end{enumerate}
such that 
if $\| \mathbf{H}_{(1)} - \mathbf{H}_{(2)} \|_{C(\Sigma_t)} \leq \updelta$,
then the following estimate holds for $t \in [0,T]$:
\begin{align} \label{E:KEYINHOMOGENEOUSTERMELLIPTICVARIABLESESTIMATE}	
	\|\mathbf{E}_{(1)} - \mathbf{E}_{(2)} \|_{H^{N-1}(\Sigma_{t})}
	& \leq
		C \|\mathbf{H}_{(1)} - \mathbf{H}_{(2)} \|_{H^{N-1}(\Sigma_{t})}.
\end{align}	
\end{lemma}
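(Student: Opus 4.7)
\medskip

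\noindent \textbf{Proof Proposal.}
The plan for Part (A) is to first bound the $H^{N-1}$ norm of $\mathbf{E}$ by $\| \vort \|_{H^N(\Sigma_t)} + \| \GradEnt \|_{H^N(\Sigma_t)}$, which is immediate from the definition \eqref{E:ELLIPTICVARS}, and then to control the latter using Lemma~\ref{L:COMPARISONBASEDONELLIPTIC} applied with the specific inverse Riemannian metric $M^{-1} := G^{-1}$ defined in \eqref{E:INVERSERIEMANNIANONSIGMAT}. The assumption $(\Lnenth,\Ent,u^1,u^2,u^3) \in \mbox{int}\, \mathfrak{K} \subset \mbox{int}\, \mathcal{H}$ and the $C^1$-regularity of $u^{\alpha}$ that follows from Theorem~\ref{T:STANDLOCALWELLPOSEDNESS} together with Sobolev embedding yield all of the quantitative hypotheses on $M^{-1}$ required by Lemma~\ref{L:COMPARISONBASEDONELLIPTIC}; hence the coercivity estimate \eqref{E:SECONDENERGYNORMCOMPARISON} reduces matters to bounding $\mathbb{E}_{N;G^{-1};\upalpha_*}[(\underline{\vort},\underline{\GradEnt})](t)$ by $\| \mathbf{H} \|_{H^{N-1}(\Sigma_t)}$.

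To control the energy, I would first observe that the six lower-order integrals in \eqref{E:INNERPRODUCT} (those corresponding to spatial multi-indices with $|\vec{I}| \leq N-1$) are bounded directly by $\| \vort \|_{H^{N-1}(\Sigma_t)}^2 + \| \GradEnt \|_{H^{N-1}(\Sigma_t)}^2$, which is controlled by $\| \mathbf{H} \|_{H^{N-1}(\Sigma_t)}^2$ since $\vort^a$ and $\GradEnt^a$ are among the components of $\mathbf{H}$ (see \eqref{E:HYPERBOLICVARS}). The four principal-order integrals (for $|\vec{I}| = N-1$) involve the quantities $(G^{-1})^{ab}\partial_a \partial_{\vec{I}} \vort_b$, $(G^{-1})^{ab}\partial_a \partial_{\vec{I}} \GradEnt_b$, and $\curlthreed^i(\partial_{\vec{I}}\underline{\vort})$, while $\curlthreed^i(\partial_{\vec{I}}\underline{\GradEnt}) = 0$ by \eqref{E:ENTSYMMETRYOFMIXEDPARTIALS}. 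The crucial identity \eqref{E:ALGEBRAICEXPRESSIONFORICOMMUTEDGEOMETRICDIVERGENCEOFVORTICITY} expresses each of these three quantities as a sum of products $\mathrm{f}_{\vec{J}_1,\cdots,\vec{J}_M}(\mathbf{H}) \prod_{m=1}^M \partial_{\vec{J}_m} \mathbf{H}$ with $\sum_m |\vec{J}_m| = N-1$; I would apply the Moser-type estimate \eqref{E:WITHSMOOTHFUNCTIONFRAMENORML2PRODUCTBOUNDINERMSOFLINFINITYANDHSDOT} with $r = N-1 \geq 2$ (so $r > 3/2$) to bound each such product in $L^2$ by a constant depending on $\| \mathbf{H} \|_{C(\Sigma_t)}$ (itself controlled by item (4) in the statement and by Sobolev embedding \eqref{E:STANDARDSOBOLEVEMBEDDING}) times $\| \mathbf{H} \|_{H^{N-1}(\Sigma_t)}$. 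Summing all contributions yields \eqref{E:KEYINHOMOGENEOUSTERMSINGLEELLIPTICVARIABLESESTIMATE}.

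For Part (B), I would again invoke Lemma~\ref{L:COMPARISONBASEDONELLIPTIC} for the difference $(\underline{\vort_{(1)} - \vort_{(2)}},\underline{\GradEnt_{(1)} - \GradEnt_{(2)}})$, taking $M^{-1} := G_{(1)}^{-1}$. The main task is estimating, for $|\vec{I}| = N-1$, expressions such as $(G_{(1)}^{-1})^{ab} \partial_a \partial_{\vec{I}} (\vort_{(1)} - \vort_{(2)})_b$, which I would decompose as
\begin{align*}
& \left\lbrace (G_{(1)}^{-1})^{ab} \partial_a \partial_{\vec{I}} \vort_{(1),b} - (G_{(2)}^{-1})^{ab} \partial_a \partial_{\vec{I}} \vort_{(2),b} \right\rbrace
+ \left\lbrace (G_{(2)}^{-1})^{ab} - (G_{(1)}^{-1})^{ab} \right\rbrace \partial_a \partial_{\vec{I}} \vort_{(2),b}.
\end{align*}
The first brace is the difference of a polynomial in $\mathbf{H}_{(1)}$ and the same polynomial in $\mathbf{H}_{(2)}$ (via \eqref{E:ALGEBRAICEXPRESSIONFORICOMMUTEDGEOMETRICDIVERGENCEOFVORTICITY}); under the $C$-smallness assumption $\|\mathbf{H}_{(1)} - \mathbf{H}_{(2)}\|_{C(\Sigma_t)} \leq \updelta$ (which determines $\updelta > 0$), the continuity estimate \eqref{E:CONTINUITYESTIMATEWITHSMOOTHFUNCTIONFRAMENORML2PRODUCTBOUNDINERMSOFLINFINITYANDHMDOT} applied with $r = N-1$ bounds its $L^2(\Sigma_t)$ norm by $C \| \mathbf{H}_{(1)} - \mathbf{H}_{(2)} \|_{H^{N-1}(\Sigma_t)}$. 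The second brace is handled via the product estimate \eqref{E:SOBOLEVALGEBRAPROP} together with Sobolev embedding: it is bounded by $\| G_{(1)}^{-1} - G_{(2)}^{-1} \|_{C(\Sigma_t)} \cdot \| \vort_{(2)} \|_{H^N(\Sigma_t)}$, and since $G^{-1}$ is a smooth function of $u^a$ (by \eqref{E:INVERSERIEMANNIANONSIGMAT}), the first factor is controlled by $\| u_{(1)} - u_{(2)} \|_{C(\Sigma_t)} \leq C \| \mathbf{H}_{(1)} - \mathbf{H}_{(2)} \|_{H^{N-1}(\Sigma_t)}$, while the second factor is bounded uniformly by hypothesis (5). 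The analogous analysis is performed for the $\GradEnt$-divergence integral and the curl integral (the curl of $\GradEnt$ difference again vanishing identically).

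\medskip

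The main obstacle is the careful bookkeeping in Part (B): the elliptic-type energy for the difference naturally involves $G_{(1)}^{-1}$-contractions, but one of the solutions ``lives under'' $G_{(2)}^{-1}$, so one must carefully split terms to isolate the genuine difference $\mathbf{H}_{(1)} - \mathbf{H}_{(2)}$ from metric-mismatch terms, and it is essential that the uniform $H^N(\Sigma_t)$ bound on $\vort_{(2)}$ assumed in item (5) of the hypotheses absorbs the top-order factor $\partial_a \partial_{\vec{I}} \vort_{(2),b}$ appearing in the second brace above. By contrast, the lower-order contributions to the energy pose no difficulty, as they are already of the form $\| \vort_{(1)} - \vort_{(2)} \|_{H^{N-1}(\Sigma_t)}^2 + \| \GradEnt_{(1)} - \GradEnt_{(2)} \|_{H^{N-1}(\Sigma_t)}^2$, which is bounded by $\| \mathbf{H}_{(1)} - \mathbf{H}_{(2)} \|_{H^{N-1}(\Sigma_t)}^2$ by inspection of \eqref{E:HYPERBOLICVARS}.
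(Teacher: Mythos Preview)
Your proposal is correct and follows essentially the same approach as the paper: for Part~(A) you invoke the coercivity estimate \eqref{E:SECONDENERGYNORMCOMPARISON} with $M^{-1}=G^{-1}$, reduce to bounding the $G^{-1}$-divergence and curl terms via the algebraic identity \eqref{E:ALGEBRAICEXPRESSIONFORICOMMUTEDGEOMETRICDIVERGENCEOFVORTICITY}, and close with the Moser estimate \eqref{E:WITHSMOOTHFUNCTIONFRAMENORML2PRODUCTBOUNDINERMSOFLINFINITYANDHSDOT}; for Part~(B) you apply the same coercivity with $M^{-1}=G_{(1)}^{-1}$ to the difference and perform exactly the two-term splitting the paper uses (difference of the full algebraic expressions, handled by \eqref{E:CONTINUITYESTIMATEWITHSMOOTHFUNCTIONFRAMENORML2PRODUCTBOUNDINERMSOFLINFINITYANDHMDOT}, plus the metric-mismatch term absorbed by the $H^N$ bound on $\vort_{(2)}$). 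One cosmetic remark: the metric-mismatch term is really an $L^\infty\times L^2$ bound together with the mean value theorem for $G^{-1}$ as a function of $u$, not the algebra property \eqref{E:SOBOLEVALGEBRAPROP}, but your stated conclusion is correct regardless.
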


\begin{proof}
Throughout this proof, $C$ denotes a constant with the dependence-properties
stated in the lemma. We begin by establishing \eqref{E:KEYINHOMOGENEOUSTERMSINGLEELLIPTICVARIABLESESTIMATE}.
Invoking definitions 
\eqref{E:HYPERBOLICVARS},
\eqref{E:ELLIPTICVARS},
\eqref{E:INNERPRODUCT}, 
and \eqref{E:TOPORDERENERGY},
using the fact that $\curlthreed(\underline{\GradEnt}) = 0$ (see \eqref{E:ENTSYMMETRYOFMIXEDPARTIALS}),
and using the estimate \eqref{E:SECONDENERGYNORMCOMPARISON} with $M^{-1} := G^{-1}$
and with $\upalpha_* > 0$ as in the statement of Lemma~\ref{L:COMPARISONBASEDONELLIPTIC}
(where $G^{-1}$ is defined in Def.\,\ref{D:INVERSERIEMANNIANONSIGMAT}, and we stress that 
the proof of \eqref{E:SECONDENERGYNORMCOMPARISON} relied on elliptic estimates),
we find that
\begin{align} 
		\|\mathbf{E} \|_{H^{N-1}(\Sigma_t)}
		& \leq
		C \mathbb{E}_{N;G^{-1};\upalpha_*}[(\underline{\vort},\underline{\GradEnt})](t)
		\label{E:GDIVERGENCEVARIABLESINTERMSOFHYPERBOLICVARIABLES}
			\\
		& 
		\leq
			C
			\sum_{|\vec{I}|=N-1}
			\left\|
				(G^{-1})^{ab} \partial_a \partial_{\vec{I}} \vort_b 
			\right\|_{L^2(\Sigma_t)}
			\notag \\
		& \ \
			+
			C
			\sum_{|\vec{I}|=N-1}
			\left\|
				(G^{-1})^{ab} \partial_a \partial_{\vec{I}} \GradEnt_b 
			\right\|_{L^2(\Sigma_t)}	
				\notag \\
		& \ \
			+
			C
			\sum_{|\vec{I}|=N-1}
			\sum_{a=1}^3
			\left\|
				\curlthreed^a(\partial_{\vec{I}} \underline{\vort})
			\right\|_{L^2(\Sigma_t)}
				\notag \\
	& \ \
			+
			C \|\mathbf{H}  \|_{H^{N-1}(\Sigma_t)}.
			\notag
\end{align}
Next, using \eqref{E:ALGEBRAICEXPRESSIONFORICOMMUTEDGEOMETRICDIVERGENCEOFVORTICITY},
we see that the terms 
$(G^{-1})^{ab} \partial_a \partial_{\vec{I}} \vort_b$,
$(G^{-1})^{ab} \partial_a \partial_{\vec{I}} \GradEnt_b$,
and
$\curlthreed^a(\partial_{\vec{I}} \underline{\vort})$
on RHS~\eqref{E:GDIVERGENCEVARIABLESINTERMSOFHYPERBOLICVARIABLES}
are smooth functions of $\mathbf{H}$ and its spatial derivatives.
Thus, using inequality \eqref{E:WITHSMOOTHFUNCTIONFRAMENORML2PRODUCTBOUNDINERMSOFLINFINITYANDHSDOT} 
to bound RHS~\eqref{E:ALGEBRAICEXPRESSIONFORICOMMUTEDGEOMETRICDIVERGENCEOFVORTICITY}
in the norm 
$\| \cdot \|_{L^2(\Sigma_t)}$,
we arrive at the desired estimate \eqref{E:KEYINHOMOGENEOUSTERMSINGLEELLIPTICVARIABLESESTIMATE}.
We stress that RHS~\eqref{E:WITHSMOOTHFUNCTIONFRAMENORML2PRODUCTBOUNDINERMSOFLINFINITYANDHSDOT} 
is \emph{linear} in the order $r$ derivatives of the solution; this is the reason
that RHS~\eqref{E:KEYINHOMOGENEOUSTERMSINGLEELLIPTICVARIABLESESTIMATE}
is linear in $\norm{\mathbf{H}}_{H^{N-1}(\Sigma_t)}$.

We now prove \eqref{E:KEYINHOMOGENEOUSTERMELLIPTICVARIABLESESTIMATE}.
For $i=1,2$, we let $G_{(i)}^{-1}$ denote the inverse Riemannian metric corresponding to the $i^{th}$ solution,
that is, the inverse Riemannian metric 
whose rectangular components are formed by evaluating RHS~\eqref{E:INVERSERIEMANNIANONSIGMAT}
at the solution corresponding to the labeling index $i$.
To proceed, we use definitions 
\eqref{E:HYPERBOLICVARS},
\eqref{E:ELLIPTICVARS},
\eqref{E:INNERPRODUCT}, 
and \eqref{E:TOPORDERENERGY},
the fact that $\curlthreed(\underline{\GradEnt}_{(1)}) = \curlthreed(\underline{\GradEnt}_{(2)}) = 0$ (see \eqref{E:ENTSYMMETRYOFMIXEDPARTIALS}), 
and the comparison estimate \eqref{E:SECONDENERGYNORMCOMPARISON} with 
$M^{-1} := G_{(1)}^{-1}$
and with $\upalpha_* > 0$ as in the statement of Lemma~\ref{L:COMPARISONBASEDONELLIPTIC}
to deduce that
\begin{align} \label{E:FIRSTSTEPKEYINHOMOGENEOUSTERMELLIPTICVARIABLESDIFFERENCEESTIMATE}
		&
		\|\mathbf{E}_{(1)} - \mathbf{E}_{(2)} \|_{H^{N-1}(\Sigma_t)}
			\\
		& \leq
		C \mathbb{E}_{N;G_{(1)}^{-1};\upalpha_*}
		[(\underline{\vort}_{(1)} - \underline{\vort}_{(2)},\underline{\GradEnt}_{(1)} - \underline{\GradEnt}_{(2)})](t)
			\notag 
		\\
		& \leq C
			\sum_{|\vec{I}|=N-1}
			\left\|
				(G_{(1)}^{-1})^{ab} \partial_a \partial_{\vec{I}} 
				\left(
					\vort_{(1)b} - \vort_{(2)b} 
				\right)
			\right\|_{L^2(\Sigma_t)}
				\notag \\
		& \ \
			+
			C
			\sum_{|\vec{I}|=N-1}
			\left\|
				(G_{(1)}^{-1})^{ab} \partial_a \partial_{\vec{I}} 
				\left(
					\GradEnt_{(1)b} - \GradEnt_{(2)b}
				\right)
			\right\|_{L^2(\Sigma_t)}
			\notag \\
	& \ \
			+
			C
			\sum_{|\vec{I}|=N-1}
			\sum_{a=1}^3
			\left\|
				\curlthreed^a\left(
				\partial_{\vec{I}} 
					(\underline{\vort}_{(1)}
					-
					\underline{\vort}_{(2)})
				\right)
			\right\|_{L^2(\Sigma_t)}
			\notag \\
	& \ \
			+
			C \|\mathbf{H}_{(1)} - \mathbf{H}_{(2)} \|_{H^{N-1}(\Sigma_t)}.
			\notag
\end{align}
Next, using the triangle inequality, we find that
\begin{align} \label{E:SECONDSTEPKEYINHOMOGENEOUSTERMELLIPTICVARIABLESDIFFERENCEESTIMATE}
	&
	\mbox{RHS~\eqref{E:FIRSTSTEPKEYINHOMOGENEOUSTERMELLIPTICVARIABLESDIFFERENCEESTIMATE}}
		\\
	& \leq
	C\sum_{|\vec{I}|=N-1}
	\left\|
		(G_{(1)}^{-1})^{ab} \partial_a \partial_{\vec{I}} \vort_{(1)b} 
		- 
		(G_{(2)}^{-1})^{ab} \partial_a \partial_{\vec{I}} \vort_{(2)b}
	\right\|_{L^2(\Sigma_t)}
		\notag \\
& \ \
	+
	C\sum_{|\vec{I}|=N-1}
	\left\|
		(G_{(2)}^{-1})^{ab} - (G_{(1)}^{-1})^{ab}
	\right\|_{C(\Sigma_t)}
	\left\|
		\partial_a \partial_{\vec{I}} \vort_{(2)b}
	\right\|_{L^2(\Sigma_t)}
	\notag \\
	& 
	\ \
	+
	C\sum_{|\vec{I}|=N-1}
	\left\|
		(G_{(1)}^{-1})^{ab} \partial_a \partial_{\vec{I}} \GradEnt_{(1)b} 
		- 
		(G_{(2)}^{-1})^{ab} \partial_a \partial_{\vec{I}} \GradEnt_{(2)b}
	\right\|_{L^2(\Sigma_t)}
	\notag \\
	&
	\ \
	+ C
	\sum_{|\vec{I}|=N-1}
	\left\|
		(G_{(2)}^{-1})^{ab} - (G_{(1)}^{-1})^{ab}
	\right\|_{C(\Sigma_t)}
	\left\|
		 \partial_a \partial_{\vec{I}}  \GradEnt_{(2)b}
		\right\|_{L^2(\Sigma_t)}
	\notag \\
	& \ \
		+
			C
			\sum_{|\vec{I}|=N-1}
			\sum_{a=1}^3
			\left\|
				\curlthreed^a(\partial_{\vec{I}} \underline{\vort}_{(1)}) - \curlthreed^a(\partial_{\vec{I}} \underline{\vort}_{(2)})
			\right\|_{L^2(\Sigma_t)}
				\notag \\
	& \ \
		+
		C \|\mathbf{H}_{(1)} - \mathbf{H}_{(2)} \|_{H^{N-1}(\Sigma_t)}.
	\notag	
\end{align}
Using the assumed bounds 
$
\sum_{|\vec{I}|=N-1}
\sum_{a,b=1}^3
	\left\|
		\partial_a \partial_{\vec{I}} \vort_{(2)b}
	\right\|_{L^2(\Sigma_t)}
\leq C
$
and
$
\sum_{|\vec{I}|=N-1}	
\sum_{a,b=1}^3
\left\|
		 \partial_a \partial_{\vec{I}}  \GradEnt_{(2)b}
		\right\|_{L^2(\Sigma_t)}
\leq C
$,
\eqref{E:ALGEBRAICEXPRESSIONFORICOMMUTEDGEOMETRICDIVERGENCEOFVORTICITY},
\eqref{E:STANDARDSOBOLEVEMBEDDING},
and
\eqref{E:CONTINUITYESTIMATEWITHSMOOTHFUNCTIONFRAMENORML2PRODUCTBOUNDINERMSOFLINFINITYANDHMDOT}
(where the hypotheses needed to invoke \eqref{E:CONTINUITYESTIMATEWITHSMOOTHFUNCTIONFRAMENORML2PRODUCTBOUNDINERMSOFLINFINITYANDHMDOT} 
are satisfied if $\| \mathbf{H}_{(1)} - \mathbf{H}_{(2)} \|_{C(\Sigma_t)}$ is sufficiently small),
we see that the terms on the first, third, and fifth lines of RHS~\eqref{E:SECONDSTEPKEYINHOMOGENEOUSTERMELLIPTICVARIABLESDIFFERENCEESTIMATE}
are $\leq C \|\mathbf{H}_{(1)} - \mathbf{H}_{(2)} \|_{H^{N-1}(\Sigma_t)}$
as desired. To handle the terms on the second and fourth lines 
of RHS~\eqref{E:SECONDSTEPKEYINHOMOGENEOUSTERMELLIPTICVARIABLESDIFFERENCEESTIMATE},
we use the assumed bounds
\[
\sum_{|\vec{I}|=N-1}
\sum_{a,b=1}^3
	\left\|
		\partial_a \partial_{\vec{I}} \vort_{(2)b}
	\right\|_{L^2(\Sigma_t)}
\leq C,
	\,
\sum_{|\vec{I}|=N-1}	
\sum_{a,b=1}^3
\left\|
		 \partial_a \partial_{\vec{I}}  \GradEnt_{(2)b}
		\right\|_{L^2(\Sigma_t)}
\leq C,
\]
the mean value theorem estimate
$\left|(G_{(2)}^{-1})^{ab} - (G_{(1)}^{-1})^{ab} \right| \leq C \left|\mathbf{H}_{(1)} - \mathbf{H}_{(2)}\right|$ 
(where we are using that RHS~\eqref{E:INVERSERIEMANNIANONSIGMAT}
can be viewed as a smooth function of $(u^1,u^2,u^3)$),
and the Sobolev embedding result \eqref{E:STANDARDSOBOLEVEMBEDDING}
to deduce that the terms on the second and fourth lines of 
RHS~\eqref{E:SECONDSTEPKEYINHOMOGENEOUSTERMELLIPTICVARIABLESDIFFERENCEESTIMATE} 
are 
$
\leq 
C \|\mathbf{H}_{(1)} - \mathbf{H}_{(2)} \|_{C(\Sigma_t)}
\leq 
C \|\mathbf{H}_{(1)} - \mathbf{H}_{(2)} \|_{H^{N-1}(\Sigma_t)}$
as desired. We have therefore proved \eqref{E:KEYINHOMOGENEOUSTERMELLIPTICVARIABLESESTIMATE}. 
\end{proof}

\subsection{Energies for the wave equations via the vectorfield multiplier method}
\label{SS:ENERGIESFORWAVE}
In this subsection, we derive a priori estimates for our new formulation
of the relativistic Euler equations. The main result is provided by the next proposition.
The proposition shows in particular that the
vorticity and entropy are one degree more differentiable compared to the
standard estimates that follow from first-order formulations of the equations.
The main analytic tools in the proof of the proposition 
are the elliptic estimates from Subsect.\,\ref{SS:ENERGYNORMTIEDTOELLITPICESTIMATES}
and the vectorfield method for wave equations (see Subsubsect.\,\ref{SSS:ESTIMATESFORCOVARIANTWAVEEQUATIONS}).

\begin{proposition}[A priori estimates for solutions to the relativistic Euler equations]
	\label{P:APRIORIESTIMATES}
	Let 
	$\mathring{\Lnenth} := \Lnenth|_{\Sigma_0}$,
	$\mathring{\Ent} := \Ent|_{\Sigma_0}$,
	and
	$\mathring{u}^i := u^i|_{\Sigma_0}$
	be initial data for the relativistic Euler equations
	\eqref{E:ENTHALPYEVOLUTION}-\eqref{E:ENTROPYEVOLUTION} + \eqref{E:UISUNITLENGTH}
	obeying the assumptions of Theorem~\ref{T:STANDLOCALWELLPOSEDNESS},
	and let $(\Lnenth,\Ent,u^0,u^1,u^2,u^3)$ be the corresponding solution.
	In particular, let $N \geq 3$ be an integer,
	let $[0,T] \times\mathbb{T}^3$
	be the slab of existence provided by the theorem, and let
	$\mathfrak{K}$ be the set featured in theorem.
	Assume in addition that the components of the initial data relative to standard coordinates
	 belong to $C^{\infty}(\mathbb{T}^3)$
	and note that by Remark~\ref{R:CINFINITYDATAGIVECINFINITYSOLUTIONS},
	the solution components belong to $C^{\infty}([0,T] \times \mathbb{T}^3)$.
	Let $\vort$ be the vorticity (see definition~\ref{D:VORTICITYDEF}),
	and let $\mathring{\vort}^i := \vort^i|_{\Sigma_0}$ be its initial spatial components.
	
	Then there exists a constant $C > 0$, depending only on: 
	\begin{enumerate}
	\item $N$ 
	\item $\mathfrak{K}$
	\item
	$\norm{\mathring{\Lnenth}}_{H^{N}(\Sigma_0)}
		+
		\sum_{a=1}^3 \norm{\mathring{u}^a}_{H^{N}(\Sigma_0)}
		+
		\norm{\mathring{\Ent}}_{H^{N+1}(\Sigma_0)}
		+
		\sum_{a=1}^3 \norm{\mathring{\vort}^a}_{H^{N}(\Sigma_0)}$
	\item  
	\begin{align*}
		&
		\| \Lnenth \|_{C\big([0,T],C^1(\mathbb{T}^3)\big)}
		+
		\sum_{a=1}^3 \| u^a \|_{C\big([0,T],C^1(\mathbb{T}^3)\big)}
		+
		\| \Ent \|_{C\big([0,T],C^1(\mathbb{T}^3)\big)}
			\\
	& \ \
		+
		\sum_{a=1}^3 \| \GradEnt^a \|_{C\big([0,T],C^1(\mathbb{T}^3)\big)}
		+
		\sum_{a=1}^3 \| \vort^a \|_{C\big([0,T],C^1(\mathbb{T}^3)\big)}
	\end{align*}
	\end{enumerate}
	such that for $t \in [0,T]$, the components of the solution
	relative to the standard coordinates verify the following estimates:
	\begin{align}
	\label{E:APRIORIESTIMATE}
	&
	\norm{\Lnenth}_{H^{N}(\Sigma_t)} 
	+
	\sum_{a=0}^3
	\norm{u^{\alpha} - \updelta_0^{\alpha}}_{H^{N}(\Sigma_t)} 
	+
	\norm{\Ent}_{H^{N+1}(\Sigma_t)} 
		\\
	& \ \
	+
	\sum_{\alpha=0}^3
	\norm{\GradEnt^{\alpha}}_{H^N(\Sigma_t)} 
	+
	\sum_{\alpha=0}^3
	\norm{\vort^{\alpha}}_{H^{N}(\Sigma_t)}
		\notag \\
	& 
	\leq C \exp(Ct)
	\leq C \exp(CT)
	:= C_*,
	\notag
	\end{align}
	where $\updelta_0^{\alpha}$ is the Kronecker delta.
\end{proposition}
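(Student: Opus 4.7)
The plan is to close a bootstrap in $t$ for the composite quantity
\begin{align*}
\mathcal{E}(t) &:= \| \Lnenth \|_{H^N(\Sigma_t)}^2 + \sum_{\alpha=0}^3 \| u^{\alpha} - \updelta_0^{\alpha} \|_{H^N(\Sigma_t)}^2 + \| \Ent \|_{H^{N+1}(\Sigma_t)}^2 \\
&\quad + \sum_{\alpha=0}^3 \left( \| \vort^{\alpha} \|_{H^N(\Sigma_t)}^2 + \| \GradEnt^{\alpha} \|_{H^N(\Sigma_t)}^2 \right) + \| \mathcal{C} \|_{H^{N-1}(\Sigma_t)}^2 + \| \mathcal{D} \|_{H^{N-1}(\Sigma_t)}^2
\end{align*}
combined with the elliptic energy $\mathbb{E}_{N;G^{-1};\upalpha_*}^2[(\underline{\vort}, \underline{\GradEnt})]$ from Def.~\ref{D:ELLIPTICNORM}, where $G^{-1}$ is the solution-dependent Riemannian metric of Def.~\ref{D:INVERSERIEMANNIANONSIGMAT}. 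The overall structure is: (i) control $\Lnenth, u^{\alpha}, \Ent$ at order $N$ via wave-equation energy estimates driven by the equations of Theorem~\ref{T:NEWFORMULATIONRELEULER}; (ii) control $\vort^{\alpha}, \GradEnt^{\alpha}$ at order $N$ via the elliptic energy coupled to the transport equations; (iii) control the modified variables $\mathcal{C}, \mathcal{D}$ at order $N-1$ via their transport equations. These three pieces are coupled but form a closed system when assembled, and Gr\"onwall's inequality yields \eqref{E:APRIORIESTIMATE}.

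For (i), I commute the covariant wave equations \eqref{E:MAINTHMCOVARIANTWAVEENTHALPY}, \eqref{E:MAINTHMCOVARIANTWAVEVELOCITY}, \eqref{E:MAINTHMENTAUXILIARYWAVEQUATION} with all spatial operators $\partial_{\vec{I}}$, $|\vec{I}| \leq N-1$, and run the standard vectorfield multiplier method using the four-velocity $u$ as the multiplier. Since $g_{\alpha\beta} u^{\alpha} u^{\beta} = -1$ by \eqref{E:ACOUSTICALMETRIC} and \eqref{E:UISUNITLENGTH}, the vectorfield $u$ is uniformly $g$-timelike, so the resulting energy is coercive and equivalent to $\sum_\alpha \| \partial_{\alpha} \partial_{\vec{I}} \phi \|_{L^2(\Sigma_t)}^2$ (this replaces the naive choice $\partial_t$, which need not be $g$-timelike when three-velocities are large; cf.\ Footnote~\ref{F:PARTIALTNOTTIMELIKE}). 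The inhomogeneous terms fall into three classes: the $\mathfrak{Q}$ null forms and $\mathfrak{L}$ linear terms are handled with Sobolev product estimates \eqref{E:WITHSMOOTHFUNCTIONFRAMENORML2PRODUCTBOUNDINERMSOFLINFINITYANDHSDOT} and \eqref{E:SOBOLEVALGEBRAPROP}, yielding contributions controlled by $C\mathcal{E}(t)$; the dangerous source terms are $\mathcal{C}^{\alpha}$ and $\mathcal{D}$, which are controlled at the level of $N-1$ spatial derivatives by the bootstrap assumption on $\mathcal{E}$.

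For (ii), which is the new and most delicate ingredient, I differentiate $\mathbb{E}_{N;G^{-1};\upalpha_*}^2$ in $t$ and invoke the transport equations \eqref{E:MAINTHMVORTICITYTRANSPORT}, \eqref{E:MAINTHMENTROPYGRADIENTEVOLUTION} commuted with $\partial_{\vec{I}}$ for $|\vec{I}| = N-1$. The key algebraic input is \eqref{E:ALGEBRAICEXPRESSIONFORICOMMUTEDGEOMETRICDIVERGENCEOFVORTICITY}: the \emph{spatial} quantities $(G^{-1})^{ab} \partial_a \partial_{\vec{I}} \vort_b$, $(G^{-1})^{ab} \partial_a \partial_{\vec{I}} \GradEnt_b$ and $\curlthreed^i(\partial_{\vec{I}} \underline{\vort})$ appearing in $\mathbb{E}_{N;G^{-1};\upalpha_*}^2$ are, by virtue of \eqref{E:MAINTHMDIVOFVORTICITY}, \eqref{E:MAINTHMUPERPCURLOFENTROPYGRADIENTISZERO} and the definitions, smooth nonlinear functions of $\mathbf{H}$ — in particular they are algebraically linked to $\mathcal{C}, \mathcal{D}$ and lower-order derivatives of $(\Lnenth, \Ent, u, \vort, \GradEnt)$. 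This is what allows the top-order estimate to avoid derivative loss. Combined with Lemma~\ref{L:COMPARISONBASEDONELLIPTIC}, one obtains $\sum_\alpha( \|\vort^\alpha\|_{H^N(\Sigma_t)} + \|\GradEnt^\alpha\|_{H^N(\Sigma_t)}) \lesssim \mathbb{E}_{N;G^{-1};\upalpha_*} + C\mathcal{E}(t)^{1/2}$. For (iii), the transport equations \eqref{E:MAINTHMSMODIFIEDDIVERGENCEENTROPYGRADIENTTRANSPORT} and \eqref{E:MAINTHMTRANSPORTFORMODIFIEDVORTICITYOFVORTICITY} for $\mathcal{D}$ and $\mathcal{C}^{\alpha}$ yield standard transport energy estimates at order $N-1$; their RHSs are at most quadratic in derivatives of $(\Lnenth, \Ent, u, \vort, \GradEnt)$ and hence, using Sobolev embedding with $N \geq 3$, are controlled by $C\mathcal{E}(t)$. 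The main obstacle is the careful bookkeeping of \textbf{(ii)}: one must verify that the time derivative of the $G^{-1}$-dependent quadratic form produces only terms of order $\leq C\mathcal{E}(t)$, which requires that $\partial_t G^{-1}$ (depending on $\partial_t u^i$, itself determined algebraically via \eqref{E:ALGEBRAICHYPERBOLICONLYSMOOTHFUNCTIONEXPRESSION}) be in $L^\infty$ — this is supplied by the bootstrap plus Sobolev embedding — and that the principal commutator terms generated by $\partial_{\vec I}$ striking the transport equations produce contributions compatible with \eqref{E:ALGEBRAICEXPRESSIONFORICOMMUTEDGEOMETRICDIVERGENCEOFVORTICITY}. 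Once this is in place, summing all three pieces and applying Gr\"onwall yields the desired bound \eqref{E:APRIORIESTIMATE}.
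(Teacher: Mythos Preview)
Your proposal identifies the right ingredients: wave-equation energies with $u$ as multiplier, transport energies for the modified variables $\mathcal{C}^{\alpha}$ and $\mathcal{D}$, and the elliptic estimate from Lemma~\ref{L:COMPARISONBASEDONELLIPTIC}. The argument would close, so there is no genuine gap. However, the organization differs from the paper's in a way worth noting.

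The paper does \emph{not} differentiate the elliptic energy $\mathbb{E}_{N;G^{-1};\upalpha_*}^2$ in $t$ at all. Instead, it uses the elliptic comparison purely \emph{statically}: the key observation is that the top-order integrands in $\mathbb{E}^2$ (namely $(G^{-1})^{ab}\partial_a\partial_{\vec I}\vort_b$, $(G^{-1})^{ab}\partial_a\partial_{\vec I}\GradEnt_b$, $\curlthreed^i(\partial_{\vec I}\underline{\vort})$) are, by \eqref{E:ALGEBRAICEXPRESSIONFORICOMMUTEDGEOMETRICDIVERGENCEOFVORTICITY}, already algebraic functions of $\mathbf{H}$ and its spatial derivatives up to order $N-1$. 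Hence the whole argument reduces to a closed Gr\"onwall inequality for $\|\mathbf{H}\|_{H^{N-1}}^2$ alone: the hyperbolic estimates \eqref{E:PRELIMESTIMATEENTH}--\eqref{E:PRELIMESTIAMTEMODIFIEDDIVGRADENT} produce $\|\mathbf{E}\|_{H^{N-1}}^2$ on the right-hand side, and the elliptic bound $\|\mathbf{E}\|_{H^{N-1}}\leq C\|\mathbf{H}\|_{H^{N-1}}$ from \eqref{E:KEYINHOMOGENEOUSTERMSINGLEELLIPTICVARIABLESESTIMATE} absorbs it. The $H^N$ control of $\vort,\GradEnt$ is then \emph{recovered a posteriori} from the $\mathbf{H}$-bound via the same elliptic comparison. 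Your route---time-differentiating $\mathbb{E}^2$, tracking $\partial_t G^{-1}$, and worrying about principal commutators---works but is more laborious, since you end up re-deriving what the static elliptic estimate gives for free.

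Two smaller points. First, you invoke the auxiliary wave equation \eqref{E:MAINTHMENTAUXILIARYWAVEQUATION} for $\Ent$, but the paper does not (see Footnote~\ref{FN:JUSTIFICATIONENTROPYWAVEEQUATION}); $\|\Ent\|_{H^{N+1}}$ is recovered from $\|\GradEnt\|_{H^N}$ rather than estimated directly. Second, the paper runs transport estimates for $\underline{\vort},\underline{\GradEnt}$ only at order $H^{N-1}$ (see \eqref{E:PRELIMESTIAMTEGRADENT}--\eqref{E:PRELIMESTIMATEVORT}); the top order is handled entirely through $\mathcal{C},\mathcal{D}$ and the elliptic mechanism, not through a separate dynamical estimate on $\mathbb{E}^2$.
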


The proof of Prop.\,\ref{P:APRIORIESTIMATES} is located in Subsubsect.\,\ref{SSS:PROOFOFPROPAPRIORIESTIMATES}.
We will first derive some preliminary results.
We start by noting that we can rewrite
the spatial components of
\eqref{E:MAINTHMCOVARIANTWAVEENTHALPY},
\eqref{E:MAINTHMCOVARIANTWAVEVELOCITY},
\eqref{E:MAINTHMENTROPYGRADIENTEVOLUTION},
\eqref{E:MAINTHMVORTICITYTRANSPORT},
\eqref{E:MAINTHMSMODIFIEDDIVERGENCEENTROPYGRADIENTTRANSPORT},
and
\eqref{E:MAINTHMTRANSPORTFORMODIFIEDVORTICITYOFVORTICITY}	
in concise form as follows,
where $\ErrorTerm$ denotes a smooth function of its arguments that is free
to vary from line to line and that satisfies $\ErrorTerm(0) = 0$,
$\underline{V}$ denotes $\upeta$-orthogonal projection of $V$ onto constant-time hypersurfaces
(see Subsubsect.\,\ref{SSS:NOTATION}),
and the hyperbolic variables $\mathbf{H}$ and the elliptic variables $\mathbf{E}$
are as in Def.\,\ref{D:CLASSIFICATIONOFSOLUTIONVARIABLES}:
\begin{subequations}
\begin{align}
	\label{E:SYMBOLICMAINTHMCOVARIANTWAVEENTHALPY}
	\Square_g \Lnenth & = \ErrorTerm(\mathbf{H}),
\\
	\label{E:SYMBOLICMAINTHMCOVARIANTWAVEVELOCITY}
	\Square_g \underline{u} & = \ErrorTerm(\mathbf{H}),
\\
	\label{E:SYMBOLICMAINTHMENTROPYGRADIENTEVOLUTION}
	u^{\alpha} \partial_{\alpha} \underline{\GradEnt} & = \ErrorTerm(\mathbf{H}),
\\
	\label{E:SYMBOLICMAINTHMVORTICITYTRANSPORT}
	u^{\alpha} \partial_{\alpha} \underline{\vort} & = \ErrorTerm(\mathbf{H}),
\\
	\label{E:SYMBOLICMAINTHMSMODIFIEDDIVERGENCEENTROPYGRADIENTTRANSPORT}
	u^{\alpha} \partial_{\alpha} \mathcal{D} & = \ErrorTerm(\mathbf{H},\mathbf{E}),
\\
	\label{E:SYMBOLICMAINTHMTRANSPORTFORMODIFIEDVORTICITYOFVORTICITY}
	u^{\alpha} \partial_{\alpha} \underline{\mathcal{C}} & = \ErrorTerm(\mathbf{H},\mathbf{E}).
\end{align}
\end{subequations}

The crux of the proof of Prop.\,\ref{P:APRIORIESTIMATES}
is to derive energy estimates for the covariant wave equations
\eqref{E:SYMBOLICMAINTHMCOVARIANTWAVEENTHALPY} and \eqref{E:SYMBOLICMAINTHMCOVARIANTWAVEVELOCITY},
energy estimates for the transport equations
\eqref{E:SYMBOLICMAINTHMENTROPYGRADIENTEVOLUTION}, 
\eqref{E:SYMBOLICMAINTHMVORTICITYTRANSPORT},
\eqref{E:SYMBOLICMAINTHMSMODIFIEDDIVERGENCEENTROPYGRADIENTTRANSPORT},
and
\eqref{E:SYMBOLICMAINTHMTRANSPORTFORMODIFIEDVORTICITYOFVORTICITY},
and elliptic estimates
to handle the terms $\mathbf{E}$ on
RHSs~\eqref{E:SYMBOLICMAINTHMSMODIFIEDDIVERGENCEENTROPYGRADIENTTRANSPORT} and 
\eqref{E:SYMBOLICMAINTHMTRANSPORTFORMODIFIEDVORTICITYOFVORTICITY}.
We have already derived the necessary elliptic estimates in Subsect.\,\ref{SS:ENERGYNORMTIEDTOELLITPICESTIMATES}.
In the next three subsections, we will outline the energy estimates,
which are standard. 

\subsubsection{Energy estimates for covariant wave equations}
\label{SSS:ESTIMATESFORCOVARIANTWAVEEQUATIONS} 
The wave operator in \eqref{E:SYMBOLICMAINTHMCOVARIANTWAVEENTHALPY} and \eqref{E:SYMBOLICMAINTHMCOVARIANTWAVEVELOCITY} is with respect to the acoustical metric $g$ introduced in Definition \ref{D:ACOUSTICALMETRICANDINVERSE}. These are covariant
wave equations for the scalar quantities $\Lnenth$ and $u^\alpha$. Estimates for such
equations can be derived by using the well-known vectorfield multiplier method\footnote{In deriving a priori estimates, 
in addition to the multiplier method,
we will use only the simplest version of the vectorfield commutator method. 
Specifically, we will commute the equations only with the coordinate spatial derivative operators $\partial_{\vec{I}}$.} 
for wave equations, which we outline in this subsubsection. 

Let $\varphi$ be any element of $\lbrace \Lnenth,u^1,u^2,u^3 \rbrace$
(in practice, we will not need to derive separate energy estimates for $u^0$ since estimates
for $u^0$ can be obtained as a consequence of the estimates for the spatial components of $u$ and
the normalization condition $\upeta_{\kappa \lambda} u^{\kappa} u^{\lambda} = -1$).
We start by defining the energy-momentum tensor associated to a scalar function $\varphi$:
\begin{align}
	\label{E:ENERGYMOMENTUMTENSOR}
	T_{\alpha \beta}
	=
	T_{\alpha \beta}[\varphi]
	&: = (\partial_\alpha \varphi) (\partial_\beta \varphi)
	-\frac{1}{2} g_{\alpha\beta} (g^{-1})^{\mu \nu} (\partial_\mu \varphi) (\partial_\nu \varphi).
\end{align}
A crucial property of $T_{\alpha \beta}$ is that it satisfies 
the \emph{dominant energy condition}: 
$T_{\alpha \beta} X^\alpha Y^\beta \geq 0$ whenever the vectorfields $X$ and $Y$ 
are future-directed\footnote{By a ``future-directed'' vectorfield $X$, we mean that $X^0 > 0$.} 
and timelike\footnote{$X$ is defined to be timelike with respect to $g$ if
$g_{\alpha \beta} X^{\alpha} X^{\beta} < 0$.} with respect to $g$. In practice, the
dominant energy condition allows one to construct energies that are \emph{coercive}
along causal (with respect to $g$) hypersurfaces;\footnote{By a ``causal hypersurface,'' we mean a hypersurface whose
future-directed unit normal is either timelike with respect to $g$ or null with respect to $g$ at each point.}
see equation \eqref{E:ENERGYEXPLICITDENSITYCALCULATION} below for the energy 
that we use in deriving a priori estimates for $\Lnenth$ and $u$.

Next, for any vectorfield $X$ (soon to be employed in the role of a ``multiplier vectorfield''), 
we let $^{(X)} \mkern-1mu \pi$ be its deformation tensor relative to $g$,
which takes the following form relative to arbitrary coordinates:
\begin{align} \label{E:DEFTENSOR}
	{^{(X)} \mkern-1mu \pi_{\alpha\beta}} 
	& := g_{\beta\mu} \nabla_\alpha X^\mu + g_{\alpha\mu}\nabla_\beta X^\mu. 
\end{align}
In \eqref{E:DEFTENSOR} and in the rest of this subsubsection,
$\nabla$ is the covariant derivative induced by $g$. 
Next, we define the \emph{energy current} vectorfield corresponding to $X$ as follows:
\begin{align} \label{E:ENERGYCURRENT}
	{^{(X)} \mkern-4mu J^{\alpha}}
	=
	{^{(X)} \mkern-4mu J^{\alpha}}
	[\varphi]
	& := (g^{-1})^{\alpha \mu} T_{\mu \beta}[\varphi] X^\beta
		-
		X^{\alpha} \varphi^2.
\end{align}
From straightforward computations, we derive the following identity:
\begin{align} \label{E:DIVOFENERGYCURRENT}
	\nabla_\alpha {^{(X)} \mkern-4mu J^{\alpha}} 
	& =  
	(\square_g \varphi) \, X^\alpha \partial_\alpha \varphi
		+
		\frac{1}{2} (g^{-1})^{\alpha\gamma} (g^{-1})^{\beta\delta} T_{\alpha \beta} {^{(X)} \mkern-1mu \pi_{\gamma \delta}} 
			\\
	& \ \
		-
		(\nabla_{\alpha} X^{\alpha}) \varphi^2
		-
		2 \varphi (X^\alpha \partial_\alpha \varphi).
		\notag
\end{align}
Applying the divergence theorem on the spacetime slab $[0,T] \times \mathbb{T}^3$
and using \eqref{E:DIVOFENERGYCURRENT},
we deduce the following identity:
\begin{align} \label{E:DIVTHMGENERAL}
	& \int_{\Sigma_t} g_{\alpha\beta} {^{(X)} \mkern-4mu J^{\alpha}}[\varphi] \hat{N}^{\beta} \, d \mu_{\underline{g}}
		\\
	& =
	\int_{\Sigma_0} g_{\alpha\beta} {^{(X)} \mkern-4mu J^{\alpha}}[\varphi] \hat{N}^{\beta} \, d \mu_{\underline{g}}
		\notag \\
	& \ \
		-
		\int_{[0,t] \times \mathbb{T}^3}
			\left\lbrace
				(\square_g \varphi) \, X^\alpha \partial_\alpha \varphi
				+
				\frac{1}{2} (g^{-1})^{\alpha\gamma} (g^{-1})^{\beta\delta} T_{\alpha \beta} {^{(X)} \mkern-1mu \pi_{\gamma \delta}} 
			\right\rbrace
		\, d \mu_g
		\notag
			\\
	& \ \
		+
		\int_{[0,t] \times \mathbb{T}^3}
			\left\lbrace
				(\nabla_{\alpha} X^{\alpha}) \varphi^2
				+
				2 \varphi (X^\alpha \partial_\alpha \varphi)
			\right\rbrace
		\, d \mu_g.
		\notag
\end{align}
In \eqref{E:DIVTHMGENERAL},
$d \mu_g$ is the volume form that $g$ induces on [0,t] $\times \mathbb{T}^3$,
$\hat{N}$ is the future-directed unit normal to $\Sigma_t$ with respect to the metric $g$, 
and $d \mu_{\underline{g}}$ is the volume form that $\underline{g}$ induces on $\Sigma_t$,
where $\underline{g}$ is the first fundamental form of $\Sigma_t$, that is,
$\underline{g}_{ij} := g_{ij}$ for $1 \leq i,j \leq 3$.
We also note that relative to the standard coordinates,
$\hat{N}^{\alpha} = - \frac{(g^{-1})^{\alpha 0}}{\sqrt{|(g^{-1})^{00}|}}$,
$d \mu_g = \sqrt{|\mbox{\upshape det} g|} \, dx^1 dx^2 dx^3 dx^0$, 
and $d \mu_{\underline{g}} = \sqrt{\mbox{\upshape det} \underline{g}} \, dx^1 dx^2 dx^3
= \sqrt{|(g^{-1})^{00}|} \sqrt{|\mbox{\upshape det} g|} \, dx^1 dx^2 dx^3
$,
where the last equality is a basic linear algebraic identity.
Note that $\hat{N}$ is future-directed and timelike with respect to $g$,
and that we used the fact that $(g^{-1})^{00} < 0$ 
(which is a simple consequence of the formula \eqref{E:INVERSEACOUSTICALMETRIC} and our assumption that $0 < \speed \leq 1$).

From the above discussion, it follows that along any spacelike (with respect\footnote{A hypersurface is spacelike
with respect to $g$ if, at each point, its unit normal is timelike with respect to $g$.} to $g$)
hypersurface with future-directed unit normal $\hat{N}$,
we can construct a positive-definite energy density $g_{\alpha\beta} {^{(X)} \mkern-4mu J^{\alpha}}[\varphi] \hat{N}^{\beta}$
using any multiplier vectorfield $X$ that is future-directed and
timelike with respect to $g$. For the basic a priori estimates of interest to us, we will apply the above constructions 
along $\Sigma_t$ with $X := u$, which is future-directed  
timelike with respect to $g$.
As we described in Footnote~\ref{F:PARTIALTNOTTIMELIKE}, we cannot generally use 
$X:= \partial_t$ because $g(\partial_t,\partial_t) > 0$ can occur 
when $\sum_{a=1}^3|u^a|$ is large;
in contrast, note that by \eqref{E:ACOUSTICALMETRIC}
and the normalization condition $\upeta_{\kappa \lambda} u^{\kappa} u^{\lambda} = -1$,
we have $g_{\kappa \lambda} u^{\kappa} u^{\lambda} = -1$.
Thus, we define the following energy (where $\hat{N}^{\alpha} = - \frac{(g^{-1})^{\alpha 0}}{\sqrt{|(g^{-1})^{00}|}}$):
\begin{align} \label{E:WAVEEN}
	E_{\text{\upshape wave}}(t)
	=
	E_{\text{\upshape wave}}[\varphi](t)
	& := \int_{\Sigma_t} g_{\alpha \beta} {^{(u)} \mkern-4mu J^{\alpha}}[\varphi] \hat{N}^{\beta} \, d \mu_{\underline{g}}.
\end{align}
From \eqref{E:DIVTHMGENERAL},
definition \eqref{E:WAVEEN},
and the standard expansion\footnote{For example, 
$\nabla_{\alpha} X^{\beta} = \partial_{\alpha} X^{\beta} + \Gamma_{\alpha \ \gamma}^{\ \beta} X^{\gamma}$,
where $\Gamma_{\alpha \ \gamma}^{\ \beta}$ is defined by \eqref{E:CHRISTOFFELOFG}.} 
of covariant derivatives in terms of partial derivatives and Christoffel symbols
(which in particular can be used to derive the identity 
${^{(u)} \mkern-1mu \pi_{\alpha \beta}} 
	= u^{\kappa} \partial_{\kappa} g_{\alpha \beta}
		+
		g_{\alpha \kappa} \partial_{\beta} u^{\kappa}
		+
		g_{\beta \kappa} \partial_{\alpha} u^{\kappa}
	$),
we deduce the following energy identity relative to the standard coordinates:
\begin{align} \label{E:WAVEBASICENERGYID}
	E_{\text{\upshape wave}}[\varphi](t)
	& =
	E_{\text{\upshape wave}}[\varphi](0)
		-
		\int_{[0,t] \times \mathbb{T}^3}
			(\square_g \varphi) u^{\kappa} \partial_{\kappa} \varphi
		\, d \mu_g
			\\
		& \ \
		-
		\frac{1}{2}
		\int_{[0,t] \times \mathbb{T}^3}
				(g^{-1})^{\alpha \gamma}
				(g^{-1})^{\beta \delta}
				T_{\alpha \beta}[\varphi] 
				u^{\kappa} \partial_{\kappa} g_{\gamma \delta}
		\, d \mu_g
			\notag
				\\
		& \ \
		-
		\int_{[0,t] \times \mathbb{T}^3}
				(g^{-1})^{\beta \delta}
				T_{\alpha \beta}[\varphi] 
				\partial_{\delta} u^{\alpha}
		\, d \mu_g
			\notag
				\\
		& \ \
		+
		\int_{[0,t] \times \mathbb{T}^3}
			\left\lbrace
				(\partial_{\kappa} u^{\kappa}) \varphi^2
				+
				\Gamma_{\kappa \ \lambda}^{\ \kappa} u^{\lambda} \varphi^2
				+
				2 \varphi u^{\kappa} \partial_{\kappa} \varphi
			\right\rbrace
		\, d \mu_g.
		\notag
\end{align}
On RHS~\eqref{E:WAVEBASICENERGYID},
\begin{align} \label{E:CHRISTOFFELOFG}
	\Gamma_{\alpha \ \beta}^{\ \gamma}
	& :=
	\frac{1}{2}
	(g^{-1})^{\gamma \delta}
	\left\lbrace	
		\partial_{\alpha} g_{\delta \beta}
		+
		\partial_{\beta} g_{\alpha \delta}
		-
		\partial_{\delta} g_{\alpha \beta}
	\right\rbrace
\end{align}
are the Christoffel symbols of $g$ relative to the standard coordinates. 
Note that by \eqref{E:ACOUSTICALMETRIC}-\eqref{E:INVERSEACOUSTICALMETRIC} we have that
\begin{align} \label{E:CHRISTOFFELSCHEMATIC}
	\Gamma_{\alpha \ \beta}^{\ \gamma}
	& = 
	\mathrm{f}(\Lnenth,\Ent,u,\partial \Lnenth,\GradEnt,\partial u),
\end{align}
where $\mathrm{f}$ is a smooth function (depending on $\alpha$, $\beta$, and $\gamma$).

Next, with the help of \eqref{E:ACOUSTICALMETRIC}-\eqref{E:INVERSEACOUSTICALMETRIC}
and the normalization condition $\upeta_{\kappa \lambda} u^{\kappa} u^{\lambda} = -1$,
we compute that
\begin{align} \label{E:WAVEENERGYDENSITY}
	& g_{\alpha \beta} {^{(u)} \mkern-4mu J^{\alpha}}[\varphi] \hat{N}^{\beta}
		\\
	& =
		\left\lbrace
			\speed^2 T_{0 \beta}[\varphi] u^{\beta}
			+
			(1 - \speed^2) u^0 T_{\alpha \beta}[\varphi] u^{\alpha} u^{\beta}
			+
			u^0
			\varphi^2
		\right\rbrace
		\frac{1}{\sqrt{|(g^{-1})^{00}|}}
			\notag \\
	& = 
		\frac{
			\frac{1}{2} u^0
			\left\lbrace
			 \speed^2 (\partial_t \varphi)^2
			+
			\speed^2 \updelta^{ab} (\partial_a \varphi) \partial_b \varphi
			+
			(1 - \speed^2) (u^{\alpha} \partial_{\alpha} \varphi)^2
		\right\rbrace}{\sqrt{|(g^{-1})^{00}|}}
		\notag
			\\
	& \ \	
		+ 
		\frac{\left\lbrace
		\speed^2 (\partial_t \varphi) u^a \partial_a \varphi
		+
		u^0 \varphi^2
		\right\rbrace}{\sqrt{|(g^{-1})^{00}|}},
		\notag
\end{align}
where $\updelta^{ab}$ is the Kronecker delta.
From \eqref{E:WAVEEN} and \eqref{E:WAVEENERGYDENSITY}, it follows that
\begin{align} \label{E:ENERGYEXPLICITDENSITYCALCULATION}
	& E_{\text{\upshape wave}}[\varphi](t)
		\\
	& = 
	\frac{1}{2} 
	\int_{\Sigma_t}
		u^0
		\left\lbrace
			\speed^2 (\partial_t \varphi)^2
			+
			\speed^2 \updelta^{ab} (\partial_a \varphi) \partial_b \varphi
			+
			(1 - \speed^2) (u^{\alpha} \partial_{\alpha} \varphi)^2
		\right\rbrace
	\, \frac{d \mu_{\underline{g}}}{\sqrt{|(g^{-1})^{00}|}}
		\notag \\
	& \ \
		+
	\int_{\Sigma_t}
		\left\lbrace	
			\speed^2 (\partial_t \varphi) u^a \partial_a \varphi
			+
			u^0 \varphi^2
		\right\rbrace
		\, \frac{d \mu_{\underline{g}}}{\sqrt{|(g^{-1})^{00}|}}.
	\notag
\end{align}
The energy $E_{\text{\upshape wave}}[\varphi](t)$
will yield $L^2$ control of $\varphi$ and its first derivatives.
In Subsubsect.\,\ref{SSS:ENERGYNORMCOMPARISON}, we will establish the coerciveness 
$E_{\text{\upshape wave}}[\varphi](t)$.
To obtain $L^2$ control of the higher-order spatial derivatives of
$\varphi$, one can use energies of the form
$E_{\text{\upshape wave}}[\partial_{\vec{I}} \varphi]$,
where $\vec{I}$ is a spatial multi-index.

\subsubsection{Energy estimates for transport equations} 
\label{SSS:ENERGYESTIMATESFORTRANSPORTEQUATIONS}
One can derive energy estimates for transport equations of the form
$u^{\alpha} \partial_{\alpha} \varphi = f$
by relying on the following energy: 
\begin{align} \label{E:TRANSPORTENERGY}
	E_{\text{\upshape transport}}[\varphi](t) 
	& 
	:= \int_{\Sigma_t} \varphi^2 \, dx,
\end{align}
as in the proof of Lemma~\ref{L:CONTINUITYRESULTFORTRANSPORTEQUATIONS}.
The analog of the wave equation energy identity \eqref{E:WAVEBASICENERGYID}
is the following integral identity, whose simple proof follows from the
ideas featured in the proof of Lemma~\ref{L:CONTINUITYRESULTFORTRANSPORTEQUATIONS}:
\begin{align} \label{E:TRANSPORTEQUATIONENERGYID}
	E_{\text{\upshape transport}}[\varphi](t)
	& = 
	E_{\text{\upshape transport}}[\varphi](0)
	+
	\int_0^t
			\int_{\Sigma_{\tau}}
				\left\lbrace
					\partial_a
					\left(
						\frac{u^a}{u^0}
					\right)
				\right\rbrace
				\varphi^2
			\, dx
	\, d \tau
			\\
	& \	\
		+
		2
		\int_0^t
			\int_{\Sigma_{\tau}}
				\varphi
				\frac{u^{\alpha} \partial_{\alpha} \varphi}{u^0} 
			\, dx
		\, d \tau.
		\notag
	\end{align}
To control the higher-order derivatives of $\varphi$, one can rely on
energies of the form
$E_{\text{\upshape transport}}[\partial_{\vec{I}} \varphi]$.
We mention that the argument we have sketched here
relies on the basic fact that $u^0 > 0$, which allows us to divide by $u^0$ on RHS~\eqref{E:TRANSPORTEQUATIONENERGYID};
for the relativistic Euler equations, this fact
follows from the normalization condition $\upeta_{\kappa \lambda} u^{\kappa} u^{\lambda} = -1$
and the fact that $u$ is future-directed.

\subsubsection{Comparison of the energies with the Sobolev norm}
\label{SSS:ENERGYNORMCOMPARISON} 
The coerciveness properties of the wave equation energy 
$E_{\text{\upshape wave}}[\varphi](t)$
constructed in
Subsubsect.\,\ref{SSS:ESTIMATESFORCOVARIANTWAVEEQUATIONS}
are tied to the metric $g$; see \eqref{E:WAVEEN}.
In order to obtain our results, we need $E_{\text{\upshape wave}}[\varphi](t)$ 
to be uniformly comparable to a corresponding Sobolev norm along $\Sigma_t$.
More precisely, we need to ensure the existence of a constant $C>1$ such that
on the slab $[0,T] \times \mathbb{T}^3$ of existence guaranteed by Theorem~\ref{T:STANDLOCALWELLPOSEDNESS},
the following estimates hold:
\begin{align}
	C^{-1}
	\left\lbrace
		\norm{\varphi}^2_{H^N(\Sigma_t)} + \norm{\partial_\td \varphi}^2_{H^{N-1}(\Sigma_t)}
	\right\rbrace
	& \leq
	\sum_{0\leq |\vec{I}| \leq N-1} E_{\text{\upshape wave}}[\partial_{\vec{I}} \varphi](t)
	\label{E:COMPARISONSOBOLEVWAVE} \\
	&
	\leq 
	C
	\left\lbrace 
		\norm{\varphi}^2_{H^{N}(\Sigma_t)} + \norm{\partial_\td \varphi}^2_{H^{N-1}(\Sigma_t)}
	\right\rbrace.
	\notag
\end{align}
To see that such a constant $C$ exists, we first use Young's inequality, 
\eqref{E:UISUNITLENGTH},
and 
Cauchy--Schwarz to bound
the first product in braces on the last line of RHS~\eqref{E:ENERGYEXPLICITDENSITYCALCULATION} as
follows:
\begin{align} \label{E:SIMPLEYOUNGSINEQUALITY}
	&
	\speed^2 (\partial_t \varphi) u^a \partial_a \varphi
		\\
	& 
	\geq
	- 
	\frac{1}{2} \speed^2 \left(\sqrt{\sum_{i=1}^3 (u^i)^2} \right) (\partial_t \varphi)^2
	-
	\frac{1}{2} \speed^2 \left(\sqrt{\sum_{i=1}^3 (u^i)^2} \right) \updelta^{ab} (\partial_a \varphi) \partial_b \varphi
		\notag \\
	& 
	=
	- 
	\frac{1}{2} \speed^2 \left(\sqrt{(u^0)^2 - 1} \right)(\partial_t \varphi)^2
	-
	\frac{1}{2} \speed^2 \left(\sqrt{(u^0)^2 - 1}\right) \updelta^{ab} (\partial_a \varphi) \partial_b \varphi.
	\notag
\end{align}
Next, we recall that Theorem~\ref{T:STANDLOCALWELLPOSEDNESS} guarantees that on
$[0,T] \times \mathbb{T}^3$, the solution never escapes the compact subset $\mathfrak{K}$
featured in the statement of the theorem. In view of \eqref{E:SIMPLEYOUNGSINEQUALITY},
we see that this ensures that on $[0,T] \times \mathbb{T}^3$,
the product $\speed^2 (\partial_t \varphi) u^a \partial_a \varphi$ on the last line of RHS~\eqref{E:ENERGYEXPLICITDENSITYCALCULATION} 
can be absorbed into the sum
$\frac{1}{2} \speed^2 u^0 (\partial_t \varphi)^2
+
\frac{1}{2} \speed^2 u^0 \updelta^{ab} (\partial_a \varphi) \partial_b \varphi$ 
from the first line of RHS~\eqref{E:ENERGYEXPLICITDENSITYCALCULATION},
with room to spare. This implies that for solutions contained in $\mathfrak{K}$,
the integrands on RHS~\eqref{E:ENERGYEXPLICITDENSITYCALCULATION} are in total
uniformly comparable to $\sum_{\alpha = 0}^3 (\partial_{\alpha} \varphi)^2 + \varphi^2$.
This also ensures that on $[0,T] \times \mathbb{T}^3$,
the volume form $\displaystyle \frac{d \mu_{\underline{g}}}{\sqrt{|(g^{-1})^{00}|}}$ on $\Sigma_t$
is uniformly comparable\footnote{To see this, it is helpful to note the following identity,
which holds relative to the standard coordinates:
$\frac{d \mu_{\underline{g}}}{\sqrt{|(g^{-1})^{00}|}} = \speed^{-3} \, dx^1 dx^2 dx^3$.
This identity follows from \eqref{E:DETG} and the linear algebraic identity  
$\mbox{\upshape det} \underline{g} = (g^{-1})^{00} \mbox{\upshape det} g$.
}
to $dx := dx^1 dx^2 dx^3$. 
From these observations, it readily follows that a $C > 1$ exists such that
\eqref{E:COMPARISONSOBOLEVWAVE} holds.

\subsubsection{Proof of Prop.\,\ref{P:APRIORIESTIMATES}}
\label{SSS:PROOFOFPROPAPRIORIESTIMATES}
Recall that the assumptions of the proposition guarantee that
we have a smooth solution to the system \eqref{E:ENTHALPYEVOLUTION}-\eqref{E:ENTROPYEVOLUTION} + \eqref{E:UISUNITLENGTH}.
Consider the scalar component functions 
\begin{align}
	\label{E:LISTOFVARIABLES}
	\Lnenth, u^\alpha, \GradEnt^{\alpha}, \vort^{\alpha}, \mathcal{C}^{\alpha}, \mathcal{D},
\end{align}
introduced in Sect.\,\ref{S:SOMEBACKGROUND}. 
According to Theorem~\ref{T:NEWFORMULATIONRELEULER},
they satisfy the system of evolution equations given by equations
\eqref{E:MAINTHMCOVARIANTWAVEENTHALPY},
\eqref{E:MAINTHMCOVARIANTWAVEVELOCITY},
\eqref{E:MAINTHMENTROPYGRADIENTEVOLUTION},
\eqref{E:MAINTHMVORTICITYTRANSPORT},
\eqref{E:MAINTHMSMODIFIEDDIVERGENCEENTROPYGRADIENTTRANSPORT},
and
\eqref{E:MAINTHMTRANSPORTFORMODIFIEDVORTICITYOFVORTICITY}. 
Next, we recall that the 
hyperbolic quantities $\mathbf{H}$ and the elliptic quantities $\mathbf{E}$
were defined in Def.\,\ref{D:CLASSIFICATIONOFSOLUTIONVARIABLES}.
To prove the proposition, we claim that it suffices to show that
the following inequality holds for $t \in [0,T]$:
\begin{align} \label{E:KEYSTEPINRESTATEMENTOFAPRIORI}
	\norm{\mathbf{H}}_{H^{N-1}(\Sigma_t)}^2
	& 
	\leq 
	C \norm{\mathbf{H}}_{H^{N-1}(\Sigma_0)}^2 
	+ 
	C
	\int_0^t
		\norm{\mathbf{H}}_{H^{N-1}(\Sigma_\tau)}^2
	\, d \tau,
\end{align}
where in \eqref{E:KEYSTEPINRESTATEMENTOFAPRIORI} 
and in the rest of this proof, $C$ is as in the statement of Prop.\,\ref{P:APRIORIESTIMATES}.
For once we have shown \eqref{E:KEYSTEPINRESTATEMENTOFAPRIORI},
we can use Gronwall's inequality to deduce
(recalling that $C$ is allowed to depend on the initial data and can vary from line to line)
that the following estimate holds for $t \in [0,T]$:
\begin{align} \label{E:KEYSTEPINRESTATEMENTOFAPRIORIGRWONALLED}
	\norm{\mathbf{H}}_{H^{N-1}(\Sigma_t)}^2
	& 
	\leq 
	C \norm{\mathbf{H}}_{H^{N-1}(\Sigma_0)}^2 
	\exp(Ct)
	\leq C \exp(Ct)
	\leq C \exp(CT).
\end{align}
Then from 
\eqref{E:KEYINHOMOGENEOUSTERMSINGLEELLIPTICVARIABLESESTIMATE}
and
\eqref{E:KEYSTEPINRESTATEMENTOFAPRIORIGRWONALLED}
we conclude,
in view of Def.\,\ref{D:CLASSIFICATIONOFSOLUTIONVARIABLES},
the desired bound \eqref{E:APRIORIESTIMATE},
except for the estimates for $u^0$, $\GradEnt^0$, and $\vort^0$.
To obtain the desired estimate for these quantities, we 
first express
$u^0-1$, $\GradEnt^0$, $\vort^0$, $\partial_a u^0$, $\partial_a \GradEnt^0$, and $\partial_a \vort^0$
as $\mathrm{f}(\mathbf{H},\mathbf{E})$, with $\mathrm{f}$ smooth and satisfying $\mathrm{f}(0) = 0$
(this is possible in view of definition \eqref{E:ALLALGEBRAICVARIABLES} and \eqref{E:ALLALGEBRAICSMOOTHFUNCTIONEXPRESSION}).
We then use Lemma~\ref{L:STANDARDSOBOLEVRESULTS} 
to deduce that 
$\norm{\mathrm{f}(\mathbf{H},\mathbf{E})}_{H^{N-1}(\Sigma_t)}\leq C \norm{\mathbf{H}}_{H^{N-1}(\Sigma_t)} + C \norm{\mathbf{E}}_{H^{N-1}(\Sigma_t)}$.
Finally, we use the elliptic estimate \eqref{E:KEYINHOMOGENEOUSTERMSINGLEELLIPTICVARIABLESESTIMATE}
and \eqref{E:KEYSTEPINRESTATEMENTOFAPRIORIGRWONALLED} to 
conclude that 
$C \norm{\mathbf{H}}_{H^{N-1}(\Sigma_t)} + C \norm{\mathbf{E}}_{H^{N-1}(\Sigma_t)} \leq \mbox{RHS~\eqref{E:APRIORIESTIMATE}}$,
which yields the desired estimates.

It remains for us to prove \eqref{E:KEYSTEPINRESTATEMENTOFAPRIORI}.
We start by noting that the results described 
in Subsubsects.\,\ref{SSS:ESTIMATESFORCOVARIANTWAVEEQUATIONS}-\ref{SSS:ENERGYNORMCOMPARISON}
can be used to derive the following estimates,
where we recall that $\underline{V}$ denotes the spatial components of $V$
(i.e., the $\upeta$-orthogonal projection of $V$ onto constant-time hypersurfaces, as in Subsubsect.\,\ref{SSS:NOTATION}):
\begin{align}
	\label{E:PRELIMESTIMATEENTH}
	\norm{\Lnenth}_{H^{N}(\Sigma_t)}^2 + \norm{\partial_\td \Lnenth}_{H^{N-1}(\Sigma_t)}^2 
	& \leq  
	C \left\lbrace
		\norm{\Lnenth}_{H^{N}(\Sigma_0)}^2 + \norm{\partial_\td \Lnenth}_{H^{N-1}(\Sigma_0)}^2
	\right\rbrace
		\\
	& \ \
	+ 
	C
	\int_0^t
		\norm{\mathbf{H}}_{H^{N-1}(\Sigma_\tau)}^2
	\, d \tau,
		\notag \\
	\label{E:PRELIMESTIMATEVELOCITY}
	\norm{\underline{u}}_{H^{N}(\Sigma_t)}^2
	+
	\norm{\partial_t \underline{u}}_{H^{N}(\Sigma_t)}^2
	& \leq 
	C 
	\left\lbrace
		\norm{\underline{u}}_{H^{N}(\Sigma_0)}^2
		+
		\norm{\partial_t \underline{u}}_{H^{N}(\Sigma_t)}^2
	\right\rbrace
		\\
	& \ \
	+
	C
	\int_0^t
		\norm{\mathbf{H}}_{H^{N-1}(\Sigma_\tau)}^2
	\, d \tau,
	\notag \\
	\norm{\underline{\GradEnt}}_{H^{N-1}(\Sigma_t)}^2 
	& \leq 
	C \norm{\underline{\GradEnt}}_{H^{N-1}(\Sigma_0)}^2
		\label{E:PRELIMESTIAMTEGRADENT}
		\\
	& \ \
		+ 
	C
	\int_0^t
		\norm{\mathbf{H}}_{H^{N-1}(\Sigma_\tau)}^2
	\, d \tau,
	\notag \\
	\norm{\underline{\vort}}_{H^{N-1}(\Sigma_t)}^2 
	& \leq  
		C	\norm{\underline{\vort}}_{H^{N-1}(\Sigma_0)}^2
		\label{E:PRELIMESTIMATEVORT}
		\\
	& \ \
		+
	C
	\int_0^t
		\norm{\mathbf{H}}_{H^{N-1}(\Sigma_\tau)}^2
	\, d \tau,
	\notag
	\\
		\norm{\mathcal{D}}_{H^{N-1}(\Sigma_t)}^2
	& \leq 
	C \norm{\mathcal{D}}_{H^{N-1}(\Sigma_0)}^2
		\label{E:PRELIMESTIMATEMODIFIEDVORTVORT} 
		\\
& \ \
	+ 
	C
	\int_0^t
		\left\lbrace
			\norm{\mathbf{H}}_{H^{N-1}(\Sigma_\tau)}^2
			+
			\norm{\mathbf{E}}_{H^{N-1}(\Sigma_\tau)}^2
		\right\rbrace
	\, d \tau,
	\notag	\\
	\norm{\underline{\mathcal{C}}}_{H^{N-1}(\Sigma_t)}^2 
	& \leq
	C \norm{\underline{\mathcal{C}}}_{H^{N-1}(\Sigma_0)}^2 
		\label{E:PRELIMESTIAMTEMODIFIEDDIVGRADENT}	
		\\
	& \ \
	+
	C
	\int_0^t
		\left\lbrace
			\norm{\mathbf{H}}_{H^{N-1}(\Sigma_\tau)}^2
			+
			\norm{\mathbf{E}}_{H^{N-1}(\Sigma_\tau)}^2
		\right\rbrace
	\, d \tau.
		\notag
	\end{align}
The estimates \eqref{E:PRELIMESTIMATEENTH}-\eqref{E:PRELIMESTIAMTEMODIFIEDDIVGRADENT}
are standard and can be derived by commuting the evolution equations of
Theorem~\ref{T:NEWFORMULATIONRELEULER} 
(more precisely, only the evolution equations for the spatial components of $u$, $\vort$, $\GradEnt$, and $\mathcal{C}$)
with spatial derivative operators $\partial_{\vec{I}}$ and using the energy identities
\eqref{E:WAVEBASICENERGYID} and \eqref{E:TRANSPORTEQUATIONENERGYID} 
(and their analogs for the $\partial_{\vec{I}}-$differentiated solution variables),
the coerciveness estimate \eqref{E:COMPARISONSOBOLEVWAVE},
Lemma~\ref{L:EXPRESSIONSFORALGEBRAICQUANTITIESINTERMSOFHYPERBOLICANDELLIPTIC},
and the Sobolev--Moser-type estimate \eqref{E:WITHSMOOTHFUNCTIONFRAMENORML2PRODUCTBOUNDINERMSOFLINFINITYANDHSDOT}.
We stress that RHS~\eqref{E:WITHSMOOTHFUNCTIONFRAMENORML2PRODUCTBOUNDINERMSOFLINFINITYANDHSDOT} 
is \emph{linear} in the order $r$ derivatives of the solution; this is the reason
the integrands on RHS~\eqref{E:PRELIMESTIMATEENTH}-\eqref{E:PRELIMESTIAMTEMODIFIEDDIVGRADENT}
are quadratic in $\norm{\mathbf{H}}_{H^{N-1}(\Sigma_\tau)}$ and $\norm{\mathbf{E}}_{H^{N-1}(\Sigma_\tau)}$
(the sup-norm factors on RHS~\eqref{E:WITHSMOOTHFUNCTIONFRAMENORML2PRODUCTBOUNDINERMSOFLINFINITYANDHSDOT}
can be bounded by $\leq C$ since those factors are among the quantities that constants $C$ are allowed to depend on).
The non-standard aspect of the remaining part of the proof is the appearance of 
the term $\norm{\mathbf{E}}_{H^{N-1}(\Sigma_\tau)}^2$
on RHSs~\eqref{E:PRELIMESTIMATEMODIFIEDVORTVORT}-\eqref{E:PRELIMESTIAMTEMODIFIEDDIVGRADENT};
we clarify that these terms are generated by the terms 
$\partial_a \GradEnt_b$
and $\partial_a \vort_b$
on RHSs~\eqref{E:MAINTHMSMODIFIEDDIVERGENCEENTROPYGRADIENTTRANSPORT}
and
\eqref{E:MAINTHMTRANSPORTFORMODIFIEDVORTICITYOFVORTICITY}
(see definition~\eqref{E:ELLIPTICVARS}).
Next, adding \eqref{E:PRELIMESTIMATEENTH}-\eqref{E:PRELIMESTIAMTEMODIFIEDDIVGRADENT}
and appealing to Def.\,\ref{D:CLASSIFICATIONOFSOLUTIONVARIABLES},
we deduce that
\begin{align} \label{E:HYPERBOLICVARIABLESCONTROLLEDINTERMSOFHYPERBOLICANDELLIPTIC}
	\norm{\mathbf{H}}_{H^{N-1}(\Sigma_t)}^2
	& 
	\leq 
	C \norm{\mathbf{H}}_{H^{N-1}(\Sigma_0)}^2
		\\
& \ \
	+ 
	C
	\int_0^t
		\left\lbrace
			\norm{\mathbf{H}}_{H^{N-1}(\Sigma_\tau)}^2
			+
			\norm{\mathbf{E}}_{H^{N-1}(\Sigma_\tau)}^2
		\right\rbrace
	\, d \tau.
	\notag
\end{align}
Finally, from \eqref{E:HYPERBOLICVARIABLESCONTROLLEDINTERMSOFHYPERBOLICANDELLIPTIC}
and the elliptic estimate \eqref{E:KEYINHOMOGENEOUSTERMSINGLEELLIPTICVARIABLESESTIMATE},
we conclude the desired bound \eqref{E:KEYSTEPINRESTATEMENTOFAPRIORI}.
\hfill $\qed$

\subsection{Proof of Theorem~\ref{T:UPGRADEDLOCALWELLPOSEDNESS}}
\label{SS:PROOFOFTHEOREMUPGRADEDLOCALWELLPOSEDNESS}
We now prove Theorem~\ref{T:UPGRADEDLOCALWELLPOSEDNESS}, 
which is the main result of Sect.\,\ref{S:WELLPOSEDNESS}.
By Theorem~\ref{T:STANDLOCALWELLPOSEDNESS},
we need only to show that \textbf{i)} under the regularity assumptions on the initial data stated in Theorem~\ref{T:UPGRADEDLOCALWELLPOSEDNESS},
the standard local well-posedness results \eqref{E:FIRSTTHMSTANDARDCONTINUITYINHN}-\eqref{E:NOTYETGAINOFONEDERIVATIVE}
can be upgraded to \eqref{E:STANDARDCONTINUITYINHN}-\eqref{E:GAINOFONEDERIVATIVE}
and \textbf{ii)} that the solution depends continuously
on the initial data, where continuity is measured in the norms
corresponding to the function spaces featured in 
\eqref{E:STANDARDCONTINUITYINHN}-\eqref{E:GAINOFONEDERIVATIVE}.
Throughout this proof, 
$\mathfrak{K}$ denotes the set
featured in the statement of Theorem~\ref{T:STANDLOCALWELLPOSEDNESS}.
To proceed, we let $(\mathring{\Lnenth}_{(m)},\mathring{\Ent}_{(m)},\mathring{u}_{(m)}^i) \subset \left( C^{\infty}(\mathbb{T}^3) \right)^5$
	be a sequence of smooth initial data such that as $m \to \infty$, 
	we have
	\begin{align}
		\left\|
			\mathring{\Lnenth}_{(m)} 
			- 
			\mathring{\Lnenth}
		\right\|_{H^N(\Sigma_0)}
		& \rightarrow 0,
		&
		\left\|
			\mathring{u}_{(m)}^i
			- 
			\mathring{u}^i
		\right\|_{H^N(\Sigma_0)}
		& \rightarrow 0,
			\label{E:STANDARDDATACONVERGENCE} \\
		\left\|
			\mathring{\Ent}_{(m)} 
			- 
			\mathring{\Ent} 
		\right\|_{H^{N+1}(\Sigma_0)}
		& \rightarrow 0,
		&
		\left\|
			\mathring{\vort}_{(m)}^i 
			- 
			\mathring{\vort}^i
		\right\|_{H^N(\Sigma_0)}
		& \rightarrow 0,
		\label{E:GAINOFONEDERIVATIVEDATACONVERGENCE}
	\end{align}
	where $\mathring{\vort}_{(m)}^i$ denotes the initial vorticity of the $m^{th}$ element of the sequence and
	$\mathring{\vort}^i$ is as in the statement of the theorem.
	Let $(\Lnenth_{(m)},\Ent_{(m)},u_{(m)}^{\alpha},\GradEnt_{(m)}^{\alpha},\vort_{(m)}^{\alpha})$ 
	denote the corresponding sequence of solution variables. Theorem~\ref{T:STANDLOCALWELLPOSEDNESS} yields 
	(see, for example, \cite{jS2008a}, for additional details) that for $m$ sufficiently large, the element
	$(\Lnenth_{(m)},\Ent_{(m)},u_{(m)}^{\alpha})$ is a $C^{\infty}$
	classical solution to equations \eqref{E:ENTHALPYEVOLUTION}-\eqref{E:ENTROPYEVOLUTION} + \eqref{E:UISUNITLENGTH}
	on the fixed slab $[0,T] \times \mathbb{T}^3$ 
	with 
	\[
	(\Lnenth_{(m)}(p),\Ent_{(m)}(p),u_{(m)}^1(p),u_{(m)}^2(p),u_{(m)}^3(p)) \in \mbox{\upshape int} \mathfrak{K}
	\]
	for $p \in [0,T] \times \mathbb{T}^3$,
	and that on the same slab,
	$(\Lnenth_{(m)},\Ent_{(m)},u_{(m)}^{\alpha},\GradEnt_{(m)}^{\alpha},\vort_{(m)}^{\alpha})$
	is a $C^{\infty}$ solution to the equations of Theorem~\ref{T:NEWFORMULATIONRELEULER}
	(which are consequences of \eqref{E:ENTHALPYEVOLUTION}-\eqref{E:ENTROPYEVOLUTION} + \eqref{E:UISUNITLENGTH}).
	Moreover, Theorem~\ref{T:STANDLOCALWELLPOSEDNESS} also implies
	that the sequence converges to the solution in the following norms as $m \to \infty$:
	\begin{align}
		\left\|
			\Lnenth_{(m)} 
			- 
			\Lnenth
		\right\|_{C\big([0,T],H^N(\mathbb{T}^3)\big)}
		& \rightarrow 0,
			\label{E:1STANDARDITERATECONVERGENCE} 
			\\
		\left\|
			u_{(m)}^{\alpha}
			- 
			u^{\alpha}
		\right\|_{C\big([0,T],H^N(\mathbb{T}^3)\big)}
		& \rightarrow 0,
			\label{E:2STANDARDITERATECONVERGENCE}  \\
		\left\|
			\Ent_{(m)} 
			- 
			\Ent
		\right\|_{C\big([0,T],H^N(\mathbb{T}^3)\big)}
		& \rightarrow 0,
		\label{E:3STANDARDITERATECONVERGENCE} 
		\\
		\left\|
			\GradEnt_{(m)}^{\alpha} 
			- 
			\GradEnt^{\alpha}
		\right\|_{C\big([0,T],H^{N-1}(\mathbb{T}^3)\big)}
		& \rightarrow 0,
			\label{E:1ANOTHERSTANDARDITERATECONVERGENCE}
			\\
		\left\|
			\vort_{(m)}^{\alpha} 
			- 
			\vort^{\alpha}
		\right\|_{C\big([0,T],H^{N-1}(\mathbb{T}^3)\big)}
		& \rightarrow 0.
		\label{E:2ANOTHERSTANDARDITERATECONVERGENCE}
	\end{align}

Next, we use the convergence results \eqref{E:1STANDARDITERATECONVERGENCE}-\eqref{E:2ANOTHERSTANDARDITERATECONVERGENCE},
Theorem~\ref{T:STANDLOCALWELLPOSEDNESS},
and the a priori estimates provided by Prop.\,\ref{P:APRIORIESTIMATES}
to deduce that exist 
a constant $C > 0$, depending on $T$ and on the four types of quantities listed just above \eqref{E:APRIORIESTIMATE},
and a positive integer $m_0$
such that
\begin{align} \label{E:1ITERATESBOUNDEDINHIGHNORM}
		\sup_{m \geq m_0}
		\sup_{\tau \in [0,T]}
		\| \Ent_{(m)} \|_{H^{N+1}(\Sigma_{\tau})}
		& \leq C,
			\\
		\sup_{m \geq m_0}
		\sup_{\tau \in [0,T]}
		\| \GradEnt_{(m)}^{\alpha} \|_{H^N(\Sigma_{\tau})}
		& \leq C,
			\label{E:2ITERATESBOUNDEDINHIGHNORM} \\
		\sup_{m \geq m_0}
		\sup_{\tau \in [0,T]}
		\| \vort_{(m)}^{\alpha} \|_{H^N(\Sigma_{\tau})}
		& \leq C.
		\label{E:3ITERATESBOUNDEDINHIGHNORM}
\end{align}	
Since $H^r(\mathbb{T}^3)$ is a Hilbert space for $r \in \mathbb{R}$, 
it follows from the norm-boundedness results
\eqref{E:1ITERATESBOUNDEDINHIGHNORM}-\eqref{E:3ITERATESBOUNDEDINHIGHNORM} that for each $\tau \in [0,T]$,
there exist subsequences
$\Ent_{(m_n)}$, 
$\GradEnt_{(m_n)}^{\alpha}$,
and $\vort_{(m_n)}^{\alpha}$
that weakly converge in $H^{N+1}(\Sigma_{\tau})$, $H^N(\Sigma_{\tau})$, and $H^N(\Sigma_{\tau})$
respectively as $n \to \infty$. Moreover, since the norm is weakly lower semicontinuous
in a Hilbert space, it follows that the limits are bounded, respectively, in the norms
$\| \cdot \|_{H^{N+1}(\Sigma_{\tau})}$,
$\| \cdot \|_{H^N(\Sigma_{\tau})}$,
and  
$\| \cdot \|_{H^N(\Sigma_{\tau})}$, 
by $\leq C$, where $C$ is the same constant found on RHSs~\eqref{E:1ITERATESBOUNDEDINHIGHNORM}-\eqref{E:3ITERATESBOUNDEDINHIGHNORM}.
From \eqref{E:1ANOTHERSTANDARDITERATECONVERGENCE}-\eqref{E:2ANOTHERSTANDARDITERATECONVERGENCE}, 
it follows that the limits must be
$\Ent$, $\GradEnt^{\alpha}$, and $\vort^{\alpha}$ respectively.
We have therefore shown that
\begin{align} 
		\sup_{\tau \in [0,T]}
		\| \Ent \|_{H^{N+1}(\Sigma_{\tau})}
		& \leq C,
			\label{E:1GAININDIFFERENTIABILITY}
			\\
		\sup_{\tau \in [0,T]}
		\| \GradEnt^{\alpha} \|_{H^N(\Sigma_{\tau})}
		& \leq C,
			\label{E:2GAININDIFFERENTIABILITY}
				\\
		\sup_{\tau \in [0,T]}
		\| \vort^{\alpha} \|_{H^N(\Sigma_{\tau})}
		& \leq C.
		\label{E:3GAININDIFFERENTIABILITY}
\end{align}	

To complete the proof of \eqref{E:GAINOFONEDERIVATIVE}, 
we must show that for each spatial multi-index $\vec{I}$ with $|\vec{I}| = N$,
the map $t \rightarrow \partial_{\vec{I}} \GradEnt^{\alpha}(t,\cdot)$ is a continuous map from $[0,T]$ into $L^2(\mathbb{T}^3)$,
and similarly for $\vort^{\alpha}$
(the desired time-continuity results for $\Ent$ then follow from the relation $\partial_i \Ent = \GradEnt_i$).
To keep the presentation short, we illustrate only the right-continuity of 
these maps at $t=0$; 
the general statement can be proved by making minor modifications to the argument that we give.
That is, we will show that
\begin{subequations}
\begin{align} \label{E:DESIREDHIGHNORMCONTINUITY}
	\lim_{t \downarrow 0}
	\| \partial_{\vec{I}} \GradEnt^{\alpha}(t,\cdot) - \partial_{\vec{I}} \mathring{\GradEnt}^{\alpha}(\cdot) \|_{L^2(\mathbb{T}^3)}
	& = 0,
	&&
	|\vec{I}|=N,
		\\
	\lim_{t \downarrow 0}
	\| \partial_{\vec{I}} \vort^{\alpha}(t,\cdot) - \partial_{\vec{I}} \mathring{\vort}^{\alpha}(\cdot) \|_{L^2(\mathbb{T}^3)}
	& = 0,
	&&
	|\vec{I}|=N,
	\label{E:SECONDDESIREDHIGHNORMCONTINUITY}
\end{align}
\end{subequations}
where $\mathring{\GradEnt}^{\alpha}(\cdot) := \GradEnt^{\alpha}(0,\cdot)$.
The rest of our proof is based on Lemmas~\ref{L:CONTINUITYRESULTFORTRANSPORTEQUATIONS} and \ref{L:COMPARISONBASEDONELLIPTIC},
but to apply the lemmas, we first have to derive some preliminary results.
We will use the estimates provided by Lemma~\ref{L:STANDARDSOBOLEVRESULTS}
without giving complete details each time we use them; we will refer
to these estimates as the ``standard Sobolev calculus.''
In the rest of the proof, we will refer to the variable sets
$\mathbf{H}$,
$\mathbf{E}$,
$\mathbf{A}_{\mathbf{H}}$,
$\mathbf{A}_{\mathbf{H},\mathbf{E}}$,
and 
$\mathbf{A}$
from Def.\,\ref{D:CLASSIFICATIONOFSOLUTIONVARIABLES}.

As a first step in proving \eqref{E:DESIREDHIGHNORMCONTINUITY}-\eqref{E:SECONDDESIREDHIGHNORMCONTINUITY}, 
we will show that
\begin{align} \label{E:HYPERBOLICQUANTITIESINHNMINUSONE}
	\mathbf{H}, \, \mathbf{A}_{\mathbf{H}} \in C\big([0,T],H^{N-1}(\mathbb{T}^3)\big),
\end{align}
where $\mathbf{H}$ and $\mathbf{A}_{\mathbf{H}}$ are defined in
\eqref{E:HYPERBOLICVARS} and \eqref{E:ALGEBRAICVARSHYPERBOLICONLY}.
Note that by \eqref{E:ALGEBRAICHYPERBOLICONLYSMOOTHFUNCTIONEXPRESSION} and
the standard Sobolev calculus, 
the desired result $\mathbf{A}_{\mathbf{H}} \in C\big([0,T],H^{N-1}(\mathbb{T}^3)\big)$
would follow from
$\mathbf{H} \in C\big([0,T],H^{N-1}(\mathbb{T}^3)\big)$.
The latter statement is equivalent to showing that
$\partial_{\vec{I}} \mathbf{H} \in C\big([0,T],L^2(\mathbb{T}^3)\big)$
for $|\vec{I}| \leq N-1$. All of these results,
except in the case of the top-order (i.e., order $N-1$) derivatives of $\mathcal{C}^i$ and $\mathcal{D}$,
follow from the standard local well-posedness time-continuity results 
\eqref{E:FIRSTTHMSTANDARDCONTINUITYINHN}-\eqref{E:NOTYETGAINOFONEDERIVATIVE},
and the standard Sobolev calculus. Thus, to complete the proof of \eqref{E:HYPERBOLICQUANTITIESINHNMINUSONE},
we need only to show that for $i=1,2,3$, we have
\begin{align} \label{E:HNMINUSONECONTINUITYFORMODIFIEDDIVCURLQUANTITIES}
	\partial_{\vec{I}} \mathcal{C}^i, 
		\,
	\partial_{\vec{I}} \mathcal{D}
	\in C\big([0,T],L^2(\mathbb{T}^3)\big),
	&&
	|\vec{I}| = N-1.
\end{align}
The desired result \eqref{E:HNMINUSONECONTINUITYFORMODIFIEDDIVCURLQUANTITIES}
follows from using equations
\eqref{E:MAINTHMSMODIFIEDDIVERGENCEENTROPYGRADIENTTRANSPORT}
and
\eqref{E:MAINTHMTRANSPORTFORMODIFIEDVORTICITYOFVORTICITY}
(more precisely, we need only to consider the spatial components of \eqref{E:MAINTHMTRANSPORTFORMODIFIEDVORTICITYOFVORTICITY}),
the boundedness results \eqref{E:1GAININDIFFERENTIABILITY}-\eqref{E:3GAININDIFFERENTIABILITY},
the standard local well-posedness time-continuity results 
\eqref{E:FIRSTTHMSTANDARDCONTINUITYINHN}-\eqref{E:NOTYETGAINOFONEDERIVATIVE},
and the standard Sobolev calculus to deduce that
$\partial_{\vec{I}} \mathcal{C}^i$ and $\partial_{\vec{I}} \mathcal{D}$
solve transport equations that satisfy the hypotheses of Lemma~\ref{L:CONTINUITYRESULTFORTRANSPORTEQUATIONS};
put succinctly, we can apply
Lemma~\ref{L:CONTINUITYRESULTFORTRANSPORTEQUATIONS} with 
$f := \partial_{\vec{I}} \mathcal{C}^i$ and 
$f := \partial_{\vec{I}} \mathcal{D}$.
We have therefore proved \eqref{E:HYPERBOLICQUANTITIESINHNMINUSONE}.
In particular, it follows from \eqref{E:HYPERBOLICQUANTITIESINHNMINUSONE} and the definition of
$\mathbf{A}_{\mathbf{H}}$ that for $i=1,2,3$, we have
\begin{align} \label{E:HNMINUSONECONTINUITYFORTHREEDIVCURLQUANTITIES}
	\curlthreed^i(\underline{\vort}),
		\,
	\curlthreed^i(\underline{\GradEnt})
	\in C\big([0,T],H^{N-1}(\mathbb{T}^3)\big).
\end{align}

Next, we note that in view of Def.\,\ref{D:CLASSIFICATIONOFSOLUTIONVARIABLES},
Lemma~\ref{L:EXPRESSIONSFORALGEBRAICQUANTITIESINTERMSOFHYPERBOLICANDELLIPTIC}
(in particular the relation \eqref{E:ALGEBRAICHYPERBOLICANDELLIPTICSMOOTHFUNCTIONEXPRESSION}
for $\partial_a S^0$ and $\partial_a \vort^0$),
\eqref{E:HYPERBOLICQUANTITIESINHNMINUSONE},
and the standard Sobolev calculus,
the desired results \eqref{E:DESIREDHIGHNORMCONTINUITY}-\eqref{E:SECONDDESIREDHIGHNORMCONTINUITY}
would follow as a consequence of the following convergence result:
\begin{align} \label{E:L2CONTINUITYRESULTELLIPTICVARIABLES}
	\lim_{t \downarrow 0}
	\| \partial_{\vec{I}} \mathbf{E}(t,\cdot) - \partial_{\vec{I}} \mathbf{E}(0,\cdot) \|_{L^2(\mathbb{T}^3)}
	& = 0,
	&&
	|\vec{I}| = N-1.
\end{align}

To establish \eqref{E:L2CONTINUITYRESULTELLIPTICVARIABLES}, we first use
\eqref{E:HYPERBOLICQUANTITIESINHNMINUSONE},
\eqref{E:ALGEBRAICEXPRESSIONFORICOMMUTEDGEOMETRICDIVERGENCEOFVORTICITY},
and the standard Sobolev calculus
to deduce the following facts, where $(G^{-1})^{ij}$ is defined in Def.\,\ref{D:INVERSERIEMANNIANONSIGMAT}:
\begin{align} \label{E:HDIVOFGRADENTISCONTINUOUS}
	(G^{-1})^{ab} \partial_a \partial_{\vec{I}} \GradEnt_b,
		\,
	(G^{-1})^{ab} \partial_a \partial_{\vec{I}} \vort_b
	&
	\in C\big([0,T],L^2(\mathbb{T}^3)\big),
	&&
	|\vec{I}| = N-1.
\end{align}

In the rest of the proof,
$\upalpha_* > 0$ is as in the statement of Lemma~\ref{L:COMPARISONBASEDONELLIPTIC}
in the case $(M^{-1})^{ij}(t,x) := (G^{-1})^{ij}(t,x)$.
Next, setting
\begin{align} \label{E:INITIALRIEMANNIANMETRIC}
	(\mathring{G}^{-1})^{ij}(\cdot)
	& :=
	(G^{-1})^{ij}(0,\cdot),
\end{align}
applying Lemma~\ref{L:COMPARISONBASEDONELLIPTIC} with $(M^{-1})^{ij} := (\mathring{G}^{-1})^{ij}$,
and appealing to definition \eqref{E:ELLIPTICVARS},
we see that in order to prove \eqref{E:L2CONTINUITYRESULTELLIPTICVARIABLES}, 
it suffices to show the following convergence result:
\begin{align} \label{E:EQUIVALENTDESIREDHIGHNORMCONTINUITY}
	\lim_{t \downarrow 0}
	\mathbb{E}_{N;\mathring{G}^{-1};\upalpha_*}[(\underline{\vort},\underline{\GradEnt}) 
		- 
		(\mathring{\underline{\vort}},\mathring{\underline{\GradEnt}})](t)
	& = 0,
\end{align}
where $(\mathring{\underline{\vort}},\mathring{\underline{\GradEnt}}) := (\underline{\vort},\underline{\GradEnt})|_{\Sigma_0}$.

To initiate the proof of \eqref{E:EQUIVALENTDESIREDHIGHNORMCONTINUITY}, 
we let $\varphi \in H^{-N}(\mathbb{T}^3)$ be any element of the dual space of
$H^N(\mathbb{T}^3)$.
From the below-top-order continuity result \eqref{E:NOTYETGAINOFONEDERIVATIVE},
the top-order boundedness results \eqref{E:2GAININDIFFERENTIABILITY}-\eqref{E:3GAININDIFFERENTIABILITY},
and the density of $C^{\infty}$ functions in $H^{-N}(\mathbb{T}^3)$,
it is straightforward to deduce that the following ``weak continuity'' result holds for $i=1,2,3$:
\begin{align} \label{E:WEAKHIGHNORMCONTINUITY}
	\lim_{t \downarrow 0}
	\int_{\mathbb{T}^3}
		\GradEnt^i(t,x) \varphi
	\, dx
	& =
	\int_{\mathbb{T}^3}
		\mathring{\GradEnt}^i \varphi
	\, dx.
\end{align}
Since $\varphi$ was arbitrary, we conclude that 
$\GradEnt^i(t,\cdot)$ weakly converges to $\mathring{\GradEnt}^i$ in $H^N(\mathbb{T}^3)$
as $t \downarrow 0$. Similarly, $\vort^i(t,\cdot)$ weakly converges to $\mathring{\vort}^i$ in $H^N(\mathbb{T}^3)$
as $t \downarrow 0$. 
We now let $\langle \cdot, \cdot \rangle_{\mathring{G}^{-1};\upalpha_*}$ denote the inner product
\eqref{E:INNERPRODUCT} on the Hilbert space $\left(H^N(\Sigma_t) \right)^3 \times \left(H^N(\Sigma_t) \right)^3$,
and we let $\langle \cdot, \cdot \rangle$ denote the standard inner product on the same Hilbert space
(obtained by keeping only the two sums on the last line of RHS~\eqref{E:INNERPRODUCT}
and replacing $N-1$ with $N$ in the summation bounds).
By Lemma~\ref{L:COMPARISONBASEDONELLIPTIC}, the two corresponding norms
(i.e., the norms on the left- and right-hand sides 
of \eqref{E:ENERGYNORMCOMPARISON}-\eqref{E:SECONDENERGYNORMCOMPARISON})
are equivalent. It is a basic result of functional analysis that
given these two inner products with equivalent norms,
a sequence weakly convergences relative to 
$\langle \cdot, \cdot \rangle_{\mathring{G}^{-1};\upalpha_*}$
if and only if it weakly converges relative to
$\langle \cdot, \cdot \rangle$.
In particular, in view of the weak convergence results for $\GradEnt^i(t,\cdot)$ and $\vort^i(t,\cdot)$ proved above,
we infer that 
$(\underline{\vort}(t,\cdot),\underline{\GradEnt}(t,\cdot))$
weakly converges to $(\underline{\mathring{\vort}}(\cdot),\underline{\mathring{\GradEnt}}(\cdot))$ 
relative to the inner product $\langle \cdot, \cdot \rangle_{\mathring{G}^{-1};\upalpha_*}$
as $t \downarrow 0$.
Moreover, it is another basic result of functional analysis that
based on this weak convergence and Lemma~\ref{L:COMPARISONBASEDONELLIPTIC},
in order to prove the result \eqref{E:EQUIVALENTDESIREDHIGHNORMCONTINUITY}, 
it suffices to show that
\begin{align} \label{E:ANOTHEREQUIVALENTDESIREDHIGHNORMCONTINUITY}
		\lim \sup_{t \downarrow 0}
		\mathbb{E}_{N;\mathring{G}^{-1};\upalpha_*}[(\underline{\vort},\underline{\GradEnt})](t)
		& \leq 
		\mathbb{E}_{N;\mathring{G}^{-1};\upalpha_*}[(\mathring{\underline{\vort}},\mathring{\underline{\GradEnt}})].
\end{align}	
Moreover, 
since the standard local well-posedness time-continuity results 
\eqref{E:FIRSTTHMSTANDARDCONTINUITYINHN}
and \eqref{E:STANDARDSOBOLEVEMBEDDING} 
imply that
$
\lim_{t \downarrow 0} 
\left\| (G^{-1})^{ij}(t,\cdot) 
	- 
(\mathring{G}^{-1})^{ij} 
\right\|_{C(\mathbb{T}^3)} 
= 
0
$,
it follows from 
definitions \eqref{E:INNERPRODUCT} and \eqref{E:TOPORDERENERGY}
and the top-order boundedness results \eqref{E:1GAININDIFFERENTIABILITY}-\eqref{E:3GAININDIFFERENTIABILITY}
that in order to prove \eqref{E:ANOTHEREQUIVALENTDESIREDHIGHNORMCONTINUITY},
it suffices to show that
\begin{align} \label{E:ALMOSTFINALEQUIVALENTDESIREDHIGHNORMCONTINUITY}
		\lim \sup_{t \downarrow 0}
		\mathbb{E}_{N;G^{-1};\upalpha_*}[(\underline{\vort},\underline{\GradEnt})](t)
		& \leq 
		\mathbb{E}_{N;\mathring{G}^{-1};\upalpha_*}[(\mathring{\underline{\vort}},\mathring{\underline{\GradEnt}})],
\end{align}	
where we stress that the inverse metric $G^{-1}$ 
on LHS~\eqref{E:ALMOSTFINALEQUIVALENTDESIREDHIGHNORMCONTINUITY}
depends on $t$ (which is different compared to \eqref{E:ANOTHEREQUIVALENTDESIREDHIGHNORMCONTINUITY}).
In fact, our arguments will yield a stronger statement than \eqref{E:ALMOSTFINALEQUIVALENTDESIREDHIGHNORMCONTINUITY}.
More precisely, we will show the following time-continuity result:
\begin{align} \label{E:STRONGERALMOSTFINALEQUIVALENTDESIREDHIGHNORMCONTINUITY}
		\lim_{t \downarrow 0}
		\mathbb{E}_{N;G^{-1};\upalpha_*}[(\underline{\vort},\underline{\GradEnt})](t)
		& =
		\mathbb{E}_{N;\mathring{G}^{-1};\upalpha_*}[(\mathring{\underline{\vort}},\mathring{\underline{\GradEnt}})],
\end{align}	
To proceed, we use definitions \eqref{E:INNERPRODUCT} and \eqref{E:TOPORDERENERGY}
and the standard local well-posedness time-continuity results 
\eqref{E:FIRSTTHMSTANDARDCONTINUITYINHN}-\eqref{E:NOTYETGAINOFONEDERIVATIVE}
to deduce that all terms in the definition of $\mathbb{E}_{N;G^{-1};\upalpha_*}[(\vort,\GradEnt)](t)$
have been shown to have the desired continuous time dependence at
except for the ones depending on the order $N$ derivatives of $\vort$ or $\GradEnt$
(i.e., the ones corresponding to the terms on the first four lines of RHS~\eqref{E:INNERPRODUCT}).
The continuous time dependence of these remaining four terms follows from
\eqref{E:HNMINUSONECONTINUITYFORTHREEDIVCURLQUANTITIES},
\eqref{E:HDIVOFGRADENTISCONTINUOUS},
and the fact that
$
 (G^{-1})^{ij} \in C\big([0,T],C(\mathbb{T}^3)\big)
$
(which follows from the standard local well-posedness time-continuity results 
\eqref{E:FIRSTTHMSTANDARDCONTINUITYINHN} and \eqref{E:STANDARDSOBOLEVEMBEDDING}).
We have therefore proved \eqref{E:STRONGERALMOSTFINALEQUIVALENTDESIREDHIGHNORMCONTINUITY},
which finishes the proof of the desired result \eqref{E:GAINOFONEDERIVATIVE}.

To complete our proof of Theorem~\ref{T:UPGRADEDLOCALWELLPOSEDNESS},
we need to show continuous dependence on the initial data.
To proceed, we let $(\mathring{\Lnenth}_{(m)},\mathring{u}_{(m)}^i,\mathring{\Ent}_{(m)})$
be a sequence of initial data 
(not necessarily $C^{\infty}$ now)
such that as $m \to \infty$, 
the convergence results \eqref{E:STANDARDDATACONVERGENCE}-\eqref{E:GAINOFONEDERIVATIVEDATACONVERGENCE} hold.
We again let $(\Lnenth_{(m)},\Ent_{(m)},u_{(m)}^{\alpha},\GradEnt_{(m)}^{\alpha},\vort_{(m)}^{\alpha})$ denote the corresponding
sequence of solution variables (which are not necessarily $C^{\infty}$ now). We aim to show that the sequence 
converges to the limiting solution $(\Lnenth,u^{\alpha},\Ent,\GradEnt^{\alpha},\vort^{\alpha})$ 
in the norm $\| \cdot \|_{C\big([0,T],H^N\mathbb{T}^3)\big)}$ as $m \to \infty$.
To proceed, we first note that Theorem~\ref{T:STANDLOCALWELLPOSEDNESS}
	yields that for $m$ sufficiently large, the element
	$(\Lnenth_{(m)},\Ent_{(m)},u_{(m)}^{\alpha},\GradEnt_{(m)}^{\alpha},\vort_{(m)}^{\alpha})$ is a 
	classical solution (not necessarily $C^{\infty}$ now)
	to equations \eqref{E:ENTHALPYEVOLUTION}-\eqref{E:ENTROPYEVOLUTION} + \eqref{E:UISUNITLENGTH}
	on the fixed slab $[0,T] \times \mathbb{T}^3$ 
	with $(\Lnenth_{(m)}(p),\Ent_{(m)}(p),u_{(m)}^1(p),u_{(m)}^2(p),u_{(m)}^3(p)) 
	\in \mbox{\upshape int} \mathfrak{K}$ for $p \in [0,T] \times \mathbb{T}^3$,
	that it also is a strong solution\footnote{By ``strong solution,'' we mean in particular
	that at each fixed $t \in [0,T]$,
	the equations of Theorem~\ref{T:NEWFORMULATIONRELEULER} are satisfied for almost every $x \in \mathbb{T}^3$.} 
	to the equations of Theorem~\ref{T:NEWFORMULATIONRELEULER},
	that there exists an integer $m_0$ such that
\begin{align} \label{E:1AGAINITERATESBOUNDEDINHIGHNORM}
		\sup_{m \geq m_0}
		\| \Ent_{(m)} \|_{C\big([0,T],H^{N+1}(\mathbb{T}^3)\big)}
		& \leq C,
			\\
		\sup_{m \geq m_0}
		\| \GradEnt_{(m)}^{\alpha} \|_{C\big([0,T],H^N(\mathbb{T}^3)\big)}
		& \leq C,
			\label{E:2AGAINITERATESBOUNDEDINHIGHNORM}
			\\
		\sup_{m \geq m_0}
		\| \vort_{(m)}^{\alpha} \|_{C\big([0,T],H^N(\mathbb{T}^3)\big)}
		& \leq C,
			\label{E:3AGAINITERATESBOUNDEDINHIGHNORM}
\end{align}	
and that the following convergence results 
(which are below top-order for $\GradEnt$ and $\vort$)
hold as $m \to \infty$:
\begin{align} \label{E:1STANDARDCONTINUOUSDEPENDENCEONINITIALDATA}
	\left\| \Lnenth - \Lnenth_{(m)} \right\|_{C\big([0,T],H^N(\mathbb{T}^3)\big)}
	& \to 0,
			\\
	\left\| u^{\alpha} - u_{(m)}^{\alpha} \right\|_{C\big([0,T],H^N(\mathbb{T}^3)\big)}
	& \to 0,
		 \label{E:2STANDARDCONTINUOUSDEPENDENCEONINITIALDATA} 
		\\
	\left\| \Ent - \Ent_{(m)} \right\|_{C\big([0,T],H^N(\mathbb{T}^3)\big)}
	& \to 0,
		 \label{E:3STANDARDCONTINUOUSDEPENDENCEONINITIALDATA} 
			\\
	\left\| \GradEnt^{\alpha} - \GradEnt_{(m)}^{\alpha} \right\|_{C\big([0,T],H^{N-1}(\mathbb{T}^3)\big)}
	& \to 0,
		\label{E:1SECONDSTANDARDCONTINUOUSDEPENDENCEONINITIALDATA} \\
	\left\| \vort^{\alpha} - \vort_{(m)}^{\alpha} \right\|_{C\big([0,T],H^{N-1}(\mathbb{T}^3)\big)}
	& \to 0.
	\label{E:2SECONDSTANDARDCONTINUOUSDEPENDENCEONINITIALDATA}
\end{align}
In view of \eqref{E:1STANDARDCONTINUOUSDEPENDENCEONINITIALDATA}-\eqref{E:2SECONDSTANDARDCONTINUOUSDEPENDENCEONINITIALDATA}, 
we see that to complete our proof of Theorem~\ref{T:UPGRADEDLOCALWELLPOSEDNESS},
we need only to show continuity in the top-order norms. That is,
we must show that if $|\vec{I}| = N$, then as $m \to \infty$, 
we have
\begin{align} \label{E:1DESIREDCONTINUOUSDEPENDENCEONINITIALDATA}
	\left\| 
		\partial_{\vec{I}} \GradEnt^{\alpha} - \partial_{\vec{I}} \GradEnt_{(m)}^{\alpha} 
	\right\|_{C\big([0,T],L^2(\mathbb{T}^3)\big)}
	& \to 0,
		\\
	\left\| 
		\partial_{\vec{I}} \vort^{\alpha} - \partial_{\vec{I}} \vort_{(m)}^{\alpha} 
	\right\|_{C\big([0,T],L^2(\mathbb{T}^3)\big)}
	& \to 0.
	\label{E:2DESIREDCONTINUOUSDEPENDENCEONINITIALDATA}
\end{align}

To proceed, we first review an approach to proving the standard estimates 
\eqref{E:1STANDARDCONTINUOUSDEPENDENCEONINITIALDATA}-\eqref{E:2SECONDSTANDARDCONTINUOUSDEPENDENCEONINITIALDATA}.
These estimates can be proved by applying Kato's abstract framework
\cites{tK1970,tK1973,tK1975}, 
which is designed to handle first-order hyperbolic systems
in a rather general Banach space setting.
In particular, one can apply Kato's framework to the first-order system
\eqref{E:ENTHALPYEVOLUTION}-\eqref{E:ENTROPYEVOLUTION} + \eqref{E:UISUNITLENGTH}; 
this is described in detail, for example, in \cite{jS2008a}.
To prove \eqref{E:1DESIREDCONTINUOUSDEPENDENCEONINITIALDATA}-\eqref{E:2DESIREDCONTINUOUSDEPENDENCEONINITIALDATA}, we will modify Kato's 
framework so that it applies to the
hyperbolic variables $\mathbf{H}$ and
the elliptic variables $\mathbf{E}$ from
Def.\,\ref{D:CLASSIFICATIONOFSOLUTIONVARIABLES}.

To employ Kato's framework,
one relies on the propagators $\mathcal{U}(t,\tau) := \mathcal{U}(t,\tau;\mathbf{H})$
for the linear homogeneous hyperbolic system corresponding to 
the (nonlinear) first-order hyperbolic system that $\mathbf{H}$ satisfies.
To shorten the presentation, we will not explicitly state the form of this linear first-order hyperbolic system; 
see Remark~\ref{R:HYPERBOLICVARIABLESSOLVEFIRSTORDERTRANSPORT} for further discussion of its nature.
By definition, $\mathcal{U}(t,\tau;\mathbf{H})$ maps initial data
given at time $\tau$ to the solution of the 
linear homogeneous hyperbolic system (whose principal coefficients depend on $\mathbf{H}$)
at time $t$. Similarly, one relies
on the operators $\mathcal{U}_{(m)}(t,\tau) := \mathcal{U}(t,\tau;\mathbf{H}_{(m)})$
corresponding to the homogeneous linear system whose principal coefficients depend on $\mathbf{H}_{(m)}$.
By Duhamel's principle, we have
\begin{align} \label{E:KATODUHAMEL}
	\mathbf{H}(t)
	& = 
		\mathcal{U}(t,0) \mathring{\mathbf{H}}
		+
		\int_{\tau=0}^t
			\mathcal{U}(t,\tau) \mathrm{f}\left(\mathbf{H}(\tau),\mathbf{E}(\tau) \right)
		\, d \tau,
			\\
	\mathbf{H}_{(m)}(t)
	& = 
		\mathcal{U}_{(m)}(t,0) \mathring{\mathbf{H}}_{(m)}
		+
		\int_{\tau=0}^t
			\mathcal{U}_{(m)}(t,\tau) \mathrm{f}\left(\mathbf{H}_{(m)}(\tau),\mathbf{E}_{(m)}(\tau) \right)
		\, d \tau,
		\label{E:KATODUHAMELSEQUENCE}
\end{align}
where $\mathring{\mathbf{H}}$ and $\mathring{\mathbf{H}}_{(m)}$
respectively denote the initial data of $\mathbf{H}$ and $\mathbf{H}_{(m)}$,
and on RHSs~\eqref{E:KATODUHAMEL}-\eqref{E:KATODUHAMELSEQUENCE},
$\mathrm{f}$ denotes the inhomogeneous term in 
the first-order hyperbolic system satisfied by the 
elements of $\mathbf{H}$ and $\mathbf{H}_{(m)}$.
We have not explicitly stated the form of $\mathrm{f}$
since its precise structure is not important for our arguments here;
what matters is only the following basic facts (that can easily be checked):
$\mathrm{f}$ is a smooth function of its arguments satisfying $\mathrm{f}(0)=0$,
and the \emph{same} $\mathrm{f}$ appears on RHSs~\eqref{E:KATODUHAMEL}-\eqref{E:KATODUHAMELSEQUENCE}.

The strategy behind Kato's framework is to control the difference 
\linebreak
$
\mathbf{H}(t,\cdot) - \mathbf{H}_{(m)}(t,\cdot)
$
in the norm $\| \cdot \|_{H^{N-1}(\mathbb{T}^3)}$
by subtracting \eqref{E:KATODUHAMEL}-\eqref{E:KATODUHAMELSEQUENCE},
splitting the right-hand side of the resulting equation into various pieces, and
bounding each piece by exploiting some standard properties of
the propagators $\mathcal{U}(t,\tau)$ and $\mathcal{U}_{(m)}(t,\tau)$.
This is explained in detail in \cite{jS2008a}*{Section~7.4}, and most of the arguments given there
for controlling
$
\| \mathbf{H}(t,\cdot) - \mathbf{H}_{(m)}(t,\cdot) \|_{H^{N-1}(\mathbb{T}^3)}
$
go through without any substantial changes.
The one part of the argument that does require
substantial changes is: 
in order to obtain a closed inequality for
$
\| \mathbf{H}(t,\cdot) - \mathbf{H}_{(m)}(t,\cdot) \|_{H^{N-1}(\mathbb{T}^3)}
$,
one needs to show that the difference of the inhomogeneous terms
on RHSs~\eqref{E:KATODUHAMEL}-\eqref{E:KATODUHAMELSEQUENCE}
satisfies the following estimate for $t \in [0,T]$:
\begin{align} \label{E:DESIREDKEYINHOMOGENEOUSTERMDIFFERENCEESTIMATE}
	\left\|
		\mathrm{f}(\mathbf{H},\mathbf{E})
		-
		\mathrm{f}(\mathbf{H}_{(m)},\mathbf{E}_{(m)})
	\right\|_{H^{N-1}(\Sigma_t)}
	& \leq 
		C \|\mathbf{H} - \mathbf{H}_{(m)} \|_{H^{N-1}(\Sigma_t)},
\end{align}
where the key point is that \emph{the quantity
$\|\mathbf{E} - \mathbf{E}_{(m)} \|_{H^{N-1}(\Sigma_t)}$
does not appear on RHS~\eqref{E:DESIREDKEYINHOMOGENEOUSTERMDIFFERENCEESTIMATE}}.

The estimate \eqref{E:DESIREDKEYINHOMOGENEOUSTERMDIFFERENCEESTIMATE}
can be obtained with the help of elliptic estimates,
as we now explain. First, we note that the top-order norm-boundedness results 
\eqref{E:1AGAINITERATESBOUNDEDINHIGHNORM}-\eqref{E:3AGAINITERATESBOUNDEDINHIGHNORM}
and the convergence results
\eqref{E:1STANDARDCONTINUOUSDEPENDENCEONINITIALDATA}-\eqref{E:2SECONDSTANDARDCONTINUOUSDEPENDENCEONINITIALDATA}
imply that
\begin{align} \label{E:BASELEVELALLVARIABLEDIFFERENCEESTIMATE}
	\lim_{m \to \infty}
	\left\lbrace
	\left\|
		\mathbf{H}
		-
		\mathbf{H}_{(m)}
	\right\|_{C\big([0,T], L^2(\mathbb{T}^3)\big)}
	+
	\left\|
		\mathbf{E}
		-
		\mathbf{E}_{(m)}
	\right\|_{C\big([0,T], L^2(\mathbb{T}^3)\big)}
	\right\rbrace
	& = 0,
\end{align}
and that there exists an integer $m_0$ and a constant $C > 0$ such that
\begin{align}
	\left\|
		\mathbf{H}
	\right\|_{C\big([0,T], H^{N-1}(\mathbb{T}^3)\big)}
	+
	\left\|
		\mathbf{E}
	\right\|_{C\big([0,T], H^{N-1}(\mathbb{T}^3)\big)}
	& \leq C,
		\label{E:TOPORDERNORMBOUNDED} \\
	\sup_{m \geq m_0}
	\left\lbrace
		\left\|
		\mathbf{H}_{(m)}
	\right\|_{C\big([0,T], H^{N-1}(\mathbb{T}^3)\big)}
	+
	\left\|
		\mathbf{E}_{(m)}
	\right\|_{C\big([0,T], H^{N-1}(\mathbb{T}^3)\big)}
	\right\rbrace
	& \leq C.
	\label{E:SEQUENCETOPORDERNORMBOUNDED}
\end{align}
From 
\eqref{E:BASELEVELALLVARIABLEDIFFERENCEESTIMATE},
\eqref{E:TOPORDERNORMBOUNDED}-\eqref{E:SEQUENCETOPORDERNORMBOUNDED},
and the Sobolev interpolation result \eqref{E:STANDARDSOBOLEVINTERPOLATION},
we deduce that if $N' < N-1$, then
\begin{align} \label{E:ANYTHINGBELOWTOPALLVARIABLEDIFFERENCEESTIMATE}
	\lim_{m \to \infty}
	\left\lbrace
		\left\|
			\mathbf{H}
			-
			\mathbf{H}_{(m)}
		\right\|_{C\big([0,T],H^{N'}(\mathbb{T}^3)\big)}
		+
		\left\|
			\mathbf{E}
			-
			\mathbf{E}_{(m)}
		\right\|_{C\big([0,T],H^{N'}(\mathbb{T}^3)\big)}
	\right\rbrace
	& = 0.
\end{align}
Fixing a real number $N'$ satisfying $3/2 < N' < 2$
and using \eqref{E:ANYTHINGBELOWTOPALLVARIABLEDIFFERENCEESTIMATE} and the Sobolev embedding result \eqref{E:STANDARDSOBOLEVEMBEDDING}, 
we deduce that
\begin{align} \label{E:SUPNORMHYPVARIABLEDIFFERENCEESTIMATE}
	\lim_{m \to \infty}
	\left\lbrace
	\left\|
		\mathbf{H}
		-
		\mathbf{H}_{(m)}
	\right\|_{C([0,T] \times \mathbb{T}^3)}
	+
	\left\|
		\mathbf{E}
		-
		\mathbf{E}_{(m)}
	\right\|_{C([0,T] \times \mathbb{T}^3)}
	\right\rbrace
	& = 0.
\end{align}
Next, we use 
\eqref{E:TOPORDERNORMBOUNDED},
\eqref{E:SEQUENCETOPORDERNORMBOUNDED},
\eqref{E:SUPNORMHYPVARIABLEDIFFERENCEESTIMATE},
\eqref{E:STANDARDSOBOLEVEMBEDDING},
and \eqref{E:CONTINUITYESTIMATEWITHSMOOTHFUNCTIONFRAMENORML2PRODUCTBOUNDINERMSOFLINFINITYANDHMDOT}
to deduce that there is a constant $C>0$ such that if $m$ is sufficiently large, 
then for $t \in [0,T]$, the following estimate holds for
the function $\mathrm{f}$ appearing on RHSs~\eqref{E:KATODUHAMEL}-\eqref{E:KATODUHAMELSEQUENCE}:
\begin{align} \label{E:KEYINHOMOGENEOUSTERMDIFFERENCEESTIMATE}
	\left\|
		\mathrm{f}(\mathbf{H},\mathbf{E})
		-
		\mathrm{f}(\mathbf{H}_{(m)},\mathbf{E}_{(m)})
	\right\|_{H^{N-1}(\Sigma_t)}
	& \leq 
		C \|\mathbf{H} - \mathbf{H}_{(m)} \|_{H^{N-1}(\Sigma_t)}
			\\
	& \ \
		+
		C \|\mathbf{E} - \mathbf{E}_{(m)} \|_{H^{N-1}(\Sigma_t)}.
		\notag
\end{align}
Next, we use 
\eqref{E:TOPORDERNORMBOUNDED},
\eqref{E:SEQUENCETOPORDERNORMBOUNDED},
\eqref{E:SUPNORMHYPVARIABLEDIFFERENCEESTIMATE},
and
\eqref{E:KEYINHOMOGENEOUSTERMELLIPTICVARIABLESESTIMATE}
to deduce that if $m$ is sufficiently large, then for $t \in [0,T]$,
the last term on RHS~\eqref{E:KEYINHOMOGENEOUSTERMDIFFERENCEESTIMATE} 
obeys the following bound:
\begin{align} \label{E:KEYINHOMOGENEOUSTERMELLIPTICVARIABLESDIFFERENCESEQUENCEESTIMATE}
	\|\mathbf{E} - \mathbf{E}_{(m)} \|_{H^{N-1}(\Sigma_{t})}
	& \leq
	C \|\mathbf{H} - \mathbf{H}_{(m)} \|_{H^{N-1}(\Sigma_{t})}.
\end{align}
The desired bound \eqref{E:DESIREDKEYINHOMOGENEOUSTERMDIFFERENCEESTIMATE}
follows from
\eqref{E:KEYINHOMOGENEOUSTERMDIFFERENCEESTIMATE}
and
\eqref{E:KEYINHOMOGENEOUSTERMELLIPTICVARIABLESDIFFERENCESEQUENCEESTIMATE}.
Kato's framework 
(see \cite{jS2008a}*{Section~7.4})
then allows one to conclude that
\begin{align} \label{E:DESIREDHYPERBOLICVARIABLEESTIMATE}
	\lim_{m \to \infty}
	\left\|
		\mathbf{H}
		-
		\mathbf{H}_{(m)}
	\right\|_{C\big([0,T], H^{N-1}(\mathbb{T}^3)\big)}
	& = 0.
\end{align}
Moreover,
\eqref{E:KEYINHOMOGENEOUSTERMELLIPTICVARIABLESDIFFERENCESEQUENCEESTIMATE}
and
\eqref{E:DESIREDHYPERBOLICVARIABLEESTIMATE}
imply that
\begin{align} \label{E:DESIREDELLIPTICVARIABLEESTIMATE}
	\lim_{m \to \infty}
	\left\|
		\mathbf{E}
		-
		\mathbf{E}_{(m)}
	\right\|_{C\big([0,T], H^{N-1}(\mathbb{T}^3)\big)}
	& = 0.
\end{align}
Finally, in view of Def.\,\ref{D:CLASSIFICATIONOFSOLUTIONVARIABLES}
and the relation \eqref{E:ALLALGEBRAICSMOOTHFUNCTIONEXPRESSION},
we note that the desired convergence results \eqref{E:1DESIREDCONTINUOUSDEPENDENCEONINITIALDATA}-\eqref{E:2DESIREDCONTINUOUSDEPENDENCEONINITIALDATA}
follow from \eqref{E:DESIREDHYPERBOLICVARIABLEESTIMATE}-\eqref{E:DESIREDELLIPTICVARIABLEESTIMATE}
and the standard Sobolev calculus 
(which is needed to handle the components $\alpha = 0$ in \eqref{E:1DESIREDCONTINUOUSDEPENDENCEONINITIALDATA}-\eqref{E:2DESIREDCONTINUOUSDEPENDENCEONINITIALDATA}).

\hfill $\qed$

\bibliographystyle{amsalpha}
\bibliography{JBib}

\end{document}